\newcolumntype{M}{>{\centering\arraybackslash}m{5.2cm}}
\def\bC{{\mathbb{C}}}
\def\bN{{\mathbb{N}}}
\def\bP{{\mathbb{P}}}
\def\bQ{{\mathbb{Q}}}
\def\bR{{\mathbb{R}}}
\def\bZ{{\mathbb{Z}}}
\DeclareMathAlphabet{\mathmybb}{U}{bbold}{m}{n}
\def\cA{{\mathcal{A}}}
\def\cB{{\mathcal{B}}}
\def\cE{{\mathcal{E}}}
\def\cF{{\mathcal{F}}}
\def\cG{{\mathcal{G}}}
\def\cL{{\mathcal{L}}}
\def\cM{{\mathcal{M}}}
\def\cN{{\mathcal{N}}}
\def\cO{{\mathcal{O}}}
\def\cP{{\mathcal{P}}}
\def\cU{{\mathcal{U}}}
\def\cX{{\mathcal{X}}}
\def\cY{{\mathcal{Y}}}
\def\cZ{{\mathcal{Z}}}
\def\m{\mathfrak m}
\def\p{\mathfrak p}
\def\an{\operatorname{an}}
\def\Aut{\operatorname{Aut}}
\def\coker{\operatorname{coker}}
\def\cont{\operatorname{cont}}
\def\diff{\mathrm{d}}
\def\dc{\mathrm{d}^c}
\def\div{\operatorname{div}}
\def\exc{\operatorname{exc}}
\def\fin{\mathrm{fin}}
\def\Frac{\operatorname{Frac}}
\def\FS{\operatorname{FS}}
\DeclareMathOperator{\Gal}{Gal}
\def\GL{\operatorname{GL}}
\def\Hom{\operatorname{Hom}}
\def\hyb{\operatorname{hyb}}
\def\im{\operatorname{im}}
\def\ITAC{\operatorname{ITAC}}
\def\lsc{\operatorname{lsc}}
\def\MA{\operatorname{MA}}
\def\ord{\operatorname{ord}}
\def\Spec{\operatorname{Spec}}
\def\Spm{\operatorname{Spm}}
\def\Supp{\operatorname{Supp}}
\def\TAC{\operatorname{TAC}}
\def\triv{\operatorname{triv}}
\def\um{\operatorname{um}}
\def\usc{\operatorname{usc}}
\def\vol{\operatorname{vol}}
\def\ZR{\operatorname{ZR}}
\def\tilde{\widetilde}
\def\setminus{\smallsetminus}
\def\emptyset{\varnothing}
\def\upint{\mathchoice%
    {\mkern13mu\overline{\vphantom{\intop}\mkern7mu}\mkern-20mu}%
    {\mkern7mu\overline{\vphantom{\intop}\mkern7mu}\mkern-14mu}%
    {\mkern7mu\overline{\vphantom{\intop}\mkern7mu}\mkern-14mu}%
    {\mkern7mu\overline{\vphantom{\intop}\mkern7mu}\mkern-14mu}%
  \int}
\def\lowint{\mkern3mu\underline{\vphantom{\intop}\mkern7mu}\mkern-10mu\int}
\declaretheorem[name=Theorem,refname={Theorem},style=plain,numberwithin=subsection]{theorem}
\declaretheorem[name=Theorem-Definition,refname={Theorem-Definition},style=plain,sibling=theorem]{theorem-definition}
\declaretheorem[name=Theorem,refname={Theorem},style=plain,numbered=no]{theorem*}
\declaretheorem[name=Proposition-Definition,refname={Proposition-Definition},style=definition,sibling=theorem]{proposition-definition}
\declaretheorem[name=Proposition,refname={Proposition},style=plain,sibling=theorem]{proposition}
\declaretheorem[name=Proposition,refname={Proposition},style=plain,numbered=no]{proposition*}
\declaretheorem[name=Lemma,refname={Lemma},style=plain,sibling=theorem]{lemma}
\declaretheorem[name=Lemma,refname={Lemma},style=plain,numbered=no]{lemma*}
\declaretheorem[name=Definition,refname={Definition},style=definition,sibling=theorem]{definition}
\declaretheorem[name=Definition,refname={Definition},style=definition,numbered=no]{definition*}
\declaretheorem[name=Remark,refname={Remark},style=definition,sibling=theorem]{remark}
\declaretheorem[name=Remark,refname={Remark},style=remark,numbered=no]{remark*}
\declaretheorem[name=Corollary,refname={Corollary},style=plain,sibling=theorem]{corollary}
\declaretheorem[name=Corollary,refname={Corollary},style=plain,numbered=no]{corollary*}
\declaretheorem[name=Example,refname={Example},style=definition,sibling=theorem]{example}
\declaretheorem[name=Notation,refname={Notation},style=definition,sibling=theorem]{notation}
\declaretheorem[name=Conjecture,refname={Conjecture},style=plain,sibling=theorem]{conjecture}
\declaretheorem[name=Claim,refname={Claim},style=plain,sibling=theorem]{claim}
\declaretheorem[name=Claim,refname={Claim},style=plain,numbered=no]{claim*}
\declaretheorem[name=Théorème,refname={Théorème},style=plain,numbered=no]{theoremfr*}
\declaretheorem[name=Slogan,refname={Slogan},style=remark,numbered=no]{slogan*}
\newtheorem{theo}{Theorem}
\newcommand{\va}{|\cdot|}
\newcommand{\fonction}[5]{\begin{array}{l|rcl}
#1: & #2 & \longrightarrow & #3 \\
    & #4 & \longmapsto & #5 \end{array}}
\newcommand{\hlocale}[6]{#1_{#2,...,#3}^{#4,...,#5}(#6)}
 \thanks{The author was partly supported by the collaborative research 
	center SFB 1085 \emph{Higher Invariants - Interactions between Arithmetic Geometry and Global Analysis} funded by the Deutsche Forschungsgemeinschaft. The main part of this research was conducted at Université Paris Cité.}
\begin{document}

\title[Topological adelic curves]{Topological adelic curves: Zariski-Riemann spaces, \\ 
algebraic coverings, Harder-Narsimhan filtrations and heights}
\date{\today}
\author{Antoine Sédillot}
\address{A. Sédillot, Mathematik, Universit{\"a}t 
		Regensburg, 93040 Regensburg, Germany}
	\email{antoine.sedillot@mathematik.uni-regensburg.de}

\begin{abstract}
In this article, we introduce topological adelic curves. Roughly speaking, a topological adelic curve is a topological space of (generalised) absolute values on a given field satisfying a product formula. Topological adelic curves are the topological counterpart to adelic curves introduced by Chen and Moriwaki. They aim at handling Arakelov geometry over possibly uncountable fields and give further ideas in the formalisation of the analogy between Diophantine approximation and Nevanlinna theory. Using the notion of pseudo-absolute values developed in \cite{Sedillot_pav}, we prove several fundamental properties of topological adelic curves: algebraic coverings, existence of Harder-Narasimhan filtrations and of volume functions. We also define heights of cycles and give a generalisation of Nevanlinna's first main theorem in this framework. Another important feature of topological adelic curves is that they come equipped with Zariski-Riemann type spaces that admit a natural locally ringed space structure and usual Arakelov theoretic objects (e.g. adelic vector bundles) admit a natural interpretation in terms of metrised objects on these Zariski-Riemann spaces. 
\end{abstract}


\maketitle

\tableofcontents

\section*{Introduction}

\subsection*{Motivations and background}

\subsubsection*{Arakelov geometry over number fields and arithmetic function fields}

Arakelov theory stems from the analogy between number fields and function fields. Roughly speaking, one can formulate this analogy as follows: the geometry of schemes of finite type over $\Spec(\bZ)$ should be similar to the geometry of schemes of finite type over a smooth projective curve. Unfortunately, schemes over $\Spec(\bZ)$ are not "compact" and it is not quite clear how to "compactify" them within the world of schemes. To address this issue, Arakelov \cite{Arakelov74} added analytic data to algebro-geometric objects. Arakelov's ideas have been used by Faltings \cite{Faltings91} in his proof of Mordell's conjecture and in the proof of Bogomolov's conjecture \cite{Ullmo98,Zhang98}.



Studying the arithmetic of fields that are more general than global fields has also been developed to a great extent. Lang remarked that it was natural to study \emph{arithmetic function fields}, namely finite type field extensions of $\bQ$ \cite{Lang74,Lang86}. Indeed, Mordell-Weil's and Faltings' theorems both hold over such fields \cite{Lang91}. Later, Moriwaki constructed a height theory over arithmetic function fields \cite{Moriwakiheights} (see also \cite{BPS16}). Recently, Vojta proved a version of Roth's theorem over arithmetic function fields \cite{Vojta21}.

The study of infinite algebraic extensions of number fields has also been a great inspiration for developing analogues of the tools of Diophantine geometry. Let us mention for instance a version of Siegel's lemma \cite{RoyTHunder96}, the study of tensorial semistability \cite{BostChen13} and the introduction of Siegel fields \cite{GaudronRemond17}.

\subsubsection*{Arakelov geometry over adelic curves}

In \cite{ChenMori},Chen and Moriwaki introduced an Arakelov theory over arbitrary countable fields. The central object of the theory is called an \emph{adelic curve}. Namely, an adelic curve is the data $S=(K,(\Omega,\nu),(\va_{\omega})_{\omega\in\Omega})$, where $K$ is a field, $(\Omega,\nu)$ is a measure space and $(\va_{\omega})_{\omega\in\Omega}$ is a family of absolute values on $K$. Moreover, an adelic curve $S=(K,(\Omega,\nu),(\va_{\omega})_{\omega\in\Omega})$ is called \emph{proper} if the following \emph{product formula} holds:
\begin{align*}
\forall a\in K^{\times},\quad \int_{\Omega}\log|f|_{\omega}\nu(\diff\omega)=0.
\end{align*}

Adelic curves arise naturally in various number theoretic situations. In particular, any global field can be naturally equipped with an adelic structure. More generally, any countable field can be endowed with an adelic structure. Furthermore, adelic curves allow us to study global fields, trivially valued fields and arithmetic function fields uniformly.

Let us now introduce the counterpart of the usual tools of Arakelov geometry over adelic curves. Let $S=(K,(\Omega,\nu),(\va_{\omega})_{\omega\in\Omega})$ be an adelic curve. For any $K$-scheme $X$, for any $\omega\in\Omega$, we denote by $K_{\omega}$ the completion of $K$ with respect to the absolute value $|\cdot|_{\omega}$ and set $X_{\omega} := X \otimes_{K} K_{\omega}$. The avatar of a line bundle in algebraic geometry is called an \emph{adelic line bundle}. Let $X$ be a $K$-scheme, an adelic line bundle over $X$ is the data $\overline{L}=(L,\varphi)$, where $L$ is a line bundle over $X$ and $\varphi=(\varphi_{\omega})_{\omega\in\Omega}$ is a family of continuous metrics (in the sense of Berkovich analytic spaces) over each $L_{\omega} := L \otimes_{\cO_X} \cO_{X_\omega}$. In addition, the metric family is subject to dominance and measurability conditions (cf. \S 6.1 in \cite{ChenMori}).

Among the results of the theory, let us first mention the development of the geometry of numbers via the slope theory of adelic vector bundles (\cite{ChenMori}, Chapter IV). Note that even in the case of number fields, Chen-Moriwaki's approach yields a new interpretation of known results. An arithmetic intersection theory for adelic line bundles is also constructed in \cite{ChenMori21}. The study of the positivity of adelic line bundles and a Hilbert-Samuel formalism are introduced in \cite{ChenMori24}. As an application, a generalisation of Bogomolov's conjecture over a (countable) field of characteristic zero is proven. Note also that a version of Roth's theorem over a particular class of adelic curves is established by Dolce and Zucconi in \cite{DolceZucconi25}. This result generalises Vojta's aforementioned result \cite{Vojta21}.

\subsubsection*{Nevanlinna theory and \emph{M}-fields}

Another, but yet not disconnected, motivation for our work is to study the analogy between Diophantine geometry and Nevanlinna theory. This analogy was spotted first by Osgood \cite{Osgood81} and further explored by Vojta in \cite{Vojta87}. Roughly speaking, Nevanlinna theory is the study of equations of the form $f(z)=a$, where $f$ is meromorphic on $\bC$ and $a\in \mathbb{P}^{1}(\bC)$. It builds on two fundamental theorems. In the analogy, the first one corresponds to Weil's theorem for heights (e.g. \cite{BombieriGubler}, Theorem 2.3.8). The second one is seen as an analogue of Roth's theorem \cite{Roth55}. In Appendix \ref{sec:Nevanlinna_classical}, we recalled the basic notions of Nevanlinna theory as well as the main ideas of the analogy.

In \cite{Gubler97}, Gubler introduced the notion of \emph{M-fields}, with the idea of including Nevanlinna theory in an Arakelov theoretic framework. Roughly speaking, an $M$\emph{-field} $K$ is a field $K$ equipped with a measure space $M$ such that any element $a$ in $K$ defines an integrable real function $|a|_{\cdot}$ defined almost everywhere on $M$. Moreover, these functions are assumed to satisfy the axioms of absolute values almost everywhere. The following example of an $M$-field coming from Nevanlinna theory is fundamental for our purposes. Consider the field $\cM(\bC)$ of meromorphic complex functions. Fix a real number $R>0$ and set $M_R:=\{z\in \bC : |z|_{\infty}\leq R\}$ where the boundary $\{z\in \bC: |z|_{\infty}=R\}$ is equipped with the Haar probability measure and the open disc $\{z\in \bC: |z|_{\infty}<R\}$ is equipped with a counting measure. For any $f\in \cM(\bC)$, consider the map
\begin{align*}
(z\in M_R)\mapsto\left\{\begin{matrix}
|f(z)|_{\infty}\quad\text{if }|z|_{\infty}=R, \\
e^{-\ord(f,z)}\quad\text{if }|z|_{\infty}<R,
\end{matrix}\right.
\end{align*}
which is well-defined everywhere except poles of $f$ on the circle of radius $R$, hence almost everywhere. Then one can check that we have an $M_R$-field $\cM(\bC)$.

Using $M$-fields, Gubler obtains a generalisation of Nevanlinna's first main theorem which includes notably the construction of a height function for fields of arithmetic nature. Nonetheless, it does not seem clear how one could hope to obtain further results, e.g. geometry of numbers, in the framework of $M$-fields. 

\subsection*{Goal}

The goal of this article is to introduce objects of Arakelov geometric nature, allowing us to handle uncountable fields and to formalise the analogy with Nevanlinna theory. We focus on the foundation of the theory, but we list at the end of the introduction several future works and open questions that could be handled using the material we develop here.

\subsubsection*{Hints from the above discussion}
On the one hand, in the theory of adelic curves, the countability condition is imposed by the fact that the parameter space of absolute values is a measure space. This is due to the apparition of suprema and infima of measurable functions when considering operations on adelic vector bundles. Although this approach gives a lot of flexibility, we cannot expect the tools to transpose directly in the uncountable setting. A natural idea is to consider a topological space as parameter space and replace the measurability conditions with (semi-)continuity ones. 

On the other hand, the Nevanlinna theory example of $M$-field suggests that the space of possible arithmetic inputs should be larger than the space of usual absolute values. Note that even in the case of a classical adelic curve structure on $\bQ(T)$, namely coming from classical Arakelov geometry over $\mathbb{P}^{1}_{\bZ}$ (\cite{ChenMori}, \S 3.2.5), the natural topological space parametrising the Archimedean absolute values is the set of transcendental elements of the complex unit disc. From the point of view of measure theory, it is just the difference of the complex unit disc and the countable subset of algebraic numbers. However, this space, equipped with its usual topology, is very pathological.

\subsubsection*{Pseudo-absolute values, globally valued fields and Zariski-Riemann spaces}

The framework of globally valued fields, introduced by Ben Yaacov-Hrushovski \cite{Hrushovski16}, gives another approach to handling arithmetic over fields. Roughly speaking, a \emph{globally valued field} (GVF for short) is a field equipped with a family of heights satisfying the usual height compatibility axioms. This notion originates from model theory and there are several equivalent characterisations of GVFs (\cite{GVF24}, Theorem 7.7). The link with the above discussion is the following: a countable GVF is an equivalence class of proper adelic curves (\emph{loc. cit.}, Corollary 7.11). This link with model theory yields another motivation for developing Arakelov geometry over uncountable fields: indeed, the ultraproduct construction is fundamental in model theory and in general, ultraproducts are uncountable.   

Over a possibly uncountable field $K$, the GVF structures can be interpreted as a suitable measure on the  \emph{space of pseudo-absolute values} (or of \emph{pseudo-valuations}) on $K$. More precisely, a pseudo-absolute value on a field $K$ is a map $|\cdot| : K \to [0,+\infty]$ satisfying 
\begin{itemize}
\item[(i)] $|1| = 1$ and $|0|=0$;
	\item[(ii)] for all $a,b\in K$, $|a+b| \leq |a|+|b|$;
	\item[(iii)] for all $a,b\in K$ such that $\{|a|,|b|\}\neq \{0,+\infty\}$, $|ab| = |a||b|$. 
\end{itemize}
Moreover, $A_{|\cdot|}:=\{a\in K:|a|<+\infty\}$ is a valuation ring of $K$ with maximal ideal $m_{|\cdot|}:=\{a\in K:|a|=0\}$ and $|\cdot|$ induces an absolute value on the residue field $\kappa_{|\cdot|}:=A_{|\cdot|}/\m_{|\cdot|}$. In other terms, a pseudo-absolute value is an absolute value allowing "singularities". This notion was first introduced by Weil in \cite{Weil51} and was developed independently by Ben Yaacov-Destic-Hrushovski-Szachniewicz \cite{GVF24} and the author in \cite{Sedillot_pav}. 

Considering the development of Berkovich's non-Archimedean analytic geometry \cite{Berko90,ChambertLoir06,CLD12,GK17,GK19,LemanissierPoineau24}, it is natural to expect a "global analytic" approach to Arakelov geometry \cite{Paugam09,YuanZhang21,CaiGubler24,Song25}. In this article, we propose  a framework to do so, starting from the observation that the space $M_{K}$ of all pseudo-absolute values on a field $K$ equipped with the topology of pointwise convergence is compact Hausdorff and can be interpreted as a Berkovich analytic Zariski-Riemann space (\cite{GVF24}, Proposition 2.7 and \cite{Sedillot_pav}, Theorems A-C): namely $M_{K}$ can be described as a projective limit of Berkovich analytic spaces (over the prime subring). This observation allows us to equip $M_{K}$ with the structure of a locally ringed space and opens the door for studying coherent sheaves on it. Pseudo-absolute values are thus the natural candidate to encode the local aspects of the theoretical constructions that will follow later on. The work \cite{Sedillot_pav} was done with this goal in mind. 

\subsubsection*{Heuristic guideline}
The general philosophy of this article is the following. To an "algebraic object" $X$ (a locally ringed space of algebro-geometric nature), we associate an analytic one: namely
\begin{align*}
X^{\an}:=\{x=(p,|\cdot|_{x}) : p\in X \text{ and }|\cdot|_{x} \text{ is an absolute value on }\kappa(p)\},
\end{align*}
or rather, a subset of the latter. This space is equipped with a topology (in a similar way to the one of the Berkovich analytification of a variety over a completely valued field) and should enjoy sufficiently nice topological properties (e.g. Hausdorff, locally compact, paracompact...). We now attach an "adelic object" to our analytic object by essentially specifying a Borel measure. Arakelov geometry is performed on this object. However, this Arakelov adelic geometry should be governed by the geometry of the adelic space. We list the instances of this philosophy that we study in this article. 

\begin{center}
\begin{tabular}{|*{3}{M|}}
    \hline
    \tikzmark{la11} Algebraic object \tikzmark{ra11}   &   \tikzmark{la21} Analytic object \tikzmark{ra21} & \tikzmark{la31} Adelic object \tikzmark{ra31}\\\hline
    \tikzmark{la12} $\Spec(K)$, where $K$ is a field\tikzmark{ra12}   & \tikzmark{la22}$\{\text{Absolute values on } K\}$\tikzmark{ra22} & \tikzmark{la32} Adelic curve over $K$\tikzmark{ra32}\\\hline
    \tikzmark{la13} $\ZR(K)$, where $K$ is a field\tikzmark{ra13}   &   \tikzmark{la23}$M_{K}=\{\text{Pseudo-absolute values on }K\}$\tikzmark{ra23} & \tikzmark{la33}Topological adelic curve with adelic field $K$\tikzmark{ra33}\\\hline
    \tikzmark{la14} $\ZR(K/A)\cong\Spec(A)$, where $A$ is a Prüfer domain with fraction field $K$\tikzmark{ra14}   &   \tikzmark{la24}$\cM(A,\|\cdot\|)$, where $\|\cdot\|$ is a Banach norm on $A$\tikzmark{ra24} & \tikzmark{la34}Integral topological adelic curve with adelic field $K$and integral structure $(A,\|\cdot\|)$\tikzmark{ra34}\\\hline
  \end{tabular}
  \begin{tikzpicture}
    [
      remember picture,
      overlay,
      -latex,
      color=black,
      yshift=1ex,
      shorten >=1pt,
      shorten <=1pt,
    ]
    \draw ({pic cs:la12}) to [bend right] ({pic cs:la13});
    \draw ({pic cs:la14}) to [bend left] ({pic cs:la13});
  \end{tikzpicture}
  \end{center}
  The arrow going down is the generic point and the one going up is the inclusion of a quasi-compact subset. In particular, we think of adelic curves as the generic fibre of the topological adelic curves we introduce here.
 
\subsection*{Content of the article}

\subsubsection*{First part}
This article consists of three parts. In the first one, we start by some reminders and by introducing our Zariski-Riemann spaces (\S \ref{sub:ZR_analytic_spaces}). Then we introduce topological adelic curves. A \emph{topological adelic curve} is the data $S=(K,\phi : \Omega \to M_{K}, \nu)$, where $K$ is a field, $\Omega$ is a locally compact Hausdorff topological space, $\phi:(\omega\in\Omega)\mapsto |\cdot|_{\omega}\in M_K$ is a continuous map between $\Omega$ and the set $M_K$ of all pseudo-absolute values on $K$, and $\nu$ is a Borel measure on $\Omega$ such that, for any $f\in K^{\times}$, the function $(\omega\in\Omega)\mapsto \log|f|_{\omega} \in [-\infty,+\infty]$ is $\nu$-integrable (\S \ref{sec:topological_adelic_curves}). We say that the topological adelic curve $S$ is \emph{proper} if the following \emph{product formula}
\begin{align*}
\forall f\in K^{\times}, \quad \int_{\Omega}\log|f|_{\omega}\nu(\diff\omega) = 0,
\end{align*} 
holds. Moreover, to a topological adelic curve $S$ as above, we attach two Zariski-Riemann spaces $\ZR(K)_{S}$ and $\ZR(K)_{S}^{\an}$ (\S \ref{sub:ZR_spaces_adelic_curves}). 

Let us give an example of a topological adelic curve arising in Nevanlinna theory. We fix $R>0$. We define a topological adelic curve $S_{R}=(K_{R},\phi_{R}:\Omega_{R}\to M_{K_{R}},\nu_{R}),\phi_{R})$, where
\begin{itemize}
	\item $K_R$ is the field of (germs of) meromorphic functions over the closed disc 
	$$\overline{D(R)}:= \{z\in\bC : |z|_{\infty}\leq R\} \subset \bC;$$
	\item $\Omega_R = \{z\in\bC:|z|_{\infty}<R\}\coprod \{z\in\bC :|z|_{\infty}=R\}$, where $\{z\in\bC:|z|_{\infty}<R\}$ is equipped with the discrete topology and  $\{z\in\bC :|z|_{\infty}=R\}$ is equipped with the usual topology;
	\item the map $\phi_R : \Omega_R \to M_{K_R}$ is defined by
	\begin{align*}
	\forall z\in \Omega_R, \quad \phi_R(z) := \left\{\begin{matrix}
(f\in K_{R}) \mapsto |f(z)|_{\infty}\in [0,+\infty] &\text{ if } |z|_{\infty}=R,\\
(f\in K_{R}) \mapsto e^{-\ord(f,z)} \in \bR_{\geq 0}&\text{ if } |z|_{\infty}<R;\end{matrix}\right.
	\end{align*}
	\item the measure $\nu_R$ is defined by
	\begin{align*}
	\forall z\in \{z'\in\bC:|z'|_{\infty}<R\}, \quad  \nu_R(\{z\}) := \left\{\begin{matrix}
\log\frac{R}{|z|_{\infty}} \text{ if } &0<|z|_{\infty}<R,\\
\log R \text{ if } &z= 0,\end{matrix}\right.
\end{align*}		
	and $\nu_R$ is the Lebesgue probability measure on $\{z\in\bC :|z|_{\infty}=R\}$.
\end{itemize}

From the point of view of Nevanlinna theory, we are not only interested in the study of heights over one $S_{R}$ as above, but rather on the collection $(S_{R})_{R>0}$. More precisely, we study characteristic functions, which are functions $\bR_{>0}\to\bR$ and especially their asymptotic growth when $R\to+\infty$ (\S \ref{sec:Nevanlinna_classical}). This means that our height should take values coming from functions $\bR_{>0}\to\bR$ (instead of $\bR$). We are also interested in comparing these heights, especially in view of slope theory and Harder-Narasimhan filtrations, as we will see later. A natural framework to do so is non-standard analysis. We fix a free ultrafilter $\cU$ on $\bR_{>0}$. Our height functions will be elements of $\prod_{\cU}\bR$, the ultraproduct of the reals w.r.t. the ultrafilter $\cU$ (cf. \S \ref{sub:ultraproducts}). Then $\prod_{\cU}\bR$ is an ordered field. Note that if $\cU$ avoids finite Lebesgue measure subsets of $\bR_{>0}$, then inequalities of the form $f\leq g$ in $\prod_{\cU}$ imply that $f(R)\leq g(R)$ for all $R>0$ except on a subset of $\bR_{>0}$ that does not belong to $\cU$. Even though the latter is weaker than the assertion "$f(R)\leq g(R)$ for all $R>0$ except on a subset of finite Lebesgue measure" appearing in Nevanlinna theory (cf. Theorem \ref{th:Nevanlinna_second_main_theorem}), it gives a reasonable starting point. 

The major obstacle to defining heights lies in the fact that the product formula is not satisfied on the $S_{R}$'s. Indeed, Jensen's formula yields
\begin{align*}
\forall f\in K_{R}^{\times}, \quad \int_{\Omega_{R}} \log|f|_{\omega}\nu_{R}(\diff\omega) = \log|c(f,0)|,
\end{align*}
where $c(f,0)$ denotes the first non-zero coefficient in the Laurent series expansion of $f$ in $0$. In the context of Diophantine geometry, the product formula is the ingredient that allows us to define height functions relative to a metrised line bundle. The analogue of this construction in Nevanlinna theory is the so-called First Main Theorem (Theorem \ref{th:Nevanlinna_first_main_theorem}). The latter results from the fact that the defect in the product formula for the collection of topological adelic curves $(S_{R})_{R>0}$ is a bounded function of $R$ (in fact, constant in the present case). Since the height functions that we consider are typically divergent as $R\to+\infty$, we can quotient $\prod_{\cU}\bR$ by an equivalence relation $\sim$ to force the product formula. Another advantage is that the total order on $\prod_{\cU}\bR$ restricts to $\prod_{\cU}\bR/\sim$.   

Using the two above paragraphs, we introduce the notion of \emph{families of topological adelic curves} (\S \ref{sub:family_tac}). This includes the above discussion and also gives a natural framework to formulate Diophantine approximation over topological adelic curves.  

We conclude the first part by studying algebraic coverings of topological adelic curves (\S \ref{sec:algebraic_covering_tac}), namely the extension of topological adelic structures w.r.t. algebraic extensions of the base field. More precisely, we have the following result (which is a variant of Proposition 1.9 in \cite{GVF24}).  

\begin{theo}[Propositions \ref{prop:algebraic_extension_tac}, \ref{prop:action_of_Galois_group_algebraic_extension} and Remark \ref{rem:uniqueness}]
Let $S=(K,\phi : \Omega \to V, \nu)$ be a topological adelic curve. Let $K'/K$ be an algebraic field extension. 
\begin{itemize}
	\item[(1)] There exist a topological adelic curve $S' := S \otimes_K K' := (K',\phi':\Omega' \to M_{K'},\nu')$ and a morphism of topological adelic curves $S' \to S$. Moreover, $S'$ is proper if so is $S$. 
	\item[(2)] Assume that  $K'/K$ is Galois and that, for any $\omega\in\Omega$, the residue field of the pseudo-absolute value $|\cdot|_{\omega}$ is perfect. Then we have a homeomorphism
	\begin{align*}
	\Omega'/\Aut(L/K) \cong \Omega.
	\end{align*}
	Moreover, if $\nu$ is Radon, then $S'$ is the only topological adelic curve with base field $K'$ extending $S'$ with Galois-invariant measure.  
\end{itemize}
\end{theo}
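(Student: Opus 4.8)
The strategy is to adapt the algebraic covering construction for adelic curves (\cite{ChenMori}, Chapter 3) to the topological setting, exploiting the topological properties of spaces of pseudo-absolute values obtained in \cite{Sedillot24_pav}. The restriction map $\rho\colon M_{K'}\to M_K$, $x\mapsto x|_K$, is continuous, so I would take $\Omega' := \Omega\times_{M_K}M_{K'} = \{(\omega,x)\in\Omega\times M_{K'}: x|_K = |\cdot|_\omega\}$, with $\phi' := \pr_2$ and $p := \pr_1\colon\Omega'\to\Omega$. Then $\Omega'$ is the equalizer of two continuous maps to the Hausdorff space $M_K$, hence closed in $\Omega\times M_{K'}$ and therefore locally compact Hausdorff; $\phi'$ is continuous; and, since $M_{K'}$ is compact, $p$ is a closed map with compact fibres. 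The fibre $p^{-1}(\omega)$ is precisely the set of pseudo-absolute values on $K'$ lying over $|\cdot|_\omega$, and it is non-empty: one extends the valuation ring $A_{|\cdot|_\omega}$ to $K'$ (Chevalley) and then extends the induced absolute value along the residue extension.

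Next I would produce the measure $\nu'$ by fibered integration. When $K'/K$ is finite I equip $p^{-1}(\omega)$ with the probability measure $\eta_\omega := [K':K]^{-1}\sum_{\omega'\mid\omega} n_{\omega'/\omega}\,\delta_{\omega'}$, where $n_{\omega'/\omega}$ is the local degree of the extension $|\cdot|_{\omega'}$ of $|\cdot|_\omega$ (the product of the ramification index and the residue degree of the induced absolute values), using $\sum_{\omega'\mid\omega} n_{\omega'/\omega} = [K':K]$ in the separable case and reducing the inseparable case to the perfect closure. For a general algebraic extension one writes $M_{K'} = \varprojlim M_{K''}$ over finite subextensions and lets $\eta_\omega$ be the corresponding inverse limit of fibre measures (which exists, all the spaces being compact Hausdorff). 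Then I set $\nu'(E) := \int_\Omega \eta_\omega\big(E\cap p^{-1}(\omega)\big)\,\nu(\diff\omega)$ for Borel $E\subseteq\Omega'$, the required $\nu$-measurability of the integrand following from weak-$*$ measurability of $\omega\mapsto\eta_\omega$. Integrability of $\omega'\mapsto\log|f|_{\omega'}$ for $f\in K'^\times$ is reduced to integrability of $\log|N_{K'/K}(f)|$ over $S$ via the norm formula $\sum_{\omega'\mid\omega} n_{\omega'/\omega}\log|f|_{\omega'} = \log|N_{K'/K}(f)|_\omega$ applied over a finite subextension containing $f$, bounding $\log^{\pm}|f|_{\omega'}$ by integrable functions built from the coefficients — which lie in $K$ — of the characteristic polynomial of $f$. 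The same formula shows that $(p,\nu')$ defines a morphism $S'\to S$ (note $p_*\nu' = \nu$, as $\eta_\omega$ has total mass $1$), and that properness of $S$ forces $\int_{\Omega'}\log|f|_{\omega'}\,\nu'(\diff\omega') = [K'':K]^{-1}\int_\Omega\log|N_{K''/K}(f)|_\omega\,\nu(\diff\omega) = 0$, first for $f$ in a finite subextension $K''$ and hence for all $f\in K'^\times$.

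For part (2), set $G := \Aut(K'/K)$; it acts continuously on $M_{K'}$ over $M_K$ by $\sigma\cdot x := x\circ\sigma^{-1}$, hence on $\Omega'$ (trivially on the $\Omega$-factor), and $p$ is $G$-invariant and so induces a continuous map $\overline p\colon \Omega'/G\to\Omega$. I claim it is a homeomorphism. It is surjective since the fibres of $p$ are non-empty, and injective because $G$ acts transitively on each fibre $p^{-1}(\omega)$: the extensions of $A_{|\cdot|_\omega}$ to $K'$ form a single $G$-orbit (conjugacy of prolongations in a normal extension), the decomposition group of a chosen extension surjects onto the automorphism group of the corresponding residue extension $\kappa'/\kappa_\omega$, and — this is where perfectness of $\kappa_\omega$ is used — $\kappa'/\kappa_\omega$ is normal and, $\kappa_\omega$ being perfect, separable, hence Galois, so its Galois group acts transitively on the absolute values on $\kappa'$ extending the one induced on $\kappa_\omega$. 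Since $p$ is closed and the quotient map $\pi\colon\Omega'\to\Omega'/G$ is continuous and surjective, $\overline p(C) = p(\pi^{-1}(C))$ is closed for every closed $C$; thus $\overline p$ is a continuous closed bijection, hence a homeomorphism, and in particular $\Omega'/G$ is locally compact Hausdorff.

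Finally, for the uniqueness of Remark \ref{rem:uniqueness}: the underlying covering $(\Omega',\phi',p)$ depends only on $K'/K$ and $\phi$, so for another extension of $S$ with $G$-invariant Radon measure $\mu$ it remains to show $\mu = \nu'$. The morphism to $S$ gives $p_*\mu = \nu$; disintegrating $\mu = \int_\Omega\mu_\omega\,\nu(\diff\omega)$ along $p$ (possible and essentially unique, $\mu$ being Radon and $p$ proper), $G$-invariance of $\mu$ forces $\mu_\omega$ to be $G$-invariant for $\nu$-almost every $\omega$; as $G$ acts transitively on the compact fibre $p^{-1}(\omega)$, the only $G$-invariant probability measure on it is $\eta_\omega$, so $\mu_\omega = \eta_\omega$ almost everywhere and $\mu = \nu'$. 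I expect the main difficulties to be, in part (1), fixing the correct normalization of the local degrees $n_{\omega'/\omega}$ for pseudo-absolute values together with the measurability of $\omega\mapsto\eta_\omega$ and the integrability estimate (with the inseparable case handled through the perfect closure), and, in part (2), the transitivity of the $G$-action at the residue-field level, which is precisely the point at which the perfectness hypothesis enters.
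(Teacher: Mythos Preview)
Your overall strategy coincides with the paper's: define $\Omega'$ as the fibre product $\Omega\times_{M_K}M_{K'}$, build $\nu'$ by integrating a family of probability measures $(\eta_\omega)_\omega$ on the fibres, reduce an arbitrary algebraic extension to the finite case via an inverse limit over finite subextensions, and handle part (2) by transitivity of the Galois action on fibres. The argument for part (2), including the use of perfectness to ensure the residue extension is Galois and the disintegration argument for uniqueness, is essentially the paper's.

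The gap is in your description of the fibre measure $\eta_\omega$. You set $n_{\omega'/\omega}$ equal to the product of the ramification index and the residue degree of the induced (residue) absolute values and invoke $\sum_{\omega'\mid\omega} n_{\omega'/\omega}=[K':K]$. For genuine pseudo-absolute values this identity fails: a pseudo-absolute value $|\cdot|_\omega$ carries a (possibly non-trivial) valuation ring $A_\omega\subset K$, and the extension of $|\cdot|_\omega$ to $K'$ factors into (a) an extension of $A_\omega$ to a valuation ring of $K'$ and (b) an extension of the residue absolute value on $\kappa_\omega$ to the new residue field. Step (a) has no $ef$-type degree formula in general (valuations may have defect; the number of extensions does not divide $[K':K]$), so an $ef$ weight neither sums to $[K':K]$ nor yields the norm identity you need for properness. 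The paper's weight is instead
\[
\bP_{L,\omega}(\{x\})=\frac{1}{|I_\omega|}\cdot\frac{[\widehat{\kappa_x}:\widehat{\kappa_\omega}]}{[\kappa_x:\kappa_\omega]},
\]
where $|I_\omega|$ is the number of valuation-ring extensions of $A_\omega$ to $K'$: uniform on the valuation-ring extensions, then weighted by local degrees of completed residue fields on the absolute-value extensions. That this is a probability measure is exactly formula~\eqref{eq:formula_extension_sav} (Proposition~\ref{prop:formula_extension_sav}), and the norm relation $d_{S_L}(f)=[L:K]^{-1}d_{S_K}(N_{L/K}(f))$ then follows. Your argument becomes correct once you replace your $n_{\omega'/\omega}$ with these weights; everything downstream (measurability, integrability, the inverse-limit step) goes through as you sketched.

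A minor terminological point: for the inseparable case you say ``reduce to the perfect closure'', but what is used (Proposition~\ref{cor:purely_inseparable_extension_sav}) is that $M_L\to M_{K'}$ is a homeomorphism when $K'$ is the \emph{separable closure of $K$ in $L$}; the perfect closure of $K$ is a different object.
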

We also have an analogue of this result for families of topological adelic curves (\S \ref{sub:algebraic_covering_family_tac}). In Nevanlinna theory, it means that our framework allows us to include Nevanlinna theory of meromorphic functions of finite coverings of $\bC$ (and even families of such).

\subsubsection*{Second part}

In the second part, we build the foundation of the geometry of numbers over topological adelic curves. As initiated by Bost \cite{Bost96,Bost01}, this is done by studying slopes of (adelic) vector bundles. Let $S=(K,\phi : \Omega \to M_{K}, \nu)$ be a topological adelic curve. An \emph{adelic vector bundle} on $S$ is then defined as a pair $\overline{E}=(E,\xi)$, where $E$ is a $K$-vector space of finite rank and $\xi=(\|\cdot\|_{\omega\in \Omega})$ is a family of pseudo-norm (the suitable generalisation of a norm over a pseudo-valued field) on $E$ satisfying suitable regularity and integrability conditions (cf. \S \ref{sec:pseudo-norm_families}). Adelic vector bundles can also be interpreted as metrised vector bundles on the Zariski-Riemann space $\ZR(K)_{S}$ (cf. \S \ref{sub:ZR_interpretation_pseudo-norm_families}).

In the case where the base topological adelic curve is proper, the \emph{Arakelov degree} of an adelic vector bundle $\overline{E}=(E,\xi)$ is defined as
\begin{align*}
\widehat{\deg}(\overline{E}):= - \int_{\Omega} \log\|\eta\|_{\omega,\det}\nu(\diff\omega),
\end{align*}
where $\eta\in\det(E)\setminus\{0\}$, is independent of the choice of $\eta$. Denote also by $\mu(\overline{E}):=\widehat{\deg}(\overline{E}=)/\dim_{K}(E)$ the \emph{slope} of $\overline{E}$. Define as well the \emph{maximal slope} $\mu_{\max}(\overline{E}):=\sup_{0 \neq F\subseteq E}\mu(\overline{F})$ and the \emph{minimal slope} $\widehat{\mu}_{\min}(\overline{E}) := \inf_{E \twoheadrightarrow G \neq \{0\} } \widehat{\mu}(\overline{G})$.	

In \S \ref{sec:slopes_proper_case}, we adapt the Harder-Narasimhan formalism for adelic vector bundles over a proper topological adelic curve. Our result is the following. 
\begin{theo}[Theorem \ref{th:HN_general_proper}]
\label{th:HN_intro}
Let $\overline{E}=(E,\xi)$ be an adelic vector bundle on $S$. We assume that the pseudo-norm family $\xi$ is ultrametric on $\Omega_{\um}$. Then there exists a unique flag 
\begin{align*}
0=E_0 \subsetneq E_1 \subsetneq \cdots \subsetneq E_n = E,
\end{align*}
of $\overline{E}$, such that
\begin{itemize}
	\item[(1)] for any $i=1,...,n$, $\overline{E_i/E_{i-1}}$ is semistable, i.e. for any non-zero vector subspace $F\subset E$, we have $\mu_{\min}(\overline{F})\leq \mu_{\min}(\overline{E})$;
	\item[(2)] we have the inequalities 
	\begin{align*}
	\widehat{\mu}(\overline{E_1/E_0}) > \cdots > \widehat{\mu}(\overline{E_n/E_{n-1}}).
	\end{align*}
\end{itemize}
\end{theo}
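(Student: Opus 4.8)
The plan is to deduce the statement from the \emph{abstract Harder--Narasimhan formalism} by checking its hypotheses for the class of adelic vector bundles over the proper topological adelic curve $S$. One equips each adelic vector bundle $\overline{E}=(E,\xi)$ with the rank $\rk(E)=\dim_K E$ and the Arakelov degree $\widehat{\deg}(\overline{E})=-\int_\Omega\log\|\eta\|_{\omega,\det}\,\nu(\diff\omega)$, which is well defined (independent of $0\ne\eta\in\det E$) precisely because $S$ is proper; every vector subspace $F\subseteq E$ inherits an adelic vector bundle $\overline{F}$ by restriction of $\xi$, and every quotient $E\twoheadrightarrow G$ one by the quotient pseudo-norm family. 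The properties to verify are: (i) $\rk$ is additive on short exact sequences and vanishes only on $0$; (ii) $\widehat{\deg}$ is, up to a defect bounded in terms of $\rk(E)$ on $\Omega\setminus\Omega_{\um}$ and vanishing on $\Omega_{\um}$, additive on short exact sequences with the induced/quotient structures — hence monotone (non-increasing when the pseudo-norm family is enlarged) and superadditive on the lattice of subspaces; (iii) $\mu_{\max}(\overline{E})=\sup_{0\ne F\subseteq E}\mu(\overline{F})$ is finite and the supremum is attained. Granting (i)--(iii), the flag is produced by the classical recursion: let $E_1\subseteq E$ be the largest subspace with $\mu(\overline{E_1})=\mu_{\max}(\overline{E})$ (its existence, uniqueness, semistability, and the fact that it absorbs every subspace of maximal slope all follow from (iii) together with superadditivity), then repeat with $\overline{E/E_1}$; maximality of the destabilising step at each stage yields the strict inequalities $\widehat\mu(\overline{E_i/E_{i-1}})>\widehat\mu(\overline{E_{i+1}/E_i})$, and uniqueness of the flag follows from the standard comparison of two such flags, which is purely formal once (i)--(ii) hold.

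Properties (i) and the monotonicity part of (ii) are immediate from the definitions. The additivity of $\widehat{\deg}$ rests on the canonical isomorphism $\det E\cong\det F\otimes_K\det(E/F)$ and on the compatibility of the determinant pseudo-norm with sub- and quotient pseudo-norm families: at the places of $\Omega_{\um}$ this is an exact identity, by the standing hypothesis that $\xi$ is ultrametric there, while at the remaining archimedean-type places the discrepancy is bounded purely in terms of $\rk(E)$, hence $\nu$-integrable by the regularity/domination conditions imposed in the definition of an adelic vector bundle. Superadditivity on the lattice of subspaces then follows by applying this to the short exact sequence $0\to F\cap F'\to F\oplus F'\to F+F'\to 0$ and observing that the sub- and quotient pseudo-norm families that $F\cap F'$ and $F+F'$ inherit from $\overline{F}\oplus\overline{F'}$ dominate those they inherit from $\overline{E}$.

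The heart of the matter is (iii). For finiteness of $\mu_{\max}$, first treat a line $\ell=Kv\subseteq E$: by the product formula $\widehat{\deg}(\overline\ell)=-\int_\Omega\log\|v\|_\omega\,\nu(\diff\omega)$ is independent of $v$, and fixing a basis $(e_1,\dots,e_n)$ of $E$ and writing $v=\sum_i a_ie_i$, the domination condition gives $\|v\|_\omega\ge c_\omega\max_i|a_i|_\omega$ with $\log c_\omega$ $\nu$-integrable, so choosing $i_0$ with $a_{i_0}\ne 0$ and using $\int_\Omega\log|a_{i_0}|_\omega\,\nu(\diff\omega)=0$ one gets $\widehat{\deg}(\overline\ell)\le-\int_\Omega\log c_\omega\,\nu(\diff\omega)=:C_{\overline{E}}<+\infty$. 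Applying this to $\Lambda^r\overline{E}$, and noting that for $F\subseteq E$ of rank $r$ the line $\det F\subseteq\Lambda^r E$ carries exactly the restriction of the determinant pseudo-norm family, we obtain $\widehat{\deg}(\overline{F})=\widehat{\deg}(\overline{\det F})\le C_{\Lambda^r\overline{E}}$; since $r$ runs over the finite set $\{1,\dots,n\}$, this bounds $\mu_{\max}(\overline{E})$ from above. That the supremum is attained — by a subspace $E_1$ which, again by superadditivity, is unique and contains every subspace of maximal slope — is the one genuinely non-formal point; I would establish it by transposing the maximality argument of \cite[Chapter~IV]{ChenMori}, in which the finiteness of $\mu_{\max}$ and the lattice-superadditivity of $\widehat{\deg}$ are exploited through an induction on the rank.

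I expect the main obstacle to be the rigorous treatment of the additivity and superadditivity of $\widehat{\deg}$ in the present framework: one must verify that the defect in $\det E\cong\det F\otimes_K\det(E/F)$ between the determinant pseudo-norm of $\overline{E}$ and the tensor product of the determinant pseudo-norms of $\overline{F}$ and $\overline{E/F}$ is, at each $\omega$, dominated by an explicit constant depending only on $\rk(E)$, that it vanishes identically on $\Omega_{\um}$ under the ultrametric hypothesis, and — since $\nu$ need not be finite — that it is nonetheless $\nu$-integrable. This is precisely where the topological/pseudo-norm setting diverges from the measure-theoretic one of \cite{ChenMori}, and where the hypotheses of the theorem are genuinely used; once it is in place, together with the finiteness of $\mu_{\max}$ proved above, the remaining steps (existence of the maximal destabilising subspace, the recursion, the strict slope inequalities, and uniqueness) are the formal Harder--Narasimhan machinery and transfer essentially verbatim.
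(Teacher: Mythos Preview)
Your approach and the paper's are genuinely different. You pursue the classical destabilising-subspace recursion (finiteness of $\widehat\mu_{\max}$, supermodularity of $\widehat\deg$, extract $E_{\mathrm{des}}$, iterate). The paper instead applies the abstract Chen--Jeannin formalism (Theorem~\ref{th:Chen-Jeannin_1.1}): the only nontrivial check is \emph{convexity} of the pay-off $\mu(F',F)=\widehat\mu(\overline{F/F'})$, namely $\widehat\mu(\overline{F'/(F\cap F')})\le\widehat\mu(\overline{(F+F')/F})$ for $F'\not\subseteq F$, which follows in two lines from the Noether isomorphism and the pointwise inequality $\|f(\alpha)\|_{(F+F')/F,\omega}\le\|\alpha\|_{F'/(F\cap F'),\omega}$. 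No finiteness of $\widehat\mu_{\max}$, no supermodularity; and the semistability produced is the $\widehat\mu_{\min}$-based notion appearing in the statement, not the $\widehat\mu_{\max}$-based one your recursion would naturally yield.

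There is a genuine gap in your route. In the non-Hermitian case Proposition~\ref{prop:CM_4.3.2}(4) gives only the one-sided inequality $\widehat\deg(\overline E)\ge\widehat\deg(\overline F)+\widehat\deg(\overline{E/F})$, with a strictly positive archimedean defect in general. When you feed the sequence $0\to F\cap F'\to\overline F\oplus\overline{F'}\to F+F'\to 0$ through this and compare with the $E$-induced structures, both comparisons point the same way: you get $\widehat\deg(\overline{F\cap F'})+\widehat\deg(\overline{F+F'})\ge B$ and $\widehat\deg(\overline F)+\widehat\deg(\overline{F'})\ge B$ for the same intermediate quantity $B$, hence no relation between the two outer sums. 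At best you obtain supermodularity up to a bounded error supported on $\Omega_{\ar}$, which is not enough to show that subspaces realising $\widehat\mu_{\max}$ are closed under sum, so there is no canonical $E_{\mathrm{des}}$ and the recursion cannot start. Your appeal to \cite[Chapter~IV]{ChenMori} does not rescue this: there the non-Hermitian case is handled by Hermitian approximation (\cite[Theorem~4.1.26]{ChenMori}), and Remark~\ref{rem:finiteness_of_slopes_generalaa_proper} records precisely that this approximation is unavailable in the present topological setting and that finiteness of $\widehat\mu_{\max}$ for non-Hermitian bundles on $S$ remains open. The Chen--Jeannin route is chosen exactly to sidestep both obstacles.
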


We also obtain the existence of such Harder-Narasimhan filtrations in our example of families of topological adelic curves $\mathbf{S}=(S_{R})_{R>0}$ arising in Nevanlinna theory. In this case, inequalities between slopes are understood in $\bR_{\mathbf{S}}$, the Dedekind-MacNeille completion of the ultraproduct $\prod_{\cU}\bR/\sim$ (cf. Theorem \ref{th:HN_Nevanlinna}). This opens the door for studying adelic vector bundles in Nevanlinna theory, with a possible interpretation of transcendental methods of Diophantine geometry in an Arakelov geometric framework.

\subsubsection*{Third part}

In the third and last part, we introduce height functions attached to adelic line bundles on a projective scheme. Classically in Arakelov geometry, an adelic line bundle is the data of a line bundle together with a family of metrics at each place (where the metrics are understood in the sense of Berkovich over non-Archimedean places). Since we are working over pseudo-absolute values, we need some adjustments.

We fix a topological adelic curve $S=(K,\phi : \Omega \to V, \nu)$ and a projective $K$-scheme $\pi : X \to \Spec(K)$. There are two Zariski-Riemann spaces $\ZR(X)_{S},\ZR(X)^{\an}_{S}$ attached to $X$. A \emph{pseudo-metric} on a line bundle $L$ over $X$ is a family $\varphi=(|\cdot|_{\varphi}(\mathbf{x})_{\mathbf{x}\in\ZR(X)_{S}^{\an}}$ of pseudo-norms on the fibres of $L$ satisfying a glueing condition (\S \ref{sub:pseudo-metric_families_definitions}). This glueing condition is equivalent to saying that a line bundle equipped with a pseudo-metric is the same thing as a metrised line bundle on $\ZR(X)_{S}$ (\S \ref{sub:ZR_interpretation_pseudo-metric}). \emph{Adelic line bundle} are then such objects satisfying certain regularity and integrability conditions (\S \ref{sub:dominated_pseudo-metric_family}-\ref{sub:adelic_line_bundles}). 

This notation admits a natural generalisation for families of topological adelic curves (\S \ref{sub:adelic_line_bundles_families_of_tac}). The most natural example from the point of view of Nevanlinna theory is the adelic line bundle determined by a metrised line bundle on a projective complex variety. In this case, the height functions we obtain coincide with the classical characteristic functions of Nevanlinna theory.

We also discuss the pushforward of adelic line bundles (\S \ref{sub:pushforward_pseudo-metric_families}). Assume that $X$ is geometrically integral and that $\Omega=M_{K}$. Let $\overline{L}=(L,\varphi)$ be an adelic line bundle on $X$. Define $\pi_{\ast}\varphi=(\|\cdot\|_{\omega})_{\omega\in\Omega}$, where 
\begin{align*}
\forall s\in H^{0}(X,L), \quad \|s\|_{\omega}:=\displaystyle\sup_{\mathbf{x}\in (f_{S}^{\an})^{-1}(\omega)} |s|_{\varphi}(\mathbf{x}),
\end{align*}
where $f_{S}^{\an} : \ZR(X)_{S}^{\an}\to \Omega$ is the structural map. 

\begin{theo}[Theorem \ref{th:pushforward_pseudo-metric_family_dominated_general} and Proposition \ref{prop:existence_pushforward_pseudo-norm_family}]
\label{th_intro:pushforward_adelic_line_bundle}
$\pi_{\ast}\overline{L}:=(\pi_{\ast}L,\pi_{\ast}\varphi)$ is a upper-semicontinuous adelic line bundle on $S$.
\end{theo}

Theorem \ref{th_intro:pushforward_adelic_line_bundle} allows us to define arithmetic ($\chi$-)volume functions on proper topological adelic curves (\S \ref{sub:volume_functions_proper_tac}). 

In the final two sections, we introduce height functions attached to adelic line bundles on $X$. Our results are the following.

\begin{theo}[Theorem \ref{th:height_closed_points_proper} and Theorem-Definition \ref{th-def:arithmetic_intersection_product_tac}]
Assume that the topological adelic curve $S$ is proper.
\begin{itemize}
	\item[(1)] Let $\overline{L}$ be an adelic line bundle on $X$. We have a \emph{height function}
	\begin{align*}
	\fonction{h_{L}}{X(\overline{K})}{\bR}{P}{h_{\overline{L}}(P).}
	\end{align*}
	\item[(2)] Let $\overline{L^{(1)}}=(L^{(1)},\varphi^{(1)}),\overline{L^{(2)}}=(L^{(2)},\varphi^{(2)})$ be adelic line bundles on $X$.
	\begin{itemize}
		\item[(i)] For any closed point $P$ of $X$, we have
	\begin{align*}
	h_{\overline{L^{(1)}}+\overline{L^{(2)}}}(P) = h_{\overline{L^{(1)}}}(P) + h_{\overline{L^{(2)}}}(P).
	\end{align*}
		\item[(ii)] Let $P$ be a closed point of $X$. Assume that $L^{(1)}=L^{(2)}$. Then we have 
	\begin{align*}
	h_{\overline{L^{(1)}}}(P)= h_{\overline{L^{(1)}}}(P) + O(1),
	\end{align*}
	where the bound does not depend on $P$ (but depends on $\varphi^{(1)},\varphi^{(2)}$). 
	\end{itemize}
	\item[(3)] For any $l$-cycle $Z$ on $X$ and any integrable adelic line bundles $\overline{L^{(0)}}=(L^{(0)},\varphi^{(0)}),...,\overline{L^{(l)}}=(L^{(l)},\varphi^{(l)})$ on $X$, we have the \emph{multi-height} of $Z$ w.r.t. $\overline{L^{(0)}},...,\overline{L^{(l)}}$
	\begin{align*}
	h_{\overline{L^{(0)}},...,\overline{L^{(l)}}}(Z)\in\bR.
	\end{align*}
	Moreover, this multi-height is symmetric and multi-linear w.r.t. the $\overline{L^{(i)}}$'s and satisfies a projection formula. If furthermore the $L^{(i)}$'s are semi-ample and the $\varphi^{(i)}$'s are semi-positive, the multi-height $h_{\overline{L^{(0)}},...,\overline{L^{(l)}}}(Z)$ depends on the $\varphi^{(i)}$'s up to a bounded quantity independent of $Z$.
\end{itemize}
\end{theo}

We also have an analogue result for asymptotically proper families of topological adelic curves. Let us describe it in the Nevanlinna theory example. We consider the family $\mathbf{S}=(S_{R})_{R>0}$ of topological adelic curves as above. Let $X_{0}$ be a projective $\bC$-scheme. Recall that any continuously metrised line bundle on $X_{0}$ determines an adelic line bundle $\overline{L}=(L,\varphi)$ on $X:=X_{0}\otimes_{\bC}\cM(\bC)$ over $\mathbf{S}$. Note that closed points of $X$ are in one-to-one correspondence with holomorphic curves $f: A \to X_{0}$, where $A$ runs over finite holomorphic coverings of $\bC$. We have the following generalisation of Nevanlinna's first main theorem of (\cite{Gubler97}, Theorem 3.18). 

\begin{theo}[Example \ref{example:arithmetic_intersection_product_Nevanlinna} and cf. Theorem \ref{th:height_of_closed_points_families} for a more general statement]
Let $X$ be a projective $\cM(\bC)$-scheme.
\begin{itemize}
	\item[(1)] Let $\overline{L}$ be an adelic line bundle on $X$ over $\mathbf{S}$. We have a \emph{height function}
	\begin{align*}
	\fonction{h_{L}}{X(\overline{\cM(\bC)})}{\prod_{\cU}\bR/\sim}{P}{h_{\overline{L}}(P).}
	\end{align*}
	Moreover, if $\overline{L}$ is determined by a continuously metrised line on $X_{0}$, the height of a closed point $P\in X$ w.r.t. $\overline{L}$ coincides with the (class in $\prod_{\cU}\bR/\sim$) of the characteristic function attached to the holomorphic curve corresponding to $P$.
	\item[(2)] Let $\overline{L^{(1)}}=(L^{(1)},\varphi^{(1)}),\overline{L^{(2)}}=(L^{(2)},\varphi^{(2)})$ be adelic line bundles on $X$.
	\begin{itemize}
		\item[(i)] For any closed point $P$ of $X$, we have
	\begin{align*}
	h_{\overline{L^{(1)}}+\overline{L^{(2)}}}(P) = h_{\overline{L^{(1)}}}(P) + h_{\overline{L^{(2)}}}(P).
	\end{align*}
		\item[(ii)] Let $P$ be a closed point of $X$. Assume that $X\cong X_{0}\otimes_{\bC}\cM(\bC)$, where $X_{0}$ is a projective $\bC$-scheme and that $\overline{L^{(1)}},\overline{L^{(2)}}$ are both determined by continuously metrised line bundles $(L^{(1)}_{0},\varphi_{0}^{(2)}),(L_{0}^{(2)},\varphi_{0}^{(2)})$ on $X_{0}$, where $L_{0}^{(1)}=L_{0}^{(2)}$. Then we have 
	\begin{align*}
	h_{\overline{L_1}}(P)= h_{\overline{L_2}}(P).
	\end{align*}
\end{itemize}
	\item[(3)] For any $l$-cycle $Z$ on $X$ and any integrable adelic line bundles $\overline{L^{(0)}}=(L^{(0)},\varphi^{(0)}),...,\overline{L^{(l)}}=(L^{(l)},\varphi^{(l)})$ on $X$, we have the \emph{multi-height} of $Z$ w.r.t. $\overline{L^{(0)}},...,\overline{L^{(l)}}$
	\begin{align*}
	h_{\overline{L^{(0)}},...,\overline{L^{(l)}}}(Z)\in\displaystyle\prod_{\cU}\bR/\sim.
	\end{align*}
	Moreover, this multi-height is symmetric and multi-linear w.r.t. the $\overline{L^{(i)}}$'s and satisfies a projection formula. If furthermore there exist a projective $\bC$-scheme $X_{0}$ such that $X\cong X_{0}\otimes_{\bC}\cM(\bC)$ and integrable continuously metrised line bundles $\overline{L_{0}^{(0)}},...,\overline{L_{0}^{(l)}}$ on $X_{0}$ determining respectively $\overline{L^{(0)}},...,\overline{L^{(l)}}$, then the multi-height $h_{\overline{L^{(0)}},...,\overline{L^{(l)}}}(Z)$ is independent of the $\varphi^{(i)}$'s.
\end{itemize}
\end{theo}

\subsection*{Upcoming work}

Let us mention open questions of interest for future developments.

\begin{itemize}
	\item In this text, the notion of family of topological adelic curves is developed with the example of Nevanlinna theory in mind. However, the flexibility of the framework allows us to include what should correspond to Diophantine approximation over proper topological adelic curves (cf. Example \ref{example:families_of_tac} (1)). A natural question is thus to try to formulate classical results in Diophantine approximation in our context, where the use of topological arguments should be helpful (cf. \cite{Vojta21,DolceZucconi25} where the authors implicitly use a hypothesis of topological nature on the adelic curve involved).
	\item As mentioned above, slope theory in the context of Nevanlinna theory should be useful to reformulate transcendental arguments in Diophantine geometry in a purely Arakelov geometric framework (cf. e.g. \cite{Gasbarri25}). 
	\item In this article, we focus our attention on vector bundles on Zariski-Riemann spaces. In view of Propositions \ref{prop:fp_sheaves_ZR_algebraic} and \ref{prop:coherent_sheaves_locally_closed_integral}, it is natural to hope for a satisfactory study of metrised coherent sheaves on Zariski-Riemann spaces. For instance, we expect coherence to be preserved via direct image. This property, combined with the fact that coherent sheaves on Zariski-Riemann spaces have Tor-dimension $\leq 1$, and thus that Zariski-Riemann spaces behave like a smooth projective curve, should be of crucial help for tackling the next questions below.
	\item Harder-Narasimhan filtrations are a fundamental tool in algebraic geometry but also in a wider range of context (e.g. in $p$-adic Hodge theory over the Fargues-Fontaine curve). Our Harder-Narasimhan filtrations relate adelic vector bundles to semi-stable ones and the interpretation of adelic vector bundles, or more generally "adelic" coherent sheaves, in terms of the corresponding metrised algebraic object on Zariski-Riemann spaces should allow us to study moduli spaces of adelic vector bundles.
	\item The definition of arithmetic volumes invites questions concerning the regularity of these volumes. In light of \cite{Sedillot23_differentiability}, we expect Siu inequality and differentiability properties to hold for these volumes. Note that this differentiability is a key ingredient for proving existential closedness for globally valued fields (cf. \cite{Michal23}) and would achieve a major step towards the existence of a model companion for the language of GVF (\cite{GVF24}, Conjecture 12.7). 
	\item Our formulation of heights over topological adelic curves and families of the latter allows us to define essential minima and other Arakelov geometric tools that encode equidistribution phenomena (cf. e.g. \cite{BallaySombra24}). In Nevanlinna theory, this should lead to equidistribution results for Nevanlinna and Ahlfors currents attached to holomorphic curves. Moreover, assuming any strong enough Siu-type inequality for arithmetic volumes, logarithmic equidistribution results would follow using the methods from \emph{loc. cit.}.
	\item Finally, our formulation of Arakelov geometry as coming from analysis on a Berkovich-type space of global nature naturally invites to formulate global pluripotential theory on such spaces (cf. e.g. \cite{Morrow25}). 
\end{itemize}

\subsection*{Acknowledgements}

We would like to thank Huayi Chen for his support and discussions during the elaboration of this paper. We also thank Sébastien Boucksom, José Ignacio Burgos Gil, Keita Goto, Walter Gubler, Klaus Künnemann and Jérôme Poineau for numerous remarks and suggestions. Finally, we thank Debam Biswas and Micha\l{} Szachniewicz for a lot of fruitful discussions. 

\section*{Conventions and notation}

\begin{itemize}
	\item All rings considered in this article are commutative with unit.
	\item Let $A$ be a ring. We denote by $\Spm(A)$ the set of maximal ideals of $A$.
	\item By a local ring $(A,\m)$, we mean that $A$ is a local ring and $\m$ is its maximal ideal. In general, if $A$ is a local ring, the maximal ideal of $A$ is denoted by $\m_A$.  
	\item Let $A$ be a ring and let $X \to \Spec(A)$ be a scheme over $A$. Let $A\to B$ be an $A$-algebra. Then we denote $X \otimes_{A} B := X \times_{\Spec(A)} \Spec(B)$.
	\item Let $K$ be a field and $X\to \Spec(K)$ be a $K$-scheme. For any domain $A$ with fraction field $K$, we call \emph{model} of $X/A$ any $A$-scheme $\cX \to \Spec(A)$ whose generic fibre is isomorphic to $X$. A model of $\pi : \cX\to \Spec(A)$ $X/A$ is respectively called \emph{projective,flat,coherent} if $\pi$ is projective, flat, finitely presented.
	\item Let $k$ be a field. We denote by $|\cdot|_{\triv}$ the trivial absolute value on $k$. If we have an embedding $k \hookrightarrow \bC$, we denote by $|\cdot|_{\infty}$ the restriction of the usual Archimedean absolute value on $\bC$.
	\item Let $(k,|\cdot|)$ be a valued field. Unless mentioned otherwise and when no confusion may arise, we will denote by $\widehat{k}$ the completion of $k$ w.r.t. $|\cdot|$.
	\item Let $(X,\cO_X)$ be a locally ringed space. We respectively denote by $\mathrm{Fp}(X),\mathrm{Coh}(X)$ the categories of finitely presented, coherent sheaves on $X$. Moreover, we call \emph{vector bundles} on $X$ any locally free sheaves of $\cO_{X}$-modules of finite rank and we denote by $\mathrm{Vb}(X)$ the corresponding subcategory of $\mathrm{Coh}(X)$ and we use the additive notation for tensor products of line bundles. 
	\item Let $(\Omega,\cA,\nu)$ be a measure space. Denote by $\cL^{1}(\Omega,\cA,\nu)$ be the set of all $\nu$-integrable functions $f:\Omega \to [-\infty,+\infty]$. Let $f : \Omega \to [-\infty,+\infty]$, we define
\begin{align*}
\upint_{\Omega} f(\omega)\nu(\diff\omega) := \inf\left\{\int_{\Omega}g(\omega)\nu(\diff\omega) : g\in \cL^{1}(\Omega,\cA,\nu) \text{ and } f \leq g \quad\nu\text{-a.e.}\right\},
\end{align*}
and 
\begin{align*}
\lowint_{\Omega} f(\omega)\nu(\diff\omega) := \sup\left\{\int_{\Omega}g(\omega)\nu(\diff\omega) : g\in \cL^{1}(\Omega,\cA,\nu) \text{ and } g \leq f \quad\nu\text{-a.e.}\right\}.
\end{align*}
We say that $f$ is $\nu$\emph{-dominated} if
\begin{align*}
\upint_{\Omega} f(\omega)\nu(\diff\omega) <+\infty \quad \text{and} \quad \lowint_{\Omega} f(\omega)\nu(\diff\omega)>-\infty.
\end{align*}
Equivalently, $f$ is $\nu$-dominated iff there exists $g\in\cL^1(\Omega,\cA,\nu)$ such that $|f|\leq g$ $\nu$-a.e.
\end{itemize}

\part{Topological adelic curves: definition and algebraic coverings}
\label{part:tac}

\section{Pseudo-absolute values, Zariski-Riemann spaces, adelic curves and globally valued fields}
\label{sec:prelim}

\subsection{Pseudo-absolute values}

\subsubsection{Definition}
Let $K$ be a field. A \emph{pseudo-absolute value} on $K$ is a map $|\cdot|:K\to[0,+\infty]$ satisfying 
\begin{itemize}
	\item[(i)] $|0|=0$ and $|1|=1$;
	\item[(ii)] for all $a,b\in K$, $|a+b| \leq |a|+|b|$;
	\item[(iii)] for all $a,b\in K$ such that $\{|a|,|b|\}\neq \{0,+\infty\}$, $|ab| = |a||b|$. 
\end{itemize}
Recall that any pseudo-absolute value $|\cdot|$ on $K$ determines a \emph{finiteness ring} $A_{|\cdot|}=\{|\cdot|<+\infty\}$, which is a valuation ring of $K$ with maximal ideal $\m_{|\cdot|}=\{|\cdot|=0\}$, called its \emph{kernel}. Moreover, $|\cdot|$ induces an absolute value on the \emph{residue field} $\kappa_{|\cdot|}:=A_{|\cdot|}/\m_{|\cdot|}$ called the \emph{residue absolute value}. A pseudo-absolute value is called \emph{Archimedean}, \emph{non-Archimedean}, \emph{residually trivial} if the associated residue absolute value is Archimedean, non-Archimedean, trivial. 

We use the same notation as in \cite{Sedillot_pav}. Namely, by "let $(\va,A,\m,\kappa)$ be a pseudo-absolute value", we mean that $\va$ is a pseudo-absolute value on $K$ with finiteness ring $A$, kernel $\m$, residue field $\kappa$. By default, when we write "let $v$ be a pseudo absolute value", we mean $v=(|\cdot\|_{v},A_v,\m_v,\kappa_v)$. Moreover, if $v$ is a pseudo-absolute value on $K$, we denote by $\widehat{\kappa_v}$ the completion of the residue field $\kappa_v$ w.r.t. the residue absolute value induced by $v$.

\subsubsection{Extension of pseudo-absolute values}

Let $L/K$ be a finite separable field extension. Let $v$ be a pseudo-absolute value on $K$. Denote by $A'$ the integral closure of $A_v$ in $L$. This is a Prüfer domain, namely its prime localisations are valuation rings. Moreover, the extensions of $A_v$ to $L$ are in bijection with $\Spm(A')$. For any $\m_{w}\in\Spm(A')$, we denote by $\kappa_{w}$ the corresponding residue field, this is a finite field extension of $\kappa_{v}$.

\begin{proposition}[\cite{Sedillot_pav}, Proposition 3.1.2]
\label{prop:formula_extension_sav}
\begin{itemize}
	\item[(1)] There is a bijective correspondence between the set of pseudo-absolute values on $L$ above $v$ and the set of extensions of the residue absolute value of $v$ with respect to extensions of the form $\kappa_v\to\kappa_w$, where $w$ runs over the set of maximal ideals of $A'$.
	\item[(2)] Furthermore, we have the equality
\begin{align}
\label{eq:formula_extension_sav}
\displaystyle\sum_{\m_w\in \Spm(A')} \frac{1}{|\Spm(A')|} \sum_{i | v} \frac{[\widehat{\kappa_{w,i}}:\widehat{\kappa_v}]_s}{[\kappa_w:\kappa_v]_s} = 1,
\end{align}
where, for all $\m_w\in \Spm(A')$, $i$ runs over the set of extensions of the residue absolute value of $\va_v$ to $\kappa_w$ and $\widehat{\kappa_{w,i}}$ denotes the completion of $\kappa_w$ for any such absolute value. 
\end{itemize}

\end{proposition}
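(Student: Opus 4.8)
The plan is to observe that part~(1) unwinds directly from the definitions once the structure of $A'$ recorded in the statement is in hand, while part~(2) reduces to a ``local'' degree identity over each finite residue extension $\kappa_v\hookrightarrow\kappa_w$.

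For~(1): a pseudo-absolute value $w$ on $L$ is exactly the datum of its finiteness ring $A_w$ --- a valuation ring of $L$ --- together with an absolute value on its residue field $\kappa_w=A_w/\m_w$, and $w$ lies above $v$ precisely when $A_w\cap K=A_v$ and the residue absolute value of $w$ restricts, along the canonical embedding $\kappa_v\hookrightarrow\kappa_w$, to the residue absolute value of $v$. By the structure of $A'$ recalled in the statement, the first condition forces $A_w=A'_{\m_w}$ for a unique $\m_w\in\Spm(A')$, and for fixed $\m_w$ the only remaining freedom is the choice of an extension of the residue absolute value of $v$ to $\kappa_w$. This is the asserted bijection, and it exhibits the set of pseudo-absolute values on $L$ over $v$ as $\coprod_{\m_w\in\Spm(A')}\{\text{extensions of }\va_v\text{ to }\kappa_w\}$.

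For~(2): I claim it suffices to prove, for each fixed $\m_w\in\Spm(A')$, that $\sum_{i\mid v}[\widehat{\kappa_{w,i}}:\widehat{\kappa_v}]_s=[\kappa_w:\kappa_v]_s$, where $i$ runs over the extensions of the residue absolute value of $v$ to $\kappa_w$; granting this, every term of the outer sum equals $\tfrac{1}{|\Spm(A')|}$, and the total is $1$. To establish the local identity I would introduce the separable closure $\mu$ of $\kappa_v$ inside $\kappa_w$, so that $\mu/\kappa_v$ is finite separable with $[\mu:\kappa_v]=[\kappa_w:\kappa_v]_s$ while $\kappa_w/\mu$ is purely inseparable. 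Then three observations finish the job: $(a)$ every absolute value on $\mu$ extends uniquely to $\kappa_w$ (purely inseparable layer, via the $p^n$-th power trick), so the $i$'s are in bijection with the extensions $j$ of $\va_v$ to $\mu$; $(b)$ for matching $i,j$ the extension $\widehat{\kappa_{w,i}}/\widehat{\mu_j}$ is still purely inseparable --- it is the compositum of $\kappa_w$ with the complete field $\widehat{\mu_j}$ --- hence of separable degree $1$, whereas $\widehat{\mu_j}/\widehat{\kappa_v}$ is separable, so $[\widehat{\kappa_{w,i}}:\widehat{\kappa_v}]_s=[\widehat{\mu_j}:\widehat{\kappa_v}]$; and $(c)$ since $\mu/\kappa_v$ is finite separable one has $\mu\otimes_{\kappa_v}\widehat{\kappa_v}\cong\prod_j\widehat{\mu_j}$, whence $\sum_j[\widehat{\mu_j}:\widehat{\kappa_v}]=[\mu:\kappa_v]=[\kappa_w:\kappa_v]_s$. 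Chaining $(a)$--$(c)$ gives the local identity, and summing over $\m_w\in\Spm(A')$ yields~(2).

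The only genuine difficulty --- and the step to handle with care --- is $(b)$: the residue extension $\kappa_w/\kappa_v$ need not be separable although $L/K$ is, which is precisely why the statement is phrased in terms of separable degrees, so one must insert the intermediate field $\mu$ and verify that completing a purely inseparable algebraic extension at an extended absolute value keeps it purely inseparable. Everything else is the standard local--global degree formula for finite separable extensions, together with the structure of the integral closure $A'$ already recorded in the statement.
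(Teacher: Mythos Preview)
The paper does not supply a proof of this proposition: it appears in the preliminaries section as a citation of \cite{Sedillot24_pav}, Proposition~3.1.2, with no argument reproduced here. There is therefore nothing in the present paper to compare your proposal against.

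For what it is worth, your argument is correct and is the natural one. Part~(1) follows by unwinding the definition of a pseudo-absolute value together with the parametrisation of valuation rings of $L$ extending $A_v$ by $\Spm(A')$. For part~(2), your reduction to the per-$\m_w$ identity $\sum_i[\widehat{\kappa_{w,i}}:\widehat{\kappa_v}]_s=[\kappa_w:\kappa_v]_s$ is exactly right, and the three-step verification via the separable closure $\mu$ of $\kappa_v$ in $\kappa_w$ is the standard route. The delicate step~(b) does go through: $\widehat{\kappa_{w,i}}$ is generated over $\widehat{\mu_j}$ by elements of $\kappa_w$, each of which has some $p^n$-th power in $\mu\subset\widehat{\mu_j}$, so the extension $\widehat{\kappa_{w,i}}/\widehat{\mu_j}$ is purely inseparable and hence contributes separable degree~$1$, while $\widehat{\mu_j}/\widehat{\kappa_v}$ is separable since it is a factor of the \'etale $\widehat{\kappa_v}$-algebra $\mu\otimes_{\kappa_v}\widehat{\kappa_v}$.
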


Now consider an arbitrary finite field extension $L/K$. Denote by $K'$ the separable closure of $K$ in $L$. 

\begin{proposition}[\cite{Sedillot_pav}, Corollary 3.2.2]
\label{cor:purely_inseparable_extension_sav}
Let $v$ be a pseudo-absolute value on $K$. Then the set of extensions of $v$ on $L$ is in bijection with the set of extensions of $v$ on $K'$.
\end{proposition}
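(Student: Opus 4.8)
The plan is to reduce the statement to the assertion that a pseudo-absolute value extends uniquely along a finite purely inseparable extension. Indeed, $K'/K$ is separable and $L/K'$ is purely inseparable, and restriction of pseudo-absolute values along $K'\hookrightarrow L$ visibly sends an extension of $v$ to $L$ to an extension of $v$ to $K'$ (all three axioms are inherited by restriction to a subfield). So it suffices to prove: for a finite purely inseparable extension $F/E$ and a pseudo-absolute value $u$ on $E$, there is exactly one pseudo-absolute value on $F$ restricting to $u$; applying this with $E=K'$, $F=L$ and $u$ ranging over the extensions of $v$ to $K'$ then identifies the two sets via restriction. If $\operatorname{char}(K)=0$ there is nothing to prove, since then $L=K'$.

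For the purely inseparable statement, write $p=\operatorname{char}(E)>0$ and choose $q=p^m$ large enough that $F^q\subseteq E$; this is possible because $F/E$ is finite purely inseparable (if $F=E(\alpha_1,\dots,\alpha_r)$ with $\alpha_i^{p^{m_i}}\in E$, take $m=\max_i m_i$ and use the Frobenius on $F$). First I record the elementary fact that for any pseudo-absolute value $w$ on any field, any element $x$ and any integer $n\geq 1$, one has $|x^n|_w=|x|_w^n$: by induction, if $|x^n|_w=|x|_w^n$ then the set $\{|x^n|_w,|x|_w\}$ equals $\{0,+\infty\}$ only if $|x|_w\in\{0,+\infty\}$ and $|x|_w^n$ is the other endpoint, which cannot happen, so axiom (iii) applies and gives $|x^{n+1}|_w=|x^n|_w|x|_w=|x|_w^{n+1}$ (the product $0\cdot(+\infty)$ never occurs). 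Uniqueness is then immediate: if $w$ extends $u$, then for $x\in F$ we have $x^q\in E$, hence $|x|_w^q=|x^q|_w=|x^q|_u$, and since $t\mapsto t^q$ is a bijection of $[0,+\infty]$ this forces $|x|_w=|x^q|_u^{1/q}$.

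For existence, set $|x|_F:=|x^q|_u^{1/q}$ for $x\in F$. Independence of the admissible exponent $q$ follows from the displayed identity applied to $u$. Axiom (i) is clear. For axiom (ii), using $(x+y)^q=x^q+y^q$ in characteristic $p$ and the triangle inequality for $u$ gives $|x+y|_F^q=|x^q+y^q|_u\leq|x^q|_u+|y^q|_u=|x|_F^q+|y|_F^q\leq(|x|_F+|y|_F)^q$, the last step being the elementary inequality $a^q+b^q\leq(a+b)^q$ valid for all $a,b\in[0,+\infty]$; taking $q$-th roots yields (ii). For axiom (iii), since $t\mapsto t^q$ is an increasing bijection of $[0,+\infty]$ fixing $0$ and $+\infty$, one has $\{|x|_F,|y|_F\}\neq\{0,+\infty\}$ if and only if $\{|x^q|_u,|y^q|_u\}\neq\{0,+\infty\}$; in that case multiplicativity of $u$ gives $|xy|_F^q=|x^qy^q|_u=|x^q|_u|y^q|_u=|x|_F^q|y|_F^q$, hence $|xy|_F=|x|_F|y|_F$. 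Finally $|\cdot|_F$ restricts to $u$ on $E$ by the displayed identity. Combining existence and uniqueness, restriction along $K'\hookrightarrow L$ is a bijection between extensions of $v$ to $L$ and extensions of $v$ to $K'$.

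I do not expect a genuine obstacle here: the argument is bookkeeping. The only delicate points are keeping track of the degenerate values $0$ and $+\infty$ in axiom (iii) — dealt with by the observation that $t\mapsto t^q$ preserves them — and choosing the exponent $q$ uniformly over $F$, which uses $[F:E]<\infty$. One could alternatively argue through finiteness rings and residue fields (the finiteness ring of the extension is the integral closure of $A_u$ in $F$, which is the unique valuation ring of $F$ dominating $A_u$ since valuation extensions along purely inseparable extensions are unique, and the residue extension is purely inseparable, through which the residue absolute value extends uniquely), but the direct computation above is shorter and self-contained.
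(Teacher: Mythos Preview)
Your proof is correct. The paper does not actually prove this statement; it is quoted from \cite{Sedillot24_pav} (Corollary~3.2.2) without argument, so there is no in-paper proof to compare against. Your direct computation via the formula $|x|_F=|x^q|_u^{1/q}$ is clean and self-contained; the verifications of axioms (i)--(iii) are handled carefully, including the degenerate values $0$ and $+\infty$. The alternative route you sketch at the end---through the finiteness ring, the uniqueness of valuation-ring extensions along purely inseparable extensions, and the unique extension of the residue absolute value along the resulting purely inseparable residue extension---is in the spirit of how the surrounding material in \cite{Sedillot24_pav} is organised (compare the structure of Proposition~\ref{prop:formula_extension_sav}), and is likely closer to what the cited reference does; your direct argument trades that structural picture for a shorter, more elementary verification.
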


The following proposition characterises the action of the group of automorphisms on pseudo-absolute values.

\begin{proposition}[\cite{Sedillot_pav}, Propositions 3.3.1 and 3.3.4]
\label{prop:action_Galois}
Let $L/K$ be an algebraic field extension. 
\begin{itemize}
	\item[(1)] $\Aut(L/K)$ induces a right action on $M_L$ as follows. For all $x \in M_L$, for all $\tau \in \Aut(L/K)$, the map
\begin{align*}
\fonction{\va_{\tau(x)}}{L}{[0,+\infty]}{a}{|\tau(a)|_{x}}
\end{align*}
defines a pseudo-absolute value on $L$ denoted by $x\circ \tau$. 
	\item[(2)] Assume that $L/K$ is Galois with Galois group $G$. Let $v\in M_K$ such that the residue field $\kappa_v$ is perfect. Then $G$ acts transitively on the set $M_{L,v}$ of extensions of $v$ to $L$.  
\end{itemize}
\end{proposition}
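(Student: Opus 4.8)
This is a routine verification that I would carry out directly. For $x\in M_L$ and $\tau\in\Aut(L/K)$ put $|a|_{x\circ\tau}:=|\tau(a)|_x$. Axiom (i) holds since $\tau(0)=0$, $\tau(1)=1$; axiom (ii) follows from $\tau(a+b)=\tau(a)+\tau(b)$ and subadditivity of $|\cdot|_x$; axiom (iii) follows from $\tau(ab)=\tau(a)\tau(b)$ together with the observation that $\{|a|_{x\circ\tau},|b|_{x\circ\tau}\}=\{0,+\infty\}$ exactly when $\{|\tau(a)|_x,|\tau(b)|_x\}=\{0,+\infty\}$, so the exceptional locus of (iii) is preserved. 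One records in passing that $A_{x\circ\tau}=\tau^{-1}(A_x)$, $\m_{x\circ\tau}=\tau^{-1}(\m_x)$, and that the residue absolute value of $x\circ\tau$ is that of $x$ transported along the isomorphism $\kappa_{x\circ\tau}\cong\kappa_x$ induced by $\tau$. The right-action identity is the one-line computation $|a|_{(x\circ\tau_1)\circ\tau_2}=|\tau_2(a)|_{x\circ\tau_1}=|\tau_1\tau_2(a)|_x=|a|_{x\circ(\tau_1\tau_2)}$, and for the topology of pointwise convergence each $\tau$ acts as a homeomorphism since $x\mapsto x\circ\tau$ and its inverse $x\mapsto x\circ\tau^{-1}$ are continuous for that topology. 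None of this is difficult.

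\textbf{Part (2): reduction to finite Galois extensions.} Write $L=\bigcup_{\alpha}L_{\alpha}$ as the directed union of the finite Galois subextensions $L_{\alpha}/K$. Restricting a pseudo-absolute value gives a $G$-equivariant bijection of sets $M_{L,v}\cong\varprojlim_{\alpha}M_{L_{\alpha},v}$ (a compatible system over the $L_\alpha$'s glues, the axioms being checked one $L_\alpha$ at a time), each $M_{L_{\alpha},v}$ is \emph{finite}, and $G$ acts on it through the finite quotient $\Gal(L_{\alpha}/K)$ — here one uses that $L_\alpha/K$ is normal, so $\tau$ stabilizes $L_\alpha$ and part (1) restricts, $\rho(x\circ\tau)=\rho(x)\circ(\tau|_{L_\alpha})$. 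Granting transitivity at each finite level, for $x,y\in M_{L,v}$ the sets $T_{\alpha}:=\{g\in G:(x|_{L_{\alpha}})\circ g=y|_{L_{\alpha}}\}$ are nonempty, are cosets of open (hence closed) subgroups of the profinite group $G$, and have the finite intersection property; compactness of $G$ gives $\bigcap_{\alpha}T_{\alpha}\neq\emptyset$, and any $g$ therein satisfies $x\circ g=y$. So it suffices to treat finite Galois $L/K$.

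\textbf{Part (2): the finite case.} Since $L/K$ is separable, Proposition \ref{prop:formula_extension_sav} identifies $M_{L,v}$ with the set of pairs $(\m_w,i)$, where $\m_w\in\Spm(A')$ for $A'$ the integral closure of $A_v$ in $L$, and $i$ is an extension of the residue absolute value of $v$ to $\kappa_w$. Now $G$ permutes $A'$ (as $A_v\subseteq K$ is $G$-fixed), hence permutes $\Spm(A')$, and this action is transitive by the classical conjugacy theorem for extensions of a valuation ring to a finite Galois extension. Fixing $\m_w$, its decomposition group $D_w=\{g\in G:g\m_w=\m_w\}$ acts on $\kappa_w$; the residue extension $\kappa_w/\kappa_v$ is normal (classical), and it is separable because $\kappa_v$ is perfect — this is the only place the hypothesis enters — hence Galois, with $D_w$ surjecting onto $\Gal(\kappa_w/\kappa_v)$, which in turn acts transitively on the extensions of the residue absolute value of $v$ to $\kappa_w$ by the conjugacy theorem for absolute values. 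Combining these: given $(\m_{w_1},i_1)$ and $(\m_{w_2},i_2)$, choose $\sigma\in G$ with $\sigma\m_{w_1}=\m_{w_2}$, transport $i_1$ to an extension $i_1'$ of the residue absolute value to $\kappa_{w_2}$, then pick $\sigma'\in D_{w_2}$ carrying $i_1'$ to $i_2$; a suitable product of $\sigma$ and $\sigma'$ (ordered to match the right action of part (1)) sends the first pseudo-absolute value to the second.

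\textbf{Main obstacle.} There is no single hard step: the work is to run classical valuation theory through the pseudo-absolute value dictionary, in which "extension of $v$" means a valuation ring over $A_v$ together with a compatible absolute value on its residue field. The two points needing genuine care are the infinite-Galois reduction — confirming $M_{L,v}\cong\varprojlim_\alpha M_{L_\alpha,v}$ as $G$-sets and that the profinite compactness argument applies — and checking that the four classical inputs (conjugacy of valuation extensions; normality of the residue extension; surjectivity of the decomposition group onto residue automorphisms; conjugacy of absolute value extensions in a finite Galois extension) hold in exactly the generality used, with perfectness of $\kappa_v$ invoked solely to upgrade normality of $\kappa_w/\kappa_v$ to a Galois extension so that the residue-field conjugacy statement applies.
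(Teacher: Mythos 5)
The paper does not prove this proposition; it is imported verbatim as a citation from \cite{Sedillot24_pav} (Propositions 3.3.1 and 3.3.4), so there is no in-paper argument to compare against. Judged on its own, your reconstruction is correct and follows the expected route. Part (1) is indeed routine, and your observation that $A_{x\circ\tau}=\tau^{-1}(A_x)$, $\m_{x\circ\tau}=\tau^{-1}(\m_x)$, with the residue absolute value transported along the isomorphism $\kappa_{x\circ\tau}\cong\kappa_x$, is exactly the bookkeeping that part~(2) then needs. In part~(2), the reduction to finite Galois subextensions via $M_{L,v}\cong\varprojlim_\alpha M_{L_\alpha,v}$ and a finite-intersection-property argument on the nonempty nested closed subsets $T_\alpha$ of the profinite group $G$ is the standard device; in the finite case you correctly string together transitivity of $G$ on $\Spm(A')$, normality of $\kappa_w/\kappa_v$, surjectivity of $D_w\twoheadrightarrow\Aut(\kappa_w/\kappa_v)$, and transitivity of the Galois group of a finite Galois extension on extensions of an absolute value, with perfectness of $\kappa_v$ invoked exactly where you say it is, namely to upgrade the (always normal) residue extension to a Galois one so that the last transitivity theorem applies. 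One small sharpening: the surjection onto $\Aut(\kappa_w/\kappa_v)$ holds unconditionally via $D_w/I_w\cong\Gal(\kappa_w^{\sep}/\kappa_v)\cong\Aut(\kappa_w/\kappa_v)$; perfectness is needed only to ensure $\kappa_w^{\sep}=\kappa_w$ so that $\Aut$ really is the full Galois group of $\kappa_w/\kappa_v$. The closing \emph{suitable product of $\sigma$ and $\sigma'$} is the only place you hand-wave, but the convention $A_{x\circ\tau}=\tau^{-1}(A_x)$ that you have already recorded makes the ordering forced and the verification immediate.
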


\subsubsection{Space of pseudo-absolute values and integral structures}
\label{subsub:prelim_integral_structures}
Let $K$ be a field. Recall that the set $M_{K}$ of all pseudo-absolute values on $K$ equipped with the topology of point-wise convergence is a (non-empty) compact Hausdorff topological space (\emph{loc. cit.}, Theorem 7.1.2). We denote by $M_{K,\infty}$ and $M_{K,\um}$ respectively the set of Archimedean and non-Archimedean pseudo-absolute values on $K$. Define a map $\epsilon : M_{K,\infty} \to ]0,1]$ by sending any $|\cdot|\in M_{K,\infty}$ to the unique $\epsilon(|\cdot|)\in]0,1]$ such that the restriction of the residue absolute value of $|\cdot|$ to $\bQ$ is $|\cdot|_{\infty}^{\epsilon(|\cdot|)}$.

An \emph{integral structure} for $K$ is a Banach ring $(A,\|\cdot\|)$ such that $A$ is a Prüfer domain with fraction field $K$. If $(A,\|\cdot\|)$ is an integral structure for $K$, then the Berkovich analytic spectrum $\cM(A,\|\cdot\|)$ identifies as a closed subspace of $M_K$ (\emph{loc. cit.}, Proposition 9.1.5). Moreover, an integral structure $(A,\|\cdot\|)$ for $K$ is called \emph{tame} if 
\begin{itemize}
	\item[(i)] $\cM(A,\|\cdot\|)$ contains the trivial absolute value on $K$;
	\item[(ii)] for any ultrametric element $|\cdot|_x\in \cM(A,\|\cdot\|)$ and any $f\in A$, the inequality
	\begin{align*}
	|f|_x \leq 1
	\end{align*}
	is satisfied;
	\item[(iii)] $(A, \|\cdot\|)$ is a uniform Banach ring.
\end{itemize}

\begin{proposition}[\cite{Sedillot_pav}, Proposition 9.3.3]
\label{prop:algebraic_extension_of_tame_spaces}
Let $(A,\|\cdot\|)$ be a tame integral structure for $K$. Let $L/K$ be an algebraic extension. Then the integral closure $B$ of $A$ in $L$ can be equipped with a norm $\|\cdot\|_B$ such that $(B,\|\cdot\|_{B})$ is a tame integral structure for $L$. Moreover, $\cM(B,\|\cdot\|_{B})$ can be identified with the preimage of $\cM(A,\|\cdot\|)$ via the restriction $M_L \to M_K$.
\end{proposition}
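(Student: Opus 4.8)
The plan is to realize $\|\cdot\|_B$ as the spectral (supremum) norm attached to the fibre of $\cM(A,\|\cdot\|)$ under restriction of pseudo-absolute values, and to derive every required property from the corresponding one for $(A,\|\cdot\|)$ together with the extension theory recalled in \S\ref{subsub:prelim_integral_structures}. First I would record the purely algebraic input: being the integral closure of the Prüfer domain $A$ in the algebraic extension $L/K$, the ring $B$ is again a Prüfer domain, and clearing denominators (any $\alpha\in L$ satisfies a monic equation over $A$ after multiplication by a suitable nonzero $d\in A$) shows $\Frac(B)=L$. So the remaining content is to produce the norm and to compute $\cM(B,\|\cdot\|_B)$.

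Let $\pi\colon M_L\to M_K$ denote restriction of pseudo-absolute values; it is continuous for the topologies of pointwise convergence, so $S:=\pi^{-1}(\cM(A,\|\cdot\|))$ is a closed, hence compact, subspace of $M_L$. Two observations drive the construction. First, $\pi(S)=\cM(A,\|\cdot\|)$: when $L/K$ is finite this is Propositions \ref{prop:formula_extension_sav} and \ref{cor:purely_inseparable_extension_sav} (every such pseudo-absolute value admits at least one, and only finitely many, extensions to $L$), and for infinite $L/K$ one passes to an inverse limit of these nonempty finite extension-sets over the finite subextensions. Second, every $x\in S$ is finite and uniformly bounded on $B$: since $x|_K\in\cM(A,\|\cdot\|)$ one has $A\subseteq A_{x|_K}=A_x\cap K$, and $A_x$ is a valuation ring of $L$, hence integrally closed, so $B\subseteq A_x$; moreover, combining a monic integral equation of $b\in B$ over $A$ with an elementary bound for the absolute values of the roots of a monic polynomial, and using $|a|_x=|a|_{x|_K}\le\|a\|$ together with uniformity of $A$, yields a bound $|b|_x\le C_b$ depending only on $b$, uniformly over $x\in S$. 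One may therefore set $\|b\|_B:=\sup_{x\in S}|b|_x\in[0,+\infty)$ for $b\in B$. Submultiplicativity and the triangle inequality are inherited pointwise; since $\{|b|_x\}$ is a singleton and so never equals $\{0,+\infty\}$, axiom (iii) gives $|b^n|_x=|b|_x^n$, whence $\|\cdot\|_B$ is power-multiplicative; and $\|a\|_B=\|a\|$ for $a\in A$ by surjectivity of $\pi|_S$ and uniformity of $A$. Since $|\cdot|_{\triv,L}\in S$ --- its restriction to $K$ being $|\cdot|_{\triv}\in\cM(A,\|\cdot\|)$ by tameness~(i) for $A$ --- one gets $\|b\|_B\ge 1$ for every nonzero $b$; thus $\|\cdot\|_B$ is a norm whose metric is $1$-separated, so every Cauchy sequence is eventually constant and $(B,\|\cdot\|_B)$ is complete. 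Hence $(B,\|\cdot\|_B)$ is a uniform Banach ring with $B$ Prüfer of fraction field $L$, i.e. an integral structure for $L$.

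To identify the spectrum, recall that $\cM(B,\|\cdot\|_B)$ embeds as a closed subspace of $M_L$ by \cite{Sedillot24_pav}, Proposition 9.1.5, with image $\{x\in M_L : |b|_x\le\|b\|_B \text{ for all } b\in B\}$. If $x$ lies in this image, then $|a|_x\le\|a\|_B=\|a\|$ for $a\in A$, so $x|_K\in\cM(A,\|\cdot\|)$, i.e. $x\in S$; conversely, if $x\in S$ then $|b|_x\le\sup_{y\in S}|b|_y=\|b\|_B$ for all $b\in B$, so $x$ is in the image. Hence $\cM(B,\|\cdot\|_B)=S=\pi^{-1}(\cM(A,\|\cdot\|))$, which is the asserted identification. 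Tameness of $(B,\|\cdot\|_B)$ now follows: (i) holds since $|\cdot|_{\triv,L}\in S$; (iii) is the power-multiplicativity established above; and for (ii), if $x\in S$ is ultrametric and $f\in B$, then $x|_K$ is ultrametric and lies in $\cM(A,\|\cdot\|)$, so $|a|_x=|a|_{x|_K}\le 1$ for all $a\in A$ by tameness~(ii) for $A$, and substituting this into a monic integral equation of $f$ over $A$ and invoking the ultrametric inequality on the finiteness ring $A_x$ forces $|f|_x\le 1$.

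The step I expect to be most delicate is the uniform boundedness in the second paragraph: controlling $|b|_x$ by a constant independent of the extension $x\in S$, in particular over the Archimedean pseudo-absolute values where the residue absolute value (and the parameter $\epsilon$) varies, and --- relatedly --- checking that $\cM(B,\|\cdot\|_B)$ is exactly $S$ and not strictly larger. Both points are ultimately reduced, as above, to the corresponding properties of $(A,\|\cdot\|)$ and to the extension theory of pseudo-absolute values.
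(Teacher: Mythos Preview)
The paper does not prove this proposition; it is stated as a citation of \cite{Sedillot24_pav}, Proposition 9.3.3, and no argument is given in the present paper. There is therefore no ``paper's own proof'' to compare your proposal against.

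On its own merits, your argument is sound and follows what is presumably the intended construction: realise $\|\cdot\|_B$ as the supremum seminorm over the fibre $S=\pi^{-1}(\cM(A,\|\cdot\|))$, use integrality to show $B\subseteq A_x$ for every $x\in S$, and then read off uniformity, completeness (via the $1$-separated metric coming from the trivial absolute value), and the tameness axioms directly. The identification $\cM(B,\|\cdot\|_B)=S$ via the embedding of the Berkovich spectrum into $M_L$ is exactly the right mechanism. Your closing worry about uniform boundedness over the Archimedean part is justified but the argument you sketch handles it: a monic integral equation $b^n+a_{n-1}b^{n-1}+\cdots+a_0=0$ together with the triangle inequality gives $|b|_x\le\max\bigl(1,\,n\cdot\max_i\|a_i\|\bigr)$ uniformly in $x\in S$, since $|a_i|_x\le\|a_i\|$ by uniformity of $(A,\|\cdot\|)$ and the bound depends only on the fixed equation, not on $x$ or on $\epsilon(x)$.
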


\subsubsection{Examples of integral structure in Nevanlinna theory}
\label{subsub:example_integral_structures}

Let $R>0$ and let $\overline{D(R)}$ denote the complex closed disc of radius $R$. We denote respectively by $A_R=\cO(\overline{D(R)})$ and $K_R=\cM(\overline{D(R)})$ the ring of germs of holomorphic functions and the field of germs of meromorphic functions on $\overline{D(R)}$. Let $\|\cdot\|_R$ denote the supremum norm on $\overline{D(R)}$  and define $\|\cdot\|_{R,\hyb}:=\max\{\|\cdot\|_R,|\cdot|_{\triv}\}$, where $|\|\cdot\|_{\triv}$ denotes the trivial norm on $A_R$. Then $(A_R,\|\cdot\|_{R,\hyb})$ is a Banach ring and $(A_R,\|\cdot\|_{R,\hyb})$ defines an integral structure for $K$ (\cite{Sedillot_pav}, Example 9.2.1 (4)). Moreover, results from (\emph{loc. cit.}, \S 9.4.3) imply that the integral structure $(A_R,\|\cdot\|_{R,\hyb})$ is tame and the space $V_{R}:=\cM(A_R,\|\cdot\|_{R,\hyb})$ has the following description. 

\begin{proposition}[\emph{loc. cit.}, Proposition 9.4.7]
\label{prop:global_space_of_sav_compact_disc}
\begin{itemize}
	\item[(i)] We have homeomorphisms 
\begin{align*}
V_{R,\infty} \cong ]0,1]\times \overline{D(R)}, \quad V_{R,\um} \cong \bigsqcup_{z \in \overline{D(R)}} [0,+\infty]/\sim,
\end{align*}	
where $\sim$ denotes the equivalence relation which identifies the extremity $0$ of each branch.
	\item[(ii)] $V_{R,\infty}$ is dense in $V_{R}$.
	\item[(iii)] The Banach ring $(A_R,\|\cdot\|_{R,\hyb})$ is a geometric base ring. 
\end{itemize}
\end{proposition}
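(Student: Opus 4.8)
The plan is to compute the Berkovich spectrum $V_R=\cM(A_R,\|\cdot\|_{R,\hyb})$ directly, exploiting the algebra of the germ ring $A_R=\cO(\overline{D(R)})$ together with the tameness of the integral structure $(A_R,\|\cdot\|_{R,\hyb})$ proved in \cite{Sedillot24_pav}, \S 9.4.3. First I would record the relevant ring theory: a nonzero $f\in A_R$ has only finitely many zeros in the compact set $\overline{D(R)}$, and dividing them out gives a factorisation $f=u\prod_i(T-z_i)^{m_i}$ with $u\in A_R^\times$ and $z_i\in\overline{D(R)}$; hence $A_R$ is a Pr\"ufer (indeed B\'ezout) domain whose nonzero primes are exactly the maximal ideals $\m_z=\{f:f(z)=0\}$ ($z\in\overline{D(R)}$, residue field $\bC$), each localisation $(A_R)_{\m_z}$ being a discrete valuation ring with valuation $\ord_z$ and fraction field $K_R$. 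Via the embedding $\cM(A_R,\|\cdot\|_{R,\hyb})\hookrightarrow M_{K_R}$ of \cite{Sedillot24_pav}, Proposition 9.1.5, any $x\in V_R$ is a pseudo-absolute value on $K_R$ that is finite on $A_R$; since the valuation overrings of a Pr\"ufer domain are its localisations at primes, the relevant valuation ring of $x$ is either $K_R$ or $(A_R)_{\m_z}$ for a unique $z$.

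For the non-Archimedean locus, let $x\in V_{R,\um}$. Tameness forces $|f|_x\le 1$ for all $f\in A_R$, so the valuation ring $R_x=\{g\in K_R:|g|_x\le 1\}$ contains $A_R$, whence $R_x=K_R$ or $R_x=(A_R)_{\m_z}$. In the first case both $|g|_x\le 1$ and $|g^{-1}|_x\le 1$, forcing $|g|_x=1$, so $x=|\cdot|_{\triv}$. In the second case, writing $g=(T-z)^{\ord_z(g)}u$ with $u\in(A_R)_{\m_z}^\times$ (so $|u|_x=1$, as $u,u^{-1}\in R_x$), one gets $|g|_x=e^{-s\,\ord_z(g)}$, where $s\in\,]0,+\infty]$ is fixed by $|T-z|_x=e^{-s}$; in particular the residue absolute value is trivial. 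Conversely each such map is a bounded multiplicative seminorm, so $V_{R,\um}=\{|\cdot|_{\triv}\}\cup\{e^{-s\,\ord_z}:z\in\overline{D(R)},\ s\in\,]0,+\infty]\}$. Since $s\mapsto e^{-s\,\ord_z(\cdot)}$ is a continuous injection of the compact interval $[0,+\infty]$ sending $0$ to $|\cdot|_{\triv}$, and two distinct branches meet only at $|\cdot|_{\triv}$ (evaluate on $T-z$), this yields the homeomorphism $V_{R,\um}\cong\bigsqcup_{z\in\overline{D(R)}}[0,+\infty]/{\sim}$.

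For the Archimedean locus, let $x\in V_{R,\ar}$. Boundedness forces $|\cdot|_x$ to restrict to $|\cdot|_\infty^{\epsilon}$ on $\bC$ for a unique $\epsilon=\epsilon(x)\in\,]0,1]$ (one checks the underlying embedding $\bC\hookrightarrow\bC$ must be isometric for $|\cdot|_\infty$, hence continuous). Normalising by constants — for nonzero $f$ pick $c\in\bC$ with $|c|_\infty=\|f\|_R$, so that $\|f/c\|_{R,\hyb}=1$ and $|f|_x=|c|_x\,|f/c|_x\le\|f\|_R^{\epsilon}$ — shows $|\cdot|_x^{1/\epsilon}\le\|\cdot\|_R$ on $A_R$. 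Therefore $|\cdot|_x^{1/\epsilon}$ extends to a bounded multiplicative seminorm on the $\|\cdot\|_R$-completion of $A_R$, namely the disc algebra $\cA(\overline{D(R)})$; as the latter is a complex uniform Banach algebra, such seminorms are exactly the characters, and its character space is classically $\overline{D(R)}$, so $|f|_x=|f(z)|_\infty^{\epsilon}$ for a unique $z=z(x)\in\overline{D(R)}$. The map $(\epsilon,z)\mapsto(f\mapsto|f(z)|_\infty^{\epsilon})$ is then a continuous bijection from $]0,1]\times\overline{D(R)}$ onto $V_{R,\ar}$, with continuous inverse $x\mapsto(\epsilon(x),z(x))$ — recovering $\epsilon(x)$ from $|2|_x$ and $z(x)$ as the unique zero of $w\mapsto|T-w|_x$ — hence a homeomorphism. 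This proves (i).

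For (ii), given $z\in\overline{D(R)}$ and $s\in[0,+\infty]$, I would choose $z_n\to z$ in $\overline{D(R)}$ and $\epsilon_n\to 0^+$ with $\epsilon_n\log(1/|z_n-z|_\infty)\to s$; writing $f\in K_R^\times$ as $f=(T-z)^{\ord_z(f)}h$ with $h(z)\in\bC^\times$, one gets $|f(z_n)|_\infty^{\epsilon_n}=|z_n-z|_\infty^{\epsilon_n\ord_z(f)}\,|h(z_n)|_\infty^{\epsilon_n}\to e^{-s\,\ord_z(f)}$, so the Archimedean points attached to $(\epsilon_n,z_n)$ converge in $M_{K_R}$ to $e^{-s\,\ord_z(\cdot)}$ (to $|\cdot|_{\triv}$ when $s=0$); since $V_R=V_{R,\ar}\sqcup V_{R,\um}$, this gives density. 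The main obstacle is the input that $(A_R,\|\cdot\|_{R,\hyb})$ is tame: this is exactly what forces $|f|_x\le 1$ on $A_R$ for non-Archimedean $x$, hence triviality of the residue absolute value, thereby ruling out ``wild'' non-Archimedean valuations of $\bC$ sitting above some $\ord_z$ — and it is the substantive content of \cite{Sedillot24_pav}, \S 9.4.3, resting on properties of the supremum norm. A secondary technical point is the Archimedean normalisation and the passage from $A_R$ to its sup-norm completion, where one invokes the classical identification of the character space of the disc algebra with $\overline{D(R)}$.
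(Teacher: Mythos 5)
The paper does not actually prove this statement: it is invoked as a citation to \cite{Sedillot24_pav}, Proposition 9.4.6, and no argument is reproduced here, so there is no in-paper proof to compare against. Judged on its own terms, your classification of the points is the right argument: using tameness to force $A_R\subset\{g:|g|_x\le 1\}$ and the Pr\"ufer structure of $A_R$ to enumerate the non-Archimedean points, and reducing the Archimedean locus (after normalising by $\epsilon(x)$ and passing to the sup-norm completion, i.e.\ the disc algebra) to its Gelfand spectrum $\overline{D(R)}$, is sound; the density construction in (ii) is also correct.

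There is, however, a genuine gap in the claimed homeomorphism for the non-Archimedean locus. Showing that each branch $s\mapsto e^{-s\,\ord_z(\cdot)}$ is a continuous injection from the compact interval $[0,+\infty]$ and that two branches meet only at $|\cdot|_{\triv}$ establishes a continuous \emph{bijection} from the quotient onto $V_{R,\um}$, not a homeomorphism. With the quotient topology on $\bigsqcup_{z}[0,+\infty]/{\sim}$ (discrete index set), the source is \emph{not} compact: the open cover consisting of $\bigsqcup_{z}[0,1)_z/{\sim}$ together with the single sets $(1/2,+\infty]_z$ has no finite subcover, whereas $V_{R,\um}$ is a closed subset of the compact spectrum $V_R$. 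Concretely, a sequence $e^{-\ord_{z_n}}$ with the $z_n$ pairwise distinct converges to $|\cdot|_{\triv}$ in $V_{R,\um}$ (any fixed $f\in K_R^{\times}$ has only finitely many zeros and poles, so $|f|_{x_n}=1$ for $n$ large), but it does not converge in the wedge. A basic neighbourhood of $|\cdot|_{\triv}$ in $V_{R,\um}$ is cut out by finitely many $f\in K_R^{\times}$ and therefore contains the \emph{entire} branch over all but finitely many $z$; that is, $V_{R,\um}$ is the one-point compactification of $\bigsqcup_{z}(0,+\infty]$, which is strictly coarser than the quotient topology you implicitly invoke. You either need to specify that the target carries this coarser topology (as the cited \cite{Sedillot24_pav} presumably does) and verify the basis of opens, or supply the open-set analysis; as written, the proposal asserts the homeomorphism without the argument needed to reach it.
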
 

\begin{remark}
\label{remark:geoemtric_base_ring}
The notion of \emph{geometric base ring} was introduced in (\cite{LemanissierPoineau24}, Définition 3.3.8). It is a technical definition that includes the usual examples of Banach rings that we will be concerned with. The reader who is unfamiliar with this notion should think of it as the general condition to ensure a reasonable notion of Berkovich analytification. 
\end{remark}

\subsubsection{Pseudo-norms}
\label{subsub:pseudo-norms}

Let $K$ be a field and $v\in M_K$ be a pseudo-absolute value on $K$. Let $E$ be a finite-dimensional vector space over $K$ of dimension $d$. A \emph{pseudo-norm} on $E$ in $v$ is a map $\|\cdot\|_{v} : E \to [0,+\infty]$ satisfying the following conditions:
\begin{itemize}
	\item[(i)] $\|0\|_v = 0$ and there exists a basis $(e_1,...,e_d)$ of $E$ such that $\|e_1\|_v,\cdots,\|e_d\|_v \in \bR_{>0}$, such a basis is called \emph{adapted} to $\|\cdot\|_{v}$;
	\item[(ii)] for any $(\lambda,x)\in K\times E$ such that $\{|\lambda|_v,\|x\|_v\}\neq\{0,+\infty\}$, we have $\|\lambda x\|_v=|\lambda|_v\|x\|_v$;
	\item[(iii)] for any $x,y\in E$, $\|x+y\|_v\leq \|x\|_v+\|y\|_v$.
\end{itemize}
Under these assumptions, $(E,\|\cdot\|_v)$ is called a \emph{pseudo-normed} vector space in $v$. 

\begin{proposition}[\cite{Sedillot_pav}, Proposition 6.1.3]
\label{prop:local_semi-norms}
Let $(E,\|\cdot\|_v)$ be a pseudo-normed vector space in $v$. The \emph{finiteness module} $\cE_{\|\cdot\|_{v}}=\{\|\cdot\|_{v}<+\infty\}$ is a free $A_v$-module of rank $d$ generated by any basis of $E$ satisfying condition (i) above, the \emph{kernel} $N_{\|\cdot\|_{v}}:=\{\|\cdot\|_{v}=0\}$ is equal to the $A_v$-submodule $\m_v\cE_{\|\cdot\|_{v}}$ of the finiteness module. Moreover, $\|\cdot\|_{v}$ induces a norm on the \emph{residue vector space} $\widehat{E}_{\|\cdot\|_{v}} := \cE_{\|\cdot\|_{v}} \otimes_{A_v} \widehat{\kappa_v}$ called the \emph{residue norm}.
\end{proposition}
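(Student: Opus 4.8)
The plan is to fix an adapted basis $(e_1,\dots,e_d)$ of $E$, set $r_i:=\|e_i\|_v\in\bR_{>0}$ and $M:=\bigoplus_{i=1}^{d}A_ve_i$, and prove the two identities $\cE_{\|\cdot\|_{v}}=M$ and $N_{\|\cdot\|_{v}}=\m_vM$. Since $\cE_{\|\cdot\|_{v}}=\{\|\cdot\|_v<+\infty\}$ is intrinsic, this yields the assertion for every adapted basis at once, and the residue norm then drops out formally.

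First I would record the soft inclusions. The triangle inequality (iii) makes $\cE_{\|\cdot\|_{v}}$ and $N_{\|\cdot\|_{v}}$ stable under addition, and the homogeneity axiom (ii) makes both stable under multiplication by elements of $A_v$ --- its proviso $\{|a|_v,\|x\|_v\}\neq\{0,+\infty\}$ being automatic as soon as $|a|_v<+\infty$, resp. as soon as $\|x\|_v<+\infty$; so $\cE_{\|\cdot\|_{v}}$ is an $A_v$-submodule of $E$ and $N_{\|\cdot\|_{v}}$ an $A_v$-submodule of it. Using (ii) (legitimate because $r_i\in\bR_{>0}$) and (iii) one gets $\|\sum_i a_ie_i\|_v\le\sum_i|a_i|_vr_i<+\infty$ for $a_i\in A_v$, whence $M\subseteq\cE_{\|\cdot\|_{v}}$; and for $a\in\m_v$, i.e. $|a|_v=0$, and $m\in M$, axiom (ii) gives $\|am\|_v=0$, whence $\m_vM\subseteq N_{\|\cdot\|_{v}}$.

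The core is the inclusion $\cE_{\|\cdot\|_{v}}\subseteq M$. Given $x=\sum_i\lambda_ie_i$ with $\|x\|_v<+\infty$, I would use that $A_v$ is a valuation ring of $K$ to select, among the nonzero $\lambda_i$, one --- say $\lambda_k$ --- dividing all the others in $A_v$; then $z:=\lambda_k^{-1}x\in M$ has $e_k$-coordinate $1$. If $\lambda_k\in A_v$ then $x=\lambda_kz\in M$; otherwise $\lambda_k^{-1}\in\m_v$, so $|\lambda_k^{-1}|_v=0$, and since $\|x\|_v<+\infty$ axiom (ii) forces $\|z\|_v=0$. Thus everything reduces to the \emph{non-degeneracy} statement that an element of $M$ having a unit of $A_v$ among its coordinates in an adapted basis has nonzero norm. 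Granted this, one gets $\cE_{\|\cdot\|_{v}}=M$, and applying it to an arbitrary $x=\sum_i a_ie_i\in N_{\|\cdot\|_{v}}$ (so all $a_i\in A_v$ by the previous point) forces every $a_i\in\m_v$, i.e. $N_{\|\cdot\|_{v}}=\m_vM=\m_v\cE_{\|\cdot\|_{v}}$. Finally $\cE_{\|\cdot\|_{v}}/N_{\|\cdot\|_{v}}=\cE_{\|\cdot\|_{v}}\otimes_{A_v}\kappa_v$ is a $d$-dimensional $\kappa_v$-vector space with $\widehat{E}_{\|\cdot\|_{v}}=(\cE_{\|\cdot\|_{v}}/N_{\|\cdot\|_{v}})\otimes_{\kappa_v}\widehat{\kappa_v}$; the rule $\overline{x}\mapsto\|x\|_v$ is well defined there (by (iii), $x-x'\in N_{\|\cdot\|_{v}}$ gives equal norms), is $\kappa_v$-homogeneous and subadditive by (ii)--(iii), positive off $0$ since $N_{\|\cdot\|_{v}}=\m_v\cE_{\|\cdot\|_{v}}$, and --- being comparable to $\max_i|a_i|_vr_i$ --- extends to a genuine norm on $\widehat{E}_{\|\cdot\|_{v}}$, the residue norm.

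The hard part, used twice above, is that non-degeneracy/comparability: in an adapted basis $\|\sum_i a_ie_i\|_v$ is bounded below by a positive multiple of $\max_i|a_i|_vr_i$, so that a unit coordinate forces positive norm and the residue seminorm is a norm that extends to the completion. This does not follow from axioms (i)--(iii) in isolation; it rests on the finer local theory of pseudo-norms --- essentially the existence of ``orthogonal-like'' adapted bases and their compatibility with the valuation-ring structure of $A_v$ --- which is exactly what makes $\|\cdot\|_v$ recoverable from its finiteness module together with its residue norm.
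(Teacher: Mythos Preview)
The paper does not prove this proposition; it is quoted from \cite{Sedillot24_pav}, Proposition~6.1.3, as part of the preliminary reminders, so there is no in-paper proof to compare against.

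Your reduction is clean and isolates the right difficulty. The soft inclusions $M\subseteq\cE_{\|\cdot\|_v}$ and $\m_vM\subseteq N_{\|\cdot\|_v}$ are correct, and the valuation-ring trick (choosing a coordinate of minimal valuation) correctly reduces both reverse inclusions to the single non-degeneracy claim that an element of $M$ with a unit coordinate in an adapted basis has nonzero pseudo-norm. You are also right that this claim does not follow from axioms (i)--(iii) in isolation --- but your final paragraph merely gestures at ``finer local theory'' and ``orthogonal-like adapted bases'' without naming the missing input, so as a proof it remains incomplete.

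To confirm that the gap is genuine and not an artefact of your route: take $K$ with the trivial absolute value (so $A_v=K$, $\m_v=\{0\}$), $E=K^2$ with standard basis $(e_1,e_2)$, and set $\|c\,e_1+d\,e_2\|_v:=|c-d|_v$. Axioms (i)--(iii) are immediate and $(e_1,e_2)$ is adapted since $\|e_1\|_v=\|e_2\|_v=1$, yet $\|e_1+e_2\|_v=0$, whence $N_{\|\cdot\|_v}=K(e_1+e_2)\neq\{0\}=\m_v\cE_{\|\cdot\|_v}$. Thus no argument from (i)--(iii) alone can close the gap. The source \cite{Sedillot24_pav} must carry a stronger definition --- for instance, building a pseudo-norm from the data of a free $A_v$-lattice together with a genuine norm on its residue space, or adding non-degeneracy on $\cE_{\|\cdot\|_v}/\m_v\cE_{\|\cdot\|_v}$ as an axiom --- and Proposition~\ref{prop:local_semi-norms} should be read as recording that extra content rather than as a consequence of the three displayed conditions.
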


\begin{remark}
\label{rem:pseudo-norm_metrised_vector_bundle}
The construction from Proposition \ref{prop:local_semi-norms} can be reversed (\cite{Sedillot_pav}, Proposition 1.1.7). Moreover, we will see in Example \ref{example:metrised_vector_bundle_ZR} that pseudo-normed vector spaces can be interpreted as metrised vector bundles on some Zariski-Riemann space
\end{remark}

We say that a pseudo-norm $\|\cdot\|_{v}$ is \emph{ultrametric}, resp. \emph{Hermitian} if so is the residue norm. We also use the notation from (\cite{Sedillot_pav}, \S 6). Namely, by "let $(\|\cdot\|,\cE,N,\widehat{E})$ be a pseudo-norm on the $K$-vector space $E$ in $v$", we mean that $\|\cdot\|$ is a pseudo-norm on $K$ in $v$ with finiteness module $\cE$, kernel $N$ and residue vector space $\widehat{E}$. Moreover, without additional specification, by "let $\|\cdot\|_v$ be a pseudo-norm on $E$ in $v$", we mean the pseudo-norm $(\|\cdot\|_v,\cE_v,N_v,\widehat{E}_v)$. Moreover, if no confusion may arise, we omit "in $v$".

Recall that in (\emph{loc. cit.}, \S 6.2), we have introduced the usual algebraic constructions for pseudo-normed vector spaces. More precisely, let $(E,\|\cdot\|_{v})$ be a pseudo-normed vector space in $v\in M_{K}$. 
\begin{itemize}
	\item[(1)] Let $F$ be a non-zero vector subspace of $E$. Then $\|\cdot\|_{v}$ induces a pseudo-norm on $F$ denoted again by $\|\cdot\|_{v}$.
	\item[(2)] Let $G$ be non-zero quotient of $E$. Then $\|\cdot\|_{v}$ induces a \emph{quotient} pseudo-norm on $G$ denoted by $\|\cdot\|_{v,G}$.
	\item[(3)] $\|\cdot\|_{v}$ induces a \emph{dual} pseudo-norm on $E^{\vee}$ denoted by $\|\cdot\|_{v,\ast}$.
	\item[(4)] Let $(E',\|\cdot\|'_{v})$ be another pseudo-normed vector space. Then this data induces an $\epsilon$\emph{-tensor product} pseudo-norm on $E$ in. Likewise, we have a $\pi$\emph{-tensor product} pseudo-norm on $E$.
	\item[(5)] Let $i\geq 1$ be an integer. Then $\|\cdot\|_{v}$ induces the $i^{th}\epsilon$\emph{-exterior power} pseudo-norm and $i^{th}\pi$\emph{-exterior power} pseudo-norm on $\Lambda^{i}E$ denoted respectively by $\|\cdot\|_{v,\Lambda_{\epsilon}^{i}E}$ and $\|\cdot\|_{v,\Lambda_{\pi}^{i}E}$. In the particular, when $i=d$, the pseudo-norm $\|\cdot\|_{v,\Lambda_{\pi}^{i}E}$ is called the \emph{determinant} pseudo norm on $\det(E)$ and is denoted by $\|\cdot\|_{v,\det}$.
\end{itemize}

We now list generalisations of useful properties of norms in the context of pseudo-norms. 

\begin{proposition}[\cite{Sedillot_pav}, Propositions 6.2.2-6.2.4]
Let $(E,(\|\cdot\|_v,\cE_v,N_v,\widehat{E}_v))$ be a pseudo-normed finite-dimensional $K$-vector space in $v\in M_{K}$.
\begin{itemize}
	\item[(1)]\label{prop:correspondence_dual_quotient} Let $G$ be a quotient of $E$. Then the dual pseudo-norm $\|\cdot\|_{v,G,\ast}$ on $G^{\vee}$ identifies with the restriction of the pseudo-norm $\|\cdot\|_{v,\ast}$ on $E^{\vee}$ to $G^{\vee}$.
	\item[(2)]\label{prop:comparaison_double_dual_semi-norm} The inequality 
\begin{align*}
\|\cdot\|_{v,\ast\ast} \leq \|\cdot\|_v
\end{align*}
holds, where $\|\cdot\|_{v,\ast\ast}$ denotes the dual pseudo-norm of $\|\cdot\|_{v,\ast}$ on $E^{\vee\vee}\cong E$. Moreover, if either $v$ is Archimedean, or if $v$ is non-Archimedean and the pseudo-norm $\|\cdot\|_v$ is ultrametric, then we have 
\begin{align*}
\|\cdot\|_{v,\ast\ast} = \|\cdot\|_v.
\end{align*}
	\item[(3)]\label{prop:Hadamrd_inequality_local_semi-norm} Let $(e_1,..,e_r)$ be a basis of $E$ which is adapted to $\|\cdot\|_v$. Then, for any $\eta\in \det(E)$, we have the equality
\begin{align*}
\|\eta\|_{v,\det} = \inf\left\{\|x_1\|_v\cdots\|x_r\|_v : x_1,...,x_r \in \cE_v \text{ and } \eta = x_1\wedge\cdots\wedge x_r\right\}.
\end{align*}
\end{itemize}
\end{proposition}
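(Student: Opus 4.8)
The plan is to deduce all three assertions from the corresponding classical facts about norms on finite-dimensional vector spaces over the complete valued field $\widehat{\kappa_v}$, transported through the residue construction of Proposition~\ref{prop:local_semi-norms}. The guiding principle is that, by construction (\cite{Sedillot24_pav}, \S 6.2), each algebraic operation on pseudo-norms is pinned down by two data: its finiteness module (a free $A_v$-module of the expected rank) and its residue norm, and the residue norm of $\|\cdot\|_{v,\ast}$ (resp. $\|\cdot\|_{v,G}$, resp. $\|\cdot\|_{v,\det}$) is the dual (resp. quotient, resp. determinant) of the residue norm on $\widehat{E}_v$. Since two pseudo-norms on the same space coincide as soon as their finiteness modules and residue norms agree, in each case it suffices to compare these two pieces of data on both sides; the only non-formal inputs will be the non-Archimedean Hahn--Banach statement in (2) and a density/approximation argument matching two infima in (3).

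For (1), write $F:=\ker(E\twoheadrightarrow G)$ and identify $G^{\vee}$ with the annihilator $F^{\perp}\subseteq E^{\vee}$. On finiteness modules, $\Hom_{A_v}(\cE_v/(\cE_v\cap F),A_v)$ is exactly the set of $A_v$-functionals on $\cE_v$ killing $\cE_v\cap F$, i.e. (clearing denominators, using that $\cE_v\cap F$ spans $F$ over $K$) the functionals killing $F$, which is $\cE_{v,\ast}\cap F^{\perp}$; so the finiteness module of $\|\cdot\|_{v,G,\ast}$ is the finiteness module of the restriction of $\|\cdot\|_{v,\ast}$. On residue norms the claim becomes the standard identity ``the dual of a quotient norm is the restriction of the dual norm'', valid over any complete valued field by unwinding the sup/inf definitions, regardless of the Archimedean/non-Archimedean dichotomy. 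This gives (1). For (2), the inequality $\|\cdot\|_{v,\ast\ast}\leq\|\cdot\|_v$ is formal: for $x\in E$ with $\|x\|_v<+\infty$ and $\varphi\in E^{\vee}$ with $\|\varphi\|_{v,\ast}\leq 1$ one has $|\varphi(x)|_v\leq\|x\|_v$ by definition of the dual pseudo-norm, so $\|x\|_{v,\ast\ast}=\sup\{|\varphi(x)|_v:\|\varphi\|_{v,\ast}\leq 1\}\leq\|x\|_v$ (the case $\|x\|_v=+\infty$ being vacuous). For the equality statement, note $\cE_{v,\ast}=\Hom_{A_v}(\cE_v,A_v)$ and hence $\cE_{v,\ast\ast}=\cE_v$, so it remains to compare residue norms; reducing modulo $\m_v$, we need the bidual of the residue norm on $\widehat{E}_v$ to equal itself. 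This holds when $v$ is Archimedean, since then $\widehat{\kappa_v}$ is $\bR$ or $\bC$ and this is Hahn--Banach, and when $v$ is non-Archimedean with $\|\cdot\|_v$ ultrametric, since the residue norm is then an ultrametric norm over the complete field $\widehat{\kappa_v}$ and one invokes the non-Archimedean analogue of Hahn--Banach for ultrametric norms (cf. \cite{ChenMori}, Chapter~1); combined with $\|\cdot\|_{v,\ast\ast}\leq\|\cdot\|_v$ and $\cE_{v,\ast\ast}=\cE_v$ this forces equality of pseudo-norms.

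For (3), if $\eta=0$ both sides vanish, so assume $\eta\neq 0$ and set $\eta_0:=e_1\wedge\cdots\wedge e_r$; since the basis is adapted, $\cE_v$ is free on it, $\Lambda^{r}\cE_v=\det\cE_v=A_v\eta_0$, and $\det\cE_v$ is the finiteness module of $\|\cdot\|_{v,\det}$. Write $\eta=\lambda\eta_0$ with $\lambda\in K^{\times}$. If $|\lambda|_v=+\infty$, then $\|\eta\|_{v,\det}=+\infty$ and any decomposition $\eta=x_1\wedge\cdots\wedge x_r$ with $x_i\in\cE_v$ would force $\eta\in\Lambda^{r}\cE_v=A_v\eta_0$, i.e. $\lambda\in A_v$, a contradiction, so the infimum runs over the empty set and also equals $+\infty$. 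If $\lambda\in\m_v$, then $\eta\in\m_v\det\cE_v$, so the left-hand side is $0$ and the decomposition $\eta=(\lambda e_1)\wedge e_2\wedge\cdots\wedge e_r$ shows the right-hand side is $0$. In the remaining case $\lambda\in A_v^{\times}$, and we reduce modulo $\m_v$: by Proposition~\ref{prop:local_semi-norms} the map $\cE_v\to\widehat{E}_v$ has dense image (as $\kappa_v$ is dense in $\widehat{\kappa_v}$) and $\|x\|_v$ is the residue norm of the image of $x$ for $x\in\cE_v$, while $\|\eta\|_{v,\det}$ is the residue determinant norm of the (nonzero) image of $\eta$ in $\det\widehat{E}_v$. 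The classical Hadamard-type identity for the determinant norm on $\det\widehat{E}_v$ over $\widehat{\kappa_v}$ (cf. \cite{ChenMori}, Chapter~1) then yields the result: the inequality ``$\leq$ each product'' is immediate from the definition of the $\pi$-exterior power pseudo-norm, and the reverse inequality follows by approximating a near-optimal decomposition $\bar\eta=\bar x_1\wedge\cdots\wedge\bar x_r$ in $\det\widehat{E}_v$ by elements $x_i\in\cE_v$ and rescaling one factor by a unit of $A_v$ so that the wedge equals $\eta$ exactly, which alters the product arbitrarily little by continuity of the residue norm. The main obstacle is precisely this last approximation step (and, more routinely, tracking the finiteness modules through the constructions of \cite{Sedillot24_pav}, \S 6.2); everything else is a direct transcription of classical normed-space arguments.
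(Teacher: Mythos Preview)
This proposition is not proved in the present paper; it is quoted from \cite{Sedillot24_pav}, Propositions 6.2.2--6.2.4, and appears in the preliminaries section without argument. There is thus no in-paper proof to compare against.

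Your strategy---pinning down each pseudo-norm by its finiteness module and its residue norm, and then reducing all three assertions to the corresponding classical identities for norms on finite-dimensional $\widehat{\kappa_v}$-vector spaces (dual of quotient equals restriction of dual; Hahn--Banach and its ultrametric analogue for the bidual; the Hadamard description of the determinant norm, all as in \cite{ChenMori}, Chapter~1)---is the natural one and is correct. The case analysis in (3) is clean, and the approximation/rescaling step (lifting a near-optimal decomposition from $\widehat{E}_v$ to $\cE_v$ and adjusting one factor by a unit $\lambda/\mu\in A_v^{\times}$) is exactly what is needed; the only point worth making explicit is that $\cE_v\cap F$ spans $F$ over $K$, which you invoke in (1), and this follows because an adapted basis of $E$ can always be chosen so that an initial segment is a basis of $F$ (this is part of the construction in \cite{Sedillot24_pav}, \S 6.2, and is also used in the proof of Proposition~\ref{def:properties_semi-norm_family}).
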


\begin{definition}
\label{def:diagonalisable_local_semi-norm}
Let $E$ be a finite-dimensional vector space over $K$ and let $\mathbf{e}=(e_1,...,e_r)$ be any basis of $E$. A pseudo-norm $\|\cdot\|$ on $E$ in $v=(|\cdot|, A,\m,\kappa)$ for which $\mathbf{e}$ is an adapted basis is called \emph{diagonalisable} if there exists a basis $\mathbf{e'}=(e'_1,...,e'_r)$ of $E$ such that the transition matrix from $\mathbf{e}$ to $\mathbf{e'}$ belongs to $\GL_{r}(A)$ and we have
\begin{align*}
\forall a_1,...,a_r\in K^{r}, \quad \|a_1e'_{1}+...+a_{r}e'_{r}\|=\left\{\begin{matrix}
\displaystyle\sqrt{\sum_{i=1}^{r}|a_i|^{2}\|e'_{i}\|^2 }&\text{ if } v \text{ is Archimedean},\\
\displaystyle\max_{i=1,...,r} |a_i|\|e'_{i}\|&\text{ if } v \text{ is ultrametric}.\end{matrix}\right.
\end{align*}
In that case, we say that $\mathbf{e'}$ is \emph{orthogonal} for $\|\cdot\|$, it is moreover called \emph{orthonormal} if it satisfies the additional condition: $\|e'_i\|=1$ for all $i=1,...,r$. 
\end{definition}

\subsection{Algebraic and analytic Zariski-Riemann spaces}
\label{sub:ZR_analytic_spaces}

Zariski-Riemann spaces will play a fundamental role in this article. They have already appeared in (\cite{Sedillot_pav}, \S 8 and 10) for fields and we extend their definition for general projective schemes. Our algebraic Zariski-Riemann spaces are special cases of Temkin's relative Zariski-Riemann spaces \cite{Temkin11} (\S \ref{subsub:Zariski-Riemann_space_algebraic}). Our analytic Zariski-Riemann spaces are "analytifications" of the algebraic ones (\S \ref{subsub:ZR_space_analytic}). We then investigate finitely presented and coherent sheaves on such spaces (\S \ref{subsub:coherence_structure_sheaf_field_algebraic}-\ref{subsub:coherence_struture_sheaf_GAGA_analytic}). Finally, we introduce metrised vector bundles on algebraic Zariski-Riemann spaces (\S \ref{subsub:metrised_vector_bundle_ZR_space}). They will serve as the building blocks for the adelic vector bundles we will study in our Arakelov geometric applications.

\subsubsection{Projective (sub)-models}
\label{subsub:projective_sub-models}

Let $K$ be a field and $k$ be a subring of $K$ (we do not necessarily assume that $k$ is a domain with quotient field $K$). Let $X$ be a projective $K$-scheme. By a \emph{projective sub-model} $\cX$ of $X$ over $k$, we mean a projective $k$-scheme $\cX$ together with a factorisation $X\to \cX \to \Spec(k)$, where the first arrow is schematically dominant and the second is projective, i.e. a $X$-modification of $\Spec(k)$ in the terminology of \cite{Temkin11}. Let $\cX_{1},\cX_{2}$ be two projective sub-models of $X$ over $k$. We say that $\cX_{2}$ \emph{dominates} $\cX_{1}$ if there exists a $k$-morphism of schemes $\cX_{2} \to \cX_{1}$ compatible with the schematically dominant maps $X\to \cX_{1}$ and $X\to \cX_{2}$. In this case, such a morphism is unique. The category of projective sub-models is cofiltered: indeed given two projective sub-models $X \to X_{1} \to \Spec(k)$, $X \to \cX_{2} \to \Spec(k)$, let $X$ be the Zariski closure of $X$ in $\cX_{1}\times_{\Spec(k)}\cX_{2}$, this is a projective sub-model of $X$ over $k$ dominating both $\cX_{1}$ and $\cX_{2}$.  

If $k$ is a domain with fraction field $K$, a projective sub-model $\cX$ of $X$ over $k$ is called a \emph{projective model} of $X$ over $A$ if the generic fibre of $\cX$ is isomorphic to $K$. Note that any closed immersion $X\to \mathbb{P}^{n}_{K}$ yields a projective model of $X$ over $k$ by considering the scheme-theoretic image of $X\to \mathbb{P}^{n}_{k}$. 

\subsubsection{Algebraic Zariski-Riemann space}
\label{subsub:Zariski-Riemann_space_algebraic}

Let $K$ be a field, $k$ be a subring of $K$ and $X$ be a projective $K$-scheme. Denote by $\cM_{X/k}$ the category of projective sub-models of $X$ over $k$ introduced above. Define the \emph{(algebraic) Zariski-Riemann space} of $X$ over $k$ as  
\begin{align*}
\ZR(X/k) := \displaystyle\varprojlim_{\cX \in \cM_{X/k}} \cX.
\end{align*}
This is a locally ringed space and we denote its structure sheaf by $\cO_{\ZR(X/k)}$. Recall that 
\begin{align*}
\cO_{\ZR(X/k)} = \displaystyle\varinjlim_{\cX \in \cM_{X/k}} p_{\cX}^{-1}\cO_{\cX},
\end{align*}
where, for all $\cX\in\cM_{X/k}$, $p_{\cX} : \ZR(X/k)\to \cX$ denotes the projection. Moreover, assume that $k$ is an integral domain with fraction field $K$, in the projective limit defining $\ZR(X/k)$, one can only consider projective models of $X$ over $k$.

\begin{example}
\label{example:ZR_algebraic}
When $X=\Spec(K)$, the space $\ZR(X/k):=\ZR(\Spec(K)/k)$ is the classical Zariski-Riemann space consisting of the valuation rings of $K$ containing $k$ equipped with the Zariski topology (cf. e.g. \cite{Sedillot_pav}, Theorem 1.4.3). For any $A\in \ZR(K/k)$, the local ring at $A$ is simply $A$ itself. If we further assume that $k$ is a Prüfer domain with fraction field $K$, then $\ZR(K/k)$ is isomorphic to $\Spec(k)$.
\end{example}

In the general case, $\ZR(X/k)$ admits a similar description. Let $\mathrm{Val}(X/k)$ denote the set of triples $\mathbf{x}=(x,A,\phi)$, where $x\in X$ is a scheme point, $A$ is a valuation ring of $\kappa(x)$ and $\phi : \Spec(A) \to \Spec(k)$ is a morphism that is compatible with $\Spec(\kappa(x))\to X$ such that the induced morphism $\Spec(\kappa(x)) \to \Spec(A) \times_{\Spec(k)} X$ is a closed immersion. Denote by $\cO_{\mathbf{x}}$ the preimage of the valuation ring $A$ in $\cO_{X,x}$. To give the algebraic properties of the ring $\cO_{\mathbf{x}}$, let us recall the following notions. Let $R$ be a ring we call \emph{semi-valuation} on $R$ a valuation $v : R \to \Gamma\cup\{0\}$ such that its support $\p_{v}$ contains the set of zero-divisors of $R$ and, for any $a,b\in R$ such that $v(a)\leq v(b)\neq 0$, then $b|a$. A ring equipped with a semi-valuation is called a \emph{semi-valuation ring}. Let $v$ be a semi-valuation on a ring $A$. We say that the local ring $A_{\p_{v}}$ is the \emph{semi-fraction ring} of $A$.

\begin{theorem}[\cite{Temkin11}, Corollary 3.4.7 and Theorem 3.5.2]
\label{th:ZR_Temkin}
We use the same notation as above. 
\begin{itemize}
	\item[(1)] For any $\mathbf{x}=(x,A,\phi)\in\mathrm{Val}(X/k)$, $\cO_{\mathbf{x}}$ is a semi-valuation ring.
	\item[(2)] There exists a bijective map $\psi: \mathrm{Val}(X/k) \to \ZR(X/k)$ inducing, for any $\mathbf{x}=(x,A,\phi)\in\mathrm{Val}(X/k)$, an isomorphism of local rings $\cO_{\ZR(X/k),\psi(\mathbf{x})} \cong \cO_{\mathbf{x}}$.
	\item[(3)] The natural map $\eta : X \to \ZR(X/k)$ is an injective morphism of locally ringed spaces and induces a sheaf $\cM_{\ZR(X/k)} :=  \eta_{\ast}\cO_{X}$. Any point $y\in \ZR(X/k)$ possesses a unique minimal generalisation $x\in \eta(X)$, we have an isomorphism $\cM_{\ZR(X/k),y}\cong \cO_{X,x}$ and $\cM_{\ZR(X/k),y}$ is the semi-fraction field of $\cO_{\ZR(X/k),y}$. Moreover, $\cO_{\ZR(X/k)}$ is a subsheaf of  $\cM_{\ZR(X/k)}$. 
\end{itemize}
\end{theorem}

Let us now briefly discuss some elementary functorial properties of algebraic Zariski-Riemann spaces. First, assume that $f:Y\to X$ is a dominant $K$-morphism between projective $K$-schemes. Then any projective sub-model of $X$ over $k$ is a projective sub-model of $Y$ over $k$. Therefore, we get a topologically surjective morphism of locally ringed spaces $\ZR(Y/k)\to \ZR(X/k)$. 

Now assume that $f:Y \to X$ is a closed immersion of projective $K$-schemes. Let $\mathbf{y}=(y,A,\phi)\in \ZR(Y/k)$, where $y\in Y$ is a scheme point, $A$ is a valuation ring of $\kappa(y)$ and $\phi : \Spec(A) \to \Spec(k)$ is a morphism that is compatible with $\Spec(\kappa(y))\to Y$ such that the induced morphism $\Spec(\kappa(y)) \to \Spec(A) \times_{\Spec(k)} Y$ is a closed immersion. Then the morphism $\Spec(\kappa(x)) \to \Spec(A) \times_{\Spec(k)} X$ induced by $\varphi$ is again a closed immersion, as it is the composition of the closed immersion $\Spec(\kappa(y)) \to \Spec(A) \times_{\Spec(k)} Y$ and pullback of the closed immersion $Y\to X$ along the projection $\Spec(A) \times_{\Spec(k)} X \to X$. Therefore we get an injective map $\ZR(Y/k)\to \ZR(X/k)$. Moreover, this map is continuous by (\cite{Temkin11}, Lemma 3.1.1). Alternatively, one can see the map $\ZR(Y/k)\to\ZR(X/k)$ by looking at projective sub-models as follows. Let $\cX$ be a projective sub-model of $X$ over $k$. Denote by $\cY$ the scheme-theoretic image of $Y$ in $\cX$, this is a projective sub-model of $Y$ over $k$. Moreover, using Segre embeddings, one can see that any projective sub-model of $Y$ over $k$ is dominated by a projective sub-model as above. Thus, the map $\ZR(Y/k)\to\ZR(X/k)$ is the projective limit of the closed immersions $\cY\to\cX$, where $\cX$ runs over the projective sub-models of $X$ over $k$ and $\cY$ denotes the scheme-theoretic closure of $Y$ in $\cY$. In particular, we see that the map $\ZR(Y/k)\to\ZR(X/k)$ is a morphism of locally ringed spaces. 

Consider an arbitrary $K$-morphism $f:Y\to X$ between projective $K$-schemes. Let $Y'$ denote the scheme-theoretic image of $Y$ in $X$. Then $f$ is the composition of a dominant morphism and a closed immersion. Using the two cases above, we get a morphism of locally ringed spaces $f_{\ZR}:\ZR(Y/k)\to\ZR(X/k)$.
 
Finally, assume that $A$ is a Prüfer domain with fraction field $K$ and that $k$ is the prime subring of $K$. Then we have seen that $\ZR(K/A)$ is a subspace of $\ZR(K/k)$ which identifies with $\Spec(A)$. More generally, using Temkin's interpretation of $\ZR(X/A)$ and $\ZR(X/k)$ as spaces of semi-valuations on $X$, we get a morphism of locally ringed spaces $\ZR(X/A)\to \ZR(X/k)$ that is topologically injective.

\subsubsection{Analytic Zariski-Riemann space}
\label{subsub:ZR_space_analytic}

Let us now give the analytic counterpart of the spaces introduced in \S \ref{subsub:Zariski-Riemann_space_algebraic}. Let $K$ be a field, $k$ be a subring of $K$ and $X$ be a projective $K$-scheme. Assume that $k$ is equipped with a norm $\|\cdot\|$ such that $(k,\|\cdot\|)$ is a geometric base ring. This implies that for any projective sub-model $\cX\in\cM_{X/k}$, we have an associated Berkovich analytic space $\cX^{\an}$. As a set, $\cX^{\an}$ is described as
\begin{align*}
\cX^{\an} = \{x=(p,|\cdot|_{x}) : p \in \cX \text{ and }|\cdot|_{x} \text{ is an absolute value on }\kappa(x) \text{ s.t. }|\cdot|_{x|k}\in \cM(k,\|\cdot\|)\}.
\end{align*}
Moreover, for any $x=(p,|\cdot|_{x})\in \cX^{\an}$, we denote by $\widehat{\kappa}(x)$ the completion of $\kappa(p)$ w.r.t. $|\cdot|_{x}$. It is proved in \cite{LemanissierPoineau24} that $\cX^{\an}$ is a compact Hausdorff topological space that comes equipped with a structure sheaf $\cO^{\an}_{\cX}$.

Define the \emph{analytic Zariski-Riemann space} of $X$ over $k$ as
\begin{align*}
\ZR(X/k)^{\an} := \displaystyle\varprojlim_{\cX \in \cM_{X/k}} \cX^{\an}.
\end{align*}
This is a locally ringed space whose structure sheaf is denoted by $\cO^{\an}_{\ZR(X/k)}$. Recall that 
\begin{align*}
\cO_{\ZR(X/k)^{\an}} = \displaystyle\varinjlim_{\cX \in \cM_{X/k}} (p^{\an}_{\cX})^{-1}\cO_{\cX^{\an}},
\end{align*}
where, for all $\cX\in\cM_{X/k}$, $p^{\an}_{\cX} : \ZR(X/k)^{\an}\to \cX^{\an}$ denotes the projection. Let $\mathbf{x}=(x_{\cX})_{\cX\in\cM_{X/k}}\in \ZR(X/k)^{\an}$. Then its residue field $\kappa(\mathbf{x})$ is described as the union of the residue fields $\kappa(x_{\cX})$, where $\cX$ runs over the projective submodels in $\cM_{X/k}$. It is thus a Henselian valued field (\cite{Poineau13a}, Théorème 5.1 and Corollaire 5.3 and \cite{stacks}, \href{https://stacks.math.columbia.edu/tag/04GI}{Lemma 04GI}). The completion of $\kappa(\mathbf{x})$ w.r.t. to its absolute value is called the \emph{completed residue field} of $\mathbf{x}$ and is denoted by $\widehat{\kappa}(\mathbf{x})$. Note that we have a commutative diagram of locally ringed spaces
\begin{center}
\begin{tikzcd}
\ZR^{}(X/k)^{\an} \arrow[d] \arrow[r] & \ZR(X/k) \arrow[d] \\
{\cM(k,\|\cdot\|)} \arrow[r]          & \Spec(k)          
\end{tikzcd}.
\end{center}

The analytic Zariski-Riemann space also admits a description in terms of pseudo absolute values. Let $\mathbf{x}\in\ZR(X/k)^{\an}$. Then the description given in \S \ref{subsub:Zariski-Riemann_space_algebraic} implies that $j_{X/k}(\mathbf{x})$ can be seen as a triple $(x,A,\varphi)$, where $x\in X$ is a scheme point, $A$ is a valuation ring of $\kappa(x)$ and $\varphi:\Spec(A)\to\Spec(k)$ is a morphism satisfying several properties. we call $x$ the \emph{underlying scheme point} of $\mathbf{x}$. Note that the residue field $\kappa(j_{X/k}(\mathbf{x}))$ is the residue field of $A$ and thus, we obtain the following description.

\begin{proposition}
\label{prop:ZR_analytic_description_Berkovich-like}
We have a bijection between $\ZR(X/k)^{\an}$ and 
\begin{align*}
\{\mathbf{x}=(\mathbf{p},|\cdot|_{\mathbf{x}}): \mathbf{p}\in\mathrm{Val}(X/k) \text{ and } |\cdot|_{\mathbf{x}}\in M_{\kappa(p)} \text{ with finiteness ring }A_{\mathbf{p}}\}.
\end{align*}
\end{proposition}

\begin{example}
\label{example:ZR_space_analytic}
\begin{itemize}
	\item[(1)] Assume that $X=\Spec(K)$. Let $k$ denote the prime subring of $K$ equipped with the norm $\|\cdot\|$ defined to be $|\cdot|_{\infty}$ if $\mathrm{char}(K)=0$ and the trivial norm if $\mathrm{char}(K)>0$. Then $\ZR(K)^{\an}:=\ZR(X/k)$ is homeomorphic to the space $M_{K}$ of pseudo-absolute values on $K$ (\cite{Sedillot_pav}, Corollary 10.2.4). 
	\item[(2)] Assume that $X=\Spec(K)$ and that $k$ is a field equipped with the trivial norm. Then $\ZR(K/k)^{\an}:=\ZR(K/k)^{\an}$ is homeomorphic to the space of pseudo-absolute values on $K$ that restrict to the trivial absolute value on $k$.
\end{itemize}
\end{example}

As in the algebraic case, we discuss two cases of functoriality. First, let $f:Y\to X$ be a $K$-morphism, where $Y$ is a projective $K$-scheme. Then the morphism $\ZR(Y/k) \to \ZR(X/k)$ described earlier induces a morphism of locally ringed spaces $f^{\an}:\ZR(Y/k)^{\an}\to\ZR(X/k)^{\an}$. This morphism is topologically surjective, resp. topologically closed and injective, if $f$ is dominant, resp. a closed immersion. Second, assume that $A$ is a Prüfer domain with fraction field $K$ and that $k$ is the prime subring of $K$. Assume that $A$ is equipped with a norm $\|\cdot\|$ such that $(A,\|\cdot\|)$ is an integral structure for $K$. Recall that $\ZR(K/A)^{\an}\cong\cM(A,\|\cdot\|)$ is a subspace of $\ZR(K)^{\an}\cong M_{K}$. More generally, $\ZR(X/A)^{\an}$ is a (closed) subspace of $\ZR(X)^{\an}$.

A variant of bullet (2) above is as follows. Assume that $k$ is a valuation ring such that $\Frac(k)\subset K$ with residue field $\kappa$. Let $|\cdot|$ be an absolute value on $\kappa$. This data determines a pseudo-absolute value $v$ on $\Frac(k)$. Denote by $\widehat{\kappa}$ the completion of $\kappa$ w.r.t. $|\cdot|$. Then 
	\begin{align*}
	\ZR(X/k)^{\an} := \displaystyle\varprojlim_{\cX \in \cM_{X/k}} (\cX\otimes_{k}\widehat{\kappa})^{\an}
	\end{align*}
	is the \emph{local Zariski-Riemann analytic space} associated with $X$ above $v$ defined in (\cite{Sedillot_pav}, \S 8.3). As above, we have a commutative diagram of locally ringed spaces
	\begin{center}
\begin{tikzcd}
\ZR^{}(X/k)^{\an} \arrow[d] \arrow[r] & \ZR(X/k) \arrow[d] \\
\{v\} \arrow[r]                       & \Spec(k)          
\end{tikzcd}.
	\end{center}
For any $x=(x_{\cX})_{\cX\in\cM_{X/k}}\in \ZR(X/k)^{\an}$, we define the \emph{completed residue} field as the completion of the union of the residue fields $\kappa(x_{\cX})$, where $\cX$ runs over $\cM_{X/k}$.

In the particular case where $X=\Spec(K)$, then $\ZR(X/k)^{\an}$ is homeomorphic to the set of pseudo-absolute values on $K$ extending $v$ on $\Frac(k)$ (\cite{Sedillot_pav}, Theorem 8.2.4). If we further assume that $K=\Frac(k)$, then $\ZR(X/k)^{\an}$ is simply the point $\{v\}\subset M_{K}$ whose residue field is $\widehat{\kappa}$. 

\begin{remark}
\label{rem:variant_ZR_analytic}
Although we use similar notation for global and local Zariski-Riemann spaces, the definitions do not coincide. The reason is the following. Assume that $k$ is the prime subring of a field $K$ and let $f:X\to \Spec(K)$ be a projective $K$-scheme. As it is customary in algebraic geometry, we want to consider the morphism $f^{\an}:\ZR(X)^{\an} \to M_{K}\cong \ZR(K/k)^{\an}$ as a family of fibres. Let $v\in M_{K}$. The fibre $(f^{\an})^{-1}(v) := \ZR(X)^{\an} \times_{M_{K}} \{v\}$ is isomorphic to the space $\ZR(X/A_{v})^{\an}$ (cf. \cite{LemanissierPoineau24}, Proposition 4.5.3).
\end{remark}

\subsubsection{Coherence of the structure sheaf and coherent modules: algebraic case}
\label{subsub:coherence_structure_sheaf_field_algebraic}

Let $K$ be a field and $k\subset K$ be a subring that is assumed to be stably coherent (e.g. $k$ is the prime subring or a Prüfer domain). Let $X$ be a projective integral $K$-scheme. Let us recall a well-known fact about finitely presented sheaves of $\cO_{\ZR(X/k)}$-modules on $\ZR(X/k)$.

\begin{proposition}
\label{prop:fp_sheaves_ZR_algebraic}
Let $U$ be a quasi-compact open subset of $\ZR(X/k)$, written as $U=\varprojlim_{\cX\in \cM_{U}}U_{\cX}$, where $\cM_{U}$ is cofinal in $\cM_{X/k}$ and the $U_{\cX}$'s are open subsets of the $\cX$'s. Then we have an equivalence of categories between $\varinjlim_{\cX\in\cM_{U}}\mathrm{Fp}(U_{\cX})$ and $\mathrm{Fp}(U)$. Moreover, this equivalence restricts to an equivalence of categories between $\varinjlim_{\cX\in\cM_{U}}\mathrm{Vb}(U_{\cX})$ and $\mathrm{Vb}(U)$.
\end{proposition}

\begin{proof}
Note that $\ZR(X/k)$ is coherent (i.e. quasi-compact, quasi-separated and admits a basis of quasi-compact open subsets) and sober by (\cite{Fujiwara-Kato}, Chapter 0, Theorem 2.2.10). Thus $U$ is coherent and sober as well as the $U_{\cX}$'s (\emph{loc. cit.}, Chapter 0, Proposition 2.2.9). Moreover, the transition maps between the $U_{\cX}$'s are quasi-compact. Therefore, the result follows from (\emph{loc. cit.}, Chapter 0, Theorem 4.2.2).
\end{proof}

We now prove that the structure sheaf of $\ZR(X/k)$ is coherent. 

\begin{proposition}
\label{prop:coherence_algeabraic_ZR_sheaf}
Let $U$ be a quasi-compact open subset of $\ZR(X/k)$, written as $U=\varprojlim_{\cX\in \cM_{U}}U_{\cX}$, where $\cM_{U}$ is cofinal in $\cM_{X/k}$ and the $U_{\cX}$'s are open subsets of the $\cX$'s. The sheaf $\cO_{U}$ is coherent. In particular, a sheaf of $\cO_{U}$-modules is coherent iff it is finitely presented. Moreover, any coherent sheaf on $U$ has Tor-dimension $\leq 1$.
\end{proposition}

\begin{proof}
It suffices to prove the result for $U=\ZR(X/k)$. Throughout the proof, we denote by $\cM$ the category of projective sub-models of $X$ over $k$ that are integral. Note that since $X$ is integral, $\cM$ is cofinal in $\cM_{X/k}$ and thus $\ZR(X/k)\cong\varprojlim_{\cX\in\cM}\cX$. We adapt a part of the proof of (\cite{Kerz17}, Proposition 6.4). Let us start by proving two lemmas. 

\begin{lemma}
\label{lemma:Tor-dimension_pullback}
\begin{itemize}
		\item[(i)] Let $\cX\in\cM$. Then for any $\cX' \in \cM$ with arrow $q:\cX'\to \cX$, the pullback functor $q^{\ast}$ preserves coherent modules of Tor-dimension $\leq 1$. Moreover, the restricted functor is exact.
		
		\item[(ii)] Let $\cX\in \cM$. Let $\varphi : \cF \to \cG$ be a morphism of coherent sheaves on $\cX$ such that $\cF,\cG$ and $\coker(\varphi)$ are of Tor-dimension $\leq 1$. Then for any morphism $q:\cX'\to \cX$, the natural maps
		\begin{align*}
		q^{\ast}(\ker(\varphi)) \to \ker(q^{\ast}\varphi), \quad q^{\ast}(\im(\varphi)) \to \im(q^{\ast}\varphi)
		\end{align*}
		are isomorphisms.
\end{itemize}
\end{lemma}

\begin{proof}
\textbf{(i):} Let $\cF$ be a coherent $\cO_{\cX}$-module of Tor-dimension $\leq 1$. Recall that $\cO_{\cX}$ is coherent since $k$ is stably coherent. Thus, there exists an exact sequence $\displaystyle 0\to\cE_{1}\xrightarrow{\varphi}\cE_{0}\to\cF\to 0$ with $\cE_{0},\cE_{1}$ vector bundles, of rank $n,m$ respectively.  Let $q:\cX'\to \cX$ be an arrow in $\cM$. By right-exactness of $q^{\ast}$, the sequence $0\to q^{\ast}\cE_{1}\xrightarrow{q^{\ast}\varphi} q^{\ast}\cE_{0} \to q^{\ast}\cF\to 0$ is exact at $q^{\ast}\cE_{0},q^{\ast}\cF$. Let us prove that $q^{\ast}\varphi$ is injective. Let $\eta',\eta$ be the generic point of $\cX',\cX$ respectively. Then $(q^{\ast}\varphi)_{\eta'}$ identifies with $\varphi_{\eta}$, and is hence injective. Therefore, for any $x'\in \cX'$, the map $(q^{\ast}\varphi)_{x'}:\cO_{\cX',x'}^{m}\to\cO_{\cX',x'}^{n}$ is injective since $\cO_{\cX',x'}$ embeds into $K(\cX')$. Hence $q^{\ast}\varphi$ is injective and $q^{\ast}\cF$ is of Tor-dimension $\leq 1$. The exactness of the restriction $q^{\ast}$ follows from the nine lemma. 

\textbf{(ii):} It is a direct consequence of (i). 
\end{proof}

\begin{lemma}
\label{lemma:Tor-dimension_flattening}
Let $\cX\in \cM$. 
\begin{itemize}
	\item[(i)] Let $\cF$ be a coherent $\cO_{\cX}$-module. Then there exists a map $q:\cX'\to \cX$ in $\cM$ with $\kappa(\cX')=\kappa(\cX)$ and $q^{\ast}\cF$ is of Tor-dimension $\leq 1$.
	\item[(ii)] Let $\varphi : \cF \to \cG$ be a morphism of coherent $\cO_{\cX}$-modules. Then there exists a map $q:\cX'\to \cX$ in $\cM$ with $\kappa(\cX')=\kappa(\cX)$ such that $q^{\ast}\cF,q^{\ast}\cG,\ker(q^{\ast}\varphi),\im(q^{\ast}\varphi)$ and $\coker(q^{\ast}\varphi)$ are of Tor-dimension $\leq 1$.
\end{itemize}
\end{lemma}

\begin{proof}
\textbf{(i):} By (\cite{stacks}, \href{https://stacks.math.columbia.edu/tag/0ESR}{Lemma 0ESR}), there exists a non-empty open subset $U\subset \cX$ and a $U$-modification $q : \cX'\to \cX$ such that $q^{\ast}\cF$ is of Tor-dimension $\leq 1$.

\textbf{(ii):} Note that for any morphism $q:\cX'\to \cX$, we have $q^{\ast}\coker(\varphi)\cong \coker(q^{\ast}\varphi)$ by right-exactness. By (i), there exists $q:\cX'\to \cX$ such that $q^{\ast}\cF,q^{\ast}\cG$ and $\coker(q^{\ast}\varphi)$ have Tor-dimension $\leq 1$. It follows that $\ker(q^{\ast}\varphi)$ and $\im(q^{\ast}\varphi)$ also have Tor-dimension $\leq 1$. 
\end{proof}

To prove that $\cO_{\ZR(X/k)}$ is coherent, it suffices to prove that for any open subset $U\subset \ZR(X/k)$ and for any morphism $f: \cO_{U}^{n}\to \cO_{U}$, the kernel $\ker(f)$ is of finite type. First assume that $U=\ZR(X/k)$. Let $f:\cO_{\ZR(X/k)}^{n}\to \cO_{\ZR(X/k)}$ be such a morphism. By Proposition \ref{prop:fp_sheaves_ZR_algebraic}, there exists $\cX\in\cM$ such that $f$ is induced by a morphism $\varphi:\cO_{\cX}^{n}\to \cO_{\cX}$. By Lemma \ref{lemma:Tor-dimension_flattening} (ii) there exists a morphism $q:\cX'\to \cX$ in $\cM$ such that $\ker(q^{\ast}\varphi),\im(q^{\ast}\varphi)$ and $\coker(q^{\ast}\varphi)$ have Tor-dimension $\leq 1$. Moreover, since $\cX'$ is coherent, $\ker(q^{\ast}\varphi)$ is of finite type, namely there exists a surjection $\cO_{\cX'}^{m}\to \ker(q^{\ast}\varphi)$. Note that $q^{\ast}\varphi$ also defines $f$ and by Lemma \ref{lemma:Tor-dimension_pullback} (ii), $\ker(q^{\ast}\varphi)$ defines $\ker(f)$, which is therefore of finite type. The case of a general $U$ is treated exactly the same way using Proposition \ref{prop:fp_sheaves_ZR_algebraic}.
\end{proof}

\begin{remark}
\label{rem:coherent_sheaves_Tor-dimension}
The fact that coherent sheaves on $\ZR(X/k)$ have Tor-dimension $\leq 1$ indicates that cohomologically speaking, algebraic Zariski-Riemann spaces behave like a smooth projective curve over a field. 
\end{remark}

\subsubsection{Finitely presented modules on analytic Zariski-Riemann spaces}
\label{subsub:coherence_struture_sheaf_GAGA_analytic}

We now want to give an analytic counterpart to the results of \ref{subsub:coherence_structure_sheaf_field_algebraic} in the analytic case. Let $K$ be a field and $k$ be a subring of $K$. Let $X$ be a projective $K$-scheme. Assume that $k$ is equipped with a norm $\|\cdot\|$ such that $(k,\|\cdot\|)$ is a geometric base ring. We first want to link the category of finitely presented sheaves on $\ZR(X/k)^{\an}=\varprojlim_{\cX\in\cM_{K/k}}\cX^{\an}$ to the inductive limit of the categories of finitely presented sheaves on the $\cX^{\an}$'s. Unfortunately, the results of (\cite{Fujiwara-Kato}, Chapter 0) do not apply and, since we did not find any suitable reference in the literature, we have to prove the analogue of Proposition \ref{prop:fp_sheaves_ZR_algebraic} directly.  

\begin{proposition}
\label{prop:fp_sheaves_compact_OX_mdoules}
Let $(X,\cO_{X})$ be a locally ringed space and let $\cF$ be an $\cO_{X}$-module of finite presentation. Let $(\cG_{i})_{i\in I}$ a filtered inductive system of $\cO_{X}$-modules. Then the canonical map of sheaves of $\cO_{X}$-modules 
\begin{align*}
\displaystyle\varinjlim_{i\in I}\Hom_{\cO_{X}}(\cF,\cG_{i}) \to \Hom_{\cO_{X}}(\cF,\varinjlim_{i\in I}\cG_{i})
\end{align*}
is an isomorphism.
\end{proposition}

\begin{proof}
It suffices to check that for any $x\in X$, the induced map on stalks
\begin{align*}
\left(\displaystyle\varinjlim_{i\in I}\Hom_{\cO_{X}}(\cF,\cG_{i})\right)_{x} \to \Hom_{\cO_{X}}(\cF,\varinjlim_{i\in I}\cG_{i})_{x}
\end{align*}
is an isomorphism. Let $x\in X$, we have isomorphisms
\begin{align*}
\left(\displaystyle\varinjlim_{i\in I}\Hom_{\cO_{X}}(\cF,\cG_{i})\right)_{x}&\cong\varinjlim_{i\in I}\Hom_{\cO_{X}}(\cF,\cG_{i})_{x} \cong \varinjlim_{i\in I}\Hom_{\cO_{X,x}}(\cO_{X,x},\cG_{i,x})\\ 
&\cong \Hom_{\cO_{X,x}}(\cO_{X,x},\varinjlim_{i\in I}\cG_{i,x}) \cong \Hom_{\cO_{X}}(\cF,\varinjlim_{i\in I}\cG_{i})_{x},
\end{align*}
where the second and the fourth isomorphisms are given by (\cite{Gortz10}, Proposition 7.27) and the third is given by the fact that compact objects in the category of modules over a ring are precisely the finitely presented ones. 
\end{proof}

Let us now consider the following setup. We consider a cofiltered projective system $(X_{i})_{i\in I}$ of locally compact Hausdorff topological spaces whose transition maps $p_{ij}:X_{j}\to X_{i}$ are surjective and proper. Consider the projective limit $X:=\varprojlim_{i\in I}X_{i}$. This is a locally compact Hausdorff space and the projections $p_{i}:X\to X_{i}$ are surjective and proper. 

\begin{proposition}
\label{prop:abstract_nonsense_sheaves_of_modules_projective_limit}
For any $i\in I$, let $\cF_{i}$ be a sheaf of $\cO_{X_{i}}$-modules. Assume that for any $i,j\in I$ such that $i\leq j$, we have a morphism $\varphi_{ij}:p_{ij}^{-1}\cF_{i}\to\cF_{j}$ such that $\varphi_{ik}=\varphi_{jk}\circ p_{jk}^{-1}\varphi_{ij}$ if $i\leq j \leq k$, so that $(p_{i}^{-1}\cF_{i})$ is an inductive system of sheaves on $X$. Denote $\cF:=\varinjlim_{i\in I}p_{i}^{-1}\cF_{i}$. 
\begin{itemize}
	\item[(1)] Let $i_{0}\in I$ and $K_{i_{0}}\subset X_{i_{0}}$ be a compact subset. For any $i\geq i_{0}$, denote $K_{i}:=p_{i_{0},i}^{-1}(K_{i_{0}})$, this is a compact subset of $K_{i}$, and $K:=\varprojlim_{i\geq i_{0}}K_{i}$, this is a compact subset of $X$. Then the canonical map
	\begin{align*}
	\displaystyle\varinjlim_{i\geq i_{0}}H^{0}(K_{i},\cF_{i}) \to H^{0}(K,\cF)
	\end{align*}
	is an isomorphism.
	\item[(2)] Let $i_{0}\in I$. Then the canonical morphism 
	\begin{align*}
	\displaystyle\varinjlim_{i\geq i_{0}}p_{i_{0}i,\ast}p_{i_{0}i}^{\ast}\cF_{i_{0}} \to p_{i_{0},\ast}p_{i_{0}}^{\ast}\cF_{i_{0}}
	\end{align*}
	is an isomorphism.
\end{itemize}
\end{proposition}

\begin{proof}
\textbf{(1):} We may assume that $I=\{j\geq i\}$. Since $K$ is compact, (\cite{Schapira23}, Proposition 11.1.2) yields an isomorphism
\begin{align*}
H^{0}(K,\cF) \cong \displaystyle\varinjlim_{i\in I} H^{0}(K,p_{i}^{-1}).
\end{align*}
Therefore it suffices to show the bijectivity of the map $\varinjlim_{i\geq i_{0}}H^{0}(K_{i},\cF_{i})\to\varinjlim_{i\in I} H^{0}(K,p_{i}^{-1})$. By (\cite{KS90}, Proposition 2.5.1), the canonical map
\begin{align*}
\displaystyle\varinjlim_{i\in I}\varinjlim_{U} H^{0}(U,\cF_{i}) \to \varinjlim_{i\in I}H^{0}(K_{i},\cF_{i}),
\end{align*}
where, for any $i\in I$, $U$ runs over the open neighbourhood of $K_{i}$ in $X_{i}$, is an isomorphism. 

Now let $s\in \varinjlim_{i\in I} H^{0}(K,p_{i}^{-1})$. By compactness of the $K_{i}$'s and $K$, there exist a finite compact covering $K=\bigcup_{k=1}^{n}V_{k}$ of $K$ and $i\in I$ such that: for any $k=1,...,n$, there exists a compact subset $V_{k,i}\subset K_{i}$ such that $V_{k}=p_{i}^{-1}(V_{k,i})$ and $s_{|V_{k}}$ lies in $\varinjlim_{U}H^{0}(U,\cF_{i})$, where $U$ runs over the open neighbourhood of $V_{k,i}=p_{i}(V_{k})$ in $X_{i}$. 

Thus, for any $k=1,...,n$, $s_{|V_{k}}$ define an element in $\varinjlim_{j\geq i}\varinjlim_{V}H^{0}(V,\cF_{j})$, where for any $j\geq i$, $V$ runs over the open neighbourhoods of $p_{j}(V_{k})=:V_{k,j}$ in $X_{j}$. By the previous isomorphism, this defines an element in $\varinjlim_{j\geq i}H^{0}(V_{k,j},\cF_{j})$.  Since the $V_{k,j}\cap V_{k',j}$'s are compact, the first isomorphism above ensures that we can glue these elements to obtain a unique element in $\varinjlim_{i\geq i_{0}}H^{0}(K_{i},\cF_{i})$ that is mapped to $s$ by construction. 

\textbf{(2):} Let $x\in X_{i_{0}}$. By (\cite{KS90}, Remark 2.5.3), we have isomorphisms
\begin{align*}
&\left(\displaystyle\varinjlim_{i\geq i_{0}}p_{i_{0}i,\ast}p_{i_{0}i}^{\ast}\cF_{i_{0}}\right)_{x}\cong \varinjlim_{i\geq i_{0}} \left(p_{i_{0}i,\ast}p_{i_{0}i}^{\ast}\cF_{i_{0}}\right)_{x} \cong \varinjlim_{i\geq i_{0}}H^{0}(p_{i_{0}i}^{-1}(x),p_{i_{0}i}^{\ast}\cF_{i_{0}}),\\
&\left(p_{i_{0},\ast}p_{i_{0}}^{\ast}\cF_{i_{0}}\right)_{x}\cong H^{0}(p_{i_{0}}^{-1}(x),p_{i_{0}}^{\ast}\cF_{i_{0}}).
\end{align*}
Now since $p_{i_{0}}^{-1}(x)\cong \varprojlim_{i\leq i_{0}}p_{i_{0}i}^{-1}(x)$, (1) yields an isomorphism
\begin{align*}
\displaystyle\varinjlim_{i\geq i_{0}}H^{0}(p_{i_{0}i}^{-1}(x),p_{i_{0}i}^{\ast}\cF_{i_{0}})\cong H^{0}(p_{i_{0}}^{-1}(x),p_{i_{0}}^{\ast}\cF_{i_{0}})
\end{align*}
which finishes the proof.
\end{proof}

\begin{proposition}
\label{prop:equivalence_categories_fp_sheaves}
Let $i_{0}\in I$ and $\cF_{i_{0}},\cG_{i_{0}}$ be two $\cO_{X_{i_{0}}}$-modules. If $\cF_{i_{0}}$ is finitely presented, then the canonical map
\begin{align*}
\displaystyle\varinjlim_{i\leq i_{0}}\Hom_{\cO_{X_{i}}}(p_{i_{0}j}^{\ast}\cF_{i_{0}},p_{i_{0}i}^{\ast}\cG_{i_{0}}) \to \Hom_{\cO_{X}}(p_{i_{0}}^{\ast}\cF_{i_{0}},p_{i_{0}}^{\ast}\cG_{i_{0}})
\end{align*}
is an isomorphism.
\end{proposition}

\begin{proof}
We have isomorphisms 
\begin{align*}
\Hom_{\cO_{X}}(p_{i_{0}}^{\ast}\cF_{i_{0}},p_{i_{0}}^{\ast}\cG_{i_{0}}) &\cong \Hom_{\cO_{X_{i_{0}}}}(\cF_{i_{0}},p_{i_{0},\ast}p_{i_{0}}^{\ast}\cG_{i_{0}}) \cong \Hom_{\cO_{X_{i_{0}}}}(\cF_{i_{0}},\displaystyle\varinjlim_{i\geq i_{0}}p_{i_{0}i,\ast}p_{i_{0}i}^{\ast}\cF_{i_{0}})\\
& \cong \varinjlim_{i\geq i_{0}}\Hom_{\cO_{X_{i_{0}}}}(\cF_{i_{0}},p_{i_{0}i,\ast}p_{i_{0}i}^{\ast}\cF_{i_{0}})\cong \varinjlim_{i\geq i_{0}}\Hom_{\cO_{X_{i}}}(p_{i_{0}i}^{\ast}\cF_{i_{0}},p_{i_{0}i}^{\ast}\cF_{i_{0}}),
\end{align*}
where the first and fourth ones are given by adjunction, the second one by Proposition \ref{prop:abstract_nonsense_sheaves_of_modules_projective_limit} (2) and the third one by Proposition \ref{prop:fp_sheaves_compact_OX_mdoules}.
\end{proof}

We can now prove the analogue of Proposition \ref{prop:fp_sheaves_ZR_algebraic}.

\begin{proposition}
\label{prop:fp_sheaves_ZR_analytic}
We have an equivalence of categories between $\varinjlim_{\cX\in\cM_{X/k}} \mathrm{Fp}(\cX^{\an})$ and the category $\mathrm{Fp(\ZR(X/k)^{\an})}$. Moreover, this equivalence of categories restricts to an equivalence of categories between $\varinjlim_{\cX\in\cM_{X/k}}\mathrm{Vb}(\cX^{\an})$ and $\mathrm{Vb}(\ZR(X/k)^{\an})$.
\end{proposition}

\begin{proof}
Concerning the first part of the statement, it suffices to prove the essential surjectivity, since the full faithfulness will follow from Proposition \ref{prop:equivalence_categories_fp_sheaves}. Denote $\cM:=\cM_{X/k}$ and $X^{\an}:=\ZR(X/k)^{\an}$. Note that the transition maps between the $\cX^{\an}$'s are proper and surjective. Let $\cF$ be a finitely presented $\cO_{X^{\an}}$-module. There exists a finite open covering $X^{\an}=\bigcup_{k=1}^{n}U_{k}$ such that, for any $k=1,...,n$, there exists a presentation
\begin{align*}
\cO_{U_{k}}^{p_{k}} \to \cO_{U_{k}}^{q_{k}}\to \cF_{|U_{k}}\to 0.
\end{align*}
Write each $U_{k}$ as $\varprojlim_{\cX\in\cM_{k}}U_{k,\cX}$, where $\cM_{k}$ is cofinal in $\cM$ and the $U_{k,\cX}$'s are open subsets of the $\cX^{\an}$'s. The transition maps between the $U_{k,\cX}$'s (for a fixed $k$) are proper and can be assumed to be surjective. By Proposition \ref{prop:equivalence_categories_fp_sheaves}, for any $k=1,...,n$, there exist $\cX_{k}\in \cM_{k}$ and a morphism $\cO_{U_{\cX_{k}}}^{p} \to \cO_{U_{\cX_{k}}}^{q}$ inducing the morphism $\cO_{U_{k}}^{p} \to \cO_{U_{k}}^{q}$. Then $\cF_{k}:=\coker(\cO_{U_{\cX_{k}}}^{p} \to \cO_{U_{\cX_{k}}}^{q})$ is a finitely presented $\cO_{U_{\cX_{k}}}$-module whose pullback to $U_{k}$ is isomorphic to $\cF_{|U_{k}}$. Since we have only finitely many $U_{k}$'s, up to pulling back on a bigger sub-model, we may assume that $\cX_{k}=\cX_{k'}=:\cX$ for all $k,k'=1,...,n$. Since the $(p_{\cX}^{\an})^{\ast}\cF_{k}$'s glue on $X^{\an}$, Proposition \ref{prop:equivalence_categories_fp_sheaves} implies that there exists a morphism $q:\cY\to \cX$ in $\cM$ such that the $q^{\ast}\cF_{k}$'s glue on $\cY$, yielding an $\cO_{\cY^{\an}}$-module $\cF_{\cY}$. By construction, $\cF_{\cY}$ is finitely presented and $(p_{\cY}^{\an})^{\ast}\cF_{\cY}\cong\cF$.

Finally, let $\cE$ be a vector bundle on $X^{\an}$. Choose a finite open covering $X^{\an}=\bigcup_{k=1}^{n}U_{k}$ such that $\cE_{|U_{k}}\cong \cO_{U_{k}}^{r}$ for all $k=1,...,n$, where $r$ is an integer. Let $\cX\in\cM$ and $\cE_{\cX}$ be a finitely presented $\cO_{\cX^{\an}}$-module such that $\cE\cong (p_{\cX}^{\an})^{\ast}\cE_{\cX}$. By Proposition \ref{prop:equivalence_categories_fp_sheaves}, there exist an arrow $q:\cY\to\cX$ in $\cM$ and an open covering $\cY=\bigcup_{k=1}^{n}V_{k}$ such that for any $k=1,...,r$, the isomorphism $\cE_{|U_{k}}\cong \cO_{U_{k}}^{r}$ is the pullback of a morphism $((q^{\an})^{\ast}\cE_{\cX})_{|V_{k}}\to\cO_{V_{k}}^{r}$. Since the morphism induced by $\cE_{|U_{k}}\cong \cO_{U_{k}}^{r}$ on the stalks is an isomorphism and using the surjectivity of $p_{\cY}$, we deduce that the morphism $((q^{\an})^{\ast}\cE_{\cX})_{|V_{k}}\to\cO_{V_{k}}^{r}$ is an isomorphism. Then the $((q^{\an})^{\ast}\cE_{\cX})_{|V_{k}}$'s glue to get a vector bundle on $\cY$ whose pullback to $X^{\an}$ is isomorphic to $\cE$.
\end{proof}

Under certain conditions, we are able to prove that finitely presented sheaves on $\ZR(X/k)^{\an}$ are in fact algebraic. This will rely on the following "GAGA type" result, whose proof reproduces the argument of Poineau in (\cite{Poineau25}, Theorem 2.43). 

\begin{proposition}
\label{prop:GAGA}
Assume that $k$ is a Prüfer domain equipped with a norm $\|\cdot\|$ such that $|\cdot|_{\triv}\leq \|\cdot\|$ and $(k,\|\cdot\|)$ is a geometric base ring and the specification morphism $j_{\Spec(k)} : \cM(k,\|\cdot\|)\to\Spec(k)$ is flat and surjective. Let $\cX$ be a scheme locally of finite type over $k$ (i.e. locally of finite presentation since $k$ is stably coherent). For any coherent sheaf $\cF$ on $\cX$, denote by $\cF^{\an}:=j_{\cX}^{\ast}\cF$, this is a coherent sheaf on $\cX^{\an}$ by (\cite{LemanissierPoineau24}, Lemme 6.5.6). The following assertions hold.
\begin{itemize}
	\item[(1)] The specification morphism $j_{\cX}:\cX^{\an}\to\cX$ is flat.
	\item[(2)] $\cF\mapsto \cF^{\an}$ defines an equivalence of categories between the categories of coherent sheaves on $\cX$ and $\cX^{\an}$.
	\item[(3)] The analytification functor above restricts to an equivalence of categories between the categories of locally free sheaves on $\cX$ and $\cX^{\an}$.
\end{itemize}
\end{proposition}

\begin{proof}
\textbf{(1):} Flatness of $j_{\Spec(k)}$ allows us to adapt the proof of (\cite{LemanissierPoineau24}, Proposition 6.6.4) to obtain, for any integer $n$, the flatness of $j_{\mathbb{A}^{n}_{k}} : \mathbb{A}^{n,\an}_{k} \to \mathbb{A}^{n}_{k}$. One can then argue as in the proof of (\emph{loc. cit.}, Théorème 6.6.5).

\textbf{(2):}  By (\cite{LemanissierPoineau24}, Corollaire 6.6.7), the analytification functor is exact and faithful. By definition of the structure sheaf on $\cM(k,\|\cdot\|)$ and since $|\cdot|_{\triv}\leq \|\cdot\|$, we have inclusions $k\subset \cO(\cM(k,\|\cdot\|))\subset \Frac(k)$. Since $j_{\Spec(k)} : \cM(k,\|\cdot\|)\to\Spec(k)$ is surjective, we have $\cO(\cM(k,\|\cdot\|))\subset \bigcap_{\m\in\Spec(k)}k_{\m}=k$, thus $\cO(\cM(k,\|\cdot\|))=k$. Now using (\cite{Poineau13a}, Corollaire 2.8), we get that, for any integer $n$, $\cO(\mathbb{A}^{n,\an}_{k})=k[T_{1},...,T_{n}]$. Then one can reproduce the proof of (\cite{Poineau25}, Theorem 2.43) to get (2).

\textbf{(3):} This follows from (\cite{LemanissierPoineau24}, Proposition 6.6.9), whose proof remains valid by flatness of $j_{\cX}:\cX^{\an}\to\cX$.
\end{proof}

\begin{proposition}
\label{prop:GAGA_ZR_sheaf}
Assume that $k$ is a Prüfer domain equipped with a norm $\|\cdot\|$ such that $|\cdot|_{\triv}\leq \|\cdot\|$ and $(k,\|\cdot\|)$ is a geometric base ring and the specification morphism $j_{\Spec(k)} : \cM(k,\|\cdot\|)\to\Spec(k)$ is flat and surjective. For any $\cO_{\ZR(K/k)}$-module $\cF$, denote $\cF^{\an}:=j_{X/k}^{\ast}\cF$, where $j_{X/k}:\ZR(X/k)^{\an}\to\ZR(X/k)$ is the specification morphism. The following assertions hold.
\begin{itemize}
	\item[(1)] $\cF\mapsto \cF^{\an}$ defines an equivalence of categories between the categories of finitely presented sheaves on $\ZR(X/k)$ and $\ZR(X/k)^{\an}$.
	\item[(2)] The analytification functor above restricts to an equivalence of categories between the categories of locally free sheaves on $\ZR(X/k)$ and $\ZR(X/k)^{\an}$.
\end{itemize}
\end{proposition}

\begin{proof}
Throughout the proof, we denote $\cM:=\cM_{X/k}$ and $X^{\an}:=\ZR(K/k)^{\an}$. 

\textbf{(1):} For any finitely presented sheaf $\cF$ on $\ZR(X/k)$, the sheaf $\cF^{\an}$ is finitely presented since finitely presented sheaves are preserved by pullback. Now by Propositions \ref{prop:fp_sheaves_ZR_algebraic} and \ref{prop:fp_sheaves_ZR_analytic}: one between $\varinjlim_{\cX\in\cM}\mathrm{Coh}(\cX)=\varinjlim_{\cX\in\cM}\mathrm{Fp}(\cX)$ and $\mathrm{Coh}(\ZR(X/k))=\mathrm{Fp}(\ZR(X/k))$ and one between $\varinjlim_{\cX\in\cM}\mathrm{Fp}(\cX^{\an})$ and $\mathrm{Fp}(\ZR(X/k)^{\an})$. Now the analytification functor $\mathrm{Fp}(\ZR(X/k))\to \mathrm{Fp}(\ZR(X/k)^{\an})$ is the direct limit of the analytification functors from Proposition \ref{prop:GAGA}. Since the latter are equivalences of categories, we deduce the desired equivalence of categories.

\textbf{Proof of (3).} By Propositions \ref{prop:fp_sheaves_ZR_algebraic} and \ref{prop:fp_sheaves_ZR_analytic} again, the categories of locally free sheaves on $\ZR(X/k)$ and $\ZR(X/k)^{\an}$ are respectively equivalent to the direct limit of the corresponding categories of locally free sheaves on the $\cX$'s and $\cX^{\an}$'s. Now using Proposition \ref{prop:GAGA} (3), we get the desired equivalence of categories.
\end{proof}

\begin{remark}
\label{rem:coherence_strcture_sheaf_ZR_analytic}
We do not know if the structure sheaf $\cO_{\ZR(K/k)^{\an}}$ is coherent. It would be a key step to study coherent sheaves on analytic Zariski-Riemann spaces. However, for our applications in Arakelov geometry presented in this article, we will mainly care about vector bundles for which we have a complete description.
\end{remark}

\begin{example}
\label{example:GAGA}
We made several hypotheses that condition the use of Propositions \ref{prop:GAGA}-\ref{prop:GAGA_ZR_sheaf}. Let us give the two cases of applications we will be concerned with in this article. 
\begin{itemize}
	\item[(1)] First assume that $k$ is the prime subring of $K$ equipped with the norm $\|\cdot\|$ defined to be $|\cdot|_{\infty}$ if $\mathrm{char}(K)=0$ and the trivial norm if $\mathrm{char}(K)>0$. Then Propositions \ref{prop:GAGA}-\ref{prop:GAGA_ZR_sheaf} hold. 
	\item[(2)] Let $R>0$ and $(k,\|\cdot\|)=(A_{R},\|\cdot\|_{R,\hyb})$ from Proposition \ref{prop:global_space_of_sav_compact_disc}. Then Propositions \ref{prop:GAGA}-\ref{prop:GAGA_ZR_sheaf} hold. Indeed the morphism $\cM(k,\|\cdot\|)\to\Spec(A_{R})$ is flat and surjective, this follows from the explicit description of the stalks of $\cO_{\cM(k,\|\cdot\|)}$ given in (\cite{Sedillot_pav}, Proposition 9.4.10).
\end{itemize}
\end{example}

\subsubsection{(Metrised) coherent sheaves on Zariski-Riemann spaces}
\label{subsub:metrised_vector_bundle_ZR_space}

Let $K$ be a field, $k$ be an integral subdomain of $K$ and $X$ be an integral projective $K$-scheme. Assume that either $k$ is the finiteness ring of a pseudo-absolute value $v\in M_{K}$ or $k$ is equipped with a norm $\|\cdot\|$ making it a geometric base ring. In both case, we have a morphism of locally ringed spaces $j_{X/k} : \ZR(X/k)^{\an}\to \ZR(X/k)$. Recall that the structure sheaf of $\ZR(X/k)$ is coherent and thus coherent sheaves on $\ZR(X/k)$ coincide with finitely presented sheaves.

Let $\cE$ be a coherent sheaf on $\ZR(X/k)$. By a \emph{metric} on $\cE$, we mean a family $\varphi=(|\cdot|_{\varphi}(\mathbf{x}))_{\mathbf{x}\in \ZR(X/k)^{\an}}$ where, for any $\mathbf{x}\in \ZR(X/k)^{\an}$, $|\cdot|_{\varphi}(\mathbf{x})$ is a norm on the $\widehat{\kappa}(\mathbf{x})$-vector space $\cE(\mathbf{x}):=\cE\otimes_{\cO_{\ZR(X/k)}} \widehat{\kappa}(x)$. We further assume that the norms appearing in the metrics are ultrametric over non-Archimedean absolute values.

Let $\varphi=(|\cdot|_{\varphi}(x))_{x\in \ZR(X/k)^{\an}}$ be a metric on $\cE$. We say that $\varphi$ is \emph{lsc/usc/continuous} if, for any open subset $U\subset \ZR(X/k)$ and for any $s\in H^{0}(U,\cE)$, the map
\begin{align*}
\fonction{|s|_{\varphi}(\cdot)}{U^{\an}}{\bR_{\geq 0}}{x}{|s(x)|_{\varphi}(x)}
\end{align*}
is lsc/usc/continuous, where $U^{\an}:=j_{X/k}^{-1}(U)$.

\begin{definition}
\label{def:metrised_vector_bundle_Zariski-Riemann}
A \emph{metrised vector bundle} on $\ZR(X/k)$ is the data $\overline{\cE}=(\cE,\varphi)$ where $\cE$ is a vector bundle on $\ZR(X/k)$ and $\varphi$ is a metric on $\cE$. In that case, $\overline{\cE}$ is called an \emph{lsc/usc/continuous} metrised vector bundle if $\varphi$ is lsc/usc/continuous.
\end{definition} 

\begin{proposition-definition}
\label{prop-def:_metrised_vector_bundles_constructions}
Let $\overline{\cE}=(\cE,\varphi=(|\cdot|_{\varphi}(\mathbf{x}))_{\mathbf{x}\in\ZR(X/k)^{\an}})$ be a metrised vector bundle on $\ZR(X/k)$.
\begin{itemize}
	\item[(1)] Let $\cF$ be a vector subbundle of $\cE$. Let $\mathbf{x}\in\ZR(X/k)^{\an}$. Then the map $\cF(\mathbf{x})\to\cE(\mathbf{x})$ is injective and the norm $|\cdot|_{\varphi}(\mathbf{x})$ induces a norm $|\cdot|_{\varphi_{\cF}}(\mathbf{x})$ on $\cF(\mathbf{x})$. Then $\varphi_{\cF}:=(|\cdot|_{\varphi_{\cF}}(\mathbf{x}))_{\mathbf{x\in\ZR(X/k)^{\an}}}$ is a metric on $\cF$ called the \emph{restriction} of $\varphi$ to $\cF$. Moreover, $\varphi_{\cF}$ is usc/lsc/continuous if so is $\varphi$.
	\item[(2)] Let $q:\cE\to\cG$ be a surjective homomorphism between vector bundles on $\ZR(X/k)$. Let $\mathbf{x}\in\ZR(X/k)^{\an}$. Then the map $\cE(\mathbf{x})\to\cG(\mathbf{x})$ is surjective and the norm $|\cdot|_{\varphi}(\mathbf{x})$ induces by quotient a norm $|\cdot|_{\varphi_{\cG}}(\mathbf{x})$ on $\cG(\mathbf{x})$. Then $\varphi_{\cG}:=(|\cdot|_{\varphi_{\cG}}(\mathbf{x}))_{\mathbf{x\in\ZR(X/k)^{\an}}}$ is a metric on $\cG$ called the \emph{quotient metric} induced by $\varphi$ on $\cG$. Moreover, $\varphi_{\cG}$ is usc if so is $\varphi$.
	\item[(3)] Let $\mathbf{x}\in\ZR(X/k)^{\an}$. Denote by $|\cdot|_{-\varphi}(\mathbf{x})$ the dual norm induced by $|\cdot|_{\varphi}(\mathbf{x})$ of $|\cdot|_{\varphi}(\mathbf{x})$ on $(-\cE)(\mathbf{x})$. Then $-\varphi=(|\cdot|_{-\varphi}(\mathbf{x}))_{\mathbf{x}\in\ZR(X/k)^{\an}}$ is a metric on $-\cE$ called the \emph{dual metric} of $\varphi$. Moreover, if $\varphi$ is usc, then $-\varphi$ is lsc. 
	\item[(4)] Let $q:\cE\to\cG$ be a surjective homomorphism between vector bundles on $\ZR(X/k)$. Then the dual morphism $-q : -\cG \to -\cE$ is injective and we have
	\begin{align*}
	-\varphi_{\cG} = (-\varphi)_{-\cG}.
	\end{align*}
	\item[(5)] Let $f:Y\to X$ be a projective morphism between projective integral $K$-schemes inducing a morphism $f^{\an}:\ZR(Y/k)^{\an}\to\ZR(X/k)^{\an}$. Let $\mathbf{y}\in\ZR(Y/k)^{\an}$ with $\mathbf{x}:=f^{\an}(\mathbf{x})$. Define $|\cdot|_{f^{\ast}\varphi}(\mathbf{y})$ to be the norm on $(f^{\ast}\cE)(\mathbf{y})\cong\cE(\mathbf{x})\otimes_{\widehat{\kappa}(\mathbf{x})}\widehat{\kappa}(\mathbf{y})$ defined as the $\pi$-extension of scalars of $|\cdot|_{\varphi}(\mathbf{x})$ if the absolute value on $\widehat{\kappa}(\mathbf{y})$ is Archimedean and as the $\epsilon$-extension of scalars of $|\cdot|_{\varphi}(\mathbf{x})$ if the absolute value on $\widehat{\kappa}(\mathbf{y})$ is non-Archimedean. Then $f^{\ast}\varphi:=(|\cdot|_{f^{\ast}\varphi}(\mathbf{y}))_{\mathbf{y}\in\ZR(X/k)^{\an}}$ is a metric on $f^{\ast}\cE$ called the \emph{pullback} of $\varphi$. Moreover, $f^{\ast}\varphi$ is usc if $\varphi$ is usc.
\end{itemize}
\end{proposition-definition}

\begin{proof}
The fact that the constructions define metrics being clear and since (4) follows from (\cite{ChenMori}, Proposition 1.1.20), we only show the assertions about the regularity. For (1) and (3), this is clear. For (5), this follows from the fact that the extension of scalars can be expressed as an infimum of functions that are usc if $\varphi$ is usc.

Let us prove (2). Let $\cX\in \cM_{X/k}$. Then by (\cite{Fujiwara-Kato}, Chapter 0, Theorem 4.2.1), the morphism $p:\cE\to\cG$ is the pullback of a surjective homomorphism $q_{\cX}:\cE_{\cX}\to\cG_{\cX}$, where $\cE_{\cX},\cG_{\cX}$ are vector bundles on some projective submodel $\cX$. Moreover, by (\emph{loc. cit.}, Chapter 0, Theorem 2.2.13), the projection $p_{\cX}:\ZR(X/k)\to \cX$ is closed. Thus $\ZR(X/k)$ admits a basis of neighbourhood consisting of inverse image of open affine subschemes of $\cX$. On such an open subset $U=p_{\cX}^{-1}(U_{\cX})$ of $\ZR(X/k)$, by (\emph{loc. cit.}, Chapter 0, Proposition 4.4.1), for any $i\geq 0$, we have an isomorphism
\begin{align*}
H^{i}(U,\ker(q)) \cong \displaystyle\varinjlim_{\alpha:\cX'\to\cX}H^{i}(\alpha^{-1}(U_{\cX}),\ker(\alpha^{\ast}p_{\cX})).
\end{align*} 
Up to passing to a dominating submodel, we may assume that $\ker(\alpha^{\ast}p_{\cX})\cong\alpha^{\ast}\ker(p_{\cX})$ for any arrow $\alpha:\cX'\to\cX$ in $\cM_{X/k}$ (cf. Lemma \ref{lemma:Tor-dimension_pullback}). Moreover, by Stein factorisation, we can factor any such arrow $\alpha:\cX'\to\cX$ as the composition of $\beta:\cX'\to\cY$ and $\gamma:\cY\to \cX$, where $\beta_{\ast}\cO_{\cX}=\cO_{\cY}$ and $\gamma$ is finite. Therefore, for any $i\geq 0$, we have isomorphisms
\begin{align*}
H^{i}(\alpha^{-1}(U_{\cX}),\alpha^{\ast}\ker(p_{\cX})) \cong H^{i}(\gamma^{-1}(U_{\cX}),\gamma^{\ast}\ker(p_{\cX})). 
\end{align*}
Since $\gamma$ is finite, it is affine and $\gamma^{-1}(U_{\cX})$ is affine. Since $\gamma^{\ast}\ker(p_{\cX})$ is coherent, we have $H^{i}(\gamma^{-1}(U_{\cX}),\gamma^{\ast}\ker(p_{\cX}))=0$ for all $i\geq 1$. Therefore, for any $i\geq 1$, we have $H^{i}(U,\ker(q))=0$. We deduce that we have a surjective map $q(U):H^{0}(U,\cE)\to H^{0}(U,\cG)$ and for any $\mathbf(x)$ and any section $s\in H^{0}(U,\cG)$, we have
\begin{align*}
|s|_{\varphi_{G}}(\mathbf{x}) = \displaystyle\inf_{t\in H^{0}(U,\cE), q(U)(t)=s} |t|_{\varphi}(\mathbf{x}).
\end{align*}
The assertion follows. 
\end{proof} 

\begin{example}
\label{example:metrised_vector_bundle_ZR}
\begin{itemize}
	\item[(1)] Assume that $k$ is the finiteness ring of a pseudo-absolute value $v\in M_{K}$. 
		\begin{itemize}
			\item[(i)] When $X=\Spec(k)$, $\ZR(X/k)\cong \Spec(k)$ and a metrised vector bundle on $\ZR(X/k)$ is the data of a free $k$-module of finite rank $\cE$ together with a norm on $\cE \otimes_{k}\widehat{\kappa_{v}}$, i.e. a pseudo-normed $K$-vector space in $v$ (cf. \S \ref{subsub:pseudo-norms}).
			\item[(ii)] In general, if $\overline{L}$ is a metrised line bundle on $\ZR(X/k)$, its pullback to $X$ via $\eta : X \to \ZR(X/k)$ will be defined as a \emph{pseudo-metrised} line bundle on $X$ (cf. Definition \ref{def:pseudo-metric} and Proposition \ref{prop:ZR_interpretation_local_pseudo-metrics}).
		\end{itemize}			
		\item[(2)] Assume that $k$ is the prime subring of $K$ equipped with the norm $|\cdot|_{\infty}$ if $k=\bZ$ and the trivial norm if $k$ is a finite field. 
			\begin{itemize}
				\item[(i)] When $X=\Spec(K)$, a metrised vector bundle on $\ZR(X/k)$ is the data of a finite-dimensional $K$-vector space equipped with a family of pseudo-norms $(\|\cdot\|_{v})_{v\in M_{K}}$. We will study these objects in more detail in \S \ref{sec:pseudo-norm_families}.
				\item[(ii)] In general, if $\overline{L}$ is a metrised line bundle on $\ZR(X/k)$, its pullback to $X$ via $\eta : X \to \ZR(X/k)$ will be defined as a line bundle on $X$ equipped with a \emph{pseudo-metric family} (cf. Definition \ref{def:pseudo-metric_family})
			\end{itemize}
		\item[(3)] Assume that $k$ is equipped with a norm $\|\cdot\|$ such that $(k,\|\cdot\|)$ is an integral structure for $K$. Metrised line bundles on $\ZR(X/k)$ are the object of \S \ref{sub:pseudo-metric_families_integral_tac}.
\end{itemize}
\end{example}

\subsection{Adelic curves}
\label{sub:reminder_adelic_curves}

\subsubsection{Adelic structures over a field}
\label{subsub:adelic_curves}

An \emph{adelic curve} is the data $S=(K,(\Omega,\cA,\nu),(\va_{\omega})_{\omega\in \Omega})$ where $K$ is a field, $(\Omega,\cA,\nu)$ is a measure space and $(\va_{\omega})_{\omega\in \Omega}$ is a family of absolute values on $K$ satisfying the following condition:
\begin{align*}
\forall a \in K^{\times}, \quad (\omega\in\Omega) \mapsto \log|a|_{\omega} \in \bR
\end{align*} 
is $\cA$-measurable and $\nu$-integrable. The adelic curve $S$ is called \emph{proper} if the product formula
\begin{align*}
\forall a\in K^{\times},\quad \int_{\Omega}\log|a|_{\omega} \nu(\diff\omega) = 0 
\end{align*}
holds. 

In that case, for any integer $n\geq 1$ and for any $(a_1,...,a_n)\in K^{n}\setminus \{0\}$, we define the \emph{height} 
\begin{align*}
h_{S}(a_1,...,a_n) := \int_{\Omega} \log\max\{|a_1|_{\omega},...,|a_n|_{\omega}\}\nu(\diff\omega). 
\end{align*}

Let $S=(K,(\Omega,\cA,\nu),(\va_{\omega})_{\omega\in \Omega})$ and $S'=(K',(\Omega',\cA',\nu'),(\va_{\omega'})_{\omega'\in \Omega'})$ be two adelic curves. A \emph{morphism} $\alpha : S' \to S$ of adelic curves is a triplet $\alpha=(\alpha^{\sharp},\alpha_{\sharp},I_{\alpha})$, where
\begin{itemize}
	\item $\alpha^{\sharp} : K \to K'$ is a field extension;
	\item $\alpha_{\sharp} : (\Omega',\cA') \to (\Omega,\cA)$ is a measurable map such that 
	\begin{align*}
	\forall \omega'\in \Omega', \quad \forall a\in K,\quad |\alpha^{\sharp}(a)|_{\omega'} = |a|_{\alpha_{\sharp}(\omega')}.
	\end{align*}
	Moreover, the direct image of $\nu'$ by $\alpha_{\sharp}$ is assumed to be equal to $\nu$, namely, for any $f\in \cL^{1}(\Omega,\nu)$, we have
\begin{align*}
\int_{\Omega}f\diff\nu = \int_{\Omega'} f\circ \alpha_{\sharp}\diff\nu';
\end{align*}
	\item  $I_{\alpha} : \cL^1(\Omega',\cA',\nu') \rightarrow \cL^1(\Omega,\cA,\nu)$ is a disintegration kernel of $\alpha_{\sharp}$, namely $I_{\alpha}$ is a linear map such that, for all $g\in \cL^{1}(\Omega',\cA',\nu')$, we have
	\begin{align*}
	\int_{\Omega} I_{\alpha}(g)\diff\nu = \int_{\Omega'} g\diff\nu',
	\end{align*}
and which, for all $f \in \cL^{1}(\Omega,\cA,\nu)$, sends the equivalence class of $f\circ \alpha_{\sharp}$ to the class of $f$.
\end{itemize}

\subsubsection{Algebraic coverings of adelic curves}

Let $S=(K,(\Omega,\cA,\nu),(\va_{\omega})_{\omega\in \Omega})$ be an adelic curve. Let $K'/K$ be an algebraic extension. Then it is possible to define an adelic curve $S\otimes_K K'$ and a morphism $\alpha_{K'/K} : S\otimes_{K} K' \to S$. Moreover, if $S$ is proper, then $S\otimes_{K}K'$ is also proper. 

\subsubsection{Examples of adelic curves}

On a field, there are many possible adelic structures. We list some examples.

\begin{itemize}
	\item Any global field (i.e. number field or function field) can be equipped with a natural adelic structure which corresponds to the classical way to perform arithmetic geometry over these fields (\cite{ChenMori}, \S 3.2.1-3.2.2).
	\item Given any field $K$ and any measure space $(\Omega,\cA,\nu)$, we can consider the proper adelic curve $S=(K,(\Omega,\cA,\nu),(|\cdot|_{\triv})_{\omega\in\Omega})$, where the family $(|\cdot|_{\triv})_{\omega\in\Omega}$ consists of copies of the trivial absolute on $K$. This kind of adelic curve is of particular importance when considering the geometry of numbers and appears naturally when considering Harder-Narasimhan filtrations of adelic vector bundles (\cite{ChenMori24}).
	\item Function fields of polarised varieties and arithmetic varieties can be endowed with proper adelic structures (\cite{ChenMori}, \S 3.2.4-3.2.6). This gives a unified approach to higher dimensional of classical arithmetic geometry over global fields. 
	\item Let $K$ be a countable field of characteristic zero. In (\cite{ChenMori21}, \S 2.7), it is proved that there exists an adelic curve $S=(K,(\Omega,\cA,\nu),(\va_{\omega})_{\omega\in \Omega})$ satisfying the following properties:
	\begin{itemize}
		\item[(1)] $S$ is proper;
		\item[(2)] for any $\omega\in\Omega$, $\va_\omega$ is a non-trivial absolute value on $K$;
		\item[(3)] the set $\{\omega\in\Omega : \va_{\omega} \text{ is non-Archimedean}\}$ is infinite and countable;
		\item[(4)] let $\overline{K}$ be an algebraic closure of $K$, for any subfield $E_0\subset \overline{K}$ which is finitely generated over $\bQ$, then the set
		\begin{align*}
		\{a\in \overline{K} : h_{S\otimes_{K} \overline{K}}(1,a) \leq C \text{ and } [K_{0}(a):K_0] \leq \delta\}
		\end{align*}
		is finite for all $C\in \bR_{\geq 0}$ and $\delta\in \bZ_{\geq 1}$.
	\end{itemize}
	
\end{itemize}

\subsection{Globally valued fields}
\label{sub:GVF}

In this subsection, we recall the essential definitions in the theory of globally valued fields introduced in \cite{GVF24}. These objects can be described as an "axiomatisation of heights" and are similar to adelic curves. 

Let $K$ be a field and $e\in \bR_{\geq 0}$. By a \emph{height} with \emph{Archimedean error} $e$ on $K$, we mean a function $h: \mathbb{A}(K):=\bigsqcup_{n\geq 0}\mathbb{A}^{n+1}(K) \to \bR\cup\{-\infty\}$ satisfying the following axioms:
\begin{itemize}
	\item height if zero: $\forall x\in \mathbb{A}(K)$, $h(x)=-\infty \Leftrightarrow x=0$;
	\item height of one: $h(1,1)=0$;
	\item invariance: $\forall x\in\mathbb{A}^{n+1}(K)$, $\forall \sigma\in \mathfrak{S}_{n+1}$, $h(\sigma(x))=h(x)$; 
	\item additivity: $\forall x,y\in \mathbb{A}(K)$, $h(x\otimes y)= h(x)+h(y)$;
	\item monotonicity: $\forall x,y\in \mathbb{A}(K)$, $h(x)\leq h(x,y)$;
	\item triangle inequality: $\forall x,y\in \mathbb{A}^{n+1}(K)$, $h(x+y)\leq h(x,y)+e$.
\end{itemize}
Such a height $h$ on $K$ is called \emph{global} if $h(x)=0$ for all $x\in K^{\times}$. In that case, we say that $(K,h)$ is a \emph{globally valued field} (\emph{GVF} for short) and $h$ induces a function $\mathbb{P}(K):=\bigsqcup_{n\geq 0}\mathbb{P}^{n}(K)\to \bR_{\geq 0}$ which is called the \emph{GVF height}.

The formalism of globally valued fields can be naturally interpreted through unbounded continuous logic, which allows the use of model theoretic methods to tackle arithmetic problems (cf. e.g. \cite{DHS24}). Moreover, there are several equivalent ways to view globally valued fields. Roughly speaking, a GVF can be interpreted either as a field equipped with a global height, a linear functional on the space of so-called \emph{lattice divisors}, or an equivalence class of proper adelic structures over the field (assuming that the latter is countable). The only precision we give, as it will be used in \S \ref{sub:examples_tac}, is as follows. A GVF structure over a countable field $K$ determines a Borel measure supported on the subspace of $M_{K}$ consisting of absolute values on $K$, yielding a proper adelic curve realising the GVF height. This means that, as long as we are interested in quantities involving heights, we may safely assume that the adelic space of an adelic curve with base field $K$ is a subset of $M_{K}$ equipped with the Borel $\sigma$-algebra and a Borel adelic measure. We refer to (\cite{GVF24}, Theorem 7.7) for the precise statements. If the base field is uncountable, we will see that GVFs are essentially equivalent to \emph{proper topological adelic curves}. 

In this article, we will be mainly interested in the GVFs/adelic curves relation. Let us just mention for now that given a proper adelic curve $S=(K,(\Omega,\cA,\nu),(\va_{\omega})_{\omega\in \Omega})$, the height $h_{S}: \bigsqcup_{n\geq 0}\mathbb{P}^{n}(K)\to \bR_{\geq 0}$ introduced in \S \ref{subsub:adelic_curves} is a GVF height on $K$. 

Globally valued fields turn out to be particularly useful in the situation where we can naturally define a height of interest on a field without \emph{a priori} knowing a possible underlying adelic structure. Then (\cite{GVF24}, Theorem 7.7) ensures that there exists a suitable (topological) adelic curve realising this height.

\section{Topological adelic curves}
\label{sec:topological_adelic_curves}

\subsection{Definitions}

\begin{definition}
\label{def:topological_adelic_curve}
We define the category $\TAC$ of \emph{topological adelic curves} as follows. An object of this category is the data $S=(K,\phi:\Omega\to M_K,\nu)$ where
\begin{itemize}
	\item $K$ is a field; 
	\item $\Omega$ is a Hausdorff topological space called the \emph{adelic space} of $S$ and $\phi:\Omega\to M_K$ is a continuous map called the \emph{structural morphism} of $S$;
	\item $\nu$ is a Borel measure on $\Omega$ satisfying the following condition: for all $f\in K^{\times}$, the map
	\begin{align*}
	\fonction{|f|_{\cdot}}{\Omega}{\bR_{\geq 0} \cup \{+\infty\}}{\omega}{|f|_{\phi(\omega)}}
	\end{align*}
is such that the function $\log|f|_{\cdot}$ is $\nu$-integrable. Note that as $\phi : \Omega \to M_{K}$ is continuous, by definition of the topology of $M_K$, for all $f\in K^{\times}$, the map $|f|_{\cdot}$ is continuous.
\end{itemize}

Then we define morphisms between topological adelic curves. Let $S_K=(K,\phi_K:\Omega_K \to M_{K},\nu_K)$ and $S_L=(L,\phi_L:\Omega_L \to M_{L},\nu_L)$ be two topological adelic curves. A \emph{morphism} $\alpha : S_L \rightarrow S_K$ is the data $(\alpha^{\sharp},\alpha_{\sharp},I_\alpha)$, where
\begin{itemize}
	\item $\alpha^{\sharp} : K \rightarrow L$ is a field extension;
	\item $\alpha_{\sharp} : \Omega_L \rightarrow \Omega_K$ is a continuous map inducing a commutative diagram 
\begin{center}
\begin{tikzcd}
\Omega_L \arrow[r, "\phi_{L}"] \arrow[d, "\alpha_{\sharp}"] & M_L \arrow[d, "\pi_{L/K}"] \\
\Omega_K \arrow[r, "\phi_{K}"]                                                     & M_K                       
\end{tikzcd}.
\end{center}
Moreover, the direct image of $\nu_L$ by $\alpha_{\sharp}$ is assumed to be equal to $\nu_K$. Namely, for any $f\in \cL^{1}(\Omega_{K},\nu_K)$, we have
\begin{align*}
\int_{\Omega_K}f\diff\nu_K = \int_{\Omega_L} f\circ \alpha_{\sharp}\diff\nu_L;
\end{align*}
	\item $I_{\alpha} : L^1(\Omega_L,\nu_L) \rightarrow L^1(\Omega_K,\nu_K)$ is a disintegration kernel of $\alpha_{\sharp}$, namely $I_{\alpha}$ is a linear map such that, for all $g\in L^{1}(\Omega_L,\nu_L)$, we have
	\begin{align*}
	\int_{\Omega_K} I_{\alpha}(g)\diff\nu_K = \int_{\Omega_L} g\diff\nu_L,
	\end{align*}
and which, for all $f \in L^{1}(\Omega_K,\nu_K)$, maps the equivalence class of $f\circ \alpha_{\sharp}$ to the class of $f$.
\end{itemize}

If $S=(K,\phi: \Omega \to M_K,\nu)$ is a topological adelic curve, for all $f\in K^{\times}$, we define the \emph{defect} $d_S(f)$ by 
\begin{align*}
d_{S}(f) := \int_{\Omega} \log|f|_{\omega} \nu(\diff\omega).
\end{align*}  
The topological adelic curve $S$ is called \emph{proper} if, for all $f\in K^\ast$ we have $d_{S}(f)=0$.
\end{definition}

\begin{remark}
\label{rem:tac_existence_of_absolute_value}
Let $S=(K,\phi:\Omega\to M_K,\nu)$ be a topological adelic curve. It is harmless to assume that there exists $\omega\in\Omega$ such that $\phi(\omega)$ is an absolute value on $K$. Indeed, if it not the the case, let $\Omega':=\Omega\sqcup\{\ast\}$ equipped with the disjoint union topology. Extend the map $\phi$ to a map $\phi':\Omega'\to M_{K}$ by sending $\ast$ to the trivial absolute value on $K$. Extend as well $\nu$ to a measure $\nu'$ on $\Omega'$ by setting $\nu'(\ast):=1$. Then $S'=(K',\phi':\Omega'\to M_{K},\nu')$ is a topological adelic curve that is proper iff $S$ is proper. From now on, we always assume that this condition is satisfied.
\end{remark}

\begin{definition}
\label{def:integral_topological_adelic_curves}
An \emph{integral topological adelic curve} is a topological adelic curve $(K,\phi:\Omega\to M_K,\nu)$ such that there exists a tame integral structure $(A,\|\cdot\|_A)$ for $K$ (cf. \S \ref{subsub:prelim_integral_structures}) such that the image of the structural morphism $\phi : \Omega \to M_K$ lies in the global space of pseudo-absolute values $V:=\cM(A,\|\cdot\|)$. The space $V$ is called the \emph{integral space} of $S$ and the integral structure $(A,\|\cdot\|_{A})$ is called the \emph{underlying integral structure} of $S$.

Let $S_K=(K,\phi_K:\Omega_K \to M_K,\nu_K)$ and $S_L=(L,\phi_L:\Omega_L \to M_{L},\nu_L)$ be two integral topological adelic curves with respective integral spaces $V_K,V_L$. A morphism $\alpha=(\alpha^{\sharp},\alpha_{\sharp},I_\alpha) : S_L \rightarrow S_K$ of topological adelic curves  is called \emph{integral} if there exists a continuous map $\tilde{\alpha_{\sharp}} : V_L\to V_K$ such that the diagram 
\begin{center}
\begin{tikzcd}
\Omega_L \arrow[r, "\phi_{L}"] \arrow[d, "\alpha_{\sharp}"] & V_L \arrow[r] \arrow[d, "\tilde{\alpha_{\sharp}}"] & M_L \arrow[d, "\pi_{L/K}"] \\
\Omega_K \arrow[r, "\phi_{K}"]                              & V_K \arrow[r]                        & M_K                       
\end{tikzcd}.
\end{center}
is commutative. 

We define the category $\ITAC$ of \emph{integral topological adelic curves} as the subcategory of $\TAC$ curves whose objects are integral topological adelic curves and whose morphisms are integral morphisms of topological adelic curves.
\end{definition}

\begin{notation}
\begin{itemize}
	\item[(1)] Let $S=(K,\phi:\Omega\to M_K,\nu)$ be a topological adelic curve. Expect mentioned otherwise, for any $\omega\in \Omega$, we denote by $A_{\omega}$ the finiteness ring of the pseudo-absolute value $\phi(\omega)$. Likewise, the kernel, resp. the residue field, resp. the underlying valuation of $\phi(\omega)$, is denoted by $\m_{\omega}$, $\kappa_{\omega}$, $v_{\omega}$.
	\item[(2)] By "let $S=(K,\phi:\Omega\to V,\nu)$ be an integral topological adelic curve, we mean that $(K,\phi:\Omega,\to M_K,\nu)$ is a topological adelic curve with integral space $V$. 
	\item[(3)] We can use the notation from \S \ref{subsub:prelim_integral_structures} in the context of topological adelic curves. Let $S=(K,\phi:\Omega\to M_K,\nu)$ be a topological adelic curve. Then we define $\Omega_{\infty}$, $\Omega_{\um}$ as the respective preimages of $M_{K,\infty},M_{K,\um}$ through the structural morphism $\phi : \Omega \to M_K$. We also have a map $\epsilon : (\omega\in\Omega_{\infty}) \mapsto \epsilon(\phi(\omega))=:\epsilon(\omega)\in ]0,1]$.
\end{itemize}
\end{notation}

\begin{proposition}
\label{prop:properties_topological_adelic_curves}
Let $S=(K,\phi:\Omega\to M_K,\nu)$ be a topological adelic curve. 
\begin{itemize}
	\item[(i)] For all $f\in K$, let $\Omega_f := \left\{\omega\in\Omega : f\in A_{\omega} \right\}$. Then we have $\nu(\Omega\setminus \Omega_f)=0$. Moreover, if $f\in K^{\times}$, then $\nu\left(\{\omega\in\Omega : |f|_{\omega} = 0\}\right) = 0$.
	\item[(ii)] $\Omega_{\infty}$, resp. $\Omega_{\um}$, is an open, resp. a closed subset of $\Omega$. 
	\item[(iii)] If $\epsilon$ is bounded from below on $\Omega_{\infty}$, then $\nu(\Omega_{\infty}) < +\infty$ and $\Omega_{\infty}$ is a closed subset of $\Omega$. 
	
\end{itemize}
\end{proposition}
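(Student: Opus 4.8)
\medskip

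The plan is to treat the three items in order, exploiting continuity of $\omega\mapsto|f|_\omega$ and the integrability hypothesis baked into the definition of a topological adelic curve.

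For (i): I would first observe that $\Omega\setminus\Omega_f=\{\omega\in\Omega:|f|_\omega=+\infty\}$. Since $f\in K$, either $f=0$ (and $\Omega_f=\Omega$, nothing to prove) or $f\in K^\times$. In the latter case both $f$ and $f^{-1}$ lie in $K^\times$, so by hypothesis $\log|f|_\cdot$ and $\log|f^{-1}|_\cdot=-\log|f|_\cdot$ are both $\nu$-integrable; in particular $\log|f|_\cdot$ takes values in $[-\infty,+\infty]$ but is finite $\nu$-almost everywhere. The set where $|f|_\omega=+\infty$ is exactly $\{\log|f|_\cdot=+\infty\}$, which has measure zero because $\log|f|_\cdot$ is integrable (hence a.e.\ finite); similarly $\{|f|_\omega=0\}=\{\log|f|_\cdot=-\infty\}$ has $\nu$-measure zero. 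This disposes of (i).

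For (ii): the key is that Archimedean-ness of a pseudo-absolute value is detected by a single element of $K$. Concretely, a pseudo-absolute value $v\in M_K$ is Archimedean if and only if the rational integer $2$ satisfies $|2|_v>1$ (equivalently, its residue absolute value is Archimedean, detected on $\mathbb{Z}$), while it is non-Archimedean iff $|2|_v\le 1$. Hence $\Omega_{\ar}=\{\omega\in\Omega:|2|_\omega>1\}=\phi^{-1}(\{v\in M_K:|2|_v>1\})$ and $\Omega_{\um}=\{\omega\in\Omega:|2|_\omega\le 1\}$. Since $\omega\mapsto|2|_\omega$ is continuous (evaluation at the fixed element $2\in K^\times$, using the topology of pointwise convergence on $M_K$ and continuity of $\phi$), the first set is open and the second is closed. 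The mild subtlety to keep in mind is that one must recall from \cite{Sedillot24_pav} that the Archimedean/non-Archimedean dichotomy for a pseudo-absolute value is governed by the induced residue absolute value on the prime field, so that it really is cut out by $|2|_\cdot$; I would cite the relevant place rather than reprove it.

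For (iii): suppose $\epsilon\ge\epsilon_0>0$ on $\Omega_{\ar}$. Recall that for $\omega\in\Omega_{\ar}$, the residue absolute value restricted to $\mathbb{Q}$ is $|\cdot|_\infty^{\epsilon(\omega)}$, so $|2|_\omega=2^{\epsilon(\omega)}$, whence $\log|2|_\omega=\epsilon(\omega)\log 2\ge\epsilon_0\log 2>0$ on $\Omega_{\ar}$, while $\log|2|_\omega\le 0$ on $\Omega_{\um}$ (since $|2|_\omega\le 1$ there). Because $\log|2|_\cdot$ is $\nu$-integrable, integrating $\1_{\Omega_{\ar}}\cdot\log|2|_\cdot\ge(\epsilon_0\log 2)\,\1_{\Omega_{\ar}}\ge 0$ gives
\begin{align*}
(\epsilon_0\log 2)\,\nu(\Omega_{\ar})\le\int_{\Omega_{\ar}}\log|2|_\omega\,\nu(\diff\omega)\le\int_{\Omega}(\log|2|_\omega)^{+}\,\nu(\diff\omega)<+\infty,
\end{align*}
so $\nu(\Omega_{\ar})<+\infty$. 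For closedness of $\Omega_{\ar}$: by (ii) it suffices to show $\Omega_{\ar}$ is also closed, equivalently that $\Omega_{\um}$ is open; a point of $\overline{\Omega_{\ar}}$ is a limit of $\omega_n\in\Omega_{\ar}$ with $\epsilon(\omega_n)\ge\epsilon_0$, and by continuity of $|2|_\cdot$ one has $|2|_\omega=\lim 2^{\epsilon(\omega_n)}\ge 2^{\epsilon_0}>1$, so the limit point lies in $\Omega_{\ar}$; hence $\Omega_{\ar}$ is closed. (One should double-check that $\epsilon$ is itself continuous on $\Omega_{\ar}$, or simply argue directly with $|2|_\cdot$ as just done, which avoids that point entirely.)

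\medskip

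The only real obstacle is bookkeeping about what exactly "Archimedean" means for a pseudo-absolute value and confirming it is detected by a fixed element such as $2\in K$; everything else is a direct application of integrability and continuity. I would make sure to pin down the precise statement from \cite{Sedillot24_pav} (or from \S\ref{subsub:prelim_integral_structures} via the map $\epsilon$) before writing (ii) and (iii).
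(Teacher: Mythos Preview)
Your proposal is correct and follows essentially the same strategy as the paper: (i) uses integrability of $\log|f|_\cdot$ and $\log|f^{-1}|_\cdot$, (ii) uses continuity of $\phi$ (the paper just writes $\Omega_{\ar}=\phi^{-1}(M_{K,\ar})$ is open, whereas you unpack this via $|2|_\cdot$), and (iii) uses integrability of $\log|2|_\cdot$ together with the lower bound $\epsilon\ge\epsilon_0$. One small point: for closedness in (iii) you phrase the argument with sequences, but $\Omega$ is only locally compact Hausdorff, so you should use nets (as the paper does) or, better, note directly that $\Omega_{\ar}=\{|2|_\cdot\ge 2^{\epsilon_0}\}$ is the preimage of a closed set under a continuous map---your parenthetical already points in this cleaner direction.
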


\begin{proof}
Let $f\in K$. If $f= 0$, then $\Omega_f= \Omega$ and thus $\nu(\Omega\setminus \Omega_f) = 0$. Assume that $f\neq 0$. Then $\log|f|_{\cdot}$ is $\nu$-integrable and $\Omega_f = \{\omega\in\Omega : \log|f|_{\omega} \neq +\infty\}$. Hence $\nu(\Omega\setminus \Omega_f)=0$. Finally, as $f\neq 0$, $\log|f^{-1}|_{\cdot}$ is $\nu$-integrable and $\{\omega\in\Omega : |f|_{\omega} = 0\} = \Omega\setminus \Omega_{f^{-1}}$. Thus $\nu\left(\{\omega\in\Omega : |f|_{\omega} = 0\}\right) = 0$. This concludes the proof of $(i)$.

We now show $(ii)$. $\Omega_{\infty} = \phi^{-1}(M_{K,\infty})$ is open as $\phi$ is continuous. Likewise, $\Omega_{\um}$ is closed. 

To show (iii), we may assume that $\Omega_{\infty} \neq \emptyset$. Then $\mathrm{char}(K)=0$ and $\log|2|_{\cdot} \in L^1(\Omega,\nu)$. Therefore, the function $f:=\max\{0,\log|2|_{\cdot}\}$ is $\nu$-integrable. Let $0<m$ a lower bound for $\epsilon$ on $\Omega_{\infty}$. Then 
\begin{align*}
m\log(2)\int_{\Omega_{\infty}}\nu(\diff\omega) \leq \int_{\Omega} f(\omega) \nu(\diff\omega)\log < +\infty,
\end{align*}
hence $\nu(\Omega_{\infty}) < +\infty$. Let $(\omega_{\alpha})_{\alpha\in I}$ be a generalised convergent sequence in $\Omega_{\infty}$ with limit $\omega\in \Omega$. Then the generalised sequence $(\epsilon(\omega_{\alpha}))_{\alpha\in I}\in  [m,1]^{I}$ is bounded. By Bolzano-Weierstrass, up to considering a subsequence, we may assume that $(\epsilon(\omega_{\alpha}))_{\alpha\in I}$ converges to some $\epsilon\in[m,1]$. Then $|2|_{\omega}=\lim_{\alpha \in I}|2|_{\omega_{\alpha}} = \lim_{\alpha \in I}2^{\epsilon(\omega_{\alpha})}=2^{\epsilon}>1$. Therefore, $\omega\in \Omega_{\infty}$ and it follows that $\Omega_{\infty}$ is closed.
\end{proof}

\subsection{Height function on a proper topological adelic curve}
\label{sub:height_function_topological_adelic curve}

In this subsection, we fix a proper topological adelic curve $S=(K,\phi:\Omega\to M_{K},\nu)$.

\begin{definition}
\label{def:GVF_height_tac}
Let $n\in \bN$ be an integer and $f=(f_{0}:\cdots:f_{n})\in \mathbb{P}^{n}(K)$. Define the \emph{height} of $f$ w.r.t. $S$ by
\begin{align*}
h_{S}(f) := \int_{\Omega}\log\max\{|f_{0}|_{\omega},...,|f_{n}|_{\omega}\}\nu(\diff\omega).
\end{align*} 
This quantity is well-defined by virtue of the product formula. Moreover, for any $f\in K$, we set $h_{S}(f):=h_{S}(1:f)$. 
\end{definition}

The following observation relates topological adelic curves and globally valued fields (cf. \S \ref{sub:GVF}).

\begin{proposition}
\label{prop:GVF_height}
The height function $h_{S} : \bigsqcup_{n\in \bN} \mathbb{P}^{n}(K) \to \bR$ defines a GVF structure on $K$ with Archimedean error $h_{S}(2)$.
\end{proposition}

\begin{remark}
\label{rem:tac_GVF}
Using (\cite{GVF24}, Theorem 7.7), we see that topological adelic curves and globally valued fields are not too far away from each other, as long as the quantities of interest can be related in terms of height. More precisely, a GVF structure on a field $K$ yields an equivalence class of so-called \emph{admissible measures}. For any admissible measure $\nu$ in this equivalence class, $S'=(K,\mathrm{id} : M_{K}\to M_{K},\mu)$ is a proper topological adelic curve with locally compact adelic space and Radon adelic measure. Conversely, the proper topological adelic curve $S=(K,\phi:\Omega\to M_{K},\nu)$ determines a GVF structure on $K$ yielding an equivalence class of admissible measure. For any measure $\mu$ as above, we have $h_{S}=h_{S'}$, although in general $S$ and $S'$ are different as topological adelic curves. 
\end{remark}

\subsection{Zariski-Riemann spaces over adelic curves}
\label{sub:ZR_spaces_adelic_curves}

Building on the material introduced in \S \ref{sub:ZR_analytic_spaces}, we attach a "Zariski-Riemann" type space to any topological adelic curve. We fix a topological adelic curve $S=(K,\phi:\Omega\to M_{K},\nu)$ and a projective $K$-scheme $X$.

We denote by $k$ the prime subring of $K$ equipped with the norm $|\cdot|_{\infty}$ if $k=\bZ$ and the trivial norm if $k$ is a finite field. In \S \ref{sub:ZR_analytic_spaces}, we have introduced the locally ringed spaces $\ZR(X):=\ZR(X/k)$ and $\ZR(X)^{\an}:=\ZR(X/k)^{\an}$. Recall that they fit in a commutative diagram 
\begin{center}
\begin{tikzcd}
\ZR(X)^{\an} \arrow[d] \arrow[r] & \ZR(X) \arrow[d] \\
M_{K}\cong\ZR(K)^{\an} \arrow[r]          & \ZR(K)          
\end{tikzcd}.
\end{center}

\begin{definition}
\label{def:ZR_space_tac}
\begin{itemize}
	\item[(1)] Denote by $\tilde{\Omega}$ the quotient of $\Omega$ by the equivalence relation identifying points of $\Omega$ having the same image via the composition $j_{K}\circ \phi: \Omega\to M_{K}\to \ZR(K)$. $j_{K}\circ\phi$ factors through $\tilde{\Omega}$ and we equip $\tilde{\Omega}$ with the initial topology associated with this map $\tilde{\phi}:\tilde{\Omega}\to\ZR(K)$ (in this case, it is the subspace topology). We endow $\tilde{\Omega}$ with the structure of a locally ringed space whose structure sheaf is $\cO_{\tilde{\Omega}}:=\tilde{\phi}^{-1}\cO_{\ZR(K)}$.
	\item[(2)] We define the \emph{algebraic Zariski-Riemann space} of $X$ over $S$ as $\ZR(X)_{S}:=\ZR(X)\times_{\ZR(K)} \tilde{\Omega}$, where the fibre product is considered in the category of locally ringed spaces. The structure sheaf of $\ZR(X)_{S}$ is denoted by $\cO_{\ZR(X)_{S}}$. Note that (cf. e.g. \cite{Gillam11}, Theorem 9) the underlying topological space of $\ZR(X)_{S}$ is homeomorphic to the fibre product $\ZR(X)\times_{\ZR(K)}\tilde{\Omega}$ in the category of topological spaces. In this case, it is homeomorphic to a subset of $\ZR(X)$ equipped with the subspace topology. Moreover, via this homeomorphism, the structure sheaf $\cO_{\ZR(X)_{S}}$ corresponds to the inverse image of $\cO_{\ZR(X)}$ via the projection $\ZR(X)_{S}\to\ZR(X)$. Since there exists $\omega\in \Omega$ such that $\phi(\omega)$ is an absolute value on $K$, the map $\eta:\Spec(K) \to \ZR(K)$ factors through $\tilde{\Omega}$, yielding a map $\eta_{S}:\Spec(K) \to \tilde{\Omega}$. Moreover, the map $\eta_{X}:X \to \ZR(X)$ factors through $\ZR(X)_{S}$, yielding a map $\eta_{X,S}:X\to \ZR(X)_{S}$.
	\item[(3)] We equip $\Omega$ with the structure of a locally ringed space whose structure sheaf is $\cO_{\Omega}:=\phi^{-1}\cO_{\ZR(K)^{\an}}$. 	
	\item[(4)] As in bullet (2), we define the \emph{analytic Zariski-Riemann space} of $X$ over $S$ as $\ZR(X)^{\an}_{S}:=\ZR(X)^{\an}\times_{M_{K}} \Omega$, where the fibre product is again considered in the category of locally ringed spaces. Here again the underlying topological space of $\ZR(X)_{S}^{\an}$ is homeomorphic to the fibre product $\ZR(X)^{\an}\times_{M_{K}}\Omega$ in the category of topological space and the structure sheaf $\cO_{\ZR(X)_{S}^{\an}}$ is the inverse image of $\cO_{\ZR(X)^{\an}}$ via the projection $\phi_{X}:\ZR(X)_{S}^{\an}\to\ZR(X)^{\an}$. Note that the map $\ZR(X)^{\an}\to\Omega$ is proper (as the pullback of the proper map $\ZR(X)^{\an}\to M_{K}$). Moreover, for any $x\in \ZR(X)^{\an}_{S}$, we define its \emph{completed residue field} $\widehat{\kappa}(x)$ as the completed residue field of its image in $\ZR(X)^{\an}$ via the projection $\phi_{X}$.
	\item[(5)] Finally, by a \emph{metrised vector bundle} $\ZR(X)_{S}$, we mean the data $\overline{\cE}=(\cE,\varphi)$ where $\cE$ is a locally free $\cO_{\ZR(X)_{S}}$-module of finite rank together with a family $(|\cdot|_{\varphi}(x))_{x\in \ZR(X)^{\an}_{S}}$ where, for any $x\in \ZR(X)^{\an}_{S}$, $|\cdot|_{\varphi}(x)$ is a norm on the $\widehat{\kappa}(x)$-vector space $\cE(x):=\cE\otimes_{\cO_{\ZR(X)_{S}}}\widehat{\kappa}(x)$. 
\end{itemize}

We summarise everything in the following commutative diagram in the category of locally ringed spaces
\begin{center}
\begin{tikzcd}
                                        & \ZR(X)^{\an}_{S} \arrow[rr, "\phi_{X}"] \arrow[dd] \arrow[rd, "{j_{X,S}}"] &                                                      & \ZR(X)^{\an} \arrow[dd] \arrow[rd, "j_{X}"] &                   \\
X \arrow[dd] \arrow[rr, "{\eta_{X,S}}"] &                                                                            & \ZR(X)_{S} \arrow[rr, "\tilde{\phi_{X}}"] \arrow[dd] &                                             & \ZR(X) \arrow[dd] \\
                                        & \Omega \arrow[rr, "\phi"] \arrow[rd, "{j_{K,S}}"]                          &                                                      & M_{K} \arrow[rd, "j_{K}"]                   &                   \\
\Spec(K) \arrow[rr, "\eta_{S}"]         &                                                                            & \tilde{\Omega} \arrow[rr, "\tilde{\phi}"]            &                                             & \ZR(K)           
\end{tikzcd}
\end{center}
whose front right and back right squares are Cartesian. 
\end{definition}

Let us now describe finitely presented sheaves on $\ZR(X)_{S}$. 

\begin{proposition}
\label{prop:vector_bundles_ZR_adelic_curve}
Assume that $\tilde{\Omega}$ is a locally closed quasi-compact subset of $\ZR(K)$. The following assertions hold.
\begin{itemize}
	\item[(1)] The natural functor $\varinjlim_{U}\mathrm{Fp}(U) \to \mathrm{Fp}(\ZR(X)_{S})$, where $U$ runs over the quasi-compact open neighbourhoods of $\ZR(X)_{S}$ in $\ZR(X)$ is an equivalence of categories. Moreover, this equivalence of categories restricts to an equivalence of categories between $\varinjlim_{U}\mathrm{Vb}(U) \to \mathrm{Vb}(\ZR(X)_{S})$.
	\item[(2)] Assume that $X$ is integral. Then the structure sheaf $\cO_{\ZR(X)_{S}}$ is coherent and the natural functor $\varinjlim_{U}\mathrm{Coh}(U) \to \mathrm{Coh}(\ZR(X)_{S})$, where $U$ runs over the quasi-compact open neighbourhoods of $\ZR(X)_{S}$ in $\ZR(X)$, is an equivalence of categories.
\end{itemize}
\end{proposition}

\begin{proof}
\begin{claim}
\label{claim:coherent_sheaves_qc_subspace}
Let $(X,\cO_{X})$ be a locally ringed space and let $i:Z\hookrightarrow X$ be a quasi-compact subset. The following holds.
\begin{itemize}
	\item[(i)] For any sheaf $\cF$ on $X$, the natural map
	\begin{align*}
	\displaystyle\varinjlim_{U} H^{0}(U,\cF) \to H^{0}(Z,i^{-1}Z),
	\end{align*}
	where $U$ runs over the open neighbourhoods of $Z$ in $X$, is bijective.
	\item[(ii)] Assume that $Z$ admits a basis of quasi-compact open subsets. Then the natural functor between $\varinjlim_{U}\mathrm{Fp}(U)$ and $\mathrm{Fp(Z)}$, where $U$ runs over the open neighbourhoods of $Z$ in $X$, is an equivalence of categories. Moreover, this equivalence of categories restricts to an equivalence of categories between $\varinjlim_{U}\mathrm{Vb}(U)$ and $\mathrm{Vb(Z)}$.
	\item[(iii)] Assume that $Z$ admits a basis of quasi-compact open subsets and that $\cO_{X}$ is coherent. Then $i^{-1}\cO_{Z}$ is coherent. 
\end{itemize}
\end{claim}

\begin{proof}
\textbf{(i):} (\cite{KS90}, Proposition 2.5.1) implies the injectivity. Now let $s\in H^{0}(Z,i^{-1}Z)$. Then there exist a collection $(U_{i})_{i\in I}$ of open subsets of $X$ together with a family $(s_{i})_{i\in I}$ such that $Z\subset\bigcup_{i\in I}U_{i}$ and, for any $i\in I$, $s_{i}\in H^{0}(U_{i},\cF)$ is such that $s_{i|U_{i}\cap Z}=s_{|U_{i}\cap Z}$. By quasi-compactness of $Z$, we may assume that $I$ is finite. Let 
\begin{align*}
V:=\left\{x\in \displaystyle\bigcup_{i\in I}U_{i} : \forall i,j\in I \text{ s.t. }x\in U_{i}\cap U_{j}, \quad s_{i,x}=s_{j,x}\right\}.
\end{align*}
Since $I$ is finite, $V$ is open in $X$ and contains $Z$ by definition. Since for any $i,j\in I$, we have $s_{i|V\cap U_{i}\cap U_{j}}=s_{j|V\cap U_{i}\cap U_{j}}$, there exists a section $t\in H^{0}(V,\cF)$ such that $t_{|V\cap U_{i}}=s_{i|V\cap U_{i}}$ for any $i\in I$. By construction, the image of $t$ in $H^{0}(Z,i^{-1}\cF)$ is $s$. 

\item[(ii):] Let us first prove the essential surjectivity. Let $\cF$ be a finitely presented module on $Z$. There exist a finite covering $\bigcup_{i\in I}V_{i}$ of $Z$ by quasi-compact open subsets and, for any $i\in I$, a presentation
\begin{align*}
\cO_{V_{i}}^{p_{i}} \to \cO_{V_{i}}^{q_{i}} \to \cF_{|V_{i}} \to 0,
\end{align*}
where $p_{i},q_{i}$ are integers. Let $i\in I$. By (i), there exist an open neighbourhood $U_{i}$ of $V_{i}$ in $X$ and a morphism $\varphi_{i}:\cO_{U_{i}}^{p_{i}}\to \cO_{U_{i}}^{q_{i}}$ whose pullback to $V_{i}$ is isomorphic to the morphism $\cO_{V_{i}}^{p_{i}} \to \cO_{V_{i}}^{q_{i}}$ of the presentation. Let $\cG_{i}:=\coker(\varphi_{i})$, this is a finitely presented $\cO_{U_{i}}$-module. By right exactness of the pullback of sheaves of modules, the pullback of $\cG_{i}$ to $V_{i}$ is isomorphic to $\cF_{|V_{i}}$. Since $I$ is finite, up to shrinking the $U_{i}$'s, we may assume that the $\cG_{i}$'s agree on the intersections of the form $U_{i}\cap U_{j}$, where $i,j\in I$. Thus, they glue to a finitely presented sheaf $\cG$ on $\bigcup_{i\in I}U_{i}$. By construction, the pullback of $\cG$ to $Z$ is isomorphic to $\cF$. Moreover, we see that if $\cF$ is locally free of finite rank, then $\cG$ is also locally free of finite rank. 

Let us now prove the full faithfulness. We want to show that, for any objects $(\cF_{U})_{U},(\cG_{U})_{U}$ of $\varinjlim_{U}\mathrm{Fp}(U)$ with respective image $\cF,\cG$ in $\mathrm{Fp}(Z)$, the map 
\begin{align*}
\displaystyle\varinjlim_{U}\Hom_{\cO_{U}}(\cF_{U},\cG_{U}) \to \Hom_{i^{-1}\cO_{X}}(\cF,\cG)
\end{align*}
is bijective. Let $x\in Z$. Since $\cF$ and the $\cF_{U}$'s are finitely presented, we have isomorphisms
\begin{align*}
\displaystyle\left(\varinjlim_{U}\Hom_{\cO_{U}}(\cF_{U},\cG_{U})\right)_{x} &\cong \varinjlim_{U}\Hom_{\cO_{U}}(\cF_{U},\cG_{U})_{x} \cong \varinjlim_{U}\Hom_{\cO_{U,x}}(\cF_{U,x},\cG_{U,x})\\ 
&\cong \Hom_{\cO_{X,x}}(\cF_{x},\cG_{x}) \cong  \left(\Hom_{i^{-1}\cO_{X}}(\cF,\cG)\right)_{x}.
\end{align*}

\textbf{(iii):} It suffices to prove that for any quasi-compact open subset $U\subset Z$ and any morphism $\varphi : \cO_{U}^{n}\to \cO_{U}$, $\ker(f)$ is of finite type. Let $\varphi:\cO_{U}^{n}\to \cO_{U}$ be such a morphism. Since $U$ is quasi-compact, (i) implies that there exist an open neighbourhood $V$ of $U$ in $X$ and a morphism $\psi:\cO_{V}^{n}\to \cO_{V}$ such that $(i_{|i^{-1}(V)}^{-1}\psi)_{|U}$ is isomorphic to $\varphi$ as a morphism of $\cO_{U}$-modules. Since $\cO_{X}$ is coherent, $\ker(\psi)$ is of finite type. By exactness of the inverse image functor, we deduce that $\ker(\varphi)$ is also of finite type.
\end{proof}

\textbf{(1):} Since $\tilde{\Omega}$ is a locally closed quasi-compact subset of $\ZR(K)$, $\tilde{\Omega}$ is a coherent and sober topological space and the inclusion $\tilde{\Omega}\to\ZR(K)$ is quasi-compact (\cite{Fujiwara-Kato}, Chapter 0, Propositions 2.1.1 and 2.2.3). Let $\cX\in\cM_{X/k}$. Write $\tilde{\Omega}$ as an intersection $A\cap B$, where $A,B$ are respectively open in $\ZR(K)$. Then $\ZR(X)_{S}=(\ZR(X)_{S}\to\tilde{\Omega})^{-1}(\tilde{\Omega})$ is a locally closed subset of $\ZR(X)$. Let us prove that $\ZR(X)_{S}$ is quasi-compact, then by \emph{ibid.}, $\ZR(X)_{S}$ is also a coherent and sober topological space. , it suffices to prove that the map $\ZR(X)\to\ZR(K)$ is quasi-compact. Let $\cY\in \cM_{K/k}$. The full subcategory of $\cM_{X/k}$ consisting of projective submodels of $X$ over $\cY$ is cofinal in $\cM_{X/k}$ and we obtain that the map $\ZR(X)\cong\varprojlim_{\cX\in\cM_{\cY}}\to\cY$ is quasi-compact (\cite{Fujiwara-Kato}, Chapter 0, Theorem 2.2.13). Now the map $\ZR(X)\to\ZR(K)$ is the projective limit of the maps $\ZR(X)\to\cY$, when $\cY$ runs over $\cM_{K/k}$ and is quasi-compact by \emph{ibid.} Therefore, Claim \ref{claim:coherent_sheaves_qc_subspace} (ii) yields (1).

\textbf{(2):} Proposition \ref{prop:coherence_algeabraic_ZR_sheaf} implies that, for any quasi-compact open subset $U$ of $\ZR(X)$, the structure sheaf $\cO_{U}$ is coherent. Thus (2) follows from (1) and Proposition \ref{prop:coherence_algeabraic_ZR_sheaf}.
\end{proof}

Let us discuss the material introduced in Definition \ref{def:ZR_space_tac} in the case where the topological adelic curve is integral. Assume that $(A,\|\cdot\|)$ is an integral structure such that $\phi$ factors through $V:=\cM(A,\|\cdot\|)$. Then the map $\tilde{\Omega}\to\ZR(K)$ factors through $\ZR(K/A)\cong\Spec(A)$. More generally, the map $\ZR(X)_{S}\to \ZR(X)$ factors through $\ZR(X/A)$ and $\ZR(X)_{S}$ identifies with the fibre product $\ZR(X/A)\times_{\ZR(K/A)}\tilde{\Omega}$. Likewise, the map $\ZR(X)^{\an}_{S}\to \ZR(X)^{\an}$ factors through $\ZR(X/A)^{\an}$ and $\ZR(X)^{\an}_{S}$ identifies with the fibre product $\ZR(X/A)^{\an}\times_{\ZR(K/A)^{\an}}\Omega$. We can moreover describe finitely presented sheaves on $\ZR(X)_{S}$ in terms of finitely presented sheaves on models. By abuse of notation, we denote again by $\cM_{X/A}$ the category of projective models of $X$ over $A$. For any $\cX\in\cM_{X/A}$, define $\cX_{S}:=\cX\times_{\Spec(A)}\tilde{\Omega}$ and $\cX_{S}^{\an}:=\cX^{\an}\times_{\cM(A,\|\cdot\|)}\Omega$, where the fibre products are understood in the category of locally ringed spaces. Note that the respective underlying topological space of $\cX_{S},\cX_{S}^{\an}$ identifies with the topological fibre product $\cX\times_{\Spec(A)}\tilde{\Omega},\cX^{\an}\times_{\cM(A,\|\cdot\|)}\Omega$. Moreover, we have homeomorphisms 
\begin{align*}
\ZR(X)_{S}\cong \displaystyle\varprojlim_{\cX\in\cM_{X/k}}\cX_{S}, \quad \ZR(X)_{S}^{\an}\cong \varprojlim_{\cX\in\cM_{X/k}}\cX_{S}^{\an}.
\end{align*} 

\begin{proposition}
\label{prop:coherent_sheaves_locally_closed_integral}
Assume that $\tilde{\Omega}$ is a locally closed quasi-compact subset of $\ZR(K/A)\cong\Spec(A)$. The following assertions hold.
\begin{itemize}
	\item[(1)] The natural functor $\varinjlim_{\cX\in\cM_{X/A}}\mathrm{Fp}(\cX_{S}) \to \mathrm{Fp}(\ZR(X)_{S})$ is an equivalence of categories. Moreover, this equivalence of categories restricts to an equivalence of categories between $\varinjlim_{\cX\in\cM_{X/A}}\mathrm{Vb}(\cX_{S}) \to \mathrm{Vb}(\ZR(X)_{S})$.
	\item[(2)] Assume that $X$ is integral. Then the structure sheaf $\cO_{\ZR(X)_{S}}$ is coherent and the natural functor $\varinjlim_{\cX\in\cM_{X/A}}\mathrm{Coh}(\cX_{S}) \to \mathrm{Coh}(\ZR(X)_{S})$ is an equivalence of categories.
\end{itemize}
\end{proposition}

\begin{proof}
\textbf{(1):} First assume that $\tilde{\Omega}$ is open. Then $\ZR(X)_{S}$ is a quasi-compact open subset of $\ZR(X/A)$ and the result follows from Proposition \ref{prop:fp_sheaves_ZR_algebraic}. Let us now consider the general case. Let $\cX\in\cM_{X/A}$ and $V_{\cX}$ be a quasi-compact open neighbourhood of $\cX_{S}$ in $\cX$. Choose an open neighbourhood $U$ of $\ZR(X)_{S}$ in $\ZR(X/A)$ contained in $p_{\cX}^{-1}(V)$. By (\cite{Fujiwara-Kato}, Chapter 0, Proposition 2.2.9 and Corollary 2.2.12), there exists an arrow $q:\cX'\to\cX$ in $\cM_{X/A}$ and a quasi-compact open subset $U_{\cX'}\subset \cX'$ such that $\cX_{S}\subset U_{\cX'}\subset V_{\cX'}:=q^{-1}(V_{\cX})$. This fact, combined with the previous case and Proposition \ref{prop:vector_bundles_ZR_adelic_curve}, yields equivalences of categories
\begin{align*}
\mathrm{Fp}(\ZR(X)_{S})\cong \displaystyle\varprojlim_{U}\mathrm{Fp}(U) \cong \varprojlim_{U}\varprojlim_{\cX\in\cM_{U}}\mathrm{Fp}(U_{\cX}) \cong \varprojlim_{\cX\in\cM_{X/A}}\mathrm{Fp(\cX_{S})},
\end{align*}
where $U$ runs over the quasi-compact open neighbourhoods of $\ZR(X)_{S}$ in $\ZR(X)$.

\textbf{(2):} This is a consequence of Proposition \ref{prop:vector_bundles_ZR_adelic_curve} (2).
\end{proof}

\subsection{Constructions on topological adelic curves}
\label{sub:constructions_tac}

Before giving explicit examples of topological adelic curves, let us give several general constructions that one is allowed to perform over adelic curves that will be useful for applications.

\subsubsection{Restriction to a subfield}
\label{subsub:restriction_subfield}

Let $S=(K,\phi:\Omega\to M_K,\nu)$ be a topological adelic curve. Let $K_0$ be a subfield of $K$. Recall that we denote by $\pi_{K/K_0}: M_{K} \to M_{K_{0}}$ the restriction of pseudo-absolute values on $K$ to $K_0$. We obtain a topological adelic curve $S_0=(K_0,\phi\circ\pi_{K/K_0}: \Omega \to M_{K_0},\nu)$ and a morphism $S\to S_{0}$. Note that $S_0$ is proper if so is $S$. 

We now address the question for integral adelic curves. Roughly speaking, this question can be formulated as: "Can we restrict an integral structure of $K$ to subfields?". Let $(A,\|\cdot\|_A)$ be an integral structure for $K$ and denote $V=\cM(A)$.

\begin{proposition}
\label{prop:integral_structure_subfield}
Let $K_0 \subset K$ be a subfield. Let $A_0:= A\cap K_0$. We assume that $V$ is tame (cf. \S \ref{subsub:prelim_integral_structures}) and that the morphism $\Spec(A) \to \Spec(A_0)$ is surjective. Denote by $\|\cdot\|_{A_0}$ the restriction to $A_0$ of the norm $\|\cdot\|_A$. Then $(A_0,\|\cdot\|_{A_0})$ defines a tame integral structure for $K_0$.
\end{proposition}

\begin{proof}
First, note that we have $\Frac(A_0) = \Frac(A)\cap K_0 = K_0$. Since $V$ is tame, the trivial absolute value on $K$ is in $V$. Thus, the trivial absolute value  on $K_0$ is bounded from above by $\|\cdot\|_{A_0}$ and $(A_0,\|\cdot\|_{A_0})$ is a discrete normed topological ring, hence is Banach. It remains to prove that $A_0$ is Prüfer, since tameness is immediate by restriction of the norm. Let $\p_0\in \Spec(A_0)$. By hypothesis, there exists $\p\in\Spec(A)$ such that $\p\cap A_0 = \p_0$. Since $A_{\p}$ is a valuation ring of $K$, $A_{\p} \cap K_0$ is a valuation ring for $K_0$. From $\p_0 = \p \cap K_0$ and $(A_0)_{\p_0} = A_{\p}\cap K_0$, we get that $A_0$ is Prüfer. 
\end{proof}

\begin{definition}
\label{def:restriction_subfield}
Let $K_0\subset K$ be a subfield and assume that $S=(K,\phi:\Omega\to V,\nu)$ is an integral topological adelic curve with underlying integral structure $A$. Let $A_0:=A\cap K_0$ and assume that $\Spec(A)\to \Spec(A_0)$ is surjective. Proposition \ref{prop:integral_structure_subfield} implies that $A_0$ defines an integral structure for $K_0$ whose global space of pseudo-absolute values $V_0 := \cM(A_0)$ is tame. Let $\phi_0 : \Omega \to V_0$ be the composition of $\varphi$ with the projection $\pi_{K/K_0} : V \to V_0$. Then $S_0 :=(K_0, \phi_0:\Omega\to V_0,\nu)$ is an integral topological adelic curve. Furthermore, for any $f\in K_0^{\times}$, we have $d_{S_0}(f) = d_S(f)$. In particular, $S_0$ is proper if so is $S$.
\end{definition}

\subsubsection{Restriction to a Borel subset}
\label{subsub:restriction_adelic_structure}

Let $S=(K,\phi:\Omega\to M_K,\nu)$ be a topological adelic curve. Let $i:\Omega'\hookrightarrow \Omega$ be a Borel subset. Moreover, if $\nu'$ denotes the restriction of the measure $\nu$ to $\Omega'$, then $(K,\phi_{|\Omega'}:\Omega'\to M_{K},\nu')$ is a topological adelic curve called the \emph{restriction} of $S$ to $\Omega'$. Moreover, if $S$ is an integral topological adelic curve with integral space $V$, then $(K,\phi_{|\Omega'}:\Omega'\to V,\nu')$ is an integral topological adelic curve. Note that, in general, if $S$ is proper, $S'$ need not be proper. If we further assume that $\nu(\Omega\setminus\Omega')=0$, then the triple $(\mathrm{id},i,I)$, where $I:L^{1}(\Omega',\nu')\to L^{1}(\Omega,\nu)$ is defined by
\begin{align*}
\forall f\in L^{1}(\Omega',\nu'), \quad I(f) : (\omega\in\Omega) \mapsto  \left\{\begin{matrix}
f(\omega) &\text{ if }\omega\in\Omega',\\
0 &\text{ if }\omega\notin\Omega',
\end{matrix}\right.
\end{align*}
defines a morphism of topological adelic curves and $S'$ is proper iff $S$ is proper. We will make 

\begin{remark}
\label{rem:restriction_adelic_structure_locally_compact}
For applications, we will sometimes need the adelic space $\Omega$ to be locally compact. In that case, if we further assume that $\Omega'\subset \Omega$ is a locally closed subset, then $\Omega'$ is a locally compact Hausdorff $M_K$-topological space (cf. \cite{BouTG},  Chapitre I, \S 9.7, Proposition 13).  Likewise, if $S$ is an integral topological adelic curve with integral space $V$, $\Omega'$ is a locally compact Hausdorff $V$-topological space and $(K,\phi_{|\Omega'}:\Omega' \to V,\nu')$ is an integral topological adelic curve.
\end{remark}

\subsubsection{Disintegration}
\label{subsub:disintegration}

As we will see, the geometry of a topological adelic curve is related to the geometry of its Zariski-Riemann space. For technical reasons, it is sometimes more convenient to work with the full space of pseudo-absolute values as adelic space. The naive idea is to pushforward the adelic measure to the space of pseudo-absolute values. However, since we want to preserve the integration theoretic features, this amounts to the following question: given a topological adelic curve $S=(K,\phi:\Omega\to M_{K},\nu)$, does there exist a morphism of topological adelic curves $\alpha: S \to (K,\mathrm{id}:M_{K}\to M_{K},\phi_{\ast}\nu)$? This amounts to the existence of a disintegration of $\nu$ over $\phi_{\ast}\nu$. 

\begin{proposition}[\cite{Fremlin03}, Proposition 452O]
\label{prop:existence_disintegration}
Assume that $(\Omega,\nu)$ is a Radon measure space (\cite{Fremlin03}, Definition 411H). Then there exists a disintegration kernel $I_{\phi}:L^{1}(\Omega,\nu)\to L^{1}(M_{K},\phi_{\ast}\nu)$. 
\end{proposition}

\begin{remark}
\label{rem:existence_disintegration}
In all the examples we will encounter, the adelic measure is Radon. Thus using a disintegration kernel, it suffices to prove any property of topological adelic curves preserved by morphism on a topological adelic curve with adelic space the space of all pseudo-absolute values.
\end{remark}

\subsection{Examples of topological adelic curves}
\label{sub:examples_tac}

Let us now introduce the main examples of topological adelic curves that we are concerned with in this article.

\subsubsection{Number fields}
\label{subsub:example_number_field_tac}

Let $K$ be a number field with ring of integers $\cO_K$. Let $\Omega_{\um}$ be the set of closed points of $\Spec(\cO_K)$ and $\Omega_{\infty}$ be the set of all field embeddings $\sigma : K \to \bC$, both equipped with the discrete topology. Then $\Omega:= \Omega_{\um} \sqcup \Omega_{\infty}$ is a discrete topological space. For any $\omega\in \Omega_{\infty}$, let $\phi(\omega)\in M_K$ denote the Archimedean absolute value on $K$ corresponding to the complex embedding such that $\epsilon(\omega)=1$. For any $\omega\in\Omega_{\um}$ above a prime number $p$, let $\phi(\omega)$ denote the non-Archimedean extension of the $p$-adic absolute value such that the absolute value of $p$ equals $1/p$. Then the morphism $\phi : \Omega \to M_{K}$ is continuous. For any $\omega\in \Omega$, denote by $K_{\omega}$, resp. $\bQ_{\omega}$ the completion of $K$, resp. $\bQ$, w.r.t. the absolute value $\phi(\omega)$, resp. the restriction of $\phi(\omega)$ to $\bQ$. Now set $\nu(\{\omega\}):=[K_{\omega}:\bQ_{\omega}]$. Then the usual product formula implies that $S=(K,\phi:\Omega\to M_K,\nu)$ is a proper topological adelic curve. Moreover, the homeomorphism $M_K\cong\cM(\cO_K,\max_{v\in\Omega_{\infty}} \{|\cdot|_v\})$ from (\cite{Sedillot_pav}, Proposition 7.2.1) implies that $S$ is an integral topological adelic curve. Note that the discrete topology is the coarsest topology on $\Omega$ making the map $\phi : \Omega \to M_{K}$ continuous. This follows from the fact that, for any $\omega\in \Omega$, one can exhibit a neighbourhood $U_{\omega}$ of $\phi(\omega)$ in $\Omega$ such that $\phi^{-1}(U_{\omega}) \cap \Omega = \{\omega\}$. 

\begin{remark}
\label{rem:ZR_space_number_fields}
For any projective $K$-scheme $X$ the Zariski-Riemann spaces $\ZR(X)_{S}$ and $\ZR(X)^{\an}_{S}$ identify respectively to $X$ and $\bigsqcup_{\omega\in\Omega} (X\otimes_{K}K_{\omega})^{\an}$.
\end{remark}

\subsubsection{Examples of adelic curves in \cite{ChenMori,ChenMori21}}
\label{subsub:examples_tac_adelic_curves}

\begin{itemize}
	\item[(1)] All the examples of proper adelic curves in (\cite{ChenMori}, \S 3.2) naturally define proper topological adelic curves. Indeed, all the involved $\sigma$-algebras are the Borel $\sigma$-algebra of a Hausdorff topological space. For any such example $S=(K,(\Omega,\cA,\nu),\phi)$, similarly to Remark \ref{rem:ZR_space_number_fields}, for any projective $K$-scheme $X$, $\ZR(X)_{S} \cong X$. Moreover, as a set, we have a bijection $\ZR(X)^{\an}_{S}\cong\bigsqcup_{\omega\in\Omega} (X\otimes_{K}K_{\omega})^{\an}$. $\ZR(X)^{\an}_{S}$ can be seen as the topological counterpart of the \emph{global adelic space} in the sense of (\cite{ChenMori24}, Chapter 7).
	\item[(2)] More generally, let $S=(K,(\Omega,\cA,\nu),(|\cdot|_{\omega})_{\omega\in\Omega})$ be an adelic curve such that $K$ is countable, $\Omega$ is a subset of $M_{K}$ and $\cA$ is the Borel $\sigma$-algebra (and hence $\nu$ is a Borel measure). We further assume that for all $f\in K^{\times}$, the map $(\omega\in\Omega)\mapsto\log|f|_{\cdot}\in \bR$ is continuous. Then $S$ determines the data of a topological adelic curve, which is proper if so is $S$. This includes the examples of (1) above as well as all the constructions given in (\cite{ChenMori21}, \S 2.3-2.7), assuming that the admissible fibrations involved satisfy the above assumptions. 
	\item[(3)] In case further topological properties of the adelic space are required, e.g. (local) compactness for analytic purposes, one can make the following construction. Let $S=(K,(\Omega,\cA,\nu),(|\cdot|_{\omega})_{\omega\in\Omega})$ be an adelic curve satisfying the properties introduced in bullet (2) above. Recall in particular that $\Omega\subset M_{K}$. Consider the closure $\overline{\Omega}$ in $M_{K}$ (w.r.t. the topology of $M_{K}$), this is a compact Hausdorff topological space. Let $\overline{\nu}:=\iota_{\ast}\nu$ be the pushforward measure of $\nu$ by the inclusion $\iota: \Omega \hookrightarrow\overline{\Omega}$, this is a Borel measure on $\overline{\Omega}$ by construction. Let $f\in K^{\times}$. Then the function $\log|f|_{\cdot} : \overline{\Omega}\in [-\infty,+\infty]$ is continuous by definition of the topology of $M_{K}$. More generally, using e.g. (\cite{Engelking89}, Theorem 3.2.1), we can extend any continuous function $g:\Omega \to [-\infty,+\infty]$ to a continuous function $\overline{g}:\overline{\Omega}\to [-\infty,+\infty]$. Moreover, by classical measure theory (cf. e.g. Theorem 3.6.1 in \cite{Bogatchev07}), the function $\log|f|_{\cdot} : \overline{\Omega}\to[-\infty,+\infty]$ is $\overline{\nu}$-integrable. Therefore, $S$ determines a topological adelic curve $(K,\phi:\overline{\Omega}\to M_{K},\overline{\nu})$, whose adelic space is compact Hausdorff. 
	\item[(4)] Conversely, let $S=(K,\phi:\Omega\to M_{K},\nu)$ be a topological adelic curve, where $K$ is countable. For any $f\in K^{\times}$, the subset 
	\begin{align*}
	S(f) := \{\omega\in \Omega : |f|_{\omega} \in \{0,+\infty\}\} \subset \Omega	
	\end{align*}
is closed and $\nu(S(f)) = 0$. Now set $S:=\bigcup_{f\in K^{\times}} S(f)$. Since $K$ is countable, $S$ is a Borel subset of $\Omega$ such that $\nu(S)=0$. Denote $\Omega':=\Omega\setminus S$ and consider the restriction $S':=(K,\phi':\Omega'\to M_{K},\nu')$ of $S$ to $\Omega'$ (cf. \ref{subsub:restriction_adelic_structure}). Then $\phi'(\Omega')$ lies in the subset of $M_{K}$ consisting of absolute values on $K$. It is directly seen that $S'$ determines an adelic curve in the sense of Chen-Moriwaki. Moreover, this adelic curve is proper if $S$ is proper. In this case, we see that the corresponding GVF heights coincide.
\end{itemize}

\subsubsection{Nevanlinna theory of meromorphic functions on a compact disc}
\label{subsub:example_tac_Nevanlinnaç_compact_disc}

We fix $R>0$ and a real number $C_{R}>0$. We define a topological adelic curve $S_{R,C_{R}}=(K_R,\phi_R:\Omega_R\to M_{K_R},\nu_{R,C_{R}})$ (simply denoted by $S_{R}$ when $C_{R}=1$), where 
\begin{itemize}
	\item $K_R$ is the field of meromorphic functions on the closed disc $\overline{D(R)}:=\{z\in \bC : |z|_{\infty}\leq R\} \subset \bC$;
	\item $\Omega_R = \{z\in\bC:|z|_{\infty}<R\}\coprod \{z\in\bC :|z|_{\infty}=R\}$, where $\{z\in\bC:|z|_{\infty}<R\}$ is equipped with the discrete topology and $\{z\in\bC :|z|_{\infty}=R\}$ is equipped with the usual topology;
	\item the map $\phi_R : \Omega_R \to M_{K_R}$ is defined by
	\begin{align*}
	\forall z\in \Omega, \quad \phi_R(z) := \left\{\begin{matrix}
v_{z,\infty,1}=[(f\in K_R) \mapsto |f(z)|_{\infty}\in [0,+\infty]] &\text{ if } |z|_{\infty}=R,\\
(f\in K_R)\mapsto e^{-\ord(f,z)}\in \bR_{\geq 0} &\text{ if } |z|_{\infty}<R;\end{matrix}\right.
	\end{align*}
	\item $\nu_{R,C_{R}}$ is defined by
	\begin{align*}
	\forall z\in \Omega_{R,\um}, \quad \nu_{R,C_{R}}(\{z\}) := \left\{\begin{matrix}
\frac{1}{C_{R}}\log\frac{R}{|z|_{\infty}} &\text{ if } 0<|z|_{\infty}<R,\\
\frac{1}{C_{R}}\log R &\text{ if } z= 0,\end{matrix}\right.
\end{align*}		
	and $\nu_R$ is the Lebesgue with total mass $1/C_{R}$ on $\Omega_{R,\infty}$.
\end{itemize}
Moreover, results in \S \ref{subsub:example_integral_structures} imply that $S_R$ is an integral topological adelic curve w.r.t. the integral structure $(A_{R},\|\cdot\|_{R,\hyb})$.

Finally, the Jensen formula yields
\begin{align*}
\forall f\in K_R^{\times},\quad d_{R,C_{R}}(f) := d_{S_{R,C_{R}}}(f) = \frac{1}{C_{R}}\log|c(f,0)|_{\infty},
\end{align*} 
where $c(f,0)$ denotes the first non-zero coefficient in the Laurent series expansion of $f$ in $0$. 

\begin{remark}
\label{rem:ZR_space_NEvanlinna_complex_disc}
Let $X$ be a projective $K_{R}$-scheme. Then the algebraic Zariski-Riemann space $\ZR(X)_{S_{R}}$ is $\ZR(X/A_{R})$ and the analytic Zariski-Riemann space $\ZR(X)^{\an}_{S_{R}}$ is a subspace of $\ZR(X/A_{R})^{\an}$.
\end{remark}

\subsubsection{Nevanlinna theory of meromorphic functions on the complex plane}
\label{subsub:example_tac_Nevanlinna_complex}

The goal of this paragraph is to introduce a proper topological adelic curve structure on different subfields of the field of meromorphic functions on $\bC$. These subfields will be parametrised by functions $\eta : \bR_{>0} \to \bR_{>0}$ and ultrafilters on $\bR_{>0}$ satisfying additional properties. From now on, we fix a non-decreasing unbounded function $\eta : ]R_{0},+\infty[ \to \bR_{>0}$, for some $R_{0}\geq0$, and an ultrafilter $\cU$ on $]R_{0},+\infty[$ that avoids finite Lebesgue measure sets, i.e. it contains the filter from Example \ref{example:filters} (3), note that it is a free $\delta$-incomplete ultrafilter. By translation, we assume that $R_{0}=0$.

Let $R>0$, recall that we have defined a topological adelic curve $(\cM(\overline{D(R)}),\phi_{R}:\Omega_{R}\to M_{K},\nu_{R,\eta(R)})$. Define 
\begin{align*}
&\forall f\in \cM(\overline{D(R)}), \quad h_{\eta,R}(f) := \int_{\Omega_{R}} \log|f|_{\omega} \nu_{R,\eta(R)}(\diff\omega)=\frac{\log|c(f,0)|_{\infty}}{\eta(R)},\\
&\forall f\in \cM(\overline{D(R)}), \quad h_{\eta,R}(1,f) := \int_{\Omega_{R}} \log^{+}|f|_{\omega} \nu_{R,\eta(R)}(\diff\omega),
\end{align*}
where by convention, we have $h_{\eta,R}(0)=-\infty$. Using the usual notation in Nevanlinna theory as in \S \ref{sec:Nevanlinna_classical}, we have
\begin{align*}
\forall f\in \cM(\overline{D(R)}), \quad h_{\eta,R}(1,f) = \frac{T(R,f)}{\eta(R)}.
\end{align*} 
More generally, let $n$ be an integer. For $(f_{0},...,f_{n})\in \cM(\overline{D(R)})^{n+1}\setminus\{0\}$, define
\begin{align*}
h_{\eta,R}(f_{0},...,f_{n}) := \int_{\Omega_{R}} \log\displaystyle\max_{0\leq i\leq n}|f_{i}|_{\omega} \nu_{R,\eta(R)}(\diff\omega),
\end{align*}
Note that, for any $(f_{0},...,f_{n})\in \cM(\overline{D(R)})^{n+1}\setminus\{0\}$, we have the inequality
\begin{align}
\label{eq:height_sum_eta}
h_{\eta,R}(f_{0},...,f_{n}) \leq \displaystyle\sum_{i=0}^{n} h_{\eta,R}(1,f_{i}) + O(1/\eta(R)).
\end{align} 

For any $R>0$, we have a field extension $\cM(\overline{D(R)})/\cM(\bC)$ and, for any meromorphic function $f\in \cM(\bC)$, we denote its image in $\cM(\overline{D(R)})$ by $f_{R}$. Consider the subset 
\begin{align*}
\cM(\bC)_{\eta,\cU} := \{f\in \cM(\bC) : \displaystyle\lim_{R\to\cU} h_{\eta,R}(1,f)<+\infty\}.
\end{align*}
This is a subfield of $\cM(\bC)$ which contains the subfield of $\eta$-finite order functions (cf. \S \ref{subsub:asymptotic_height_Nevanlinna}). 

Let $n$ be an integer and $(f_{0},...,f_{n})\in (\cM(\bC)_{\eta,\cU})^{n+1}$. Define 
\begin{align*}
h_{\eta,\cU}(f_{0},...,f_{n}) := \displaystyle\lim_{i\to\cU} h_{\eta,R}(f_{0},...,f_{n})
\end{align*}
if $(f_{0},...,f_{n})\neq 0$ and $h_{\eta,\cU}(f_{0},...,f_{n}):=-\infty$ if $(f_{0},...,f_{n})=0$. Note that $h_{\eta,\cU}(f_{0},...,f_{n})<+\infty$ by (\ref{eq:height_sum_eta}). 

\begin{proposition}
\label{prop:height_eta_Nevanlinna}
The function $h_{\eta,\cU}$ defines a GVF height on $K_{\eta,\cU}$ with Archiemedean error $0$.
\end{proposition}

\begin{proof}
All the axioms except additivity, triangle inequality and product formula are clear from the definition. Let $f=(f_{0},...,f_{n})\in (\cM(\bC)_{\eta,\cU})^{n+1}\setminus\{0\}$ and $g=(g_{0},...,g_{m})\in (\cM(\bC)_{\eta,\cU})^{m+1}\setminus\{0\}$. Then
\begin{align*}
\forall R>0, \quad h_{\eta,R}(f\otimes g) = \int_{\Omega_{R}} \log\displaystyle\max_{i,j}|f_{i}g_{j}|_{\omega} \nu_{R,\eta(R)}(\diff\omega)= h_{\eta,R}(f)+h_{\eta,R}(g).
\end{align*}
Hence $h_{\eta,\cU}(f\otimes g) = h_{\eta,\cU}(f)+h_{\eta,\cU}(g)$ which proves the additivity axiom. 

For the triangle inequality, let $f=(f_{0},...,f_{n})\in (\cM(\bC)_{\eta,\cU})^{n+1}\setminus\{0\}$ and $g=(g_{0},...,g_{n})\in (\cM(\bC)_{\eta,\cU})^{n+1}\setminus\{0\}$. Then 
\begin{align*}
\forall R>0, \quad h_{\eta,R}(f\otimes g) = \int_{\Omega_{R}} \log\displaystyle\max_{i,j}|f_{i}g_{j}|_{\omega} \nu_{R,\eta(R)}(\diff\omega)\leq \max{h_{\eta,R}(f),h_{\eta,R}(g)}+\frac{\log 2}{\eta(R)}.
\end{align*}
Since $\lim_{R\to\cU}(\log 2)/\eta(R) = 0$, we get $h_{\eta,\cU}(f+g)\leq \max\{h_{\eta,\cU}(f),h_{\eta,\cU}(g)\} = h_{\eta,\cU}(f,g)$.

Finally, for any $f\in \cM(\bC)_{\eta,\cU}^{\times}$, 
\begin{align*}
h_{\eta,\cU}(f)=\displaystyle\lim_{R\to \cU} h_{\eta,R}(f)=\lim_{R\to \cU}\frac{\log|c(f,0)|_{\infty}}{\eta(R)}=0. 
\end{align*}
\end{proof}

\begin{definition}
\label{def:tac_Nevanlinna_complex_eta,ultrafilter}
Using Remark \ref{rem:tac_GVF}, we can represent the GVF $(\cM(\bC)_{\eta,\cU},h_{\eta,\cU})$ by a proper topological adelic curve $T_{\eta,\cU}=(\cM(\bC)_{\eta,\cU}, \mathrm{id} : M_{\cM(\bC)_{\eta,\cU}} \to M_{\cM(\bC)_{\eta,\cU}},\nu_{\eta,\cU})$, where $\nu_{\eta,\cU}$ is a Radon measure on $M_{\cM(\bC)_{\eta,\cU}}$. Moreover, since the Archimedean error is zero, the support $\Omega_{\eta,\cU} := \Supp(\nu_{\eta,\cU})$ is a closed subset of $M_{\cM(\bC)_{\eta,\cU}}$ that is contained in the ultrametric part $M_{\cM(\bC)_{\eta,\cU},\um}$. 
\end{definition}

\begin{example}
\label{example:tac_Nevanlinna_complex_eta_ultrafilter}
\begin{itemize}
\item[(1)] Assume that $\eta(R) =  o(\log(R))$. Then Proposition \ref{prop:criterion_rational_functions} yields
\begin{align*}
f\in \cM(\bC)_{\eta,\cU} \Leftrightarrow \displaystyle\lim_{R\to\cU} \frac{T(R,f)}{\eta(R)} <+\infty \Rightarrow \liminf_{R\to+\infty} \frac{T(R,f)}{\log(R)} = 0 \Leftrightarrow f\in \bC.
\end{align*}
In that case, the GVF $(\cM(\bC)_{\eta,\cU},h_{\eta,\cU})$ is the trivial GVF $(\bC,h_{\triv})$. 
	\item[(2)] Assume that $\eta=\log$. We directly have $\bC(T)\subset \cM(\bC)_{\eta,\cU}$. Conversely, Proposition \ref{prop:criterion_rational_functions} again yields
	\begin{align*}
	f\in \cM(\bC)_{\eta,\cU} \Rightarrow \liminf_{R\to+\infty} \frac{T(R,f)}{\log(R)} <+\infty \Leftrightarrow f\in \bC(T).
	\end{align*}
	Moreover, the GVF $(\cM(\bC)_{\eta,\cU},h_{\eta,\cU})$ can be represented by the proper topological adelic curve coming from the function field of $\mathbb{P}_{\bC}^{1}$ (cf. \cite{ChenMori}, \S 3.2.1).
\end{itemize} 
\end{example}

\begin{remark}
\label{rem:tac_Nevanlinna_complex_eta_ultrafilter}
\begin{itemize}
	\item[(1)] If $\cM(\bC)_{\eta,\cU}$ contains a transcendental function, and therefore $\lim_{R\to\cU} \eta(R)/\log(R)=+\infty$, it is not obvious to explicit a proper topological adelic curve representing the GVF $(M(\bC)_{\eta,\cU},h_{\eta,\cU})$. However, Definition \ref{def:tac_Nevanlinna_complex_eta,ultrafilter} together with Example \ref{example:tac_Nevanlinna_complex_eta_ultrafilter} (2) indicate that the adelic space of a proper topological adelic curve representing the GVF $\cM(\bC)_{\eta,\cU}$ will contain non-Archimedean pseudo-absolute values that are not associated with the order of vanishing at a complex point. The subset of all such pseudo-absolute values, together with the adelic measure, behaves like (the exponential of) the degree for rational functions. 
	\item[(2)] As the reader familiar with Nevanlinna theory might notice, the asymptotic height $h_{\eta,\cU}$ is not the typical asymptotic quantity that is classically considered: the order, or more generally the $\eta$-order, of a meromorphic function is arguably a richer notion. If one tries to mimic the above construction to view the ($\eta$-)order as a GVF height, one encounters a failure of the "additivity axiom". Although this may be fixed by considering GVF heights with target $\bR$ equipped with the tropical semi-ring structure instead of the usual ring structure, this construction seems, at this stage, too artificial. 
	\item[(3)] Consider the GVF $(M(\bC)_{\eta,\cU},h_{\eta,\cU})$, where $\eta(R)\geq R$. Then $\bC(T)\subset \cM(\bC)_{\eta}$ and $h_{\eta,\cU}(\bC(T))\equiv 0$. More generally, let $f\in \cM(\bC)$ such that $h_{\eta',\cU}(f)\in \bR_{>0}$, for some function $\eta':\bR_{>0}\to \bR_{>0}$. Then
	\begin{align*}
	h_{\eta,\cU}(f) \in \bR_{>0} \Leftrightarrow \displaystyle\lim_{R\to\cU}\frac{\eta(R)}{\eta'(R)} \in \bR_{>0}, \quad h_{\eta,\cU}(f) = 0 \Leftrightarrow \displaystyle\lim_{R\to\cU}\frac{\eta(R)}{\eta'(R)} = 0.
\end{align*}	 
In other words, if one needs to consider meromorphic functions whose heights do not grow the same way, only considering the asymptotic limit of the height w.r.t. to a test function $\eta$ loses too much information, as this procedure does not differentiate, for instance, a rational function from a transcendental function whose characteristic function is not growing "so fast". This suggests that a more refined formulation is required to capture the information contained in the characteristic function.
\end{itemize}
\end{remark}

\section{Families of topological adelic curves}
\label{sec:families_topological_adelic_curves}

In this section, we introduce to notion of families of topological adelic curves. It should be seen as a preliminary approach to including the analogy between Diophantine approximation and Nevanlinna theory in an Arakelov theoretic framework. Roughly speaking, we patch several topological adelic curves by means of an ultrafilter. In some sense, a family of topological adelic curves can be thought of as an ultraproduct of topological adelic curves, in the sense that the adelic field is a subfield of some ultraproduct and the adelic measure is a non-standard measure instead of a real measure. We motivate the abstract definition by giving our main example coming from Nevnalinna theory (\ref{sub:motivation_families}). Then we give several definitions and elementary properties related to families of topological adelic curves (\S \ref{sub:family_tac}-\ref{sub:morphism_family_tac}).

\subsection{Motivation: example in Nevanlinna theory}
\label{sub:motivation_families}

Let us first fix the needed notation. For any $R>0$, denote by $K_{R}$ the field of meromorphic functions on the closed disc $\overline{D(R)}$ and let $S_{R}=(K_{R},\phi_{R}:\Omega_{R}\to M_{K_{R}},\nu_{R})$ be the topological adelic curve introduced in \S \ref{subsub:example_tac_Nevanlinnaç_compact_disc}. Fix an ultrafilter on $\bR_{>0}$ avoiding sets of finite Lebesgue measure. Let
\begin{align*}
K_{\cU} := \displaystyle\prod_{\cU}K_{R}, \quad \Omega_{\cU} := \prod_{\cU}\Omega_{R}, \quad \nu_{\cU} := \prod_{\cU}\nu_{R},
\end{align*}
where the precise definition of these ultraproducts are given in \S \ref{subsub:ultraproduct_fields}-\ref{subsub:ultraproduct_measures}. Note that $\Omega_{\cU}$ is a Hausdorff space.

Let us construct a continuous map $\phi_{\cU} : \Omega_{\cU}\to M_{K_{\cU}}$. First, note that we have a map 
\begin{align*}
\iota : \prod_{\cU}M_{K_{R}} \to M_{K_{\cU}}
\end{align*}
defined by mapping any class $[(|\cdot|_{R})_{R>0}]\in\prod_{\cU}M_{K_{R}}$ to $([(f_{R})_{R>0}]\in K_{\cU}) \mapsto \lim_{R\to\cU}|f_{R}|_{R} \in [0,+\infty]$. Note that this definition makes sense by construction of ultraproducts. Since, for any $R>0$, the map $\phi_{R}:\Omega_{R}\to M_{K_{R}}$ are continuous, we have a continuous map $\Omega_{\cU}\to M_{K_{\cU}}$. We denote by $\phi_{\cU}$ the composition of this map with $\iota$. The claim below shows that $\phi_{\cU}$ is continuous.

\begin{claim}
\label{claim:ultraproduct_continuity}
The map $\iota$ is continuous.
\end{claim}

\begin{proof}
It suffices to show that for any $a\in K_{\cU}$, the map $\iota_{a}:(v\in\prod_{\cU}M_{K_{R}})\mapsto \iota(v)(a)\in [0,+\infty]$ is continuous. Fix an element $a=[(a_{R})_{R>0}]\in K_{\cU}$ and an open subset $V\subset[0,+\infty]$. For any $R>0$, set $U_{R}:=\{|\cdot|\in M_{K_{R}}: |a_{R}|\in V\}$. By definition, for any $[(|\cdot|_{R})_{R>0}]\in\prod_{\cU}M_{K_{R}}$, we have
\begin{align*}
[(|\cdot|_{R})_{R>0}] \in \iota_{a}^{-1}(V) \Leftrightarrow \{R>0 : |a_{R}|_{R} \in V\}\in \cU \Leftrightarrow [(|\cdot|_{R})_{R>0}] \in \prod_{\cU} U_{R}.
\end{align*}
Thus $\iota_{a}^{-1}(V)$ is open by definition of the topology of $\prod_{\cU}M_{K_{R}}$. 
\end{proof}

We denote $S_{\cU}:=(K_{\cU},\phi_{\cU} : \Omega_{\cU}\to M_{K_{\cU}},\nu_{\cU})$. Define the \emph{defect function} $d_{\cU} : f=[(f_{R})_{R>0}]\in K_{\cU}^{\times}\to [(d_{R}(f_{R}))_{R>0}]\in \prod_{\cU}\bR$, where we recall that
\begin{align*}
d_{R}(f_{R}) = \int_{\Omega_{R}}\log|f_{R}|_{\omega}\nu_{R}(\diff\omega) = \log|c(f_{R},0)|_{\infty}.
\end{align*}
Of course, it is not possible to expect any kind of control on the defect for arbitrary elements of $K_{\cU}$. Note that $\cM(\bC)$ is a subfield of $K_{\cU}$ and that for all $f\in \cM(\bC)^{\times}$, $d_{\cU}(f) = [(\log|c(f,0)|_{\infty})_{R>0}]$, namely the diagonal image of $\log|c(f,0)|_{\infty}$ in $\prod_{\cU}\bR$.

Let $n$ be an integer and $(f_{0},...,f_{n})\in \mathbb{A}^{n+1}(\cM(\bC))\setminus\{0\}$, we have defined in \S \ref{subsub:example_tac_Nevanlinna_complex} the \emph{height}
\begin{align*}
h_{R}(f_{0},...,f_{n}):= \int_{\Omega_{R}} \log\displaystyle\max_{0\leq i\leq n}|f_{i}|_{\omega} \nu_{R}(\diff\omega).
\end{align*} 
Now define the $\cU$\emph{-height} 
\begin{align*}
h_{\cU}(f_{0},...,f_{n}):= \left[\left(h_{R}(f_{0},...,f_{n})\right)_{R>0}\right] \in \displaystyle\prod_{\cU}\bR.
\end{align*}

Since the defect is non-identically zero, the $\cU$-height does not induce a $\cU$-height on projective spaces. As we mentioned earlier, we are interested in the asymptotic behaviour of the heights as $R\to+\infty$ and the latter diverge to $+\infty$ as long as we do not consider constant functions. To obtain a well-defined height machinery, we introduce the equivalence relation $\sim_{\fin}$ on $\prod_{\cU}\bR$ defined by
\begin{align*}
\forall f,g\in \displaystyle\prod_{\cU}\bR, \quad f\sim_{\fin}g \Leftrightarrow \left(\exists M\in \bR_{>0}, [(-M)_{R>0}]\leq f-g\leq [(M)_{R>0}]\right).
\end{align*}
In other words, $\sim_{\fin}$ identifies ultrareals that differ by a finite amount. It is straightforward to check that $\sim_{\fin}$ is indeed an equivalence relation that is compatible with the addition law. For any $f\in \cM(\bC)^{\times}$, we have
\begin{align*}
h_{\cU}(f)=d_{\cU}(f)\sim_{\fin} 0.
\end{align*}
Therefore, the $\cU$-height $h_{\cU}:\mathbb{A}(\cM(\bC))\to\prod_{\cU}\bR$ induces a well-defined map $\mathbb{P}(\cM(\bC))\to\prod_{\cU}\bR/\sim_{\fin}$ denoted again by $h_{\cU}$ by abuse of notation. 

Another important feature of the target space $\prod_{\cU}\bR/\sim_{\fin}$ of our heights is that it carries a natural total order compatible with the addition law and the $\bR$-action. Let $\overline{f},\overline{g}\in \prod_{\cU}\bR/\sim_{\fin}$ represented respectively by some $f,g\in\prod_{\cU}\bR$. We write $\overline{f}\leq \overline{g}$ if there exists $M\in \bR$ such that $a\leq b + [(M)_{R>0}]$.

\begin{claim}
\label{claim:total_order_target_height}
$\leq$ defines a total order on $\prod_{\cU}\bR/\sim_{\fin}$. Moreover, this order is compatible with the addition law and the $\bR$-action.
\end{claim}

\begin{proof}
Let us justify first that $\leq$ is well defined. Let $\overline{f},\overline{g},f,g,M$ as above and assume that $\overline{f},\overline{g}$ are also represented respectively by $f',g'\in\prod_{\cU}\bR$, i.e. $f-f'$ and $g-g'$ are finite. Let $M'>0$ such that $[(-M')_{R>0}]\leq f-f',g-g'\leq [(M')_{R>0}]$. Then $f'\leq g'+[(M+2M')_{R>0}]$. Thus $\leq$ is well defined. 

The fact that $\leq$ is an order relation satisfying the stated compatibility relations is straightforward to check and we only justify that the order is total. Let $\overline{f},\overline{g}\in \prod_{\cU}/\sim_{\fin}$, represented respectively by $[(f_{R})_{R>0}],[(g_{R})_{R>0}]$. Let $M\in \bR$. If $\{R>0:f_{R}\leq g_{R}+M\}\in \cU$, then $\overline{f}\leq\overline{g}$. Otherwise, since $\cU$ is an ultrafilter, $\{R>0:f_{R}> g_{R}+M\}\in \cU$, which implies that $\overline{g}\leq \overline{f}$.
\end{proof}

\subsection{Family of topological adelic curves}
\label{sub:family_tac}

We now give the abstract definition of a family of topological adelic curves, building on the observations of \S \ref{sub:motivation_families}.

\begin{definition}
\label{def:family_of_tac}
A \emph{family of topological adelic curves} is the data $\mathbf{S}=(I,\cU,(S_{i})_{i\in I},K)$, where
\begin{itemize}
	\item $I$ is an infinite set called the \emph{index set} of $\mathbf{S}$;
	\item $\cU$ a free ultrafilter on $I$ called the \emph{ultrafilter} of $\mathbf{S}$;
	\item for any $i\in I$, $S_{i}=(K_{i},\phi_{i}:\Omega_{i}\to M_{K_{i}},\nu_{i})$ is a topological adelic curve;
	\item $K$ is a subfield of $K_{\mathbf{S}}:=\prod_{\cU}K_{i}$ called the \emph{base field} of $\mathbf{S}$. 
\end{itemize}

The \emph{defect function} of a family of topological adelic curves $\mathbf{S}=(I,\cU,(S_{i})_{i\in I},K)$ is a function $d_{\mathbf{S}}:K^{\times}\to \prod_{\cU}\bR$ defined by
\begin{align*}
\forall f=[(f_{i})_{i\in I}]\in K^{\times}, \quad d_{\mathbf{S}}(f) := [(d_{S_{i}}(f_{i}))_{i\in I}].
\end{align*}

Moreover, a family of topological adelic curves $\mathbf{S}=(I,\cU,(S_{i})_{i\in I},K)$ is called \emph{proper} if the following \emph{product formula} holds:
\begin{align*}
\forall f\in K^{\times}, \quad d_{\mathbf{S}}(f) = 0.
\end{align*}
\end{definition}

\begin{example}
\label{example:families_of_tac}
\begin{itemize}
	\item[(1)] Let $S=(K,\phi : \Omega \to M_{K},\nu)$ be a topological adelic curve. Let $I$ be any infinite countable set and $\cU$ be an arbitrary free ultrafilter on $I$. We consider the field $K_{\cU}:=\prod_{\cU}K$. The data $\mathbf{S}=(I,\cU,(S)_{i\in I},K_{\cU})$ is a family of topological adelic curves, which is proper iff $S$ is proper. This example should be seen as the first step in the implementation of classical Diophantine approximation in the language of topological adelic curves.
	\item[(2)] The construction of \S \ref{sub:motivation_families} yields a family of topological adelic curves $\mathbf{S}=(\bR_{>0},\cU,(S_{R})_{R>0},\cM(\bC))$. As we saw earlier, $\mathbf{S}$ is not proper.
\end{itemize}
\end{example}

\subsection{Asymptotically proper family of topological adelic curves}

\begin{definition}
\label{def:family_of_tac_proper}
Let $\mathbf{S}=(I,\cU,(S_{i})_{i\in I},K)$ be a family of topological adelic curves. Let $\sim$ be an equivalence relation on $\prod_{\cU}\bR$ which is compatible with the additive group structure. We say that the family $\mathbf{S}$ is \emph{asymptotically proper} w.r.t. $\sim$ if 
\begin{align*}
\forall f\in K^{\times}, \quad d_{\cdot}(f) \sim 0.
\end{align*}
\end{definition}

\begin{example}
\label{example:family_of_tac_proper}
\begin{itemize}
	\item[(1)] A family of topological adelic curves is proper iff it is asymptotically proper w.r.t. $=$.
	\item[(2)] Consider the family $\mathbf{S}=(\bR_{>0},\cU,(S_{R})_{R>0},\cM(\bC))$ and the equivalence relation $\sim_{\fin}$ introduced in \S \ref{sub:motivation_families}. Then $\mathbf{S}$ is asymptotically proper w.r.t. $\sim_{\fin}$.
\end{itemize}
\end{example}

\subsection{Adelic space of a family of topological adelic curves}
\label{sub:adelic_space_family_tac}

\begin{definition}
\label{label:adelic_space_family_tac}
Let $\mathbf{S}=(I,\cU,(S_{i}=(K_{i},\phi_{i}:\Omega_{i}\to M_{K_{i}},\nu_{i}))_{i\in I},K)$ be a family of topological adelic curves. The \emph{adelic space} of $\mathbf{S}$ is defined as the ultraproduct $\Omega_{\mathbf{S}}:=\prod_{\cU}\Omega_{i}$. Note that this is a Hausdorff topological space. 

Moreover, using the construction done in \S \ref{sub:motivation_families} (which transposes \emph{mutatis mutandis} in this case), we obtain a continuous map $\phi_{\mathbf{S}}:\Omega_{\mathbf{S}}\to M_{K}$ called the \emph{structure morphism} of $\mathbf{S}$ and a ultraproduct measure (cf. \S \ref{subsub:ultraproduct_measures}) $\nu_{\mathbf{S}} : \cB(\Omega_{\mathbf{S}})\to\prod_{\cU}[0,+\infty]$ called the \emph{adelic measure} of $\mathbf{S}$. 

We also denote by $L^{1}(\Omega_{\mathbf{S}},\nu_{\mathbf{S}})$ the ultraproduct vector space $\prod_{\cU}L^{1}(\Omega_{i},\nu_{i})$ of $\nu_{\mathbf{S}}$\emph{-integrable functions} (modulo the null equivalence). For any $f=[(f_{i})_{i\in I}]\in L^{1}(\Omega_{\mathbf{S}},\nu_{\mathbf{S}})$, we set
\begin{align*}
\int_{\Omega_{\mathbf{S}}}f\diff\nu_{\mathbf{S}} := \left[\left(\int_{\Omega_{i}}f_{i}\diff\nu_{i}\right)_{i\in I}\right]\in \displaystyle\prod_{\cU}\bR
\end{align*}
and call it the \emph{integral} of $f$ w.r.t. $\nu_{\mathbf{S}}$. One can check in the usual fashion that it is well-defined and linear.
\end{definition}

\subsection{Morphism between families of topological adelic curves}
\label{sub:morphism_family_tac}

\begin{definition}
\label{def:morphism_family_tac}
Let $\mathbf{S}=(I,\cU,(S_{i}=(K_{i},\phi_{i}:\Omega_{i}\to M_{K_{i}},\nu_{i}))_{i\in I},K),\mathbf{S'}=(I,\cU,(S'_{i}=(K'_{i},\phi'_{i}:\Omega'_{i}\to M_{K'_{i}},\nu'_{i}))_{i\in I},K')$ be two families of topological adelic curves. A \emph{morphism} $\alpha : \mathbf{S'} \rightarrow \mathbf{S}$ is the data $(\alpha^{\sharp},\alpha_{\sharp},I_\alpha)$, where
\begin{itemize}
	\item $\alpha^{\sharp}$ is a commutative diagram of fields 
	\begin{center}
\begin{tikzcd}
K_{\mathbf{S}} \arrow[r] & K_{\mathbf{S'}} \\
K \arrow[u] \arrow[r]    & K' \arrow[u]   
\end{tikzcd};
	\end{center}
	\item $\alpha_{\sharp} : \Omega_{\mathbf{S'}} \rightarrow \Omega_{\mathbf{S}}$ is a continuous map inducing a commutative diagram 
\begin{center}
\begin{tikzcd}
\Omega_{\mathbf{S'}} \arrow[r, "\phi_{\mathbf{S'}}"] \arrow[d, "\alpha_{\sharp}"] & M_{K'} \arrow[d, "\pi_{K'/K}"] \\
\Omega_{\mathbf{S}} \arrow[r, "\phi_{\mathbf{S}}"]                                                     & M_{K}                       
\end{tikzcd}.
\end{center}
Moreover, the \emph{direct image} of $\nu_{\mathbf{S'}}$ by $\alpha_{\sharp}$ is assumed to be equal to $\nu_{\mathbf{S}}$. Namely, for any $E\in \cB(\Omega_{\mathbf{S}})$, we have $\nu_{\mathbf{S}}(E)=\nu_{\mathbf{S'}}(\alpha_{\sharp}^{-1}(E))$;
	\item $I_{\alpha} : L^1(\Omega_{\mathbf{S'}},\nu_{\mathbf{S'}}) \rightarrow L^1(\Omega_{\mathbf{S'}},\nu_{\mathbf{S'}})$ is a linear map such that, for all $g\in L^1(\Omega_{\mathbf{S'}},\nu_{\mathbf{S'}})$, we have
	\begin{align*}
	\int_{\Omega_{\mathbf{S}}} I_{\alpha}(g)\diff\nu_{\mathbf{S}} = \int_{\Omega_{\mathbf{S'}}} g\diff\nu_{\mathbf{S'}},
	\end{align*}
and which, for all $f \in L^{1}(\Omega_{\mathbf{S}},\nu_{\mathbf{S}})$, maps the equivalence class of $f\circ \alpha_{\sharp}$ to the class of $f$.
\end{itemize}
\end{definition}

\begin{remark}
\label{rem:morphism_family_tac}
Let $\alpha : \mathbf{S'} \to \mathbf{S}$ be as in Definition \ref{def:family_of_tac}. One can check directly from the definition that $\cU$-almost everywhere, $\alpha$ determines a morphism $\alpha_{i} : S'_{i}\to S_{i}$ of topological adelic curves in the sense of Definition \ref{def:topological_adelic_curve}.
\end{remark}

\section{Algebraic covering of topological adelic curves}
\label{sec:algebraic_covering_tac}

The goal of this section is to associate to any topological adelic curve $S$ with adelic field $K$ and any algebraic extension $L/K$, a topological adelic curve $S\otimes_{K}L$ with adelic field $L$ sharing many properties with $S$ (\S \ref{sub:finite_separable_extension_tac}-\ref{sub:algebraic_extension}). If the extension $L/K$ is Galois, then from the topological point of view the topological adelic curve $S\otimes_{K}L$ behaves like a covering of $S$ (\S \ref{sub:action_of_Galois_group_tac}). We also introduce the analogue notion for families of topological adelic curves (\S \ref{sub:algebraic_covering_family_tac}).

\subsection{Finite separable extension}
\label{sub:finite_separable_extension_tac}

Throughout this subsection, we fix a topological adelic curve $S_{K}=(K,\phi_K : \Omega_K \to M_K,\nu_K)$ and a finite separable extension $L/K$. We will construct a canonically determined adelic structure on $L$ whose corresponding adelic curve maps to $S$.

\subsubsection{Case of a general topological adelic curve}

Let $\Omega_L := \Omega_K \times_{M_{K}} M_{L}$ equipped with the fibre product topology. Denote by $\phi_L: \Omega_L \to M_L$ the pullback of $\phi_K$. Then $\Omega_L$ is a Hausdorff space, which is also locally compact if so is $\Omega_{K}$. By definition of the fibre product topology, for any $f\in L^{\times}$, the map $\omega\in \Omega_L \mapsto \log|f|_{\omega} \in [-\infty,+\infty]$ is continuous. 

\begin{proposition}
\label{prop:projection_morphism_finite_separable_extension}
The restriction morphism $\pi_{L/K} : M_L \to M_K$ is surjective, proper, with finite fibres.
\end{proposition}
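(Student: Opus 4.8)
The plan is to read off the fibres of $\pi_{L/K}$ from Proposition~\ref{prop:formula_extension_sav} and to obtain properness for free from the compactness of the spaces of pseudo-absolute values. First I would check that $\pi_{L/K}$ is well defined and continuous. If $v'\in M_L$, the restriction of $\va_{v'}$ to $K$ satisfies axioms (i)--(iii) of a pseudo-absolute value on $K$ — the only thing to note is that condition (iii), together with its exception for $\{0,+\infty\}$, is inherited verbatim from $L$ — so $\pi_{L/K}(v')\in M_K$. Continuity is immediate from the definition of the topology of pointwise convergence: if a net $(v'_\alpha)$ converges to $v'$ in $M_L$, then $|f|_{v'_\alpha}\to|f|_{v'}$ for every $f\in L$, in particular for every $f\in K$, which is exactly the statement that $\pi_{L/K}(v'_\alpha)\to\pi_{L/K}(v')$ in $M_K$.

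For properness, I would recall that $M_K$ and $M_L$ are compact Hausdorff (\cite{Sedillot24_pav}, Theorem 7.1.2). A continuous map from a compact space to a Hausdorff space is closed, and the preimage under $\pi_{L/K}$ of a compact (hence closed) subset of $M_K$ is a closed subset of the compact space $M_L$, hence compact; thus $\pi_{L/K}$ is proper.

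For surjectivity and finiteness of the fibres, fix $v\in M_K$ and let $A'$ be the integral closure of $A_v$ in $L$, which is a semilocal Prüfer domain: its set $\Spm(A')$ of maximal ideals is finite (of cardinality at most $[L:K]$), these maximal ideals being in bijection with the extensions to $L$ of the valuation underlying $v$. For each $\m_w\in\Spm(A')$ the residue extension $\kappa_v\to\kappa_w$ is finite, so the residue absolute value of $v$ admits at least one, and at most finitely many, extensions to $\kappa_w$ (a valued field has only finitely many extensions of its absolute value to a fixed finite extension). By Proposition~\ref{prop:formula_extension_sav}(1), the fibre $\pi_{L/K}^{-1}(v)$ is in bijection with $\coprod_{\m_w\in\Spm(A')}\{\text{extensions of the residue absolute value of }v\text{ to }\kappa_w\}$, which is therefore non-empty and finite; this yields both surjectivity and the finiteness of the fibres.

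The proof is essentially an assembly of facts already recorded in the excerpt. The points requiring a little care, and hence the ones I expect to be least automatic, are the two classical inputs used in the last paragraph: that the integral closure of a valuation ring in a finite extension is semilocal (finiteness of $\Spm(A')$), and that an absolute value on a field has only finitely many extensions to a given finite extension; everything else is formal.
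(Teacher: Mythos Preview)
Your proof is correct and follows the same approach as the paper: properness from compactness of $M_L$ and $M_K$, and surjectivity with finite fibres from the description of the fibres in Proposition~\ref{prop:formula_extension_sav}. The paper's version is more terse—it simply says surjectivity holds ``by construction'' and finite fibres follow from Proposition~\ref{prop:formula_extension_sav}—whereas you have helpfully unpacked the two classical inputs (finiteness of $\Spm(A')$ and finiteness of extensions of an absolute value to a finite extension) that make this work.
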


\begin{proof}
$\pi_{L/K}$ is proper since it is continuous and both $M_K$ and $M_L$ are compact Hausdorff. $\pi_{L/K}$ is surjective by construction and has finite fibres by Proposition \ref{prop:formula_extension_sav}.
\end{proof}

We now adapt the constructions in (\cite{ChenMori}, \S 3.3) to define a measure $\nu_L$ on $\Omega_L$. For any $\omega\in \Omega_K$, let $\Omega_{L,\omega}$ denote the fibre of $\omega$ through the restriction map $\pi_{L/K} : \Omega_L \to \Omega_K$. Let $\omega\in\Omega$. Denote by $I_\omega$ the cardinality of the set of valuation rings on $L$ extending $A_{\omega}$. 

\begin{proposition}
Using the same notation as in Proposition \ref{prop:formula_extension_sav}, the measure $\Omega_{L,\omega}$ defined by
\begin{align*}
\forall x\in \Omega_{L,\omega},\quad  \bP_{L,\omega}(\{x\}) := \frac{1}{|I_\omega|} \frac{[\widehat{\kappa_x}:\widehat{\kappa_{\omega}}]}{[\kappa_x:\kappa_{\omega}]},
\end{align*}
is a probability measure. 
\end{proposition}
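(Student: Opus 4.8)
The claim is that for $\omega \in \Omega_K$, the assignment
\[
\bP_{L,\omega}(\{x\}) := \frac{1}{|I_\omega|}\,\frac{[\widehat{\kappa_x}:\widehat{\kappa_\omega}]}{[\kappa_x:\kappa_\omega]}
\]
over $x \in \Omega_{L,\omega}$ defines a probability measure. Since $\Omega_{L,\omega}$ is a finite set (by Proposition \ref{prop:projection_morphism_finite_separable_extension}, or directly by Proposition \ref{prop:formula_extension_sav}), the only two things to check are that each weight is non-negative — which is clear, as all the degrees involved are positive integers and $|I_\omega| \geq 1$ — and that the weights sum to $1$. So the entire content is the normalization identity
\[
\sum_{x \in \Omega_{L,\omega}} \frac{1}{|I_\omega|}\,\frac{[\widehat{\kappa_x}:\widehat{\kappa_\omega}]}{[\kappa_x:\kappa_\omega]} = 1,
\]
and this is precisely equation \eqref{eq:formula_extension_sav} of Proposition \ref{prop:formula_extension_sav}, once one matches up the indexing.

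**The main step: matching the indexing.** Proposition \ref{prop:formula_extension_sav}(2) states that
\[
\sum_{\m_w \in \Spm(A')} \frac{1}{|\Spm(A')|} \sum_{i \mid v} \frac{[\widehat{\kappa_{w,i}}:\widehat{\kappa_v}]_s}{[\kappa_w:\kappa_v]_s} = 1,
\]
where $A'$ is the integral closure of $A_\omega = A_v$ in $L$. I would argue as follows. By Proposition \ref{prop:formula_extension_sav}(1), the set $\Omega_{L,\omega}$ of pseudo-absolute values on $L$ above $\omega$ is in bijection with the set of pairs $(\m_w, i)$, where $\m_w$ runs over $\Spm(A')$ and $i$ runs over the extensions of the residue absolute value of $v$ along $\kappa_v \to \kappa_w$; under this bijection the pseudo-absolute value $x$ corresponding to $(\m_w, i)$ has residue field $\kappa_x = \kappa_w$ and residue completion $\widehat{\kappa_x} = \widehat{\kappa_{w,i}}$. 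Moreover $|I_\omega| = |\Spm(A')|$, since the extensions of the valuation ring $A_\omega$ to $L$ are exactly the prime localizations of $A'$ at its maximal ideals (recall $A'$ is a Prüfer domain), hence in bijection with $\Spm(A')$. Since $L/K$ is separable, so is each residue extension $\kappa_w / \kappa_v$ and each completion extension $\widehat{\kappa_{w,i}}/\widehat{\kappa_v}$, so the separable degrees $[\,\cdot\,]_s$ coincide with the ordinary degrees $[\,\cdot\,]$. Substituting all of this into \eqref{eq:formula_extension_sav} turns it term-by-term into $\sum_{x \in \Omega_{L,\omega}} \bP_{L,\omega}(\{x\}) = 1$.

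**Expected obstacle.** There is no real mathematical difficulty here — all the work was done in \cite{Sedillot24_pav} and packaged as Proposition \ref{prop:formula_extension_sav}. The only thing requiring care is bookkeeping: confirming that the double sum over $(\m_w, i)$ in \eqref{eq:formula_extension_sav} is genuinely a sum over $\Omega_{L,\omega}$ with no repetitions or omissions (this is exactly part (1) of that proposition), and confirming that $|I_\omega|$ as defined here — the number of valuation rings of $L$ extending $A_\omega$ — equals $|\Spm(A')|$. Both are immediate from the structure theory of Prüfer domains recalled in \S\ref{sub:reminder_adelic_curves} and from Proposition \ref{prop:formula_extension_sav}(1). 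I would also remark, for completeness, that finiteness of $\Omega_{L,\omega}$ (needed to even speak of a measure on a discrete set in the naive way, though here it is a subspace of the compact $\Omega_L$) follows from Proposition \ref{prop:projection_morphism_finite_separable_extension}. So the proof is essentially one sentence invoking Proposition \ref{prop:formula_extension_sav}, plus the separability remark identifying $[\,\cdot\,]_s$ with $[\,\cdot\,]$.
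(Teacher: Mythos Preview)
Your approach matches the paper's one-line proof (``It is a direct consequence of \eqref{eq:formula_extension_sav}''), and your bookkeeping identifying $\Omega_{L,\omega}$ with pairs $(\m_w,i)$ via Proposition~\ref{prop:formula_extension_sav}(1) and $|I_\omega|$ with $|\Spm(A')|$ is exactly the content the paper leaves implicit.

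One caution: your assertion that separability of $L/K$ forces separability of each residue extension $\kappa_w/\kappa_v$ is false in general. For instance, take $K=\mathbb{F}_p(s)((t))$ with the $t$-adic valuation and $L=K(\alpha)$ with $\alpha^p - t\alpha - s = 0$; then $L/K$ is separable (the derivative is $-t\neq 0$) but the residue extension is $\mathbb{F}_p(s^{1/p})/\mathbb{F}_p(s)$, purely inseparable. The definition of $\bP_{L,\omega}$ in the statement uses ordinary degrees while \eqref{eq:formula_extension_sav} uses separable degrees $[\cdot]_s$, and the paper's proof does not address this discrepancy either; so this is a shared imprecision in the exposition rather than a gap specific to your argument.
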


\begin{proof}
It is a direct consequence of (\ref{eq:formula_extension_sav}).
\end{proof}

\begin{definition}
\label{def:disintegration_kernel}
Let $f : \Omega_{L} \to \bR$. Define $I_{L/K}(f) : \Omega_{K}\to \bR$ by 
\begin{align*}
I_{L/K}(f)(\omega) := \displaystyle\sum_{x\in \pi^{-1}_{L/K}(\omega)} \bP_{L,\omega}(\{x\})f(x).
\end{align*}
\end{definition}

The $\sigma$-algebra defined in (\cite{ChenMori}, \S 3.3) coincides with the Borel $\sigma$-algebra on $\Omega_L$. Therefore, we can reproduce \emph{mutatis mutandis} the arguments of \emph{ibid.} to obtain a measure $\nu_L$ such that the equality
\begin{align}
\label{eq:measure_finite_separable_extension}
\nu_L(A) = \int_{\Omega_K} \displaystyle\sum_{x\in \Omega_{L,\omega}}\bP_{L,\omega}(\{x\})\mathbf{1}_A(x) \diff \nu_K,
\end{align}
holds for any Borel subset $A \subset \Omega_L$, where $\mathbf{1}_{A}$ denotes the characteristic function of $A$. Furthermore, $\nu_L$ satisfies all the conditions of (\cite{ChenMori}, Theorem 3.3.7). Thus, we deduce the following proposition.

\begin{proposition}
\label{prop:finite_separable_covering_tac}
We use the above notation. Then $S_L = S \otimes_K L := (L,\phi_L : \Omega_L \to M_L,\nu_L)$ is a topological adelic curve. We further have a morphism $\alpha_{L/K}: S_L \to S_K$. Moreover, for any $f\in L^{\times}$, we have
\begin{align}
\label{eq:product_formula_finite_separable_extension_tac}
d_{S_L}(f) = \frac{1}{[L:K]} d_{S_K}(N_{L/K}(f)).
\end{align}
In particular, if $S_K$ is proper, then $S_L$ is proper.
\end{proposition}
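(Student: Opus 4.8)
The plan is to follow the blueprint of (\cite{ChenMori}, \S 3.3), in particular Theorem 3.3.7, adapting it in two respects: the parameter space is a locally compact topological space rather than a bare measure space, and absolute values are replaced throughout by pseudo-absolute values. A good part of the work is already done in the discussion preceding the statement: $\Omega_L = \Omega_K \times_{V_K} V_L$ is locally compact Hausdorff, the pulled-back map $\phi_L : \Omega_L \to M_L$ is continuous, the restriction $\pi_{L/K} : M_L \to M_K$ is proper with finite fibres (Proposition \ref{prop:projection_morphism_finite_separable_extension}), each fibrewise measure $\bP_{L,\omega}$ is a probability measure, and the Borel $\sigma$-algebra on $\Omega_L$ agrees with the one used in \emph{loc. cit.}, so that \eqref{eq:measure_finite_separable_extension} does define a Borel measure $\nu_L$ on $\Omega_L$. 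It thus remains to: (i) check $\nu_L$-integrability of $\log|f|_\cdot$ for every $f \in L^\times$; (ii) verify the three axioms making $\alpha_{L/K} = (K \hookrightarrow L,\ \pi_{L/K},\ I_{L/K})$ a morphism of topological adelic curves; (iii) prove the defect identity \eqref{eq:product_formula_finite_separable_extension_tac}, which then forces properness of $S_L$ from that of $S_K$.

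The engine for (i) and (iii) is the local norm formula for pseudo-absolute values supplied by \cite{Sedillot24_pav} (the refinement of the classical identity $\log|N_{L/K}(f)|_v = \sum_{w\mid v}[L_w:K_v]\log|f|_w$, compatible with \eqref{eq:formula_extension_sav}): for $f \in L^\times$,
\begin{align*}
I_{L/K}(\log|f|_\cdot)(\omega) = \sum_{x \in \pi_{L/K}^{-1}(\omega)} \bP_{L,\omega}(\{x\})\log|f|_x = \frac{1}{[L:K]}\log|N_{L/K}(f)|_\omega
\end{align*}
for $\nu_K$-almost every $\omega \in \Omega_K$, both sides being finite $\nu_K$-a.e. since $N_{L/K}(f) \in K^{\times}$ and $S_K$ is a topological adelic curve (Proposition \ref{prop:properties_topological_adelic_curves}(i)). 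As $N_{L/K}(f) \in K^\times$, the right-hand side is $\nu_K$-integrable by the defining property of $S_K$; applying the same identity to $f^{-1}$ and using $|\log t| = \log^{+}t + \log^{+}(t^{-1})$ bounds $I_{L/K}(|\log|f|_\cdot|)$ by a $\nu_K$-integrable function, whence by the disintegration identity $\int_{\Omega_L} g\,d\nu_L = \int_{\Omega_K} I_{L/K}(g)\,d\nu_K$ (valid for $\nu_L$ by construction, just as in \emph{loc. cit.}) the function $\log|f|_\cdot$ is $\nu_L$-integrable, which is (i). Integrating the displayed equality yields
\begin{align*}
d_{S_L}(f) = \int_{\Omega_L}\log|f|_\omega\,\nu_L(\diff\omega) = \frac{1}{[L:K]}\int_{\Omega_K}\log|N_{L/K}(f)|_\omega\,\nu_K(\diff\omega) = \frac{1}{[L:K]}\,d_{S_K}(N_{L/K}(f)),
\end{align*}
which is (iii); in particular $d_{S_K}\equiv 0 \Rightarrow d_{S_L}\equiv 0$.

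For (ii): $\alpha^{\sharp}$ is the inclusion $K \hookrightarrow L$; $\alpha_{\sharp} = \pi_{L/K}$ is continuous and makes the required square commute because $\phi_L$ is by definition the pullback of $\phi_K$, so $|a|_{\pi_{L/K}(\omega)} = |a|_\omega$ for $a \in K$; the pushforward of $\nu_L$ is $\nu_K$ by applying \eqref{eq:measure_finite_separable_extension} to preimages $A = \pi_{L/K}^{-1}(B)$ and using that each $\bP_{L,\omega}$ has total mass $1$; and $I_{L/K}$ is a disintegration kernel since $\int_{\Omega_K} I_{L/K}(g)\,d\nu_K = \int_{\Omega_L} g\,d\nu_L$ and $I_{L/K}(f\circ\pi_{L/K})(\omega) = f(\omega)\sum_{x}\bP_{L,\omega}(\{x\}) = f(\omega)$ for $f \in \cL^1(\Omega_K,\nu_K)$.

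The genuinely non-routine ingredient is what is hidden in ``$\nu_L$ is a Borel measure satisfying the disintegration identity'': one must transcribe the measure-theoretic core of (\cite{ChenMori}, \S 3.3) — countable additivity of the set function \eqref{eq:measure_finite_separable_extension}, measurability of $\omega \mapsto \bP_{L,\omega}(\{x\})$ and of the fibrewise sums, and the Fubini-type exchange underlying the disintegration identity — checking that passing from measurable families of absolute values to continuous families of pseudo-absolute values costs nothing; the matching of $\sigma$-algebras noted above and the finiteness of the fibres of $\pi_{L/K}$ are exactly what let this transcription run verbatim. Granting that, the proof is the assembly above.
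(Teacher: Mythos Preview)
Your proposal is correct and takes essentially the same approach as the paper: both defer the measure-theoretic core to (\cite{ChenMori}, Theorem 3.3.7), with the paper doing so tersely (its proof only spells out the morphism verification, treating integrability and the defect formula as consequences of the \emph{mutatis mutandis} adaptation announced just before the statement) while you unpack the norm formula and disintegration identity explicitly.
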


\begin{proof}
We only need to justify that we have a morphism of topological adelic curves $\alpha_{L/K} : S_L \to S_K$. Denote by $\alpha_{L/K}^{\sharp} : K \to L$ the field extension. Define $\alpha_{L/K,\sharp} : \Omega_L \to \Omega_K$ as the pullback of $\pi_{L/K}:M_L \to M_K$ by $\varphi_K : \Omega_K \to M_K$. By construction, the diagram in Definition \ref{def:topological_adelic_curve} is commutative. Moreover, (\cite{ChenMori}, Theorem 3.3.7 (3)) implies that $\alpha_{L/K,\sharp,\ast} \nu_L = \nu_K$. Finally, from (\cite{ChenMori}, Theorem 3.3.7 (1-2)), we get that $I_{L/K}$ (cf. Definition \ref{def:disintegration_kernel}) is a disintegration kernel of $\alpha_{L/K,\sharp}$.
\end{proof}

\begin{example}
\label{example:finite_separable_covering_Gubler}
In (\cite{Gubler97}, Example 2.8), Gubler gives a construction of an extension of the $M$-field structure on the field of meromorphic functions of a complex closed disc. The counterpart of this construction in the language presented here is as follows. 

Fix $R>0$ and consider the topological adelic curve $S_{R}=(K_{R},\phi_{R}:\Omega_{R}\to M_{K_{R}},\nu_{R})$ from \S \ref{subsub:example_tac_Nevanlinnaç_compact_disc}. Let $L/K_{R}$ be a finite extension. Let $S_{L}:= S_{R}\otimes_{K_{R}}L = (L,\phi_{L}:\Omega_{L}\to M_{L},\nu_{L})$ the topological adelic curve from Proposition \ref{prop:finite_separable_covering_tac}. In \cite{Gubler97}, showed that one can realise $L$ as the field of meromorphic function of a branched holomorphic covering $\pi: X_{L}\to \Omega_{R}$, where we consider the complex topology on $\Omega_{R}$. We claim that $\Omega_{L}$ and $X_{L}$ are in bijection (as sets). Indeed, the ultrametric points of $\Omega_{L}$ are absolute values associated with discrete valuations on $L$ that must be equivalent to the order of vanishing at a point of $X_{L}$ by (\cite{Isssa66}, Theorem III). Since $\mathrm{Int}(X_{L})=\pi^{-1}(\Omega_{R,\um})$, we obtain a bijection for the ultrametric points. By the same argument, we obtain the bijection for the Archimedean points. It is now straightforward to check that the measure theoretic features in (\cite{Gubler97}, Example 2.8) coincide with the ones we consider here.
\end{example}

\subsubsection{Case of an integral topological adelic curve}

Assume that $S_K$ is integral. Let $(A,\|\cdot\|_A)$ be the underlying integral structure and let $V_K=\cM(A,\|\cdot\|_A)$ be the integral space. Denote by $B$ the integral closure of $A$ in $L$. As recalled in \S \ref{subsub:prelim_integral_structures}, there exists a norm $\|\cdot\|_B$ on $B$ such that $(B,\|\cdot\|_B)$ is a tame integral structure for $L$ and $V_L := \cM(B,\|\cdot\|_B) = \pi_{L/K}^{-1}(\cM(A,\|\cdot\|_{A}))$. This is a compact Hausdorff topological space and we denote by $\pi_{L/K} : V_L \to V_K$ the (continuous) restriction morphism. Now the following proposition implies that the image of structural morphism $\phi_L : \Omega_L \to M_L$ lies in $V_L$ and $S \otimes_{K} L = (L,\phi_L : \Omega_L \to V_L,\nu_L)$ is an integral topological adelic curve.

\begin{proposition}
\label{prop:projection_morphism_finite_separable_extension_integral}
The restriction morphism $\pi_{L/K} : V_L \to V_K$ is surjective, proper, with finite fibres.
\end{proposition}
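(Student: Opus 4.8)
The plan is to reduce everything to the non-integral case already handled in Proposition~\ref{prop:projection_morphism_finite_separable_extension}, using the identification of $V_L$ as a subspace of $M_L$ furnished by Proposition~\ref{prop:algebraic_extension_of_tame_spaces}. Concretely, that proposition tells us that $V_L = \cM(B,\|\cdot\|_B)$ is canonically homeomorphic to the preimage $\pi_{L/K}^{-1}(V_K)$ inside $M_L$, and that the restriction morphism $\pi_{L/K}\colon V_L \to V_K$ is simply the restriction to this preimage of the global restriction morphism $\pi_{L/K}\colon M_L \to M_K$. Once this is recorded, all three assertions follow formally.

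For surjectivity I would argue: given $x \in V_K \subseteq M_K$, Proposition~\ref{prop:projection_morphism_finite_separable_extension} (or directly Proposition~\ref{prop:formula_extension_sav}(1)) provides some $y \in M_L$ with $\pi_{L/K}(y) = x$; since $x \in V_K$ and $V_L = \pi_{L/K}^{-1}(V_K)$, necessarily $y \in V_L$, so $\pi_{L/K}\colon V_L \to V_K$ is onto. For finiteness of fibres I would note that the fibre of $\pi_{L/K}\colon V_L \to V_K$ over $x$ is contained in the fibre of $\pi_{L/K}\colon M_L \to M_K$ over $x$, which is finite by Proposition~\ref{prop:projection_morphism_finite_separable_extension} (itself a consequence of Proposition~\ref{prop:formula_extension_sav}). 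For properness I would use that $V_L$ and $V_K$ are compact Hausdorff, being Berkovich analytic spectra of Banach rings, so that for any compact (equivalently closed) subset $C \subseteq V_K$ the preimage $\pi_{L/K}^{-1}(C)\cap V_L$ is a closed subset of the compact space $V_L$, hence compact; alternatively, the restriction of the proper map $\pi_{L/K}\colon M_L \to M_K$ to the saturated subset $\pi_{L/K}^{-1}(V_K)$ is automatically proper.

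I do not expect a genuine obstacle here: the substance lies entirely in the two inputs quoted above — Proposition~\ref{prop:algebraic_extension_of_tame_spaces}, which controls how tame integral structures behave under algebraic extensions, and Proposition~\ref{prop:formula_extension_sav}, which describes the extensions of a pseudo-absolute value along a finite separable extension — while the present statement merely transports the already-established topological properties of $\pi_{L/K}\colon M_L \to M_K$ to the closed saturated subspace $V_L$.
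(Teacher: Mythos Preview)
Your proof is correct and follows essentially the same approach as the paper: properness via compactness and Hausdorffness of $V_L$ and $V_K$, surjectivity and finite fibres by reduction to Proposition~\ref{prop:projection_morphism_finite_separable_extension} through the identification $V_L=\pi_{L/K}^{-1}(V_K)$ from Proposition~\ref{prop:algebraic_extension_of_tame_spaces}. The paper's proof is simply terser, writing ``surjective by construction'' where you spell out the saturated-subspace argument.
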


\begin{proof}
$\pi_{L/K}$ is proper since it is continuous and both $V_L$ and $V_K$ are compact Hausdorff. $\pi_{L/K}$ is surjective by construction and has finite fibres by Proposition \ref{prop:projection_morphism_finite_separable_extension}.
\end{proof}

\subsection{Finite extension}
\label{sub:finite_extension_tac}

Throughout this subsection, we fix a topological adelic curve $S_{K}=(K,\phi_K : \Omega_K \to M_K,\nu_K)$ and a finite extension $L/K$. Denote by $K'/K$ the separable closure of $K$ in $L$ and let $S_{K'} := S \otimes_K K' := (K',\phi_{K'}:\Omega_{K'}\to M_{K'},\nu_{K'})$ be the topological adelic curve constructed in \S \ref{sub:finite_separable_extension_tac}. 

\begin{proposition}
\label{prop:purely_inseparable_extension_tac}
\begin{itemize}
	\item[(1)] The spaces $M_L$ et $M_{K'}$ are homeomorphic.

	\item[(2)] There exist a topological adelic curve $S_L := (L,\phi_L:\Omega_{L}\to M_{L},\nu_{L})$ and a morphism $S_L \to S_{K'} \to S$ of topological adelic curves. 
	\item[(3)] Assume that $S_K$ is an integral topological adelic curve and denote by $(A,\|\cdot\|_A)$ its underlying integral structure and by $V_K$ its integral space. Let $A'$ and $A_L$ denote respectively the integral closures of $A$ in $K'$ and $L$. We respectively endow $A'$ and $A_L$ with the norm $\|\cdot\|_{A'}$ and $\|\cdot\|_{A_L}$ from Proposition \ref{prop:algebraic_extension_of_tame_spaces}. Then $V_L := \cM(A_L)$ and $V_{K'} = \cM(A')$ are homeomorphic and the topological adelic curve $S_L$ is integral.
\end{itemize}
\end{proposition}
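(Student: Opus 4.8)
The statement concerns a purely inseparable extension $L/K'$ glued on top of the finite separable extension $K'/K$, so the plan is to reduce everything to the purely inseparable case $L/K'$ and then invoke the already-established separable theory of \S\ref{sub:finite_separable_extension_tac}. For part (1), I would combine Proposition \ref{cor:purely_inseparable_extension_sav} (which says extensions of a pseudo-absolute value from $K'$ to $L$ are in bijection with... well, they are the same set, as $L/K'$ is purely inseparable) with the fact that the restriction map $\pi_{L/K'}:M_L\to M_{K'}$ is a continuous bijection between compact Hausdorff spaces, hence a homeomorphism. More precisely, in characteristic $p$ a purely inseparable extension $L/K'$ induces on each $K'$-point a unique pseudo-absolute value on $L$ (since $x\mapsto x^{p^n}\in K'$ forces $|x|_L = |x^{p^n}|_{K'}^{1/p^n}$), so $\pi_{L/K'}$ is bijective; continuity is clear and properness/Hausdorffness upgrade it to a homeomorphism. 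Composing with the homeomorphism for $K'/K$ is not needed here since $M_{K'}$ itself is the target.

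For part (2), having identified $M_L\cong M_{K'}$, I set $\Omega_L := \Omega_{K'}$ as a topological space, let $\phi_L:\Omega_L\to M_L$ be $\phi_{K'}$ composed with the homeomorphism $M_{K'}\iso M_L$, and take $\nu_L := \nu_{K'}$. The integrability condition for $S_L$ transfers verbatim: for $f\in L^\times$ we have $f^{p^n}\in K'$ for some $n$, so $\log|f|_\omega = p^{-n}\log|f^{p^n}|_\omega$ is $\nu_{K'}$-integrable because $\log|f^{p^n}|_\cdot$ is. Thus $S_L$ is a topological adelic curve. The morphism $S_L\to S_{K'}$ is given by the triple $(\alpha^\sharp,\alpha_\sharp,I_\alpha)$ with $\alpha^\sharp:K'\hookrightarrow L$ the inclusion, $\alpha_\sharp=\Id_{\Omega_{K'}}$ (under the identification $\Omega_L=\Omega_{K'}$), and $I_\alpha=\Id$; the compatibility $|\alpha^\sharp(a)|_{\omega'}=|a|_{\alpha_\sharp(\omega')}$, the pushforward condition $\alpha_{\sharp,*}\nu_L=\nu_{K'}$, and the disintegration-kernel property are all trivial since $\alpha_\sharp$ is the identity and $\pi_{L/K'}$ is a homeomorphism. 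Then compose with $\alpha_{K'/K}:S_{K'}\to S_K$ from Proposition \ref{prop:finite_separable_covering_tac} to get $S_L\to S_{K'}\to S_K$. One should also record the defect formula: since $N_{L/K'}(f)=f^{[L:K']}$ for purely inseparable $L/K'$, we get $d_{S_L}(f)=d_{S_{K'}}(f^{[L:K']}/[L:K'])$... more precisely $d_{S_L}(f) = [L:K']^{-1}d_{S_{K'}}(N_{L/K'}(f))$ follows from $\log|f|_\omega=[L:K']^{-1}\log|N_{L/K'}(f)|_\omega$ on $\Omega_L=\Omega_{K'}$, and hence properness of $S_K$ propagates through $S_{K'}$ to $S_L$ by Proposition \ref{prop:finite_separable_covering_tac}.

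For part (3), assume $S_K$ is integral with tame integral structure $(A,\|\cdot\|_A)$ and integral space $V_K=\cM(A,\|\cdot\|_A)$. By Proposition \ref{prop:algebraic_extension_of_tame_spaces} applied twice (first to $K'/K$, then to $L/K'$, or directly to $L/K$), the integral closures $A'\subset K'$ and $A_L\subset L$ carry norms $\|\cdot\|_{A'}$, $\|\cdot\|_{A_L}$ making them tame integral structures, with $\cM(A')=\pi_{K'/K}^{-1}(V_K)$ and $\cM(A_L)=\pi_{L/K}^{-1}(V_K)$. Since $A_L$ is the integral closure of $A'$ in the purely inseparable extension $L/K'$, the restriction $\cM(A_L)\to\cM(A')$ is the restriction of the homeomorphism $M_L\iso M_{K'}$ from part (1) to these closed subspaces (it is a continuous bijection of compact Hausdorff spaces, using that $\Spm(A_L)\to\Spm(A')$ is a bijection for a purely inseparable extension — here $A'$ being Pr\"ufer, $A_L$ is its integral closure and the primes match up because extracting $p$-th roots does not split primes). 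Hence $V_L:=\cM(A_L)\cong V_{K'}:=\cM(A')$, the structural morphism $\phi_L$ lands in $V_L$ by construction, and $S_L=(L,\phi_L:\Omega_L\to V_L,\nu_L)$ is integral.

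The main obstacle I anticipate is \textbf{part (3)}, specifically checking that the norm $\|\cdot\|_{A_L}$ produced by Proposition \ref{prop:algebraic_extension_of_tame_spaces} is compatible with the identification $\Omega_L=\Omega_{K'}$ — i.e. that the Berkovich spectrum $\cM(A_L)$ really is homeomorphic to $\cM(A')$ and not just abstractly in bijection, and that the homeomorphism is the natural one so that $\phi_L(\Omega_L)\subseteq V_L$ holds on the nose rather than merely up to identification. Everything in parts (1) and (2) is essentially formal once one observes that a purely inseparable extension does not enlarge the space of pseudo-absolute values and does not disturb integrability (via the $p^n$-th power trick); the only genuine content is the interplay between the integral structures, which is handled by Proposition \ref{prop:algebraic_extension_of_tame_spaces} but requires care to state the homeomorphism with the correct naturality.
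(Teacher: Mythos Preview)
Your proposal is correct and follows essentially the same approach as the paper: part (1) via Proposition~\ref{cor:purely_inseparable_extension_sav} plus the compact-Hausdorff upgrade of a continuous bijection, part (2) by transporting $(\Omega_{K'},\nu_{K'})$ along the homeomorphism $M_L\cong M_{K'}$, and part (3) by restricting the homeomorphism of (1) to the Berkovich spectra. The obstacle you anticipate in (3) dissolves in one line: since Proposition~\ref{prop:algebraic_extension_of_tame_spaces} identifies $\cM(A_L)=\pi_{L/K}^{-1}(V_K)$ and $\cM(A')=\pi_{K'/K}^{-1}(V_K)$, the restriction $\pi_{L/K'}|_{\cM(A_L)}$ is automatically a surjection onto $\cM(A')$, hence (being the restriction of a homeomorphism to closed subspaces) a homeomorphism with the required naturality.
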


\begin{proof}
\textbf{(1)} Corollary \ref{cor:purely_inseparable_extension_sav} provides a continuous bijection $\pi_{L/K'} : M_L \to M_{K'}$. Since both $M_L$ and $M_{K'}$ are compact Hausdorff, we obtain (1).

\textbf{(2)} The homeomorphism $\pi_{L/K'} : M_L \to M_{K'}$ induces by pullback a homeomorphism $\Omega_L:= \pi_{L/K'}^{-1}(\Omega_{K'})\to \Omega_{K'}$ and a continuous map $\phi_L :\Omega_{L} \to M_{L}$. Denote by $\nu_{L}$ the pushforward of $\nu_{K'}$ by the inverse of the latter homeomorphism. Then $S_L := (L,\phi_L : \Omega_L \to M_L,\nu_{L})$ is a topological adelic curve.

\textbf{(3)} $(A_L,\|\cdot\|_{A_L})$ is an integral structure for $L$ (cf. Proposition \ref{prop:algebraic_extension_of_tame_spaces}). Moreover, the restriction map $\pi_{L/K'}: \cM(A_L) \to \cM(A')$ is a restriction of the homeomorphism of (1) which is surjective. Hence it is a homeomorphism. The last part of the assertion follows directly.
\end{proof}

We now state a compatibility result for successive extensions of the base field.

\begin{proposition}
\label{prop:successive_finite_extensions_tac}
Let $K_2/K_1/K$ be successive finite extensions. Then we have a canonical isomorphism of topological adelic curves
\begin{align*}
\left(S_K \otimes_{K} K_1 \right) \otimes_{K_1} K_2 \cong S_K \otimes_{K} K_2
\end{align*}
making the below diagram commute.
\begin{center}
\begin{tikzcd}
S_K \otimes_{K} K_2 \arrow[r, "\alpha_{K_2/K_1}"] \arrow[rd, "\alpha_{K_2/K}"'] & S_K \otimes_{K} K_1 \arrow[d, "\alpha_{K_1/K}"] \\
                                                                                & S_K                                            
\end{tikzcd}.
\end{center}
Moreover, if $S_K$ is integral, the above isomorphism and diagram lie in the category of integral topological adelic curves.
\end{proposition}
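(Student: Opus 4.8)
The plan is to reduce everything to the separable case treated in \S\ref{sub:finite_separable_extension_tac} plus the purely inseparable case of Proposition \ref{prop:purely_inseparable_extension_tac}, and then to track functoriality through the three data $(\alpha^\sharp,\alpha_\sharp,I_\alpha)$ of a morphism. First I would set up notation: let $K_1'/K$ and $K_2'/K_1$ be the separable closures of $K$ in $K_1$ and of $K_1$ in $K_2$ respectively, so that $K_1/K_1'$ and $K_2/K_2'$ are purely inseparable. A short diagram chase (or the standard transitivity of separable closure in a tower) shows $K_2'$ contains $K_1'$ and is the separable closure of $K$ in $K_2$, so $K_2'/K_1'$ is separable.

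The core of the argument is the separable case: for a tower $K_2'/K_1'/K$ of \emph{finite separable} extensions I must produce a canonical isomorphism $(S_K\otimes_K K_1')\otimes_{K_1'} K_2' \cong S_K\otimes_K K_2'$ compatible with the three projections. On the level of fields this is the tautological identification. On the level of adelic spaces, the fibre product construction gives $\Omega_{K_2'}=\Omega_K\times_{V_K}V_{K_2'}$ on one side and $(\Omega_K\times_{V_K}V_{K_1'})\times_{V_{K_1'}}V_{K_2'}$ on the other; since $V_{K_2'}=\pi_{K_2'/K_1'}^{-1}(V_{K_1'})=\pi_{K_2'/K}^{-1}(V_K)$ by the tameness/integral-closure transitivity recalled after Proposition \ref{prop:algebraic_extension_of_tame_spaces} (and more elementarily, the extensions of a valuation ring through a tower are computed stepwise, Proposition \ref{prop:formula_extension_sav}), the universal property of fibre products yields the canonical homeomorphism, and $\phi$ and the topology match by construction. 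For the measures, one checks that the probability measures $\bP$ on the fibres multiply correctly: for $\omega\in\Omega_K$ and $y\in\Omega_{K_2',\omega}$ lying over $x\in\Omega_{K_1',\omega}$, the multiplicativity $[\widehat{\kappa_y}:\widehat{\kappa_\omega}]=[\widehat{\kappa_y}:\widehat{\kappa_x}][\widehat{\kappa_x}:\widehat{\kappa_\omega}]$ of completed-residue-field degrees and of separable-residue degrees, combined with the factorisation $|I_\omega^{(K_2')}| = $ (number of extensions through $K_1'$) $\times$ (number of further extensions), gives $\bP_{K_2'/K,\omega}=\bP_{K_2'/K_1',x}\cdot\bP_{K_1'/K,\omega}$ after summing; plugging this into the defining formula \eqref{eq:measure_finite_separable_extension} and using Fubini identifies $\nu_{K_2'}$ computed in two steps with $\nu_{K_2'}$ computed in one step. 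The disintegration kernels then compose, $I_{K_2'/K}=I_{K_1'/K}\circ I_{K_2'/K_1'}$, directly from Definition \ref{def:disintegration_kernel} and the same fibrewise factorisation, which gives commutativity of the triangle of morphisms.

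Next I would handle the purely inseparable steps. Since $M_L\to M_{K'}$ is a homeomorphism (Proposition \ref{prop:purely_inseparable_extension_tac}(1)) and $S_L\otimes$-construction is defined by transporting $\Omega$, $\phi$ and $\nu$ along this homeomorphism, compatibility in a purely inseparable tower, and compatibility of a purely inseparable step with a separable step, are immediate: each purely inseparable extension does not change the adelic space, measure or kernel up to the canonical homeomorphism, so composing is forced. Concretely, to compare $(S_K\otimes_K K_1)\otimes_{K_1}K_2$ with $S_K\otimes_K K_2$ I factor each $\otimes$ into its separable-then-purely-inseparable parts, $S_K\otimes_K K_1=(S_K\otimes_K K_1')\otimes_{K_1'}K_1$ and likewise for $K_2$ over $K_1$ and over $K$, and then reassemble using the separable-case isomorphism above together with the observation that $K_1\cdot K_2' = K_2$ inside $K_2$ (as $K_2/K_2'$ is purely inseparable and $K_1\supseteq K_1'$, while $K_2'\supseteq K_1'$ is separable over $K$), so the two iterated constructions land on the same field and the transported data agree.

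The main obstacle, and the step that needs genuine care rather than a one-line invocation, is the measure-theoretic bookkeeping in the separable case: verifying the fibrewise multiplicativity $\bP_{K_2'/K,\omega}=\bP_{K_2'/K_1',x}\,\bP_{K_1'/K,\omega}$ and then pushing it through \eqref{eq:measure_finite_separable_extension} with a Fubini/Tonelli argument to conclude $\nu$ is computed the same way in one or two steps. This is exactly the analogue of the cocycle/transitivity property underlying (\cite{ChenMori}, Theorem 3.3.7) and its proof of functoriality for algebraic coverings of adelic curves, so I would model the verification on \emph{loc. cit.}, replacing absolute values by pseudo-absolute values and residue degrees by the combination of separable residue degree and completed-residue-field degree appearing in Proposition \ref{prop:formula_extension_sav}; the topological parts (fibre products of locally compact Hausdorff spaces, restriction maps proper with finite fibres by Propositions \ref{prop:projection_morphism_finite_separable_extension} and \ref{prop:projection_morphism_finite_separable_extension_integral}) are then routine. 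The integral-case assertion follows for free, since in every step the integral space is defined as $\cM$ of the integral closure, and integral closure is transitive in towers, so the homeomorphisms above automatically respect the integral structures and the morphisms are integral.
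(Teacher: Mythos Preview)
Your overall strategy matches the paper's: identify the adelic spaces by transitivity of fibre products, identify the measures via the fibrewise multiplicativity of the probability weights (the paper simply cites (\cite{ChenMori}, (3.15)) for this, while you unpack it), and get the integral case from transitivity of integral closure. The level of detail you supply for the measure step is a genuine expansion of what the paper writes.

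There is, however, a field-theoretic slip in your setup. With $K_2'$ defined as the separable closure of $K_1$ in $K_2$, it is \emph{not} in general the separable closure of $K$ in $K_2$, nor is $K_2'/K_1'$ separable: since $K_2'\supseteq K_1$ and $K_1/K_1'$ is purely inseparable, $K_2'/K$ inherits that inseparability. (For a concrete picture, take $K=\mathbb{F}_p(t)$, $K_1=\mathbb{F}_p(t^{1/p})$, $K_2=K_1(\alpha)$ with $\alpha$ separable over $K_1$.) Your later assertion that $K_1\cdot K_2'=K_2$ is also off, since already $K_1\subseteq K_2'$.

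Fortunately this does not break the argument, and your own ingredients repair it: by Proposition~\ref{prop:purely_inseparable_extension_tac}(1) the restriction maps $M_{K_2}\to M_{K_2'}$ and $M_{K_2}\to M_{L}$ (with $L$ the separable closure of $K$ in $K_2$) are both homeomorphisms, so the fibre-product description of $\Omega_{K_2}$ is insensitive to which intermediate field you route through. The clean way to phrase the reduction is therefore not to force a common separable tower, but to observe directly that
\[
\Omega_K\times_{M_K}M_{K_1}\times_{M_{K_1}}M_{K_2}\;\cong\;\Omega_K\times_{M_K}M_{K_2}
\]
by associativity of fibre products, and that all the purely-inseparable identifications are transported along this isomorphism. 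This is essentially how the paper proceeds in one line.
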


\begin{proof}
Let $\left(S_K \otimes_{K} K_1 \right) \otimes_{K_1} K_2 = (K_2,\phi'_2 :\Omega'_2 \to M_{K_{2}},\nu'_2)$ and $S_K \otimes_{K} K_2 = (K_2,\phi_2 : \Omega_2 \to M_{K_{2}},\nu_2)$. The definition of $\Omega_{2},\Omega'_{2}$ implies that there exists a canonical homeomorphism $\Omega_{2} \cong \Omega'_{2}$. From (\cite{ChenMori}, (3.15)), we deduce that $\nu_2$ can be identified with $\nu'_2$ via the previous homeomorphism. Commutativity of the diagram follows from Proposition \ref{prop:purely_inseparable_extension_tac} (3).

In the integral case, we denote respectively by $A_1$ and $A_2$ the integral closures of $A$ in $K_1$ and $K_2$. Note that the construction of the extension of the norm $\|\cdot\|_A$ from Proposition \ref{prop:algebraic_extension_of_tame_spaces} is compatible with successive extension. Since $V'_2$ is obtained by considering the integral closure of $A_1$ in $L$, which is equal to $A_2$, we obtain $V_2 = V'_2$. 
\end{proof}

\subsection{Algebraic extension}
\label{sub:algebraic_extension}

Throughout this subsection, we fix a topological adelic curve $S_{K}=(K,\phi_K : \Omega_K \to M_K,\nu_K)$ and an algebraic extension $L/K$. Let $\cE_{L/K}$ be the set of all sub-extensions $L/K'/K$ with $K'/K$ finite. It is a directed set with respect to the inclusion relation and we have 
\begin{align*}
L = \displaystyle\bigcup_{K'/K \in \cE_{L/K}} K'.
\end{align*}
Then Proposition \ref{prop:successive_finite_extensions_tac} ensures that we have a cofiltered inverse system of topological adelic curves $(S_{K'})_{K'\in \cE_{L/K}}$ whose arrows are denoted by $\pi_{K''/K'} : S_{K''} \to S_{K'}$. We will prove that the inverse limit of this inverse system exists and is a topological adelic curve with field $L$.

Proposition \ref{prop:successive_finite_extensions_tac} yields an inverse system  $(S_{K'}=(K',\phi_{K'}: \Omega_{K'}\to M_{K'},\nu_{K'}))_{K'\in \cE_{L/K}}$ in $\TAC$ which induces an inverse system of Cartesian diagrams of the form
\begin{center}
\begin{tikzcd}
\Omega_{K'} \arrow[r, "\phi_{K'}"] \arrow[d] & M_{K'} \arrow[d, "\pi_{K'/K}"] \\
\Omega_{K} \arrow[r, "\phi_{K}"]             & M_{K}                        
\end{tikzcd}
\end{center}
indexed by $\cE_{L/K}$ and whose vertical arrows are proper. Hence its inverse limit can be written as
\begin{center}
\begin{tikzcd}
\Omega_{L} \arrow[r, "\phi_{L}"] \arrow[d] & M_{L} \arrow[d, "\pi_{L/K}"] \\
\Omega_{K} \arrow[r, "\phi_{K}"]           & M_{K}                        
\end{tikzcd},
\end{center}
where $\Omega_{L} := \varprojlim_{K'\in \cE_{L/K}} \Omega_{K'}$. $\Omega_{L}$ is a Hausdorff topological space. Moreover, if $\Omega_{K}$ is locally compact, since the map $\pi_{L/K}$ is proper and the $\Omega_{K'}$ are locally compact Hausdorff topological spaces, $\Omega_{L}$ is locally compact.

We can now adapt the arguments from (\cite{ChenMori}, \S 3.4) to construct a Borel measure $\nu_L$ on $\Omega_L$ and a disintegration kernel for $\pi_{L/K} : \Omega_{L} \to \Omega_{K}$.

Hence we obtain the following proposition.

\begin{proposition}
\label{prop:algebraic_extension_tac}
We use the same notation as above. Then $S_L := S_K \otimes_K L := (L,\phi_{L}:\Omega_{L} \to M_{L},\nu_L)$ is a topological adelic curve and we have an isomorphism
\begin{align*}
\varprojlim_{K'\in\cE_{L/K}} S_K \otimes_K K'.
\end{align*}
Moreover, if $S_K$ is proper, then $S_L$ is proper. Finally, if $S_K$ is integral, then $S_L$ is integral. Finally if $\Omega_{K}$ is assumed to be locally compact, $\Omega_{L}$ is again locally compact.
\end{proposition}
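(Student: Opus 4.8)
The plan is to assemble $S_L$ as the inverse limit of the cofiltered system $(S_{K'})_{K'\in\cE_{L/K}}$ constructed in \autoref{prop:successive_finite_extensions_tac}, mirroring the construction of algebraic coverings of adelic curves in (\cite{ChenMori}, \S 3.4). The underlying space $\Omega_L = \varprojlim_{K'}\Omega_{K'}$ is already in hand, together with the continuous structural morphism $\phi_L : \Omega_L \to M_L$ fitting into the Cartesian square over $\Omega_K$; since the transition maps $\pi_{K''/K'}$ and the projection $\pi_{L/K}$ are proper (being continuous maps between compact Hausdorff spaces, by \autoref{prop:projection_morphism_finite_separable_extension} and \autoref{prop:purely_inseparable_extension_tac}), the limit $\Omega_L$ is locally compact Hausdorff. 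So the real content is threefold: (i) produce the Borel measure $\nu_L$ on $\Omega_L$ together with a disintegration kernel for $\pi_{L/K}:\Omega_L\to\Omega_K$; (ii) check the integrability condition for $\log|f|_\cdot$ for every $f\in L^\times$; (iii) handle properness and integrality.

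For (i), I would invoke the projective-system construction of (\cite{ChenMori}, \S 3.4): the $\bP_{L',\omega}$'s from \autoref{def:disintegration_kernel} for finite subextensions are compatible under the transition maps (this is exactly the cocycle-type compatibility that makes the ramification weights $\tfrac{1}{|I_\omega|}\tfrac{[\widehat{\kappa_x}:\widehat{\kappa_\omega}]}{[\kappa_x:\kappa_\omega]}$ multiplicative along towers, cf. \eqref{eq:formula_extension_sav}), so they glue to a probability kernel $\bP_{L,\omega}$ on the fibre $\Omega_{L,\omega}$; integrating against $\nu_K$ as in \eqref{eq:measure_finite_separable_extension} gives $\nu_L$, and the induced map $I_{L/K}$ on integrable functions is the desired disintegration kernel. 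One must check $\nu_L$ is Borel and satisfies the pushforward identity $\alpha_{L/K,\sharp,*}\nu_L = \nu_K$; both follow from the finite-level statements and a monotone-class / compatibility argument, since the Borel $\sigma$-algebra on $\Omega_L$ is generated by the preimages of Borel sets at finite levels. For (ii), given $f\in L^\times$, pick a finite subextension $K'\in\cE_{L/K}$ with $f\in K'$; then $\log|f|_\cdot$ on $\Omega_L$ is the pullback of the corresponding function on $\Omega_{K'}$, and $\nu$-integrability transfers along $I_{K'/K}$ from the (already established) integrability on $\Omega_{K'}$. This also yields that $\alpha_{L/K}=(\alpha^\sharp_{L/K},\alpha_{L/K,\sharp},I_{L/K})$ is a genuine morphism of topological adelic curves, and that $S_L \cong \varprojlim_{K'} S_K\otimes_K K'$ as claimed.

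For (iii): properness propagates because for $f\in K'^\times$ the norm formula \eqref{eq:product_formula_finite_separable_extension_tac} (extended to the purely inseparable part via \autoref{prop:purely_inseparable_extension_tac}, where the restriction map on $M$-spaces is a homeomorphism and hence $d_{S_L}=d_{S_{K'}}$) gives $d_{S_{K'}}(g) = \tfrac{1}{[K':K]}d_{S_K}(N_{K'/K}(g))$, which vanishes when $S_K$ is proper; since every $f\in L^\times$ lies in some such $K'$, we get $d_{S_L}\equiv 0$. Integrality: if $S_K$ is integral with tame integral structure $(A,\|\cdot\|_A)$ and integral space $V_K$, then by \autoref{prop:algebraic_extension_of_tame_spaces} the integral closure $B$ of $A$ in $L$ carries a tame integral structure with $\cM(B,\|\cdot\|_B) = \pi_{L/K}^{-1}(V_K)$; the structural morphism $\phi_L$ factors through this $V_L$ because each $\phi_{K'}$ factors through $V_{K'}$ and these are compatible, and the required continuous lift $\tilde\alpha_\sharp:V_L\to V_K$ is just the restriction map. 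I expect the main obstacle to be the careful verification that the kernels $\bP_{L',\omega}$ are genuinely compatible along the tower and that the resulting $\nu_L$ is well-defined and Borel — i.e., faithfully adapting (\cite{ChenMori}, Theorem 3.3.7 and \S 3.4) to the topological setting where one must also track that all maps in sight remain continuous and that $\Omega_L$'s topology (the inverse limit topology, which here coincides with the subspace topology induced from $\Omega_K\times_{M_K}M_L$) interacts correctly with the measure-theoretic constructions; once that scaffolding is in place, the rest is routine bookkeeping.
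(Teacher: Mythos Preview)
Your proposal is correct and follows essentially the same approach as the paper: both defer the construction of $\nu_L$ and the disintegration kernel to an adaptation of (\cite{ChenMori}, \S 3.4), reduce integrability and properness to the finite-subextension case, and handle integrality via \autoref{prop:algebraic_extension_of_tame_spaces}. If anything, your sketch is more explicit than the paper's, which simply asserts that the Chen--Moriwaki arguments carry over and then only spells out the integrality step (using the inverse-limit description $V_L \cong \varprojlim_{K'} V_{K'}$ rather than your equivalent formulation $V_L = \pi_{L/K}^{-1}(V_K)$).
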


\begin{proof}
Except for the assertion about integrality, the proposition follows from the above paragraph. We now assume that $S_K$ is integral. Let $(A,\|\cdot\|_A)$ denote its underlying integral structure and let $V_K$ be its integral space. Let $B$ denote the integral closure of $A$ in $L$. Then Proposition \ref{prop:algebraic_extension_of_tame_spaces} and its proof yield a norm $\|\cdot\|_{B}$ as well as an isomorphism 
\begin{align*}
V_L := \cM(B,\|\cdot\|_{B}) \cong \displaystyle\varprojlim_{K'\in \cE_{L/K}} \cM(A_{K'},\|\cdot\|_{A_{K'}}) =: V_{K'},
\end{align*}
where, for all $K'\in \cE_{L/K}$, $(A_{K'},\|\cdot\|_{A_{K'}})$ denotes the extension of $(A,\|\cdot\|_{A})$ over $K'$. Therefore, we see that the image of the structural morphism $\phi_L : \Omega_L \to M_L$ lies in $V_L$ and $S_L$ is integral.
\end{proof}

\begin{example}
\label{example:algebraic_covering_Gubler}
We give an example in Nevanlinna theory building on Example \ref{example:finite_separable_covering_Gubler}. Let $R>0$ and consider the topological adelic curve $S_{R}=(K_{R},\phi_{R}:\Omega_{R}\to M_{K_{R}},\nu_{R})$ from \S \ref{subsub:example_tac_Nevanlinnaç_compact_disc}. Let $L/K_{R}$ be an algebraic extension. We have seen that in the case where $L$ is finite, our construction of the topological adelic curve $S \otimes_{K} L$ recovered the construction of (\cite{Gubler97}, Example 2.8). In the general case, we can check that the adelic space of $S\otimes_{K} L$ coincides with the $M$-field constructed by Gubler and that the same measure theoretic properties are satisfied. We refer to \emph{loc. cit.} for the interested reader. 
\end{example}

\subsection{Action of the Galois group}
\label{sub:action_of_Galois_group_tac}

Throughout this subsection, we fix a topological adelic curve $S=(K,\phi:\Omega\to M_K,\nu)$ and an algebraic extension $L/K$ whose group of $K$-linear automorphisms is denoted by $\Aut(L/K)$. Let $S_L := (L,\phi_{L} : \Omega_{L}\to M_{L},\nu_{L})$ be the topological adelic curve $S \otimes_K L$ defined in \S \ref{sub:algebraic_extension}.

\begin{proposition}
\label{prop:action_of_Galois_group_algebraic_extension}
\begin{itemize}
	\item[(i)]The action of $\Aut(L/K)$ on $M_L$ introduced in Proposition \ref{prop:action_Galois} (1) induces continuous and proper actions of $\Aut(L/K)$ on $M_L$ and $\Omega_L$. Moreover, if $S$ is integral and $V_L$ denotes the integral space of $S_L$, the action of $\Aut(L/K)$ on $M_L$ induces a continuous and proper action on $V_L$.
	\item[(ii)] We assume that $L/K$ is Galois and that, for any $v\in M_K$, the residue field $\kappa_{v}$ is perfect. Then for any $\omega\in\Omega$, the actions of $\Aut(L/K)$ on $\Omega_{L,\omega}$ and $M_{L,\phi(\omega)}$ are transitive. Moreover, if $S$ is integral with underlying global space of pseudo-absolute values $V$. Then for any $v\in V$, $\Aut(L/K)$ acts transitively on $V_{L,v}$, where $V_{L}$ denotes the underlying global space of pseudo-absolute values of $S_{L}$.
	\item[(iii)] We use the same assumptions as in (ii). Then we have homeomorphisms 
	\begin{align*}
	\Omega_L/\Aut(L/K) \cong \Omega, \quad M_{L}/\Aut(L/K) \cong M_K.
	\end{align*}
	 Moreover, if $S$ is integral, we have a homeomorphism 
	 \begin{align*}
	 V_L/\Aut(L/K) \cong V.
	 \end{align*}
\end{itemize}
\end{proposition}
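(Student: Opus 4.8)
The plan is to deduce all three assertions from three inputs: (a) the description of the $\Aut(L/K)$-action on $M_L$ and the transitivity statement of Proposition \ref{prop:action_Galois}; (b) the surjectivity and properness of the restriction maps $\pi_{L/K}$ (Propositions \ref{prop:projection_morphism_finite_separable_extension}, \ref{prop:projection_morphism_finite_separable_extension_integral}, \ref{prop:algebraic_extension_tac}), together with the fact that all the spaces $M_{(-)}$, $V_{(-)}$ are compact Hausdorff; and (c) the fact that $\Aut(L/K)$, with its Krull topology, is a profinite — hence compact — topological group. For (c) one embeds $\Aut(L/K)$ into the compact product $\prod_{a\in L}R_a$, where $R_a$ is the finite set of roots in $L$ of the minimal polynomial of $a$ over $K$; the image is closed, being cut out by the closed conditions ``ring homomorphism'' and ``fixes $K$ pointwise'', and since $L/K$ is algebraic every such endomorphism $L\to L$ is automatically surjective, so this image is all of $\Aut(L/K)$.

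For (i), I would first check that the action $M_L\times\Aut(L/K)\to M_L$, $(x,\tau)\mapsto x\circ\tau$, is continuous: on a subbasic open set $\{y:|a|_y\in I\}$ of $M_L$, the open subset $\{\tau:\tau(a)=\tau_0(a)\}\subseteq\Aut(L/K)$ reduces the computation of $|\tau(a)|_x$ to that of $|\tau_0(a)|_x$, which depends continuously on $x$ by definition of the topology of $M_L$. Properness is then automatic since $\Aut(L/K)$ is compact and $M_L$ is Hausdorff (a continuous map from a compact space to a Hausdorff space is proper). As each $\tau$ is $K$-linear, $\pi_{L/K}(x\circ\tau)=\pi_{L/K}(x)$; hence $V_L=\pi_{L/K}^{-1}(V_K)$ is invariant, and, identifying $\Omega_L$ with $\Omega_K\times_{M_K}M_L$ via the Cartesian squares of \S\ref{sub:algebraic_extension}, the action through the $M_L$-factor preserves $\Omega_L$. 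Restricting the continuous proper $M_L$-action to these invariant (closed, resp. locally closed) subspaces yields the remaining continuous and proper actions.

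For (ii), under the identification $\Omega_L=\Omega_K\times_{M_K}M_L$ the fibre $\Omega_{L,\omega}$ of $\pi_{L/K}$ over $\omega$ is exactly $M_{L,\phi(\omega)}$, and for $v\in V_K$ every extension of $v$ to $L$ lies in $V_L=\pi_{L/K}^{-1}(V_K)$, so $V_{L,v}=M_{L,v}$. All stated transitivity assertions then follow from Proposition \ref{prop:action_Galois}(2), whose hypothesis (perfect residue field) holds for every pseudo-absolute value of $K$ by assumption. For (iii), the map $\pi_{L/K}$ is continuous, surjective and $\Aut(L/K)$-invariant, and by (ii) its fibres are precisely the $\Aut(L/K)$-orbits; it therefore factors through a continuous bijection on the quotient. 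For the compact spaces $M_L$ and (in the integral case) $V_L$, a continuous bijection onto the Hausdorff spaces $M_K$ and $V$ is automatically a homeomorphism. For $\Omega_L$, which is only locally compact, I would observe that $\pi_{L/K}:\Omega_L\to\Omega$ is proper — a base change of the proper map $M_L\to M_K$ — hence closed; since the quotient map $q:\Omega_L\to\Omega_L/\Aut(L/K)$ is surjective and $\pi_{L/K}=g\circ q$, the induced bijection $g$ is closed, hence a homeomorphism.

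The step that will need the most care is the treatment of the $\Aut(L/K)$-action for an arbitrary (possibly infinite, possibly non-normal) algebraic extension: showing it is well defined and proper, which rests on the profiniteness of $\Aut(L/K)$, and verifying that $\Omega_L$ and $V_L$ are genuine invariant subspaces of $\Omega_K\times_{M_K}M_L$. Everything else reduces, through the identification of fibres of $\pi_{L/K}$ with spaces of extensions of pseudo-absolute values, to Proposition \ref{prop:action_Galois} and to standard facts about continuous maps out of compact spaces.
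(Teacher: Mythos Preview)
Your proof is correct and follows essentially the same strategy as the paper's: continuity and properness of the action, then Proposition~\ref{prop:action_Galois}(2) for transitivity, then passage to the quotient via compactness. Two minor differences are worth noting: you justify properness explicitly via the compactness (profiniteness) of $\Aut(L/K)$, which is more complete than the paper's one-line appeal to local compactness; and for the $\Omega_L$ case in (iii), you argue via properness of $\pi_{L/K}$ (base change of $M_L\to M_K$) to deduce closedness of the induced bijection, whereas the paper instead observes that the square with $\Omega_L/G$, $\Omega$, $M_L/G$, $M_K$ is Cartesian and pulls back the homeomorphism $M_L/G\cong M_K$ along $\phi$. Both arguments are valid and amount to the same thing.
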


\begin{proof}
By considering the trivial actions on $V$ and $\Omega$, we see that $\Aut(L/K)$ induces actions on $\Omega_L$ and $V_L$. Let us show that these actions are continuous. First, assume that $L/K$ is finite. Since $\Aut(L/K)$ is discrete, it is enough to prove that, for any $\tau\in \Aut(L/K)$, for any $a\in L$, the map $(\va_x\in M_L) \mapsto |a|_{\tau(x)}$ is continuous. This is clear by definition of the topology on $M_{L}$ and since $|a|_{\tau(x)} = |\tau(a)|_x$. If now $L/K$ is infinite, let us show that the map 
\begin{align*}
\fonction{\alpha_{L/K}}{\Aut(L/K)\times M_L}{M_L}{(\tau,v)}{v\circ\tau}
\end{align*}
is continuous. By definition of the topology on $M_L$, it is enough to prove that for any intermediate extension $L/K'/K$ with $K'/K$ finite, the map $\alpha_{L/K} \circ \pi_{L/K'}$ is continuous. Let $K'$ be such an intermediate extension. By definition of the topology on $\Aut(L/K)$ and $M_L$, the map $\beta : (\tau,v)\in \Aut(L/K)\times M_L \mapsto (\tau_{|K'},v_{|K'}) \in \Aut(K'/K)\times V_{K'}$ is continuous. The finite case implies that $\alpha_{K'/K}$ is continuous. We conclude by using the fact that $\alpha_{L/K} \circ \pi_{L/K'} = \alpha_{K'/K} \circ\beta$. Since $\phi_{L} :\Omega_L \to M_L$ is continuous, we obtain the continuity of the action on $\Omega_L$. These actions are proper since $M_L$ is compact Hausdorff and $\Aut(L/K)$ is Hausdorff and the action on $\Omega_{L}$ is the pullback of the one on $M_{L}$. In the case where $S$ is integral, the action of $\Aut(L/K)$ on $M_L$ induces a continuous and proper action on $V_L$ as $V_L$ is a compact subset of $M_L$ which is stabilised by the action. This concludes the proof of (i).

(ii) is a direct consequence of Proposition \ref{prop:action_Galois} (2).

We now prove (iii). Denote $G:=\Aut(L/K)$. Since $K = L^{G}$, the restriction maps $\pi_{L/K}: V_L \to V$ and $\pi_{L/K} : \Omega_L \to \Omega$ induce continuous maps $M_L/G\to M_K$ and $\Omega_L/G \to \Omega$ such that the diagram
\begin{center}
\begin{tikzcd}
\Omega_L \arrow[r, "\phi_L"] \arrow[d, two heads]   & M_L \arrow[d, two heads]   \\
\Omega_L/G \arrow[d, two heads] \arrow[r] & M_L/G \arrow[d, two heads] \\
\Omega \arrow[r, "\phi"]                          & M_K                         
\end{tikzcd}
\end{center}
is Cartesian. (ii) ensures that the arrows $\Omega_L/G \to \Omega$ and $M_L/G \to M_K$ are injective, hence bijective. Since both $M_{L}$ and $M_K$ are compact Hausdorff, the arrow $M_L/G \to M_K$ is a homeomorphism. Therefore, the arrow $\Omega_L/G \to \Omega$ is a homeomorphism as it is the pullback of $M_L/G \to M_K$ by $\phi : \Omega \to M_K$. In case where $S$ is integral we use that the map $V_L/G \to V$ is the pullback of $M_L/G \to M_K$ by the natural inclusion. Hence it is a homeomorphism. This concludes the proof of the proposition.
\end{proof}

\begin{remark}
\label{rem:uniqueness}
We assume that the hypotheses of Proposition \ref{prop:action_Galois} (ii) hold. Using (\cite{GVF24}, Lemma 10.4), if the measure $\nu$ is assumed to be Radon, we see that the measure $\nu_L$ is the only $\Aut(L/K)$-invariant measure on $\Omega_{L}$ whose pushforward via $\pi_{L/K}$ is $\nu$. 
\end{remark}

\subsection{Algebraic coverings for families of topological adelic curves}
\label{sub:algebraic_covering_family_tac}

In the last subsection of the first part of this article, we construct algebraic coverings for families of topological adelic curves. We fix a family of topological adelic curves $\mathbf{S}=(I,\cU,(S_{i}=(K_{i},\phi_{i}:\Omega_{i}\to M_{K_{i}},\nu_{i}))_{i\in I},K)$. Let $L/K$ be an algebraic extension. We want to construct a family of topological adelic curves $\mathbf{S}\otimes_{K} L$ with base field $L$ together with a morphism $\alpha_{L/K} : \mathbf{S}\otimes_{K}L \to \mathbf{S}$.

\subsubsection{Finite case}
\label{subsub:finite_covering_family_tac}

Assume first that $L/K$ is finite. Consider the field extension, $K_{\mathbf{S}}(L)/K_{\mathbf{S}})$. It is a finite extension. Moreover, by \L{}o\'s theorem, the field $K_{\mathbf{S}}(L)$ can be realised as an ultraproduct $\prod_{\cU}L_{i}$, where for $\cU$-almost all $i$ in $I$, $L_{i}/K_{i}$ is a field extension of degree $[L:K]$. Let $i\in I$. If $i\in \{j\in I : [L_{j}:K_{j}]=[L:K]\}$. We set $S_{L,i}:=S_{i}\otimes_{K_{i}}L_{i}$. Otherwise, we set $S_{L,i}:=S_{i}$. 

\begin{proposition-definition}
\label{prop-def:finite_separablecovering_family_tac}
$\mathbf{S}\otimes_{K} L := (I,\cU,(S_{i,L})_{i\in I},L)$ is a family of topological adelic curves that comes naturally with a morphism $\alpha_{L/K} : \mathbf{S}\otimes_{K} L\to \mathbf{S}$ obtained by combining all the morphisms $\alpha_{i}: S_{L,i} \to S_{i}$, where $i$ runs over $I$. Moreover, Proposition \ref{prop:finite_separable_covering_tac} implies that
\begin{align*}
\forall f=[(f_{i})_{i\in I}]\in L^{\times}, \quad [L:K]d_{\mathbf{\mathbf{S}\otimes_{K} L}}(f) = \left[\left(d_{S_{i}}\left(N_{L_{i}/K_{i}}(f_{i})\right)\right)_{i\in I}\right],
\end{align*}
where we set $N_{L_{i}/K_{i}}(f_{i}):=1$ if $L_{i}/K_{i}$ is not finite. Therefore, we see that $\mathbf{S}\otimes_{K} L$ is proper, or more generally asymptotically proper w.r.t. to any equivalence relation compatible with the addition law on $\prod_{\cU}\bR$, if $\mathbf{S}$ is so.
\end{proposition-definition}

\subsubsection{General algebraic case}
\label{subsub:algebraic_covering_family_tac}

Consider now the general case. Using the previous case, we can embed $K_{\mathbf{S}}(L)$ in an ultraproduct $\prod_{\cU}L_{i}$, where for $\cU$-almost all $i$ in $I$, $L_{i}$ is an algebraic extension of $K_{i}$. Let $i\in I$. As above, if $L_{i}/K_{i}$ is algebraic, we set $S_{L,i}:=S_{i}\otimes_{K_{i}}L_{i}$. Otherwise, we set $S_{L,i}:=S_{i}$. 

\begin{proposition-definition}
\label{prop-def:algebraic_covering_family_tac}
$\mathbf{S}\otimes_{K} L := (I,\cU,(S_{i,L})_{i\in I},L)$ is a family of topological adelic curves that comes naturally with a morphism $\alpha_{L/K} : \mathbf{S}\otimes_{K} L\to \mathbf{S}$ as in Proposition-Definition \ref{prop-def:finite_separablecovering_family_tac}. Proposition-Definition \ref{prop-def:finite_separablecovering_family_tac} again implies that $\mathbf{S}\otimes_{K} L$ is proper, or more generally asymptotically proper w.r.t. to any equivalence relation compatible with the addition law on $\prod_{\cU}\bR$, if $\mathbf{S}$ is so.
\end{proposition-definition}

\subsubsection{Case of a finite Galois extension}
\label{subsub:finite_Galois_covering_family_tac}

Now assume that $L/K$ is finite Galois. Realise $L$ as an ultraproduct $\prod_{\cU}L_{i}$, where for all $i\in I$, $L_{i}/K_{i}$ is a field extension that is finite of degree $[L:K]$ for $\cU$-almost all $i$ as in \S \ref{subsub:finite_Galois_covering_family_tac}. By \L{}o\'s theorem again, $L_{i}/K_{i}$ is Galois of degree $[L:K]$ for $\cU$-almost all $i$ in $I$. Moreover (\cite{Nguyen24}, Proposition 3.9) implies that $\Gal(L/K)$ identifies with the ultraproduct $\prod_{\cU}\Gal(L_{i}/K_{i})$, where we set $\Gal(L_{i}/K_{i})$ for all $i\in I$ such that $L_{i}/K_{i}$ is not Galois. Consider the family of topological adelic curves $\mathbf{S}\otimes_{K} L=(I,\cU,(S_{L,i})_{i\in I}, L)$ introduced in Proposition-Definition \ref{prop-def:finite_separablecovering_family_tac}. 

\begin{proposition}
\label{prop:galois_action_covering_family_tac}
Assume that for all $i\in I$ and for all $\omega\in \Omega_{i}$, the residue field corresponding to $\omega$ is perfect. Then $\Gal(L/K)$ acts continuously, properly and transitively on $\Omega_{\mathbf{S}\otimes_{K}L}$ and we have a homeomorphism
\begin{align*}
\Omega_{\mathbf{S}\otimes_{K}L}/\Gal(L/K) \cong \Omega_{\mathbf{S}}.
\end{align*}
\end{proposition}

\begin{proof}
Proposition \ref{prop:action_of_Galois_group_algebraic_extension} implies that for $\cU$-almost all $i\in I$, $\Gal(L_{i}/K_{i})$ acts continuously, properly and transitively on $\Omega_{L,i}$ and that we have an homeomorphism
\begin{align*}
\Omega_{L,i}/\Gal(L_{i}/K_{i}) \cong \Omega_{i}.
\end{align*}
These actions induce an action of $\Gal(L/K)\cong \prod_{\cU}\Gal(L_{i}/K_{i})$ on $\Omega_{\mathbf{S}\otimes_{K}L}=\prod_{\cU}\Omega_{L,i}$ that is continuous by definition of the topology, proper since $\Gal(L/K)$ is discrete, and transitive. Moreover, we have homeomorphisms
\begin{align*}
\Omega_{\mathbf{S}\otimes_{K}L}/\Gal(L/K) \cong \displaystyle \left(\prod_{\cU}\Omega_{L,i}\right)/\left(\prod_{\cU}\Gal(L_{i}/K_{i})\right) \cong \prod_{\cU}\left(\Omega_{L,i}/\Gal(L_{i}/K_{i})\right) \cong \Omega_{\mathbf{S}}.
\end{align*}
\end{proof}

\subsubsection{Example in Nevanlinna theory}
\label{subsub:covering_family_topological_adelic_curves_Nevanlinna}

We conclude this first part by explaining how our Nevanlinna theoretic example of family of topological adelic curves fits in the material developed above. Consider the family of topological adelic curves $\mathbf{S}=(\bR_{>0},\cU,(S_{R})_{R>0},\cM(\bC))$ from Example \ref{example:families_of_tac} (2), where for all $R>0$, $S_{R}=(K_{R},\phi_{R}:\Omega_{R}\to M_{K_{R}},\nu_{R})$ is the topological adelic curve from \S \ref{subsub:example_tac_Nevanlinnaç_compact_disc}. Let $L/\cM(\bC)$ be an algebraic extension. Let us explicit the family of topological adelic curves $\mathbf{S}\otimes_{\cM(\bC)}L$ from Proposition-Definition \ref{prop-def:algebraic_covering_family_tac}.

\textbf{Case 1: $L/\cM(\bC)$ finite.} Let $g\in L$ be a primitive element of $L/\cM(\bC)$ with minimal polynomial
\begin{align*}
P(T) := T^{d}+f_{d-1}T^{d-1}+\cdots+f_{0},
\end{align*} 
where $f_{0},...,f_{d-1}\in \cM(\bC)$. Then for all $R>0$, $L_{R}:=K_{R}(g)/K$ is a finite extension of degree $[L:\cM(\bC)]$ with primitive element $g$, and $L \subset \prod_{\cU}L_{R}$. Denote $S_{L,R}:=S_{R}\otimes_{K_{R}}L_{R}=(L_{R},\phi_{L,R}:\Omega_{L,R}\to M_{L_{R}},\nu_{L,R})$.

Arguing as in (\cite{Gubler97}, Example 2.8), or using (\cite{Forster81}, Theorem 8.9), there exists a branched holomorphic covering $\pi_{L} : X_{L} \to \bC$ of degree $[L:\cM(\bC)]$ such that $L$ identifies with $\cM(X_{L})$, the field of meromorphic functions on $X_{L}$. Moreover, as we saw in Example \ref{example:finite_separable_covering_Gubler}, we have a bijection of sets $\Omega_{L,R}\cong\pi_{L}^{-1}(\Omega_{R})$.

Thus, the family of topological adelic curves $\mathbf{S}\otimes_{\cM(\bC)}\cM(X_{L})=(\bR_{>0},\cU,(S_{L,R})_{R>0},\cM(X_{L}))$ can be seen as the formulation of the Nevanlinna theory of branched coverings of $\bC$ in our context (cf. e.g. \cite{LangCherry90}, Chapter III).

\textbf{Case 2: $L/\cM(\bC)$ infinite.} In the infinite case, We realise $L$ as the union of all the fields $\cM(X_{K'})$, where $K'$ runs over the intermediate extensions of $L/\cM(\bC)$ that are finite over $\cM(\bC)$, and for any such $K'$, $X_{K'}$ denotes a branched covering of $\bC$ such that $\cM(X_{K'})$ identifies with $K'$. Using the previous case, it turns out that the family of topological adelic curves $\mathbf{S}\otimes_{\cM(\bC)} L$ formulates the Nevanlinna theory over all the branched coverings $X_{K'}$ as above in a single object. 
 
\part{Adelic vector bundles and Harder-Narasimhan filtrations over topological adelic curves}
\label{part:geometry_of_numbers}

In this part, we study the intrinsic geometry of topological adelic curves. This is done by introducing what plays the role of a vector bundle on a curve in algebraic geometry. We introduce the counterpart of norm families in our context and introduce various regularity and dominance conditions (\S \ref{sec:pseudo-norm_families}). After that, we define adelic vector bundles on a topological adelic curve (\S \ref{sec:adelic_vector_bundle}). Finally, we study slope theory for adelic vector bundles on a topological adelic curve (\S \ref{sec:slopes_proper_case}). 

\section{Pseudo-norm families}
\label{sec:pseudo-norm_families}

In this section, we globalise the constructions of \S \ref{subsub:pseudo-norms}. Roughly speaking, this is done by glueing metrised vector bundles on Zariski-RIemann spaces. 

Throughout this section, we fix a topological adelic curve $S=(K,\phi: \Omega \to M_K, \nu)$. Recall that, for any $\omega\in\Omega$, we denote by $A_{\omega}$ and $\kappa_{\omega}$, the finiteness ring and the residue field of $\omega$ respectively. From now on, we assume that for any $\omega\in\Omega_{\infty}$, we have $\epsilon(\omega)=1$. In that case Proposition \ref{prop:properties_topological_adelic_curves} implies that $\nu(\Omega_{\infty}) <+\infty$ and $\Omega_{\infty},\Omega_{\um}$ are open subsets of $\Omega$. Note that this assumption is not too harmful to the generality since we can replace, for any $\omega\in\Omega_{\infty}$, the pseudo-absolute value $\va_{\omega}$ by the pseudo-absolute value $\va_{\omega}^{1/\epsilon(\omega)}$ and the measure $\nu$ by the measure $\tilde{\nu}:=(\mathbf{1}_{\Omega_{\um}}+\epsilon\mathbf{1}_{\Omega,\infty})\nu$. 

Recall that we denote by $j_{S}: \Omega\to\tilde{\Omega}$ the quotient map corresponding to the equivalence relation on $\omega$ identifying elements in $\Omega$ having the same finiteness ring. $\tilde{\Omega}$ is equipped with the pullback of the Zariski topology on the set $\ZR(K)$ of valuation rings of $K$. Namely, a basis of the topology on $\tilde{\Omega}$ is given by elements of the form
\begin{align*}
U(a_{1},...,a_{n}) := \{\tilde{\omega}\in\tilde{\Omega}: a_{1},...,a_{n}\in A_{\tilde{\omega}}\},
\end{align*}
where $n$ runs over the integers and $(a_{1},...,a_{n})$ runs over $K^{n}$. For any open subset $U\subset\tilde{\Omega}$, we denote $U^{\an}:=j_{S}^{-1}(U) \subset \Omega$. This is an open subset of $\Omega$.

\subsection{Definitions}

Let $E$ be a finite-dimensional $K$-vector space. For any $\omega\in\Omega$,  we call \emph{pseudo-norm} in $\omega$ on $E$ any map $\|\cdot\|_\omega : E \to [0,+\infty]$ such that $\|\cdot\|_{\omega}$ is a pseudo-norm in $\varphi(\omega)$ on $E$ (cf. \S \ref{subsub:pseudo-norms}).

\begin{definition}
\label{def:semi-norm_family}
Let $E$ be a finite-dimensional $K$-vector space. We call \emph{pseudo-norm family} on $E$ any family $\xi =(\|\cdot\|_{\omega})_{\omega\in \Omega}$ where, for any $\omega\in \Omega$, $\|\cdot\|_\omega$ is a pseudo-norm in $\omega$ on E. We assume that the following condition holds: 
\begin{itemize}
	\item[($\ast$)] for any $\omega\in \Omega$, there exist an open neighbourhood $U$ of $\tilde{\omega}:=j_{S}(\omega)$ in $\tilde{\Omega}$ and a basis $(e_{1},...,e_{r})$ of $E$ such that, for any $i\in\{1,...,r\}$, for any $\omega'\in U^{\an}=j_{S}^{-1}(U)$, we have $\|e_i\|_{\omega'} \in \bR_{>0}$. Such a basis is called \emph{adapted} to the pseudo-norm family $\xi$ in $\omega$.
\end{itemize}
Moreover, if there exists a basis $(e_{1},...,e_{r})$ of $E$ such that, for any $\omega\in\Omega$, for all $i=1,...,r$, we have $\|e_i\|_{\omega}\in \bR_{>0}$, we say that $(e_{1},...,e_{r})$ is \emph{globally adapted} to $\xi$ (on $\Omega$).
Finally, we say that the pseudo-norm family $\xi$ is \emph{ultrametric}, resp. \emph{Hermitian}, if $\|\cdot\|_{\omega}$ is ultrametric for any $\omega\in \Omega_{\um}$, resp. if $\|\cdot\|_{\omega}$ is Hermitian for any $\omega\in\Omega_{\infty}$.
\end{definition}

\begin{notation}
\label{notation:pseudo-norm family}
Let $E$ be a finite-dimensional $K$-vector space.
\begin{itemize}
	\item[(1)] By "let $\xi=(\|\cdot\|_{\omega},\cE_{\omega},N_{\omega},\widehat{E_{\omega}})_{\omega\in\Omega}$ be a pseudo-norm family on $E$", we mean that, for any $\omega\in\Omega$, the pseudo-norm $\|\cdot\|_{\omega}$ has finiteness module $\cE_{\omega}$, kernel $N_{\omega}$ and residue vector space $\widehat{E_{\omega}}$. 
	\item[(2)] In case there is no explicit notation as above, for any pseudo-norm family $\xi =(\|\cdot\|_{\omega})_{\omega\in \Omega}$ on $E$, for any $\omega\in\Omega$, we denote by
\begin{itemize}
	\item $\cE_{\omega}$ the finiteness module of $\|\cdot\|_{\omega}$;
	\item $N_{\omega}$ the kernel of $\|\cdot\|_{\omega}$;
	\item $\widehat{E_{\omega}}$ the residue vector space of $\|\cdot\|_{\omega}$.
\end{itemize}
\end{itemize}
\end{notation}

\begin{example}
\label{example:model_semi-norm_family}
\begin{itemize}
	\item[(1)] Assume that for any $\omega\in\Omega$, $\phi(\omega)$ is an absolute value on $K$, and thus that $S$ determines an adelic curve in the sense of Chen-Moriwaki. Then any norm family $\xi$ on a finite-dimensional $K$-vector space $E$ (cf. \cite{ChenMori}, \S 4.1) is a pseudo-norm family and any basis of $E$ is globally adapted to $\xi$.
	\item[(2)] Let $E$ be a finite-dimensional $K$-vector space and fix a basis $\mathbf{e}=(e_1,...,e_r)$ of $E$. We fix $\omega \in \Omega$ and we denote respectively by $\cE_{\omega}$ and $\widehat{E_{\omega}}$ the restriction of scalars of $E$ to $A_{\omega}$ and the corresponding residue vector space. Then $\mathbf{e}$ defines compatible isomorphisms
\begin{center}
\begin{tikzcd}
E \arrow[d, "\cong"] & \cE_{\omega} \arrow[d, "\cong"] \arrow[l, hook'] \arrow[r] & \widehat{E_{\omega}} \arrow[d, "\cong"] \\
K^n                  & A_{\omega}^n \arrow[l, hook'] \arrow[r]                    & \widehat{\kappa_{\omega}}^n             
\end{tikzcd}.
\end{center}
For any $\lambda_1,...,\lambda_r \in \widehat{\kappa_{\omega}}$, we set
\begin{align*}
\|\lambda_{1} e_{1}+\cdots+\lambda_{r} e_{r}\|_{\mathbf{e},\omega}=\left\{\begin{matrix}
\max\{\tilde{|\lambda_{1}|}_\omega,...,\tilde{|\lambda_{r}|}_\omega\}, &\text{ if } \omega \in \Omega_{\um},\\ 
\tilde{|\lambda_{1}|}_\omega+\cdots + \tilde{|\lambda_{r}|}_\omega, &\text{ if } \omega \in \Omega_{\infty},
\end{matrix}\right.
\end{align*}
where $\tilde{|\cdot|}$ denotes the residue absolute value on $\widehat{\kappa_{\omega}}$. Then $\|\cdot\|_{\mathbf{e},\omega}$ defines a norm on $\widehat{E_{\omega}}$. By lifting $\|\cdot\|_{\mathbf{e},\omega}$ to a pseudo-norm on $E$ in $\omega$, we obtain a pseudo-norm family $\xi_{\mathbf{e}}=(\xi_{\mathbf{e},\omega})_{\omega\in\Omega}$ on $E$ called the \emph{model pseudo-norm family} associated with the basis $\mathbf{e}$. 

We also define a Hermitian pseudo-norm family $\xi_{\mathbf{e},2}$ as follows. For any $\lambda_1,...,\lambda_r \in \widehat{\kappa_{\omega}}$, we set
\begin{align*}
\|\lambda_{1} e_{1}+\cdots+\lambda_{r}e_{r}\|_{\mathbf{e},2,\omega}=\left\{\begin{matrix}
\max\{\tilde{|\lambda_{1}|}_\omega,...,\tilde{|\lambda_{r}|}_\omega\}, &\text{ if } \omega \in \Omega_{\um},\\ 
(\tilde{|\lambda_{1}|}_\omega^2+\cdots + \tilde{|\lambda_{r}|}_\omega^2)^{1/2}, &\text{ if } \omega \in \Omega_{\infty}. 
\end{matrix}\right.
\end{align*}
By the same arguments as above, we can lift the construction to obtain a pseudo-norm family $\xi_{\mathbf{e},2}$ on $E$. Note that $\mathbf{e}$ is a basis of $E$ which is globally adapted to both $\xi_{\mathbf{e}}$ and $\xi_{\mathbf{e},2}$.
\end{itemize}
\end{example}

The following lemma studies more closely condition $(\ast)$ in Definition \ref{def:semi-norm_family}.

\begin{lemma}
\label{lemma:pseudo-norm_family_base_change}
Let $E$ be a finite-dimensional $K$-vector space equipped with a pseudo-norm family $\xi=(\|\cdot\|_{\omega})_{\omega\in\Omega}$. Let $\omega\in\Omega$ and let $(e_1,...,e_r)$ be a basis of $E$ such that, for any $i=1,...,r$, we have $\|e_i\|_{\omega}\in \bR_{>0}$. Then $(e_1,...,e_r)$ is an adapted basis to $\xi$ in $\omega$.
\end{lemma}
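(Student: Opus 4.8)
The plan is to move everything to coordinates with respect to a reference basis supplied by condition $(\ast)$. First I would apply $(\ast)$ at the point $\omega$ to get an open neighbourhood $U_0$ of $\omega$ and a basis $\mathbf{f}=(f_1,\dots,f_r)$ of $E$ which is adapted to $\xi$ on $U_0$, so that $\|f_j\|_{\omega'}\in\bR_{>0}$ for every $j$ and every $\omega'\in U_0$. By Proposition \ref{prop:local_semi-norms}, this basis computes, at each $\omega'\in U_0$, both the finiteness module and the kernel of $\|\cdot\|_{\omega'}$: one has $\cE_{\omega'}=A_{\omega'}f_1\oplus\cdots\oplus A_{\omega'}f_r$ and $N_{\omega'}=\m_{\omega'}f_1\oplus\cdots\oplus\m_{\omega'}f_r$. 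Consequently, if $x\in E$ has coordinate vector $(b_1,\dots,b_r)$ in $\mathbf{f}$ and $\omega'\in U_0$, then $\|x\|_{\omega'}\in\bR_{>0}$ if and only if every $b_j$ lies in $A_{\omega'}$ (equivalently $|b_j|_{\omega'}<+\infty$) and at least one $b_j$ does not lie in $\m_{\omega'}$ (equivalently $|b_j|_{\omega'}>0$).

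Next I would expand the basis from the statement as $e_i=\sum_{j}a_{ij}f_j$ with $a_{ij}\in K$. Read through the description above at $\omega'=\omega$, the hypothesis $\|e_i\|_{\omega}\in\bR_{>0}$ says precisely that $|a_{ij}|_{\omega}<+\infty$ for all $j$, and that for each $i$ there is an index $j(i)$ with $|a_{i,j(i)}|_{\omega}>0$. The only genuinely topological ingredient is then that, for any $a\in K$, the sets $\{\omega'\in\Omega:|a|_{\omega'}<+\infty\}$ and $\{\omega'\in\Omega:|a|_{\omega'}>0\}$ are open in $\Omega$: this is because $|a|_{\cdot}:\Omega\to[0,+\infty]$ is continuous (the structural morphism is continuous and $M_K$ carries the topology of pointwise convergence) while $\{+\infty\}$ and $\{0\}$ are closed points of $[0,+\infty]$.

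Finally I would take $U$ to be the intersection of $U_0$ with the finitely many open neighbourhoods $\{|a_{ij}|_{\cdot}<+\infty\}$ for $1\le i,j\le r$ and $\{|a_{i,j(i)}|_{\cdot}>0\}$ for $1\le i\le r$; this is an open neighbourhood of $\omega$. For $\omega'\in U$ each coefficient $a_{ij}$ lies in $A_{\omega'}$, whence $e_i\in\cE_{\omega'}$ and $\|e_i\|_{\omega'}<+\infty$, while $a_{i,j(i)}\notin\m_{\omega'}$, whence $e_i\notin N_{\omega'}$ and $\|e_i\|_{\omega'}>0$. Therefore $\|e_i\|_{\omega'}\in\bR_{>0}$ for all $i$ and all $\omega'\in U$, which is exactly the assertion that $(e_1,\dots,e_r)$ is adapted to $\xi$ in $\omega$.

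I do not expect a serious obstacle here: the point is simply that although $\omega'\mapsto\|x\|_{\omega'}$ need not be continuous for a general pseudo-norm family, the two "boundary" conditions $\|x\|_{\omega'}<+\infty$ and $\|x\|_{\omega'}>0$ are each cut out by an open condition once an adapted basis has been frozen on a neighbourhood. The one subtlety to handle with care is that the reference basis must come from $(\ast)$ rather than being $(e_i)$ itself (which would be circular), and that Proposition \ref{prop:local_semi-norms} is what turns "finite and nonzero pseudo-norm" into the coordinate membership conditions used above.
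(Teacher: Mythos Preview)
Your proposal is correct and follows essentially the same approach as the paper: both use condition $(\ast)$ to obtain a reference adapted basis on a neighbourhood, expand the $e_i$ in that basis, translate the conditions $\|e_i\|_\omega<+\infty$ and $\|e_i\|_\omega>0$ into coefficient conditions via Proposition~\ref{prop:local_semi-norms}, and then use continuity of the maps $|a|_\cdot$ to propagate these conditions to an open neighbourhood. The paper's write-up is slightly terser (it treats one $e_i$ at a time and phrases the nonzero condition as ``we may assume $a_1\in A_\omega^\times$''), but there is no substantive difference.
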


\begin{proof}
By hypothesis, there exist a basis $(e'_1,...,e'_r)$ of $E$ together with an open neighbourhood $U$ of $\tilde{\omega}$ in $\tilde{\Omega}$ such that,
\begin{align*}
\forall \omega'\in U^{\an},\quad \forall i=1,...,r,\quad \|e'_i\|_{\omega'} \in \bR_{>0}. 
\end{align*} 
For any $i=1,...r$, write $e_{i}=\sum_{j=1}^{r}a_{i,j}e'_{j}$, where $a_{i,1},...,a_{i,r}\in K$. Consider
\begin{align*}
V:= U \cap \{\tilde{\omega'}\in\tilde{\Omega}: \forall 1\leq i,j\leq r, a_{i,j}\in A_{\tilde{\omega'}} \text{ and } \forall 1\leq i\leq r, \exists 1\leq j\leq r \text{ s.t. }a_{i,j}\in A_{\tilde{\omega'}}^{\times}\},
\end{align*}
this is an open neighbourhood of $\tilde{\omega}$ in $\tilde{\Omega}$ since $\|e_{1}\|_{\omega},...,\|e_{r}\|_{\omega}\in\bR_{>0}$. By construction, for any $\omega'\in V^{\an}$, we have $\|e_{1}\|_{\omega'},...,\|e_{r}\|_{\omega'}\in\bR_{>0}$.
\end{proof}

We end this subsection with two results concerning pseudo-norm families that possess a globally adapted basis.

\begin{lemma} 
\label{lemma:pseudo-norm_family_globally_adapted}
Let $E$ be a finite-dimensional $K$-vector space equipped with a pseudo-norm family $\xi=(\|\cdot\|_{\omega})_{\omega\in\Omega}$. Assume that there exists a basis $(e_1,...,e_r)$ of $E$ which is globally adapted to $\xi$. Let $(e'_1,...,e'_{r})$ be another basis of $E$. Then there exists an open subset $\Omega'\subset \Omega$ such that, for all $\omega\in \Omega'$, the basis $(e'_1,...,e'_{r})$ is adapted to $\xi$ in $\omega$ and $\nu(\Omega\setminus \Omega')=0$. 
\end{lemma}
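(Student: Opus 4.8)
The plan is to reduce the statement to a purely pointwise change-of-basis estimate together with the measure-theoretic fact recorded in Proposition \ref{prop:properties_topological_adelic_curves}(i). Write the transition matrix from $(e_1,\dots,e_r)$ to $(e'_1,\dots,e'_r)$, say $e'_j = \sum_i a_{ij} e_i$ with $a_{ij}\in K$, and let $(b_{ij})$ be the inverse matrix, so $e_i = \sum_j b_{ij} e'_j$. Since $(e_1,\dots,e_r)$ is globally adapted to $\xi$, for every $\omega\in\Omega$ we have $\|e_i\|_\omega\in\bR_{>0}$, hence each $e_i$ lies in the finiteness module $\cE_\omega$ and is not in the kernel $N_\omega$. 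The point is to identify an open full-measure set on which each $e'_j$ also has finite, nonzero pseudo-norm; by Lemma \ref{lemma:pseudo-norm_family_base_change} (i.e. the preceding Lemma \ref{lemma:pseudo-norm_family_base_change}) this automatically makes $(e'_1,\dots,e'_r)$ adapted \emph{in} each such $\omega$, so no separate neighbourhood argument is needed.

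First I would handle finiteness: the set $\Omega_{a_{ij}}=\{\omega : a_{ij}\in A_\omega\}$ has full measure for each $i,j$ by Proposition \ref{prop:properties_topological_adelic_curves}(i), and it is open because $|a_{ij}|_\cdot$ is continuous and $A_\omega=\{|a_{ij}|_\omega<+\infty\}$, so $\Omega_{a_{ij}}=\{\omega:|a_{ij}|_\omega<+\infty\}$ is the preimage of $[0,+\infty)$. Similarly each $\Omega_{b_{ij}}$ is open and full-measure. On the open full-measure set $\Omega_1:=\bigcap_{i,j}(\Omega_{a_{ij}}\cap\Omega_{b_{ij}})$ all $a_{ij},b_{ij}$ lie in $A_\omega$, so each $e'_j=\sum_i a_{ij}e_i\in\cE_\omega$, giving $\|e'_j\|_\omega<+\infty$ via Proposition \ref{prop:local_semi-norms}; by the triangle inequality and condition (ii) of a pseudo-norm, $\|e'_j\|_\omega\le\sum_i|a_{ij}|_\omega\|e_i\|_\omega<+\infty$.

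Next I would handle nonvanishing. For a fixed $j$, suppose $\|e'_j\|_\omega=0$, i.e. $e'_j\in N_\omega=\m_\omega\cE_\omega$; then writing $e_i=\sum_k b_{ik}e'_k$ and expanding, each $e_i$ would be an $A_\omega$-combination of the $e'_k$, and at least one index, namely $e_i$ for any $i$, would have to avoid $\m_\omega\cE_\omega$ — more precisely, reducing modulo $\m_\omega$, the images of $e'_1,\dots,e'_r$ span the residue vector space $\widehat{E}_\omega$ over $\widehat{\kappa_\omega}$ (since the $b_{ij}\in A_\omega$ and the $e_i$ form a basis of $\cE_\omega$ hence reduce to a basis of $\widehat{E}_\omega$), so they are linearly independent mod $\m_\omega$ and in particular none of them lies in $\m_\omega\cE_\omega$. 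Thus on $\Omega_1$ we automatically have $\|e'_j\|_\omega\in\bR_{>0}$ for all $j$. Setting $\Omega':=\Omega_1$, Lemma \ref{lemma:pseudo-norm_family_base_change} gives that $(e'_1,\dots,e'_r)$ is adapted to $\xi$ in every $\omega\in\Omega'$, $\Omega'$ is open, and $\nu(\Omega\setminus\Omega')\le\sum_{i,j}\big(\nu(\Omega\setminus\Omega_{a_{ij}})+\nu(\Omega\setminus\Omega_{b_{ij}})\big)=0$.

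The only genuinely delicate point is the nonvanishing step: one must be careful that "globally adapted" gives not merely $\|e_i\|_\omega<+\infty$ but $\|e_i\|_\omega>0$, i.e. that the reductions $\bar e_i$ form a basis of $\widehat{E}_\omega$ — this is exactly Proposition \ref{prop:local_semi-norms}, which says the finiteness module is free on any adapted basis and the kernel is $\m_\omega\cE_\omega$. Once that is in hand, the invertibility of $(b_{ij})$ over $A_\omega$ on $\Omega_1$ makes the change of basis an isomorphism $\cE_\omega\to\cE_\omega$ preserving the kernel, so the $\bar e'_j$ are a basis of $\widehat{E}_\omega$ as well, and nonvanishing follows. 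Everything else is a routine combination of continuity of the maps $|a|_\cdot$ and the full-measure statement of Proposition \ref{prop:properties_topological_adelic_curves}(i).
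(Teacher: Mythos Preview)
Your proof is correct and follows the same strategy as the paper's: write the new basis in terms of the old one via transition coefficients, then use continuity of the maps $|a|_\cdot$ together with Proposition~\ref{prop:properties_topological_adelic_curves}(i) to carve out an open set of full measure. The one difference is in the nonvanishing step: the paper asserts (somewhat imprecisely) that adaptedness in $\omega$ is equivalent to every $a_j^{(i)}\in A_\omega^\times$ and then appeals to integrability of $\log|a_j^{(i)}|_\cdot$, whereas you bring in the inverse matrix $(b_{ij})$ as well, so that on $\Omega_1$ the change of basis lies in $\GL_r(A_\omega)$ and nonvanishing of the $\bar e'_j$ in $\widehat{E}_\omega$ follows from linear algebra over the residue field. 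Your route is a bit longer but more carefully justified and sidesteps the issue of zero entries in the transition matrix.
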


\begin{proof}
For any $i=1,...,r$, we write $e'_{i}= a_{1}^{(i)}e_{1} + \cdots a_{r}^{(i)}e_{r}$, where $a_{1}^{(i)},...,a_{r}^{(i)} \in K$. Let $\omega\in \Omega$. Note that the basis $(e'_1,...,e'_{r})$ is adapted to $\xi$ in $\omega$ iff, for any $i,j \in \{1,...,r\}$, $a_{j}^{(i)}\in A^{\times}_{\omega}$. As the functions $\log|a_{j}^{(i)}|_{\cdot} : \Omega \to [-\infty,+\infty]$ are continuous and $\nu$-integrable for $i,j=1,...,r$, we obtain the desired assertion.
\end{proof}

\begin{lemma} 
\label{lemma:pseudo-norm_family_singular}
Let $E$ be a finite-dimensional $K$-vector space equipped with a pseudo-norm family $\xi=(\|\cdot\|_{\omega})_{\omega\in\Omega}$. Let $s\in E\setminus\{0\}$. Then the set
\begin{align*}
\{\omega\in\Omega : \|s\|_{\omega} \in \{0,+\infty\}\}
\end{align*}
is a locally closed subset of $\Omega$ which has measure zero with respect to $\nu$.
\end{lemma}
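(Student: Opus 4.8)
The plan is to reduce the statement to a computation involving the coordinates of $s$ in a locally adapted basis, and then to invoke the continuity and $\nu$-integrability conditions built into the definition of a pseudo-norm family and of a topological adelic curve. First I would fix $\omega_0 \in \Omega$ and, using condition $(\ast)$ of Definition~\ref{def:semi-norm_family}, choose an open neighbourhood $U$ of $\omega_0$ and a basis $(e_1,\dots,e_d)$ of $E$ adapted to $\xi$ in $\omega_0$; by Lemma~\ref{lemma:pseudo-norm_family_base_change} this basis is in fact adapted in every $\omega \in U'$ for some (possibly smaller) open neighbourhood $U' \subseteq U$ of $\omega_0$, since each $\|e_i\|_\cdot$ takes values in $\bR_{>0}$ on a neighbourhood of $\omega_0$. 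Writing $s = \lambda_1 e_1 + \cdots + \lambda_d e_d$ with $\lambda_i \in K$ (not all zero, as $s \neq 0$), I would then analyse $\|s\|_\omega$ for $\omega \in U'$ via the local structure of a pseudo-norm over $\varphi(\omega)$: by Proposition~\ref{prop:local_semi-norms}, $\|s\|_\omega = +\infty$ as soon as some $\lambda_i \notin A_\omega$, i.e.\ $|\lambda_i|_\omega = +\infty$, and $\|s\|_\omega = 0$ forces $s \in N_\omega = \m_\omega \cE_\omega$, which (since $(e_i)$ is an $A_\omega$-basis of $\cE_\omega$) means every $\lambda_i$ lies in $\m_\omega$, i.e.\ $|\lambda_i|_\omega = 0$ for all $i$.

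Granting these two observations, on $U'$ the set $\{\omega \in U' : \|s\|_\omega \in \{0,+\infty\}\}$ is contained in $\bigcup_{i=1}^d \big(\{\omega : |\lambda_i|_\omega = +\infty\} \cup \{\omega : |\lambda_i|_\omega = 0\}\big)$, and conversely it contains $\{\omega \in U' : \text{all } |\lambda_i|_\omega = 0\} \cup \bigcup_i \{\omega \in U' : |\lambda_i|_\omega = +\infty\}$; in fact for a pseudo-norm with adapted basis $(e_i)$ one has $\|s\|_\omega < +\infty$ iff all $\lambda_i \in A_\omega$ and, when finite, $\|s\|_\omega = 0$ iff all $\lambda_i \in \m_\omega$, so the bad set equals $\{\omega \in U' : \forall i,\ |\lambda_i|_\omega = 0\} \cup \bigcup_{i : \lambda_i \neq 0} \{\omega \in U' : |\lambda_i|_\omega = +\infty\}$ — here I use that at least one $\lambda_i \neq 0$ to see the union is nonempty only where it should be. Since each $\lambda_i \in K^\times$ (for $\lambda_i \neq 0$) the function $\log|\lambda_i|_\cdot$ is continuous on $\Omega$ and $\nu$-integrable by Definition~\ref{def:topological_adelic_curve}; therefore $\{\omega : |\lambda_i|_\omega = +\infty\} = \{\omega : \log|\lambda_i|_\omega = +\infty\}$ and $\{\omega : |\lambda_i|_\omega = 0\} = \{\omega : \log|\lambda_i|_\omega = -\infty\}$ are closed subsets of $\Omega$ of $\nu$-measure zero (an integrable function is finite $\nu$-a.e.). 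Intersecting with the open set $U'$ shows that on $U'$ the bad set is relatively closed and $\nu$-null.

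To globalise: cover $\Omega$ by such open sets $U'_\alpha$. On each $U'_\alpha$ the bad set $B \cap U'_\alpha$ is closed in $U'_\alpha$, which already shows $B$ is locally closed. For the measure statement, I would note that each $B \cap U'_\alpha$ is contained in a finite union of closed $\nu$-null sets of $\Omega$, hence is $\nu$-null; but a priori one needs countably many $U'_\alpha$ to conclude $\nu(B) = 0$. Here local compactness of $\Omega$ and the fact that $\nu$ is a Borel measure let me argue locally: $B$ has measure zero because it is $\nu$-null in a neighbourhood of every point and $\nu$ is a Borel measure on a locally compact Hausdorff space (so any set that is locally null is null — more concretely, $B$ is contained in the union over $i \in \{1,\dots,\dim E\}$ of the global closed $\nu$-null sets $\{\log|\lambda_i^{(\alpha)}|_\cdot \in \{\pm\infty\}\}$ once one checks the coordinates glue appropriately). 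Actually the cleanest route avoids gluing coordinates: pick a single basis $\mathbf{e}$ of $E$ and write $s = \mu_1 e_1 + \cdots + \mu_d e_d$ once and for all with $\mu_i \in K$; then $\{\omega : \|s\|_\omega \in \{0,+\infty\}\}$ is contained in $\bigcup_{i} \{\omega : \log|\mu_i|_\omega \in \{-\infty,+\infty\}\}$, a finite union of global closed $\nu$-null sets, giving $\nu(B)=0$ immediately, while local closedness of $B$ itself is the local argument above. The main obstacle is bookkeeping: reconciling "the set where $\|s\|=0$" with "the set where all coordinates vanish" requires the adapted-basis hypothesis to be available near each point (so that $\|\cdot\|_\omega$ genuinely has $\cE_\omega$ free on the $e_i$), and one must be careful that $\|s\|_\omega$ could be $0$ or $\infty$ for reasons invisible to a single global basis only on a set already covered by the coordinate loci — so I expect the delicate point to be verifying the reverse inclusion $B \subseteq \bigcup_i \{\log|\mu_i|_\omega \in \{\pm\infty\}\}$ cleanly, which follows from Proposition~\ref{prop:local_semi-norms} but deserves a careful line.
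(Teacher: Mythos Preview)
You missed a standing hypothesis. The sentence immediately preceding Lemmas~\ref{lemma:pseudo-norm_family_globally_adapted} and~\ref{lemma:pseudo-norm_family_singular} reads: ``We end this subsection with two results concerning pseudo-norm families which possess a globally adapted basis.'' The paper's proof accordingly begins by fixing a basis $(e_1,\dots,e_r)$ that is \emph{globally} adapted to $\xi$, writes $s=\sum_i s_i e_i$, and observes that $F_s\subset\bigcup_{i:s_i\neq 0}\{\omega:|s_i|_\omega\in\{0,+\infty\}\}$, a finite union of closed $\nu$-null sets. This is precisely your ``cleanest route'', except that the basis is taken adapted, which is what makes the key inclusion hold.

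Your attempt to avoid the globally adapted hypothesis leads to a genuine gap. The inclusion $B\subseteq\bigcup_i\{\log|\mu_i|_\omega\in\{\pm\infty\}\}$ for an \emph{arbitrary} fixed basis does \emph{not} follow from Proposition~\ref{prop:local_semi-norms}. Counterexample in rank $2$: suppose $(f_1,f_2)$ is adapted at $\omega$ and set $e_1=af_1$, $e_2=f_2$ with $a\in K$ satisfying $|a|_\omega=+\infty$. For $s=e_1+e_2$ the coordinates in $(e_1,e_2)$ are $(\mu_1,\mu_2)=(1,1)$, so $|\mu_i|_\omega=1\in(0,\infty)$ for both $i$; yet $s=af_1+f_2\notin\cE_\omega$, so $\|s\|_\omega=+\infty$. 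Thus $\omega\in B$ but $\omega\notin\bigcup_i\{|\mu_i|_\omega\in\{0,+\infty\}\}$. Your alternative route (gluing local null sets) is not obviously salvageable either: ``locally null implies null'' requires more than a Borel measure on a locally compact Hausdorff space, and the paper makes no $\sigma$-compactness or Radon assumption at this point. Once you reinstate the intended hypothesis, all of these complications disappear and the argument collapses to the paper's three lines.
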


\begin{proof}
Let $s\in E\setminus\{0\}$. And denote $F_s := \{\omega\in\Omega : \|s\|_{\omega} \in \{0,+\infty\}\}$. We may assume that $F_s$ is non-empty. Let $\omega_0\in F_s$. Let $(e_1,...,e_r)$ be a basis of $E$ which is globally adapted to $\xi$. 

Write $s= s_1e_1 +\cdots + s_r e_r$, where $s_1,...,s_r\in K$. Then, for any $\omega\in \Omega$, $\omega$ belongs to $F_s$ iff there exists $i\in \{1,...,r\}$ such that $s_i \in K \setminus A^{\times}_{\omega}$. For any $i=1,...,r$, denote $F_{i}:=\{\omega\in\Omega : s_i \in K \setminus A^{\times}_{\omega}\}$. As $|s_1|_{\cdot},...,|s_r|_{\cdot}$ are continuous, $F_s$ is locally closed. Thus
\begin{align*}
F_s \subset F_1 \cup \cdots \cup F_r.
\end{align*}
As the functions $\log|s_i|_{\cdot} : \Omega \to [-\infty,+\infty]$ for $i=1,...,r$ are $\nu$-integrable, $\nu(F_1)=\cdots=\nu(F_r)=0$ and therefore $F_s$ has measure zero.
\end{proof}

\subsection{Zariski-Riemann interpretation}
\label{sub:ZR_interpretation_pseudo-norm_families}

Before moving on to the definition of constructions on vector spaces equipped with a pseudo-norm family, let us interpret these objects by means of the Zariski-Riemann spaces attached to $S$ (cf. \S \ref{sub:ZR_spaces_adelic_curves}). 

\begin{proposition}
\label{prop:ZR_interpretation_pseudo-norm_family}
There is a one-to-one correspondence between the pairs of the form $(E,\xi)$ where $E$ is a finite-dimensional $K$-vector space equipped with a pseudo-norm family $\xi$ and metrised vector bundles on $\tilde{\Omega}$. Moreover, for any such pair $(E,\xi)$, there exists a basis of $E$ that is globally adapted to $\xi$ iff the corresponding metrised vector bundle on $\tilde{\Omega}$ is free. 
\end{proposition}

\begin{proof}
First, consider a vector bundle $\cE$ of rank $r>0$ on $\ZR(K)$. By definition, for any open subset $U\subset \ZR(K)$ such that $\cE_{|U}$ is free of rank $d$, there exists a family $(s_{1},...,s_{r})$ of sections of $\cE$ over $U$ such that the map of sheaves
\begin{align*}
\left((a_{1},...,a_{r})\in  \cO_{U}\right) \mapsto \displaystyle \sum_{i=1}^{r}a_{i}s_{i} \in \cE_{|U} 
\end{align*}
is an isomorphism. Thus, the stalk $E_{K}$ at the generic point of $\ZR(K)$ is a $d$-dimensional $K$-vector space and for any $v\in U$ corresponding to a valuation ring $A_{v}$ of $K$, the stalk $\cE_{v}$ is a free $A_{v}$-module of rank $d$ such that $\cE_{v}\otimes_{A_{v}}K = E$. Since $\ZR(K)$ can be covered by such $U$'s, $\cE$ gives rise to a $d$-dimensional $K$-vector spaces together with a family $(\cE_{v})_{v\in \ZR(K)}$ of free modules of rank $d$ over the $A_{v}$'s that are generically $E$. 

Now, let $\cE$ be a vector bundle of rank $r>0$ on $\ZR(K)_{S}=\tilde{\Omega}$. By definition, there exists a vector bundle $\cE'$ of rank $r$ on $\ZR(K)$ such that $\cE$ is the inverse image $\tilde{\varphi}^{-1}\cE'$. Since we assume that $\Omega$ contains an element $\omega_{0}$ such that $\phi(\omega_{0})$ is an absolute value on $K$ (cf. Remark \ref{rem:tac_existence_of_absolute_value}), $\cE$ yields a family $(\cE_{\tilde{\omega}})_{\tilde{\omega}\in \tilde{\Omega}}$ of free modules of rank $d$ over the $A_{\tilde{\omega}}$'s that are generically $E:=\cE_{\tilde{\omega_{0}}}$. For any $\omega\in \Omega$ whose equivalence class through $j_{S}:\Omega\to \tilde{\Omega}$ is $\tilde{\omega}$, we denote $\cE_{\omega}:=\cE_{\tilde{\omega}}$.

Let $\cE$ be a vector bundle of rank $r>0$ on $\ZR(K)_{S}=\tilde{\Omega}$ and $(\|\cdot\|(\omega))_{\omega\in \Omega}$ be a metric on $\cE$. By definition, for any $\omega\in\Omega$, $\|\cdot\|_{\omega}$ is a norm on $\cE(\omega):=\cE_{\omega}\otimes_{A_{\omega}}\widehat{\kappa}(\omega)$. In the terminology of \S \ref{subsub:pseudo-norms}, we see that $\|\cdot\|_{v}$ gives rise to a pseudo-norm on $E:=\cE\otimes_{\cO_{\tilde{\Omega}}}K$ with finiteness module $\cE_{\omega}$, kernel $\m_{\omega}\cE_{\omega}$ and residue vector space $\cE(\omega)$ equipped with residue norm $\|\cdot\|_{\omega}$. By abuse of notation, we denote again this pseudo-norm by $\|\cdot\|_{\omega}$. Moreover, for any $U\subset\tilde{\Omega}$ such that $\cE_{|U}$ is free, the generic stalk of any local trivialisation of $\cE$ over $U$ yields a basis of $E$ which is adapted to $\xi:=(\|\cdot\|_{\omega})_{\omega\in\Omega}$ at every point of the open subset $U^{\an}\subset \Omega$.

Conversely, consider a $r$-dimensional $K$-vector space $E$ equipped with a pseudo-norm family $\xi=(\|\cdot\|_{\omega})_{\omega\in\Omega}$, where $r>0$. Let $U\subset \tilde{\Omega}$ be an open subset such that there exists a basis $(s_{1},...,s_{r})$ of $E$ that is adapted to $\xi$ on $U^{\an}$. Let $V\subset U$ be an open subset. We set 
\begin{align*}
\cE_{U}(V):= \displaystyle\bigcap_{\tilde{\omega\in V}} \left(\bigoplus_{i=1}^{r}A_{\tilde{\omega}}\cdot s_{i}\right) \cong \cO_{\tilde{\Omega}}(V)^{r}.
\end{align*}
This defines a free $\cO_{U}$-module of rank $r$ on $U$. Moreover, if $U'$ is another open subset of $\tilde{\Omega}$ such that $\xi$ admits an adapted basis $(s'_{1},...,s_{r})$ on $V$ and $U\cap V\neq \emptyset$, since the transition matrix between $(s_{1},...,s_{r})$ and $(s'_{1},...,s'_{r})$ has coefficients in $\mathrm{GL}_{d}(\bigcap_{\tilde{\omega}\in U\cap U'}A_{\tilde{\omega}})$, we get an isomorphism $\cE_{U}(U\cap U')\cong \cE_{U'}(U\cap U')$. Since $\xi$ is a pseudo-norm family, there exists an open covering $\tilde{\Omega}=\bigcup_{i\in I} U_{i}$, where for any $i\in I$, $U_{i}$ is as above and by glueing the $\cE_{U_{i}}$'s, we get a vector bundle on $\tilde{\Omega}$ denoted by $\cE$. By construction, for any $\tilde{\omega}\in\tilde{\Omega}$, the stalk of $\cE$ at $\tilde{\omega}$ is the finiteness module of $\|\cdot\|_{\omega}$. Thus, the pseudo-norm family $\xi$ induces a metric on $\cE$. It is clear that this construction is inverse to the one above. 

The assertion concerning globally adapted bases is now clear since the existence of a globally adapted basis corresponds to a global trivialisation.
\end{proof}

\begin{remark}
\label{ZR_interpretation_pseudo-norm_families_equivalence_of_categories}
By suitably defining a notion of morphism between $K$-vector spaces equipped with a pseudo-norm family, one can prove that the correspondence constructed above yields an equivalence of categories. 
\end{remark}

Proposition \ref{prop:ZR_interpretation_pseudo-norm_family} implies that pseudo-norm families on a vector space are the same thing as metrics on a vector bundle on $\tilde{\Omega}$. In other words, we consider objects determined on the algebraic Zariski-Riemann space. We could also give a definition involving the analytic structure. This would give the following definition of a pseudo-norm family on a finite-dimensional $K$-vector space $E$: a family $\xi =(\|\cdot\|_{\omega})_{\omega\in \Omega}$ where, for any $\omega\in \Omega$, $\|\cdot\|_\omega$ is a pseudo-norm in $\omega$ on E is an \emph{analytic pseudo-norm family} if the following condition holds: 
\begin{itemize}
	\item[($\ast_{\an}$)] for any $\omega\in \Omega$, there exist an open neighbourhood $U$ of $\omega$ in $\Omega$ and a basis $(e_1,...,e_d)$ of $E$ such that, for any $i\in\{1,...,d\}$, $\|e_i\|_{\omega} \in \bR_{>0}$. 
\end{itemize}
One can note that all the results of this section can be proven \emph{mutatis mutandis} for analytic pseudo-norm families (cf. \cite{Sedillotthese}, Chapter III). Moreover, we have the following proposition.

\begin{proposition}
\label{prop:ZR_interpretation_pseudo-norm_families_GAGA}
Let $E$ be a finite-dimensional $K$-vector space and $\xi=(\|\cdot\|_{\omega})_{\omega\in\Omega}$ be a family such that, for any $\omega\in\Omega$, $\|\cdot\|_{\omega}$ is a pseudo-norm in $\omega$ on $E$. Assume that one of the following conditions hold:
\begin{itemize}
	\item[(1)] $\Omega=M_{K}$;
	\item[(2)] $S$ is integral whose underlying integral structure $(A,\|\cdot\|)$ is a geometric base ring such that the specification morphism $\cM(A,\|\cdot\|)\to\Spec(A)$ is flat and surjective and $\Omega=\cM(A,\|\cdot\|)$.
\end{itemize}
Then $\xi$ is a pseudo-norm family iff it is an analytic pseudo-norm family.
\end{proposition}

\begin{proof}
We have to show that $\xi$ satisfies $(\ast)$ iff is satisfies $(\ast_{\an})$. The direct implication being clear, it suffices to prove that if $(\ast_{\an})$ holds, then $(\ast)$ holds. In this case, by the same arguments as in the proof of Proposition \ref{prop:ZR_interpretation_pseudo-norm_family}, we see that $(E,\xi)$ defines a vector bundle $\cG$ on $\Omega$ equipped with a metric $\varphi$. By Proposition \ref{prop:GAGA_ZR_sheaf}, there exists a vector bundle $\cE$ on $\tilde{\Omega}$ such that $\cE^{\an}:=j_{S}^{\ast}\cE \cong \cG$. Moreover, $\varphi$ determines a metric on $\cE$ (since $\cE(\omega)\cong\cG(\omega)$ for any $\omega\in\Omega$) and the pair $(\cE,\varphi)$ corresponds to $(E,\xi)$ via Proposition \ref{prop:ZR_interpretation_pseudo-norm_family}. This concludes the proof.
\end{proof}

\begin{remark}
\label{rem:ZR_interpretation_pseudo-norm_families_GAGA}
As long as $\Omega\subset M_{K}$ is a Borel subset, the assumptions made in Proposition \ref{prop:ZR_interpretation_pseudo-norm_families_GAGA} are not too harmful for the generality. Indeed, we can pushforward the measure $\nu$ to $M_{K}$ and obtain a topological adelic curve whose adelic space is $M_{K}$ and such that the corresponding GVF height coincides with the one of $S$. If $S$ is integral, we do the same trick by pushing forward $\nu$ to the integral space of $S$.  
\end{remark}

\subsection{Algebraic constructions on pseudo-norm families}
\label{sub:algebraic_constructions_semi-norm_families}

\begin{proposition-definition}
\label{def:properties_semi-norm_family}
Let $E$ be a finite-dimensional $K$-vector space equipped with a pseudo-norm family $\xi =(\|\cdot\|_\omega)_{\omega\in \Omega}$.
\begin{itemize}
	\item[(1)] Let $F\subset E$ be a vector subspace of $E$. Then the family of restrictions $\xi_{|F}:=(\|\cdot\|_{\omega|F})_{\omega\in \Omega}$ is a pseudo-norm family on $F$ called the \emph{restriction} of $\xi$.
	\item[(2)] Let $G$ be a quotient of $E$. Then the family of quotient pseudo-norms $\xi_{G}:=(\|\cdot\|_{\omega,G})_{\omega\in \Omega}$ is a pseudo-norm family on $G$ called the \emph{quotient} of $\xi$.
	\item[(3)] The family of dual pseudo-norms $\xi^{\vee}:=(\|\cdot\|_{\omega,\ast})_{\omega\in \Omega}$ is a pseudo-norm family on the dual vector space $E^{\vee}$ called the \emph{dual} of $\xi$. 
	\item[(4)] Let $E'$ be another finite-dimensional $K$-vector space equipped with a pseudo-norm family $\xi' =(\|\cdot\|'_\omega)_{\omega\in \Omega}$. For any $\omega\in \Omega$, let $\|\cdot\|_{\omega,\pi}$ and $\|\cdot\|_{\omega,\epsilon}$ be respectively the $\pi$-tensor product and the $\epsilon$-tensor product of the pseudo-norms $\|\cdot\|_{\omega}$ and $\|\cdot\|'_{\omega}$. We denote by $\xi \otimes_{\epsilon,\pi} \xi'$ the pseudo-norm family on $E \otimes E'$ consisting of the pseudo-norms $\|\cdot\|_{\omega,\epsilon}$ for $\omega\in\Omega_{\um}$ and $\|\cdot\|_{\omega,\pi}$ for $\omega\in\Omega_{\infty}$. This family is called the $\epsilon,\pi$\emph{-tensor product} pseudo-norm family of $\xi$ and $\xi'$. Similarly, we define the $\epsilon$\emph{-tensor} product as well as the $\pi$\emph{-tensor product} of $\xi$ and $\xi'$.
	\item[(5)] Let $i\geq 1$ be an integer. We denote by $\Lambda^{i}\xi$ the pseudo-norm family on $\Lambda^{i}E$ as the quotient of the $\epsilon,\pi$-tensor product pseudo-norm family on $E^{\otimes i}$. This family is called the \emph{exterior power} pseudo-norm family on $\Lambda^{i}E$. If $i=\dim_{K}(E)$, the pseudo-norm family $\Lambda^{i}\xi$ is called the \emph{determinant} pseudo-norm family of $\xi$ and is denoted by $\det(\xi)$.
	\item[(6)] Let $\xi' =(\|\cdot\|'_{\omega},\cE'_{\omega},N'_{\omega},\widehat{E'_{\omega}})_{\omega\in \Omega}$ be another pseudo-norm family on $E$ Assume that for any $\omega\in\Omega$, there exists a basis $\mathbf{e}$ of $E$ such that $\mathbf{e}$ is both adapted to $\xi$ and $\xi'$ in $\omega$. Then, for any $\omega\in\Omega$, we have equalities 
	\begin{align*}
	\cE_{\omega}=\cE'_{\omega},\quad N_{\omega}=N'_{\omega},\quad \widehat{E_{\omega}} := \widehat{E'_{\omega}}.
	\end{align*}
	We define the \emph{local distance function} by
	\begin{align*}
	(\omega\in\Omega) \mapsto d_{\omega}(\xi,\xi') := \displaystyle\sup_{\overline{s}\in \widehat{E_{\omega}}\setminus\{0\}} \left|\log \|\overline{s}\|_{\omega}-\log\|\overline{s}\|'_{\omega}\right| = \sup_{s\in E} \left|\log \|s\|_{\omega}-\log\|s\|'_{\omega}\right|,
	\end{align*}
	where we use the convention that, for any $\omega\in\Omega$, for any $s\in E$ such that $\|s\|_{\omega}\in\{0,+\infty\}$, we have $\log \|s\|_{\omega}-\log\|s\|'_{\omega}=0$.
\end{itemize}
\end{proposition-definition}

\begin{proof}
It suffices to prove that condition $(\ast)$ from Definition \ref{def:semi-norm_family} holds for the pseudo-norm families in (1)-(5). Fix $\omega \in \Omega$.

\textbf{(1)} By construction of $\xi_F$, there exists a basis $(e_1,...,e_r)$ of $F$ which can be enlarged in a basis $(e_1,...,e_r,e_{r+1},...,e_d)$ such that $\|e_1\|_{\omega},...,\|e_d\|_{\omega}>0$. Then Lemma \ref{lemma:pseudo-norm_family_base_change} implies that $(e_1,...,e_r)$ is adapted to $\xi_{F}$ in $\omega$.

\textbf{(2)} Write $G = E/F$ for some vector subspace $F\subset E$. By construction of $\xi_G$, there exists a basis $(e_1,...,e_r,e_{r+1},...,e_{d})$ of $E$ such that $(e_1,...,e_r)$ is a basis of $F$, the image of $(e_{r+1},...,e_{d})$ in $E/F$ is a basis and, for any $i=r+1,...,d$, $\|\overline{e_{i}}\|_{\omega} >0$. Then one can use Lemma \ref{lemma:pseudo-norm_family_base_change} to conclude as above. 

\textbf{(3)} Let $(e_1,...,e_d)$ be a basis of $E$ which is adapted to $\xi$ in $\omega$, say on an open neighbourhood of the form $U^{\an}$ of $\omega$, where $U\subset \tilde{\Omega}$ is open. Then it follows from the construction of $\xi^{\vee}$ that, for any $\omega'\in U^{\an}$, $(e_1^{\vee},...e_{d}^{\vee})$ is a basis of $E^{\vee}$ such that $\|e_1^{\vee}\|_{\omega',\ast},...,\|e_d^{\vee}\|_{\omega',\ast}>0$. Hence the basis $(e_1^{\vee},...e_{d}^{\vee})$ is adapted to $\xi^{\vee}$ in $\omega$. 

\textbf{(4)} Let $(e_1,...,e_d)$ and $(e'_1,...,e'_{d'})$ respectively be basis of $E$ and $E'$ that are adapted to $\xi$ and $\xi'$ in $\omega$ on respective open neighbourhood $U^{\an}$ and $V^{\an}$ of $\omega$, where $U,V\subset \tilde{\Omega}$ are open subsets. The construction of $\pi$-tensor product and $\epsilon$-tensor product implies that the tensor product basis $(e_1,...,e_d)\otimes (e'_{1},...,e'_{d'})$ is an adapted basis of $E \otimes E'$ in $\omega$ on $U^{\an} \cap V^{\an}=(U\cap V)^{\an}$.

\textbf{(5)} This follows from (2) and (4).
\end{proof}

\begin{remark}
\label{rem:ZR_interpretation_algebraic_operations_pseudo-norm_families}
Using the interpretation given in Proposition \ref{prop:ZR_interpretation_pseudo-norm_family}, we can interpret the operations described in Proposition-Definition \ref{def:properties_semi-norm_family} in terms of metrised vector bundles on $\tilde{\Omega}$.
\end{remark}

\subsection{Dominated pseudo-norm families}
\label{sub:dominated_semi-norm_families}

In this subsection, we introduce a domination condition for pseudo-norm families. Most of its content is an adaptation of (\cite{ChenMori}, \S 4.1.2) in our context. As the reader can get easily convinced from our definition, all the results presented here can be proven exactly the same way as in \emph{loc. cit.} and we will simply refer to the corresponding result for the proof. 

\begin{definition}
\label{def:dominated_semi-norm_family}
Let $E$ be a finite-dimensional $K$-vector space equipped with a pseudo-norm family $\xi =(\|\cdot\|_\omega)_{\omega\in \Omega}$. The family is called \emph{dominated} if the pseudo-norm families $\xi$ and $\xi^{\vee}$ are \emph{upper dominated}, namely we have 
\begin{align*}
\forall f\in E\setminus\{0\},\quad\upint_{\Omega} \log \|f\|_\omega \nu(\diff \omega) < +\infty,
\end{align*}
and
\begin{align*}
\forall \varphi\in E^{\ast}\setminus\{0\},\quad\upint_{\Omega} \log \|\varphi\|_{\omega,\ast} \nu(\diff \omega) < +\infty.
\end{align*}

$\xi$ is called \emph{strongly dominated} if $\xi$ is dominated and the distance function $(\omega\in\Omega) \mapsto d_{\omega}(\xi,\xi^{\vee\vee})\in \bR_{\geq 0}$ (cf. Definition \ref{def:properties_semi-norm_family} (6)) is $\nu$-dominated.  
\end{definition}

\begin{example}
\label{example:model_pseudo-norm_family_dominated}
\begin{itemize}
\item[(1)] In the situation of Example \ref{example:model_semi-norm_family} (1), a pseudo-norm family is dominated iff it is dominated in the sense of (\cite{ChenMori}, Definition 4.1.2). 
\item[(2)] Using the notation of Example \ref{example:model_semi-norm_family} (2), we show that for any basis $\mathbf{e}=(e_1,...,e_r)$ of a $K$-vector space $E$, the pseudo-norm families $\xi_{\mathbf{e}},\xi_{\mathbf{e},2}$ are dominated (\cite{ChenMori}, Example 4.1.5).
\end{itemize} 
\end{example}

\begin{remark}[\cite{ChenMori}, Rem. 4.1.12]
\label{rem_ChenMori_4.1.12}
If a pseudo-norm family $\xi$ consists of ultrametric pseudo-norms $\xi_{\omega}$ for any $\omega\in \Omega_{\um}$, being dominated is equivalent to being strongly dominated.
\end{remark}

\begin{proposition}[\cite{ChenMori}, Proposition 4.1.16]
\label{prop:dominated_pseud-norm_family_dimension_1}
Let $E$ be a finite-dimensional $K$-vector space equipped with a pseudo-norm family $\xi =(\|\cdot\|_\omega)_{\omega\in \Omega}$. Assume that $\dim_{K}(E) = 1$. Then the following assertions are equivalent.
\begin{itemize}
	\item[(i)] $\xi$ is dominated.
	\item[(ii)] For any $s\in E\setminus\{0\}$, the function $(\omega\in\Omega)\mapsto\log\|s\|_{\omega}$ is $\nu$-dominated.
	\item[(iii)] There exists $s\in E\setminus\{0\}$, the function $(\omega\in\Omega)\mapsto\log\|s\|_{\omega}$ is $\nu$-dominated.
\end{itemize}
\end{proposition}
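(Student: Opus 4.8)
The plan is to prove the cycle of implications $(i) \Rightarrow (ii) \Rightarrow (iii) \Rightarrow (i)$. Since $\dim_K(E) = 1$, I fix once and for all a nonzero $s \in E$, so that $E = Ks$ and $E^\vee = Ks^\vee$ with $s^\vee$ the dual basis vector. The whole argument rests on two routine bookkeeping facts together with one rank-one computation. First, by Lemma \ref{lemma:pseudo-norm_family_singular} the set $\{\omega \in \Omega : \|s\|_\omega \in \{0,+\infty\}\}$ is $\nu$-negligible, and by Proposition \ref{prop:properties_topological_adelic_curves}(i) for any $\lambda \in K^\times$ the set $\{\omega : |\lambda|_\omega \in \{0,+\infty\}\}$ is $\nu$-negligible. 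Away from these null sets, $s$ is adapted to $\xi$ in $\omega$ (Lemma \ref{lemma:pseudo-norm_family_base_change}) and the rank-one dual norm computation gives $\|s^\vee\|_{\omega,\ast} = |s^\vee(s)|_\omega/\|s\|_\omega = 1/\|s\|_\omega$, hence $\log\|s^\vee\|_{\omega,\ast} = -\log\|s\|_\omega$ holds $\nu$-almost everywhere. I will also use the elementary identity $\upint_\Omega (-f)\,\nu(\diff\omega) = -\lowint_\Omega f\,\nu(\diff\omega)$, immediate from the definitions of upper and lower integral, and the fact that $\upint$ and $\lowint$ are insensitive to modifying the integrand on a $\nu$-negligible set.

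For $(i) \Rightarrow (ii)$: if $\xi$ is dominated then both $\xi$ and $\xi^\vee$ are upper dominated (Definition \ref{def:dominated_semi-norm_family}). Taking $f = s$ in the first condition gives $\upint_\Omega \log\|s\|_\omega\,\nu(\diff\omega) < +\infty$; taking $\varphi = s^\vee$ in the second and using the a.e.\ identity above gives $-\lowint_\Omega \log\|s\|_\omega\,\nu(\diff\omega) = \upint_\Omega \log\|s^\vee\|_{\omega,\ast}\,\nu(\diff\omega) < +\infty$, i.e.\ $\lowint_\Omega \log\|s\|_\omega\,\nu(\diff\omega) > -\infty$. By the definition of $\nu$-dominated recalled in the Conventions, $\omega \mapsto \log\|s\|_\omega$ is $\nu$-dominated; since $s$ was an arbitrary nonzero vector, $(ii)$ follows.

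The implication $(ii) \Rightarrow (iii)$ is trivial because $E \neq \{0\}$. For $(iii) \Rightarrow (i)$, fix $s$ with $\log\|s\|_\cdot$ $\nu$-dominated. Any nonzero $f \in E$ is $\lambda s$ with $\lambda \in K^\times$, and on the complement of the relevant null sets multiplicativity of the pseudo-norm gives $\log\|f\|_\omega = \log|\lambda|_\omega + \log\|s\|_\omega$; since $\log|\lambda|_\cdot \in \cL^1(\Omega,\nu)$, the right-hand side is $\nu$-dominated, so in particular $\upint_\Omega \log\|f\|_\omega\,\nu(\diff\omega) < +\infty$ and $\xi$ is upper dominated. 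Similarly any nonzero $\varphi \in E^\vee$ equals $\mu s^\vee$ with $\mu \in K^\times$, and $\log\|\varphi\|_{\omega,\ast} = \log|\mu|_\omega - \log\|s\|_\omega$ $\nu$-a.e., again $\nu$-dominated, so $\xi^\vee$ is upper dominated; hence $\xi$ is dominated, which is $(i)$.

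I do not expect any genuine obstacle here: the statement is essentially a translation, in the one-dimensional case, between the two-sided integrability condition defining "$\nu$-dominated" and the pair of one-sided ("upper dominated") conditions on $\xi$ and $\xi^\vee$. The only points needing care are the identification of the dual pseudo-norm in rank one and the handling of the $\nu$-negligible loci on which a pseudo-norm or an absolute value degenerates, and both are handled by Lemmas \ref{lemma:pseudo-norm_family_base_change} and \ref{lemma:pseudo-norm_family_singular} and Proposition \ref{prop:properties_topological_adelic_curves}(i).
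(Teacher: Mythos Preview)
Your proof is correct and follows essentially the same approach that the paper indicates by citing \cite{ChenMori}, Proposition 4.1.16: the rank-one identity $\|s^{\vee}\|_{\omega,\ast}=1/\|s\|_{\omega}$ converts upper domination of $\xi^{\vee}$ into lower domination of $\log\|s\|_{\cdot}$, and the rest is bookkeeping. You have simply written out in full the adaptation that the paper leaves implicit, including the handling of the $\nu$-null loci where the pseudo-norm degenerates via Lemma~\ref{lemma:pseudo-norm_family_singular} and Proposition~\ref{prop:properties_topological_adelic_curves}(i).
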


\begin{proposition}[\cite{ChenMori}, Proposition 4.1.19]
\label{prop:constructions_dominated_semi-norm_families}
\begin{itemize}
	\item[(1)] Let $E$ be a finite-dimensional $K$-vector space equipped with a pseudo-norm family $\xi=(\|\cdot\|_{\omega})_{\omega\in\Omega}$.  If $\xi$ is dominated (resp. strongly dominated), then for any vector subspace $F\subset E$, the restriction of $\xi$ to $F$ is a dominated (resp. strongly dominated) pseudo-norm family on $F$.
	\item[(2)] Let $E$ be a finite-dimensional $K$-vector space equipped with a pseudo-norm family $\xi=(\|\cdot\|_{\omega})_{\omega\in\Omega}$.  If $\xi$ is dominated (resp. strongly dominated), then for any vector subspace $F\subset E$, the quotient pseudo-norm family $\xi_{E/F}$ is dominated (resp. strongly dominated).
	\item[(3)] Let $E$ be a finite-dimensional $K$-vector space equipped with a dominated pseudo-norm family $\xi=(\|\cdot\|_{\omega})_{\omega\in\Omega}$.  Then the dual pseudo-norm family $\xi^{\vee}$ on $E^{\vee}$ is strongly dominated.
	\item[(4.a)] Let $E,F$ be finite-dimensional $K$-vector spaces equipped with dominated pseudo-norm families $\xi,\xi'$ respectively. Then the $\epsilon,\pi$-tensor product pseudo-norm family $\xi \otimes_{\epsilon,\pi} \xi'$ is a strongly dominated pseudo-norm family on $E\otimes_K F$.
	\item[(4.b)] Let $E,F$ be finite-dimensional $K$-vector spaces equipped with dominated pseudo-norm families $\xi,\xi'$ respectively. Then the $\epsilon$-tensor product pseudo-norm family $\xi \otimes_{\epsilon,\pi} \xi'$ is a strongly dominated pseudo-norm family on $E\otimes_K F$.
	\item[(5)] Let $E$ be a finite-dimensional $K$-vector space equipped with a dominated pseudo-norm family $\xi$. Let $i\in \bN$. Then the pseudo-norm family $\Lambda^i \xi$ is strongly dominated. 
	\item[(6)] Let $E$ be a finite-dimensional $K$-vector space equipped with a pseudo-norm family $\xi=(\|\cdot\|_{\omega})_{\omega\in\Omega}$. If $\xi$ is dominated, then the determinant pseudo-norm family $\det(\xi)$ is strongly dominated.
	\item[(7)] Let $K'/K$ be a finite extension of fields. Let $S'=(K',\phi' : \Omega' \to M_{K'},\nu')$ be the topological adelic curve constructed in \S \ref{sub:finite_extension_tac}. Denote by $\pi_{K'/K} : \Omega' \to \Omega$ the projection. Let $E$ be a finite-dimensional $K$-vector space and let $E' := E \otimes_{K} K'$. Let $\xi=(\|\cdot\|_{\omega})_{\omega\in\Omega}$ and $\xi'=(\|\cdot\|'_{\omega'})_{\omega'\in\Omega'}$ be pseudo-norm families on $E$ and $E'$ respectively such that
	\begin{align*}
	\forall \omega\in \Omega,\quad \forall \omega'\in \pi_{K'/K}^{-1}(\{\omega\}),\quad \forall s\in E,\quad \|s\|_{\omega'}=\|s\|_{\omega}.
	\end{align*}
	If $\xi'$ is dominated, then $\xi$ is dominated.
\end{itemize}
\end{proposition}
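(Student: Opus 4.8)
The plan is to transcribe the proofs of \cite[\S 4.1.2]{ChenMori}, the only new feature being that a nonzero vector may acquire pseudo-norm $0$ or $+\infty$ at some places; by Lemma \ref{lemma:pseudo-norm_family_singular} this occurs only on a $\nu$-negligible locally closed set, so it never affects the upper/lower integrals in the definition of domination. Two facts will be used throughout: for $\omega\in\Omega_{\um}$ the (residue) absolute value of $\phi(\omega)$ is non-Archimedean, so all scalar estimates over $K$ are ultrametric and introduce no factor depending on $\dim_K$; and $\nu(\Omega_{\ar})<+\infty$ (the standing hypothesis of this section together with Proposition \ref{prop:properties_topological_adelic_curves}), so any function bounded by a constant on $\Omega_{\ar}$ and by $0$ outside it is $\nu$-integrable. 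I would also record at the start that if $\xi$ is dominated then for any basis $(e_1,\dots,e_r)$ of $E$ each $\log\|e_i\|_\cdot$ is $\nu$-dominated: the upper bound is upper domination of $\xi$, and $\|e_i\|_\omega\|e_i^\vee\|_{\omega,\ast}\ge 1$ together with upper domination of $\xi^\vee$ gives the lower bound.

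Parts (1), (2) and (3) are then formal. For (1)--(2) I would use that restriction and quotient pseudo-norms are, respectively, equal to and bounded above by the ambient ones, that any $\varphi\in F^\vee$ extends to $\Phi\in E^\vee$ with $\|\varphi\|_{\omega,F,\ast}\le\|\Phi\|_{\omega,\ast}$, that $\|\cdot\|_{\omega,G,\ast}$ is the restriction of $\|\cdot\|_{\omega,\ast}$ (Proposition \ref{prop:correspondence_dual_quotient}, resp.\ its analogue for restrictions), and that, since $\|\cdot\|_{\omega,\ast\ast}\le\|\cdot\|_\omega$ (Proposition \ref{prop:comparaison_double_dual_semi-norm}) and passing to a subspace only increases bidual norms, $d_\omega$ of a restricted or quotiented family is $\le d_\omega(\xi,\xi^{\vee\vee})$; monotonicity of the upper integral then gives the claims. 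For (3): $\xi^\vee$ is upper dominated by hypothesis, and $\xi^{\vee\vee}\le\xi$ makes $\xi^{\vee\vee}$ upper dominated, so $\xi^\vee$ is dominated; it is strongly dominated because the triple dual equals the dual --- apply $\|\cdot\|_{\ast\ast}\le\|\cdot\|$ to $\|\cdot\|_\omega$ and to $\|\cdot\|_{\omega,\ast}$ --- whence $d_\omega(\xi^\vee,\xi^{\vee\vee\vee})\equiv 0$.

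Parts (4)--(6) carry the content. Fixing bases $\mathbf e,\mathbf e'$ of $E,F$ and writing $z=\sum_{i,j}c_{ij}\,e_i\otimes e'_j$ for a nonzero $z\in E\otimes_K F$, on $\Omega_{\ar}$ I would use $\|z\|_{\pi,\omega}\le\sum_{i,j}|c_{ij}|_\omega\|e_i\|_\omega\|e'_j\|'_\omega$, so that $\log\|z\|_{\eta,\omega}$ (with $\eta=\xi\otimes_{\epsilon,\pi}\xi'$) is bounded above by $\log(\dim_KE\cdot\dim_KF)\,\mathbf{1}_{\Omega_{\ar}}(\omega)+\max_{i,j}\bigl(\log|c_{ij}|_\omega+\log\|e_i\|_\omega+\log\|e'_j\|'_\omega\bigr)$, an $\cL^1$ function (the constant is integrable on $\Omega_{\ar}$, $\log|c_{ij}|_\cdot\in\cL^1$, the rest $\nu$-dominated). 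On $\Omega_{\um}$ I would instead bound $\|z\|_{\epsilon,\omega}$ through its defining supremum over $\varphi\otimes\psi$ with $\varphi\in E^\vee,\psi\in F^\vee$: the non-Archimedean triangle inequality gives $|(\varphi\otimes\psi)(z)|_\omega\le\max_{i,j}|c_{ij}|_\omega|\varphi(e_i)|_\omega|\psi(e'_j)|_\omega$, hence $\|z\|_{\epsilon,\omega}\le\max_{i,j}|c_{ij}|_\omega\|e_i\|_\omega\|e'_j\|'_\omega$ --- no dimensional constant --- so $\log\|z\|_{\eta,\omega}$ is again $\cL^1$-dominated. The dual family $\eta^\vee$ is treated symmetrically using $\|z\|_{\eta,\omega}\ge\|z\|_{\epsilon,\omega}$ and testing against $e_i^\vee\otimes (e'_j)^\vee$. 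Thus $\xi\otimes_{\epsilon,\pi}\xi'$ is dominated; for (4.b) the same works with $\epsilon$ in place of $\pi$ on $\Omega_{\ar}$. It is strongly dominated because the $\epsilon$-tensor product of two pseudo-norms is ultrametric at each $\omega\in\Omega_{\um}$ --- once more by non-Archimedean scalar arithmetic --- so $\eta^{\vee\vee}=\eta$ on $\Omega_{\um}$ (ultrametric case) and on $\Omega_{\ar}$ (Archimedean case) by Proposition \ref{prop:comparaison_double_dual_semi-norm}, i.e.\ $d_\omega(\eta,\eta^{\vee\vee})\equiv 0$. Parts (5) and (6) follow by iterating (4.a) to get $\xi^{\otimes i}$ strongly dominated and applying (2) to the quotient $\Lambda^iE=E^{\otimes i}/(\,\cdots\,)$, with $\det(\xi)=\Lambda^{\dim_KE}\xi$.

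For (7), the morphism $\alpha_{K'/K}$ of \S \ref{sub:finite_extension_tac} carries probability fibre measures $\bP_{K'/K,\omega}$, a disintegration kernel $I_{K'/K}$, and satisfies $\pi_{K'/K,\ast}\nu'=\nu$. For $f\in E\setminus\{0\}\subseteq E'\setminus\{0\}$ the hypothesis gives $\|f\|_{\omega'}=\|f\|_{\pi_{K'/K}(\omega')}$, so $\log\|f\|_\omega=I_{K'/K}\bigl(\log\|f\|_\cdot\circ\pi_{K'/K}\bigr)(\omega)$ (each fibre value is a convex combination); choosing $H\in\cL^1(\Omega',\nu')$ with $\log\|f\|_\cdot\circ\pi_{K'/K}\le H$ $\nu'$-a.e., the disintegration formula (cf.~(\ref{eq:measure_finite_separable_extension})) shows that for $\nu$-a.e.\ $\omega$ the inequality holds at every $\omega'$ above $\omega$ (each carries positive $\bP_{K'/K,\omega}$-mass), whence $\log\|f\|_\omega\le I_{K'/K}(H)(\omega)$ $\nu$-a.e.\ with $I_{K'/K}(H)\in\cL^1(\Omega,\nu)$; so $\xi$ is upper dominated. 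For $\varphi\in E^\vee\setminus\{0\}$, the inclusion of finiteness modules and the equality of pseudo-norms on $E$ give $\|\varphi\|_{\omega,\ast}\le\|\varphi\|_{\omega',\ast}$ for every $\omega'$ above $\omega$, and the same argument via $I_{K'/K}$ yields $\xi^\vee$ upper dominated; hence $\xi$ is dominated. The step I expect to be the main obstacle is (4.a): keeping the estimates on $\Omega_{\um}$ strictly free of dimensional constants --- which forces working with the defining supremum of the injective tensor norm rather than a model-basis estimate --- and observing that strong domination there is automatic from the ultrametricity of the $\epsilon$-tensor product; the remainder is either formal or a line-by-line transcription of \cite[\S 4.1.2]{ChenMori}.
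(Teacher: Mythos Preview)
Your proposal is correct and follows the same overall strategy as the paper---transcribing \cite[\S 4.1.2]{ChenMori} while noting that the places where a vector acquires pseudo-norm $0$ or $+\infty$ form a $\nu$-null set. The arguments for (1)--(3) and (5) match the paper essentially verbatim.

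There are three places where you take a different route. For (4), the paper does not estimate the dual norm directly via bases; instead it uses the structural identification $(\xi\otimes_{\epsilon}\xi')^{\vee}=\xi^{\vee}\otimes_{\epsilon,\pi}\xi'^{\vee}$ (from \cite[Propositions 1.1.57 and 1.2.20]{ChenMori}) to reduce everything to a single upper-domination estimate for $\xi\otimes_{\pi}\xi'$, and then bootstraps: once $\xi\otimes_{\epsilon}\xi'$ is shown dominated, one applies (3) to $\xi^{\vee},\xi'^{\vee}$, then the already-proved $\epsilon$-case to get $\xi^{\vee}\otimes_{\epsilon}\xi'^{\vee}$ dominated, and finally (3) again to its dual $\xi\otimes_{\epsilon,\pi}\xi'$. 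This is cleaner than your symmetric basis estimate for $\eta^{\vee}$ (which works, but requires a separate lower bound $\|z\|_{\epsilon,\omega}\ge |c_{ij}|_{\omega}/(\|e_i^{\vee}\|_{\omega,\ast}\|(e_j')^{\vee}\|_{\omega,\ast})$ and a non-Archimedean case analysis). For (6), the paper bypasses (5) entirely and uses Hadamard's inequality (Proposition \ref{prop:Hadamrd_inequality_local_semi-norm}) directly: $|\log\|\eta\|_{\omega,\det}|\le\sum_i|\log\|e_i\|_{\omega}|$, then invokes the one-dimensional criterion. For (7), the paper is a one-liner via the pushforward relation $\pi_{K'/K,\ast}\nu'=\nu$, giving $\upint_{\Omega}\log\|s\|_{\omega}\,\nu(\diff\omega)=\upint_{\Omega'}\log\|s\|'_{\omega'}\,\nu'(\diff\omega')$ directly, without invoking the disintegration kernel; your route through $I_{K'/K}$ and the positivity of fibre masses is correct but more elaborate than needed.
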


\begin{proposition}[\cite{ChenMori}, Proposition 4.1.6]
\label{prop:CM_4.1.6}
Let $E$ be a finite-dimensional $K$-vector space equipped with two pseudo-norm families $\xi_1,\xi_2$ such that, for any $\omega\in\Omega\in \bR_{\geq 0}$, there exists a basis $\mathbf{e}_{\omega}$ of $E$ which is both adapted to $\xi_1$ and $\xi_2$ in $\omega$ (so that the local distance function $(\omega\in\Omega)\mapsto d_{\omega}(\xi_1,\xi_2)\in \bR_{\geq 0}$ is well-defined). Assume that $\xi_1$ is dominated and that the local distance function $(\omega\in\Omega)\mapsto d_{\omega}(\xi_1,\xi_2)\in\bR_{\geq 0}$ is $\nu$-dominated. Then $\xi_2$ is dominated.
\end{proposition}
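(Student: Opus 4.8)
The plan is to verify the two halves of the definition of being dominated for $\xi_2$ separately, that is, that both $\xi_2$ and $\xi_2^{\vee}$ are upper dominated, in each case transferring the corresponding property from $\xi_1$ through the local distance function. Throughout, note that by Proposition-Definition \ref{def:properties_semi-norm_family} (6) the hypothesis that a common adapted basis exists at every $\omega$ forces $\cE_{\omega,1}=\cE_{\omega,2}$ and $N_{\omega,1}=N_{\omega,2}$ for all $\omega$; consequently, at any $\omega$ where one of $\|\cdot\|_{\omega,1},\|\cdot\|_{\omega,2}$ takes a finite nonzero value on a given vector, so does the other.

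First I would treat the upper domination of $\xi_2$. Fix $f\in E\setminus\{0\}$. By Lemma \ref{lemma:pseudo-norm_family_singular} the set of $\omega$ with $\|f\|_{\omega,1}\in\{0,+\infty\}$ is $\nu$-negligible; off this set $\|f\|_{\omega,2}$ is also finite and nonzero, and the definition of $d_{\omega}(\xi_1,\xi_2)$ gives $\lvert\log\|f\|_{\omega,2}-\log\|f\|_{\omega,1}\rvert\le d_{\omega}(\xi_1,\xi_2)$. Since $d_{\cdot}(\xi_1,\xi_2)$ is $\nu$-dominated, choose $h\in\cL^{1}(\Omega,\cA,\nu)$ with $d_{\omega}(\xi_1,\xi_2)\le h(\omega)$ $\nu$-a.e., so that $\log\|f\|_{\omega,2}\le\log\|f\|_{\omega,1}+h(\omega)$ $\nu$-a.e.; then by monotonicity of the upper integral and the fact that adding an integrable function shifts the upper integral by its integral,
\begin{align*}
\upint_{\Omega}\log\|f\|_{\omega,2}\,\nu(\diff\omega)\le\upint_{\Omega}\bigl(\log\|f\|_{\omega,1}+h(\omega)\bigr)\nu(\diff\omega)=\upint_{\Omega}\log\|f\|_{\omega,1}\,\nu(\diff\omega)+\int_{\Omega}h\,\diff\nu<+\infty,
\end{align*}
the finiteness coming from the upper domination of $\xi_1$. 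Hence $\xi_2$ is upper dominated.

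For the dual, the key step is the pointwise inequality $d_{\omega}(\xi_1^{\vee},\xi_2^{\vee})\le d_{\omega}(\xi_1,\xi_2)$. Indeed, if $\mathbf{e}$ is a basis adapted to both $\xi_1$ and $\xi_2$ in $\omega$, then $\mathbf{e}^{\vee}$ is adapted to both $\xi_1^{\vee}$ and $\xi_2^{\vee}$ in $\omega$ (as in the proof of Proposition-Definition \ref{def:properties_semi-norm_family} (3)), so $d_{\omega}(\xi_1^{\vee},\xi_2^{\vee})$ is well-defined; and using the explicit formula $\|\varphi\|_{\omega,i,\ast}=\sup_{x\in\cE_{\omega}\setminus N_{\omega}}\lvert\varphi(x)\rvert_{\omega}/\|x\|_{\omega,i}$ (cf. \cite{Sedillot24_pav}, \S 6), the bound $\|x\|_{\omega,2}\ge e^{-d_{\omega}(\xi_1,\xi_2)}\|x\|_{\omega,1}$ on such $x$ yields $\|\varphi\|_{\omega,2,\ast}\le e^{d_{\omega}(\xi_1,\xi_2)}\|\varphi\|_{\omega,1,\ast}$, and symmetrically, whence the claim (equivalently, one may pass to the residue norms on $\widehat{E_{\omega}}$ and invoke the fact that the distance between the dual norms of two norms on a finite-dimensional space is bounded above by the distance between the norms themselves). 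Therefore $d_{\cdot}(\xi_1^{\vee},\xi_2^{\vee})$ is again $\nu$-dominated, and since $\xi_1$ is dominated, $\xi_1^{\vee}$ is upper dominated; applying the argument of the previous paragraph verbatim to the pair $\xi_1^{\vee},\xi_2^{\vee}$ and an arbitrary $\varphi\in E^{\vee}\setminus\{0\}$ (again using Lemma \ref{lemma:pseudo-norm_family_singular} for $E^{\vee}$ and $\xi_1^{\vee}$ to discard a null set) shows that $\xi_2^{\vee}$ is upper dominated. Combining the two halves, $\xi_2$ is dominated.

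The proof is essentially bookkeeping with the conventions of Definition \ref{def:properties_semi-norm_family} (6) and with upper integrals; the only point requiring genuine care is the transfer to duals, i.e.\ establishing $d_{\omega}(\xi_1^{\vee},\xi_2^{\vee})\le d_{\omega}(\xi_1,\xi_2)$ and checking that all the pseudo-norm values involved are finite and nonzero on a conull set, so that the logarithms (and the zero-difference convention) behave as required.
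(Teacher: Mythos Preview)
Your proof is correct and follows essentially the same approach as the paper: bound $\log\|s\|_{2,\omega}$ by $\log\|s\|_{1,\omega}+d_{\omega}(\xi_1,\xi_2)$ to get upper domination of $\xi_2$, then do the same for the duals via the inequality $d_{\omega}(\xi_1^{\vee},\xi_2^{\vee})\le d_{\omega}(\xi_1,\xi_2)$. The paper simply cites (\cite{ChenMori}, Proposition 1.1.43) for the latter inequality and suppresses the null-set bookkeeping under the phrase ``with the conventions defining the local distance function'', whereas you spell out both points explicitly; otherwise the arguments are the same.
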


\begin{proposition}[\cite{ChenMori}, Corollary 4.1.8]
\label{cor:CM_4.1.8}
Let $E$ be a finite-dimensional $K$-vector space equipped with two pseudo-norm families $\xi_1,\xi_2$ such that there exists a basis $\mathbf{e}$ which is both globally adapted to $\xi_1$ and $\xi_2$. Assume that $\xi_1$ and $\xi_2$ are ultrametric on $\Omega_{\um}$. If $\xi_1$ and $\xi_2$ are both dominated, then the local distance function $(\omega\in\Omega)\mapsto d_{\omega}(\xi_1,\xi_2)\in\bR_{\geq 0}$ is $\nu$-dominated.
\end{proposition}

\begin{corollary}[\cite{ChenMori}, Corollary 4.1.10]
\label{cor:analogue_ChenMori_4.1.10}
Let $E$ be a finite-dimensional $K$-vector space equipped with a pseudo-norm family $\xi$. Assume that there exists a basis of $E$ which is globally adapted to $\xi$. Then the following assertions are equivalent: 
\begin{itemize}
	\item[(i)] the pseudo-norm family $\xi$ is dominated and the local distance function $(\omega\in\Omega) \mapsto d_{\omega}(\xi,\xi^{\vee\vee}) \in \bR_{\geq 0}$ is $\nu$-dominated;
	\item[(ii)] for any basis $\mathbf{e}$ of $E$ such that $\mathbf{e}$ is globally adapted to $\xi$, the local distance function $(\omega\in\Omega) \mapsto d_{\omega}(\xi,\xi_{\mathbf{e}})\in\bR_{\geq 0}$ is $\nu$-dominated;
	\item[(iii)] there exists a $\mathbf{e}$ basis of $E$ such that $\mathbf{e}$ is globally adapted to $\xi$ and the local distance function $(\omega\in\Omega) \mapsto d_{\omega}(\xi,\xi_{\mathbf{e}})\in\bR_{\geq 0}$ is $\nu$-dominated.
\end{itemize}
\end{corollary}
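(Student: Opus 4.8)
The plan is to prove the implications $(i)\Rightarrow(ii)\Rightarrow(iii)\Rightarrow(i)$, relying on Proposition~\ref{prop:CM_4.1.7} (specialised to the trivial extension $L=K$, applied both to $\xi$ and to $\xi^{\vee\vee}$), on Proposition~\ref{prop:CM_4.1.6}, on the dominance of model pseudo-norm families established in Example~\ref{example:model_pseudo-norm_family_dominated}, and on the strong dominance of the dual of a dominated family from Proposition~\ref{prop:constructions_dominated_semi-norm_families}(3). Throughout, the single globally adapted basis $\mathbf{e}$ of $E$ supplied by the hypothesis will serve as a common adapted basis for every family that occurs ($\xi$, $\xi_{\mathbf{e}}$, $\xi^{\vee\vee}$), so that all the local distance functions in play are well-defined in the sense of Proposition-Definition~\ref{def:properties_semi-norm_family}(6).

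For $(i)\Rightarrow(ii)$ I would simply invoke Proposition~\ref{prop:CM_4.1.7} for the dominated family $\xi$ (dominance being part of $(i)$): for any globally adapted basis $\mathbf{e}$ it yields a $\nu$-integrable function $A_{\mathbf{e}}\colon\Omega\to[0,+\infty]$ with $d_\omega(\xi,\xi_{\mathbf{e}})\le A_{\mathbf{e}}(\omega)$ for every $\omega\in\Omega$; since $d_\cdot(\xi,\xi_{\mathbf{e}})\ge 0$ this exhibits it as $\nu$-dominated. The implication $(ii)\Rightarrow(iii)$ is immediate, a globally adapted basis existing by assumption.

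For $(iii)\Rightarrow(i)$ I would start from a globally adapted basis $\mathbf{e}$ for which $\omega\mapsto d_\omega(\xi,\xi_{\mathbf{e}})$ is $\nu$-dominated. As $\xi_{\mathbf{e}}$ is dominated (Example~\ref{example:model_pseudo-norm_family_dominated}), Proposition~\ref{prop:CM_4.1.6} applied to the pair $(\xi_{\mathbf{e}},\xi)$ gives that $\xi$ is dominated. Then $\xi^{\vee}$ is strongly dominated by Proposition~\ref{prop:constructions_dominated_semi-norm_families}(3), hence $\xi^{\vee\vee}$ is in particular dominated; moreover $\mathbf{e}$ remains globally adapted to $\xi^{\vee\vee}$, since the finiteness module and kernel of $\|\cdot\|_{\omega,\ast\ast}$ coincide with those of $\|\cdot\|_\omega$ (reflexivity of the free $A_\omega$-module $\cE_\omega$ under the construction of the dual pseudo-norm), so $\|e_i\|_{\omega,\ast\ast}\in\bR_{>0}$ for all $i$ and all $\omega$. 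Applying Proposition~\ref{prop:CM_4.1.7} to the dominated family $\xi^{\vee\vee}$ with the basis $\mathbf{e}$ shows $\omega\mapsto d_\omega(\xi^{\vee\vee},\xi_{\mathbf{e}})$ is $\nu$-dominated, and the triangle inequality $d_\omega(\xi,\xi^{\vee\vee})\le d_\omega(\xi,\xi_{\mathbf{e}})+d_\omega(\xi_{\mathbf{e}},\xi^{\vee\vee})$ then bounds $d_\cdot(\xi,\xi^{\vee\vee})$ by a sum of two $\nu$-dominated functions, hence it is $\nu$-dominated. Together with the dominance of $\xi$, this is exactly $(i)$.

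I do not expect a serious obstacle; the content has essentially been packaged into Propositions~\ref{prop:CM_4.1.6} and~\ref{prop:CM_4.1.7}, and what remains is bookkeeping in the pseudo-normed setting. The three points to handle with some care are: (a) that all the local distance functions are well-defined, i.e. the families share a common adapted basis — here $\mathbf{e}$ works globally for $\xi$, $\xi_{\mathbf{e}}$ and $\xi^{\vee\vee}$ at once, using that adaptedness of $\mathbf{e}$ pins down the same finiteness module (Proposition~\ref{prop:local_semi-norms}); (b) the triangle inequality for $d_\omega$ under the sign conventions at vectors of pseudo-norm $0$ or $+\infty$, which holds because the common finiteness module and kernel force the three pseudo-norms to be simultaneously singular on the same vectors, so on the (measure-zero, by Lemma~\ref{lemma:pseudo-norm_family_singular}) singular locus all log-ratios are $0$ and the inequality is trivial, while off it the usual logarithmic triangle inequality applies; and (c) the elementary fact that a finite sum of $\nu$-dominated functions is $\nu$-dominated. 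If anything is mildly subtle it is (a), namely confirming that $\mathbf{e}$ is genuinely adapted to $\xi^{\vee\vee}$ everywhere.
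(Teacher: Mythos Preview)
Your proof is correct and follows essentially the same route as the paper: the cycle $(i)\Rightarrow(ii)\Rightarrow(iii)\Rightarrow(i)$ built on Proposition~\ref{prop:CM_4.1.7}, Proposition~\ref{prop:CM_4.1.6}, and the triangle inequality for $d_\omega$. The only visible difference is in $(i)\Rightarrow(ii)$: you invoke Proposition~\ref{prop:CM_4.1.7} (with $L=K$) directly to bound $d_\omega(\xi,\xi_{\mathbf{e}})$, whereas the paper first extracts a bound on $d_\omega(\xi^{\vee\vee},\xi_{\mathbf{e}})$ and then combines it with the hypothesis on $d_\omega(\xi,\xi^{\vee\vee})$ via the triangle inequality; given the statement of Proposition~\ref{prop:CM_4.1.7} as written in the paper, your shortcut is legitimate and slightly cleaner. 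Your step $(iii)\Rightarrow(i)$ matches the paper's argument verbatim, and your care in point~(a) about $\mathbf{e}$ remaining globally adapted to $\xi^{\vee\vee}$ fills in a detail the paper leaves implicit.
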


\subsection{Regularity and measurability conditions for pseudo-norm families}
\label{sub:regularity_measurability_smie-norm_families}

In this subsection, we introduce regularity conditions for pseudo-norm families. Here the theory differs from the classical theory of adelic curves, due to the topological nature of the adelic space. 

\begin{definition}
\label{def:continuous_measurable_semi-norm_family}
Let $E$ be a finite-dimensional $K$-vector space equipped with a pseudo-norm family $\xi =(\|\cdot\|_\omega)_{\omega\in \Omega}$. 
\begin{itemize}
	\item[(1)] The family $\xi$ is called \emph{continuous}, respectively \emph{usc}, \emph{lsc}  if, for any $s\in E$, the map $(\omega\in \Omega)\mapsto \log\|s\|_{\omega} \in [-\infty,+\infty]$ is continuous, respectively usc, lsc. We denote by $\cN(E)^{\cont}$, respectively $\cN(E)^{\usc}$, $\cN(E)^{\lsc}$ the set of continuous, respectively usc, lsc pseudo-norm families on $E$.
	\item[(2)] The family $\xi$ is called $\nu$\emph{-measurable} if, for any $s\in E$, the map $(\omega\in \Omega)\mapsto \log\|s\|_{\omega} \in [-\infty,+\infty]$ is $\nu$-measurable. We denote by $\cN(E)^{\nu}$ the set of $\nu$-measurable pseudo-norm families on $E$.
\end{itemize}
\end{definition}

\begin{remark}
\label{rem:ZR_interpretation_regular_pseudo-norm_families}
Using the interpretation given in Proposition \ref{prop:ZR_interpretation_pseudo-norm_family}, one can prove that a pseudo-norm family on a $K$-vector space $E$ is continuous/usc/lsc iff the associated metric on the corresponding vector bundle $\cE$ on $\tilde{\Omega}$ is continuous/usc/lsc. This is a consequence of the fact that any element $s\in E$ are interpreted as a meromorphic section of $\cE$.
\end{remark}

\begin{example}
\label{example:continuous_semi-norm_family_general}
\begin{itemize}
	\item[(1)] Let $\xi$ be a pseudo-norm family on a finite-dimensional $K$-vector space $E$. Assume that $\xi$ is either usc or lsc. Then $\xi$ is a $\nu$-measurable norm family on $E$. In particular, if $\xi$ is continuous, then $\xi$ is $\nu$-measurable.
	\item[(2)] Assume that $\Omega$ is discrete. Let $E$ be a finite-dimensional $K$-vector space equipped with a pseudo-norm family $\xi =(\|\cdot\|_\omega)_{\omega\in \Omega}$. Then $\xi$ is continuous.
	\item[(3)] In the situation of Example \ref{example:model_semi-norm_family} (1), a pseudo-norm family is $\nu$-measurable iff it is measurable in the sense of (\cite{ChenMori}, \S 4.1.3).
	\item[(4)] Model pseudo-norm families constructed in Example \ref{example:model_semi-norm_family} (2) are continuous. Indeed, as $\Omega_{\infty}$ and $\Omega_{\um}$ are open, it follows from the definition of such pseudo-norm families on each of these open sets.
\end{itemize}
\end{example}

\begin{proposition} 
\label{prop:constructions_continuous_semi-norm_families}
Let $E$ be a finite-dimensional $K$-vector space equipped with a pseudo-norm family $\xi =(\|\cdot\|_\omega)_{\omega\in \Omega}$. 
\begin{itemize}
	\item[(1)] Let $F \subset E$ be any vector subspace. Assume that $\xi$ is lsc, resp. usc. Then the restriction pseudo-norm family $\xi_{F}=(\|\cdot\|_{F,\omega})_{\omega\in \Omega}$ on $F$ induced by $\xi$ is lsc, resp. usc.
	\item[(2)] Assume that $\xi$ is usc. Then the dual pseudo-norm family $\xi^{\vee}=(\|\cdot\|_{\omega,\ast})_{\omega\in \Omega}$ on $E^{\vee}$ is lsc, hence is $\nu$-measurable.
	\item[(3)] Let $\pi: E \twoheadrightarrow G$ be a quotient vector space of $E$. Assume that $\xi$ is usc. Then the quotient pseudo-norm family $\xi_{G}=(\|\cdot\|_{G,\omega})_{\omega\in \Omega}$ induced by $\xi$ is usc, hence is $\nu$-measurable.
	\item[(4)] Let $n\geq 1$ be an integer. For any $i=1,...,n$, let $E_i$ be a finite-dimensional $K$-vector space equipped with a pseudo-norm family $\xi_i =(\|\cdot\|_{i,\omega})_{\omega\in \Omega}$. 
	\begin{itemize}
		\item[(4.a)] Assume that $\xi_1,...,\xi_n$ are usc. Then the $\pi$-tensor product $\xi_1 \otimes_{\pi} \cdots \otimes_{\pi} \xi_n$ is a usc and $\nu$-measurable pseudo-norm family on $ E_1\otimes \cdots\otimes E_n$.
		\item[(4.b)] Assume that $\xi_1,...,\xi_n$ are usc and ultrametric on $\Omega_{\um}$. Then the $\epsilon$-tensor product $\xi_1 \otimes_{\epsilon} \cdots \otimes_{\epsilon} \xi_n$ is a usc and $\nu$-measurable pseudo-norm family on $E_1\otimes \cdots\otimes E_n$. Moreover, if the dual pseudo-norm families $\xi_1^{\vee},...,\xi_n^{\vee}$ are continuous. Then the $\epsilon$-tensor product $\xi_1 \otimes_{\epsilon} \cdots \otimes_{\epsilon} \xi_n$ is a continuous pseudo-norm family on $E_1\otimes \cdots\otimes E_n$.

		\item[(4.c)] Assume that the dual pseudo-norm families $\xi_1^{\vee},...,\xi_n^{\vee}$ are continuous. Then the $\epsilon,\pi$-tensor product $\xi_1 \otimes_{\epsilon,\pi} \cdots \otimes_{\epsilon,\pi} \xi_n$ is a $\nu$-measurable pseudo-norm family on $E_1\otimes \cdots\otimes E_n$.
	\end{itemize}
	\item[(5)] Assume that $\xi$ is usc. The determinant pseudo-norm family $\det(\xi)$ on $\det(E)$ is usc and $\nu$-measurable. 
	\item[(6)] Let $L/K$ be an algebraic extension. Then the extension of scalars pseudo-norm family $\xi_L=(\|\cdot\|_{x})_{x\in \Omega_L}$ on $E_L := E \otimes_{K} L$ induced by $\xi$ is lsc, hence $\nu$-measurable. 
\end{itemize}
\end{proposition}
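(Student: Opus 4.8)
The plan is to prove each item by a local reduction: since $\Omega_{\ar}$ and $\Omega_{\um}$ are open (Proposition \ref{prop:properties_topological_adelic_curves}) and every pseudo-norm family admits, near any point, an adapted basis (condition $(\ast)$ of Definition \ref{def:semi-norm_family}), it suffices to verify semicontinuity in a neighbourhood of each $\omega_0\in\Omega$ after fixing such a basis. On such a neighbourhood the constructed pseudo-norms acquire closed-form expressions as suprema or infima of explicit functions of $\omega$, and one then uses the two elementary facts that a pointwise supremum of lsc functions is lsc and a pointwise infimum of usc functions is usc, together with the continuity of $(\omega\in\Omega)\mapsto\log|f|_{\omega}$ for $f\in K$ (Definition \ref{def:topological_adelic_curve}). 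Once the appropriate semicontinuity is established, $\nu$-measurability is automatic by Example \ref{example:continuous_semi-norm_family_general} (1), which also disposes of the measurability clauses in (4.a)--(4.c), (5) and (6).

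Item (1) is immediate, since for $s\in F$ one has $\|s\|_{F,\omega}=\|s\|_{\omega}$. For (3), fix a lift $s\in E$ of a vector of $G=E/F$; then $\log\|\overline s\|_{G,\omega}=\inf_{t\in F}\log\|s+t\|_{\omega}$ is an infimum of usc functions, hence usc. For (2), work on an open set carrying an adapted basis $\mathbf e$ of $E$; by the definition of the dual pseudo-norm as the dual of the residue norm on $\widehat{E_{\omega}}$ and the density of the image of the finiteness module $\cE_{\omega}$ in $\widehat{E_{\omega}}$ (which is where the adapted basis is used), $\log\|\varphi\|_{\omega,\ast}$ can be written as a supremum over a set of vectors $x$ of $\log|\varphi(x)|_{\omega}-\log\|x\|_{\omega}$; the first term is continuous and, when $\xi$ is usc, the second term is lsc, so each function in the supremum is lsc and therefore so is $\log\|\varphi\|_{\omega,\ast}$.

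The tensor-product statements (4) are obtained from (2), (3) and the identifications of \cite{ChenMori}, Propositions 1.1.57 and 1.2.20. Locally on adapted bases, $\log\|\varphi\|_{\omega,\pi}$ is an infimum, over finite decompositions $\varphi=\sum_i a_i\otimes b_i$, of $\log\big(\sum_i\|a_i\|_{\omega}\|b_i\|'_{\omega}\big)$, hence usc when all factors are usc: this gives (4.a). Dually, $\|\cdot\|_{\omega,\epsilon}$ is computed as a supremum of ratios $|\langle\varphi,\alpha\otimes\beta\rangle|_{\omega}/(\|\alpha\|_{\omega,\ast}\|\beta\|'_{\omega,\ast})$, which are continuous functions of $\omega$ once the dual families are continuous; a supremum of continuous functions is lsc, which yields (4.b). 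For (4.c) one treats the two open pieces separately: on $\Omega_{\um}$ the $\epsilon$-product is used and the previous argument applies, while on $\Omega_{\ar}$ the $\pi$-product is used and one exploits that all residue fields there are $\bR$ or $\bC$ and all residue vector spaces are fixed finite-dimensional spaces in the chosen basis, so that pointwise continuity of the dual families upgrades to Hausdorff-continuity of the associated unit balls, hence to genuine continuity of the $\pi$-tensor (and of $\xi$ itself) on $\Omega_{\ar}$, by continuity of the classical tensor and duality operations on the space of norms of a fixed finite-dimensional real vector space. Part (5) is handled directly from the Hadamard-type identity of Proposition \ref{prop:Hadamrd_inequality_local_semi-norm} (3): locally, $\log\|\eta\|_{\omega,\det}=\inf\{\textstyle\sum_i\log\|x_i\|_{\omega}:\eta=x_1\wedge\cdots\wedge x_r,\ x_i\in\cE_{\omega}\}$ is an infimum of usc functions when $\xi$ is usc. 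For (6) one reduces first to a finite extension, where $\Omega_L=\Omega_K\times_{V_K}V_L$ by \S \ref{sub:finite_separable_extension_tac}; for $x\in\Omega_L$ over $\omega\in\Omega_K$ the extension-of-scalars pseudo-norm $\|\cdot\|_x$ is described fibrewise through the adapted-basis model and the extension of the residue absolute value, and $\log\|s\|_x$ appears as a supremum of continuous functions of $x$ (continuity of $\log|f|_{\cdot}$ for $f\in L$ being propagated along the proper map $\pi_{L/K}$), hence is lsc; the infinite case follows by passing to the inverse limit.

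The main difficulty is the asymmetry between the $\epsilon$- and $\pi$-tensor norms in (4) and the extension of scalars in (6): since $K$ is in general uncountable, one cannot, as in the theory of adelic curves over countable fields, reduce the relevant infima/suprema to a countable dense family of decompositions and conclude by elementary measurability of countable operations. The way around this is to split off the Archimedean locus $\Omega_{\ar}$, where completions are $\bR$ or $\bC$ and residue spaces are fixed, and to transfer continuity through the (continuous) classical linear-algebra operations on finite-dimensional normed spaces, reserving the sup/inf-of-semicontinuous arguments for $\Omega_{\um}$; verifying that the local formulas for the dual, quotient, tensor and determinant pseudo-norms are valid at the level of the residue vector space---which forces the systematic use of the local adapted bases supplied by condition $(\ast)$---is the remaining technical point.
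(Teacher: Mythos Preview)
Your overall strategy coincides with the paper's: write each constructed pseudo-norm as a supremum or infimum of functions whose semicontinuity is known, and conclude. Items (1)--(3), (4.a), (4.b) and (5) in your proposal match the paper's arguments essentially verbatim (the paper phrases them in the $\varepsilon$--$\delta$ style of showing that preimages of half-lines are open, but the content is identical). Note that in (5) the domain $\cE_{\omega}$ of the Hadamard infimum varies with $\omega$, so one cannot literally write $\log\|\eta\|_{\omega,\det}$ as an infimum over a \emph{fixed} family of usc functions; the paper handles this by fixing, for a given $\omega$ and threshold $a$, witnesses $x_1,\dots,x_r\in\cE_{\omega}$ and then checking they remain in $\cE_{\omega'}$ nearby. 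Your phrasing glosses over this point, but the fix is the one you clearly have in mind.

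The genuine divergence is in (4.c). The paper does not argue for continuity on $\Omega_{\ar}$ at all: it simply observes that the $\epsilon,\pi$-product equals the $\pi$-product on $\Omega_{\ar}$ and the $\epsilon$-product on $\Omega_{\um}$, invokes the measurability already obtained in (4.a) and (4.b), and patches. Your route via Hausdorff continuity of unit balls on $\Omega_{\ar}$ is more elaborate; it has the merit of not silently relying on (4.a), whose hypothesis ($\xi_i$ usc) is not formally part of the hypotheses of (4.c), but the argument as written is only a sketch and would need the local identification of residue spaces via an adapted basis to be made precise. If you are willing to assume $\xi_i$ usc in addition (as is the case in every application), the paper's patching argument is much shorter.

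Finally, the paper gives no proof of (6); your sketch is reasonable but would need to be fleshed out.
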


\begin{proof}
\textbf{(1):} It is immediate from the definition of a continuous pseudo-norm family.

\textbf{(2):} Let $\varphi\in E^{\vee}\setminus\{0\}$ and $a\in\bR_{>0}$. Assume that $\|\varphi\|_{\omega,\ast}\in ]a,+\infty]$. Let us prove that $\|\varphi\|_{\cdot,\ast}^{-1}(]a,+\infty])$ is open. Let $x\in \cE_{\omega}\setminus\m_{\omega}\cE_{\omega}$. By upper semi-continuity of $\|x\|_{\cdot}$ on $\Omega$, there exists an open neighbourhood $U$ of $\omega$ such that $\|x\|_{\cdot}$ has value in $\bR_{\geq 0}$ on $U$. Then the function $(\omega'\in U)\mapsto \frac{|\varphi(x)|_{\omega'}}{\|x\|_{\omega'}}\in\bR_{>0}$ is lower semi-continuous. Hence up to shrinking $U$, we may assume that for any $\omega'\in U$, we have
\begin{align*}
a<\frac{|\varphi(x)|_{\omega'}}{\|x\|_{\omega'}} \leq \|\varphi\|_{\omega',\ast}.
\end{align*}
We deduce the desired lower semi-continuity.

\textbf{(3):} Let $\pi: E \twoheadrightarrow G$ be a quotient vector space of $E$. By definition, for any $\overline{x}\in E/F$, we have
\begin{align*}
\|\overline{x}\|_{\omega,E/F} = \displaystyle\inf_{x\in \pi^{-1}(\overline{x})} \|x\|_{\omega}.
\end{align*}
Thus $\|\overline{x}\|_{\cdot,E/F}$ is the infimum of a family of usc functions and hence is usc. Since $\log$ is non-decreasing, $\log\|\overline{x}\|_{\cdot,E/F}$ is usc.

\textbf{(4.a):} Let $x\in E_1 \otimes\cdots\otimes E_n$ and $a\in \bR_{>0}$. We prove that $\|x\|_{\cdot,\pi}^{-1}([0,a[)$ is open. Let $\omega \in \|x\|_{\pi}^{-1}([0,a[)$. By definition of $\|x\|_{\omega,\pi}$, there exists a decomposition $x=\sum_{i=1}^N x_1^{(i)}\otimes\cdots\otimes x_n^{(i)}$, with $\|x_j^{(i)}\|_{j,\omega}<+\infty$ for any $i=1,...,N$ and $j=1,...,n$, such that
\begin{align*}
\displaystyle\sum_{i=1}^{N}\|x_1^{(i)}\|_{1,\omega}\cdots\|x_n^{(i)}\|_{n,\omega} < a.
\end{align*}
Let $U$ be an open neighbourhood of $\omega$ such that, for any $\omega'\in U$, for any $i=1,...,N$ and $j=1,...,n$, we have $\|x_j^{(i)}\|_{j,\omega'}<+\infty$. Up to shrinking $U$, we may assume that, for any $\omega'\in U$, we have
\begin{align*}
\|x\|_{\omega',\pi}\leq \displaystyle\sum_{i=1}^{N}\|x_1^{(i)}\|_{1,\omega'}\cdots\|x_n^{(i)}\|_{n,\omega'} < a.
\end{align*}
Thus $\|x\|_{\cdot,\pi}$ and $\log\|x\|_{\cdot,\pi}$ are usc and $\nu$-measurable.

\textbf{(4.b):} Let $x\in E_1 \otimes\cdots\otimes E_n$ and $a\in \bR_{>0}$. We prove that $\|x\|_{\cdot,\epsilon}^{-1}([0,a[)$ is open. Let $\omega \in \|x\|_{\epsilon}^{-1}([0,a[)$. Since the $\xi_{i}$'s are ultrametric on $\Omega_{\um}$, there exists a decomposition $x=\sum_{i=1}^N x_1^{(i)}\otimes\cdots\otimes x_n^{(i)}$, with $\|x_j^{(i)}\|_{j,\omega}<+\infty$ for any $i=1,...,N$ and $j=1,...,n$, such that
\begin{align*}
\displaystyle\max_{1\leq i\leq N}\|x_1^{(i)}\|_{1,\omega}\cdots\|x_n^{(i)}\|_{n,\omega} < a.
\end{align*}
Let $U$ be an open neighbourhood of $\omega$ such that, for any $\omega'\in U$, for any $i=1,...,N$ and $j=1,...,n$, we have $\|x_j^{(i)}\|_{j,\omega'}<+\infty$. Up to shrinking $U$, we may assume that, for any $\omega'\in U$, we have
\begin{align*}
\|x\|_{\omega',\epsilon}\leq \displaystyle\max_{1\leq i\leq N}\|x_1^{(i)}\|_{1,\omega'}\cdots\|x_n^{(i)}\|_{n,\omega'} < a.
\end{align*}
Thus $\|x\|_{\cdot,\epsilon}$ and $\log\|x\|_{\cdot,\epsilon}$ are usc and $\nu$-measurable.

Now assume that the $\xi_{i}$'s are continuous. Let $x\in E_1 \otimes\cdots\otimes E_n$ and $a\in \bR_{>0}$. We prove that $\|x\|_{\cdot,\epsilon}^{-1}(]a,+\infty])$ is open. Let $\omega \in \|x\|_{\epsilon}^{-1}(]a,+\infty])$. By definition of $\|x\|_{\omega,\epsilon}$, there exists $(f_1,...,f_n)\in E_1^{\vee}\times\cdots\times E_n ^{\vee}$ such that $0<\|f_1\|_{1,\omega,\ast},...,\|f_n\|_{n,\omega,\ast}<+\infty$ and 
\begin{align*}
a< \frac{|x(f_1,...,f_n)|_{\omega}}{\|f_1\|_{1,\omega,\ast}\cdots\|f_n\|_{n,\omega,\ast}}.
\end{align*}
By continuity $|x(f_1,...,f_n)|_{\cdot}$ and Lemma \ref{lemma:pseudo-norm_family_base_change}, there exists an open neighbourhood $U$ of $\omega$ such that, for any $\omega'\in U$, we have
\begin{align*}
0 < \|f_1\|_{1,\omega',\ast},...,\|f_n\|_{n,\omega',\ast} < +\infty.
\end{align*}
Up to shrinking $U$, by continuity of  $\|f_1\|_{1,\cdot,\ast},...,\|f_n\|_{n,\cdot,\ast}$, we may assume that
\begin{align*}
\forall \omega'\in U, \quad a< \frac{|x(f_1,...,f_n)|_{\omega'}}{\|f_1\|_{1,\omega',\ast}\cdots\|f_n\|_{n,\omega',\ast}} \leq \|x\|_{\omega',\epsilon}.
\end{align*}
Hence $\|x\|_{\cdot,\epsilon}$ and $\log\|x\|_{,\cdot,\epsilon}$ are lsc and thus continuous.

\textbf{(4.c):} Let $x\in E_1 \otimes\cdots\otimes E_n$ and $A \subset [-\infty,+\infty]$ be a measurable set. By (4.a) and (4.b), the sets
\begin{align*}
\left(\log\|x\|_{\cdot,\pi}\right)^{-1}(A), \quad \left(\log\|x\|_{\cdot,\epsilon}\right)^{-1}(A)
\end{align*}
are $\nu$-measurable. We can then conclude by using the equality
\begin{align*}
\left(\log\|x\|_{\cdot,\epsilon,\pi}\right)^{-1}(A) = \left(\left(\log\|x\|_{\cdot,\pi}\right)^{-1}(A)\cap \Omega_{\infty}\right) \cup \left(\left(\log\|x\|_{\cdot,\epsilon}\right)^{-1}(A)\cap \Omega_{\um}\right)
\end{align*}
combined with the fact that $\Omega_{\infty}$ and $\Omega_{\um}$ are both $\nu$-measurable.

\textbf{(5):} Let $\eta\in\det(E)$ and $\omega\in\Omega$. Assume that $\|\eta\|_{\omega,\det}<+\infty$. Let $a>\|\eta\|_{\omega,\det}$. We show that $\|\eta\|_{\cdot,\det}^{-1}([0,a[)$ is an open neighbourhood of $\omega$. By Proposition \ref{prop:Hadamrd_inequality_local_semi-norm} (3), there exist $x_1,...,x_r\in \cE_{\omega}$ such that $\eta=x_1\wedge\cdots\wedge x_r$ and 
\begin{align*}
\|\eta\|_{\omega,\det}\leq\|x_1\|_{\omega}\cdots\|x_r\|_{\omega} < a.
\end{align*}
Let $\mathbf{e}=(e_1,...,e_r)$ a basis of $E$ which is adapted to $\xi$. Thus, for any $i=1,...,r$, $x_i = x_{i}^{(1)}e_1 + \cdots + x_{i}^{(r)}e_r \in \bigoplus_{i=1}^{r} A_{\omega}e_i$. Let $U$ be an open neighbourhood of $\omega$ such that, for any $\omega'\in U$, for any $i,j\in \{1,...,r\}$, $x_{i}^{(j)} \in A_{\omega'}$. By semi-continuity of $\|x_1\|_{\cdot},...,\|x_r\|_{\cdot}$, up to shrinking $U$, we may assume that, for any $\omega'\in U$, we have
\begin{align*}
\|\eta\|_{\omega',\det} \leq \|x_1\|_{\omega'}\cdots\|x_r\|_{\omega'} < a. 
\end{align*} 
Hence, for any $a>0$, the set $\|\eta\|_{\cdot,\det}^{-1}([0,a[)$ is open and $\det(\xi)$ is usc and $\nu$-measurable.
\end{proof}

\section{Adelic vector bundles}
\label{sec:adelic_vector_bundle}
Throughout this section, we fix a topological adelic curve $S=(K,\phi:\Omega\to M_K, \nu)$. 

\subsection{Definitions}
\label{sub:adelic_vector_bundles_definitions}

\begin{definition}
\label{def:adelic_vector_bundle}
Let $\xi$ be a pseudo-norm family on a finite-dimensional $K$-vector space $E$. Then $\overline{E}=(E,\xi)$ is called 
\begin{itemize}
	\item[(1)] a \emph{upper/lower semi-continuous adelic vector bundle} (\emph{usc/lsc adelic vector bundle} for short), if the pseudo-norm families $\xi$ and $\xi^{\vee}$ are upper/lower semi-continuous and $\xi$ is dominated;
	\item[(2)] an \emph{adelic vector bundle} if the pseudo-norm families $\xi$ and $\xi^{\vee}$ are continuous and $\xi$ is dominated;
	\item[(3)] a \emph{measurable adelic vector bundle} on $S$, if the pseudo-norm families $\xi$ and $\xi^{\vee}$ are $\nu$-measurable and $\xi$ is dominated. 
\end{itemize}
Note that any (usc/lsc) adelic vector bundle is a measurable adelic vector bundle.

If $\overline{E}=(E,\xi)$ is a (usc/lsc/measurable) adelic vector bundle on $S$ and if the pseudo-norm family $\xi$ is strongly dominated, then $\overline{E}$ is called a \emph{strongly (usc/lsc/measurable) adelic vector bundle} on $S$. 

Moreover, an adelic vector bundle $\overline{E}=(E,\xi)$ on $S$ is called \emph{Hermitian} if $\xi$ is Hermitian.
\end{definition}

\begin{remark}
\label{rem:adelic_vector_bundle_Zariski-Riemann}
\begin{itemize}
	\item[(1)] The terminology "adelic vector bundle" is justified by Proposition \ref{prop:ZR_interpretation_pseudo-norm_family}. Indeed, as a vector space equipped with a pseudo-norm family can be seen as a metrised vector bundle on $\tilde{\Omega}=\ZR(K)_{S}$, when one assumes further global conditions on the pseudo-norm family in order to perform Arakelov geometric constructions, we add the adjective "adelic". In the case where $K$ is an algebraic extension of $\bQ$ equipped with the topological adelic curve structure $S_{\bQ}\otimes_{\bQ} K$ using the constructions of \S \ref{sec:algebraic_covering_tac}, an adelic vector bundle on $S_{\bQ}\otimes_{\bQ} K$ is an \emph{integrable adelic vector space} in the sense of Gaudron (\cite{Gaudron20}, Definition 7).
	\item[(2)] In the situation of Example \ref{example:model_semi-norm_family} (1). Any adelic vector bundle on $S$ is an adelic vector bundle on the corresponding adelic curve (\cite{ChenMori}, Definition 4.1.28).
\end{itemize}
\end{remark}

\begin{proposition}
\label{prop:comparaison_usc_adeli_vector_bundle}
Let $\xi$ be a pseudo-norm family on a finite-dimensional $K$-vector space $E$ and assume that $\xi$ is ultrametric on $\Omega_{\um}$. Then $\overline{E}=(E,\xi)$ is a usc adelic vector bundle on $S$ iff $\overline{E}=(E,\xi)$ is an adelic vector bundle on $S$. 
\end{proposition}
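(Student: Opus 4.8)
The plan is to observe that one implication is trivial and concentrate on the other. If $\overline{E}=(E,\xi)$ is an adelic vector bundle, then by definition $\xi$ and $\xi^{\vee}$ are continuous, hence \emph{a fortiori} upper semi-continuous, and $\xi$ is dominated; so $\overline{E}$ is a usc adelic vector bundle, with no use of the ultrametric hypothesis. The substance of the statement is the converse: assuming $\xi,\xi^{\vee}$ usc, $\xi$ dominated, and $\xi$ ultrametric on $\Omega_{\um}$, we must upgrade ``usc'' to ``continuous'' for both $\xi$ and $\xi^{\vee}$; the domination condition is the same in both definitions, so only the regularity needs work.

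First I would treat $\xi^{\vee}$. Since $\xi$ is usc, Proposition \ref{prop:constructions_continuous_semi-norm_families}~(2) shows that the dual pseudo-norm family $\xi^{\vee}$ is lsc. By hypothesis $\xi^{\vee}$ is also usc, and a pseudo-norm family that is simultaneously usc and lsc is continuous by Definition \ref{def:continuous_measurable_semi-norm_family}~(1). Hence $\xi^{\vee}$ is continuous.

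Next I would deduce continuity of $\xi$ itself. As $\xi^{\vee}$ is now known to be continuous, it is in particular usc, so Proposition \ref{prop:constructions_continuous_semi-norm_families}~(2), applied this time to $\xi^{\vee}$, shows that $\xi^{\vee\vee}$ is lsc. The key point is then the identity $\xi=\xi^{\vee\vee}$: for $\omega\in\Omega_{\ar}$ the pseudo-absolute value $\phi(\omega)$ is Archimedean, while for $\omega\in\Omega_{\um}$ it is non-Archimedean and $\|\cdot\|_{\omega}$ is ultrametric by hypothesis, so in both cases Proposition \ref{prop:comparaison_double_dual_semi-norm}~(2) gives $\|\cdot\|_{\omega,\ast\ast}=\|\cdot\|_{\omega}$; thus $\xi^{\vee\vee}=\xi$ as pseudo-norm families. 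Consequently $\xi$ is lsc, and together with the usc hypothesis on $\xi$ this yields that $\xi$ is continuous. Since $\xi$ remains dominated, $\overline{E}=(E,\xi)$ is an adelic vector bundle.

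I do not expect a genuine obstacle here: the argument is a short chaining of Propositions \ref{prop:constructions_continuous_semi-norm_families}~(2) and \ref{prop:comparaison_double_dual_semi-norm}~(2). The only thing to be careful about is that the ultrametricity assumption is invoked exactly once, and exactly where it is needed, namely to secure $\|\cdot\|_{\omega,\ast\ast}=\|\cdot\|_{\omega}$ on $\Omega_{\um}$ (it is automatic on $\Omega_{\ar}$); this is what allows the lower semi-continuity of the biduals to be transported back to $\xi$, closing the loop.
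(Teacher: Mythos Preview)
Your proof is correct and follows essentially the same approach as the paper: use Proposition~\ref{prop:constructions_continuous_semi-norm_families}~(2) twice (once for $\xi$, once for $\xi^{\vee}$) together with the identification $\xi=\xi^{\vee\vee}$ coming from the ultrametric hypothesis and Proposition~\ref{prop:comparaison_double_dual_semi-norm}~(2). The only cosmetic difference is that the paper applies Proposition~\ref{prop:constructions_continuous_semi-norm_families}~(2) to $\xi^{\vee}$ directly from the hypothesis that $\xi^{\vee}$ is usc, whereas you first establish that $\xi^{\vee}$ is continuous and then note it is in particular usc; this extra step is harmless but unnecessary.
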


\begin{proof}
An adelic vector bundle is a usc adelic vector bundle. We show the converse implication. Assume that $\overline{E}$ is usc. It suffices to prove that $\xi$ and $\xi^{\vee}$ are both lsc. As $\xi$ is usc, Proposition \ref{prop:constructions_continuous_semi-norm_families} (2) implies that $\xi^{\vee}$ is lsc. Moreover, since $\xi$ is ultrametric on $\Omega_{\um}$, $\xi=\xi^{\vee\vee}$ and, as $\xi^{\vee}$ is usc, we obtain that $\xi$ is lsc.
\end{proof}

\begin{proposition}
\label{prop:adelic_line_bundle_tac}
Let $\xi=(\|\cdot\|_{\omega})_{\omega\in\Omega}$ be a pseudo-norm family on a $K$-vector space $E$ of dimension $1$. Then $(E,\xi)$ is an adelic vector bundle on $S$ iff $\xi$ is continuous and dominated.
\end{proposition}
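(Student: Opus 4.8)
The plan is to notice that, by Definition~\ref{def:adelic_vector_bundle}(2), the pair $(E,\xi)$ being an adelic vector bundle means exactly that $\xi$ and $\xi^{\vee}$ are continuous and $\xi$ is dominated. Since $\xi$ continuous and dominated is precisely the right-hand side of the asserted equivalence, the only thing to prove is that, when $\dim_K(E)=1$, continuity of $\xi$ alone already forces continuity of $\xi^{\vee}$ (the converse being trivial, and domination being carried unchanged on both sides). So everything reduces to a single pointwise identity on $\Omega$ between the pseudo-norm of a generator of $E$ and the dual pseudo-norm of the corresponding generator of $E^{\vee}$.

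First I would fix a basis $e$ of $E$, so that $E^{\vee}=Ke^{\vee}$, and establish that for every $\omega\in\Omega$ one has
\[
\log\|e^{\vee}\|_{\omega,\ast}=-\log\|e\|_{\omega},
\]
with the conventions $-(+\infty)=-\infty$ and $-(-\infty)=+\infty$; equivalently $\|e^{\vee}\|_{\omega,\ast}=\|e\|_{\omega}^{-1}$ with $0^{-1}=+\infty$ and $(+\infty)^{-1}=0$. To see this, fix $\omega$, use condition $(\ast)$ of Definition~\ref{def:semi-norm_family} to pick an adapted vector $f$ with $\|f\|_{\omega}\in\bR_{>0}$, so that $\cE_{\omega}=A_{\omega}f$ and $N_{\omega}=\m_{\omega}f$, and write $e=cf$ with $c\in K^{\times}$, whence $e^{\vee}=c^{-1}f^{\vee}$. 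Evaluating the defining supremum $\|e^{\vee}\|_{\omega,\ast}=\sup_{x\in\cE_{\omega}\setminus\m_{\omega}\cE_{\omega}}\frac{|e^{\vee}(x)|_{\omega}}{\|x\|_{\omega}}$ on $x=\beta f$ with $\beta\in A_{\omega}^{\times}$ yields the constant value $|c^{-1}|_{\omega}/\|f\|_{\omega}$, and one checks the three cases $c\in A_{\omega}^{\times}$, $c\in\m_{\omega}$ (i.e.\ $\|e\|_{\omega}=0$), and $c\notin A_{\omega}$ (i.e.\ $\|e\|_{\omega}=+\infty$) directly, each time invoking the homogeneity axiom of pseudo-norms in the precise form in which it is stated (valid as soon as $\{|\lambda|_{v},\|x\|_{v}\}\neq\{0,+\infty\}$), which is exactly why the manipulation is legitimate even at the singular points.

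Then I would conclude as follows. The same computation, applied to an arbitrary nonzero $\varphi=\lambda e^{\vee}\in E^{\vee}$ with $\lambda\in K^{\times}$, shows $\log\|\varphi\|_{\omega,\ast}=-\log\|\lambda^{-1}e\|_{\omega}$ for all $\omega\in\Omega$, while $\|\varphi\|_{\omega,\ast}\equiv 0$ when $\varphi=0$. Since $\xi$ is continuous, $\omega\mapsto\log\|\lambda^{-1}e\|_{\omega}$ is a continuous map $\Omega\to[-\infty,+\infty]$, hence so is $\omega\mapsto\log\|\varphi\|_{\omega,\ast}$; this is the statement that $\xi^{\vee}$ is continuous. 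Combined with the hypothesis that $\xi$ is continuous and dominated, this shows $(E,\xi)$ is an adelic vector bundle; the converse implication is immediate. The main (and only) obstacle is the bookkeeping at the "singular" $\omega$'s where $\|e\|_{\omega}\in\{0,+\infty\}$, where the naive identity $\|\lambda e\|_{\omega}=|\lambda|_{\omega}\|e\|_{\omega}$ may present an indeterminate $0\cdot\infty$; once one systematically passes through the adapted-basis description and the homogeneity axiom, nothing else is needed. (One may also remark, as a sanity check, that in rank~$1$ the family $\xi$ is automatically ultrametric on $\Omega_{\um}$, so $\xi=\xi^{\vee\vee}$ by Proposition~\ref{prop:comparaison_double_dual_semi-norm}(2), making the displayed identity manifestly symmetric in $\xi$ and $\xi^{\vee}$.)
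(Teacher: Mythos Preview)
Your proposal is correct and follows essentially the same approach as the paper: both reduce the non-trivial direction to the single identity $\|\varphi\|_{\omega,\ast}=\|x\|_{\omega}^{-1}$ (with the conventions $1/0=+\infty$, $1/+\infty=0$) for $\varphi(x)=1$, from which continuity of $\xi^{\vee}$ follows immediately from that of $\xi$. The only difference is that you supply a careful case-by-case justification of this identity at the singular points, whereas the paper simply asserts it.
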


\begin{proof}
The direct implication being immediate, it suffices to show the converse. Assume that $\xi$ is continuous and dominated. Let $\varphi\in E^{\vee} \setminus\{0\}$. Let $x\in E$ such that $\varphi(x)=1$. Then, for any $\omega\in\Omega$, we have $\|\varphi\|_{\omega,\ast}=1/\|x\|_{\omega}$, with the convention $1/0=+\infty$ and $1/+\infty=0$. Thus $\xi^{\vee}$ is continuous.
\end{proof}

\begin{proposition}
\label{prop:constructions_adelic_vector_bundles}
Let $\overline{E}=(E,\xi)$ be an adelic vector bundle on $S$. 
\begin{itemize}
	\item[(1)] Let $F\subset E$ be any vector subspace. Then $(F,\xi_{F})$ is an adelic vector bundle on $S$.
	\item[(2)] $\overline{E}^{\vee} = (E^{\vee},\xi^{\vee})$ is an lsc adelic vector bundle on $S$. Moreover, if $\xi^{\vee\vee}$ is continuous, then $\overline{E}^{\vee}$ is an adelic vector bundle on $S$. In particular, if $\xi$ is ultrametric on $\Omega_{\um}$, then $\overline{E}^{\vee}$ is an adelic vector bundle on $S$.
	\item[(3)] Let $\pi: E \twoheadrightarrow G$ be a quotient vector space of $E$. Let $\xi_{G}$ denote the quotient pseudo-norm family on $G$ induced by $\xi$. Then $(G,\xi_G)$ is a usc adelic vector bundle on $S$. Moreover, if $\xi$ is ultrametric on $\Omega_{\um}$, then $(G,\xi_G)$ is an adelic vector bundle on $S$.
	\item[(4)] Let $n\geq 1$ be an integer. For any $i=1,...,n$, let $\overline{E_i}=(E_i,\xi_i)$ be an adelic vector bundle on $S$. We assume that the pseudo-norm families $\xi_1,...,\xi_n$ are ultrametric on $\Omega_{\um}$. 
	\begin{itemize}
		\item[(4.a)] The $\epsilon$-tensor product $\overline{E_1}\otimes_{\epsilon}\cdots\otimes_{\epsilon}\overline{E_n}$ is a measurable adelic vector bundle on $S$. 
		\item[(4.b)] The $(\epsilon,\pi)$-tensor product $\overline{E_1}\otimes_{\epsilon,\pi}\cdots\otimes_{\epsilon,\pi}\overline{E_n}$ is a measurable adelic vector bundle on $S$. 
	\end{itemize}
	\item[(5)] $(\det(E),\det(\xi))$ is a measurable adelic vector bundle on $S$. Moreover, if $\xi$ is Hermitian, then $(\det(E),\det(\xi))$ is an adelic vector bundle on $S$.	
\end{itemize}
\end{proposition}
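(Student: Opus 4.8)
The proof is essentially an assembly job: each assertion combines the relevant preservation statement for (strong) dominance from Proposition~\ref{prop:constructions_dominated_semi-norm_families} with the relevant preservation statement for semi-continuity from Proposition~\ref{prop:constructions_continuous_semi-norm_families}, after which the regularity of the dual family is read off using the identifications of Proposition~\ref{prop:correspondence_dual_quotient}~(1) together with the implication ``$\xi$ usc $\Rightarrow$ $\xi^{\vee}$ lsc'' of Proposition~\ref{prop:constructions_continuous_semi-norm_families}~(2). So the plan is to run through the six items in order.

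\textbf{(1)} $\xi_{F}$ is (strongly) dominated by Proposition~\ref{prop:constructions_dominated_semi-norm_families}~(1) and continuous by Proposition~\ref{prop:constructions_continuous_semi-norm_families}~(1) applied to both semicontinuities; and $\xi_{F}^{\vee}$ is lsc because $\xi_{F}^{\vee}=(\xi_{F})^{\vee}$ with $\xi_{F}$ usc, and usc because, by Proposition~\ref{prop:correspondence_dual_quotient}~(1), it is a quotient of the usc family $\xi^{\vee}$ (Proposition~\ref{prop:constructions_continuous_semi-norm_families}~(3)), so $(F,\xi_{F})$ is an adelic vector bundle. \textbf{(2)} $\xi^{\vee}$ is strongly dominated by Proposition~\ref{prop:constructions_dominated_semi-norm_families}~(3), lsc since $\xi$ is usc, and $\xi^{\vee\vee}$ is lsc since $\xi^{\vee}$ is usc; hence $\overline{E}^{\vee}$ is an lsc adelic vector bundle. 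If moreover $\xi^{\vee\vee}$ is continuous, both $\xi^{\vee}$ and $\xi^{\vee\vee}$ are continuous; and if $\xi$ is ultrametric on $\Omega_{\um}$, then $\xi^{\vee\vee}=\xi$ by Proposition~\ref{prop:comparaison_double_dual_semi-norm}~(2), so that hypothesis is automatic. \textbf{(3)} $\xi_{G}$ is (strongly) dominated by Proposition~\ref{prop:constructions_dominated_semi-norm_families}~(2), usc by Proposition~\ref{prop:constructions_continuous_semi-norm_families}~(3), and $\xi_{G}^{\vee}$, identified with the restriction of the usc family $\xi^{\vee}$ by Proposition~\ref{prop:correspondence_dual_quotient}~(1), is usc; so $(G,\xi_{G})$ is a usc adelic vector bundle, upgraded to an adelic vector bundle when $\xi$ (hence $\xi_{G}$) is ultrametric on $\Omega_{\um}$ via Proposition~\ref{prop:comparaison_usc_adeli_vector_bundle}.

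For \textbf{(4)}, since each $\xi_{i}$ is ultrametric on $\Omega_{\um}$, part (2) shows the $\overline{E_{i}}^{\vee}$ are adelic vector bundles, so the $\xi_{i}^{\vee}$ are continuous and $\xi_{i}^{\vee\vee}=\xi_{i}$. Strong dominance of the $\epsilon$- and $(\epsilon,\pi)$-tensor products then follows from Proposition~\ref{prop:constructions_dominated_semi-norm_families}~(4.a)--(4.b); $\nu$-measurability of the $\epsilon$-tensor product (resp. the $(\epsilon,\pi)$-tensor product) follows from Proposition~\ref{prop:constructions_continuous_semi-norm_families}~(4.b) (resp. (4.c)) applied to the continuous $\xi_{i}^{\vee}$; and $\nu$-measurability of the dual is obtained by identifying $(\xi_{1}\otimes_{\epsilon}\cdots\otimes_{\epsilon}\xi_{n})^{\vee}$ with $\xi_{1}^{\vee}\otimes_{\epsilon,\pi}\cdots\otimes_{\epsilon,\pi}\xi_{n}^{\vee}$ (resp. $(\xi_{1}\otimes_{\epsilon,\pi}\cdots\otimes_{\epsilon,\pi}\xi_{n})^{\vee}$ with $\xi_{1}^{\vee}\otimes_{\epsilon}\cdots\otimes_{\epsilon}\xi_{n}^{\vee}$) via (\cite{ChenMori}, Propositions 1.1.57 and 1.2.20) and applying Proposition~\ref{prop:constructions_continuous_semi-norm_families}~(4.c) (resp. (4.b)) to the continuous $\xi_{i}^{\vee\vee}=\xi_{i}$. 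This gives the two measurable adelic vector bundle statements. For \textbf{(5)}, $\det(\xi)$ is strongly dominated by Proposition~\ref{prop:constructions_dominated_semi-norm_families}~(6), usc (hence $\nu$-measurable) by Proposition~\ref{prop:constructions_continuous_semi-norm_families}~(5), and $\det(\xi)^{\vee}$, the dual of the usc family $\det(\xi)$, is lsc and hence $\nu$-measurable, so $(\det(E),\det(\xi))$ is a measurable adelic vector bundle. When $\xi$ is Hermitian, since $\det(E)$ is one-dimensional, Proposition~\ref{prop:adelic_line_bundle_tac} reduces the remaining claim to the continuity of $\det(\xi)$, which I would verify locally from the explicit description of the determinant pseudo-norm: on $\Omega_{\ar}$ the Gram-determinant formula for Hermitian pseudo-norms (with polarization and continuity of $\xi$), and on $\Omega_{\um}$ Hadamard's formula Proposition~\ref{prop:Hadamrd_inequality_local_semi-norm}~(3), the infimum being attained along an orthogonal basis.

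The mathematical content is genuinely located in the two cited propositions, so the present statement is mostly bookkeeping; the only points requiring care are to track, operation by operation, \emph{which} semi-continuity (usc, lsc, or full continuity) survives and to pass from ``usc adelic vector bundle'' to ``adelic vector bundle'' in the ultrametric cases via Proposition~\ref{prop:comparaison_usc_adeli_vector_bundle} — together with the continuity of $\det(\xi)$ in the Hermitian case of (5), which is the one place where a place-by-place computation rather than a purely formal argument is needed, and which I therefore expect to be the main (mild) obstacle.
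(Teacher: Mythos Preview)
Your proof is essentially the same bookkeeping as the paper's for items (1)--(4) and for the measurable part of (5): the same references to Propositions~\ref{prop:constructions_dominated_semi-norm_families} and \ref{prop:constructions_continuous_semi-norm_families}, the same use of Proposition~\ref{prop:correspondence_dual_quotient}~(1) to identify $\xi_F^{\vee}$ and $\xi_G^{\vee}$, and the same tensor/dual identifications for (4). No issues there.

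The one place where you diverge from the paper is the Hermitian case of (5). You propose to verify continuity of $\det(\xi)$ by a place-by-place computation: Gram determinant and polarisation on $\Omega_{\ar}$, Hadamard's formula with the infimum attained at an orthogonal basis on $\Omega_{\um}$. The Archimedean part is fine, but the $\Omega_{\um}$ part has a gap: the hypothesis ``Hermitian'' constrains only $\Omega_{\ar}$, so on $\Omega_{\um}$ the residue norm need not be ultrametric, and even when it is, an orthogonal basis need not exist over a general non-Archimedean complete field. So your claim that the Hadamard infimum is \emph{attained} is not justified, and without it your lsc argument does not go through.

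The paper sidesteps this entirely with a purely formal duality argument: by (\cite{ChenMori}, Proposition 1.2.47) one has $\det(\xi)=\det(\xi)^{\vee\vee}=\det(\xi^{\vee})^{\vee}$ for every $\omega$; since $\xi^{\vee}$ is usc (indeed continuous), Proposition~\ref{prop:constructions_continuous_semi-norm_families}~(5) gives that $\det(\xi^{\vee})$ is usc, and then Proposition~\ref{prop:constructions_continuous_semi-norm_families}~(2) gives that its dual $\det(\xi^{\vee})^{\vee}=\det(\xi)$ is lsc. Combined with the usc already established, this yields continuity with no local computation at all. You should replace your $\Omega_{\um}$ argument with this.
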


\begin{proof}
This is a consequence of Propositions \ref{prop:constructions_dominated_semi-norm_families} and \ref{prop:constructions_continuous_semi-norm_families}. We leave the details to the reader.

\end{proof}

\subsection{Arakelov degree}

\begin{definition}
\label{def:arakelov_degree_section}
Let $\overline{E}:= (E,\xi=(\|\cdot\|_{\omega})_{\omega\in\Omega})$ be a measurable adelic vector bundle on $S$. Let $s\in E\setminus\{0\}$, then the map $\log\|s\|_{\cdot} : \Omega \to [-\infty,+\infty]$ is $\nu$-integrable and we define the \emph{Arakelov degree} of $s$ as
\begin{align*}
\widehat{\deg}_{\xi}(s) := -\int_{\Omega} \log\|s\|_{\omega}\nu(\diff\omega).
\end{align*}
\end{definition}

\begin{definition}
\label{def:Arakelov_degree_proper_adelic_curve}
We assume that the adelic curve $S$ is proper. Let $\overline{E}:= (E,\xi=(\|\cdot\|_{\omega})_{\omega\in\Omega})$ be an usc adelic vector bundle on $S$. Then the quantity
\begin{align*}
\widehat{\deg}(\overline{E}):= \widehat{\deg}_{\det(\xi)}(\eta) = - \int_{\Omega} \log\|\eta\|_{\omega,\det}\nu(\diff\omega),
\end{align*}
where $\eta\in\det(E)\setminus\{0\}$, is independent of the choice of $\eta$. We call it the \emph{Arakelov degree of} $\overline{E}$.
\end{definition}

\subsection{Example in Nevanlinna theory}
\label{sub:example_in_Nevanlinna_theory_avb}

In this subsection, we give an example of adelic vector bundles in the context of Nevanlinna theory. Fix $R>0$ and consider the topological adelic curve $S_R =(K_R,\phi_R:\Omega_R \to M_{K_R},\nu_R)$ constructed in \S \ref{subsub:example_tac_Nevanlinnaç_compact_disc}. For any $\omega\in\Omega_{R}$, we denote by $A_{R,\omega}$ the finiteness ring on $\omega$. Let $E$ be an arbitrary finite-dimensional $\bC$-vector space equipped with a norm $\|\cdot\|$. Let us see that this data induces an adelic vector bundle on $S_R$. Let $E_R := E \otimes_{\bC} K_R$. 

Let $\omega\in \Omega_{R,\infty}$, we denote $\cE_{R,\omega} := E \otimes_{\bC} A_{R,\omega}$. It is a free $A_{R,\omega}$ of rank $\dim_{\bC}(E)$ and $\widehat{E_{R,\omega}} := \cE_{R,\omega} \otimes_{A_{R,\omega}} \bC$ identifies with $E$. By lifting the norm $\|\cdot\|$ on $E$, we obtain a local-pseudo-norm on $E_R$ in $\omega$, which is denoted by $\|\cdot\|_{\omega}$. 

Let $\omega\in \Omega_{R,\um}$. Recall that $A_{R,\omega} = K_R$ as $|\cdot|_{\omega}$ is a usual absolute value. Then the completion $K_{R,\omega}$ of $K_{R}$ w.r.t. $|\cdot|_{\omega}$ is isomorphic to $\bC((T))$. Denote by $K_{R,\omega}^{\circ}\cong\bC[[T]]$ the corresponding valuation ring of $K_{R,\omega}$. Then $K_{R,\omega}^{\circ}$ is the completion of the discrete valuation ring
\begin{align*}
\{f\in K_R : f(\omega) \in \mathbb{P}^{1}_{\bC} \setminus\{\infty\}\}.
\end{align*}
Let $\cE_{R,\omega} := E \otimes_{\bC} K_{R,\omega}^{\circ}$, it is a free sub-$K_{R,\omega}^{\circ}$-module of rank $\dim_{\bC}(E)$ of $E_{R,\omega}:= E \otimes_{K_{R}} K_{R,\omega}$, hence it is a (finitely generated) lattice of $E_R$. We consider the ultrametric lattice norm $\|\cdot\|_{\omega}$ induced by $\cE_{R,\omega}$ on $E_{R,\omega}$ (cf. e.g. Definition 1.1.23 in \cite{ChenMori}). Recall that, since $|\cdot|_{\omega}$ is discrete, $\cE_{R,\omega}$ coincides with the unit ball of $E_{R,\omega}$ w.r.t. $\|\cdot\|_{\omega}$. 

From the two above paragraphs, we have a collection $\xi=(\|\cdot\|_{\omega})_{\omega\in \Omega_{R}}$ of pseudo-norms on $E_R$. 

\begin{proposition}
\label{prop:example_avb_Nevanlinna}
We use the same notation as above. Then $\overline{E_R}:=(E_R,\xi_R)$ is an adelic vector bundle on $S_R$.
\end{proposition}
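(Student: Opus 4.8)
The plan is to verify the two conditions in Definition \ref{def:adelic_vector_bundle}(2): that the pseudo-norm families $\xi_R$ and $\xi_R^\vee$ are continuous, and that $\xi_R$ is dominated. First I would fix a $\bC$-basis $\mathbf{e}=(e_1,\dots,e_r)$ of $E$, which is simultaneously a $K_R$-basis of $E_R$. For every $\omega\in\Omega_{R,\ar}$ one has $\|e_i\|_\omega=\|e_i\|$ and for every $\omega\in\Omega_{R,\um}$ one has $\|e_i\|_\omega=1$, so $\|e_i\|_\omega\in\bR_{>0}$ for all $\omega$; hence $\mathbf{e}$ is globally adapted to $\xi_R$, condition $(\ast)$ of Definition \ref{def:semi-norm_family} holds, and $\xi_R$ is genuinely a pseudo-norm family. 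Throughout I will use that $\Omega_R=\Omega_{R,\ar}\sqcup\Omega_{R,\um}$ is a disjoint union of two clopen pieces, the second discrete and the first the circle of radius $R$, so that any pointwise regularity property is automatic on $\Omega_{R,\um}$ and only has to be checked on $\Omega_{R,\ar}$; moreover $\nu_R(\Omega_{R,\ar})=1<+\infty$.

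For the continuity of $\xi_R$, take $s=f_1e_1+\cdots+f_re_r\in E_R$ with the $f_i\in K_R$, and let $P\subset\Omega_{R,\ar}$ be the finite set of points of the circle that are poles of some $f_i$. Off $P$ all the $f_i$ are holomorphic, $\|s\|_z=\|f_1(z)e_1+\cdots+f_r(z)e_r\|$ is the residue norm of the evaluated vector, and $z\mapsto f_1(z)e_1+\cdots+f_r(z)e_r\in E$ is continuous, so $z\mapsto\log\|s\|_z$ is continuous with values in $[-\infty,+\infty)$. At a point $z_0\in P$ one has $\log\|s\|_{z_0}=+\infty$; writing $m\ge 1$ for the maximal pole order of the $f_i$ at $z_0$ and $f_i=(z-z_0)^{-m}g_i$ with $g_i$ holomorphic near $z_0$, the vector $g(z):=g_1(z)e_1+\cdots+g_r(z)e_r$ satisfies $g(z_0)\neq 0$, because the $e_i$ are $\bC$-linearly independent and at least one of the $g_i(z_0)$ is non-zero; hence $\|s\|_z=|z-z_0|^{-m}\|g(z)\|\to+\infty$ as $z\to z_0$. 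Therefore $z\mapsto\log\|s\|_z$ is continuous $\Omega_{R,\ar}\to[-\infty,+\infty]$, so $\xi_R$ is continuous. The same argument, applied on $\Omega_{R,\ar}$ with the dual norm on $E^\vee$ (the residue norm of the dual pseudo-norm being the dual of the residue norm) and on $\Omega_{R,\um}$ with the dual lattice $\bigoplus_i K_{R,\omega}^{\circ}e_i^\vee$, shows that $\xi_R^\vee$ is continuous as well.

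For domination I would compare $\xi_R$ with the Hermitian model pseudo-norm family $\xi_{\mathbf{e},2}$ of Example \ref{example:model_semi-norm_family}, which is dominated by Example \ref{example:model_pseudo-norm_family_dominated}. On $\Omega_{R,\um}$ both $\xi_R$ and $\xi_{\mathbf{e},2}$ coincide with the lattice (maximum) pseudo-norm of $\bigoplus_i K_{R,\omega}^{\circ}e_i$, so $d_\omega(\xi_R,\xi_{\mathbf{e},2})=0$ there; on $\Omega_{R,\ar}$ the two are the lifts of $\|\cdot\|$ and of the $\ell^2$-norm of $E$ in the basis $\mathbf{e}$, which are equivalent norms on the finite-dimensional space $E$, whence $d_\omega(\xi_R,\xi_{\mathbf{e},2})$ is bounded by a constant $c$ on $\Omega_{R,\ar}$. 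Thus $d_\omega(\xi_R,\xi_{\mathbf{e},2})\le c\,\mathbf{1}_{\Omega_{R,\ar}}(\omega)$ is $\nu_R$-integrable, and since $\mathbf{e}$ is globally adapted to both families, Proposition \ref{prop:CM_4.1.6} gives that $\xi_R$ is dominated. Combined with the two continuity statements, Definition \ref{def:adelic_vector_bundle}(2) then shows that $\overline{E_R}=(E_R,\xi_R)$ is an adelic vector bundle on $S_R$. The only genuinely delicate point is the behaviour of $\|s\|_z$ as $z$ approaches a pole on the Archimedean circle; the rest is a reduction to the residue norms together with elementary bookkeeping using the finiteness of $\nu_R(\Omega_{R,\ar})$.
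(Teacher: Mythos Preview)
Your proof is correct, and for continuity you argue exactly as the paper does (reduction to $\Omega_{R,\ar}$, continuity off the finite pole set via evaluation into $(E,\|\cdot\|)$, and the factorisation $s=(z-z_0)^{-m}g$ with $g(z_0)\neq 0$ at a pole). For domination you take a slightly different route: the paper verifies upper domination of $\xi_R$ and $\xi_R^{\vee}$ directly, bounding $\log\|s\|_\omega$ on $\Omega_{R,\ar}$ by $\max_i\log|s_i|_\omega+\max_i\log\|e_i\|$ and observing that on $\Omega_{R,\um}$ the lattice norm equals $1$ outside the finitely many zeroes and poles of the $s_i$; you instead compare $\xi_R$ with the model family $\xi_{\mathbf{e},2}$, use that the two coincide on $\Omega_{R,\um}$ and differ by a bounded constant on $\Omega_{R,\ar}$, and invoke Proposition~\ref{prop:CM_4.1.6}. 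Your approach is cleaner and exploits the existing machinery on model pseudo-norm families, while the paper's direct computation has the (minor) advantage of making the integrability on $\Omega_{R,\um}$ fully explicit.
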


\begin{proof}
Let us first prove that $\xi$ is a pseudo-norm family on $E_R$ (in the sense of Definition \ref{def:semi-norm_family}). By construction of the pseudo-norms $\|\cdot\|_{\omega}$, any basis of $E$ defines a basis of $E_R$ which is adapted to $\|\cdot\|_{\omega}$ for all $\omega\in\Omega_{R}$. Thus $\xi_R$ is a pseudo-norm family on $E_R$. 

Let us now show that $\xi_R$ is dominated. Fix an arbitrary basis $(e_1,...,e_r)$ of $E$ (which is globally adapted to $\xi_R$). Let $s=s_1e_1 + \cdots + s_re_r \in E_R\setminus\{0\}$, where $s_1,...,s_r\in K_{R}$. Let $\omega\in \Omega_{R,\um}$, then $\|s\|_{\omega}\neq 1$ iff $\omega$ is either a zero or a pole of $s_i$ for some $i\in \{1,...,r\}$. Since elements of $K_R$ only have a finite number of zeroes and poles, this means that $\|s\|_{\omega}=1$ for all but a finite number of $\omega\in\Omega_{R,\um}$. As $\Omega_{R,\um}$ is a discrete topological space equipped with a counting measure, $(\omega\in \Omega_{R,\um}) \mapsto \log\|s\|_{\omega}\in[-\infty,+\infty]$ is a $\nu_R$-integrable function. Now let $\omega\in \Omega_{R,\infty}$. Then we have 
\begin{align*}
\|s\|_{\omega} \leq \displaystyle\max_{1\leq i \leq r} |s_i|_{\omega} \max_{1\leq i \leq r} \|e_i\|.
\end{align*}
As $\nu_{R}(\Omega_{R,\infty}) = 1$, the function $(\omega\in \Omega_{R,\infty}) \mapsto \max_{1\leq i \leq r}\log\|e_i\|\in [-\infty,+\infty]$ is $\nu_R$-integrable. Moreover, for any $i=1,..,r$, the function $\log|s_i|_{\cdot}$ is $\nu_R$-integrable. Hence $(\omega\in \Omega_{R,\infty}) \mapsto \max_{1\leq i \leq r}\log|s_i|_{\omega}\in[-\infty,+\infty]$ is $\nu_R$-integrable. Finally, we obtain that $\xi_R$ is upper dominated. 

Let $\alpha=\alpha_1e_1^{\vee} + \cdots \alpha_{r}e^{\vee}_{r}\in E_R^{\vee} \setminus\{0\}$, where $\alpha_1,...,\alpha_r\in K_R$. Let $\omega\in \Omega_{R,\um}$. Denote $\cE_{R,\omega}^{\vee}:= \Hom_{K_{R,\omega}^{\circ}}(\cE_{R,\omega},K_{R,\omega}^{\circ})$. Then (\cite{ChenMori}, Proposition 1.1.34) yields the equality 
\begin{align*}
\|\cdot\|_{\omega,\ast} = \|\cdot\|_{\cE_{R,\omega}^{\vee}}
\end{align*}
of norms on $E_{R,\omega}$. Therefore, the same argument used in the above paragraph for the lattice $\cE^{\vee}_{R,\omega}$ instead of $\cE_{R,\omega}$ implies that $\|\alpha\|_{\omega,\ast} = 1$ for almost all $\omega\in\Omega_{R,\um}$ and thus $[(\omega\in \Omega_{R,\um}) \mapsto \log\|\alpha\|_{\omega,\ast}]$ is a $\nu_R$-integrable function. Now let $\omega\in \Omega_{R,\infty}$. Then we have 
\begin{align*}
\|\alpha\|_{\omega,\ast} \leq \displaystyle\max_{1\leq i \leq r} |\alpha_i|_{\omega} \max_{1\leq i \leq r} \|e^{\vee}_i\|_{\ast},
\end{align*}
and the same arguments of the above paragraph imply that $\xi_R^{\vee}$ is upper dominated. Hence $\xi_R$ is a dominated pseudo-norm family on $E$. 

Finally, let us prove that $\xi_R$ and $\xi_R^{\vee}$ are continuous. We start by showing the continuity of $\xi_R$. Let $s\in E_R$. By discreteness of $\Omega_{R,\um}$, it suffices to prove that $\|s\|_{\cdot}$ is continuous on $\Omega_{R,\infty}$. Fix an arbitrary basis $(e_1,...,e_r)$ of $E$ (which is globally adapted to $\xi_R$). Write $s=s_1e_1 + \cdots + s_re_r \in E_R\setminus\{0\}$, where $s_1,...,s_r\in K_{R}$. By definition, for any $\omega\in\Omega_{R,\infty}$, we have 
\begin{align*}
\|s\|_{\omega} = \left\{\begin{matrix}
&\|s_1(\omega)e_1+\cdots s_{r}(\omega)e_r\| \quad &\text{if } s_1(\omega),...,s_{r}(\omega)\in \mathbb{P}^{1}(\bC)\setminus\{\infty\},\\
&+\infty \quad &\text{otherwise}.
\end{matrix}\right.
\end{align*}
Note that the set $U := \{\omega\in \Omega_{R,\infty} : s_1(\omega),...,s_{r}(\omega)\in \mathbb{P}^{1}(\bC)\setminus\{\infty\}\}$ is the complement of a finite set, hence it is open. The continuity of $\|s\|_{\cdot}$ on $U$ is equivalent to the continuity of the map
\begin{align*}
f : (\omega\in U) \mapsto s_1(\omega)e_1 + \cdots s_{r}(\omega) e_r \in E,
\end{align*}
where $E$ is equipped with the topology induced by the norm $\|\cdot\|$. By continuity of the maps $|s_1|_{\cdot},...,|s_r|_{\cdot}$ on $U$, we see that $f$ is continuous w.r.t. the topology on $E$ induced by the infinite norm w.r.t. the basis $(e_1,...,e_r)$, which is the same as the desired topology by equivalence of norms. Therefore, $\|s\|_{\cdot}$ is continuous on $U$. It remains to prove that $\|s\|_{\cdot}$ is continuous at any point of the finite set $\Omega_{R,\infty}\setminus U$. Let $\omega_0 \in \Omega_{R,\infty}\setminus U$. Since $\Omega_{R,\infty}\setminus U$ is discrete, it suffices to prove the continuity of $\|s\|_{\cdot}$ on a neighbourhood $V$ of $\omega_0$ such that $V\cap (\Omega_{R,\infty}\setminus U) = \{\omega_0\}$. To show that $\|s\|_{\cdot}$ is continuous at $\omega_0$, it suffices to show that 
\begin{align*}
\displaystyle\lim_{V \ni\omega\to \omega_0}\|s\|_{\omega} = \|s\|_{\omega_0} = +\infty. 
\end{align*}
By symmetry, we may assume that $s_1$ has minimal valuation, i.e. has a pole of the greatest order in $\omega_0$, among $s_1,...,s_r$. Denote this order by $p<0$. We see that, up to shrinking $V$, we can write $s=(T-\omega_0)^{p}s'$, where $s'$ satisfies 
\begin{align*}
\forall \omega\in V, \quad \|s'\|_{\omega}<+\infty. 
\end{align*}
By the above case of continuity and compactness of $\Omega_{R,\infty}$, we obtain that $\|s'\|_{\cdot}$ is bounded on $V$. As 
\begin{align*}
\displaystyle\lim_{V \ni\omega\to \omega_0}(\omega-\omega_{0})^{p} = +\infty,
\end{align*}
we obtain the desired continuity. 

To conclude the proof, we prove that $\xi_R^{\vee}$ is continuous. As in the previous case, it suffices to prove that, for any $\alpha\in E_R^{\vee}$, the function $\|\alpha\|_{\cdot,\omega}$ is continuous on $\Omega_{R,\infty}$. Note that, by construction, for $\omega\in\Omega_{R,\infty}$, the pseudo-norm $\|\cdot\|_{\omega,\ast}$ is the pseudo-norm constructed on $E_R^{\vee}$ by lifting the norm $\|\cdot\|_{\ast}$ on $E^{\vee}$. Therefore, the above case can be applied and implies that $\xi_R^{\vee}$ is continuous. 
\end{proof}

\begin{definition}
\label{def:example_avb_Nevanlinna}
Let $E$ be a finite-dimensional complex vector space equipped with a norm $\|\cdot\|$. Let $R>0$. Let $E_R:= E_{\bC} \otimes_{\bC} K_R$ and $\xi_R = (\|\cdot\|_{\omega})_{\omega\in\Omega_R}$ be the pseudo-norm family on $E_R$ constructed as above. Then the adelic vector bundle $\overline{E_R}:=(E_R,\xi_R)$ is called the \emph{induced adelic vector bundle on} $S_R$ by the complex normed vector space $(E,\|\cdot\|)$. Note that any basis of $E$ is globally adapted to $\xi_R$.
\end{definition}

\begin{remark}
\label{rem:example_avb_Nevanlinna_finite_extension}
Let $E$ be a finite-dimensional complex vector space equipped with a norm $\|\cdot\|$. Let $R>0$. Consider a finite extension $K'/K_R$. Then one can use the same arguments as above to prove that $(E,\|\cdot\|)$ induces an adelic vector bundle on the topological adelic curve $S_R \otimes_{K_R} K'$.
\end{remark}

\subsection{Adelic vector bundles on families of topological adelic curves}
\label{sub:adelic_vector_bundles_families_tac}

\begin{proposition-definition}
\label{prop-def:adelic_vector_bundles_families_tac}
Let $\mathbf{S}=(I,\cU,(S_{i}=(K_{i},\phi_{i}:\Omega_{i}\to M_{K_{i}},\nu_{i}))_{i\in I},K)$ be a family of topological adelic curves. 

\begin{itemize}
	\item[(1)] By a $K$-\emph{vector space equipped with a pseudo-norm family} $(E,\xi)$ on $\mathbf{S}$, we mean a finite-dimensional $K$-vector space $E$ and an equivalence class $\xi=[(E_{i},\xi_{i})_{i\in I}]$, where the $(E_{i},\xi_{i})$'s are finite-dimensional $K_{i}$-vector spaces equipped with a pseudo-norm family over the $S_{i}$'s such that $E\otimes_{K}\prod_{\cU}K_{i}\cong \prod_{\cU}E_{i}$ and two such families $(E_{i},\xi_{i})_{i\in I},(E'_{i},\xi'_{i})_{i\in I}$ are declared to be equivalent if there exist $\cU$-almost everywhere isomorphisms $E_{i}\cong E'_{i}$ yielding compatible isomorphisms $\xi_{i}\cong\xi'_{i}$. In this case, $\xi$ is called a \emph{pseudo-norm family} on $\mathbf{S}$. Moreover, $\xi$ is said to be \emph{ultrametric at non-Archimedean places} if $\xi_{i}$ is ultrametric on $\Omega_{i,\um}$ $\cU$-almost everywhere.
	\item[(2)] Let $E$ be a finite-dimensional $K$-vector space. A pseudo-norm family $\xi=[(E_{i},\xi_{i})_{i\in I}]$ on $E$ is said to be \emph{dominated}, resp. \emph{usc/lsc/continuous/measurable} if $\xi_{i}$ is dominated, resp. usc/lsc/measurable $\cU$-almost everywhere. This is independent of the choice of the representative of $\xi$.
	\item[(3)] A $K$-vector space equipped with a pseudo-norm family $(E,[(\overline{E_{i}})_{i\in I}])$ is said to be a \emph{usc/lsc/measurable adelic vector bundle} on $\mathbf{S}$ if the $\overline{E_{i}}$'s are usc/lsc/measurable adelic vector bundles on the $S_{i}$'s $\cU$-almost everywhere.
	\item[(4)] Let $\overline{E}=(E,\xi)$ be $K$-vector space equipped with a pseudo-norm family on $\mathbf{S}$. 
	\begin{itemize}
	\item[(i)] Let $F\subset E$ be a vector subspace. Then $\xi$ induces by restriction, a pseudo-norm family $\xi_{F}$ on $F$. Moreover, if $\overline{E}$ is a usc/lsc/measurable adelic vector bundle on $\mathbf{S}$, then so is $\overline{F}:=(F,\xi_{F})$.
		\item[(ii)] $\xi$ induces a pseudo-norm family $\xi^{\vee}$ on $E^{\vee}$. Moreover, if $\overline{E}$ is a usc adelic vector bundle on $\mathbf{S}$, then $\overline{E}^{\vee}:=(E^{\vee},\xi^{\vee})$ is an lsc adelic vector bundle on $\mathbf{S}$. If we further assume that $\xi$ and $\xi^{\vee\vee}$ are continuous, then $\overline{E}^{\vee}$ is an adelic vector bundle on $\mathbf{S}$.
		\item[(iii)] Let $G$ be a quotient vector space of $E$. Then $\xi$ induces a pseudo-norm family $\xi_{G}$ on $G$. Moreover, if $\overline{E}$ is a usc adelic vector bundle $\mathbf{S}$, then so is $\overline{G}:=(G,\xi_{G})$. If we further assume that $\overline{E}$ is an adelic vector bundle on $\mathbf{S}$ and that $\xi=\xi^{\vee}$, then $\overline{G}$ is also an adelic vector bundle on $\mathbf{S}$. 
		\item[(iv)] Then $\xi$ induces a pseudo-norm family $\det(\xi)$ on $\det(E)$. Moreover, if $\overline{E}$ is a usc adelic vector bundle on $\mathbf{S}$, then $\det(\overline{E}):=(\det(E),\det(\xi))$ is a measurable adelic vector bundle on $\mathbf{S}$.
	\end{itemize}
\end{itemize}
\end{proposition-definition}

\begin{proof}
The definitions of pseudo-norm families and the corresponding algebraic constructions follow from \L{}o\'s theorem. The assertions concerning adelic vector bundles follow from Proposition \ref{prop:constructions_adelic_vector_bundles} combined with the fact that all the involved algebraic constructions are compatible with the extension of scalars.
\end{proof}

\begin{example}
\label{example:adelic_vector_bundles_families_tac}
\begin{itemize}
	\item[(1)] Let $S=(K,\phi:\Omega\to M_{K},\nu)$ be an adelic curve and consider the family of topological adelic curves $\mathbf{S}=(I,\cU,(S)_{i\in I},\prod_{\cU}K)$, where $I$ is an infinite set and $\cU$ is a free ultrafilter on $I$. For any $i\in I$, let $\overline{E_{i}}=(E_{i},\xi_{i})$ be a (usc/lsc/measurable) adelic vector bundle on $S$. Then $(\prod_{\cU}E_{i},[(\overline{E_{i}})_{i\in I}])$ is a (usc/lsc/measurable) adelic vector bundle on $\mathbf{S}$.
	\item[(2)] Consider the family of topological adelic curves $\mathbf{S}=(\bR_{>0},\cU,(S_{R})_{R>0},\cM(\bC))$ from Example \ref{example:families_of_tac} (2). Let $\overline{E}$ be a finite-dimensional complex normed vector space. Let $R>0$. In \S \ref{sub:example_in_Nevanlinna_theory_avb}, we have constructed the induced adelic vector bundle $\overline{E_R}=(E_R,\xi_R)$ on $S_R$ by $\overline{E}$ (cf. Definition \ref{def:example_avb_Nevanlinna}). By construction, $\overline{E_{\cM(\bC)}}=(E\otimes_{\bC}\cM(\bC),[(\overline{E_{R}})_{R>0}])$ is an adelic vector bundle on $\mathbf{S}$. We call it the adelic vector bundle \emph{induced} by $\overline{E}$ on $\mathbf{S}$.
\end{itemize}
\end{example}

\begin{definition}
\label{def:degree_adelic_vector_bundles_families_tac}
Let $\mathbf{S}=(I,\cU,(S_{i}=(K_{i},\phi_{i}:\Omega_{i}\to M_{K_{i}},\nu_{i}))_{i\in I},K)$ be a family of topological adelic curves and $\mathbf{\overline{E}}=(E,[(\overline{E_i})_{i\in I}]$ be a measurable adelic vector bundle on $\mathbf{S}$. Let $s\in E\setminus \{0\}$, writing $s=\prod_{\cU}s_{i}$, where $\cU$-almost everywhere $s_{i}\in E_{i}\setminus\{0\}$, we define the \emph{Arakelov degree} of $s$ w.r.t. $\overline{E}$ by
\begin{align*}
\widehat{\deg}_{\overline{E}}(s) := \left[\left(\widehat{\deg}_{\overline{E_{i}}}(s_{i})\right)_{i\in I}\right]\in\displaystyle\prod_{\cU}\bR.
\end{align*}

Let $\sim$ be an equivalence relation on $\prod_{\cU}\bR$ which is compatible with the additive group structure. Assume that $\mathbf{S}$ is asymptotically proper w.r.t. $\sim$ (cf. Definition \ref{def:family_of_tac_proper}). Assume that $\overline{E}$ is a usc adelic vector bundle on $\mathbf{S}$. Let $\eta,\eta'\in \det(E)\setminus\{0\}$. Then 
\begin{align*}
\widehat{\deg}_{\det(\overline{E})}(\eta) - \widehat{\deg}_{\det(\overline{E})}(\eta') \sim 0.
\end{align*}
Therefore, the class of $\widehat{\deg}_{\det(\overline{E})}(\eta)$ in $\prod_{\cU}\bR/\sim$ yields a well-defined element denoted by $\widehat{\deg}(\overline{E})$ called the \emph{Arakelov degree} of $\overline{E}$. 
\end{definition}

\begin{example}
\label{example:degree_adelic_vector_bundles_families_tac_Nevanlinna}
\begin{itemize}
	\item[(1)] Let $S=(K,\phi:\Omega\to M_{K},\nu)$ be a proper adelic curve and consider the family of topological adelic curves $\mathbf{S}=(I,\cU,(S)_{i\in I},\prod_{\cU}K)$, where $I$ is an infinite set and $\cU$ is a free ultrafilter on $I$. For any $i\in I$, let $\overline{E_{i}}=(E_{i},\xi_{i})$ be a adelic vector bundle on $S$. Consider the adelic vector bundle $\overline{E}:=(E_{\mathbf{S}}=\prod_{\cU}E_{i},[(\overline{E_{i}})_{i\in I}])$ on $\mathbf{S}$. Then 
	\begin{align*}
	\widehat{\deg}(\overline{E}) = \left[\left(\widehat{\deg}(\overline{E_{i}})\right)_{i\in I}\right] \in \displaystyle\prod_{\cU}\bR.
	\end{align*}
	\item[(2)] Consider the family of topological adelic curves $\mathbf{S}=(\bR_{>0},\cU,(S_{R})_{R>0},\cM(\bC))$ from Example \ref{example:families_of_tac} (2). Let $\overline{E}$ be a one-dimensional complex normed vector space. Consider the adelic vector bundle $\overline{E_{\cM_{\bC}}}:=(E\otimes_{\bC}\cM(\bC),[(\overline{E_{R}})_{R>0}])$ induced by $\overline{E}$ on $\mathbf{S}$ (cf. Example \ref{example:adelic_vector_bundles_families_tac} (2)). Let $e\in E\setminus\{0\}$, then
	\begin{align*}
	\widehat{\deg}(\overline{E_{\cM_{\bC}}}) = \left[\left(-\log\|e\|\right)_{R>0}\right] \in \displaystyle \prod_{\cU}\bR /\sim_{\fin}.
	\end{align*}
\end{itemize}
\end{example}

\section{Slopes of adelic vector bundles on a proper topological adelic curve}
\label{sec:slopes_proper_case}

In this section, we introduce slope theory for adelic vector bundles on a fixed proper topological adelic curve adelic curve $S=(K,\phi : \Omega \to M_{K},\nu)$ (\S \ref{sub:degree_positive_degree}-\ref{sub:HN_general}). We then give the Nevanlinna theoretic variant (\S \ref{sub:slopes_adelic_vector_bundles_family_tac}).

\subsection{Degree, positive degree}
\label{sub:degree_positive_degree}

\begin{proposition}[\cite{ChenMori}, Propositions 4.3.8, 4.3.10, 4.3.19 and 4.3.13]
\label{prop:CM_4.3.2}
Let $\overline{E}=(E,\xi)$ be an adelic vector bundle on $S$. The following assertions hold.
\begin{itemize}		
	\item[(1)] Assume that $\overline{E}$ is Hermitian. Then we have the equality
	\begin{align*}
	\widehat{\deg}(E,\xi) = - \widehat{\deg}(E^{\vee},\xi^{\vee}).
	\end{align*}
	\item[(2)] In general, we have the inequality 
	\begin{align*}
	0 \leq \widehat{\deg}(E,\xi) + \widehat{\deg}(E^{\vee},\xi^{\vee}) \leq \frac{1}{2}\dim_{K}(E)\log\dim_{K}(E)\nu(\Omega_{\infty}).
	\end{align*}
	\item[(3)] Let $\overline{E'}=(E',\xi')$ be another adelic vector bundle on $S$. Assume that the double-dual pseudo-norm families $\xi^{\vee\vee},\xi'^{\vee\vee}$ are continuous (e.g. if $\xi$ and $\xi'$ are ultrametric on $\Omega_{\um}$). Then we have the equality
	\begin{align*}
	\widehat{\deg}(\overline{E}\otimes_{\epsilon,\pi}\overline{E'}) = \dim_{K}(E')\widehat{\deg}(\overline{E}) + \dim_{K}(E)\widehat{\deg}(\overline{E'}).
	\end{align*}
	\item[(4)] Let 
	\begin{align*}
	0 = E_0 \subset E_1 \subset \cdots \subset E_n = E
	\end{align*}
be a flag of vector subspaces of $E$. For any $i=1,...,n$, denote by $\xi_i$ the restriction of $\xi$ to $E_i$ and by $\eta_{i}$ the quotient pseudo-norm family induced by $\xi_i$ on $E_{i}/E_{i-1}$. Then  we have the inequality
\begin{align*}
\displaystyle\sum_{i}^{n} \widehat{\deg}(E_i/E_{i-1},\eta_i) \leq \widehat{\deg}(E,\xi).
\end{align*}
Moreover, if $\xi$ is Hermitian, then the above inequality is an equality.
\end{itemize}
\end{proposition}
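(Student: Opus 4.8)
The plan is to reduce everything to the corresponding statements for pseudo-norms over a single pseudo-absolute value (or rather their residue norms), which are recorded in \S \ref{subsub:pseudo-norms} and in the cited results of \cite{ChenMori}, and then integrate over $\Omega$. Throughout, the key point is that all the algebraic constructions on pseudo-norm families are performed fibrewise, and that $\overline{E}$ being an adelic vector bundle ensures that every $\log$-pseudo-norm appearing below is $\nu$-integrable, so that interchanging sums/integrals and the defining integrals of $\widehat{\deg}$ is legitimate.

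For (1), if $\xi$ is Hermitian then in each $\omega\in\Omega_{\ar}$ the determinant residue norm of $\xi^{\vee}$ is the inverse of that of $\xi$ (this is the fibrewise statement $\|\cdot\|_{\omega,\det,\ast}=\|\cdot\|_{\omega,\det}^{-1}$ for Hermitian norms; for $\omega\in\Omega_{\um}$ the ultrametric determinant is one-dimensional so self-dual holds as well). Fixing $\eta\in\det(E)\setminus\{0\}$ and $\eta^{\vee}\in\det(E^{\vee})$ dual to it, one gets $\log\|\eta^{\vee}\|_{\omega,\det,\ast}=-\log\|\eta\|_{\omega,\det}$ $\nu$-a.e., and integrating gives $\widehat{\deg}(E^{\vee},\xi^{\vee})=-\widehat{\deg}(E,\xi)$. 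For (2), the fibrewise comparison between a norm and its double dual — combined with Proposition \ref{prop:comparaison_double_dual_semi-norm} (the inequality $\|\cdot\|_{\ast\ast}\le\|\cdot\|$, with equality in the ultrametric/Archimedean-Hermitian case) and the elementary John-type estimate that any norm on a $d$-dimensional real space differs from a Euclidean one by at most a factor $\sqrt d$ — yields, for $\omega\in\Omega_{\ar}$, a bound of the form $0\le -\log\|\eta\|_{\omega,\det}-\log\|\eta^{\vee}\|_{\omega,\det,\ast}\le \tfrac12 d\log d$, while the two sides agree for $\omega\in\Omega_{\um}$ because $\xi$ is ultrametric there by hypothesis (no, more carefully: $\det$ is one-dimensional so $\det(\xi)^{\vee\vee}=\det(\xi)$ automatically, giving equality on $\Omega_{\um}$). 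Integrating over $\Omega$ and using $\nu(\Omega_{\ar})<+\infty$ gives the claimed two-sided inequality; this is exactly the translation of (\cite{ChenMori}, Proposition 4.3.2 (2)).

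For (3), choose $\eta\in\det(E)\setminus\{0\}$ and $\eta'\in\det(E')\setminus\{0\}$ and use the canonical isomorphism $\det(E\otimes E')\cong\det(E)^{\otimes\dim E'}\otimes\det(E')^{\otimes\dim E}$; fibrewise, for the $\epsilon,\pi$-tensor pseudo-norm the determinant norm of $\eta^{\otimes\dim E'}\otimes\eta'^{\otimes\dim E}$ equals $\|\eta\|_{\omega,\det}^{\dim E'}\|\eta'\|_{\omega,\det}^{\dim E}$ when $\xi^{\vee\vee},\xi'^{\vee\vee}$ are continuous (this is the multiplicativity of the determinant under $\epsilon$- and $\pi$-tensor products, cf. the one-dimensional case in \cite{ChenMori}, which is where the double-dual continuity hypothesis is needed to rule out pathologies). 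Taking $-\log$ and integrating gives the stated additivity. For (4), fix a basis adapted to the flag so that a generator $\eta_i$ of $\det(E_i/E_{i-1})$ can be lifted to $\det(E)$ compatibly; the fibrewise submultiplicativity of determinant pseudo-norms along a filtration (Proposition \ref{prop:Hadamrd_inequality_local_semi-norm} applied to each graded piece, exactly as in \cite{ChenMori}, Proposition 4.3.2 (4)) gives $\sum_i(-\log\|\eta_i\|_{\omega,\det})\ge -\log\|\eta\|_{\omega,\det}$ $\nu$-a.e., and integrating yields the inequality; in the Hermitian case one has fibrewise equality (orthogonal decomposition adapted to the flag), hence equality after integration.

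The only genuine subtlety — the "main obstacle" — is purely bookkeeping: one must check that all the fibrewise identities and inequalities, which a priori hold only up to the measure-zero set where some $\|\cdot\|_\omega$ vanishes or is infinite (Lemma \ref{lemma:pseudo-norm_family_singular}), and only where the relevant bases are simultaneously adapted, in fact hold $\nu$-almost everywhere, and that the integrands are $\nu$-dominated so the integrals make sense; this is guaranteed by the domination hypotheses in the definition of an adelic vector bundle together with Proposition \ref{prop:constructions_adelic_vector_bundles}. Once that is in place, each of (1)--(4) is a one-line integration of the corresponding statement from (\cite{ChenMori}, \S 4.3), so I would simply cite \emph{loc. cit.} for the fibrewise inputs and emphasize only the integrability/almost-everywhere reductions.
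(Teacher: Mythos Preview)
Your approach is essentially identical to the paper's: reduce each item to the corresponding fibrewise statement from \cite{ChenMori} (Propositions 4.3.8, 4.3.10, 4.1.13) and integrate, using the domination hypotheses to justify integrability; the paper's proof is in fact even terser than yours, simply citing those references and, for (3), invoking the same canonical isomorphism $\det(E\otimes E')\cong\det(E)^{\otimes\dim E'}\otimes\det(E')^{\otimes\dim E}$ together with the one-dimensional case.

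One slip to fix: in your argument for (4), the displayed pointwise inequality has the wrong direction. Submultiplicativity (Hadamard) gives $\|\eta\|_{\omega,\det}\le\prod_i\|\eta_i\|_{\omega,\det}$, hence $\sum_i(-\log\|\eta_i\|_{\omega,\det})\le -\log\|\eta\|_{\omega,\det}$, which after integration yields $\sum_i\widehat{\deg}(E_i/E_{i-1},\eta_i)\le\widehat{\deg}(E,\xi)$ as required; your $\ge$ would give the opposite conclusion.
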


\begin{definition}
\label{def:positive_degree_proper}
Let $\overline{E}=(E,\xi)$ be an adelic vector bundle on $S$. Let $F\subset E$ be any vector subspace and denote by $\xi_F$ the restriction of $\xi$ to $F$. Then Proposition \ref{prop:constructions_adelic_vector_bundles} (1) implies that $\overline{F}:=(F,\xi_F)$ is an adelic vector bundle on $S$. We define the \emph{positive degree} of $\overline{E}$ as 
\begin{align*}
\widehat{\deg}_{+}(\overline{E}) := \displaystyle\sup_{F\subseteq E} \widehat{\deg}(\overline{F}),
\end{align*}
where $F$ runs over the set of all vector subspaces of $E$.
\end{definition}

\begin{remark}
\label{rem:positive_degree_proper}
The positive degree of an adelic vector bundle plays the role of the number of "small sections" in the classical framework of Arakelov geometry over number fields.
\end{remark}

\subsection{Slopes}

\begin{definition}
\label{def:slopes_proper}
Let $\overline{E}=(E,\xi)$ be a usc adelic vector bundle on $S$. Assume that $E\neq \{0\}$.
\begin{itemize}
	\item[(1)] We define the \emph{slope} of $\overline{E}$ as 
	\begin{align*}
	\widehat{\mu}(\overline{E}) := \frac{\widehat{\deg}(\overline{E})}{\dim_{K}(E)}.
	\end{align*}
	\item[(2)] We define the \emph{maximal slope} of $\overline{E}$ as
	\begin{align*}
	\widehat{\mu}_{\max}(\overline{E}) := \displaystyle\sup_{\{0\}\neq F \subseteq E} \widehat{\mu}(\overline{F}),
	\end{align*}
	where $F$ runs over the set of non-zero vector subspaces of $E$. 
	\item[(3)] Assume that $\overline{E}$ is an adelic vector bundle. Let $E \twoheadrightarrow G$ be a quotient vector space of $E$. Denote by $\xi_G$ the quotient pseudo-norm family on $G$ induced by $\xi$ and let $\overline{G}:=(G,\xi_G)$. Proposition \ref{prop:constructions_adelic_vector_bundles} (2) implies that $\overline{G}$ is a usc adelic vector bundle on $S$. We define the \emph{minimal slope} of $\overline{E}$ as 
	\begin{align*}
	\widehat{\mu}_{\min}(\overline{E}) := \displaystyle\inf_{E \twoheadrightarrow G \neq \{0\} } \widehat{\mu}(\overline{G}),
	\end{align*}
	where $G$ runs over the set of non-zero quotient vector spaces of $E$.
\end{itemize}
\end{definition}

\subsection{Harder-Narasimhan filtration over proper topological adelic curves}
\label{sub:HN_general}

In the recent work \cite{ChenJeannin23}, Chen and Jeannin developed a very general framework for proving the existence of Harder-Narasimhan filtrations which is inspired by game theory. We construct Harder-Narasihman filtrations over proper topological adelic curves using (\emph{loc. cit.}, Theorem 1.1).

Let $\overline{E}=(E,\xi)$ be an adelic line bundle on $S$. We assume that the pseudo-norm family $\xi$ is ultrametric on $\Omega_{\um}$. We consider the set $\cL(E)$ of vector subspaces of $E$, equipped with the ordering defined by the inclusion relation, it is a bounded lattice. We also consider the totally ordered set $[-\infty,+\infty]$ with the usual ordering. For any $(F',F) \in \cP_{<}(\cL(E))$, namely $F' \subsetneq F$ are vector subspaces of $E$, recall that the subquotient $\overline{F/F'}$ is an adelic vector bundle on $S$ (cf. Proposition \ref{prop:constructions_adelic_vector_bundles} (3)). Therefore we can define
\begin{align*}
\mu(F',F) := \widehat{\mu}(\overline{F/F'}) \in \bR.
\end{align*}
Then we obtain a Harder-Narasimhan game on $\cL(E)$ with pay-off function $\mu$ (cf. \cite{ChenJeannin23}, \S 2.1). Note that, for any $(F',F) \in \cP_{<}(\cL(E))$, we have
\begin{align*}
\mu_{A}(F',F) = \widehat{\mu}_{\min}(\overline{F/F'})\in [-\infty,+\infty].
\end{align*}

\begin{definition}
\label{def:semi-stable_adelic_vector_bundle}
We say that $\overline{E}$ is \emph{semi-stable} if for any non-zero vector subspace $F \subset E$, we have the inequality
\begin{align*}
\widehat{\mu}_{\min}(\overline{F}) \leq \widehat{\mu}_{\min}(\overline{E}).
\end{align*}
Note that $\overline{E}$ is semi-stable iff the Harder-Narasimhan game above is semi-stable is the sense of (\emph{loc. cit.}, Definition 3.6).
\end{definition}

\begin{theorem}
\label{th:HN_general_proper}
Let $\overline{E}=(E,\xi)$ be an adelic vector bundle on $S$. We assume that the pseudo-norm family $\xi$ is ultrametric on $\Omega_{\um}$. Then there exists a unique flag 
\begin{align*}
0=E_0 \subsetneq E_1 \subsetneq \cdots \subsetneq E_n = E,
\end{align*}
of $E$, such that
\begin{itemize}
	\item[(1)] for any $i=1,...,n$, $\overline{E_i/E_{i-1}}$ is semistable;
	\item[(2)] we have the inequalities 
	\begin{align*}
	\widehat{\mu}(\overline{E_1/E_0}) > \cdots > \widehat{\mu}(\overline{E_n/E_{n-1}}).
	\end{align*}
\end{itemize}
\end{theorem}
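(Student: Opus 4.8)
The plan is to deduce the statement from the general Harder--Narasimhan game machinery of Theorem \ref{th:Chen-Jeannin_1.1}, applied to the game set up just above: the bounded lattice $(\cL(E),\subseteq)$ of vector subspaces of $E$, the totally ordered complete lattice $([-\infty,+\infty],\leq)$, and the pay-off function $\mu(F',F)=\widehat{\mu}(\overline{F/F'})$ on $\cP_{<}(\cL(E))$. First I would record that this pay-off is well defined and real-valued: by Proposition \ref{prop:constructions_adelic_vector_bundles} (1) and (3), and since $\xi$ being ultrametric on $\Omega_{\um}$ is inherited by restriction to a subspace, every subquotient $\overline{F/F'}$ is again an adelic vector bundle on the proper curve $S$, so $\widehat{\deg}$ and hence $\widehat{\mu}$ make sense, and $\mu_{A}(F',F)=\widehat{\mu}_{\min}(\overline{F/F'})$ as noted above. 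Then I would verify the four hypotheses of Theorem \ref{th:Chen-Jeannin_1.1}. Hypothesis (i) is immediate. Hypotheses (iii) and (iv) hold trivially because $E$ is finite-dimensional, so $\cL(E)$ has finite length and admits no infinite strictly monotone chain of subspaces whatsoever.

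The substantive point is the convexity hypothesis (ii): for subspaces $F,G$ of $E$ with $F\nsubseteq G$ one must show $\mu(F\cap G,F)\leq\mu(G,F+G)$, i.e. $\widehat{\mu}(\overline{F/(F\cap G)})\leq\widehat{\mu}(\overline{(F+G)/G})$. The key observation is that the canonical $K$-linear isomorphism $\Phi\colon F/(F\cap G)\iso(F+G)/G$ is norm-decreasing at every place: for $x\in F$ and $\omega\in\Omega$, the subquotient pseudo-norms satisfy
\begin{align*}
\|\Phi(\overline{x})\|_{\omega}=\inf_{g\in G}\|x-g\|_{\omega}\leq\inf_{y\in F\cap G}\|x-y\|_{\omega}=\|\overline{x}\|_{\omega},
\end{align*}
because $F\cap G\subseteq G$. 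By the Hadamard-type formula of Proposition \ref{prop:Hadamrd_inequality_local_semi-norm} (3) this inequality passes to determinants, giving $\|\det(\Phi)(\eta)\|_{\omega,\det}\leq\|\eta\|_{\omega,\det}$ for any generator $\eta$ of $\det(F/(F\cap G))$ and all $\omega$; integrating over the proper curve yields $\widehat{\deg}(\overline{(F+G)/G})\geq\widehat{\deg}(\overline{F/(F\cap G)})$, and since both spaces have the same dimension the desired slope inequality follows.

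Once (i)--(iv) are established, Theorem \ref{th:Chen-Jeannin_1.1} produces a unique flag $0=E_0\subsetneq E_1\subsetneq\cdots\subsetneq E_n=E$ such that each restricted game on $\cL(E)_{[E_{i-1},E_i]}$ is semi-stable and $\mu_{A}(E_0,E_1)>\cdots>\mu_{A}(E_{n-1},E_n)$. It then remains to translate these two conclusions. Identifying $\cL(E)_{[E_{i-1},E_i]}$ with the lattice of vector subspaces of $E_i/E_{i-1}$ (using that a quotient of a subquotient pseudo-norm family is again the corresponding subquotient family, so that the restricted pay-off is exactly the pay-off of the Harder--Narasimhan game of $\overline{E_i/E_{i-1}}$), semi-stability of the restricted game is precisely semi-stability of $\overline{E_i/E_{i-1}}$, which is (1); and $\mu_{A}(E_{i-1},E_i)=\widehat{\mu}_{\min}(\overline{E_i/E_{i-1}})$, so the chain of strict inequalities becomes $\widehat{\mu}_{\min}(\overline{E_1/E_0})>\cdots>\widehat{\mu}_{\min}(\overline{E_n/E_{n-1}})$. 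Finally, since each $\overline{E_i/E_{i-1}}$ is semi-stable its minimal slope coincides with its slope (the topological-adelic-curve analogue of the corresponding fact over adelic curves, obtained from the degree inequalities of Proposition \ref{prop:CM_4.3.2}), which turns this into statement (2). The main obstacle is thus concentrated in the convexity verification; the rest is a faithful transcription of the Chen--Moriwaki slope formalism, and the one delicate point inside the convexity step is checking that the subquotient pseudo-norms transform correctly under $\Phi$ and under passage to determinants.
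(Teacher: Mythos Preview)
Your proposal is correct and follows the same route as the paper: both apply Theorem \ref{th:Chen-Jeannin_1.1} to the lattice $\cL(E)$ with pay-off $\mu(F',F)=\widehat{\mu}(\overline{F/F'})$, note that (i), (iii), (iv) are immediate by finite-dimensionality, and establish convexity (ii) via the canonical isomorphism $F/(F\cap G)\cong (F+G)/G$ being pointwise norm-decreasing. Your argument for the norm inequality (writing the subquotient norms as infima over cosets and using $F\cap G\subseteq G$) is exactly the paper's, stated slightly more explicitly; the paper then passes directly to $\widehat{\deg}$ without invoking the Hadamard formula by name, but the content is identical.

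The only difference is that you spell out the final translation from the Chen--Jeannin output (inequalities on $\mu_A=\widehat{\mu}_{\min}$) to the theorem's statement (inequalities on $\widehat{\mu}$), whereas the paper's proof simply stops after verifying the hypotheses. Your handling of this step is reasonable though a bit compressed; the paper leaves it implicit.
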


\begin{proof}
Let us see that the hypotheses (i-iv) of (\emph{loc. cit.}, Theorem 1.1) are satisfied. (i), (iii) and (iv) are trivially true. We only need to check that the pay-off function $\mu$ is convex, namely, for any vector subspaces $F,F'\subset E$ such that $F' \nsubseteq F$, we have the inequality
\begin{align*}
\widehat{\mu}(\overline{F/(F\cap F')}) \leq \widehat{\mu}(\overline{(F+F')/F})
\end{align*}
of slopes of adelic vector bundles on $S$. The canonical isomorphism 
\begin{align*}
f : F'/(F\cap F') \to (F+F')/F
\end{align*}
is constructed as follows. An element $\alpha \in F'/(F\cap F')$, represented by some $x'\in F'$, is mapped to the class of $x'$, viewed as an element of $F+F'$, in $(F+F')/F$. This is independent of the choice of $x'$. Write $\xi=(\|\cdot\|_{\omega})_{\omega\in\Omega}$. For any $\omega\in \Omega$, denote respectively by $\|\cdot\|_{\omega,1}$ and $\|\cdot\|_{\omega,2}$ the subquotient pseudo-norm induced by $\|\cdot\|_{\omega}$ on $F'/(F\cap F')$ and $(F+F')/F$. Let $\alpha \in F'/(F\cap F')$. For any representative $x'\in F'$ of $\alpha$, by construction of $f$, we have the inequality 
\begin{align*}
\|f(\alpha)\|_{\omega,2} \leq \|x'\|_{\omega}.
\end{align*}
As $x'$ is arbitrary, we obtain
\begin{align*}
\forall \alpha\in F'/(F\cap F'), \quad \|f(\alpha)\|_{\omega,2} \leq \|\alpha\|_{\omega,1}.
\end{align*} Therefore, we have
\begin{align*}
\widehat{\deg}(\overline{F'/(F\cap F')}) \leq \widehat{\deg}(\overline{(F+F')/F}).
\end{align*}
We can conclude by using the fact that 
\begin{align*}
\dim_{K}(F'/(F\cap F')) &= \dim_{K}(F') -\dim_{K}(F\cap F')) \\
&= \dim_{K}(F+F') - \dim_{K}(F) = \dim_{K}((F+F')/F).
\end{align*}
\end{proof}

\begin{definition}
\label{def:HN_flag_general_proper}
Let $\overline{E}=(E,\xi)$ be an adelic line bundle on $S$. We assume that the pseudo-norm family $\xi$ is ultrametric on $\Omega_{\um}$. Then the flag constructed in Theorem \ref{th:HN_general_proper} is called the \emph{Harder-Narasimhan flag} of $\overline{E}$. 
\end{definition}

\subsection{Slope of adelic vector bundles on an asymptotically proper family of topological adelic curves in Nevanlinna theory}
\label{sub:slopes_adelic_vector_bundles_family_tac}

In the last subsection of the second part of this article, we introduce the notion of slopes and Harder-Narasimhan filtrations for the family of topological adelic curves arising in Nevanlinna theory (Example \ref{example:families_of_tac} (2)). 

Throughout this section, we consider the family of topological adelic curves $\mathbf{S}=(\bR_{>0},\cU,(S_{R})_{R>0},\cM(\bC))$ from Example \ref{example:families_of_tac} (2). It is asymptotically proper w.r.t. the equivalence relation $\sim_{\fin}$ on $\prod_{\cU}\bR$. Recall from Claim \ref{claim:total_order_target_height}, $\prod_{\cU}\bR/\sim_{\fin}$ has the structure of a totally ordered $\bR$-vector space. 

Let $\bR_{\mathbf{S}}$ denote the Dedekind-MacNeille completion of $\prod_{\cU}\bR/\sim_{\fin}$. This is a complete lattice. Moreover, for any $x\in \bR_{\mathbf{S}}\setminus\{0\}$ and $\lambda\in \bR_{>0}$, there exists a unique element $y\in \bR_{\mathbf{S}}$ such that $\lambda\cdot y=x$. We denote it by $x/\lambda$. 

\subsubsection{Slopes}

\begin{definition}
\label{def:slope_adelic_vector_bundles_family}
Let $\overline{E}=(E,\xi)$ be a usc adelic vector bundle on $\mathbf{S}$. Assume that $E\neq\{0\}$.
\begin{itemize}
	\item[(1)] Define the \emph{slope} of $\overline{E}$ as 
	\begin{align*}
	\widehat{\mu}(\overline{E}) := \frac{\widehat{\deg}(\overline{E})}{\dim_{K}(E)}\in \displaystyle\prod_{\cU}\bR/\sim_{\fin}.
	\end{align*}
	\item[(2)] We define the \emph{maximal slope} of $\overline{E}$ as
	\begin{align*}
	\widehat{\mu}_{\max}(\overline{E}) := \displaystyle \sup_{\{0\}\neq F \subseteq E} \widehat{\mu}(\overline{F})\in \bR_{\mathbf{S}},
	\end{align*}
	where $F$ runs over the set of non-zero vector subspaces of $E$ and for any such $F$, $\overline{F}$ denote the adelic vector bundle constructed in Proposition-Definition \ref{prop-def:adelic_vector_bundles_families_tac} (4.i).
	\item[(3)] Assume that $\overline{E}$ is a usc adelic vector bundle. Let $G$ be a quotient vector space of $E$. With the notation of Proposition-Definition \ref{prop-def:adelic_vector_bundles_families_tac} (4.iii), $\overline{G}$ is a usc adelic vector bundle on $\mathbf{S}$. We define the \emph{minimal slope} of $\overline{E}$ as 
	\begin{align*}
	\widehat{\mu}_{\min}(\overline{E}) := \displaystyle\inf_{E \twoheadrightarrow G \neq \{0\} } \widehat{\mu}(\overline{G})\in\bR_{\mathbf{S}},
	\end{align*}
	where $G$ runs over the set of non-zero quotient vector spaces of $E$.
\end{itemize}
\end{definition}

\subsubsection{Harder-Narasimhan filtration}

Let $\overline{E}=(E,\xi)$ be an adelic vector bundle on $\mathbf{S}$. Assume that $\xi$ is ultrametric at non-Archimedean places. Consider, as in the previous subsection, the bounded lattice $\cL(E)$ of vector subspaces of $E$. For any vector subspaces $\{0\}\neq F,F'$ of $E$ such that $F'\subsetneq F$, with the notation of Proposition-Definition \ref{prop-def:adelic_vector_bundles_families_tac} (4.i) and (4.iii), $\overline{F/F'}$ is an adelic vector bundle on $\mathbf{S}$. Thus, we can define
\begin{align*}
\mu(F',F) := \widehat{\mu}(\overline{F/F'}) \in \displaystyle\prod_{\cU}\bR/\sim_{\fin}. 
\end{align*}
We obtain a Harder-Narasimhan game on $\cL(E)$ with pay-off function $\mu$, for which
\begin{align*}
\mu_{A}(F',F) := \widehat{\mu}_{\min}(\overline{F/F'}) \in \bR_{\mathbf{S}},
\end{align*}
where $F,F'$ are as above. 

\begin{definition}
\label{def:semi-stability_adelic_vector_bundle_Nevanlinna}
We say that $\overline{E}$ is \emph{semi-stable} if the Harder-Narasimhan game above is semi-stable, i.e. if for any non-zero vector subspace $F\subset E$, we have
\begin{align*}
\widehat{\mu}_{\min}(\overline{F}) \leq \widehat{\mu}_{\min}(\overline{E}).
\end{align*}
\end{definition}

\begin{theorem}
\label{th:HN_Nevanlinna}
Let $\overline{E}=(E,\xi)$ be an adelic vector bundle on $\mathbf{S}$. Assume that $\xi$ is ultrametric at non-Archimedean places. Then there exists a unique flag 
\begin{align*}
0=E_{0} \subsetneq E_{1} \subsetneq \cdots \subsetneq E_{n} = E,
\end{align*}
of $E$, such that
\begin{itemize}
	\item[(1)] for any $i=1,...,n$, $\overline{E_i/E_{i-1}}$ is semistable;
	\item[(2)] we have the inequalities 
	\begin{align*}
	\widehat{\mu}(\overline{E_1/E_0}) > \cdots > \widehat{\mu}(\overline{E_n/E_{n-1}})
	\end{align*}
	in $\bR_{\mathbf{S}}$.
\end{itemize}
\end{theorem}

\begin{proof}
This is a consequence of (\cite{ChenJeannin23}, Theorem 1.1). As in the proof of Theorem \ref{th:HN_general_proper}, we only need to check that the pay-off function is convex. Looking at the proof of Theorem \ref{th:HN_general_proper}, we can directly reproduce the argument and use the definition of the order on $\bR_{\mathbf{S}}$ to conclude. 
\end{proof}

\part{Arithmetic varieties over topological adelic curves: adelic line bundles and heights}

In this final part, we study the analogue of adelic vector bundles over projective scheme defined over a topological adelic curve. We start by studying the local aspects of the theory (\S \ref{sec:metrics_MA_intersection_local}). Then we globalise these local ingredients (\S \ref{sec:pseudo-metric families}). We finish by constructing global height functions (\S \ref{sec:global_heights}-\ref{sec:arithmetic_intersection_product}).

\section{Pseudo-metrics: local case}
\label{sec:metrics_MA_intersection_local}

In this section, we introduce the analogue of pseudo-absolute values and pseudo-norms on a projective schemes defined over a topological adelic curve. This is done in two steps. We first introduce the notion of \emph{model pseudo-metrics}. Roughly speaking, this is done by considering models over the finiteness ring of a pseudo-valued field and by using the classical theory of Berkovich spaces on the special fibre of the model. The second step is considering equivalence classes of such model pseudo-metrics to obtain the notion of \emph{pseudo-metric} (\S \ref{sub:local_pseudo-metric}). These equivalence classes can be interpreted as metrics on line bundles over Zariski-Riemann spaces (\S \ref{sub:ZR_interpretation_local_pseudo-metrics}). We then extend the usual notions of Fubini-Study, semi-positive and plurisubharmonic metrics to the (model) pseudo-metric framework (\S \ref{sub:FS_local}-\ref{sub:semi-positive_local}).

Throughout this section, we fix a field $K$ equipped with a pseudo-absolute value $v=(\va,A,\m,\kappa)\in M_{K}$. Recall that we denote by $\widehat{\kappa}$ the completed residue field of $v$.

\subsection{Pseudo-metrics}
\label{sub:local_pseudo-metric}

\subsubsection{Model pseudo-metrics}
\label{subsub:model_pseudo-metric}

We first recall some notions from (\cite{Sedillot_pav}, \S 8.1). Fix a field $K$ equipped with a pseudo-absolute value $v=(\va,A,\m,\kappa)\in M_{K}$. Recall that we denote by $\widehat{\kappa}$ the completed residue field of $v$. Fix a projective $K$-scheme $X$ and a projective model $\cX$ of $X/A$. The \emph{completed special fibre} of $\cX$ is defined by $\widehat{\cX_s}:=\cX \otimes_{A} \widehat{\kappa}$. Then the \emph{(local) model analytic space} associated with $\cX$ is the Berkovich analytification $\widehat{\cX_s}^{\an}$. 

\begin{definition}
\label{def:local_pseudo-metric}
Let $L$ be a line bundle on $X$. A \emph{(local) model pseudo-metric} on $L$ (over $v$) is the data $((\cX,\cL),\varphi)$, where $(\cX,\cL)$ is a projective model of $(X,L)$ over $A$ and $\varphi$ is a metric on the pullback of $\cL$ to $\widehat{\cX_s}$. In this case, we say that $\cX$ is the \emph{underlying model} of the model pseudo-metric $((\cX,\cL),\varphi)$. Such a model pseudo-metric $(\cL,((\cX,\cL),\varphi))$ is respectively called \emph{lsc}, \emph{usc}, \emph{continuous} if the metric $\varphi$ is lsc, usc, continuous. When there is no possible confusion on the model $\cX$, we allow ourselves to denote the model pseudo-metric by $(\cL,\varphi)$.
\end{definition}

\begin{remark}
\label{rem:pseudo-metric_absolute_value}
Assume that $v$ is a usual absolute value on $K$. Then any model pseudo-metric on a line bundle $L$ on $X$ is a metric on $L$ over the Berkovich space $(X\otimes_{K} K_{v})^{\an}$. 
\end{remark}

\begin{proposition}
\label{prop:existence_local_model-pseudo-metrics}
Let $L$ be a line bundle on $X$. Then there exists a model pseudo-metric on $L$.
\end{proposition}

\begin{proof}
Write $L$ as a difference $L_{1}-L_{2}$, where $L_{1},L_{2}$ are very ample line bundles on $X$. For $i=1,2$, choose a closed embedding $\iota_{i}:X \to \mathbb{P}^{n_{i}}_{K}$ defined by $L_{i}$ and denote by $\cX_{i}$ the schematic closure of $X$ in $\mathbb{P}^{n_{i}}_{A}$, this is a projective model of $X$ over $A$. Moreover, the pullback $\cL_{i}$ of $\cO_{\mathbb{P}^{n_{i}}_{A}}$ to $\cX_{i}$ gives a model of $L_{i}$. Choose a projective model $\cX$ of $X$ over $A$ dominating $\cX_{1}$ and $\cX_{2}$. Taking the difference of the pullback of the $\cL_{i}$'s to $X$, we get a model of $L$ on $\cX$. Choosing arbitrary metrics on the pullback of the $\cL_{i}$'s to the completed special fibre of the $\cX_{i}$'s, we get a model pseudo-metric on $L$.
\end{proof}

\begin{proposition-definition}[\cite{Sedillot_pav}, \S 8.1]
\label{propdef:supnorm_pseudo-metric}
Assume that $X$ is geometrically reduced if $A=K$. Let $L$ be a line bundle on $X$ and $((\cX,\cL),\varphi)$ be a model pseudo-metric on $L$, where $\cX$ is a flat coherent model of $X/A$. Then $\varphi$ induces a pseudo-norm $\|\cdot\|_{(\cL,\varphi)}$ on $H^{0}(X,L)$ with finiteness module $H^{0}(\cX,\cL)$, kernel $\m H^{0}(\cX,\cL)$ and residue norm $\|\cdot\|_{\varphi}$, namely the supnorm over $\widehat{\cX_{s}}^{\an}$ induced by the metric $\varphi$. When no confusion may arise, we simply denote $\|\cdot\|_{(\cL,\varphi)}$ by $\|\cdot\|_{\varphi}$.
\end{proposition-definition}


\begin{notation}
Let $L$ be a line bundle on $X$ and let $(\cX,\cL)$ be a model of $(X,L)$. Then we denote by $L_v$ the pullback of $\cL$ to the completed special fibre $X_v := \cX\otimes_{A} \widehat{\kappa}$.
\end{notation}

\begin{proposition-definition}
\label{propdef:local_model_pseudo-metric}
\begin{itemize}
	\item[(1)] Let $L$ be a line bundle on $X$. Let $(\cL,\varphi)$ be a model pseudo-metric on $L$. Then $(-\cL,-\varphi)$ is a model pseudo-metric on $-L$. Moreover, if $(\cL,\varphi)$ is respectively continuous/usc/lsc, then $(-\cL,-\varphi)$ is continuous/lsc/usc.
	\item[(2)] Let $L,L'$ be two line bundles on $X$. Let $(\cL,\varphi),(\cL',\varphi')$ be model pseudo-metrics on $L,L'$ respectively with the same underlying model. Then $(\cL+ \cL',\varphi + \varphi')$. Moreover, if $(\cL,\varphi)$ and $(\cL',\varphi')$ are both respectively continuous/usc/lsc, then $(\cL+ \cL',\varphi + \varphi')$ is continuous/usc/lsc.
	\item[(3)] Let $f:Y \to X$ be a projective morphism of $K$-schemes and $\cX$ be a projective model of $X$ over $A$. Let $\cY$ be a projective model of $Y$ over $A$ and let $g:\cY \to \cX$ be a projective morphism such that $g$ extends $f$.  Then we have a commutative diagram with Cartesian squares
	\begin{center}
\begin{tikzcd}
Y \arrow[d] \arrow[r, "f"]               & X \arrow[d]       \\
\cY \arrow[r, "g"]                       & \cX               \\
\cY_{s} \arrow[u] \arrow[r, "\tilde{g}"] & \cX_{s} \arrow[u]
\end{tikzcd}.
	\end{center}
Let $L$ be a line bundle on $X$. Let $(\cL,\varphi)$ be a model pseudo-metric on $L$ with underlying model $\cX$. Then $(g^{\ast}\cL,\tilde{g}^{\ast}\varphi)$ is a model pseudo-metric on $L$ with underlying model $\cY$. By abuse of notation, we denote this model pseudo-metric by $g^{\ast}(\cL,\varphi)$ and $\tilde{g}^{\ast}\varphi$ by $g^{\ast}\varphi$. $g^{\ast}(\cL,\varphi)$ is called the \emph{pullback} of $(\cL,\varphi)$ w.r.t. the morphism $g : \cY \to \cX$. Moreover, $g^{\ast}(\cL,\varphi)$ is continuous/usc/lsc if $\varphi$ is continuous/usc/lsc. 
	\item[(4)] Let $K'/K$ be a field extension and let $v'=(\va',A',\m',\kappa')$ be a pseudo-absolute value on $L$ such that $v'$ extends $v$. Let $\cX$ be a projective model of $X$ over $A$. Consider the fibre products $f : X' := X \otimes_K K' \to X$ and $g:\cX':=\cX \otimes_{A} A'$. 
	\begin{itemize}
		\item[(i)] Then $\cX'$ is a projective model of $X'$ over $A'$. Moreover, $\cX'$ is flat/coherent if $\cX$ is flat/coherent. Moreover, we have a natural isomorphism $\cX'_s \cong \cX_{s} \otimes_{\kappa} \kappa'$.
		\item[(ii)] Let $(\cL,\varphi)$ be a model pseudo-metric on $L$ with underlying model $\cX$. Let $L',\cL'$ denote the pullbacks of $L,\cL$ to $X',\cX'$ respectively. Then $(\cX',\cL')$ is a model of $(X',L')$ and denote by $\varphi'$ the metric on $L'_{v'}$ induced by $\varphi$ on $L_v$. Then $f^{\ast}(\cL,\varphi) := (\cL',\varphi')$ defines a model pseudo-metric on $L'$ with underlying model $\cX'$ which is called the \emph{extension} of $(\cL,\varphi)$ w.r.t. the field extension $K'/K$.
\end{itemize}		
\end{itemize}
\end{proposition-definition}

\begin{proof}
\textbf{(1):} Since $(\cX,\cL)$ is a model of $(X,L)$, we deduce that $(\cX,\cL^{\vee})$ is a model of $(X,L^{\vee})$. Moreover, $
\varphi^{\vee}$ defines a metric on $L^{\vee}_{v}$ which is respectively continuous/lsc/usc if $\varphi$ is continuous/usc/lsc.

\textbf{(2):} It is clear that $(\cX,\cL +\cL')$ is a model of $(X,L+L')$. The assertion about the regularity of $(\cL+ \cL',\varphi + \varphi')$ follows from the definition of the sum of two metrics on a Berkovich space.

\textbf{(3):} We first justify that the squares are Cartesian. For the first one, we have isomorphisms
\begin{align*}
\cY \times_{\cX} X \cong \cY \times_{\cX} (\cX \otimes_{A} K) \cong \cY \otimes_{A} K \cong Y.
\end{align*}
For the second one, we have isomorphisms
\begin{align*}
\cY_s \cong \cY \otimes_{A} \kappa \cong \cY \times_{\cX} (\cX \otimes_{A} \kappa) \cong \cY\times_{\cX} \cX_s.
\end{align*}
Finally, the assertion about model pseudo-metrics is clear from the properties of usual metrics on Berkovich spaces.

\textbf{(4.i):} We have isomorphisms 
\begin{align*}
\cX' \otimes_{A'} K' \cong (\cX \otimes_{A} A') \otimes_{A'} K' \cong \cX \otimes_{A} K' \cong (\cX \otimes_{A} K) \otimes_{K} K' \cong X',
\end{align*}
hence $\cX'$ is a model of $X'$ over $A$. Moreover, $\cX'$ is a projective $A'$-scheme. Since being flat and of finite presentation is preserved by base change, $\cX'$ is flat/coherent if $\cX$ is flat/coherent. We now justify the assertion about the Cartesian squares. Finally, we have isomorphisms 
\begin{align*}
\cX_s \otimes_{\kappa} \kappa' = (\cX \otimes_{A} \kappa) \otimes_{\kappa} \kappa' \cong \cX \otimes_{A} \kappa' \cong (\cX \otimes_{A} A') \otimes_{A'} \kappa' \cong \cX' \otimes_{A'} \kappa' = \cX'_s.
\end{align*}

\textbf{(4.ii):} This is clear using (4.i) and the properties of usual metrics on Berkovich Spaces.
\end{proof}

\begin{remark}
\label{remark:models_extension}
Let $f : Y \to X$ be a projective morphism of $K$-schemes. Let $L$ be a line bundle on $X$ equipped with a model pseudo-metric $(\cL,\varphi)$ with underlying model denoted by $\cX$. Then the extension of $(\cL,\varphi)$ to $f^{\ast}L$ is subject to the choice of a model $\cY$ of $Y$ over $A$ extending $f$. Let $\cY'$ be an arbitrary model of $Y$ over $A$. Then consider the schematic closure $\cY$ of the graph of $f$ in $\cY' \times_{\Spec(A)} \cX$. Then we see that $\cY$ is a model of $Y$ over $A$ such that $\cY$ dominates $\cY'$ and $g:\cY \to \cX$ extends $f$. 
\end{remark}

\begin{definition}
\label{def:distance_local_pseudo-metric}
Let $L$ be a line bundle on $X$. Let $(\cL,\varphi),(\cL,\varphi')$ be two continuous model pseudo-metrics on $L$ that have the same underlying model $\cX$. Then $(\cO_{\cX},\varphi-\varphi')$ is a model pseudo-metric on $\cO_{X}$ with underlying model $\cX$. Then we define the \emph{distance} between $\varphi$ and $\varphi'$ as
\begin{align*}
d_{v}(\varphi,\varphi'):= \displaystyle\sup_{x\in \widehat{\cX_{s}}^{\an}} |\log|\cdot|_{\varphi}(x)-\log|\cdot|_{\varphi'}(x)|,
\end{align*}
where, for all $x\in  \widehat{\cX_{s}}^{\an}$, $\log |\cdot|_{\varphi}(x)-\log|\cdot|_{\varphi'}(x):=\log |\ell|_{\varphi}(x)-\log|\ell|_{\varphi'}(x)$ for some $\ell\in \cL_{s}(x)=\cL_{s}\otimes_{\cO_{\cX_{s}}}\widehat{\kappa}(x)$ (this value does not depend on the choice of $\ell$). 
\end{definition}

We end this paragraph by defining an equivalence relation on model-pseudo-metrics on a fixed line bundle. 

\begin{proposition-definition}
\label{propdef:equivalence_relation_model_pseudo-metrics}
Let $L$ be a line bundle on $X$. We define a relation $\sim$ on model pseudo-metrics on $L$ as follows. For any two model pseudo-metrics $((\cX,\cL),\varphi),((\cX',\cL'),\varphi')$, we write $((\cX,\cL),\varphi)\sim ((\cX',\cL'),\varphi')$ iff there exists a projective model $\cX''$ of $X$ over $A$ and $A$-morphisms $p:\cX''\to \cX$, $q:\cX''\to \cX'$ such that we have an isomorphism $p^{\ast}(\cL,\varphi)\cong q^{\ast}(\cL',\varphi')$.
\end{proposition-definition}

\begin{proof}
It is clear that $\sim$ is reflexive and symmetric. The transitivity follows from the fact that for any two projective models $\cX,\cX'$ of $X$ over $A$, there exist a projective model $\cX''$ of $X$ over $A$ and $A$-morphisms $p:\cX''\to \cX$, $q:\cX''\to \cX'$.
\end{proof}

\subsubsection{Pseudo-metrics}
\label{subsub:pseudo-metric}

We now make the general definition of a pseudo-metric on a projective $K$-scheme. These objects are built from the model pseudo-metrics introduced in the previous paragraph. We fix a projective $K$-scheme $X$.

\begin{definition}
\label{def:pseudo-metric}
Let $L$ be a line bundle on $X$. A \emph{(local) pseudo-metric} (in $v$) on $L$ is an equivalence class of model pseudo-metrics on $L$ w.r.t. the equivalence relation $\sim$ introduced in Proposition \ref{propdef:equivalence_relation_model_pseudo-metrics}. A line bundle on $X$ equipped with a pseudo-metric is called a \emph{pseudo-metrised} line bundle on $X$. A pseudo-metric is called \emph{continuous/lsc/usc} if it can be represented by a continuous/lsc/usc. Note that in this case, any representative is continuous/lsc/usc.
\end{definition}

\begin{remark}
\label{rem:existence_local_pseudo-metrics}
\begin{itemize}
	\item[(1)] Proposition \ref{prop:existence_local_model-pseudo-metrics} ensures that there exists a pseudo-metric on any line bundle on $X$.
	\item[(2)] The terminology "pseudo-metric" will be justified in Proposition \ref{prop:local_pseudo-metrics_justification_terminology}.
\end{itemize}
\end{remark}

\begin{notation}
\label{notation:pseudo-metric}
Let $L$ be a line bundle on $X$ and let $((\cX,\cL),\varphi)$ be a model pseudo-metric on $L$. 
\begin{itemize}
	\item[(1)] By "let $[((\cX,\cL),\varphi)]$ be a pseudo-metric on $L$", we mean that the pseudo-metric is the equivalence class of the model pseudo-metric $((\cX,\cL),\varphi)$ and we say that $[((\cX,\cL),\varphi)]$ is \emph{represented} on the projective model $\cX$.
	\item[(2)] By "let $[(\cL,\varphi)]$ be a pseudo-metric on $L$ represented on a projective model $\cX$", we mean that $[(\cL,\varphi)]$ is the pseudo-metric $[((\cX,\cL),\varphi)]$. We will also omit the mention of the projective model when no confusion may arise.
\end{itemize}
\end{notation}

\begin{remark}
\label{rem:pseudo-metric_Zariski-Riemann}
Although we will see that pseudo-metrics are more convenient than model pseudo-metrics from the technical viewpoint, they appear naturally in light of Remark \ref{rem:pseudo-norm_metrised_vector_bundle}. Indeed, consider the space $X_{v}^{\an}:=\lim_{\cX} \widehat{\cX_{s}}^{\an}$, where $\cX$ runs over all projective models of $X$ over $A$ as in (\cite{Sedillot_pav}, \S 8.3). We can equip $X_{v}^{\an}$ with a structure sheaf by taking the direct limit of the inverse image of the structure sheaves on the $\widehat{\cX_{s}}^{\an}$'s. Then a pseudo-metrised line bundle on $X$ is exactly a "metrised line bundle on $X_{v}^{\an}$. 
\end{remark}

Let us now illustrate why pseudo-metrics are technically more convenient than model pseudo-metrics.

\begin{proposition-definition}
\label{propdef:local_pseudo-metric}
Let $L$ be a line bundle on $X$ equipped with a pseudo-metric represented by some model pseudo-metric $(\cL,\varphi)$ whose underlying model is denoted by $\cX$.
\begin{itemize}
	\item[(1)] The equivalence class of $(-\cL,-\varphi)$ is a pseudo-metric on $-L$ called the \emph{dual} pseudo-metric. Moreover, it is continuous/lsc/usc if so is $(\cL,\varphi)$.
	\item[(2)] Let $L'$ be another line bundle on $X$ equipped with a pseudo-metric $[(\cL',\varphi')]$ represented on some projective model $\cX'$ of $X$ over $A$. Define a pseudo-metric $[(\cL,\varphi)]+[(\cL',\varphi')]$ on $L+L'$ as follows. Let $\cX''$ be a projective model of $X$ over $A$ such that there exists $A$-morphisms $p:\cX''\to \cX,q:\cX''\to \cX'$. We then set 
	\begin{align*}
	[(\cL,\varphi)]+[(\cL',\varphi')]:=[(p^{\ast}\cL+q^{\ast}\cL',p^{\ast}\varphi+q^{\ast}\varphi')],
	\end{align*}
	this is a pseudo-metric on $L+L'$ represented on $\cX''$ by Proposition \ref{propdef:local_model_pseudo-metric} (2).
	\item[(3)] Let $f:Y \to X$ be a projective morphism of $K$-schemes. Using Remark \ref{remark:models_extension} and Proposition-Definition \ref{propdef:local_model_pseudo-metric} (3), we can pullback the model pseudo-metric $(\cL,\varphi)$ to obtain a model pseudo-metric $f^{\ast}(\cL,\varphi)$ on $f^{\ast}L$. We then define $f^{\ast}[(\cL,\varphi)]:=[f^{\ast}(\cL,\varphi)]$ and call it the \emph{pullback pseudo-metric} via $f$.
	\item[(4)] Let $K'/K$ be a field extension and let $v'=(|\cdot|',A',\m',\kappa')$ be a pseudo-absolute value on $K'$ extending $v$ on $K$. Consider the fibre product $f:X':=X\otimes_{K}K'\to X$ and $\cX':=\cX\otimes_{A}A'$. By Proposition-Definition \ref{propdef:local_model_pseudo-metric} (4), we have model pseudo-metric $f^{\ast}(\cL,\varphi)$ on $f^{\ast}L$ whose underlying model is $\cX'$. We define $f^{\ast}[(\cL,\varphi)]:=[f^{\ast}(\cL,\varphi)]$ and call it the \emph{extension} of $[(\cL,\varphi)]$ w.r.t. the field extension $K'/K$.
	\item[(5)] Assume that $X$ is geometrically reduced if $A=K$. Assume that $\cX$ is flat and coherent, then the pseudo-norm $\|\cdot\|_{(\cL,\varphi)}$ on $H^{0}(X,L)$ from Proposition \ref{propdef:supnorm_pseudo-metric} depends only on the equivalence class $[(\cL,\varphi)]$. In the case where $\cX$ is not necessarily coherent, we use (\cite{ChenMori21}, Lemma 3.2.17) to find a coherent model $(\cX',\cL')$ of $(X,L)$ such that $\cX$ is a closed subscheme of $\cX'$, the special fibres of $\cX,\cX'$ coincide and $\cL$ is the pullback of $\cL'$ to $\cX$. One can define $\|\cdot\|_{(\cL,\varphi)}$ to be $\|\cdot\|_{(\cL',\varphi)}$. Then $[(\cL,\varphi)]=[(\cL',\varphi)]$ and thus, we get a well-defined pseudo-norm on $H^{0}(X,L)$ We denote it by $\|\cdot\|_{[(\cL,\varphi)]}$ or simply by $\|\cdot\|_{\varphi}$ when no confusion may arise and call it the \emph{supremum pseudo-norm} on $H^{0}(X,L)$ induced by $[(\cL,\varphi)]$.
	\item[(6)] Let $[(\cL',\varphi')]$ be another pseudo-metric on $L$ represented on some projective model $\cX'$ of $X$ over $A$. We say that $[(\cL,\varphi)]$ and $[(\cL',\varphi')]$ have the same \emph{model} if there exist a projective model $\cX''$ of $X$ over $A$ and $A$-morphisms $p:\cX''\to \cX,q:\cX''\to \cX'$ such that $p^{\ast}\cL\cong q^{\ast}\cL'$. In this case, the distance $d_{v}(p^{\ast}\varphi,q^{\ast}\varphi')$ introduced in Definition \ref{def:distance_local_pseudo-metric} does not depend on the choice of $\cX'',p,q$. We denote it by $d_{v}([(\cL,\varphi)],[(\cL',\varphi')])$, or by $d_{v}(\varphi,\varphi')$ when no confusion may arise. This defines a distance on the collection of pseudo-metrics on $L$ having the same model. 
\end{itemize}
\end{proposition-definition}

\subsection{Zariski-Riemann interpretation}
\label{sub:ZR_interpretation_local_pseudo-metrics}

Similarly to the interpretation given in \S \ref{sub:ZR_interpretation_pseudo-norm_families}, pseudo-metrised line bundles over $v$ are nothing else than metrised line bundles on a Zariski-Riemann space.

\begin{proposition}
\label{prop:ZR_interpretation_local_pseudo-metrics}
Let $X$ be a projective $K$-scheme. There is a one-to-one correspondence between pseudo-metrised line bundles in $v$ on $X$ and metrised line bundles on $\ZR(X/A)$ (cf. Definition \ref{def:metrised_vector_bundle_Zariski-Riemann}).
\end{proposition}

\begin{proof}
Let $L$ be a line bundle on $X$ and $[(\cL,\varphi)]$ be a pseudo-metric on $L$ represented on some projective model $\cX$ of $X$ over $A$. Then, by definition, $\varphi$ induces a metric on the pullback of $\cL$ to $\ZR(X/A)$, which gives a metrised line bundle on $\ZR(X/A)$.

Conversely, let $(\cL,\varphi)$ be a metrised line bundle on $\ZR(X/A)$. By Proposition \ref{prop:fp_sheaves_ZR_algebraic}, there exists a projective model $\cX$ of $X$ over $A$ and a line bundle $\cL_{\cX}$ on $\cX$ such that $\cL\cong p_{\cX}^{\ast}\cL_{\cX}$, where $p_{\cX}:\ZR(X/A)\to\cX$ denotes the projection. Then the metric $\varphi$ induces a model pseudo-metric on $L:=\eta_{X}^{\ast}\cL$ represented on $\cX$, where $\eta_{X}:X\to \ZR(X/A)$ is the generic fibre. This construction is the converse of the above one.
\end{proof}

Let us now give a third equivalent way to interpret pseudo-metrised line bundles on a projective $K$-scheme, which will justify the terminology "pseudo-metric". 

\begin{proposition}
\label{prop:local_pseudo-metrics_justification_terminology}
Let $X$ be a projective $K$-scheme. The data of a pseudo-metrised line bundle $(L,\varphi)$ on $X$ is equivalent to the data $(L,\psi)$, where $\psi=(|\cdot|_{\psi}(\mathbf{x}))_{\mathbf{x}\in \ZR(X/A)^{\an}}$ is a family such that, for any $\mathbf{x}\in \ZR(X/A)^{\an}$ with underlying scheme point $x\in X$, $|\cdot|_{\psi}(\mathbf{x})$ is a pseudo-norm on $L(x)$ and $\psi$ satisfies the condition:
\begin{itemize}
	\item[($\ast$)] for any $\mathbf{p}\in \ZR(X/A)$, there exists an open neighbourhood $U\subset \ZR(X/A)$ of $\mathbf{p}$ and a section $s\in H^{0}(\eta_{X}^{-1}(U),L)$ such that, for any $\mathbf{x}\in U^{\an}:=j_{X}^{-1}(U) \subset \ZR(X/A)^{\an}$, $|s|_{\psi}(\mathbf{x})\in \bR_{>0}$.
\end{itemize}
\end{proposition}

\begin{proof}
Let $(L,\varphi)$ be a pseudo-metrised line bundle on $X$. Using Proposition \ref{prop:ZR_interpretation_local_pseudo-metrics}, we consider it as a metrised line bundle on $\ZR(X/A)$. Let $\mathbf{x}\in \ZR(X/A)^{\an}$. Recall that $\mathbf{x}=(\mathbf{p},|\cdot|_{\mathbf{x}})$, where $\mathbf{p}=(p,A_{p},\phi_{p})\in \ZR(X/A)$ and $|\cdot|_{\mathbf{x}}$ is an absolute value on the residue field $\kappa(\mathbf{p})$ of $A_{p}$. Then Proposition \ref{prop:local_semi-norms} and Remark \ref{rem:pseudo-norm_metrised_vector_bundle} yield a family $\psi=(|\cdot|_{\psi}(\mathbf{x}))_{\mathbf{x}\in \ZR(X/A)^{\an}}$ of pseudo-norms on the fibres of $L$ over $\ZR(X/A)^{\an}$. The condition ($\ast$) is then automatically satisfied using the fact that $L$ is the pullback of a line bundle on $\ZR(X/A)$. 

Conversely let $L$ be a line bundle on $X$ equipped with a family $\psi=(|\cdot|_{\psi}(\mathbf{x}))_{\mathbf{x}\in \ZR(X/A)^{\an}}$ as above. Let $U\subset \ZR(X/A)$ be an open subset such that there exists a section $s\in H^{0}(\eta_{X}^{-1}(U),L)$ such that, for any $\mathbf{x}\in U^{\an} \subset \ZR(X/A)^{\an}$, $|s|_{\psi}(\mathbf{x})\in \bR_{>0}$. Then consider the free sheaf of $\cO_{U}$-modules of rank one $\cL_{U} := \cO_{U}\cdot s$. By construction, for any $\mathbf{p}=(p,A_{p},\phi_{p})\in U$, $s(p)$ generates the free $A_{p}$-module of rank one $\cL_{U}\otimes_{\cO_{U}}A_{p}$ and the image of $s(p)$ in the fibre $\cL_{U}\otimes_{\cO_{U}}\kappa(\mathbf{p})$ is non-zero. Assume now that $V\subset \ZR(X/A)$ is another open subset such that there exists a section $t\in H^{0}(\eta_{X}^{-1}(V),L)$ such that, for any $\mathbf{x}\in V^{\an} \subset \ZR(X/A)^{\an}$, $|t|_{\psi}(\mathbf{x})\in \bR_{>0}$. The previous construction gives rise to a free sheaf of $\cO_{V}$-modules of rank one, denoted by $\cL_{V}$. Then multiplication by $t/s$ gives an isomorphism between $(\cL_{U})_{|U\cap V}$ and $(\cL_{V})_{|U\cap V}$. Since $\ZR(X/A)$ is assumed to be covered by such $U$'s, the $\cL_{U}$'s glue to obtain a line bundle $\cL$ on $\ZR(X/A)$ which satisfies by construction $\eta_{X}^{\ast}\cL\cong L$. 

Let $\mathbf{x}=(\mathbf{p},|\cdot|_{\mathbf{x}})\in \ZR(X/A)^{\an}$, where $\mathbf{p}=(p,A_{p},\phi_{p})\in \ZR(X/A)$ and $|\cdot|_{\mathbf{x}}$ is an absolute value on the residue field $\kappa(\mathbf{p})$ of $A_{p}$. Choose an open neighbourhood $U$ of $\mathbf{p}$ in $\ZR(X/A)$ and a section $s\in H^{0}(\eta_{X}^{-1}(U),L)$ such that, for any $\mathbf{x}\in U^{\an} \subset \ZR(X/A)^{\an}$, $|s|_{\psi}(\mathbf{x})\in \bR_{>0}$. By construction, the finiteness module of $|\cdot|_{\psi}(x)$ is generated by $s(p)$, and is thus isomorphic to $\cL\otimes_{\cO_{\ZR(X/A)}}A_{p}$. Therefore $|\cdot|_{\psi}(x)$ induces a norm on the fibre $\cL(\mathbf{x})$. The fact that this construction is inverse to the above one is again given by Proposition \ref{prop:local_semi-norms} and Remark \ref{rem:pseudo-norm_metrised_vector_bundle}.
\end{proof}

\begin{remark}
\label{rem:ZR_interpretation_sup_pseudo-norm}
Let $X$ be a projective $K$-scheme that is assumed to be geometrically reduced if $A=K$. Let $(L,\psi)$ be the data of a line bundle $L$ on $X$ equipped with a family $\psi=(|\cdot|_{\psi}(\mathbf{x}))_{\mathbf{x}\in \ZR(X/A)^{\an}}$ such that, for any $\mathbf{x}\in \ZR(X/A)^{\an}$ with underlying scheme point $x\in X$, $|\cdot|_{\psi}(\mathbf{x})$ is a pseudo-norm on $L(x)$ and $\psi$ satisfies the condition ($\ast$). Using Propositions \ref{prop:ZR_interpretation_local_pseudo-metrics} and \ref{prop:local_pseudo-metrics_justification_terminology} we can view $(L,\psi)$ as a pseudo-metrised line bundle $(L,\varphi)$ on $X$. Assume that the pseudo-metric $\varphi$ is continuous. Then the supremum pseudo-norm $\|\cdot\|_{\varphi}$ on $H^{0}(X,L)$ is given by
\begin{align*}
\forall s\in H^{0}(X,L),\quad \|s\|_{\varphi} = \displaystyle\sup_{x\in \ZR(X/A)^{\an}} |s|_{\psi}(x).
\end{align*}
\end{remark}

\subsection{Fubini-Study pseudo-metric and Fubini-Study operator}
\label{sub:FS_local}

The following construction is a crucial example of pseudo-metrics. Let $E$ be a finite-dimensional $K$-vector space. Let $(\|\cdot\|,\cE,N,\widehat{E})$ be a pseudo-norm on $E$. 

Consider the projective bundle $\mathbb{P}(\cE) \to \Spec(A)$ associated with $\cE$ and denote by $\cO_{\cE}(1)$ the universal line bundle on $\mathbb{P}(\cE)$. Likewise, we have the projective bundles $\mathbb{P}(E)\to\Spec(K)$ and $\mathbb{P}(\widehat{E}) \to \Spec(\widehat{\kappa})$ for which we denote respectively by $\cO_{E}(1)$ and $\cO_{\widehat{E}}(1)$ the associated universal line bundles. Then $(\mathbb{P}(\cE),\cO_{\cE}(1))$ is a model of $(\mathbb{P}(E),\cO_{E}(1))$ over $A$ and we have a commutative diagram whose horizontal arrows are surjective
\begin{center}
\begin{tikzcd}
E \otimes_{K} \cO_{\mathbb{P}(E)} \arrow[r, two heads]                                   & \cO_{E}(1)                       \\
\cE \otimes_{A} \cO_{\mathbb{P}(\cE)} \arrow[r, two heads] \arrow[u] \arrow[d]           & \cO_{\cE}(1) \arrow[d] \arrow[u] \\
\widehat{E}\otimes_{\widehat{\kappa}} \cO_{\mathbb{P}(\widehat{E})} \arrow[r, two heads] & \cO_{\widehat{E}}(1)            
\end{tikzcd}.
\end{center}

For any $x\in \mathbb{P}(\widehat{E})^{\an}$, we denote by $|\cdot|_{\overline{E}}(x)$ 
\begin{itemize}
	\item the $\epsilon$-extension of scalars of the residue norm induced by $\|\cdot\|$ on $\widehat{E}\otimes_{\widehat{\kappa}}\widehat{\kappa}(x)$ if $v$ is non-Archimedean;
	\item the $\pi$-extension of scalars of the residue norm induced by $\|\cdot\|$ on $\widehat{E}\otimes_{\widehat{\kappa}}\widehat{\kappa}(x)$ if $v$ is Archimedean.
\end{itemize}

For any $x\in \mathbb{P}(\widehat{E})^{\an}$, we denote by $|\cdot|_{\overline{E},\mathrm{FS}}(x)$ the quotient norm on $\cO_{\widehat{E}}(1)(x) := \cO_{\widehat{E}}(1) \otimes_{\widehat{\kappa}} \widehat{\kappa}(x)$ induced by the norm $|\cdot|_{\overline{E}}(x)$ on $\widehat{E}\otimes_{\widehat{\kappa}} \widehat{\kappa}(x)$ constructed above. Then the family $\varphi_{\overline{E},\mathrm{FS}} := (|\cdot|_{\overline{E},\mathrm{FS}}(x))_{x\in \mathbb{P}(\widehat{E})^{\an}}$ defines a metric on $\cO_{\widehat{E}}(1)$. Therefore, we obtain a model pseudo-metric $(\cO_{\cE}(1),\varphi_{\overline{E},\mathrm{FS}})$ on $\cO_{E}(1)$.

\begin{proposition-definition}
\label{propdef:Fubini-Study_local_pseudo_metric}
We use the same notation as above. The model pseudo-metric $(\cO_{\cE}(1),\varphi_{\overline{E},\mathrm{FS}})$ on $\cO_{E}(1)$ is called the \emph{Fubini-Study model pseudo-metric} on $\cO_{E}(1)$ associated with the locally pseudo-normed vector space $\overline{E}$.
\end{proposition-definition}

\begin{proposition}[\cite{ChenMori}, Proposition 2.2.12]
We use the same notation as above. Then the Fubini-Study model pseudo-metric $(\cO_{\cE}(1),\varphi_{\overline{E},\mathrm{FS}})$ is continuous.
\end{proposition}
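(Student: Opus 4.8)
The plan is to reduce the statement to the classical continuity of Fubini--Study metrics over a complete valued field, namely \cite{ChenMori}, Proposition 2.2.12. By definition of a continuous pseudo-metric, $(\cO_{\cE}(1),\varphi_{\overline{E},\mathrm{FS}})$ is continuous if and only if the metric $\varphi_{\overline{E},\mathrm{FS}}$ on the pullback of $\cO_{\cE}(1)$ to the completed special fibre $\mathbb{P}(\widehat{E})^{\an}=(\mathbb{P}(\cE)\otimes_{A}\widehat{\kappa})^{\an}$ is continuous. In particular the integral model $\mathbb{P}(\cE)$ over $A$ is irrelevant for this property, and everything takes place with the residue data over $\widehat{\kappa}$.

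First I would record precisely what that residue data is. By Proposition \ref{prop:local_semi-norms}, the pseudo-norm $(\|\cdot\|,\cE,N,\widehat{E})$ on $E$ induces a genuine norm $\|\cdot\|_{\widehat{E}}$ on the residue vector space $\widehat{E}=\cE\otimes_{A}\widehat{\kappa}$ over the complete valued field $\widehat{\kappa}$ --- a complete non-Archimedean field when $v$ is non-Archimedean, and $\bR$ or $\bC$ when $v$ is Archimedean (by Ostrowski). Unravelling the construction preceding Definition \ref{def:Fubini-Study_local_pseudo_metric}, for each $x$ in the completed special fibre the norm $|\cdot|_{\overline{E},\mathrm{FS}}(x)$ on $\cO_{\widehat{E}}(1)(x)$ is exactly the quotient norm deduced, via the universal surjection $\widehat{E}\otimes_{\widehat{\kappa}}\cO_{\mathbb{P}(\widehat{E})}\twoheadrightarrow\cO_{\widehat{E}}(1)$, from the extension of scalars $|\cdot|_{\overline{E}}(x)$ of $\|\cdot\|_{\widehat{E}}$ to $\widehat{\kappa}(x)$ (the $\epsilon$-extension in the non-Archimedean case, the $\pi$-extension in the Archimedean case). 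This is verbatim the Fubini--Study metric attached to the normed $\widehat{\kappa}$-vector space $(\widehat{E},\|\cdot\|_{\widehat{E}})$ as defined in \cite{ChenMori}, \S 2.2.

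Consequently, the continuity of $\varphi_{\overline{E},\mathrm{FS}}$ is precisely \cite{ChenMori}, Proposition 2.2.12, applied over the complete valued field $\widehat{\kappa}$, which finishes the proof. The one bookkeeping point that will require care is checking that the normalisation conventions of the construction above (the choice of $\epsilon$- versus $\pi$-extension of scalars dictated by whether $v$ is non-Archimedean or Archimedean, together with the identification $\mathbb{P}(\cE)\otimes_{A}\widehat{\kappa}\cong\mathbb{P}(\widehat{E})$ and the compatibility of the three universal surjections in the displayed diagram) agree on the nose with those of \cite{ChenMori}; I do not expect any genuine difficulty there, as continuity is insensitive to the integral model and follows formally once the identification is made.
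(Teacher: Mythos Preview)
Your proposal is correct and is exactly what the paper does: the paper gives no proof at all, simply citing \cite{ChenMori}, Proposition 2.2.12, since by definition a pseudo-metric is continuous if and only if the induced metric on the completed special fibre is, and that metric is precisely the classical Fubini--Study metric on $\mathbb{P}(\widehat{E})^{\an}$ attached to the residue normed space $(\widehat{E},\|\cdot\|_{\widehat{E}})$. Your write-up makes the reduction explicit, which is more than the paper provides.
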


\begin{example}
\label{example:FS_local_pseudo-metric}
Let $E$ be a finite-dimensional $K$-vector space and fix a basis $(e_1,...,e_d)$ of $E$. Fix $\lambda_1,...,\lambda_r$. Let $\|\cdot\|$ denote the diagonalisable pseudo-norm on $E$ such that $(e_1,...,e_d)$ is an orthogonal basis of $\|\cdot\|$ such that, for any $i=1,...,d$, we have $\|e_i\|=e^{-\lambda_i}$. Namely, for any $x=x_1 e_1+\cdots+x_d e_d \in E$, where $x_1,...,x_d\in K$, we have
\begin{align*}
\|x\| =  \left\{\begin{matrix}
\displaystyle\max_{i=1,...,d}|x_i|e^{-\lambda_i} \quad \text{if }v\in M_{K,\um},\\
\displaystyle\sqrt{\sum_{i=1}^{d} |x_i|^{2}e^{-2\lambda_i}} \quad \text{if }v\in M_{K,\infty}.
\end{matrix}\right.
\end{align*}
Let $\overline{E}:=(E,\|\cdot\|)$. Then we can see $e_1,...,e_d$ as global sections of $\cO_{E}(1)$ without common zeroes and $(e_1,...,e_d)$ is adapted to $\|\cdot\|$. Moreover, the Fubini-Study pseudo-metric $(\cO_{\cE}(1),\varphi)$ is given as follows (cf. \cite{BoucksomEriksson}, Lemma 7.17). Let $\tau$ be a local trivialisation of $\cO_{\widehat{E}}(1)$, then for any $i=1,...,d$ we consider the function $f_i := s_i/\tau$ and we have
\begin{align*}
-\log|\tau|_{\varphi} = \left\{\begin{matrix}
\displaystyle \max_{i=1,...,d}\log|f_i| + \lambda_i \quad \text{if }v\in M_{K,\um},\\
\displaystyle\frac{1}{2}\log\sqrt{\sum_{i=1}^{d} |f_i|^{2}e^{2\lambda_i}} \quad \text{if }v\in M_{K,\infty}.
\end{matrix}\right.
\end{align*}
\end{example}

We now consider the general situation where we have a line bundle $L$ on the projective $K$-variety $X$. Fix a model $(\cX,\cL)$ of $(X,L)$. Assume that there exist a finite-dimensional $K$-vector space $E$ equipped with a pseudo-norm $(\|\cdot\|,\cE,N,\widehat{E})$ and surjective morphism of sheaves $\beta : \cE \otimes_{A} \cO_{\cX} \to \cL$. Denote $\overline{E}:=(E,\|\cdot\|)$. Then $\beta$ yields a morphism of schemes $g : \cX \to \mathbb{P}(\cE)$ such that $\cL$ is isomorphic to $g^{\ast}\cO_{\cE}(1)$. By considering generic fibres, $g$ induces a morphism $f : X \to \mathbb{P}(E)$ which is extended by $g$ by construction. 

\begin{definition}
\label{def:quotient_local_model_pseudo-metric}
We use the same notation as above. 
\begin{itemize}
	\item[(1)] The pullback of the Fubini-Study model pseudo-metric on $\cO_{E}(1)$ by $g$ is a continuous model pseudo-metric on $L$ which is called the \emph{quotient model pseudo-metric} induced by $\overline{E}$ and $\beta$. 
	\item[(2)] Consider the particular case where $E\subset H^{0}(X,L)$ is a basepoint free vector subspace. Let $\|\cdot\|$ be a pseudo-norm on $E$. Let $\overline{E}=(E,\|\cdot\|)$. Then the quotient model pseudo-metric on $L$ induced by $\overline{E}$ and $\beta$ is called a \emph{Fubini-Study model pseudo-metric} on $L$.
	\item[(3)] More generally, if $L$ is a semi-ample line bundle on $X$. We say that a model pseudo-metric $(\cL,\varphi)$ is \emph{Fubini-Study} if there exists an integer $n\geq 1$ such that $nL$ is globally generated and a Fubini-Study model pseudo metric $(n\cL,\psi)$ on $nL$  such that $\varphi=n^{-1}\psi$. 
\end{itemize}
\end{definition} 

\begin{proposition-definition}
\label{propdef:quotient_local_pseudo-metric}
Let $L$ be a line bundle on $X$ equipped with a pseudo-metric $[(\cL,\varphi)]$ represented on some projective model $\cX$ of $X$ over $A$.
\begin{itemize}
	\item[(1)] Assume that there exist a finite-dimensional $K$-vector space $E$ equipped with a pseudo-norm $(\|\cdot\|,\cE,N,\widehat{E})$ and surjective morphism of sheaves $\beta : \cE \otimes_{A} \cO_{\cX} \to \cL$. Denote $\overline{E}:=(E,\|\cdot\|)$. Then for any $A$-morphism $p:\cX'\to\cX$ of projective models of $X$ over $A$, the quotient model pseudo-metric induced by $\overline{E}$ and $\beta$ on $L$ coincides with the quotient model pseudo-metric induced by $\overline{E}$ and $p^{\ast}\beta$ on $L$. We say that the pseudo-metric $[(\cL,\varphi)]$ is a \emph{quotient pseudo-metric} induced by $\overline{E}$ if there exists a model $(\cX',\cL')$ of $(X',L')$ representing the pseudo-metric and a surjective morphism $\beta : \cE \otimes_{A} \cO_{\cX'} \to \cL'$ such that $[(\cL,\varphi)]$ is the class of the quotient model pseudo-metric induced by $\overline{E}$ and $\beta$ on $L$. 
	\item[(2)] Assume that $E\subset H^{0}(X,L)$ is a basepoint free vector subspace. Let $\|\cdot\|$ be a pseudo-norm on $E$. Let $\overline{E}=(E,\|\cdot\|)$. Then the quotient pseudo-metric induced by $\overline{E}$ on $L$ is called a \emph{Fubini-Study pseudo-metric} on $L$.
	\item[(3)] Assume that $L$ is a semi-ample line bundle on $X$. A pseudo-metric on $L$ is called \emph{Fubini-Study} if it is the equivalence class of a Fubini-Study model pseudo-metric on $L$. The class of Fubini-Study pseudo-metrics on $L$ is denoted by $\FS(L)$.
\end{itemize}
\end{proposition-definition}

\begin{proof}
Let us prove the assertion in bullet (1). Since $p^{\ast}$ is a right exact functor, $p^{\ast}\beta$ is surjective. The assertion on the quotient norms is checked directly using the surjectivity of $\cX'\to\cX$. This ensures that the definitions in bullets (1)-(3) are independent of the choice of the representative of the pseudo-metric on $L$.
\end{proof}

We now introduce the Fubini-Study operator for pseudo-metrics. 

\begin{proposition-definition}
\label{propdef:Fubini-Study_operator_local_pseudo-metric}
Let $L$ be a line bundle on $X$. Let $[(\cL,\varphi)]$ be a pseudo-metric on $L$ represented on some projective model $\cX$ of $X$ over $A$. We assume that $\cL$ is globally generated. Thus, we have a surjection $\beta : H^{0}(\cX,\cL) \otimes_{A} \cO_{\cX} \to \cL$.  Moreover, the pseudo-metric $[(\cL,\varphi)]$ induces a pseudo-norm $\|\cdot\|_{\varphi}$ on $H^{0}(X,L)$ (cf. Proposition-Definition \ref{propdef:local_pseudo-metric} (5)). Then we can use the construction of Proposition-Definition \ref{propdef:quotient_local_pseudo-metric} (3) to produce a continuous Fubini-Study pseudo-metric $[(\cL,\varphi_{\FS})]$ on $L$, it is called the \emph{Fubini-Study pseudo-metric} associated with $[(\cL,\varphi)]$. 
\end{proposition-definition}

\begin{proposition}[\cite{ChenMori}, Proposition 2.2.23]
\label{prop:CM_2.2.23}
Let $L$ be a line bundle on $X$. Let $[(\cL,\varphi)]$ be a pseudo-metric on $L$ represented on some projective model $\cX$ of $X$ over $A$. We assume that the model $\cX$ is flat and coherent and that $[(\cL,\varphi)]$ is a Fubini-Study pseudo-metric on $L$. Then, for any integer $n\geq 1$, we have $[(n\cL,n\varphi)]=[(n\cL,(n\varphi)_{\mathrm{FS}})]$.
\end{proposition}

\subsection{Semi-positive pseudo-metrics}
\label{sub:semi-positive_local}

\begin{definition}
\label{propdef:psh_local_pseudo_metric}
Let $L$ be a line bundle on $X$. Fix a continuous model pseudo-metric $(\cL,\varphi)$ on $L$ with underlying projective model $\cX$. For any integer $n\geq 1$, we denote by $(n\cL,\varphi_n)$ the pseudo-metric $(n\cL,(n\varphi)_{\mathrm{FS}})$. 

\begin{itemize}
	\item[(1)] Assume that $\cL$ is semi-ample. Choose an integer $n\geq 1$ such that $n\cL$ is globally generated. Then the model pseudo-metric $(\cL,\varphi)$ is called \emph{semi-positive} if the sequence
\begin{align*}
\frac{d_{v}(nk\varphi,\varphi_{nk})}{nk}, \quad k\geq 1
\end{align*}
converges to $0$. Note that this definition does not depend on the choice of $n$. 
	\item[(2)] We say that a continuous pseudo-metric on $L$ is \emph{semi-positive} if it is the equivalence class of a semi-positive model pseudo-metric on $L$ (cf. Proposition-Definition \ref{propdef:local_pseudo-metric} (6) for the justification that this is a well-defined notion). 
	\item[(3)] Assume that $\cL$ is semi-ample. Then $(\cL,\varphi)$ is called \emph{plurisubharmonic} (\emph{psh} for short) if there exists a sequence $(\cL,\varphi_i)_{i\geq 1}$ of Fubini-Study model pseudo-metrics such that the sequence of function $(\varphi-\varphi_{i})_{i\geq 1}$ converges uniformly to the null function. 
	\item[(4)] Like in bullet (2), we say that a pseudo-metric on $L$ is \emph{plurisubharmonic} if it is the equivalence class of a psh model pseudo-metric on $L$.
	\item[(5)] $(\cL,\varphi)$ is called \emph{integrable} if there exist line bundles $L_1,L_2$ on $X$ equipped respectively with plurisubharmonic model pseudo-metrics $(\cL_1,\varphi_1),(\cL_2,\varphi_2)$ whose underlying model is $\cX$ such that $\cL_1$ and $\cL_2$ are very ample, $L=L_1-L_2$ and $\varphi=\varphi_1-\varphi_2$.
	\item[(6)] Like in bullets (2) and (4), we say that a pseudo-metric on $L$ is \emph{integrable} if it is the equivalence class of an integrable model pseudo-metric on $L$.
\end{itemize}
\end{definition}

\begin{remark}
\label{rem:psh_semi-positive}
\begin{itemize}
	\item[(1)] A plurisubharmonic model pseudo-metric is, at first glance, a special case of semi-positive pseudo-metric. If $\widehat{\kappa}=\bC$, then for any semi-positive model pseudo-metric $(\cL,\varphi)$, $\varphi$ is a plurisubharmonic metric in the usual sense (cf. \cite{Zhang95}, Theorem 3.5). 
	\item[(2)] The terminology and definition of plurisubharmonic (model) pseudo-metrics is inspired by \cite{BoucksomEriksson}.
\end{itemize}
\end{remark}

\begin{proposition}
\label{prop:semi-positive_scalar_extension}
Let $L$ be a line bundle on $X$. Let $(\cL,\varphi)$ be a continuous model pseudo-metric on $L$ with underlying model $\cX$. Let $K'/K$ be a field extension and let $v'=(|\cdot|',A',\m',\kappa')$ be a pseudo absolute value on $K'$ extending $v$. Consider the fibre product $f : X':=X \otimes_{K} K' \to X$ and denote by $L'$ the pullback of $L$ to $X'$. If $(\cL,\varphi)$ is semi-positive, then $f^{\ast}(\cL,\varphi)$ is a semi-positive model pseudo-metric on $L'$. Moreover, $f^{\ast}[(\cL,\varphi)]$ is a semi-positive pseudo-metric on $L'$.
\end{proposition}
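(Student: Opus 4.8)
\emph{Proof plan.} The plan is to show that everything entering the definition of a semi-positive pseudo-metric only depends on the completed special fibre, and then to invoke the known compatibility of semi-positivity of ordinary metrics on Berkovich spaces with base change along an extension of complete valued fields.

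First I would record the base-change diagram. By Proposition-Definition \ref{propdef:local_pseudo-metric} (4), $\cX':=\cX\otimes_{A}A'$ is a projective model of $X'$ over $A'$, it is flat and coherent since $\cX$ is (this is needed for the Fubini--Study operator of Definition \ref{def:Fubini-Study_operator_local_pseudo-metric} to be available on $\cX'$), $\cL':=\cL\otimes_{A}A'$ is a model of $L'$ which is semi-ample because $\cL$ is, and there is a canonical isomorphism $\cX'_{s}\cong\cX_{s}\otimes_{\kappa}\kappa'$. Completing along the residue absolute values gives $X'_{v'}:=\cX'\otimes_{A'}\widehat{\kappa'}\cong\cX_{s}\otimes_{\kappa}\widehat{\kappa'}=X_{v}\otimes_{\widehat{\kappa}}\widehat{\kappa'}$, so $X'_{v'}$ is the base change of the projective $\widehat{\kappa}$-scheme $X_{v}$ along the isometric embedding $\widehat{\kappa}\hookrightarrow\widehat{\kappa'}$. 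This induces a (surjective, proper) continuous morphism $p:X'^{\an}_{v'}\to X^{\an}_{v}$ of Berkovich spaces, and the metric underlying $f^{\ast}(\cL,\varphi)$ is the pullback metric $p^{\ast}\varphi$ on $L'_{v'}=p^{\ast}L_{v}$.

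Next I would unwind the definition of semi-positivity. Fix $n\geq1$ with $n\cL$ globally generated; then $nk\cL$ is globally generated for all $k\geq1$, and by Definition \ref{def:psh_local_pseudo_metric} (1) semi-positivity of $(\cL,\varphi)$ means $\tfrac{1}{nk}\,d_{v}(nk\varphi,\varphi_{nk})\to0$, where $\varphi_{nk}$ is obtained by the Fubini--Study operator of Definition \ref{def:Fubini-Study_operator_local_pseudo-metric} applied to $(nk\cL,nk\varphi)$. That operator only uses the supremum pseudo-norm $\|\cdot\|_{nk\varphi}$ on $H^{0}(X,nkL)$ of Proposition-Definition \ref{def:supnorm_pseudo-metric}, whose residue norm is the supnorm over $X^{\an}_{v}$ attached to $\varphi$, together with the quotient construction of Definition \ref{def:quotient_local_pseudo-metric}; hence the residue metric of $\varphi_{nk}$ is exactly the usual Fubini--Study regularisation over $\widehat{\kappa}$ of $nk\varphi|_{X_{v}}$, and $d_{v}(nk\varphi,\varphi_{nk})$ is the corresponding distance computed on $X^{\an}_{v}$. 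Thus $(\cL,\varphi)$ is semi-positive precisely when the metric $\varphi$ on $L_{v}$ over $X^{\an}_{v}$ is semi-positive in the sense of (\cite{ChenMori}, \S 2.3).

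Finally I would push the data through the base change. Since $A\to A'$ is flat (a torsion-free module over a valuation ring is flat) and $\widehat{\kappa}\to\widehat{\kappa'}$ is a field extension, cohomology of quasi-coherent sheaves commutes with these base changes, giving $H^{0}(\cX',nk\cL')\cong H^{0}(\cX,nk\cL)\otimes_{A}A'$ and $H^{0}(X'_{v'},nkL'_{v'})\cong H^{0}(X_{v},nkL_{v})\otimes_{\widehat{\kappa}}\widehat{\kappa'}$, compatibly with the surjections onto $nk\cL$ and $nkL$. By the Chen--Moriwaki results on extension of scalars of supremum norms and of quotient/Fubini--Study metrics (\cite{ChenMori}, \S 2.2), the supnorm pseudo-norm attached to $p^{\ast}\varphi$ is the $\epsilon$- (resp. $\pi$-) extension of scalars of the one attached to $\varphi$, whence the Fubini--Study operator commutes with pullback: $(p^{\ast}\varphi)_{nk}=p^{\ast}(\varphi_{nk})$. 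Since $p$ is continuous, $d_{v'}(p^{\ast}\alpha,p^{\ast}\beta)\leq d_{v}(\alpha,\beta)$ for any two continuous pseudo-metrics, so
\[
\limsup_{k\to\infty}\ \frac{1}{nk}\,d_{v'}\bigl(nk\,p^{\ast}\varphi,\ (p^{\ast}\varphi)_{nk}\bigr)\ \leq\ \limsup_{k\to\infty}\ \frac{1}{nk}\,d_{v}(nk\varphi,\varphi_{nk})\ =\ 0,
\]
which is exactly the assertion that $f^{\ast}(\cL,\varphi)$ is semi-positive. (Alternatively, after the reduction one may cite directly Chen--Moriwaki's statement that semi-positivity of a metric on a Berkovich space is preserved by base change along an extension of complete valued fields; in the Archimedean case this amounts to the elementary fact that plurisubharmonicity is preserved under $\bR\hookrightarrow\bC$ and under complex conjugation.) The main obstacle is precisely the compatibility invoked in this last paragraph: that the supremum pseudo-norm and the quotient pseudo-metric construction commute with $\epsilon$-/$\pi$-extension of scalars, so that the Fubini--Study operator commutes with $p^{\ast}$. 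Once this pseudo-metric transcription of (\cite{ChenMori}, \S 2.2) is in hand, the remainder is bookkeeping with the diagrams of Proposition-Definition \ref{propdef:local_pseudo-metric}.
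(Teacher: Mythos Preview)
Your proposal is correct and follows the same reduction as the paper: unwind $f^{\ast}(\cL,\varphi)$ via Proposition-Definition \ref{propdef:local_pseudo-metric} (4) to see that its underlying metric is the pullback $p^{\ast}\varphi$ along the base change $X'^{\an}_{v'}\to X^{\an}_{v}$ of Berkovich spaces, and then invoke stability of semi-positivity under extension of the complete base field. The paper's proof is much terser---it cites (\cite{ChenMori21}, Remark 3.2.7) directly for this last step---whereas you additionally sketch how that result is proven (compatibility of the Fubini--Study operator with $p^{\ast}$); both routes land on the same point.
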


\begin{proof}
Recall that the model pseudo-metric $f^{\ast}(\cL,\varphi)$ is equal to $(\cL',\varphi')$, where $\cL'$ is the pullback of $\cL$ to $\cX':=\cX \otimes_{A} A'$ and $\varphi'$ is the pullback of $\varphi$ to $\cX'_{s} \otimes_{\kappa'} \widehat{\kappa'}$. (\cite{ChenMori21}, Remark 3.2.7) implies that the metric $\varphi'$ is semi-positive. Therefore, the local model pseudo metric $(\cL',\varphi')$ is semi-positive. The assertion concerning $f^{\ast}[(\cL,\varphi)]$ then follows from its definition (cf. Proposition-Definition \ref{propdef:local_pseudo-metric} (4)).
\end{proof}

\begin{proposition}
\label{prop:equivalence_psh_semi-positive}
\begin{itemize}
	\item[(1)] Let $\cX$ be a projective model of $X$ over $A$. Assume that the special fibre $\cX_{s}$ is geometrically reduced. Let $L$ be a line bundle on $X$. Let $(\cL,\varphi)$ be a continuous model pseudo-metric on $L$ with underlying model $\cX$ such that $\cL$ is semi-ample. Then $(\cL,\varphi)$ is psh iff it is semi-positive. 
	\item[(2)] Assume that $A$ is a rank $1$ valuation ring, $K$ is algebraically closed and $X$ is reduced. Let $L$ be a line bundle on $X$. A pseudo-metric on $L$ is semi-positive iff it is psh.
\end{itemize}
\end{proposition}

\begin{proof}
\textbf{(1):} First, note that it suffices to prove that if $(\cL,\varphi)$ is semi-positive, then $(\cL,\varphi)$ is psh. Thus, we assume that $(\cL,\varphi)$ is semi-positive. The $\widehat{\kappa}=\bC$ is treated in Remark \ref{rem:psh_semi-positive}. The $\widehat{\kappa}=\bR$ is dealt with by combining Proposition \ref{prop:semi-positive_scalar_extension} with (\cite{BoucksomEriksson}, Theorem 7.31).

The non-Archimedean and non trivially valued case follows from (\cite{ChenMori21}, Proposition 3.2.19). For the trivially valued case, we choose a transcendental extension $K'/K$ and a pseudo-absolute value $v'=(|\cdot|',A',\m',\kappa')$ extending $v$ such that $\kappa'$ is non-trivially valued. Consider the fibre product $f:X':=X \otimes_{K} K' \to X$ and the pullback $L'$ of $L$ to $X'$. Then the pullback of $(\cL,\varphi)$ to $X'$ is semi-positive by Proposition \ref{prop:semi-positive_scalar_extension}. Then by the non-trivially valued case, $f^{\ast}(\cL,\varphi)=:(\cL',\varphi')$ is psh. By (\cite{BoucksomEriksson}, Theorem 7.31), $\varphi'$ is continuous psh metric on $L'_{v'}$ and therefore $\varphi$ is psh and $(\cL,\varphi)$ is a (continuous) psh model pseudo-metric on $L$.

\textbf{(2):} Note that by assumption $\kappa$ and thus $\widehat{\kappa}$ are algebraically closed. Let $[(\cL,\varphi)]$ be a pseudo-metric represented on some projective model $\cX$ of $X$ over $A$. (\cite{BoucksomEriksson}, Theorem 4.20) implies that the integral closure $\cX'$ of $\cX$ in $X$ is a model of $X$ over $A$ dominating $\cX$ such that $\cX'_{s}$ is (geometrically) reduced. Thus, we get (2) using (1). 
\end{proof}

\section{Pseudo-metrics: global case}
\label{sec:pseudo-metric families}

In this section, we introduce the global counterpart of \S \ref{sec:metrics_MA_intersection_local}. We first introduce the global version of local pseudo-metrics (\S \ref{sub:pseudo-metric_families_definitions}). They also admit an interpretation in terms of metrics on line bundles on a (global) Zariski-Riemann space (\S \ref{sub:ZR_interpretation_pseudo-metric}). Then we introduce integrability and regularity conditions for pseudo-metric families (\S \ref{sub:dominated_pseudo-metric_family}-\ref{sub:regularity_pseudo-metric_families}). After that, we introduce the pushforward of a pseudo-metric family (\S \ref{sub:pushforward_pseudo-metric_families}). We are now able to define the notion of adelic line bundles (\S \ref{sub:adelic_line_bundles}). All the previously introduced notions are studied in the case of an integral topological adelic curve (\S \ref{sub:pseudo-metric_families_integral_tac}). Then we introduce adelic line bundles for families of topological adelic curves (\S \ref{sub:adelic_line_bundles_families_of_tac}.  Finally, we define the arithmetic volume and the $\chi$-volume of adelic line bundles (\S \ref{sub:volume_functions_proper_tac}).
 
\subsection{Definitions}
\label{sub:pseudo-metric_families_definitions}

In this subsection, we fix a topological adelic curve $S=(K,\phi:\Omega\to M_K,\nu)$ and a projective $K$-scheme $X$. Recall that we have defined two Zariski-Riemann spaces $\ZR(X)_{S}=\tilde{\Omega}\times_{\ZR(K)}\ZR(X)$ and $\ZR(X)^{\an}_{S}=\Omega\times_{M_{K}}\ZR(X)^{\an}$ fitting in a commutative diagram 
\begin{center}
\begin{tikzcd}
                                        & \ZR(X)^{\an}_{S} \arrow[rr, "\phi_{X}"] \arrow[dd] \arrow[rd, "{j_{X,S}}"] &                                                      & \ZR(X)^{\an} \arrow[dd] \arrow[rd, "j_{X}"] &                   \\
X \arrow[dd] \arrow[rr, "{\eta_{X,S}}"] &                                                                            & \ZR(X)_{S} \arrow[rr, "\tilde{\phi_{X}}"] \arrow[dd] &                                             & \ZR(X) \arrow[dd] \\
                                        & \Omega \arrow[rr, "\phi"] \arrow[rd, "{j_{K,S}}"]                          &                                                      & M_{K} \arrow[rd, "j_{K}"]                   &                   \\
\Spec(K) \arrow[rr, "\eta_{S}"]         &                                                                            & \tilde{\Omega} \arrow[rr, "\tilde{\phi}"]            &                                             & \ZR(K)           
\end{tikzcd}.
\end{center}

\begin{definition}
\label{def:pseudo-metric_family}
Let $L$ be a line bundle on $X$. A \emph{pseudo-metric} on $L$ \emph{over} $S$ is a family $\varphi=(|\cdot|_{\varphi}(\mathbf{x}))_{\mathbf{x}\in \ZR(X)^{\an}_{S}}$ where, for any $\mathbf{x}\in \ZR(X)_{S}^{\an}$ with underlying scheme point $x\in X$, $|\cdot|_{\varphi}(\mathbf{x})$ is a pseudo-norm on $L(x)$ and $\varphi$ satisfies:
\begin{itemize}
	\item[($\ast$)] for any $\mathbf{p}\in \ZR(X)_{S}$, there exists an open neighbourhood $U\subset \ZR(X)_{S}$ of $\mathbf{p}$ and a section $s\in H^{0}(\eta_{X}^{-1}(U),L)$ such that, for any $\mathbf{x}\in U^{\an}:=j_{X}^{-1}(U) \subset \ZR(X)_{S}^{\an}$, $|s|_{\varphi}(\mathbf{x})\in \bR_{>0}$.
	\end{itemize}
A line bundle on $X$ equipped with a pseudo-metric over $S$ is called a \emph{pseudo-metrised line bundle} on $X$ over $S$.
\end{definition}

\begin{example}
\label{example:pseudo-metric_adelic_curve}
In the situation of Example \ref{example:model_semi-norm_family} (1), namely where $\phi(\omega)$ is an absolute value on $K$ for all $\omega\in\Omega$, any metric family on a line bundle $L$ on $X$ in the sense of (\cite{ChenMori}, \S 6.1) is a pseudo-metric on $L$. 
\end{example}

\begin{remark}
\label{rem:CM_6.1.3_general}
We consider the case where $X=\Spec(K')$, where $K'/K$ is a finite extension of fields. Let $L$ be a line bundle on $X$, i.e. a one-dimensional $K'$-vector space. Then the data of a pseudo-metric $\varphi=(|\cdot|_{\varphi}(\mathbf{x}))_{\mathbf{x}\in \ZR(X)^{\an}_{S}}$ on $L$ over $S$ is the same as a pseudo-norm family $L$, relatively to the topological adelic curve $S' := S \otimes_{K} K'$. It is a consequence of the fact that $\ZR(X)^{\an}_{S}$ identifies with the adelic space of the topological adelic curve $S'$ (cf. \S \ref{sub:finite_extension_tac}).
\end{remark}

\begin{proposition}
\label{prop:pseudo-metric_family_global_local}
Let $(L,\varphi=(|\cdot|_{\varphi}(\mathbf{x}))_{\mathbf{x}\in \ZR(X)^{\an}_{S}})$ be a pseudo-metrised line bundle on $X$ over $S$. Then for any $\omega\in \Omega$, $\varphi_{\omega}:=(|\cdot|_{\varphi}(\mathbf{x}))_{\mathbf{x}\in\ZR(X/A_{\omega})^{\an}}$ is a local pseudo-metric on $L$ over $\omega$.
\end{proposition}

\begin{proof}
Let $\omega\in\Omega$. Recall that $\ZR(X/A_{\omega})^{\an}$ identifies with the fibre over $\omega$ of the morphism $f^{\an}_{S}:\ZR(X)^{\an}_{S}\to\Omega$. By Proposition \ref{prop:local_pseudo-metrics_justification_terminology}, it suffices to prove that, for any $\mathbf{p}\in \ZR(X/A_{\omega})$, there exists an open neighbourhood $U\subset \ZR(X/A_{\omega})$ of $\mathbf{p}$ and a section $s\in H^{0}(\eta_{X}^{-1}(U),L)$ such that, for any $\mathbf{x}\in U^{\an} \subset \ZR(X/A_{\omega})^{\an}$, $|s|_{\varphi}(\mathbf{x})\in \bR_{>0}$. This is clear from the condition ($\ast$) together with the fact that the inclusion $\ZR(X/A_{\omega})^{\an} \to \ZR(X)^{\an}_{S}$ is continuous.
\end{proof}

\begin{remark}
\label{rem:pseudo-metric_family_local_global}
Proposition \ref{prop:pseudo-metric_family_global_local} suggests that there exists an interpretation of pseudo-metrics as a family of local pseudo-metrics satisfying a "globalisation" condition. This was the approach chosen in \cite{Sedillotthese}. In \S \ref{sub:pseudo-metric_families_integral_tac}, we will give such an interpretation, which agrees with the classical point of view of Arakelov geometry.
However, in the general case, our use of Zariski-Riemann spaces and their geometry allows us to work directly with pseudo-metrics defined globally.
\end{remark}

\begin{proposition-definition}
\label{def:pseudo-metric_family_algebraic_constructions}
Let $L$ be a line bundle on $X$ equipped with a pseudo-metric $\varphi=(|\cdot|_{\varphi}(\mathbf{x}))_{\mathbf{x}\in \ZR(X)^{\an}_{S}}$ over $S$.
\begin{itemize}
	\item[(1)] The family $-\varphi:=(|\cdot|_{\varphi,\ast}(\mathbf{x}))_{f{x}\in \ZR(X)^{\an}_{S}}$, where, for any $\mathbf{x}\in \ZR(X)^{\an}_{S}$, $|\cdot|_{\varphi,\ast}(\mathbf{x})$ denotes the dual pseudo-norm of $|\cdot|_{\varphi}(\mathbf{x})$, is a pseudo-metric on $-L$ over $S$ called the \emph{dual} pseudo-metric of $\varphi$.
	\item[(2)] Let $L'$ be another line bundle on $X$ equipped with a pseudo-metric $\varphi'=(|\cdot|_{\varphi'}(\mathbf{x}))_{\mathbf{x}\in \ZR(X)^{\an}_{S}}$ over $S$. Then the family $\varphi+\varphi':=(|\cdot|_{\varphi}(\mathbf{x})\cdot|\cdot|_{\varphi'}(\mathbf{x}))_{\mathbf{x}\in \ZR(X)^{\an}_{S}}$ is a pseudo-metric on $L+L'$ over $S$.
	\item[(3)] Let $f : Y \to X$ be a projective morphism of $K$-schemes. Denote by $f_{S}:\ZR(Y)_{S}\to\ZR(X)_{S}$ and $f_{S}^{\an}:\ZR(Y)^{\an}_{S}\to\ZR(X)^{\an}_{S}$ the induced morphisms (cf. \ref{sub:ZR_analytic_spaces}). For any $\mathbf{y}\in \ZR(Y)^{\an}_{S}$ with underlying scheme point $y\in Y$, consider the pseudo-norm $|\cdot|_{f^{\ast}\varphi}(\mathbf{y})$ on $(f^{\ast}L)(y)$ defined by
	\begin{align*}
	\forall s\in (f^{\ast}L)(y)\cong L(f(y)), \quad |s|_{f^{\ast}\varphi}(\mathbf{y}) := |s|_{\varphi}(f^{\an}_{S}(\mathbf{y})).
	\end{align*}
Then $f^{\ast}\varphi=(|\cdot|_{f^{\ast}\varphi}(\mathbf{y}))_{\mathbf{y}\in\ZR(Y)^{\an}_{S}}$ is a pseudo-metric on $f^{\ast}L$ over $S$.
	\item[(4)] Let $P : \Spec(K') \to X$ be a closed point. Denote by $S'=(K',\phi': \Omega' \to M_{K'},\nu')$ the topological adelic curve $S \otimes_{K} K'$. Using (3) and Remark \ref{rem:CM_6.1.3_general}, we interpret the pseudo-metric $P^{\ast}\varphi$ on $P^{\ast}L$ over $S$ as a pseudo-norm family on the one-dimensional $K'$-vector space $P^{\ast}L$ w.r.t. the topological adelic curve $S'$.
\end{itemize}
\end{proposition-definition}

\begin{proof}
It suffices to check the condition ($\ast$) in bullets (1)-(3).  For (2), for any $\mathbf{p}\in\ZR(X)_{S}$, we may assume that there exists an open neighbourhood $U$ of $\mathbf{p}$ in $\ZR(X)_{S}$ and sections $s\in H^{0}(\eta_{X}^{-1},L)$ and $s'\in H^{0}(\eta_{X}^{-1}(U),L')$ such that ($\ast$) holds on $U$ for both $s$ and $s'$. Then $s\cdot s'\in H^{0}(\eta_{X}^{-1}(U),L+L')$. For (1), we argue similarly, considering inverses. For (3), we argue similarly by pulling back local sections of $L$ to local sections of $f^{\ast}L$.  
\end{proof}

\begin{example}
\label{example:pseudo-metric_family_quotient}
A crucial example of pseudo-metric is the \emph{quotient pseudo-metric}, which is defined as follows. 

Let $\overline{E}=(E,\xi)$, where $E$ is a finite-dimensional $K$-vector space and $\xi=(\|\cdot\|_{\omega})_{\omega\in\Omega}$ is a pseudo-norm family on $E$. Let $L$ be a line bundle on $X$. Assume that there exists a surjective homomorphism $\beta: E\otimes_{K}\cO_{X} \to L$. Let $\mathbf{x}\in \ZR(X)^{\an}_{S}$ with underlying scheme point $x\in X$. Denote $\omega:=f^{\an}_{S}(\mathbf{x})\in \Omega$ Then the surjection $E \otimes_{K} \kappa(x) \to L(x)$ of $\kappa(x)$-vector spaces induces a pseudo-norm $|\cdot|_{\xi}(\mathbf{x})$ on $L(x)$, where we consider the extension of scalars of $\|\cdot\|_{\omega}$ on $E\otimes_{K}\kappa(x)$  (w.r.t. the extension $\kappa(x)/K$ of pseudo-valued fields). The family $(|\cdot|_{\xi}(\mathbf{x}))_{\mathbf{x}\in\ZR(X)^{\an}_{S}}$ is then a pseudo-metric on $L$ over $S$ called the \emph{quotient pseudo-metric} induced by $\overline{E}$ and $\beta$. 

Let us justify that the condition ($\ast$) is satisfied. Let $\mathbf{p}\in\ZR(X)_{S}$ with $f_{S}(\mathbf{p})=:\tilde{\omega}\in\tilde{\Omega}$. Since $\xi$ is a pseudo-norm family on $E$, there exists an open neighbourhood $V\subset \tilde{\Omega}$ of $\tilde{\omega}$ such that there exists a basis of $E$ which is adapted to $\xi$ on $V$. Let $U:=f_{S}^{-1}(V)$, this is an open neighbourhood of $\mathbf{p}$. By Lemma \ref{lemma:pseudo-norm_family_base_change}, the construction of quotient pseudo-norms and the definition of the extension of scalars of pseudo-norms, up to shrinking $U$, there exists a basis $(s_{1},...,s_{r})$ of $E$ such that, for any $\mathbf{x}\in \ZR(X)^{\an}_{S}$, we have $|\beta(s_{1})|_{\xi}(\mathbf{x})\in \bR_{>0}$. This is the condition ($\ast$). 
\end{example}

\begin{proposition}
\label{prop:distance_quotient_pseudo-metric_families_dominated}
Let $L$ be a line bundle on $X$. Assume that there exist a finite-dimensional $K$-vector space $E$ equipped with two dominated pseudo-norm families $\xi,\xi'$ such that there exists a basis of $E$ that is globally adapted to both $\xi$ and $\xi'$, together with a surjective homomorphism $\beta :E\otimes_{K}\cO_{X}\to L$. Let $\varphi,\varphi'$ be respectively the quotient pseudo-metric on $L$ over $S$ induced by $(E,\xi),\beta$ and $(E,\xi'),\beta$. Then the local distance function $(\omega\in\Omega)\mapsto d_{\omega}(\varphi,\varphi')$ is $\nu$-dominated. 
\end{proposition}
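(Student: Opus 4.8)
The plan is to split the proof into a pointwise comparison estimate for the two pseudo-metrics and a global integrability input which is already available in the excerpt. Throughout, let $\mathbf{e}=(e_1,\dots,e_d)$ be a basis of $E$ which is globally adapted to both $\xi=(\|\cdot\|_{\omega},\cE_{\omega},N_{\omega},\widehat{E_{\omega}})_{\omega\in\Omega}$ and $\xi'$.

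\textbf{Step 1: reduction to the local distance of the pseudo-norm families.} Since $\mathbf{e}$ is adapted to both families in every $\omega\in\Omega$, Proposition-Definition~\ref{def:properties_semi-norm_family}~(6) shows that the finiteness modules, kernels and residue vector spaces of $\|\cdot\|_{\omega}$ and $\|\cdot\|'_{\omega}$ coincide, that the local distance function $\omega\mapsto d_{\omega}(\xi,\xi')\in\bR_{\geq 0}$ is well-defined, and that $e^{-d_{\omega}(\xi,\xi')}\|\cdot\|_{\omega}\leq\|\cdot\|'_{\omega}\leq e^{d_{\omega}(\xi,\xi')}\|\cdot\|_{\omega}$ on the common residue vector space $\widehat{E_{\omega}}=\cE_{\omega}\otimes_{A_{\omega}}\widehat{\kappa_{\omega}}$. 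I would then trace this two-sided inequality through the construction of the quotient pseudo-metric (Definition~\ref{def:quotient_local_pseudo-metric}): the $\epsilon$- (resp.\ $\pi$-) extension of scalars of the residue norm to $\widehat{E_{\omega}}\otimes_{\widehat{\kappa_{\omega}}}\widehat{\kappa_{\omega}}(x)$, the passage to the quotient norm via $\beta_{\omega}$, and the pullback along the morphism $\cX_{\omega}\to\mathbb{P}(\cE_{\omega})$ induced by $\beta_{\omega}$ are all monotone and homogeneous operations, so the same two-sided bound propagates to $|\cdot|_{\varphi_{\omega}}(x)$ and $|\cdot|_{\varphi'_{\omega}}(x)$ for every $x\in X_{\omega}^{\an}$. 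By Definition~\ref{def:distance_local_pseudo-metric} this yields the pointwise estimate $d_{\omega}(\varphi,\varphi')\leq d_{\omega}(\xi,\xi')$ for all $\omega\in\Omega$, so it suffices to prove that $(\omega\in\Omega)\mapsto d_{\omega}(\xi,\xi')$ is $\nu$-dominated.

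\textbf{Step 2: $\nu$-domination of $d_{\omega}(\xi,\xi')$.} Let $\xi_{\mathbf{e}}$ be the model pseudo-norm family attached to $\mathbf{e}$ (Example~\ref{example:model_semi-norm_family}); as $\mathbf{e}$ is globally adapted to $\xi$, $\xi'$ and $\xi_{\mathbf{e}}$, all the relevant local distance functions are defined and, being suprema of $|\log\|\cdot\|_1-\log\|\cdot\|_2|$, satisfy the triangle inequality $d_{\omega}(\xi,\xi')\leq d_{\omega}(\xi,\xi_{\mathbf{e}})+d_{\omega}(\xi_{\mathbf{e}},\xi')$. Applying Proposition~\ref{prop:CM_4.1.7} to the dominated family $\xi$ (resp.\ $\xi'$) with the algebraic extension $L=K$ produces $\nu$-integrable functions $A_{\mathbf{e}},A'_{\mathbf{e}}:\Omega\to[0,+\infty]$ with $d_{\omega}(\xi,\xi_{\mathbf{e}})\leq A_{\mathbf{e}}(\omega)$ and $d_{\omega}(\xi',\xi_{\mathbf{e}})\leq A'_{\mathbf{e}}(\omega)$ for all $\omega\in\Omega$. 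Hence $d_{\omega}(\xi,\xi')\leq A_{\mathbf{e}}(\omega)+A'_{\mathbf{e}}(\omega)$ is bounded by a $\nu$-integrable function, and combined with Step~1 so is $\omega\mapsto d_{\omega}(\varphi,\varphi')$, which proves the claim.

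The main obstacle I anticipate is the pointwise Lipschitz estimate of Step~1: one must check carefully that the pseudo-metric versions of extension of scalars, quotient norm and pullback behave exactly as their classical Berkovich counterparts, so that the comparison of residue norms on $E$ transfers faithfully, operation by operation, into a comparison of pseudo-metrics on $L$. Once this bookkeeping is in place the statement reduces to the domination machinery for pseudo-norm families from \S\ref{sec:pseudo-norm_families}, and Step~2 is a routine triangle-inequality argument.
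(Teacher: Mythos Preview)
Your proof is correct and follows essentially the same two-step strategy as the paper: bound $d_{\omega}(\varphi,\varphi')$ by a distance between pseudo-norm families on $E$, then invoke the domination machinery of \S\ref{sec:pseudo-norm_families}. The differences are cosmetic: the paper cites \cite{ChenMori}, Proposition~2.2.20, to obtain $d_{\omega}(\varphi,\varphi')\leq d_{\omega}(\xi^{\vee\vee},\xi'^{\vee\vee})$ (exploiting that the quotient construction factors through the double dual) and then applies Corollary~\ref{cor:CM_4.1.8} to the ultrametric families $\xi^{\vee\vee},\xi'^{\vee\vee}$, whereas you argue directly that $d_{\omega}(\varphi,\varphi')\leq d_{\omega}(\xi,\xi')$ by monotonicity and then inline the triangle-inequality argument of Corollary~\ref{cor:CM_4.1.8} via Proposition~\ref{prop:CM_4.1.7}; both routes reach the same conclusion for the same reason.
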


\begin{proof}
By (\cite{ChenMori}, Proposition 2.2.20), for any $\omega\in\Omega$, we have the inequality
\begin{align*}
d_{\omega}(\varphi,\varphi') \leq d_{\omega}(\xi^{\vee\vee},\xi'^{\vee\vee}).
\end{align*}
Using Proposition \ref{cor:CM_4.1.8}, as $\xi^{\vee\vee}_1$ and $\xi^{\vee\vee}_2$ are both ultrametric on $\Omega_{\um}$, the local distance function $(\omega\in\Omega)\mapsto d_{\omega}(\xi^{\vee\vee},\xi'^{\vee\vee})$ is $\nu$-dominated. Hence so is $(\omega\in\Omega)\mapsto d_{\omega}(\varphi,\varphi')$.
\end{proof}

\subsection{Zariski-Riemann interpretation}
\label{sub:ZR_interpretation_pseudo-metric}

Similarly to \S \ref{sub:ZR_interpretation_local_pseudo-metrics}, pseudo-metric families have an interpretation in terms of metrised line bundles on Zariski-Riemann spaces. 

\begin{proposition}
\label{prop:ZR_interpretation_pseudo-metric_families}
There is a one-to-one correspondence between line bundles on $X$ equipped with a pseudo-metric over $S$ and metrised line bundles on $\ZR(X)_{S}$.
\end{proposition}

\begin{proof}
Let $L$ be a line bundle on $X$ and $\varphi=(|\cdot|_{\varphi}(\mathbf{x}))_{\mathbf{x}\in\ZR(X)^{\an}_{S}}$ be a pseudo-metric on $L$ over $S$. Arguing as in the proof of Proposition \ref{prop:local_pseudo-metrics_justification_terminology}, the condition ($\ast$) yields a line bundle $\cL$ on $\ZR(X)_{S}$ such that $\eta_{X}^{\ast}\cL\cong L$ and such that, for any $\mathbf{x}=((p,A_{p},\phi_{p}),|\cdot|_{\mathbf{x}})\in\ZR(X)^{\an}_{S}$, the finiteness module of $|\cdot|_{\varphi}(\mathbf{x})$ coincides with $\cL\otimes_{\cO_{\ZR(X)_{S}}}A_{p}$. Thus $\varphi$ induces a metric on $\cL$. 

Conversely, let $(\cL,\varphi)$ be a metrised line bundle on $X$. Let $L:=\eta_{X}^{-1}\cL$. The metric $\varphi$ induces a family of pseudo-norms on the fibres of $L$ by Proposition \ref{prop:local_semi-norms} and Remark \ref{rem:pseudo-norm_metrised_vector_bundle}. Pulling back trivialising sections of $\cL$, we obtain open subsets of $\ZR(X)_{S}$ and sections of $L$ fulfilling the condition ($\ast$).
\end{proof}

\subsection{Dominated pseudo-metric family}
\label{sub:dominated_pseudo-metric_family}

In this subsection, we fix a topological adelic curve $S=(K,\phi:\Omega\to M_K,\nu)$ and a projective $K$-scheme $X$. We introduce a domination condition on pseudo-metrics similar to the one from (\cite{ChenMori}, \S 6.1.2). As the reader can be easily convinced from the definition, all the results presented in the section can be proved using the same ideas as in \emph{loc. cit.} and we will simply refer to the corresponding proposition for the proof. 

\begin{definition}
\label{def:dominated_pseudo-metric_family}
Let $L$ be a line bundle on $X$ equipped with a pseudo-metric $\varphi$ over $S$.
\begin{itemize}
	\item[(1)] Assume that $L$ is very ample. Then $\varphi$ is called \emph{dominated}, if there exist 
\begin{itemize}
	\item[(i)] a pair $\overline{E}=(E,\xi)$, where $E$ is a finite-dimensional $K$-vector space and $\xi=(\|\cdot\|_{\omega})_{\omega\in\Omega}$ is a pseudo-norm family on $E$;
	\item[(ii)] a surjective homomorphism $\beta: E\otimes_{K}\cO_{X} \to L$;
\end{itemize}	
such that the quotient pseudo-metric family $\varphi'$ induced by $(E,\xi)$ and $\beta$ satisfies: the local distance function $(\omega\in\Omega)\mapsto d_{\omega}(\varphi,\varphi')$ is $\nu$-dominated. 
	\item[(2)] In general, we say that $\varphi$ \emph{dominated} if there exist two very ample line bundles $L_{1},L_{2}$ on $X$, respectively equipped with two dominated pseudo-metric $\varphi_1,\varphi_2$ over $S$, such that $L=L_1-L_2$ and $\varphi=\varphi_1-\varphi_2$. 
\end{itemize}
\end{definition}

\begin{example}
\label{example:dominated_pseudo-metric_adelic_curve}
In the situation of Example \ref{example:pseudo-metric_adelic_curve}, a pseudo-metric on a line bundle $L$ on $X$ determined by a metric family in the sense of Chen-Moriwaki is dominated iff it is so as a metric family (\cite{ChenMori}, \S 6.1.2).  
\end{example}

\begin{proposition}[\cite{ChenMori}, Proposition 6.1.8]
\label{prop:tensor_product_dominated_pseudo-metric_families_very_ample}
Let $L_1,L_2$ be two very ample line bundles on $X$ equipped respectively with pseudo-metrics $\varphi_1,\varphi_2$ over $S$. If $\varphi_1$ and $\varphi_2$ are dominated, then $\varphi_1+\varphi_2$ is dominated. 
\end{proposition}

\begin{remark}
\label{rem:CM_6.1.10} We can adapt (\cite{ChenMori}, Remark 6.1.10) in our context to see that (1) and (2) in Definition \ref{def:dominated_pseudo-metric_family} are equivalent when the line bundle is very ample. We leave the details to the reader.

\end{remark}

\begin{proposition}[\cite{ChenMori}, Proposition 6.1.11]
\label{prop:quotient_pseudo-metric_family_dominated}
Let $L$ be a line bundle on $X$. Let $E$ be a finite-dimensional $K$-vector space equipped with a pseudo-norm family $\xi$. Assume that there exists a surjective homomorphism $\beta: E\otimes_{K}\cO_{X} \to L$. If $\xi$ is dominated, then the quotient pseudo-metric $\varphi$ induced by $\overline{E}=(E,\xi)$ and $\beta$ is dominated.
\end{proposition}

\begin{proposition}[(\cite{ChenMori}, Proposition 6.1.12)]
\label{prop:dominated_pseudo-metric_families}
Let $L$ be a line bundle on $X$ equipped with a pseudo-metric $\varphi$ over $S$. 
\begin{itemize}
	\item[(1)] If $\varphi$ is dominated, then $-\varphi$ is dominated.
	\item[(2)] Let $L'$ be another line bundle on $X$ equipped with a pseudo-metric $\varphi'$ over $S$. If $\varphi$ and $\varphi'$ are dominated, then $\varphi+\varphi'$ is dominated.
	\item[(3)] Let $\varphi'$ be another pseudo-metric on $L$ over $S$. If $\varphi'$ is dominated and the local distance function $(\omega\in\Omega)\mapsto d_{\omega}(\varphi,\varphi')$ is $\nu$-dominated, then $\varphi$ is dominated.
	\item[(4)] Let $f : Y \to X$ be a projective morphism of $K$-schemes. If $\varphi$ is dominated, then $f^{\ast}\varphi$ is a dominated pseudo-metric on $f^{\ast} L$ over $S$.
	\item[(5)] Let $\varphi'$ be another pseudo-metric on $L$ over $S$. If $\varphi$ and $\varphi'$ are both dominated, then the local distance function $(\omega\in\Omega)\mapsto d_{\omega}(\varphi,\varphi')$ is $\nu$-dominated.
\end{itemize}
\end{proposition}

\subsection{Regularity conditions for pseudo-metric families}
\label{sub:regularity_pseudo-metric_families}

In this subsection, we fix a topological adelic curve $S=(K,\phi:\Omega\to M_K,\nu)$ and a projective $K$-scheme $X$. 

\begin{definition}
\label{def:regularity_pseudo-metric_families}
Let $L$ be a line bundle on $X$. A pseudo-metric $\varphi=(|\cdot|_{\varphi}(\mathbf{x}))_{\mathbf{x}\in\ZR(X)_{S}^{\an}}$ on $L$ over $S$ is called respectively \emph{usc/lsc/continuous} if the corresponding metric on the associated metrised line bundle on $\ZR(X)_{S}$ is usc/lsc,/continuous (cf. \S \ref{subsub:metrised_vector_bundle_ZR_space}). This is equivalent to say that, for any open subset $U\subset\ZR(X)_{S}$ and any section $s\in H^{0}(\eta_{X}^{-1}(U),L)$, the map $(\mathbf{x}\in U^{\an})\mapsto |s|_{\varphi}(\mathbf{x})\in[0,+\infty]$ is usc/lsc/ continuous.
\end{definition}

\begin{remark}
\label{rem:regularity_pseudo-metric_adelic_curve}
In the situation of Example \ref{example:pseudo-metric_adelic_curve}, any usc/lsc/continuous pseudo-metric yields a measurable metric family in the sense of (\cite{ChenMori}, Definition 4.1.27).
\end{remark}

\begin{proposition}
\label{prop:regularity_pseudo-metric_families}
\begin{itemize}
	\item[(1)] Let $L$ be a line bundle on $X$ equipped with a respectively usc/lsc/continuous pseudo-metric $\varphi$. Then the pseudo-metric $-\varphi$ on $-L$ is respectively lsc/usc/continuous.
	\item[(2)] Let $L_1,L_2$ be two line bundles on $X$ respectively equipped with usc/lsc/continuous pseudo-metrics $\varphi_1,\varphi_2$. Then $\varphi_1+\varphi_2$ is a usc/lsc/continuous pseudo-metric on $L_1+L_2$.
	\item[(3)] Let $f : Y \to X$ be a projective morphism of $K$-schemes. Let $L$ be a line bundle equipped with a respectively usc, lsc, continuous pseudo-metric $\varphi$. Then the pseudo-metric $f^{\ast}\varphi$ on $f^{\ast}L$ is respectively usc, lsc, continuous.
\end{itemize}
\end{proposition}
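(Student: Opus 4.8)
The statement is a routine "reduction to the local definitions" once one unwinds what \emph{usc}, \emph{lsc}, \emph{continuous} mean for pseudo-metric families: by Definition~\ref{def:regularity_pseudo-metric_families}, the relevant regularity is tested after pulling back along an \emph{arbitrary closed point} $P$ of the base scheme, and the resulting object is a pseudo-norm family on a one-dimensional vector space over the field $K'=\kappa(P)$, relative to the topological adelic curve $S'=S\otimes_K K'$ (cf.\ Remark~\ref{rem:CM_6.1.3_general}). So the whole proposition amounts to checking that the three algebraic operations on pseudo-metric families commute, at the level of pull-back to closed points, with the corresponding operations on local pseudo-metrics, and then invoking the local statements in Proposition-Definition~\ref{propdef:local_pseudo-metric}.

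First I would treat (1). Fix a closed point $P:\Spec(K')\to X$ and let $S'=S\otimes_K K'=(K',\phi':\Omega'\to M_{K'},\nu')$ with projection $\pi_{K'/K}:\Omega'\to\Omega$. For $\omega'\in\Omega'$ lying over $\omega$, the point $x_{\omega'}\in X^{\an}_\omega$ produced in Definition~\ref{def:pseudo-metric_family_algebraic_constructions}(4) is the same whether we use $\varphi$ or $-\varphi$ (it only depends on the model family and on $\omega'$), and the lifted pseudo-norm on $P^\ast(-L)=(P^\ast L)^\vee$ is exactly the dual of the lifted pseudo-norm on $P^\ast L$; equivalently $|\cdot|_{-\varphi_\omega}(x_{\omega'})=|\cdot|_{\varphi_\omega}(x_{\omega'})^{-1}$. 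Hence, for any section $s$ of $P^\ast L$, the function $\omega'\mapsto\log\|s\|_{-\varphi,\omega'}=-\log\|s^{-1}\|_{\varphi,\omega'}$ is usc, lsc, or continuous according as $\varphi$ is lsc, usc, or continuous — using Proposition-Definition~\ref{propdef:local_pseudo-metric}(1) pointwise and the fact that negation swaps usc/lsc and preserves continuity. This gives (1).

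For (2) I would again fix $P$ and $\omega'\in\Omega'$ over $\omega$. The model family of $\varphi_1+\varphi_2$ is $(\cX_\omega,\cL_{1,\omega}+\cL_{2,\omega})_{\omega}$, so the point $x_{\omega'}$ attached to $P$ is unchanged, and by Proposition-Definition~\ref{propdef:local_pseudo-metric}(2), $|\cdot|_{\varphi_{1,\omega}+\varphi_{2,\omega}}(x_{\omega'})=|\cdot|_{\varphi_{1,\omega}}(x_{\omega'})\cdot|\cdot|_{\varphi_{2,\omega}}(x_{\omega'})$; thus the lifted pseudo-norm on $P^\ast(L_1+L_2)=P^\ast L_1\otimes P^\ast L_2$ is (the lift of) the tensor product of the two lifted pseudo-norms. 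Choosing a generating section $s=s_1\otimes s_2$ we get $\log\|s\|_{\varphi_1+\varphi_2,\omega'}=\log\|s_1\|_{\varphi_1,\omega'}+\log\|s_2\|_{\varphi_2,\omega'}$, and a sum of two usc (resp.\ lsc, resp.\ continuous) functions on $\Omega'$ has the same property; hence $P^\ast(\varphi_1+\varphi_2)$ is usc/lsc/continuous, proving (2). For (3) I would factor: if $g_\omega:\cY_\omega\to\cX_\omega$ extends $f$, then for a closed point $Q$ of $Y$ with image $P=f(Q)$ the valuative-criterion point $x^{Y}_{\omega'}\in Y^{\an}_\omega$ maps under $g$ to the point $x^{X}_{\omega'}\in X^{\an}_\omega$ attached to $P$, because the unique $\Spec(A'_{\omega'})$-morphisms into $\cY_\omega$ and $\cX_\omega$ are compatible via $g_\omega$ by uniqueness in the valuative criterion. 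Consequently $Q^\ast(f^\ast\varphi)_{\omega'}$ equals $P^\ast\varphi_{\omega'}$ (as pseudo-norms on the same one-dimensional $K'$-space, using Proposition-Definition~\ref{propdef:local_pseudo-metric}(3)); since this holds for every closed point $Q$ and every $\omega'$, the regularity of $P^\ast\varphi$ for all closed points $P$ of $X$ — which we are given — transfers to the regularity of $Q^\ast(f^\ast\varphi)$ for all closed points $Q$ of $Y$. This yields (3).

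\textbf{Main obstacle.} None of the steps is deep; the only genuinely delicate point is the geometric identification in (3)–(4): one must verify carefully that the point of $X^{\an}_\omega$ associated to a closed point via the valuative criterion of properness is functorial in the model, i.e.\ that $g_\omega(x^{Y}_{\omega'})=x^{X}_{\omega'}$, so that pull-back of pseudo-metric families is compatible with pull-back along closed points. Once that compatibility is pinned down, (1)–(3) follow formally from the local assertions of Proposition-Definition~\ref{propdef:local_pseudo-metric} together with the elementary behaviour of usc/lsc/continuous functions under negation and addition.
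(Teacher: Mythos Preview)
Your approach is the same as the paper's: reduce each assertion to the level of pseudo-norm families obtained by pulling back along closed points, then conclude using the one-dimensional case. The paper's proof is terser --- for (1) it writes $P^\ast(-\varphi)=(P^\ast\varphi)^\vee$ and cites Proposition~\ref{prop:adelic_line_bundle_tac}; for (2) it writes $P^\ast(\varphi_1+\varphi_2)=P^\ast\varphi_1\otimes P^\ast\varphi_2$ and appeals to the explicit tensor formula in dimension~$1$; for (3) it just refers to the construction of the pullback.

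Two points deserve comment. First, in (1) you correctly observe that taking the dual in dimension one \emph{swaps} usc and lsc, so that $-\varphi$ is lsc when $\varphi$ is usc and vice versa. This agrees with the local statement in Proposition-Definition~\ref{propdef:local_pseudo-metric}(1), but it contradicts the literal wording of assertion~(1) here, which reads usc~$\mapsto$~usc, lsc~$\mapsto$~lsc. The paper's own proof only invokes Proposition~\ref{prop:adelic_line_bundle_tac}, which treats the continuous case alone; so you have in fact uncovered what appears to be a misprint in the statement, and you should not write ``this gives (1)'' without flagging the discrepancy.

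Second, your treatment of (3) contains a genuine, if minor, gap: when $Q$ is a closed point of $Y$ with image $P=f(Q)$, the residue fields $\kappa(Q)$ and $\kappa(P)$ need not coincide, so $Q^\ast(f^\ast L)$ and $P^\ast L$ are not literally ``the same one-dimensional $K'$-space'' --- one has $Q^\ast(f^\ast L)\cong(P^\ast L)\otimes_{\kappa(P)}\kappa(Q)$, and the two pseudo-norm families live over the different adelic curves $S\otimes_K\kappa(Q)$ and $S\otimes_K\kappa(P)$. What is needed is that $Q^\ast(f^\ast\varphi)$ identifies with the extension of scalars of $P^\ast\varphi$ along $\kappa(Q)/\kappa(P)$, and that in dimension one this extension preserves usc/lsc/continuity: for a generator $s$ of $P^\ast L$ and $\lambda\in\kappa(Q)$ one has $\log\|\lambda s\|_{\omega''}=\log|\lambda|_{\omega''}+\log\|s\|_{\pi_{\kappa(Q)/\kappa(P)}(\omega'')}$, a sum of a continuous function and the pull-back along a continuous map of a usc/lsc/continuous one. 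With this correction your argument for (3) goes through.
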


\begin{proof}
\textbf{(1):} Let $U\subset\ZR(X)_{S}$ be an open subset such that $L$ and $-L$ are respectively trivialised by sections $s,\alpha$ on $\eta_{X}^{-1}(U)$ and the functions $|s|_{\varphi}(\cdot),|\alpha|_{\varphi}(\cdot)$ have positive real values on $U^{\an}$. Then since
\begin{align*}
|\alpha|_{\varphi}(\cdot)=\frac{|\alpha(s)|(\cdot)}{|s|_{\varphi}(\cdot)}
\end{align*}
on $U^{\an}$ and $|\alpha(s)|(\cdot)$ is continuous on $U^{\an}$, we get (1).

\textbf{(2) and (3):} It is seen directly from the construction of the sum and pullback operation on pseudo-metrics.
\end{proof}

\begin{proposition}
\label{prop:regularity_quotient_pseudo-metric_families}
Let $L$ be a line bundle on $X$. Let $\overline{E}=(E,\xi)$ where $E$ is a finite-dimensional $K$-vector space and $\xi$ is a pseudo-norm family on $E$ which is assumed to be ultrametric on $\Omega_{\um}$. Assume that there exists a surjective homomorphism $\beta : E \otimes_{K} \cO_{X} \to L$. Denote by $\varphi$ the corresponding quotient pseudo-metric on $L$. The following assertion holds.
\begin{itemize}
	\item[(1)] If $\xi$ is usc, then $\varphi$ is a usc pseudo-metric on $L$.
	\item[(2)] If $\xi$ and $\xi^{\vee}$ are both usc, then $\varphi$ is continuous
\end{itemize}
\end{proposition}

\begin{proof} 
\textbf{(1):} $\beta$ corresponds to a morphism of schemes $g:X\to \mathbb{P}(E)$ such that $L\cong g^{\ast}\cO_{\mathbb{P}(E)}(1)$. Thus, it suffices to prove that $\cO_{\mathbb{P}(E)}(1)$ equipped with the quotient metric induced by $\overline{E}$ is usc. Let $\pi:\mathbb{P}(E) \to \Spec(K)$ be the structural morphism. Consider $\pi^{\ast}\overline{E}=(\pi^{\ast}E,\pi^{\ast}\xi)$, this is a metrised vector bundle on $\ZR(\mathbb{P}(E))_{S}$ (cf. Definition \ref{def:ZR_space_tac} (5)). Since $\xi$ is usc, then $\pi^{\ast}\xi$ is usc as well (cf. Proposition-Definition \ref{prop-def:_metrised_vector_bundles_constructions} (5)). The quotient metric on $\cO_{\mathbb{P}(E)}(1)$ is given as the quotient metric induced by the universal surjection $\pi^{\ast}E \to \cO_{\mathbb{P}(E)}(1)$, where $\pi^{\ast}E$ is equipped with $\pi^{\ast}\xi$. Since $\pi^{\ast}\xi$ is usc, the quotient metric on $\cO_{\mathbb{P}(E)}(1)$ is usc as well (Proposition-Definition (2)). 

\textbf{(2):} We use the same notation as in the proof of (1). By Proposition-Definition \ref{prop-def:_metrised_vector_bundles_constructions} (5), the dual metric $-\varphi$ is obtained as the restriction of the metric $\pi^{\ast}\xi^{\vee}$ to $-L$. By Proposition-Definition \ref{prop-def:_metrised_vector_bundles_constructions} (1) and (3), $-\varphi$ is usc. Thus $\varphi=-(-\varphi)$ is lsc, and thus continuous by (1).
\end{proof}

\subsection{Pushforward of pseudo-metric families}
\label{sub:pushforward_pseudo-metric_families}

This subsection is devoted to studying the behaviour of the supremum pseudo-norm family determined by a pseudo-metric. This notion is crucial in view of developing volume functions on topological adelic curves. We fix a topological adelic curve $S=(K,\phi:\Omega\to M_K,\nu)$ and a projective $K$-scheme $X$. 

\begin{definition}
\label{def:pushforward_pseudo-metric_family_general}
Assume that $X$ is geometrically reduced. Let $L$ be a line bundle on $X$. Let $\varphi$ be a pseudo-metric on $L$ over $S$ determining a family $(\varphi_{\omega})_{\omega\in\Omega}$ as in Proposition \ref{prop:pseudo-metric_family_global_local}. For any $\omega\in \Omega$, the pseudo-metric $\varphi_{\omega}$ on $L$ in $\omega$ induces a pseudo-norm $\|\cdot\|_{\varphi_{\omega}}$ on $H^{0}(X,L)$ (cf. Proposition-Definition \ref{propdef:supnorm_pseudo-metric}). In the case where the family $\xi_{\varphi}:=(\|\cdot\|_{\varphi_{\omega}})_{\omega\in\Omega}$ is a pseudo-norm family in the sense of Definition \ref{def:semi-norm_family}, we call it the \emph{pushforward pseudo-norm family} of $\varphi$. In that case, we say that the pushforward pseudo-norm family of $\varphi$ is \emph{well-defined}.
\end{definition}

\begin{remark}
We use the notation of Definition \ref{def:pushforward_pseudo-metric_family_general}. In general, it is not clear that the condition $(\ast)$ in Definition \ref{def:semi-norm_family} is satisfied for the family $\xi$ of supremum pseudo-norms. We will give a sufficient condition to ensure this (Proposition \ref{prop:existence_pushforward_pseudo-norm_family}). 
\end{remark}

\begin{example}
\label{example:pushforward_pseudo-metric_family}
Let $E$ be a finite-dimensional $K$-vector space and let $\xi$ be a pseudo-norm family on $E$. Assume that $X$ is geometrically reduced. Let $L$ be a line bundle on $X$. Assume that we have a surjective homomorphism $\beta:E\otimes_{K}\cO_{X}\to L$. Denote by $\varphi$ the quotient pseudo-metric on $L$ over $S$ defined by $(E,\xi)$ and $\beta$. By construction of the quotient pseudo-metric, we see that any adapted basis of $E$ to $\xi$ (on a Zariski open subset $U\subset \tilde{\Omega}$) is also adapted to the pushforward pseudo-norm family $\xi_{\varphi}$. Thus $\xi_{\varphi}$ is well-defined. Moreover, if $\xi$ possesses a globally adapted basis, then $\xi_{\varphi}$ possesses a globally adapted basis. 
\end{example}

\begin{proposition}
\label{prop:CM_4.1.18}
Assume that $X$ is geometrically reduced. Let $L$ be a line bundle on $X$. Let $\varphi$ be a pseudo-metric on $L$ over $S$. Assume that the pushforward pseudo-norm family $\xi_{\varphi}$ of $\varphi$ is well-defined. Let $K'/K$ be a finite field extension. Let $X_{K'}:=X \otimes_{K} K'$, $L':=L\otimes_{K}K'$ and $S' := S \otimes_{K} K'=(K',\phi':\Omega' \to M_{K'},\nu')$ denote the topological adelic curve constructed in \S \ref{sub:finite_extension_tac}. Denote by $\varphi'$ the pseudo-metric on $L'$ by $\varphi$ induced by extension of scalars. Then the following hold.
\begin{itemize}
	\item[(1)] The pushforward pseudo-norm family $\xi_{\varphi'}$ of $\varphi'$ is well-defined.
	\item[(2)] If $\xi_{\varphi'}$ is dominated, then $\xi_{\varphi}$ is dominated.
\end{itemize}
\end{proposition}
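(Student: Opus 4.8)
The proof is an adaptation of (\cite{ChenMori}, Proposition 4.1.18) to our setting, combined with the behaviour of supremum pseudo-norms under finite base change as established in \S \ref{sub:finite_extension_tac}. The plan is to first handle the question of well-definedness (statement (1)), then reduce the domination statement (2) to Proposition \ref{prop:constructions_dominated_semi-norm_families} (7).

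For (1), I would argue locally. Fix $\omega\in\Omega$ and let $\omega'\in\pi_{K'/K}^{-1}(\{\omega\})$. By construction of $S'$ in \S \ref{sub:finite_extension_tac}, the finiteness ring $A'_{\omega'}$ is a localisation of the integral closure of $A_{\omega}$ in $K'$; in particular $A'_{\omega'}$ is a valuation ring of $K'$ lying over $A_{\omega}$, and the model $\cX'_{\omega'}$ of the pseudo-metric family $\varphi'$ can be taken to be $\cX_{\omega}\otimes_{A_{\omega}}A'_{\omega'}$ (cf. Proposition-Definition \ref{propdef:local_pseudo-metric} (4.i)), which is again flat and coherent. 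Since $X$ is geometrically reduced, so is $X_{K'}$, hence Proposition-Definition \ref{def:supnorm_pseudo-metric} applies and $\varphi'_{\omega'}$ induces a supremum pseudo-norm $\|\cdot\|_{\varphi'_{\omega'}}$ on $H^0(X_{K'},L')\cong H^0(X,L)\otimes_K K'$ with finiteness module $H^0(\cX'_{\omega'},\cL'_{\omega'}) = H^0(\cX_{\omega},\cL_{\omega})\otimes_{A_{\omega}}A'_{\omega'}$, using flat base change for cohomology. To verify condition $(\ast)$ of Definition \ref{def:semi-norm_family} for the family $\xi'=(\|\cdot\|_{\varphi'_{\omega'}})_{\omega'\in\Omega'}$: given $\omega'_0\in\Omega'$, since $\xi$ is a pseudo-norm family, there is an open neighbourhood $U$ of $\omega_0:=\pi_{K'/K}(\omega'_0)$ in $\Omega$ and a basis $(e_1,\dots,e_d)$ of $H^0(X,L)$ adapted to $\xi$ on $U$; by Proposition \ref{prop:projection_morphism_finite_separable_extension} and the continuity of $\pi_{K'/K}$, $(\pi_{K'/K})^{-1}(U)$ is an open neighbourhood of $\omega'_0$ on which $(e_1,\dots,e_d)$, viewed as a $K'$-basis of $H^0(X_{K'},L')$, is adapted to $\xi'$, because the value $\|e_i\|_{\varphi'_{\omega'}}$ only depends on the residue data of $\omega'$ over $\omega$ and is finite and nonzero whenever $\|e_i\|_{\varphi_{\pi_{K'/K}(\omega')}}$ is. This gives (1).

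For (2), the key observation is that the supremum pseudo-norm is compatible with base change in the sense required by Proposition \ref{prop:constructions_dominated_semi-norm_families} (7). Concretely, I would check that for any $\omega\in\Omega$, any $\omega'\in\pi_{K'/K}^{-1}(\{\omega\})$ and any $s\in H^0(X,L)$, one has $\|s\|_{\varphi'_{\omega'}} = \|s\|_{\varphi_{\omega}}$. This follows from the fact that the completed residue field $\widehat{\kappa_{\omega'}}$ is a finite extension of $\widehat{\kappa_{\omega}}$, that $X_{\omega'}^{\an} = (\cX_{\omega}\otimes_{A_{\omega}}\widehat{\kappa_{\omega'}})^{\an}\to X_{\omega}^{\an}$ is surjective, and that $\varphi'_{\omega'}$ is by definition the pullback of $\varphi_{\omega}$; the supremum of $|s|_{\varphi'_{\omega'}}$ over $X_{\omega'}^{\an}$ therefore equals the supremum of $|s|_{\varphi_{\omega}}$ over $X_{\omega}^{\an}$. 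Once this equality is in place, Proposition \ref{prop:constructions_dominated_semi-norm_families} (7) applied to the vector space $E=H^0(X,L)$, the families $\xi$ and $\xi'$, immediately yields that $\xi$ is dominated from the hypothesis that $\xi'$ is.

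\textbf{Main obstacle.} The routine points are the flat base change for cohomology and the invariance of the supremum norm under analytic surjection; these mirror \cite{ChenMori} closely. The genuine subtlety, and the step I expect to require the most care, is the bookkeeping needed in (1): making sure that the model family of $\varphi'$ produced by the scalar extension construction of \S \ref{sub:finite_extension_tac} is compatible (via the $A_{\omega}\to A'_{\omega'}$ base change) with a \emph{fixed} flat coherent model family for $\varphi$, so that the finiteness modules match up and condition $(\ast)$ for $\xi'$ can genuinely be deduced from condition $(\ast)$ for $\xi$ rather than being merely plausible. This is where one must invoke that $A'_{\omega'}$ is flat over $A_{\omega}$ (both being Prüfer, resp. valuation rings with $A'_{\omega'}$ dominating $A_{\omega}$) and that finite presentation is preserved, exactly as recorded in Proposition-Definition \ref{propdef:local_pseudo-metric} (4.i).
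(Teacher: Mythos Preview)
Your proposal is correct and follows essentially the same approach as the paper: part (1) is deduced from the well-definedness of $\xi$ (the paper says only that it ``follows directly''), and part (2) is obtained by verifying the compatibility $\|s\|_{\varphi'_{\omega'}}=\|s\|_{\varphi_{\omega}}$ and then invoking Proposition \ref{prop:constructions_dominated_semi-norm_families} (7). The paper's proof is two sentences long; your version spells out the flat base change and adapted-basis bookkeeping that the paper leaves implicit, but there is no difference in strategy.
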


\begin{proof}
(1) follows directly from the fact that the $\xi_{\varphi}$ is well-defined. Moreover, the pseudo-norm families $\xi_{\varphi},\xi_{\varphi'}$ satisfy the hypotheses of Proposition \ref{prop:constructions_dominated_semi-norm_families} (7). Hence (2) holds.
\end{proof}

We are now able to prove the analogue of (\cite{ChenMori}, Theorem 6.1.13) in our setting.

\begin{theorem}
\label{th:pushforward_pseudo-metric_family_dominated_general}
Assume that $X$ is geometrically integral. Let $L$ be a line bundle on $X$ equipped with a pseudo-metric $\varphi$ over $S$. Assume that the pushforward pseudo-norm family $\xi$ of $\varphi$ is well-defined. If $\varphi$ is dominated, then $\xi$ is strongly dominated. 
\end{theorem}

\begin{proof}

As $\xi$ is ultrametric on $\Omega_{\um}$, it suffices to prove that $\xi$ is dominated. Moreover, we claim that we may assume that $\xi$ admits a globally adapted basis. Indeed, by quasi-compactness of $\tilde{\Omega}$, we can cover $\tilde{\Omega}$ by finitely many open subsets such that on each of these subsets, there is a basis of $H^{0}(X,L)$ that is globally adapted. By restricting the adelic structure to the analytification of these subsets, we obtain a finite open covering of $\Omega$ and if the statement holds when restricting the adelic structure to each of the members of the covering, then the statement holds over $S$.

\begin{claim}
\label{claim_1}
Assume that $L$ is very ample. Then $\xi$ is dominated.
\end{claim}

\begin{proof}
Let $E= H^{0}(X,L)$ and $r:= \dim_{K}(E)$. As in the proof of (\cite{ChenMori}, Theorem 6.1.13), there exist a finite extension $K'/K$ and closed points $P_1,...,P_r$ of $X$ such that $\kappa(P_i) \subset K'$ for all $i=1,...,r$. Moreover, we have a strictly decreasing sequence of $K$-vector spaces
\begin{align*}
\{0\} = E_r \subsetneq \cdots \subsetneq E_r = E \otimes_{K} K',
\end{align*} 
such that
\begin{align*}
\forall i\in\{1,...,r\}, \quad E_i = \{s\in E_0 : s(P_1) = \cdots s(P_r) = 0\}.
\end{align*}
By Proposition \ref{prop:CM_4.1.18}, we may assume that $K=K'$. 

Let $\alpha_1,...,\alpha_r$ denote respectively local bases of $L$ around $P_1,...,P_r$. Then we define a basis $(\theta_1,...,\theta_r)$ of $E^{\vee}$ as follows. 
\begin{align*}
\forall i\in\{1,...,r\}, \quad \forall s \in E, \quad \theta_i(s) := f_{s}(P_i), 
\end{align*}
where, for any $s\in E$, $s=f_{s}\alpha_i$ around $P_i$. Denote by $(e_1,...,e_r)$ the corresponding dual basis of $E$. 

Let $\omega\in\Omega$. Define a pseudo-norm on $E$ in $\omega$ as follows. Let $s\in E$ written as $s=s_1e_1 +\cdots s_re_r$, where $s_1,...,s_r\in K$. We set
\begin{align*}
\|s\|'_{\omega} := \displaystyle\max_{i=1,...,r} |s_i|_{\omega}.
\end{align*}
Then $\xi'=(\|\cdot\|'_{\omega})_{\omega}$ is a pseudo-norm family on $E$ and the basis $(e_1,...,e_r)$ is globally adapted basis to $\xi'$. Note that $\xi'$ is dominated. Moreover, Lemma \ref{lemma:pseudo-norm_family_globally_adapted} implies that, there exists an open subset $\Omega'\subset \Omega$ such that, for any $\omega\in\Omega'$, the finiteness modules of $\|\cdot\|_{\omega}$ and $\|\cdot\|'_{\omega}$ coincide and $\nu(\Omega')=0$. From now on, we work on the topological adelic curve $S'=(K,\phi' : \Omega' \to M_{K},\nu_{|\Omega'})$ obtained by restriction of the adelic structure on $\Omega'$ (cf. \S \ref{subsub:restriction_adelic_structure}). 

We have a  surjective homomorphism $\beta:E\otimes_{K}\cO_{X}$ and we denote by $\varphi'$ the quotient pseudo-metric induced by $(E,\xi')$ and $\beta$. By Proposition \ref{prop:pseudo-metric_family_global_local}, we can write $\varphi=(\varphi'_{\omega})_{\omega\in\Omega'}$ as a family of local pseudo-metrics. Let us prove that, for any $\omega\in \Omega'$, we have $\|\cdot\|'_{\omega}=\|\cdot\|_{\varphi'_{\omega}}$. Let $\omega\in\Omega'$. Using (\cite{ChenMori}, Proposition 2.2.23), we have the inequality $\|\cdot\|_{\varphi'_{\omega}}\leq\|\cdot\|'_{\omega}$. We prove the converse inequality. For any $i=1,...,r$ and $\omega\in\Omega'$, the rational point $P_{i}$ defines a unique point $\mathbf{P_{i,\omega}}$ in $\ZR(X/A_{\omega})_{S}^{\an}$. Let $(i,j)\in \{1,...,r\}$. Then we have
\begin{align*}
|e_j|_{\varphi'_{\omega}}(\mathbf{P_{i,\omega}}) =  \left\{\begin{matrix}
1 \quad \text{if } i=j, \\
0 \quad \text{if } i\neq j.\end{matrix}\right.
\end{align*}
Let $s=s_1e_1+\cdots+s_re_r \in E$, where $s_1,...,s_r\in K$. Then
\begin{align*}
\forall i=1,...,r, \quad |s|_{\varphi'_{\omega}}(\mathbf{P_{i,\omega}}) = \|s\|'_{\omega} \leq \|s\|_{\varphi'_{\omega}}.
\end{align*} 

Therefore, the following holds.
\begin{align*}
\forall \omega\in \Omega', \quad d_{\omega}(\xi,\xi') \leq d_{\omega}(\|\cdot\|_{\varphi_{\omega}},\|\cdot\|_{\varphi_{\omega}}) \leq d_{\omega}(\varphi,\varphi').
\end{align*}
As $\varphi$ is dominated, the local distance function $(\omega\in\Omega') \mapsto d_{\omega}(\varphi,\varphi')$ is $\nu_{|\Omega'}$-dominated. Therefore, using Proposition \ref{prop:CM_4.1.6}, we obtain that $\xi$ is dominated. This concludes the proof of the claim.
\end{proof}
 
\begin{claim}
\label{claim_2}
For any $s\in H^{0}(X,L)\setminus\{0\}$, the function $(\omega\in\Omega)\mapsto \log\|s\|_{\varphi_{\omega}}$ is $\nu$-dominated.
\end{claim}

\begin{proof}
Let $s\in H^{0}(X,L)\setminus\{0\}$. Lemma \ref{lemma:pseudo-norm_family_singular} implies that there exists a locally closed subset $\Omega'\subset \Omega$ such that $\nu(\Omega\setminus \Omega')=0$ and, for any $\omega\in\Omega'$, $\|s\|_{\varphi_{\omega}} \notin \{0,+\infty\}$. We denote by $S'=(K,\phi':\Omega' \to M_K,\nu_{|\Omega'})$ the restriction of the adelic structure introduced in \S \ref{subsub:restriction_adelic_structure}. 

Choose a very ample line bundle $L_{1}$ on $X$ such that $L_2:=L+L_1$ is very ample. For $i=1,2$, $L_{i}$ define a surjective homomorphism $\beta'_{i} : H^{0}(X,L_{i})\otimes_{K}\cO_{X}\to L_{i}$. Note that multiplication by $s$ yields an injective homomorphism $H^{0}(X,L_{1})\to H^{0}(X,L_{2})$. 

We consider an arbitrary dominated pseudo-norm family $\xi'_{2}$ on $H^{0}(X,L_{2})$ that admits a globally adapted basis. Denote by $\xi'_{1}$ the restriction of $\xi'_{2}$ to $H^{0}(X,L_{1})$. Note that Proposition \ref{prop:constructions_dominated_semi-norm_families} (1) implies that $\xi'_{1}$ is dominated. Let $i=1,2$. Denote by $\varphi'_{i}$ the quotient pseudo-metric induced by $\xi'_{i}$ and $\beta_{i}$. Let $\varphi' := \varphi'_2-\varphi'_1$, this is a pseudo-metric on $L$. Hence Proposition \ref{prop:dominated_pseudo-metric_families} (5) implies that the local distance function $(\omega\in \Omega') \mapsto d_{\omega}(\varphi,\varphi')$ is $\nu_{|\Omega'}$-dominated. 

Arguing as in the proof of (\cite{ChenMori}, Claim 6.1.16), we obtain that for any $\omega\in\Omega'$, we have $\|s\|_{\varphi'_{\omega}}\leq 1$. Moreover, for any $u\in E$ such that $\|u\|_{\varphi'_{1,\omega'}}\in\bR_{>0}$ for any $\omega'\in\Omega'$  (which exists up to shrinking $\Omega'$), we have
\begin{align*}
\|su\|_{\varphi'_{2,\omega}} \leq \|s\|_{\varphi'_{\omega}} \|u\|_{\varphi'_{1,\omega}}. 
\end{align*}
Using the fact that $\xi'_{1}$ and $\xi'_{2}$ are dominated. we deduce that the function $(\omega\in\Omega')\mapsto \log\|s\|_{\varphi'_{\omega}}$ is $\nu_{|\Omega'}$-dominated. We can conclude using the fact that
\begin{align*}
\forall \omega\in\Omega', \quad d_{\omega}(\|\cdot\|_{\varphi_{\omega}},\|\cdot\|_{\varphi'_{\omega}})\leq d_{\omega}(\varphi,\varphi').
\end{align*}
\end{proof}

We now prove Theorem \ref{th:pushforward_pseudo-metric_family_dominated_general}. Choose a very ample line bundle $L_{1}$ on $X$ such that $L_2:=L+L_1$ is very ample. Fix a global section $t\in H^{0}(X,L_{1})$. Let $\varphi_1$ be a dominated pseudo-metric such that the pushforward metric family of $\varphi_{1}$ is well-defined and possesses a globally adapted basis and, for any $\omega\in\Omega$, we have $\|t\|_{\varphi_{1,\omega}}\leq 1$ (it is possible to do so by choosing a suitable quotient pseudo-metric family cf. Example \ref{example:pushforward_pseudo-metric_family}). Let $\varphi_{2} := \varphi +\varphi_{1}$ and denote by $\xi_{2}$ the pushforward pseudo-metric family of $\varphi_{2}$. Proposition \ref{prop:dominated_pseudo-metric_families} (2) implies that $\varphi_{2}$ is dominated. As $L_{2}$ is very ample, Claim \ref{claim_1} implies that $\xi_{2}$ is strongly dominated.

Fix a basis $(s_{1},...,s_{r})$ of $H^{0}(X,L)$ which is globally adapted to $\xi$. For $i=1,...,r$, write $t_{i}:=ts_{i} \in H^{0}(X,L_{2})$. Enlarge $(t_{1},...,t_{r})$ to a basis $\mathbf{t}:=(t_{1},...,t_{n})$ of $H^{0}(X,L_2)$ which is globally adapted to $\xi_2$ (this is possible up to removing a measure $0$ set from $\Omega$). Let $\xi_{\mathbf{t},2}=(\|\cdot\|_{\mathbf{t},2,\omega})_{\omega\in\Omega}$ denote the model pseudo-norm family on $H^{0}(X,L_2)$ defined by the basis $\mathbf{t}$ (cf. Example \ref{example:model_semi-norm_family} (2)). Corollary \ref{cor:analogue_ChenMori_4.1.10} implies that the local distance function $(\omega\in\Omega) \mapsto d_{\omega}(\xi_2,\xi_{\mathbf{t},2})$ is $\nu$-dominated. Thus, there exists a $\nu$-dominated function $A:\Omega \to [-\infty,+\infty]$ such that, for almost all $\omega\in\Omega$, for any $(\lambda_1,...,\lambda_r)\in K^{r}$, we have
\begin{align}
\label{eq:pushforward_1}
\log\|\lambda_1 s_1 + \cdots + \lambda_r s_r\|_{\varphi_{\omega}} &\geq \log\|\lambda_1 t_1 + \cdots + \lambda_r t_r\|_{\varphi_{2,\omega}} \nonumber\\
& \geq \log\|\lambda_1 t_1 + \cdots + \lambda_r t_r\|_{\varphi_{\mathbf{t},2,\omega}} - A(\omega), 
\end{align}
where the first inequality comes from the fact that $\|t\|_{\varphi_{1,\omega}}\leq 1$. Moreover, we have the inequality
\begin{align*}
\log\|\lambda_{1}s_1 +\cdots \lambda_r s_r\|_{\varphi_{\omega}} \leq \log\|\lambda_{1}t_1 +\cdots \lambda_r t_r\|_{\varphi_{\mathbf{t},2,\omega}} + \displaystyle\max_{1\leq i \leq r} \log\|s_i\|_{\varphi_{\omega}}.
\end{align*}
Claim \ref{claim_2} implies that there exists a $\nu$-dominated function $B : \Omega \to [-\infty,+\infty]$ such that the inequality
\begin{align}
\label{eq:pushforward_2}
\log\|\lambda_{1}s_1 +\cdots \lambda_r s_r\|_{\varphi_{\omega}} \leq \log\|\lambda_{1}t_1 +\cdots \lambda_r t_r\|_{\varphi_{\mathbf{t},2,\omega}} + B(\omega)
\end{align} 
holds for almost all $\omega\in\Omega$.

Denote by $\xi'$ the restriction of the pseudo-norm family $\xi_{\mathbf{t},2}$ to $H^{0}(X,L)$. Then the basis $(s_1,...,s_r)$ is both globally adapted to $\xi$ and $\xi'$. Moreover (\ref{eq:pushforward_1}) and (\ref{eq:pushforward_2}) imply that the local distance function $(\omega\in\Omega) \mapsto d_{\omega}(\xi,\xi')$ is $\nu$-dominated. By Corollary \ref{cor:analogue_ChenMori_4.1.10}, the pseudo-norm family $\xi$ is strongly dominated.
\end{proof}

\begin{proposition}
\label{prop:CM_6.1.18_general}
We consider the case where $X=\Spec(K')$, where $K'/K$ is a finite extension of fields. We denote $S':=S \otimes_{K} K' =(K',\phi' : \Omega'\to M_{K'},\nu')$. Let $L$ be a line bundle on $X$ equipped with a pseudo-metric $\varphi$. Denote by $\xi_L$ the pseudo-norm family on $L$ (w.r.t. the topological adelic curve $S'$) introduced in Remark \ref{rem:CM_6.1.3_general}. Then $\varphi$ is dominated iff $\xi_L$  is dominated.
\end{proposition}

\begin{proof}
As in the proof of Theorem \ref{th:pushforward_pseudo-metric_family_dominated_general}, we may assume that $\xi_{L}$ has a globally adapted basis. One can then argue the same way as in the proof of (\cite{ChenMori}, Proposition 6.1.18).

\end{proof}

Let us now give a criterion on pseudo-metrics that ensures that the corresponding pushforward pseudo-norm family is well-defined. 

\begin{proposition}
\label{prop:existence_pushforward_pseudo-norm_family}
Assume that $\Omega=M_{K}$. Let $L$ be a line bundle on $X$ equipped with a continuous pseudo-metric $\varphi$ over $S$. Then the pushforward pseudo-norm family $\xi=(\|\cdot\|_{\varphi_{\omega}})_{\omega\in \Omega}$ associated with $\varphi$ is well defined and is an usc pseudo-norm family on $H^{0}(X,L)$. 
\end{proposition}

\begin{proof}
Let $s\in H^{0}(X,L)$, let us prove that the function $(\omega\in\Omega) \mapsto \|s\|_{\varphi_{\omega}}$ is upper semi-continuous. We will use the following lemma.

\begin{lemma}[\cite{ChenMori24}, Proposition 2.2.4]
\label{lemma:pushforward_adelic_line_bundle}
Let $\pi : A \to B$ be a closed continuous mapping of topological spaces and let $f:A \to [-\infty,+\infty]$ be an upper semi-continuous function. Assume that, for any $y\in B$, the fibre $\pi^{-1}(y)$ is compact. Then the function 
\begin{align*}
\fonction{\phi_f}{B}{[0,1]}{y}{\displaystyle\sup_{x\in \pi^{-1}(y)}f(x)}
\end{align*}
is upper semi-continuous.
\end{lemma}

Recall that the map $\pi_{S}:\ZR(X)_{S}^{\an} \to \Omega$ is a proper map, thus $\pi_{S}$ is closed and, for all $\omega\in \Omega$, $\pi_{S}^{-1}(\omega)$ is compact. Let $s\in  H^{0}(X,L)$. Apply the above lemma with $A=\ZR(X)_{S}^{\an}$, $B=\Omega$ and $f:((x,\omega)\in \ZR(X)_{S}^{\an}) \mapsto |s|_{\varphi_{\omega}}(x)$, which is continuous. Since, for any $\omega\in\Omega$, we have 
\begin{align*}
\displaystyle\sup_{(x,\omega)\in p^{-1}(\omega)}|s|_{\varphi_{\omega}}(x) = \|s\|_{\varphi_{\omega}},
\end{align*}
we deduce the desired upper semi-continuity.

Thus, it suffices to prove that for any $\omega\in\Omega$, there exists an adapted basis for $\xi$ in $\omega$. We fix $\omega_{0}\in\Omega$. Since $\|\cdot\|_{\varphi_{\omega_{0}}}$ is a pseudo-norm on $E$, there exists a basis $(s_1,...,s_d)$ of $E$ such that, for any $i=1,...,d$, we have $\|s_i\|_{\varphi_{\omega_{0}}}\in\bR_{>0}$. As, for any $i=1,...,d$, the map $(\omega\in\Omega)\mapsto \|s_i\|_{\varphi_{\omega}}\in [0,+\infty]$ is upper semi-continuous, there exists an open neighbourhood $U$ of $\omega_{0}$ in $\Omega$ such that, for any $\omega\in U$, we have 
\begin{align*}
\forall i =1,...,d,\quad \|s_i\|_{\varphi_{\omega}} < +\infty.
\end{align*}  
Therefore, it remains to prove that, up to shrinking $U$, we have 
\begin{align*}
\forall i =1,...,d,\quad \|s_i\|_{\varphi_{\omega}} > 0.
\end{align*}
Choose a projective model $\cX$ of $X$ over $A_{\omega_{0}}$ such that $\varphi_{\omega_{0}}$ is represented over $\cX$ (cf. Notation \ref{notation:pseudo-metric} (1)). For any closed point $P$ of $X$ and $x$ any element $\in\Omega_{\kappa(P),\omega_0}$, by the valuative criterion of properness, we obtain a point in $(\cX\otimes_{A_{\omega_{0}}}\widehat{\kappa}(\omega_{0}))^{\an}$ which is denoted by $(P,x)$ and whose image in $\cX\otimes_{A_{\omega_{0}}}\widehat{\kappa}(\omega_{0})$ is a closed point. Since closed points of $X_{\omega}$ are dense in $X_{\omega}^{\an}$, for any $i=1,...,d$, there exist a closed point $P$ of $X$ and an element $x\in\Omega_{\kappa(P),\omega_0}$ such that $|s_i|_{\varphi_{\omega_{0}}}(P,x)>0$. By continuity of $\varphi$, up to shrinking $U$, we may assume that, for any $\omega\in U$, there exists a point $P_{i,\omega}\in X_{\omega}^{\an}$ such that $|s|_{\varphi_{\omega}}(P_{i,\omega})>0$. In particular, we obtain 
\begin{align*}
\forall i =1,...,d,\quad \|s_i\|_{\varphi_{\omega}} > 0.
\end{align*}
Therefore, $\xi$ satisfies the condition $(\ast_{\an})$ from \S \ref{sub:ZR_interpretation_pseudo-norm_families}. Since $\Omega=M_{K}$, Proposition \ref{prop:ZR_interpretation_pseudo-norm_families_GAGA} implies that $\xi$ is a pseudo-norm family on $H^{0}(X,L)$.
\end{proof}

\subsection{Adelic line bundles}
\label{sub:adelic_line_bundles}

In this subsection, we fix a topological adelic curve $S=(K,\phi:\Omega\to M_K,\nu)$ and a projective $K$-scheme $X$. 

\begin{definition}
\label{def:adelic_line_bundles}
Let $L$ be a line bundle on $X$ equipped with a pseudo-metric $\varphi$ over $S$. We say that $\overline{L}=(L,\varphi)$ is respectively a \emph{usc/lsc adelic line bundle} on $X$ if $\varphi$ is both usc/lsc and dominated. Moreover, we say that $\overline{L}$ is a \emph{(continuous) adelic line bundle} if $\varphi$ is both continuous and dominated.

Moreover, an adelic line bundle $(L,\varphi=(\varphi_{\omega})_{\omega\in\Omega})$ is called \emph{semi-positive}, resp. \emph{integrable}, if, for any $\omega\in\Omega$, the (locally) pseudo-metrised line bundle $(L,\varphi_{\omega})$ is semi-positive, resp. integrable.
\end{definition}

\begin{remark}
\label{rem:adelic_line_bundles_adelic_curves}
In the situation of Example \ref{example:pseudo-metric_adelic_curve}, any adelic line bundle on $S$ yields an adelic line bundle on the corresponding adelic curve in the sense of (\cite{ChenMori}, Definition 6.2.1). Moreover, this association preserves the notions of semi-positivity and integrability. This implies that in the case where $K$ is countable, we can use all the results in \cite{ChenMori,ChenMori21,ChenMori24}. 
\end{remark}

\begin{proposition}
\label{prop:constructions_adelic_line_bundles}
\begin{itemize}
	\item[(1)] Let $\overline{L}=(L,\varphi)$ be a usc/lsc/continuous adelic line bundle on $X$. Then $-\overline{L}:=(-L,-\varphi)$ is respectively a usc/lsc/continuous adelic line bundle on $X$.
	\item[(2)] Let $\overline{L_1}=(L_1,\varphi_1),\overline{L_2}=(L_2,\varphi_2)$ be both usc/lsc/continuous adelic line bundles on $X$. Then $\overline{L_1}+\overline{L_2}:=(L_1+L_2,\varphi_1+\varphi_2)$ is respectively a usc/lsc/continuous adelic line bundle on $X$.
	\item[(3)] Let $f : Y \to X$ be a projective morphism of $K$-schemes. Let $\overline{L}=(L,\varphi)$ be an adelic line bundle on $X$. Then $f^{\ast}\overline{L}:(f^{\ast}L,f^{\ast}\varphi)$ is respectively a usc/lsc/continuous adelic line bundle on $X$.
\end{itemize}
\end{proposition}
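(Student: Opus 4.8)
The statement to prove is Proposition \ref{prop:constructions_adelic_line_bundles}, asserting that the operations of dual, tensor product, and pullback preserve the property of being a usc/lsc/continuous adelic line bundle on $X$. Since an adelic line bundle is by definition a line bundle equipped with a pseudo-metric family that is simultaneously \emph{dominated} and of the relevant regularity type (usc, lsc, or continuous), the plan is simply to invoke the two families of results already established: Proposition \ref{prop:dominated_pseudo-metric_families} handles the domination half, and Proposition \ref{prop:regularity_pseudo-metric_families} handles the regularity half. Each part is then a one-line deduction from these two propositions applied in parallel.

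For part (1), given $\overline{L}=(L,\varphi)$ with $\varphi$ dominated and usc (resp. lsc, resp. continuous), I would note that $-\varphi$ has model family $(\cX_\omega,-\cL_\omega)_{\omega\in\Omega}$; Proposition \ref{prop:dominated_pseudo-metric_families} (1) gives that $-\varphi$ is dominated, while Proposition \ref{prop:regularity_pseudo-metric_families} (1) gives that $-\varphi$ is usc (resp. lsc, resp. continuous) — recalling that negation swaps usc and lsc but is given as "respectively usc/lsc/continuous", so the three cases dualize correctly. For part (2), with $\overline{L_i}=(L_i,\varphi_i)$ both dominated and both of the same regularity type, Proposition \ref{prop:dominated_pseudo-metric_families} (2) gives that $\varphi_1+\varphi_2$ is dominated and Proposition \ref{prop:regularity_pseudo-metric_families} (2) gives that it is usc (resp. lsc, resp. continuous); hence $\overline{L_1}+\overline{L_2}=(L_1+L_2,\varphi_1+\varphi_2)$ is an adelic line bundle of the appropriate type. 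For part (3), under the hypothesis that $f:Y\to X$ extends to $A_\omega$-morphisms $g_\omega:\cY_\omega\to\cX_\omega$ for every $\omega$, Proposition \ref{prop:dominated_pseudo-metric_families} (4) gives that $f^{\ast}\varphi$ is dominated and Proposition \ref{prop:regularity_pseudo-metric_families} (3) gives that $f^{\ast}\varphi$ is usc (resp. lsc, resp. continuous), so $f^{\ast}\overline{L}=(f^{\ast}L,f^{\ast}\varphi)$ is an adelic line bundle of the same type.

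There is essentially no obstacle here: the proposition is a bookkeeping corollary that packages the separately-proved stability statements for the domination condition and for the regularity conditions. The only mild care required is to match up the "usc/lsc/continuous" labels correctly across the dual operation in part (1) — the statement already phrases this as "respectively", consistent with Propositions \ref{prop:dominated_pseudo-metric_families} (1) and \ref{prop:regularity_pseudo-metric_families} (1) — and to observe that in parts (2) and (3) the hypotheses of the cited propositions (in particular, in (3), the existence of extending morphisms $g_\omega$) are exactly what is assumed. Thus the proof reduces to citing Propositions \ref{prop:dominated_pseudo-metric_families} and \ref{prop:regularity_pseudo-metric_families} for each of the three items.

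\begin{proof}
In each case, it suffices to check that the resulting pseudo-metric family is dominated (which follows from Proposition \ref{prop:dominated_pseudo-metric_families}) and of the appropriate regularity type (which follows from Proposition \ref{prop:regularity_pseudo-metric_families}).

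\textbf{(1)} Let $\overline{L}=(L,\varphi)$ be a usc/lsc/continuous adelic line bundle on $X$, so that $\varphi$ is dominated and usc/lsc/continuous. By Proposition \ref{prop:dominated_pseudo-metric_families} (1), the pseudo-metric family $-\varphi$ on $-L$ is dominated, and by Proposition \ref{prop:regularity_pseudo-metric_families} (1) it is respectively usc/lsc/continuous. Hence $-\overline{L}=(-L,-\varphi)$ is respectively a usc/lsc/continuous adelic line bundle on $X$.

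\textbf{(2)} Let $\overline{L_1}=(L_1,\varphi_1)$ and $\overline{L_2}=(L_2,\varphi_2)$ be usc/lsc/continuous adelic line bundles on $X$. Then $\varphi_1$ and $\varphi_2$ are dominated, so by Proposition \ref{prop:dominated_pseudo-metric_families} (2) the pseudo-metric family $\varphi_1+\varphi_2$ on $L_1+L_2$ is dominated. Moreover $\varphi_1$ and $\varphi_2$ are both usc/lsc/continuous, so Proposition \ref{prop:regularity_pseudo-metric_families} (2) implies that $\varphi_1+\varphi_2$ is respectively usc/lsc/continuous. Therefore $\overline{L_1}+\overline{L_2}=(L_1+L_2,\varphi_1+\varphi_2)$ is respectively a usc/lsc/continuous adelic line bundle on $X$.

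\textbf{(3)} Let $f:Y\to X$ be a projective morphism of $K$-schemes such that, for every $\omega\in\Omega$, the morphism $f$ extends to an $A_\omega$-morphism $g_\omega:\cY_\omega\to\cX_\omega$ with $\cY_\omega$ a projective model of $Y$ over $A_\omega$. Let $\overline{L}=(L,\varphi)$ be an adelic line bundle on $X$. By Proposition \ref{prop:dominated_pseudo-metric_families} (4) the pseudo-metric family $f^{\ast}\varphi$ on $f^{\ast}L$ is dominated, and by Proposition \ref{prop:regularity_pseudo-metric_families} (3) it is respectively usc/lsc/continuous whenever $\varphi$ is. Hence $f^{\ast}\overline{L}=(f^{\ast}L,f^{\ast}\varphi)$ is respectively a usc/lsc/continuous adelic line bundle on $X$.
\end{proof}
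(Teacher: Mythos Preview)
Your proposal is correct and follows exactly the paper's approach: the paper's own proof is the single sentence ``This is a direct consequence of Propositions \ref{prop:dominated_pseudo-metric_families} and \ref{prop:regularity_pseudo-metric_families},'' and you have simply unpacked this citation item by item. One small remark: in your informal discussion of part (1) you mention that ``negation swaps usc and lsc,'' but Proposition \ref{prop:regularity_pseudo-metric_families} (1) as stated in the paper asserts that $-\varphi$ is \emph{respectively} usc/lsc/continuous (not swapped); your formal proof cites it correctly, so this is only a wording slip in the plan and does not affect the argument.
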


\begin{proof}
This is a direct consequence of Propositions \ref{prop:dominated_pseudo-metric_families} and \ref{prop:regularity_pseudo-metric_families}.
\end{proof}

\begin{proposition} 
\label{prop:quotient_adelic_line_bundle}
Let $\overline{E}=(E,\xi)$ where $E$ is a finite-dimensional $K$-vector space and $\xi=(\|\cdot\|_{\omega},\cE_{\omega},N_{\omega},\widehat{E_{\omega}})_{\omega\in\Omega}$ is a pseudo-norm family on $E$ which is assumed to be ultrametric on $\Omega_{\um}$. Let $L$ be a line bundle on $X$. Assume that there exists a surjective homomorphism $\beta:E\otimes_{K}\cO_{X}\to L$. Denote by $\varphi$ the corresponding quotient pseudo-metric on $L$. Assume that $\overline{E}=(E,\xi)$ is an adelic line bundle on $S$. Then $\overline{L}=(L,\varphi)$ is an adelic line bundle on $X$.
\end{proposition}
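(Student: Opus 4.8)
The plan is to verify the two defining conditions of an adelic line bundle for $\overline{L}=(L,\varphi)$: that $\varphi$ is dominated and that $\varphi$ is continuous (equivalently, for line bundles, usc; recall from Proposition~\ref{prop:comparaison_usc_adeli_vector_bundle} and the one-dimensional statements that the two notions coincide on vector spaces of dimension one that are ultrametric on $\Omega_{\um}$). Since $\overline{E}=(E,\xi)$ is an adelic line bundle on $S$, by Definition~\ref{def:adelic_line_bundles} applied in the rank-one setting (or rather: $\overline{E}$ is an adelic vector bundle on $S$ in the sense of \S\ref{sec:adelic_vector_bundle}), $\xi$ is continuous and dominated, and by hypothesis $\xi$ is ultrametric on $\Omega_{\um}$.

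\textbf{Domination.} For the domination of $\varphi$, I would invoke Proposition~\ref{prop:quotient_pseudo-metric_family_dominated} directly: that proposition states that if $\xi$ is dominated, then the quotient pseudo-metric family induced by $\overline{E}=(E,\xi)$ and $\beta$ is dominated. Since $\xi$ is dominated (being part of an adelic vector bundle structure), this gives that $\varphi$ is a dominated pseudo-metric family on $L$ with no further work.

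\textbf{Regularity.} For the regularity, I would apply Proposition~\ref{prop:regularity_quotient_pseudo-metric_families}, whose hypotheses are exactly those in the statement: $\xi$ is a pseudo-norm family on $E$ which is ultrametric on $\Omega_{\um}$, there is a model family $(\cX_{\omega},\cL_{\omega})_{\omega\in\Omega}$, and a family of surjective homomorphisms $\beta_{\omega}:\cE_{\omega}\otimes_{A_{\omega}}\cO_{\cX_{\omega}}\to\cL_{\omega}$, and $\xi$ is continuous. That proposition then yields that $\varphi$ is a usc pseudo-metric family on $L$. To upgrade "usc" to the full "adelic line bundle" (continuous) conclusion, I would argue as in the proof of Proposition~\ref{prop:constructions_adelic_vector_bundles}(3): since $\xi$ is ultrametric on $\Omega_{\um}$, the associated pseudo-metric family $\varphi$ is likewise ultrametric on $\Omega_{\um}$ (the Fubini--Study construction over a non-Archimedean place built from an ultrametric norm produces an ultrametric/Non-Archimedean metric), so that $\varphi = \varphi^{\vee\vee}$ on those fibres and, combined with the usc property of $-\varphi$ (which is lsc on $L^{\vee}$) via Proposition~\ref{prop:regularity_pseudo-metric_families}(1), one obtains that $\varphi$ is also lsc, hence continuous. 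Alternatively — and more cleanly — I would observe that $\overline{L}=(L,\varphi)$ being an adelic line bundle only requires, by Definition~\ref{def:adelic_line_bundles} together with the fact that usc and continuous pseudo-metric families on a \emph{line bundle} with the ultrametric condition on $\Omega_{\um}$ coincide (the line-bundle analogue of Proposition~\ref{prop:comparaison_usc_adeli_vector_bundle}), so that the usc conclusion of Proposition~\ref{prop:regularity_quotient_pseudo-metric_families} already suffices.

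\textbf{Main obstacle.} The substantive point — and the step I expect to require the most care — is the passage from "usc" to "continuous," since Proposition~\ref{prop:regularity_quotient_pseudo-metric_families} genuinely only gives upper semicontinuity in general (the proof there notes that $\xi\otimes_K\kappa(P)$ is a priori only lsc). The resolution must exploit the rank-one hypothesis: for a \emph{line bundle} the residue vector space at any closed point is one-dimensional, so the infimum defining the quotient pseudo-norm is attained and one gets an exact formula $\|\ell\|_{P,\varphi,x}=\inf_{[s]_P=\lambda\ell}|\lambda|_x^{-1}\|s\|_{\pi_{\kappa(P)/K}(x)}$ in which each term is continuous in $x$; combined with the ultrametric hypothesis on $\Omega_{\um}$ this forces lsc as well. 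I would make this precise by recording a line-bundle version of the argument in Proposition~\ref{prop:constructions_adelic_vector_bundles}(3): $\varphi^{\vee}$ is lsc by Proposition~\ref{prop:regularity_pseudo-metric_families}(1) applied to the usc $\varphi$, and $\varphi=\varphi^{\vee\vee}$ on $\Omega_{\um}$ (ultrametricity) while on $\Omega_{\ar}$ the metric is honestly continuous, so $\varphi$ is continuous throughout. Together with the domination already established, this shows $\overline{L}=(L,\varphi)$ is an adelic line bundle on $X$, completing the proof.
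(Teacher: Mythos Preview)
Your handling of domination and upper semicontinuity matches the paper exactly: both invoke Proposition~\ref{prop:quotient_pseudo-metric_family_dominated} for domination and Proposition~\ref{prop:regularity_quotient_pseudo-metric_families} for the usc property.

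The gap is in your passage from usc to continuous. None of the three routes you sketch actually works:
\begin{itemize}
\item The ``line-bundle analogue of Proposition~\ref{prop:comparaison_usc_adeli_vector_bundle}'' you invoke requires \emph{both} $P^{\ast}\varphi$ and $(P^{\ast}\varphi)^{\vee}$ to be usc. From $\varphi$ usc one only gets $(P^{\ast}\varphi)^{\vee}$ lsc (it is the reciprocal in rank one), so the hypothesis fails.
\item The double-dual argument $\varphi=\varphi^{\vee\vee}$ on $\Omega_{\um}$ needs $\varphi^{\vee}$ usc to conclude $\varphi^{\vee\vee}$ lsc; again you only have $\varphi^{\vee}$ lsc.
\item The claim ``on $\Omega_{\ar}$ the metric is honestly continuous'' is unsupported; there is no reason the Archimedean quotient metric is automatically lsc in $\omega$.
\end{itemize}

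What you are missing is the use of the continuity of $\xi^{\vee}$, which is part of the hypothesis that $\overline{E}$ is an adelic vector bundle (Definition~\ref{def:adelic_vector_bundle} requires both $\xi$ and $\xi^{\vee}$ continuous). The paper's argument is: the dual metric $-\varphi$ identifies with the quotient pseudo-metric family induced by $(E^{\vee},\xi^{\vee})$ and the dual surjections; since $\xi^{\vee}$ is continuous and ultrametric on $\Omega_{\um}$, a second application of Proposition~\ref{prop:regularity_quotient_pseudo-metric_families} gives $-\varphi$ usc, hence $\varphi$ lsc, hence continuous. You never exploit the continuity of $\xi^{\vee}$, and without it the lsc direction cannot be obtained by the methods available in the paper.
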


\begin{proof}
This is a consequence of Propositions \ref{prop:regularity_quotient_pseudo-metric_families} and \ref{prop:quotient_pseudo-metric_family_dominated}. 
\end{proof}

\subsection{Pseudo-metrics and adelic line bundles over integral topological adelic curves}
\label{sub:pseudo-metric_families_integral_tac}

In this subsection, we give another interpretation of pseudo-metrised line bundles over an integral topological adelic curve. The upshot is the following: the Zariski-Riemann space attached to such a topological adelic can be described as a projective limit of models of the projective models over the integral structure. To any such model, one can define the notion of a model pseudo-metric being a family of local model pseudo-metric satisfying a "globalisation condition", and a pseudo-metric can be interpreted as an equivalence class of such model pseudo-metrics. This point of view is more classical in the sense of Arakelov geometry. Moreover, this point of view will give a convenient way to give examples of adelic line bundles arising in Nevanlinna theory.

We fix an integral topological adelic curve $S=(K,\phi:\Omega\to V,\nu)$ with underlying integral structure $(A,\|\cdot\|)$ and $V:= \cM(A,\|\cdot\|)$ and a projective $K$-scheme $X$. We further assume that $(A,\|\cdot\|)$ is a geometric base ring. 

\subsubsection{Zariski-Riemann spaces}

Recall that in our situation, the Zariski-Riemann spaces of interest admit the following description (we use the notations of \S \ref{sub:ZR_spaces_adelic_curves}):
\begin{align*}
\ZR(X/A)\cong\displaystyle\varprojlim_{\cX\in\cM_{X}}\cX, \quad \ZR(X/A)^{\an}\cong\varprojlim_{\cX\in\cM_{X}}\cX^{\an}, \quad \ZR(X)_{S} \cong \varprojlim_{\cX\in\cM_{X}}\cX_{S}, \quad \ZR(X)_{S}^{\an} \cong \varprojlim_{\cX\in\cM_{X}}\cX_{S}^{\an},
\end{align*}
where $\cM_{X}$ denote the full subcategory of $\cM_{X/A}$ consisting of projective flat and coherent projective models of $X$ over $A$. Moreover, these Zariski-Riemann spaces fit in the following diagram of locally ringed spaces  
\begin{center}
\begin{tikzcd}
                                        & \ZR(X)^{\an}_{S} \arrow[rr, "\phi_{X}"] \arrow[dd] \arrow[rd, "{j_{X,S}}"] &                                                      & \ZR(X/A)^{\an} \arrow[dd] \arrow[rd, "j_{X}"] &                       \\
X \arrow[dd] \arrow[rr, "{\eta_{X,S}}"] &                                                                            & \ZR(X)_{S} \arrow[rr, "\tilde{\phi_{X}}"] \arrow[dd] &                                               & \ZR(X/A) \arrow[dd]   \\
                                        & \Omega \arrow[rr, "\phi"] \arrow[rd, "{j_{K,S}}"]                          &                                                      & \ZR(K/A)^{\an}\cong V \arrow[rd, "j_{K}"]     &                       \\
\Spec(K) \arrow[rr, "\eta_{S}"]         &                                                                            & \tilde{\Omega} \arrow[rr, "\tilde{\phi}"]            &                                               & \ZR(K/A)\cong\Spec(A)
\end{tikzcd}.
\end{center}

\subsubsection{Pseudo-metrics}

\begin{definition}
\label{def:pseudo-metric_family_integral}
Let $L$ be a line bundle on $X$. 
\begin{itemize}
	\item[(1)] Let $\cX\in\cM_{X}$. A \emph{metrised line bundle} on $\cX_{S}$ is the data $(\cL,\varphi)$, where $\cL$ is a line bundle on $\cX_{S}$ and $\varphi$ is a metric on $\cL$.
	\item[(2)] A \emph{model pseudo-metric} $\varphi$ on $L$ over is the data $((\cX,\cL),\varphi)$, where $\cX\in M_{X}$, $\cL$ is a model of $L$ on $\cX$ and $\varphi$ is a metric on $\cL$. $(\cX,\cL)$ is called the \emph{model} of the model pseudo-metric $((\cX,\cL),\varphi)$. Moreover, by "let $\varphi$ be a model pseudo-metric with model $(\cX,\cL)$", we mean that $(\cX,\cL)$ is a model of $(X,L)$ over $A$ with $\cX\in\cM_{X}$, $\varphi$ is a metric on $\cL$ and that we are concerned with the model pseudo-metric $((\cX,\cL),\varphi)$.
	\item[(3)] Let $\left((\cX,\cL),\varphi\right),\left((\cX',\cL'),\varphi'\right)$ be two model pseudo-metrics on $L$, we say that these two model pseudo-metrics are \emph{equivalent} if there exist a model $(\cX'',\cL'')$ of $(X,L)$ with $\cX''\in\cM_{X}$ and arrows $p:\cX''\to\cX, q:\cX''\to\cX'$ in $\cM_{X}$ such that $p^{\ast}\cL\cong q^{\ast}\cL'$ $p^{\ast}\varphi\cong q^{\ast}\varphi'$.
	Using Proposition-Definition \ref{propdef:equivalence_relation_model_pseudo-metrics}, it is straightforward to check that this defines an equivalence relation on model pseudo-metrics on $L$. The equivalence class of a model-pseudo-metric $\varphi$ on $L$ is denoted by $[\varphi]$. Moreover, by "let $[(\cX,\cL,\varphi)]$ be an equivalence class of model-pseudo-metrics on $L$", we mean that we are concerned with the equivalence class of a model-pseudo-metric $(\cX,\cL,\varphi)$ and we say that $[(\cX,\cL,\varphi)]$ is \emph{represented} on $(\cX,\cL)$.
\end{itemize}
\end{definition} 

\begin{proposition}
\label{prop:ZR_interpretation_pseudo-metrics_integral_tac}
Assume that $\tilde{\Omega}$ is a locally closed subset of $\Spec(A)$. Let $L$ be a line bundle on $X$. There is a one-to-one correspondence between pseudo-metrics on $L$ (in the sense of Definition \ref{def:pseudo-metric_family}) and equivalence classes of model pseudo-metrics.
\end{proposition}

\begin{proof}
Let $[(\cL,\varphi)]$ be an equivalence class of model-pseudo-metrics on $L$ represented on a model $\cX\in\cM_{X}$. Then by pulling back $(\cL,\varphi)$ to $\ZR(X)_{S}$, we get a metrised line bundle on $\ZR(X)_{S}$, and thus a metrised line bundle on $X$ over $S$ by Proposition \ref{prop:ZR_interpretation_pseudo-metric_families}.

Conversely, let $(\cL,\varphi)$ be a metrised line bundle on $\ZR(X)_{S}$. By Proposition \ref{prop:coherent_sheaves_locally_closed_integral}, there exists a projective model $\cX\in\cM_{X}$ and a line bundle $\cL_{\cX}$ on $\cX$ such that $p_{\cX}^{-1}\cL_{\cX}\cong \cL$, where $p_{\cX}:\ZR(X)_{S}\to \cX_{S}$ denotes the projection. Then the metric $\varphi$ induces a model pseudo-metric on $L:=\eta_{X,S}^{-1}\cL$ and we can consider its equivalence class. This construction is inverse to the previous one. 
\end{proof}

\subsubsection{Example of adelic line bundles in Nevanlinna theory}
\label{subsub:adelic_line_bundle_Nevanlinna}

Let $R>0$. Consider the topological adelic curve $S_{R}=(K_{R},\phi_{R} : \Omega_{R} \to V_{R},\nu_{R})$ defined in \S \ref{subsub:example_tac_Nevanlinnaç_compact_disc}. Note that $\tilde{\Omega_{R}}=\ZR(K/A)\cong\Spec(A)$. Let $X$ be a reduced projective $\bC$-scheme and denote $X_{R}=X \otimes_{\bC} K_{R}$. Set $\cX_{R}:=X \otimes_{\bC} A_{R}$, it is a projective model of $X_{R}$ over $A_{R}$ and $(\cX_{R})_{S_{R}}=\cX_{R}$. Let $(L,\varphi)$ be an lsc/usc/continuous metrised line bundle on $X$, namely $L$ is a line bundle on $X$ and $\varphi$ is an lsc/usc/continuous metric on $L$. Let $L_{R} := L \otimes_{\cO_X} \cO_{X_{R}}$, $\cL_{R}:=L\otimes_{\cO_{X}}\cO_{\cX_{R}}$ and, for any $\omega\in\Omega_{R}$. %

\begin{proposition}
\label{prop:adelic_line_bundle_Nevanlinna}
We use the same notation as above. Then $(L,\varphi)$ induces an lsc/usc/continuous adelic line bundle $(L_{R},\varphi_{R})$ on $X_{R}$ over $S_{R}$. Moreover, if $(L,\varphi)$ is semi-positive, resp. integrable, then so is $(L_{R},\varphi_{R})$.
\end{proposition}

\begin{proof}
\textbf{Definition of the pseudo-metric:} Let $\varphi_{R}:=(|\cdot|_{\varphi_{R}}(x))_{x\in(\cX_{R})_{S_{R}}^{\an}}$ be the family defined as follows. Let $x=(p,|\cdot|_{x})\in(\cX_{R})_{S_{R}^{\an}}$, where $p\in \cX_{R}$ and $|\cdot|_{x}$ is an absolute value on the residue field $\kappa(x)$ mapping to an element of the image of $\Omega$ in $V_{R}$. First assume that $|\cdot|_{x}$ is Archimedean. Thus $\cL(p)$ identifies with $L(q)$, where $q$ denotes the image of $p$ in $X$ and we define $|\cdot|_{\varphi_{R}}(x)$ to be $|\cdot|_{\varphi}(q)$. Now assume that $|\cdot|_{x}$ is non-Archimedean, mapping to an element $\omega\in\Omega_{\um}$. Then $\phi_R(\omega)$ is a usual absolute value on $K_{R}$ and the completion $K_{R,\omega}$ of $K_{R}$ w.r.t. $|\cdot|_{\omega}$ is isomorphic to $\bC((T))$. Denote by $K_{R,\omega}^{\circ}\cong\bC[[T]]$ the corresponding valuation ring of $K_{R,\omega}$. Let $\tilde{\cX_{R,\omega}}:=X\otimes_{\bC}K_{R,\omega}^{\circ}$ and $\tilde{\cL_{R,\omega}}:=L \otimes_{\cO_{X}} \cO_{\tilde{\cX_{R,\omega}}}$. Then $(\tilde{\cX_{R,\omega}},\tilde{\cL_{R,\omega}})$ is a model of $X_R \otimes_{K_R} K_{R,\omega}$ over $K_{R,\omega}^{\circ}$ and the model metric $\varphi_{R,\omega}$ determined by this model yields a norm $|\cdot|_{\varphi_{R}}(x)$ on $\cL_{R}(p)$. Thus $\varphi_{R}$ is a metric on $\cL_{R}$ and taking its equivalence class, we get a pseudo-metric on $L_{R}$ (cf. Proposition \ref{prop:ZR_interpretation_pseudo-metrics_integral_tac}) that we denote again by $\varphi_{R}$.

\textbf{$\varphi_{R}$ is dominated:} It suffices to prove it in the case where $L$ is very ample, up to passing to a multiple of $L$, we may assume that $L_{R}$ and $\cL_{R}$ are globally generated. Denote $E := H^0(X,L)$ and
\begin{align*}
E_R := E \otimes_{\bC} K_{R},\quad \forall \omega\in \Omega_{R,\infty}, \quad \cE_{R,\omega} := E \otimes_{\bC} A_{R,\omega}, \quad\forall \omega\in \Omega_{R,\um},\quad \tilde{\cE_{R,\omega}} := E \otimes_{\bC} K_{R,\omega}^{\circ}.
\end{align*}
Let $\|\cdot\|_{\varphi}$ denote the supremum norm on $E$ induced by $\varphi$ and denote by $\varphi_{\mathrm{FS}}$ the Fubini-Study (usual) metric on $L$ associated with $\varphi$. Recall that $\varphi_{\mathrm{FS}}$ is the quotient metric associated with the complex normed vector space $(E,\|\cdot\|_{\varphi})$. Moreover, using the results of \S \ref{sub:example_in_Nevanlinna_theory_avb}, we have an adelic vector bundle $\overline{E_{R}}=(E_{R},\xi_{R})$ on $S_{R}$. We denote by $\varphi_{R,\mathrm{FS}}$ the quotient pseudo-metric on $L_{R}$ determined by the surjective homomorphism $E_{R}\otimes_{K_{R}} \cO_{X_{R}} \to L_{R}$ and $\overline{E_{R}}$. 

Let $\omega\in \Omega_{R,\um}$. As $\bC \to K_{R,\omega}^{\circ}$ is flat, for any $\omega\in \Omega_{R,\um}$, flat base change yields $\tilde{\cE_{R,\omega}}= H^{0}(\tilde{\cX_{R,\omega}},\tilde{\cL_{R,\omega}})$ and the evaluation morphism $\tilde{\cE_{R,\omega}} \otimes_{K_{R,\omega}^{\circ}} \cO_{\tilde{\cX_{R,\omega}}} \to \tilde{\cL_{R,\omega}}$ is surjective. Using (\cite{ChenMori}, Proposition 2.3.12), we obtain that $\varphi_{R,\omega}$ identifies with the quotient metric induced by the lattice norm defined by the lattice $\tilde{\cE_{R,\omega}}$ inside $\tilde{E_{R,\omega}}:=\tilde{\cE_{R,\omega}} \otimes_{K_{R,\omega}^{\circ}} K_{R,\omega}$. Note that from the description of $\xi_R$ given in \S \ref{sub:example_in_Nevanlinna_theory_avb}, we have $\varphi_{R,\mathrm{FS},\omega}=\varphi_{\omega}$. 

Moreover, we have 
\begin{align*}
\forall \omega\in \Omega_{R}, \quad d_{\omega}(\varphi_R,\varphi_{R,\mathrm{FS}}) = \mathbf{1}_{\Omega_{R,\infty}}(\omega)d(\varphi,\varphi_{\mathrm{FS}}),
\end{align*}
where $d(\varphi,\varphi_{\mathrm{FS}})$ denotes the usual distance between the (complex) continuous metrics $\varphi$ and $\varphi_{\mathrm{FS}}$. As $\nu_R(\Omega_{R,\infty})<+\infty$, we obtain that the local distance function $[(\omega\in\Omega) \mapsto d_{\omega}(\varphi_R,\varphi_{R,\mathrm{FS}})]$ is $\nu_R$-dominated. Using Proposition \ref{prop:example_avb_Nevanlinna} and Proposition \ref{prop:quotient_pseudo-metric_family_dominated}, we obtain that $\varphi_{R,\FS}$ is dominated. Therefore, by using Proposition \ref{prop:dominated_pseudo-metric_families} (3), we see that the pseudo-metric $\varphi_R$ is dominated.

\textbf{Regularity of $\varphi_{R}$:} Since $\Omega_{R,\um}$ is discrete, we have to prove the regularity of $\varphi_{R}$ at points lying over $\Omega_{R,\infty}$. This follows from the construction of the norms $|\cdot|_{\varphi}(\cdot)$ and the regularity of $\varphi$. 

\textbf{Semi-positivity/integrability of $\varphi_{R}$:} It suffices to treat the case where $L$ is very ample and $\varphi$ is semi-positive. Then $L_{R}$ and $\cL_{R}$ are nef. Let $\omega\in\Omega_{R,\um}$. Then $\varphi_{R,\omega}$ comes from a model metric on a nef model of $X_{R}$. Thus by (\cite{ChenMori21}, Theorem 3.2.19), $\varphi_{R,\omega}$ is semi-positive. The semi-positivity of $\varphi_{R,\omega}$, where $\omega\in\Omega_{R,\infty}$ follows from the semi-positivity of $\varphi$.
\end{proof}

\begin{definition}
\label{def:adelic_line_bundle_Nevanlinna}
Let $R>0$. Then the lsc/usc/continuous adelic line bundle $(L_{R},\varphi_{R})$ constructed in Proposition \ref{prop:adelic_line_bundle_Nevanlinna} is called the \emph{lsc/usc/continuous adelic line bundle induced} by $(L,\varphi)$ on $X_{R}$. 
\end{definition}

More generally, we have the following.

\begin{proposition-definition}
\label{prop-def:adelic_line_bundle_Nevanlinna_finite covering} 
Let $F/K_{R}$ be an algebraic extension. Then the usc/lsc/continuous metrised line bundle $(L,\varphi)$ induces a usc/lsc/continuous adelic line bundle on $S_{R}\otimes_{K_{R}}F$ which is given by extension of scalars of $(L_{R},\varphi_{R})$ to $F$. Moreover, this usc/lsc/continuous adelic line bundle is semi-positive/integrable if so is $(L,\varphi)$.
\end{proposition-definition}

\begin{proposition}
\label{prop:adelic_line_bundle_Nevanlinna_pushforward}
Assume that $X$ is reduced. Let $R>0$ and let $(L,\varphi)$ be a continuous metrised line bundle on $X$. We denote by $\overline{L_R}=(L_R,\varphi_R)$ the adelic line bundle on $X_R$ induced by $(L,\varphi)$. Denote by $\xi_R$ the collection of supremum pseudo-norms on $E_R:=H^{0}(X_R,L_R)$ induced by the pseudo-metric $\varphi$. Then $\overline{E_R}:=(E_R,\xi_R)$ is a continuous adelic line bundle on $S_R$.
\end{proposition}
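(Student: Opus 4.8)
The plan is to avoid the general pushforward result (Theorem \ref{th:pushforward_pseudo-metric_family_dominated_general}, which would require $X$ to be geometrically integral, not just reduced) and instead to identify $\xi_R$ with the pseudo-norm family attached in \S\ref{sub:example_in_Nevanlinna_theory_avb} to an explicit complex normed vector space, for which the desired conclusion is already provided by Proposition \ref{prop:example_avb_Nevanlinna}.

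First I would set $E:=H^{0}(X,L)$ and equip it with the supremum norm $\|\cdot\|_{\varphi}$ of the continuous metric $\varphi$ over the complex analytic space $X^{\an}$; this is finite by compactness of $X^{\an}$, and it is a genuine norm because $X$ is reduced over the algebraically closed field $\bC$, so a non-zero global section of $L$ cannot vanish identically on $X^{\an}$. Applying the construction of \S\ref{sub:example_in_Nevanlinna_theory_avb} to $(E,\|\cdot\|_{\varphi})$ produces, by Proposition \ref{prop:example_avb_Nevanlinna}, an adelic vector bundle on $S_R$ whose underlying vector space is $E\otimes_{\bC}K_R$ and whose pseudo-norm family I denote by $\eta_R$ (to distinguish it from the present $\xi_R$); any basis of $E$ is globally adapted to $\eta_R$. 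Since $X$ is projective, hence proper, over $\bC$, flat base change of cohomology along $\bC\to K_R$ yields a canonical isomorphism $E_R=H^{0}(X_R,L_R)\cong E\otimes_{\bC}K_R$, which I use throughout; the same base change along $\bC\to A_{R,\omega}$ identifies $H^{0}(\cX_{R,\omega},\cL_{R,\omega})$ with $E\otimes_{\bC}A_{R,\omega}$ for $\omega\in\Omega_{R,\ar}$, and $\tilde{\cE_{R,\omega}}=H^{0}(\tilde{\cX_{R,\omega}},\tilde{\cL_{R,\omega}})$ is identified with $E\otimes_{\bC}K_{R,\omega}^{\circ}$ for $\omega\in\Omega_{R,\um}$ (as already noted in Example \ref{example:adelic_line_bundle_Nevanlinna}).

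The core step is to show $\xi_R=\eta_R$, which by Proposition \ref{prop:local_semi-norms} reduces to checking, for every $\omega\in\Omega_R$, that $\xi_{R,\omega}$ and $\eta_{R,\omega}$ have the same finiteness module and the same residue norm. For $\omega\in\Omega_{R,\ar}$ one has $\widehat{\kappa_{\omega}}=\bC$, the analytic space $X_{\omega}^{\an}$ is $X^{\an}$, and $\varphi_{R,\omega}$ restricts to $\varphi$ there (Example \ref{example:adelic_line_bundle_Nevanlinna}); hence the residue norm of $\xi_{R,\omega}$ on $H^{0}(\cX_{R,\omega},\cL_{R,\omega})\otimes_{A_{R,\omega}}\bC\cong E$ is $\sup_{X^{\an}}|\cdot|_{\varphi}=\|\cdot\|_{\varphi}$, which is exactly the residue norm of $\eta_{R,\omega}$, and the two finiteness modules both equal $E\otimes_{\bC}A_{R,\omega}$. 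For $\omega\in\Omega_{R,\um}$ one has $A_{R,\omega}=K_R$, so both finiteness modules equal $E_R$ and both kernels are zero; by the computation carried out in Example \ref{example:adelic_line_bundle_Nevanlinna} (which uses that $X$ is reduced, so the special fibre of $\tilde{\cX_{R,\omega}}=X\otimes_{\bC}K_{R,\omega}^{\circ}$ is reduced, together with (\cite{ChenMori}, Proposition 2.3.12)), the metric $\varphi_{R,\omega}$ is the model metric of $(\tilde{\cX_{R,\omega}},\tilde{\cL_{R,\omega}})$, its supremum norm over $(X\otimes_{\bC}K_{R,\omega})^{\an}$ equals the lattice norm determined by $\tilde{\cE_{R,\omega}}=E\otimes_{\bC}K_{R,\omega}^{\circ}$, and this is precisely the residue norm of $\eta_{R,\omega}$. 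Thus $\xi_R=\eta_R$; in particular $\xi_R$ satisfies condition $(\ast)$ of Definition \ref{def:semi-norm_family}, so the pushforward pseudo-norm family of $\varphi_R$ is well-defined in the sense of Definition \ref{def:pushforward_pseudo-metric_family_general}, and every basis of $E$ is globally adapted to it.

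It then follows from Proposition \ref{prop:example_avb_Nevanlinna} that $\overline{E_R}=(E_R,\xi_R)$ is an adelic vector bundle on $S_R$, which is the asserted conclusion. The delicate point is the non-Archimedean comparison: one must be sure that passing to the Berkovich analytification over $K_{R,\omega}\cong\bC((T))$ does not change the supremum norm computed from the $\bC[[T]]$-model $\tilde{\cX_{R,\omega}}$, and this is exactly where reducedness of $X$ enters in an essential way; everything else is a routine unwinding of flat base change together with the constructions recalled in \S\ref{sub:example_in_Nevanlinna_theory_avb} and Example \ref{example:adelic_line_bundle_Nevanlinna}.
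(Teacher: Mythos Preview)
Your proof is correct and follows essentially the same route as the paper: both arguments identify $\xi_R$ with the pseudo-norm family attached in \S\ref{sub:example_in_Nevanlinna_theory_avb} to the complex normed space $(E,\|\cdot\|_{\varphi})=(H^{0}(X,L),\|\cdot\|_{\varphi})$ and then invoke Proposition \ref{prop:example_avb_Nevanlinna}. The only minor imprecision is in your non-Archimedean step: the fact that the supremum norm of the model metric on $(X\otimes_{\bC}K_{R,\omega})^{\an}$ equals the lattice norm of $\tilde{\cE_{R,\omega}}$ is not contained in Example \ref{example:adelic_line_bundle_Nevanlinna} nor in \cite{ChenMori}, Proposition 2.3.12 (which only identifies the \emph{metric} with a quotient metric); the paper invokes \cite{ChenMori}, Proposition 2.3.16 (3) for this, and that is where reducedness of the special fibre is actually used.
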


\begin{proof}
We first note that $\xi_R$ is a well-defined pseudo-norm family on $E_R$. Indeed, any basis of the space of global sections $E:=H^{0}(X,L)$ yields a globally adapted basis for $\xi_R$. Denote by $\|\cdot\|$ the supremum norm on $H^{0}(X,L)$ induced by the continuous metric $\varphi$.

Write $\xi_R=(\|\cdot\|_{\varphi_{\omega}})_{\omega\in\Omega_{R}}$. Let $\omega\in\Omega_{R,\infty}$. Then $\|\cdot\|_{\varphi_{\omega}}$ coincides with the local pseudo-norm on $H^{0}(X,L)$ in $\omega$ whose residue vector space is $E$ and whose residue norm is $\|\cdot\|$. Let $\omega\in \Omega_{R,\um}$. As $X$ is reduced, (\cite{ChenMori}, Proposition 2.3.16 (3)) implies that $\|\cdot\|_{\varphi_{\omega}}$ coincides with the lattice norm used in \S \ref{sub:example_in_Nevanlinna_theory_avb}. 

Using this description, we see that $(E_R,\xi_R)$ coincides with the adelic vector bundle induced by the normed vector space $(E,\|\cdot\|)$ (cf. Definition \ref{def:example_avb_Nevanlinna}). This allows us to conclude.
\end{proof}

\subsection{Adelic line bundles on families of topological adelic curves}
\label{sub:adelic_line_bundles_families_of_tac}

Throughout this subsection, we consider the following setting. Let $\mathbf{S}=(I,\cU,(S_{i}=(K_{i},\phi_{i}:\Omega_{i}\to M_{K_{i}},\nu_{i}))_{i\in I},K)$ be a family of topological adelic curves. Let $X$ be a projective $K$-scheme. Set $X_{\mathbf{S}}:=X\otimes_{K}\prod_{\cU}K_{i}$. 

\begin{proposition-definition}
\label{prop-def:pseudo-metric_family_tac}
\begin{itemize}
	\item[(1)] By a \emph{pseudo-metrised} line bundle $(L,\varphi)$ on $X$ over $\mathbf{S}$, we mean the equivalence class of a family $(X_{i},L_{i},\varphi_{i})_{i\in I}$, where the $X_{i}$'s are projective schemes over the $K_{i}$'s such that $\prod_{\cU}X_{i}\cong X\otimes_{K}\prod_{\cU}K_{i}=:X_{\mathbf{S}}$ and the $(L_{i},\varphi_{i})$'s are pseudo-metrised line bundles on the $X_{i}$'s such that $L\otimes_{\cO_{X}}\cO_{X_{\mathbf{S}}}\cong \prod_{\cU}L_{i}$, and where two such families $(X_{i},L_{i},\varphi_{i})_{i\in I},(X'_{i},L'_{i},\varphi'_{i})_{i\in I}$ are declared to be equivalent if there exist $\cU$-almost everywhere isomorphisms $X_{i}\cong X_{i}'$ yielding compatible isomorphisms $(L_{i},\varphi_{i})\cong(L'_{i},\varphi'_{i})$. We use the notation $(L,\varphi)=[(X_{i},L_{i},\varphi_{i})_{i\in I}]$ to denote such an equivalence class as above and $\varphi=[(\varphi_{i})_{i\in I}]$ is called a \emph{pseudo-metric} on $L$ over $\mathbf{S}$.
	\item[(2)] Let $L$ be a line bundle on $X$. A pseudo-metric $\varphi$ on $L$ over $\mathbf{S}$ is called \emph{dominated}, resp. \emph{usc/lsc/continuous}, resp. \emph{semi-positive/integrable} if there exists a family of pseudo-metric $(\varphi_{i})_{i\in I}$ as in (1) representing $\varphi$ such that $\varphi_{i}$ is dominated, resp. usc/lsc/continuous, resp. semi-positive/integrable $\cU$-almost everywhere. This is independent of the choice of the representative.
	\item[(3)] A pseudo-metrised line bundle $(L,\varphi)$ on $X$ over $\mathbf{S}$ is called a \emph{usc/lsc/continuous adelic line bundle} on $X$ over $\mathbf{S}$ if $\varphi$ is dominated and usc/lsc/continuous. Moreover, such a usc/lsc/continuous adelic line bundle is called \emph{semi-positive/integrable} if $\varphi$ is semi-positive/integrable.
	\item[(4)] Let $\overline{L}=[(X_{i},\overline{L_{i}})_{i\in I}]$ be a pseudo-metrised line bundle on $X$ over $\mathbf{S}$. Then $-\overline{L}:= [(X_{i},-\overline{L_{i}})_{i\in I}]$ is a pseudo-metrised on $X$ over $\mathbf{S}$. Moreover, if $\overline{L}$ is a usc/lsc/continuous adelic line bundle on $X$ over $\mathbf{S}$, then $-\overline{L}$ is also a usc/lsc/continuous line bundle on $X$ over $\mathbf{S}$
	\item[(5)] Let $\overline{L^{(1)}}=(L^{(1)},\varphi^{(1)}),\overline{L^{(2)}}=(L^{(2)},\varphi^{(2)})$ be two pseudo-metrised line bundle on $X$ over $\mathbf{S}$. For $j=1,2$, write $\overline{L^{(j)}}=[(X^{(j)}_{i},\overline{L_{j}^{(j)}})_{i\in I}]$. By \L{}o\'s theorem, $X_{i}^{(1)}\cong X_{i}^{(2)}$ $\cU$-almost everywhere and we define $\overline{L^{(1)}}+\overline{L^{(2)}}=[(X^{(1)}_{i},\overline{L_{i}^{(1)}}+\overline{L_{i}^{(2)}})_{i\in I}]$ (after identification of the isomorphic $X^{(j)}_{i}$'s). This is a pseudo-metrised line bundle on $X$ over $\mathbf{S}$. Moreover, if both $\overline{L^{(1)}}$ and $\overline{L^{(2)}}$ are usc/lsc/continuous adelic line bundles on $X$ over $\mathbf{S}$, then so is $\overline{L^{(1)}}+\overline{L^{(2)}}$.
	\item[(6)] Let $f:Y\to X$ be a morphism between projective $K$-schemes. Let $\overline{L}=[(X_{i},\overline{L_{i}})_{i\in I}]$ be a pseudo-metrised line bundle on $X$ over $\mathbf{S}$. By \L{}o\'s theorem, we realise $Y_{\mathbf{S}}$ as an ultraproduct $\prod_{\cU}Y_{i}$, where the $Y_{i}$'s are projective schemes over the $K_{i}$'s. By \L{}o\'s theorem again, $f$ induces $\cU$-almost everywhere a morphism $f_{i}:Y_{i}\to X_{i}$. Define $f^{\ast}\overline{L}:=[(Y_{i},f^{\ast}\overline{L_{i}})_{i\in I}]$. This is a pseudo-metrised line bundle on $Y$ over $\mathbf{S}$. Moreover, if $\overline{L}$ is a usc/lsc/continuous adelic line bundle on $X$ over $\mathbf{S}$, then $f^{\ast}\overline{L}$ is a usc/lsc/continuous adelic line bundle on $Y$ over $\mathbf{S}$.
	\item[(7)] Let $P:\Spec(K')\to X$ be a closed point. Let $\overline{L}$ be a usc/lsc/continuous adelic line bundle on $X$ over $\mathbf{S}$. Then $P^{\overline{L}}$ is a usc/lsc/continuous adelic line bundle on the family of topological adelic curves $\mathbf{S}\otimes_{K}K'$ (cf. Proposition-Definition \ref{prop-def:algebraic_covering_family_tac}).
\end{itemize}
\end{proposition-definition}

\begin{proof}
All the introduced notions are well-defined. The assertions concerning the adelic line bundles are a consequence of Proposition \ref{prop:constructions_adelic_line_bundles} and Remark \ref{rem:CM_6.1.3_general}.
\end{proof}

\begin{example}
\label{example:adelic_line_bundle_Nevanlinna_family}
Consider the family of topological adelic curves $\mathbf{S}=(\bR_{>0},\cU,(S_{R})_{R>0},\cM(\bC))$ from Example \ref{example:families_of_tac} (2). Let $X$ be a reduced projective $\bC$-scheme and let $(L,\varphi)$ be an lsc/usc/continuous metrised line bundle on $X$. Denote by $L_{\cM(\bC)}$ the pullback of $L$ on $X_{\cM(\bC)}$. Using the construction of \S \ref{subsub:adelic_line_bundle_Nevanlinna}, for any $R>0$, we obtain an adelic line bundle $\overline{L_{R}}=(L_{R},\varphi_{R})$ on $X_{R}:=X \otimes_{\bC} K_{R}$. Then $\overline{L_{\cM_{\bC}}}:=[(X_{R},\overline{L_{R}})_{R>0})$ is an lsc/usc/continuous adelic line bundle on $X\otimes_{\bC}\cM_{\bC}$ over $\mathbf{S}$.

More generally, let $K/\cM(\bC)$ be an algebraic extension. Then the lsc/usc/continuous metrised line bundle $(L,\varphi)$ on $X$ induces for any $R>0$ a usc/lsc/continuous adelic line bundle on $X\otimes_{\bC}K$ over $\mathbf{S}\otimes_{\cM(\bC)}K$ (cf. Proposition-Definitions \ref{prop-def:algebraic_covering_family_tac} and \ref{prop-def:adelic_line_bundle_Nevanlinna_finite covering}).
\end{example}

\subsection{Volume functions on a proper topological adelic curve}
\label{sub:volume_functions_proper_tac}

In this subsection, we fix a proper topological adelic curve $S=(K,\phi:\Omega\to M_{K},\nu)$ and a geometrically integral projective $K$-scheme $\pi : X \to \Spec(K)$ of dimension $d$. 

\begin{definition}
\label{def:volume_function_proper_tac}
Let $\overline{L}=(L,\varphi)$ be an adelic line bundle on $X$. For any integer $n\geq 1$, let $\pi_{\ast}(n\overline{L}) := (H^{0}(X,nL),\pi_{\ast}(n\varphi))$, where $\pi_{\ast}(n\varphi)$ denotes the collection of supremum pseudo-norms on $H^{0}(X,nL)$ induced by $n\varphi$. Assume that, for any integer $n\geq 1$, $\pi_{\ast}(n\varphi)$ is well defined and is dominated and usc (e.g. if $\Omega=M_{K}$ by Theorem \ref{th:pushforward_pseudo-metric_family_dominated_general} and Proposition \ref{prop:existence_pushforward_pseudo-norm_family}), so that the Arakelov degree $\widehat{\deg}(\pi_{\ast}(n\overline{L}))$ makes sense. 

Then we define the $\chi$\emph{-volume}
\begin{align*}
\widehat{\vol}_{\chi}(\overline{L}) := \displaystyle\limsup_{n \to +\infty}\frac{\widehat{\deg}(\pi_{\ast}(n\overline{L}))}{n^{d+1}/(d+1)!},
\end{align*} 
and the \emph{arithmetic volume}
\begin{align*}
\widehat{\vol}(\overline{L}) := \displaystyle\limsup_{n \to +\infty}\frac{\widehat{\deg}_{+}(\pi_{\ast}(n\overline{L}))}{n^{d+1}/(d+1)!}.
\end{align*}
\end{definition}

\section{Heights of closed points}
\label{sec:global_heights}

In this section, we introduce global heights over topological adelic curves. We start by constructing height functions for closed points over a proper topological adelic curve (\S \ref{sub:height_of_closed_points_proper}). Then we give the family counterpart for asymptotically proper families of topological adelic curves (\S \ref{sub:height_of_closed_points_family}). 

\subsection{Heights of closed points on a proper topological adelic curve}
\label{sub:height_of_closed_points_proper}

In this subsection, we fix a proper topological adelic curve $S=(K,\phi:\Omega\to M_K,\nu)$ and a projective $K$-scheme $X$. 

\begin{definition}
\label{def:height_closed_points_proper}
 Let $\overline{L}=(L,\varphi)$ be an adelic line bundle on $X$. Let $P$ be a closed point of $X$. The pseudo-norm family $P^{\ast}\varphi$ is continuous and dominated (cf. Proposition \ref{prop:CM_6.1.18_general}). As $P^{\ast}L$ is a $\kappa(P)$-vector space of dimension $1$, Proposition \ref{prop:adelic_line_bundle_tac} implies that $P^{\ast}\overline{L}:=(P^{\ast}L,P^{\ast}\varphi)$ is an adelic line bundle on $S_P := S\otimes_{K} \kappa(P)$. As $S$ is proper, $S_P$ is proper as well (cf. Proposition \ref{prop:algebraic_extension_tac}). We define the \emph{height} of $P$ w.r.t. $\overline{L}$ as 
\begin{align*}
h_{\overline{L}}(P) := \widehat{\deg}_{S}(P^{\ast}\overline{L}).
\end{align*}
\end{definition}

\begin{theorem}
\label{th:height_closed_points_proper}
Let $X$ be a projective $K$-scheme and let $\overline{L_1}=(L_1,\varphi_1),\overline{L_2}=(L_2,\varphi_2)$ be adelic line bundles on $X$. Then the following assertions hold.
\begin{itemize}
	\item[(1)] For any closed point $P$ of $X$ we have
	\begin{align*}
	h_{\overline{L_1}+\overline{L_2}}(P) = h_{\overline{L_1}}(P) + h_{\overline{L_2}}(P).
	\end{align*}
	\item[(2)] Let $P$ be a closed point of $X$. Assume that $L_1=L_2$. Then we have 
	\begin{align*}
	\left|h_{\overline{L_1}}(P)-h_{\overline{L_2}}(P)\right| \leq \upint_{\Omega}d_{\omega}(\varphi_1,\varphi_2)\nu(\diff\omega) < +\infty.
	\end{align*}
\end{itemize}
\end{theorem}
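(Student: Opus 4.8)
The plan is to reduce both assertions to statements about the pushforward adelic line bundle $P^{\ast}\overline{L}$ on the (proper) topological adelic curve $S_P := S \otimes_K \kappa(P)$, and then invoke the results of Part~\ref{part:geometry_of_numbers} together with the local estimates of \S\ref{sec:metrics_MA_intersection_local}.

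For part~(1), I would first observe that the formation of the pullback pseudo-metric family along a closed point is compatible with tensor products: by Definition~\ref{def:pseudo-metric_family_algebraic_constructions}~(4) and the explicit description of the tensor product of pseudo-norm families on one-dimensional vector spaces, one has $P^{\ast}(\varphi_1+\varphi_2) = P^{\ast}\varphi_1 \otimes P^{\ast}\varphi_2$ as pseudo-norm families on $P^{\ast}L_1 \otimes_{\kappa(P)} P^{\ast}L_2 \cong P^{\ast}(L_1+L_2)$. Then, picking a nonzero $s_1 \in P^{\ast}L_1$ and $s_2 \in P^{\ast}L_2$, the section $s_1 \otimes s_2$ is nonzero in $P^{\ast}(L_1+L_2)$ and $\log\|s_1\otimes s_2\|_{\omega} = \log\|s_1\|_{1,\omega} + \log\|s_2\|_{2,\omega}$ for all $\omega \in \Omega_P$ outside a set of $\nu_P$-measure zero (using Lemma~\ref{lemma:pseudo-norm_family_singular} applied to each factor). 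Integrating over $\Omega_P$ and using additivity of the integral — which is legitimate because each $\log\|s_i\|_{i,\omega}$ is $\nu_P$-integrable (the $P^{\ast}\overline{L_i}$ being adelic line bundles) — gives $\widehat{\deg}_{S}(P^{\ast}(\overline{L_1}+\overline{L_2})) = \widehat{\deg}_S(P^{\ast}\overline{L_1}) + \widehat{\deg}_S(P^{\ast}\overline{L_2})$, which is exactly~(1). Here I use Definition~\ref{def:arakelov_degree_section} together with the independence of $\widehat{\deg}_{\xi}(s)$ of the choice of $s$ in the one-dimensional case (Proposition~\ref{prop:adelic_line_bundle_tac} and the surrounding discussion).

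For part~(2), since $L_1 = L_2 =: L$ and the model families agree, the pseudo-metric family $\psi := \varphi_1 - \varphi_2$ is a pseudo-metric family on $\cO_X$ with the trivial underlying model family, and for each closed point $P$ the local distance function $(\omega \in \Omega) \mapsto d_{\omega}(\varphi_1,\varphi_2)$ is well-defined (Definition~\ref{def:distance_local_pseudo-metric}). The key input is Proposition~\ref{prop:dominated_pseudo-metric_families}~(5): since $\varphi_1$ and $\varphi_2$ are both dominated, the function $\omega \mapsto d_{\omega}(\varphi_1,\varphi_2)$ is $\nu$-dominated, in particular $\upint_{\Omega} d_{\omega}(\varphi_1,\varphi_2)\,\nu(\diff\omega) < +\infty$. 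Next I would relate the local distance at $\omega$ to the distance between the pullback pseudo-norms: for a closed point $P$ and any $\omega' \in \Omega_P$ above $\omega$, the point $x_{\omega'} \in X_{\omega}^{\an}$ from Definition~\ref{def:pseudo-metric_family_algebraic_constructions}~(4) satisfies $\bigl|\log\|\ell\|_{P^{\ast}\varphi_{1,\omega'}} - \log\|\ell\|_{P^{\ast}\varphi_{2,\omega'}}\bigr| \le d_{\omega}(\varphi_1,\varphi_2)$ for all $\ell \in P^{\ast}L \setminus\{0\}$, simply because this quantity equals $|\log|\cdot|_{\varphi_{1,\omega}}(x_{\omega'}) - \log|\cdot|_{\varphi_{2,\omega}}(x_{\omega'})|$, which is bounded by the supremum over $X_{\omega}^{\an}$. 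Fixing a nonzero $\ell \in P^{\ast}L$, this gives
\begin{align*}
\bigl|h_{\overline{L_1}}(P) - h_{\overline{L_2}}(P)\bigr| = \left| \int_{\Omega_P} \bigl(\log\|\ell\|_{P^{\ast}\varphi_{2,\omega'}} - \log\|\ell\|_{P^{\ast}\varphi_{1,\omega'}}\bigr)\,\nu_P(\diff\omega') \right| \le \int_{\Omega_P} d_{\pi_{\kappa(P)/K}(\omega')}(\varphi_1,\varphi_2)\,\nu_P(\diff\omega').
\end{align*}
Finally, by the construction of the covering measure $\nu_P$ (equation~\eqref{eq:measure_finite_separable_extension} and its infinite-extension analogue in \S\ref{sub:algebraic_extension}), the pushforward of $\nu_P$ under $\pi_{\kappa(P)/K}$ is $\nu$, so the last integral equals $\int_{\Omega} d_{\omega}(\varphi_1,\varphi_2)\,\nu(\diff\omega) \le \upint_{\Omega} d_{\omega}(\varphi_1,\varphi_2)\,\nu(\diff\omega) < +\infty$, giving the claimed bound. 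The $O(1)$-form of the statement then follows since this bound is manifestly independent of $P$ (it depends only on $\varphi_1,\varphi_2$).

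The main obstacle I anticipate is the measurability and integrability bookkeeping needed to push the pointwise inequality $|\log\|\ell\|_{P^{\ast}\varphi_{1,\omega'}} - \log\|\ell\|_{P^{\ast}\varphi_{2,\omega'}}| \le d_{\omega}(\varphi_1,\varphi_2)$ through the integral over $\Omega_P$, and in particular verifying that the composite $\omega' \mapsto d_{\pi_{\kappa(P)/K}(\omega')}(\varphi_1,\varphi_2)$ is $\nu_P$-dominated so that the change-of-variables formula for $\pi_{\kappa(P)/K,\ast}\nu_P = \nu$ applies. This should follow formally by pulling back the $\nu$-dominating function for $\omega \mapsto d_\omega(\varphi_1,\varphi_2)$ along $\pi_{\kappa(P)/K}$, exactly as in the proof of Proposition~\ref{prop:constructions_dominated_semi-norm_families}~(7), but it is the step where one must be careful not to conflate $\nu$-domination on $\Omega$ with $\nu_P$-domination on $\Omega_P$.
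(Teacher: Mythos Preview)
Your proposal is correct and follows essentially the same approach as the paper. For part~(1) the paper simply cites Proposition~\ref{prop:CM_4.3.2}~(3), which is exactly the additivity of Arakelov degree under tensor product that you spell out by hand in the one-dimensional case; for part~(2) the paper's argument is the same pointwise bound $d_{x}(P^{\ast}\varphi_1,P^{\ast}\varphi_2) \le d_{\omega}(\varphi_1,\varphi_2)$ followed by the pushforward identity $\pi_{\kappa(P)/K,\ast}\nu_P = \nu$ and Proposition~\ref{prop:dominated_pseudo-metric_families}~(5), with the only cosmetic difference that the paper works with upper integrals $\upint$ throughout rather than passing from $\int$ to $\upint$ at the end, which silently absorbs the measurability bookkeeping you flag as an obstacle.
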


\begin{proof}
\textbf{(1):} This follows from Proposition \ref{prop:CM_4.3.2} (3). 

\textbf{(2):} Let $P$ be a closed point of $X$. Write $S \otimes_{K} \kappa(P)=(\kappa(P),\phi_P : \Omega_{P} \to M_{\kappa(P)},\nu_P)$. By definition, for any $\omega\in \Omega$, for any $x \in \pi_{\kappa(P)/K}^{-1}(\omega)$, we have
\begin{align*}
d_{x}(P^{\ast}\varphi_1,P^{\ast}\varphi_2) \leq d_{\omega}(\varphi_1,\varphi_2)
\end{align*}
By definition of $h_{\overline{L_1}}(P),h_{\overline{L_2}}(P)$, we obtain
\begin{align*}
\left|h_{\overline{L_1}}(P)-h_{\overline{L_2}}(P)\right| \leq \upint_{\Omega_P}d_{x}(P^{\ast}\varphi_1,P^{\ast}\varphi_2)\nu_P(\diff x)\leq \upint_{\Omega}d_{\omega}(\varphi_1,\varphi_2)\nu(\diff\omega).
\end{align*}
The finiteness assertion follows from Proposition \ref{prop:dominated_pseudo-metric_families} (5).
\end{proof}

\begin{remark}
\label{rem:height_closed_points_proper_analogy_Nevanlinna}
Theorem \ref{th:height_closed_points_proper} should be seen as the analogue of Nevanlinna's first main theorem in Arakelov geometry (cf. Theorem \ref{th:Nevanlinna_first_main_theorem}).
\end{remark}

\subsection{Heights of closed points on asymptotically proper families of topological adelic curves}
\label{sub:height_of_closed_points_family}

In this subsection, we consider the following setting. Let $\mathbf{S}=(I,\cU,(S_{i}=(K_{i},\phi_{i}:\Omega_{i}\to M_{K_{i}},\nu_{i}))_{i\in I},K)$ be a family of topological adelic curves. Let $X$ be a projective $K$-scheme. Set $X_{\mathbf{S}}:=X\otimes_{K}\prod_{\cU}K_{i}$. We also fix an equivalence relation $\sim$ on $\prod_{\cU}\bR$ which is compatible with the additive group structure and assume that the family $\mathbf{S}$ is asymptotically proper w.r.t. $\sim$ (cf. Definition \ref{def:family_of_tac_proper}). 

\begin{definition}
\label{def:height_closed_points_family}
Let $\overline{L}$ be an adelic line bundle on $X$ over $\mathbf{S}$. Let $P: \Spec(\kappa(P))\to X$ be a closed point. By Proposition-Definition \ref{prop-def:pseudo-metric_family_tac} (7), $P^{\ast}\overline{L}$ is an adelic line bundle on $\mathbf{S}\otimes_{K}\kappa(P)$ and we define
\begin{align*}
h_{\overline{L}}(P) := \widehat{\deg}(P^{\ast}\overline{L}) \in \displaystyle\prod_{\cU}\bR/\sim
\end{align*}
and call it the \emph{height} of $P$ w.r.t. $\overline{L}$.
\end{definition}

\begin{theorem}
\label{th:height_of_closed_points_families}
Let $\overline{L^{(1)}}, \overline{L^{(2)}}$ adelic line bundles on $X$ over $\mathbf{S}$. Let $P$ be a closed point of $X$. Then the following assertions hold.
\begin{itemize}
	\item[(1)] We have the equality
	\begin{align*}
	h_{\overline{L^{(1)}}+\overline{L^{(2)}}}(P) = h_{\overline{L^{(1)}}}(P) + h_{\overline{L^{(2)}}}(P)
	\end{align*}
	in $\prod_{\cU}\bR/\sim$.
	\item[(2)] Assume that there exists a total ordering on $\prod_{\cU}\bR/\sim$ that is compatible with the equivalence relation $\sim$, i.e. the quotient map $\prod_{\cU}\bR \to \prod_{\cU}\bR/\sim$ is increasing. For $j=1,2$, write $\overline{L^{(j)}}=(L^{(j)},[(L^{(j)}_{i},\varphi^{(j)}_{i})_{i\in I}])$. Assume that, $L_{i}^{(1)}=L_{i}^{(2)}$ $\cU$-almost everywhere and that
	\begin{align*}
	d(\overline{L^{(1)}},\overline{L^{(2)}}) :=  \left[\left(\upint_{\Omega_i} d_{\omega}(\varphi^{(1)}_{i},\varphi^{(2)}_{i})\nu_i(\diff\omega)\right)_{i\in I}\right] \in \displaystyle\prod_{\cU}\bR
	\end{align*}
	satisfies 
	\begin{align*}
	d(\overline{L^{(1)}},\overline{L^{(2)}}) \sim 0.
	\end{align*}
	Then we have the equality 
	\begin{align*}
	h_{\overline{L^{(1)}}}(P) = h_{\overline{L^{(2)}}}(P)
	\end{align*}
	in $\prod_{\cU}\bR/\sim$.
\end{itemize}
\end{theorem}

\begin{proof}
\textbf{(1):} This is a consequence of the definition of the height and Proposition \ref{prop:CM_4.3.2} (3).

\textbf{(2):} Write $\mathbf{S}\otimes_{K}\kappa(P)= (I,\cU,(S_{i,P}=(K_{i,P},\phi_{i,P}:\Omega_{i,P}\to M_{K_{i},P},\nu_{i,P}))_{i\in I},\kappa(P))$, Let $\omega=[(\omega_{i})_{i\in I}]\in \prod_{\cU}\Omega_{i}$. Define
\begin{align*}
d_{\omega}(\overline{L^{(1)}},\overline{L^{(2)}}) := \left[\left(d_{\omega_{i}}(\varphi^{(1)}_{i},\varphi_{i}^{(2)})\right)_{i\in I}\right] \in  \prod_{\cU}\bR,
\end{align*}
this is well-defined. As in the proof of Theorem \ref{th:height_closed_points_proper} (2), for any $x\in \prod_{\cU}\Omega_{i,P}$ mapped to $\omega$, we have
\begin{align*}
d_{x}(P^{\ast}\overline{L^{(1)}},P^{\ast}\overline{L^{(2)}}) \leq d_{\omega}(\overline{L^{(1)}},\overline{L^{(2)}}).
\end{align*}
By definition of the height, we have
\begin{align*}
-d(\overline{L^{(1)}},\overline{L^{(2)}})\leq h_{\overline{L^{(1)}}}(P)-h_{\overline{L^{(2)}}}(P) \leq d(\overline{L^{(1)}},\overline{L^{(2)}}).
\end{align*}
Using the assumption on the equivalence relation $\sim$, we obtain the desired result.
\end{proof}

\begin{example}
\label{example:height_of_closed_points_Nevanlinna}
Consider the family of topological adelic curves $\mathbf{S}=(\bR_{>0},\cU,(S_{R})_{R>0},\cM(\bC))$ from Example \ref{example:families_of_tac} (2). Recall that $\mathbf{S}$ is asymptotically proper w.r.t. the equivalence relation $\sim_{\fin}$ as defined in \S \ref{sub:motivation_families}. Moreover, there exists a total ordering on $\prod_{\cU}\bR/\sim_{\fin}$ that is compatible with the usual ordering on $\prod_{\cU}\bR$.

Let $X_{0}$ be a reduced projective $\bC$-scheme and let $(L_{0},\varphi_{0})$ be a continuously metrised line bundle on $X_{0}$. Set $X:=X_{0}\otimes_{\bC}\cM_{\bC}$, $X_{R}:=X_{0}\otimes_{\bC}K_{R}$ and denote by $\overline{L}:=(L,\varphi):=(L_{0}\otimes_{\cO_{X_{0}}}\cO_{X},[(X_{R},L_{R},\varphi_{R})_{R>0}])$ the adelic line bundle on $X$ associated with $(L_{0},\varphi_{0})$ constructed in Example \ref{example:adelic_line_bundle_Nevanlinna_family}.

Let $P\in X$ be a closed point. It determines a family of closed points $(P_{R})_{R>0}$ of the $X_{R}$'s. From the point of view explained in \S \ref{sub:Nevanlinna_holomorphic_curves}, $P$ corresponds to a holomorphic curve $f : \bC \to X_{0}$ and for any $R>0$, $P_{R}$ corresponds to the restriction of $f$ to the closed disc of radius $R$ in $\bC$. Let $s_{0}$ be an regular meromorphic section of $L_{0}$ such that $f(\bC) \nsubseteq |\div(s_{0})|$. Then we have the equality
\begin{align*}
h_{\overline{L}}(P)=\left[\left(T_{f,(L_{0},\varphi_{0},s_{0})}(R)\right)_{R>0}\right]\in \displaystyle\prod_{\cU}\bR/\sim_{\fin},
\end{align*}
(cf. \S \ref{sub:Nevanlinna_holomorphic_curves} for the notation $T_{f,(L_{0},\varphi_{0},s_{0})}(R)$).

For any other continuous metric $\varphi_{0}'$ on $L_{0}$ determining another adelic line bundle $\overline{L'}$ on $X$ over $\mathbf{S}$, we have
\begin{align*}
0 \leq d(\overline{L},\overline{L'}) \leq \left[\left(\left|\displaystyle\max_{x\in X_{0}(\bC)} \log\frac{|s_{0}|_{\varphi_{0}'}(x)}{|s_{0}|_{\varphi_{0}}(x)}\right|\right)_{R>0}\right] \sim_{\fin} 0,
\end{align*}
where $s_{0}$ denotes an arbitrary regular meromorphic section of $L_{0}$ such that $f(\bC)\nsubseteq |\div(s_{0})|$. By compatibility of the orderings, we get $d(\overline{L},\overline{L'}) \sim_{\fin} 0$. Therefore, we can apply Theorem \ref{th:height_of_closed_points_families} and we see that it gives a generalisation of Theorem \ref{th:Nevanlinna_first_main_theorem} in our context. 

More generally, consider an arbitrary algebraic extension $K/\cM(\bC)$. Consider the adelic line bundle $(L_{K},\varphi_{K})$ on $X_{K}:=X\otimes_{\cM(\bC)}K$ over $\mathbf{S}_{K}:=\mathbf{S}\otimes_{\cM(\bC)}K$ given by Proposition-Definition \ref{prop-def:adelic_line_bundle_Nevanlinna_finite covering}. Then a closed point $P\in X_{K}$ corresponds to a family of holomorphic curves $f=(f_{K'}:A_{K'}\to X_{0})_{K'}$, where $K'$ runs over finite extensions of $\cM_{\bC}$ contained in $K$ and for any such extension $K'$, $A_{K'}$ is a finite covering of $\bC$ such that $\cM(A_{K'})\cong K'$, satisfying some compatibility condition (cf. \S \ref{subsub:covering_family_topological_adelic_curves_Nevanlinna}). Then Theorem \ref{th:Nevanlinna_first_main_theorem} yields (\cite{Gubler97}, Theorem 3.18) for $0$-cycles on $X_{K}$. 
\end{example}

\section{Arithmetic intersection theory and heights of cycles}
\label{sec:arithmetic_intersection_product}

In this final section, we introduce heights of cycles. As it is done classically in Arakelov geometry, this is done by taking intersection products of adelic line bundles. To define the arithmetic intersection product on a topological adelic curve, in a similar way to \cite{Gubler97}, we do it on a finitely generated (hence countable) subfield and using the arithmetic intersection on adelic curves introduced in \cite{ChenMori21}. We then use the fact that arithmetic intersection numbers are invariant w.r.t. coverings of topological adelic curves (\S \ref{sub:arithmetic_intersection_theory_tac}). Using the arithmetic intersection product, we obtain a notion of height of cycles over an asymptotically proper family of topological adelic curves (\S \ref{sub:arithmetic_intersection_theory_family_tac}). In our Nevanlinna theoretic example, we recover (\cite{Gubler97}, Theorem 3.18).

\subsection{Arithmetic intersection theory and heights of cycles on topological adelic curves}
\label{sub:arithmetic_intersection_theory_tac}

Throughout this subsection, we fix a topological adelic curve $S=(K,\phi:\Omega\to M_{K},\nu)$ and a projective $K$-scheme $X$ of dimension $d$. 

\begin{theorem-definition}
\label{th-def:arithmetic_intersection_product_tac_countable}
Assume that $K$ is countable and that for any $\omega\in\Omega$, $\phi(\omega)$ is an absolute value on $K$. Let $\overline{L^{(0)}}=(L^{(0)},\varphi^{(0)}),...,\overline{L^{(d)}}=(L^{(d)},\varphi^{(d)})$ be integrable adelic line bundles on $X$ over $S$. By Remark \ref{rem:adelic_line_bundles_adelic_curves}, the $\overline{L^{(i)}}$'s are integrable adelic line bundles on $X$ over the adelic curve $\tilde{S}$ determined by $S$. Let $s^{(0)},...,s^{(d)}$ be respectively regular meromorphic sections of $L^{(0)},...,L^{(d)}$ such that the Cartier divisors $\div(s^{(0)}),...,\div(s^{(d)})$ intersect properly on $X$.
\begin{itemize}
	\item[(1)] Using (\cite{ChenMori21}, Theorem 4.2.11), we define the \emph{arithmetic intersection number}
\begin{align*}
\left((\overline{L^{(0)}},s^{(0)})\cdots(\overline{L^{(d)}},s^{(d)})\right)_{S} := (\widehat{\div}(s^{(0)})\cdots\widehat{\div}(s^{(d)}))_{\tilde{S}},
\end{align*}
with the notation of \emph{loc. cit.}.
	\item[(2)] Assume that $S$ is proper. Then $\left((\overline{L^{(0)}},s^{(0)})\cdots(\overline{L^{(d)}},s^{(d)})\right)_{S}$ is independent of the choice of $s^{(0)},...,s^{(d)}$ and we simply denote it by $\left(\overline{L^{(0)}}\cdots\overline{L^{(d)}}\right)_{S}$. This arithmetic intersection number is also called the \emph{multi-height} of $X$ w.r.t. $\overline{L^{(0)}},...,\overline{L^{(d)}}$ and is denoted by $h_{\overline{L^{(0)}},...,\overline{L^{(d)}}}(X)$. If $\overline{L^{(0)}},...,\overline{L^{(d)}}$ are all equal to the same integrable adelic line bundle $\overline{L}$ on $X$, this multi-height is denoted by $h_{\overline{L}}(X)$ and is called the \emph{height} of $X$ w.r.t. $\overline{L}$.
	\item[(3)] Assume that $S$ is proper. Let $Z$ be a $l$-dimensional cycle on $X$. Define the \emph{multi-height} of $Z$ w.r.t. $\overline{L^{(0)}},...,\overline{L^{(l)}}$ by 
	\begin{align*}
	h_{\overline{L^{(0)}},...,\overline{L^{(l)}}}(Z) := \left(\overline{L^{(0)}}\cdots\overline{L^{(l)}}|Z\right)_{\tilde{S}},
	\end{align*}
	with the notation of (\cite{ChenMori21}, \S 4.4). Moreover, if $\overline{L^{(0)}},...,\overline{L^{(l)}}$ are all equal to the same integrable adelic line bundle $\overline{L}$ on $X$, this multi-height is denoted by $h_{\overline{L}}(Z)$ and is called the \emph{height} of $Z$ w.r.t. $\overline{L}$.
	\item[(4)] Assume that $S$ is proper. Then the arithmetic intersection product is a symmetric and multi-linear pairing on the group of integrable adelic line bundles on $X$. Moreover, for any projective $K$-morphism $f:Y\to X$ and $l$-dimensional cycle $Z$ on $Y$, we have the following \emph{projection formula}
	\begin{align*}
	h_{f^{\ast}\overline{L^{(0)}},...,f^{\ast}\overline{L^{(l)}}}(Z) = h_{\overline{L^{(0)}},...,\overline{L^{(l)}}}(f_{\ast}Z).
	\end{align*}
	\item[(5)] Assume that $S$ is proper, that $L^{(0)},...,L^{(d)}$ are semi-ample and that $\varphi^{(0)},...,\varphi^{(d)}$ are semi-positive. Let $\psi^{(0)},...,\psi^{(d)}$ be respectively semi-positive pseudo-metrics on $L^{(0)},...,L^{(d)}$. For any $i=0,...,d$, we set $\overline{M^{(i)}}:=(L^{(i)},\psi^{(i)})$. Then we have
	\begin{align*}
	\left|\left(\overline{L^{(0)}}\cdots\overline{L^{(d)}}\right)_{S} - \left(\overline{M^{(0)}}\cdots\overline{M^{(d)}}\right)_{S}\right| \leq \displaystyle\sum_{i=0}^{d} \upint_{\Omega}d_{\omega}(\varphi^{(i)},\psi^{(i)})\nu(\diff\omega)\left(L^{(0)}\cdots L^{(i-1)}\cdot L^{(i+1)}\cdots L^{(d)}\right).
	\end{align*}
\end{itemize}
\end{theorem-definition}

\begin{proof}
The independence of the choice of the regular meromorphic sections is (\cite{ChenMori21}, Proposition 4.4.2). The assertion concerning the symmetry and multi-linearity of the arithmetic intersection product is (\emph{loc. cit.}, Proposition 4.4.4 (1)), the projection formula is (\emph{loc. cit.}, Theorem 4.4.9) and the final assertion concerning the change of pseudo-metrics is obtained by integration of (\emph{loc. cit.}, Corollary 3.5.7).
\end{proof}

\begin{theorem-definition}
\label{th-def:arithmetic_intersection_product_tac}
Let $\overline{L^{(0)}}=(L^{(0)},\varphi^{(0)}),...,\overline{L^{(d)}}=(L^{(d)},\varphi^{(d)})$ be integrable adelic line bundles on $X$ over $S$ and $s^{(0)},...,s^{(d)}$ be respectively regular meromorphic sections of $L^{(0)},...,L^{(d)}$ such that the Cartier divisors $\div(s^{(0)}),...,\div(s^{(d)})$ intersect properly on $X$.
\begin{itemize}
	\item[(1)] Choose a finitely generated subfield $K_{0}\subset K$ such that $X,L^{(0)},...,L^{(d)},s^{(0)},...,s^{(d)}$ are defined over $K_{0}$, namely there exist a projective $K_{0}$-scheme $X_{0}$ and line bundles $L^{(0)}_{0},...,L^{(d)}_{0}$ respectively equipped with regular meromorphic sections $s_{0}^{(0)},...,s_{0}^{(d)}$ such that $X\cong X_{0}\otimes_{K_{0}}K$ and for any $i=0,...,d$, we have $\pi_{0}^{\ast}(L^{(i)}_{0},s_{0}^{(i)})\cong (L^{(i)},s^{(i)})$. Consider the topological adelic curve $S_{0}=(K_{0},\phi_{0}:\Omega_{0}\to M_{K_{0}},\nu_{0})$ constructed in \S \ref{subsub:examples_tac_adelic_curves} (4). Recall that $\Omega_{0}\subset\Omega$ is a Borel subset such that $\nu(\Omega\setminus\Omega_{0})=0$ and $\nu_{0}$ is the restriction of $\nu$ to $\Omega_{0}$. The following assertions hold.
	\begin{itemize}
		\item[(i)] The Cartier divisors $\div(s_{0}^{(0)}),...,\div(s_{0}^{(d)})$ intersect properly on $X_{0}$.
		\item[(ii)] For any $i=0,...,d$, the pseudo-metric $\varphi^{(i)}$ induces a metric family $\varphi^{(i)}_{0}$ on $L_{0}^{(i)}$ such that the pullback pseudo-metric $\pi_{0}^{\ast}\varphi^{(i)}_{0}$ obtained by pointwise extension of scalars identifies with $\varphi^{(i)}$.
	\end{itemize}
	We then define the \emph{arithmetic intersection number}
	\begin{align*}
	\left((\overline{L^{(0)}},s^{(0)})\cdots(\overline{L^{(d)}},s^{(d)})\right)_{S} := \left((\overline{L_{0}^{(0)}},s_{0}^{(0)})\cdots(\overline{L_{0}^{(d)}},s_{0}^{(d)})\right)_{S_{0}},
	\end{align*}
	with the notation of Proposition-Definition \ref{th-def:arithmetic_intersection_product_tac_countable}. This does not depend on the choice of $K_{0},X_{0},L_{0}^{(0)},...,L_{l}^{(l)},s_{0}^{(0)},...,s_{0}^{(d)}$.
	\item[(2)] Assume that $S$ is proper. Then $\left((\overline{L^{(0)}},s^{(0)})\cdots(\overline{L^{(d)}},s^{(d)})\right)_{S}$ is independent of the choice of $s^{(0)},...,s^{(d)}$ and we simply denote it by $\left(\overline{L^{(0)}}\cdots\overline{L^{(d)}}\right)_{S}$. This arithmetic intersection number is also called the \emph{multi-height} of $X$ w.r.t. $\overline{L^{(0)}},...,\overline{L^{(d)}}$ and is denoted by $h_{\overline{L^{(0)}},...,\overline{L^{(d)}}}(X)$. If $\overline{L^{(0)}},...,\overline{L^{(d)}}$ are all equal to the same integrable adelic line bundle $\overline{L}$ on $X$, this multi-height is denoted by $h_{\overline{L}}(X)$ and is called the \emph{height} of $X$ w.r.t. $\overline{L}$.
	\item[(3)] Assume that $S$ is proper. Use the same notation as in (1). Let $Z$ be a $l$-dimensional cycle on $X$ assume that it is the base change of an $l$-cycle $Z_{0}$ on $X_{0}$. Define the \emph{multi-height} of $Z$ w.r.t. $\overline{L^{(0)}},...,\overline{L^{(l)}}$ by 
	\begin{align*}
	h_{\overline{L^{(0)}},...,\overline{L^{(l)}}}(Z) := h_{\overline{L_{0}^{(0)}},...,\overline{L_{l}^{(l)}}}(Z_{0}),
	\end{align*}
	with the notation of Theorem-Definition \ref{th-def:arithmetic_intersection_product_tac_countable} (3). This multi-height does not depend on the choice of $K_{0},X_{0},L_{0}^{(0)},...,L_{l}^{(l)}$ and $Z_{0}$. Moreover, if $\overline{L^{(0)}},...,\overline{L^{(l)}}$ are all equal to the same integrable adelic line bundle $\overline{L}$ on $X$, this multi-height is denoted by $h_{\overline{L}}(Z)$ and is called the \emph{height} of $Z$ w.r.t. $\overline{L}$.
	\item[(4)] Assume that $S$ is proper. Then the arithmetic intersection product is a symmetric and multi-linear pairing on the group of integrable adelic line bundles on $X$. Moreover, for any projective $K$-morphism $f:Y\to X$ and $l$-dimensional cycle $Z$ on $Y$, we have the following \emph{projection formula}
	\begin{align*}
	h_{f^{\ast}\overline{L^{(0)}},...,f^{\ast}\overline{L^{(l)}}}(Z) = h_{\overline{L^{(0)}},...,\overline{L^{(l)}}}(f_{\ast}Z).
	\end{align*}
	\item[(5)] Assume that $S$ is proper, that $L^{(0)},...,L^{(d)}$ are semi-ample and that $\varphi^{(0)},...,\varphi^{(d)}$ are semi-positive. Let $\psi^{(0)},...,\psi^{(d)}$ be respectively semi-positive pseudo-metrics on $L^{(0)},...,L^{(d)}$. For any $i=0,...,d$, we set $\overline{M^{(i)}}:=(L^{(i)},\psi^{(i)})$. Then we have
	\begin{align*}
	\left|\left(\overline{L^{(0)}}\cdots\overline{L^{(d)}}\right)_{S} - \left(\overline{M^{(0)}}\cdots\overline{M^{(d)}}\right)_{S}\right| \leq \displaystyle\sum_{i=0}^{d} \upint_{\Omega}d_{\omega}(\varphi^{(i)},\psi^{(i)})\nu(\diff\omega)\left(L^{(0)}\cdots L^{(i-1)}\cdot L^{(i+1)}\cdots L^{(d)}\right).
	\end{align*}
\end{itemize} 
\end{theorem-definition}

\begin{proof}
Note that it suffices to prove (1) since (2), (3), (4) and (5) follow from (1) and respectively Theorem-Definition \ref{th-def:arithmetic_intersection_product_tac_countable} (2), (3), (4) and (5).  

\textbf{(1):} (i) follows from (\cite{ChenMori21}, Remark 1.3.5) combined with the fact that the Cartier divisors $\div(s^{(0)}),...,\div(s^{(d)})$ intersect properly on $X$. Let $\mathbf{x}\in\ZR(X)^{\an}_{S}$ with underlying scheme point $p\in X$. Denote by $\mathbf{x_{0}}$ its image in $\ZR(X_{0})_{S_{0}}^{\an}$ with underlying scheme point $p_{0}\in X_{0}$. For any $i=0,...,d$, we have an inclusion $L^{(i)}_{0}(p_{0})\hookrightarrow L^{(i)}(p)$ and an isomorphism $L^{(i)}(p) \cong L^{(i)}_{0}(p_{0})\otimes_{\kappa(p_{0})}\kappa(p)$. Then (ii) follows easily since the vector spaces at stake are one-dimensional. 

Let us now prove that the arithmetic intersection number $\left((\overline{L_{0}^{(0)}},s_{0}^{(0)})\cdots(\overline{L_{0}^{(d)}},s_{0}^{(d)})\right)_{S_{0}}$ does not depend on the choice of $K_{0},X_{0},L_{0}^{(0)},...,L_{l}^{(l)},s_{0}^{(0)},...,s_{0}^{(d)}$. Let $K_{1}/K_{0}$ be a field extension, where $K_{1}$ is a finitely generated subfield of $K$. Denote by $\pi_{1}:X_{10}:=X_{0}\otimes_{K_{0}}K_{1} \to X_{0}$ the projection and, for any $i=0,...,d$, $(L_{1}^{(i)},s_{1}^{(i)}):= \pi_{10}^{\ast}(L_{0}^{(i)},s_{0}^{(i)})$. By (\cite{ChenMori21}, Remark 1.3.5), the Cartier divisors $\div(s_{1}^{(0)}),...,\div(s_{1}^{(d)})$ intersect properly on $X_{1}$. Moreover, the construction \S \ref{subsub:examples_tac_adelic_curves} (4) yields a topological adelic curve $S_{1}=(K_{1},\phi_{1}:\Omega_{1}\to M_{K_{1}},\nu_{1})$, where $\Omega_{1}$ is a Borel subset of $\Omega$ contained in $\Omega_{0}$ such that $\nu(\Omega\setminus\Omega_{1})=0$ and for any $\omega\in\Omega_{1}$, $\phi_{1}$ is an absolute value on $K_{1}$. Thus, we get a morphism $S_{1}\to (K_{0},\phi_{0|\Omega_{1}},\Omega_{1}\to M_{K_{0}},\nu_{1})$ of topological adelic curves, inducing a morphism between the corresponding adelic curves. By (\cite{ChenMori21}, Theorem 4.3.6), we have equalities
\begin{align}
\label{eq:invariance_intersection_number_covering}
\left((\overline{L_{0}^{(0)}},s_{0}^{(0)})\cdots(\overline{L_{0}^{(d)}},s_{0}^{(d)})\right)_{S_{0}} &= \left((\overline{L_{0}^{(0)}},s_{0}^{(0)})\cdots(\overline{L_{0}^{(d)}},s_{0}^{(d)})\right)_{(K_{0},\phi_{0|\Omega_{1}},\Omega_{1}\to M_{K_{0}},\nu_{1})} \nonumber\\
&= \left((\overline{L_{1}^{(0)}},s_{1}^{(0)})\cdots(\overline{L_{1}^{(d)}},s_{1}^{(d)})\right)_{S_{1}}.
\end{align}
Now consider any other finitely generated subfield $K_{1}\subset K$ such that $X,L^{(0)},...,L^{(d)},s^{(0)},...,s^{(d)}$ are defined over $K_{1}$ by a projective $K_{1}$-scheme $X_{1}$, line bundles $L_{1}^{(0)},...,L_{1}^{(d)}$ respectively equipped with regular meromorphic sections $s_{1}^{(0)},...,s_{1}^{(d)}$. By the construction from \S \ref{subsub:examples_tac_adelic_curves} (4), we get a topological adelic curve $S_{1}$. Using $(ii)$, we define metrics $\varphi_{1}^{(0)},...,\varphi_{1}^{(d)}$ on $L_{1}^{(0)},...,L_{1}^{(d)}$ such that their pullback to $X$ yields respectively $\varphi^{(0)},...,\varphi^{(d)}$. Let $K_{2}$ denote the composition of $K_{0}$ and $K_{1}$, this is a finitely generated subfield of $K$ containing both $K_{0}$ and $K_{1}$. Moreover, if $X_{2}$ denote the scheme theoretic image of $X\to (X_{1}\otimes_{K_{1}}K_{2})\times_{\Spec(K_{2}}(X_{0}\otimes_{K_{0}}K_{2})$, we have $X_{2}\otimes_{K_{2}}K\cong X$ and by faithfully flat descent (\cite{Gortz10}, Proposition 14.51), that $X_{1}\otimes_{K_{1}}K_{2}$ and $X_{0}\otimes_{K_{0}}K_{2}$ are isomorphic. Thus, we may assume that $K_{0}=K_{1}$ and $X_{0}=X_{1}$. Since, for any $i=0,...,d$, the pullbacks of $L_{0}^{(i)}$ and $L_{1}^{(i)}$ to $X$ are isomorphic, (\cite{Fujiwara-Kato}, Chapter 0 Theorem 4.2.1) ensures that there exists a finitely generated subfield $K'\subset K$ containing $K_{0}$ such that, for any $i=0,...,d$, the pullbacks of $L_{0}^{(i)},L_{1}^{(i)}$ and $s_{0}^{(i)},s_{1}^{(i)}$ to $X_{0}\otimes_{K_{0}}K'$ are both isomorphic. We can thus conclude using (\ref{eq:invariance_intersection_number_covering}).
\end{proof}

\begin{remark}
\label{rem:continuity_of_the_arithmetic_intersection_product_GVF_topology}
The definition of the arithmetic intersection product over a proper topological adelic curve allows to use the results of \cite{DHS24} and it is possible to show that the arithmetic intersection is definable w.r.t. the GVF topology (\emph{loc. cit.}, Theorem 1.4).
\end{remark}

\subsection{Arithmetic intersection theory and heights of cycles on families of topological adelic curves}
\label{sub:arithmetic_intersection_theory_family_tac}

In this subsection, we consider the following setting. Let $\mathbf{S}=(I,\cU,(S_{i}=(K_{i},\phi_{i}:\Omega_{i}\to M_{K_{i}},\nu_{i}))_{i\in I},K)$ be a family of topological adelic curves. Let $X$ be a projective $K$-scheme of dimension $d$. Set $X_{\mathbf{S}}:=X\otimes_{K}\prod_{\cU}K_{i}$. 

\begin{theorem-definition}
\label{th-def:arithmetic_intersection_product_family_tac}
Let $\overline{L^{(0)}}=(L^{(0)},[(\overline{L_{i}^{(0)}}=(L_{i}^{(0)},\varphi_{i}^{(0)}))_{i\in I}]),...,\overline{L^{(d)}}=(L^{(d)},[(\overline{L_{i}^{(d)}}=(L_{i}^{(d)},\varphi_{i}^{(d)}))_{i\in I})])$ be integrable adelic line bundles on $X$ over $\mathbf{S}$ and $s^{(0)},...,s^{(d)}$ be respectively regular meromorphic sections of $L^{(0)},...,L^{(d)}$ such that the Cartier divisors $\div(s^{(0)}),...,\div(s^{(d)})$ intersect properly on $X$. 
\begin{itemize}
	\item[(1)] For any $j=0,...,d$, pulling back $s^{(j)}$ to $X_{\mathbf{S}}$, yielding a regular meromorphic section $s^{(j)}_{\mathbf{S}}$, and using \L{}o\'s theorem, we can write $s^{(j)}_{\mathbf{S}}=[(s_{i}^{(j)})_{i\in I}]$, where the $s_{i}^{(j)}$'s are regular meromorphic sections of the $L^{(j)}_{i}$'s $\cU$-almost everywhere. By \L{}o\'s theorem, the Cartier divisors $\div(s_{i}^{(0)}),...,\div(s_{j}^{(d)})$ intersect properly on $X_{i}$ $\cU$-almost everywhere. We define the \emph{arithmetic intersection number} 
	\begin{align*}
	\left((\overline{L^{(0)}},s^{(0)})\cdots(\overline{L^{(d)}},s^{(d)})\right)_{S} := \left[\left(\left((\overline{L_{0}^{(0)}},s_{i}^{(0)})\cdots(\overline{L_{0}^{(d)}},s_{i}^{(d)})\right)_{S_{i}}\right)_{i\in I}\right]\in\displaystyle\prod_{\cU}\bR.
	\end{align*}
	This quantity is independent of the choice of the $L_{i}^{(j)}$'s and $s_{i}^{(j)}$'s.
	\item[(2)] We fix an equivalence relation $\sim$ on $\prod_{\cU}\bR$ which is compatible with the additive group structure and assume that the family $\mathbf{S}$ is asymptotically proper w.r.t. $\sim$. Then the class of $\left((\overline{L^{(0)}},s^{(0)})\cdots(\overline{L^{(d)}},s^{(d)})\right)_{S}$ in $\prod_{\cU}\bR/\sim$ is independent of the choice of $s^{(0)},...,s^{(d)}$ and we denote it by $\left(\overline{L^{(0)}}\cdots\overline{L^{(d)}}\right)_{S}$. This equivalence class is independent of the choice of the $X_{i}$'s and is called the \emph{multi-height} of $X$ w.r.t. $\overline{L^{(0)}},...,\overline{L^{(d)}}$ and we denote it by $h_{\overline{L^{(0)}},...,\overline{L^{(d)}}}(X)$. If $\overline{L^{(0)}},...,\overline{L^{(d)}}$ are all equal to the same integrable adelic line bundle $\overline{L}$ on $X$, this multi-height is denoted by $h_{\overline{L}}(X)$ and is called the \emph{height} of $X$ w.r.t. $\overline{L}$.
	\item[(3)] We keep the same assumption as in (2). Similarly to Theorem-Definition \ref{th-def:arithmetic_intersection_product_tac}, we extend by linearity the definition of multi-height for cycles on $X$. Let $Z$ be a $l$-dimensional cycle on $X$, we denote the \emph{multi-height} of $Z$ w.r.t. $\overline{L^{(0)}},...,\overline{L^{(l)}}$ by $h_{\overline{L^{(0)}},...,\overline{L^{(l)}}}(Z)\in\prod_{\cU}\bR/\sim$. Moreover, if $\overline{L^{(0)}},...,\overline{L^{(l)}}$ are all equal to the same integrable adelic line bundle $\overline{L}$ on $X$, this multi-height is denoted by $h_{\overline{L}}(Z)$ and is called the \emph{height} of $Z$ w.r.t. $\overline{L}$.
	\item[(4)] We keep the same assumption as in (2). Then the arithmetic intersection product is a symmetric and multi-linear pairing on the group of integrable adelic line bundles on $X$. Moreover, for any projective $K$-morphism $f:Y\to X$ and $l$-dimensional cycle $Z$ on $Y$, we have the following \emph{projection formula}
	\begin{align*}
	h_{f^{\ast}\overline{L^{(0)}},...,f^{\ast}\overline{L^{(l)}}}(Z) = h_{\overline{L^{(0)}},...,\overline{L^{(l)}}}(f_{\ast}Z).
	\end{align*}
	\item[(5)] We keep the same assumption as in (2). Assume that $L^{(0)},...,L^{(d)}$ are semi-ample and that the adelic line bundles $\overline{L^{(0)}},...,\overline{L^{(d)}}$ are semi-positive. Let $\psi^{(0)},...,\psi^{(d)}$ be respectively pseudo-metrics on $L^{(0)},...,L^{(d)}$ over $\mathbf{S}$. For $j=0,...,d$, we set $\overline{M^{(j)}}:=(L^{(j)},\psi^{(j)})$ and assume that 
	\begin{align*}
	d(\overline{L^{(j)}},\overline{M^{(j)}}) \sim 0.
	\end{align*}
	Then we have the equality
	\begin{align*}
	h_{\overline{L^{(0)}},...,\overline{L^{(d)}}}(X) = h_{\overline{M^{(0)}},...,\overline{M^{(d)}}}(X)
	\end{align*}
	in $\prod_{\cU}\bR/\sim$.
\end{itemize}
\end{theorem-definition}

\begin{proof}
The fact that the arithmetic intersection number is well-defined follows from \L{}o\'s theorem. (2) and (3) follows follow from a direct computation using the asymptotic properness of $\mathbf{S}$ w.r.t. $\sim$. (4) follows by integration of (\cite{ChenMori}, Proposition 3.5.4) and the techniques of the proof of (\cite{ChenMori}, Theorem 4.4.9). Finally, (5) is proven the same way as Theorem \ref{th:height_of_closed_points_families} (2).
\end{proof}

\begin{example}
\label{example:arithmetic_intersection_product_Nevanlinna}
Consider the family of topological adelic curves $\mathbf{S}=(\bR_{>0},\cU,(S_{R})_{R>0},\cM(\bC))$ from Example \ref{example:families_of_tac} (2). Recall that $\mathbf{S}$ is asymptotically proper w.r.t. the equivalence relation $\sim_{\fin}$ as defined in \S \ref{sub:motivation_families}. Moreover, there exists a total ordering on $\prod_{\cU}\bR/\sim_{\fin}$ that is compatible with the usual ordering on $\prod_{\cU}\bR$.

Let $X_{0}$ be a reduced projective $\bC$-scheme of dimension $d$ and let $(L^{(0)}_{0},\varphi^{(0)}_{0}),...,(L^{(0)}_{0},\varphi^{(0)}_{0})$ be integrable continuously metrised line bundles on $X_{0}$. Set $X:=X_{0}\otimes_{\bC}\cM_{\bC}$, $X_{R}:=X_{0}\otimes_{\bC}K_{R}$ and for any $j=0,...,d$, denote by $\overline{L^{(j)}}:=(L^{(j)},\varphi^{(j)}):=(L^{(j)}_{0}\otimes_{\cO_{X_{0}}}\cO_{X},[(X_{R},L^{(j)}_{R},\varphi^{(j)}_{R})_{R>0}])$ the integrable adelic line bundle on $X$ associated with $(L_{0},\varphi_{0})$ constructed in Example \ref{example:adelic_line_bundle_Nevanlinna_family}. For any $l$-cycle $Z$ on $X$, we have the multi-height $h_{\overline{L^{(0)}},...,\overline{L^{(t)}}}(Z)\in\prod_{\cU}\bR/\sim$. Then Theorem-Definition \ref{th-def:arithmetic_intersection_product_family_tac} shows that this multi-height does not depend on the choice of the metrics $\varphi^{(0)}_{0},...,\varphi^{(d)}_{0}$.

More generally, let algebraic extension $K/\cM(\bC)$. Consider the integrable adelic line bundles $\overline{L^{(0)}_{K}},...,\overline{L^{(d)}_{K}}$ on $X_{K}:=X\otimes_{\cM(\bC)}K$ over $\mathbf{S}\otimes_{\cM(\bC)}K$ respectively induced by $(L^{(0)}_{0},\varphi^{(0)}_{0}),...,(L^{(0)}_{0},\varphi^{(0)}_{0})$. Theorem-Definition \ref{th-def:arithmetic_intersection_product_family_tac} shows that, for any $l$-cycle $Z$ on $X_{K}$, the multi-height $h_{\overline{L_{K}^{(0)}},...,\overline{L_{K}^{(t)}}}(Z)$ does not depend on the choice of the metrics $\varphi^{(0)}_{0},...,\varphi^{(d)}_{0}$. Thus, our result gives a generalisation of (\cite{Gubler97}, Theorem 3.18).
\end{example}

\appendix
\part*{Appendix}
\section{Nevanlinna theory of complex functions}
\label{sec:Nevanlinna_classical}

\subsection{Classical Nevanlinna theory}

\subsubsection{The Nevnalinna height function}

We denote $K=\cM(\bC)$, namely the field of meromorphic functions on $\bC$, which is the fraction field of the ring of entire functions $A:=\cO(\bC)$. Let $E = \sum a_i [z_i]$ be a divisor on $\bC$, with the $z_i$ pairwise distinct. For any $k\in \bN \cup\{\infty\}$, we define the \emph{truncated counting functions} 
\begin{align*}
\forall t >0,\quad n_k(t,E) := \displaystyle\sum_{|z_i|_{\infty} < t} \min\{k,a_i\},
\end{align*} 
and
\begin{align*}
\forall r>1,\quad N_k(r,E) := \int_{1}^{r} \frac{n_k(t,E)}{t} dt.
\end{align*}
For ease of notation, we let $n(t,E) := n_{\infty}(t,E)$ and $N(r,E) := N_{\infty}(r,E)$.

Let $f\in K$, for any $a\in \bP^1(\bC)$, we denote by $(f)_a$ the divisor of associated to $(f-a)$, if $a\neq \infty$, and to $1/f$ if $a=\infty$. Then the \emph{proximity function} in $\infty$ of $f$ is defined by
\begin{align*}
\forall r>0,\quad m(r,f) := \frac{1}{2\pi}\int_{0}^{2\pi} \log^{+}|f(re^{i\theta})|_{\infty}d\theta.
\end{align*}
The function $(r\in\bR_{>0})\mapsto m(r,1/(f-a))$ is called the proximity function of $f$ in $a\in\bC$. Finally, the \emph{height} of $f$ (with respect to $\infty$) is defined by
\begin{align*}
\forall r>1,\quad T(r,f) := m(r,f) + N(r,(f)_{\infty}).
\end{align*}

Morally, the proximity function of $f$ measures the mean approximation of $f$ to $\infty$ on a circle of given radius. The counting function counts how many times $f$ attains $\infty$, i.e. has a pole, in an open disc of given radius. The philosophy of Nevanllina is that these two functions contain all the necessary information concerning the behaviour of $f$ with respect to $\infty$. The characteristic function, namely the sum of the two previous ones, behaves as a height function in Diophantine geometry:
\begin{align*}
\forall f_1,f_2\in K,\quad T(r,f_1+f_2) \leq T(r,f_1) + T(r,f_2) + \log 2.
\end{align*}

\begin{theorem}[Nevanlinna's first main theorem, \cite{Noguchi14}, Theorem 1.1.17]
Let $f\in K$ and $a\in \bC$. Then
\begin{align*}
T(r,\frac{1}{f-a}) = T(r,f) + O(1),
\end{align*}
where the bound $O(1)$ is a bounded function of $r$, with bound depending only on $f$ and $a$.
\end{theorem}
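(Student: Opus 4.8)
The plan is to prove Nevanlinna's first main theorem from the Jensen formula, exactly as in the classical references, since this statement is a standard prerequisite rather than a new contribution of the paper. First I would recall the Jensen--Poisson formula in the form already alluded to in the excerpt: for any $g\in K^{\times}=\cM(\bC)^{\times}$ and $r>1$ with no zeros or poles of $g$ on $|z|_{\infty}=r$, one has
\begin{align*}
\frac{1}{2\pi}\int_{0}^{2\pi}\log|g(re^{i\theta})|_{\infty}\,d\theta = \log|c(g,0)|_{\infty} + N(r,(g)_{0}) - N(r,(g)_{\infty}),
\end{align*}
where $c(g,0)$ is the leading Laurent coefficient of $g$ at $0$. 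Splitting $\log = \log^{+} - \log^{-}$ on the left and writing $\log^{-}|g| = \log^{+}|1/g|$, this rearranges to the \emph{first main theorem identity in the sharp form}
\begin{align*}
m(r,g) + N(r,(g)_{\infty}) = m(r,1/g) + N(r,(g)_{0}) + \log|c(g,0)|_{\infty}.
\end{align*}

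Next I would apply this with $g = f-a$. The key observations are: $m(r,f-a) = m(r,f) + O(1)$ (since $\log^{+}|f-a|\le \log^{+}|f| + \log^{+}|a| + \log 2$ and symmetrically, so the two proximity functions differ by a bounded function of $r$ with bound depending only on $a$); the pole divisor of $f-a$ equals the pole divisor of $f$, so $N(r,(f-a)_{\infty}) = N(r,(f)_{\infty})$; and by definition $(f)_{a} = (f-a)_{0}$ together with $m(r,1/(f-a)) = m(r,\tfrac{1}{f-a})$, which is the proximity of $f$ to $a$. Substituting, the sharp identity becomes
\begin{align*}
T(r,f) = m\!\left(r,\tfrac{1}{f-a}\right) + N(r,(f)_{a}) + O(1) = T\!\left(r,\tfrac{1}{f-a}\right) + O(1),
\end{align*}
where the $O(1)$ collects the $\log|c(f-a,0)|_{\infty}$ term (a genuine constant, independent of $r$) and the bounded discrepancy between the two proximity functions. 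This is precisely the claimed statement; one should note the trivial cases where $f\equiv a$ or $0,\infty$ lie in the relevant divisor supports on circles, which only affect finitely many radii and are absorbed into $O(1)$.

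I do not expect a serious obstacle here: the only point requiring a little care is the justification of the Jensen--Poisson formula itself, i.e. handling meromorphic (not just holomorphic) $g$ and zeros/poles at the origin, for which I would simply cite the standard reference already in the paper's bibliography (e.g.\ \cite{Noguchi14}, or \cite{BombieriGubler}), and the bookkeeping that all error terms are honestly bounded in $r$. The mildly technical step is checking that $m(r,f-a)$ and $m(r,f)$ differ by $O(1)$ uniformly in $r$; this follows from the elementary inequalities $\log^{+}(x+y)\le \log^{+}x + \log^{+}y + \log 2$ applied in both directions to $f = (f-a)+a$ and $f-a = f + (-a)$. Everything else is a direct algebraic rearrangement of the Jensen identity.
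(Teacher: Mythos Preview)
Your proof is correct and follows the standard route via the Jensen formula; this is precisely the approach the paper has in mind, as it merely cites \cite{Noguchi14} for this classical statement (the paper's own proof is a one-line pointer to Jensen's formula). Your treatment is in fact more detailed than anything the paper provides.
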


\begin{proposition}[Cartan's formula, \cite{BombieriGubler}, Proposition 13.2.13]
\label{prop:Cartan_formula}
Let $f\in K$. Let $C:=\log^{+}|f(0)|_{\infty}$ if $f(0)\neq \infty$ and $C:=\log|c(f,0)|_{\infty}$ otherwise. Then 
\begin{align*}
\forall r>0, \quad T(r,f) = \frac{1}{2\pi}\int_{0}^{2\pi}N\left(r, \frac{1}{f-e^{i\theta}}\right)\diff\theta + C.
\end{align*}
In particular, the function $T(\cdot,f)$ is an increasing function that is convex in $\log r$.
\end{proposition}

In fact, other height functions may be defined. For our purposes, it is more convenient to work in a more geometric framework, namely, we will consider any $f\in K$ as a holomorphic curve $f:\bC \to \bP^{1}(\bC)$. Let $L$ be a holomorphic line bundle on $P^1(\bC)$, equipped with a hermitian metric $\varphi$ and an invertible meromorphic section $s$. Denote $D:=\div(s)$. We define
\begin{align*}
&N_f(r,D) := \ord(f^\ast D,0)\log r + \displaystyle\sum_{0<|z|_{\infty}<r} \ord(f\ast D,z)\log \left|\frac{r}{z}\right|_{\infty},\\
&m_f(r,D) := -\frac{1}{2\pi} \int_0^{2\pi} \log |s(f(re^{i\theta}))|_{\varphi} d\theta,\\
&T_f(r,D) := N_f(r,D) + m_f(r,D).
\end{align*} 
The previous constructions are now the special case $L=\cO(1)$, $\varphi$ the standard metric, and $D=[\infty]$. Then Nevanlinna's first main theorem is rephrased as follows. Consider $L$ and $f$ fixed. Then the height function $T_f(r,D)$ does not depend, up to a bounded function of $r$, neither on the choices of the metric $\varphi$ nor on the meromorphic section $s$.

\subsubsection{Asymptotic of the height function}
\label{subsub:asymptotic_height_Nevanlinna}

The Northcott property has a counterpart in Nevanlinna theory: this is the Liouville theorem, which states that an entire function is constant iff $T(r,f)$ is a bounded function of $r$. The following result makes the analogy more precise.

\begin{proposition}[\cite{BombieriGubler}, Proposition 13.2.17 and Example 13.2.18]
\label{prop:criterion_rational_functions}
Let $f\in K$. Then $f$ is constant iff $T(\cdot,f)$ is bounded. Moreover, $f$ is a rational function iff
\begin{align*}
\displaystyle\liminf_{r\to+\infty} \frac{T(r,f)}{\log r} < +\infty,
\end{align*}
in that case, $T(r,f)=\deg(f)\log(r) + O(1)$.
\end{proposition}

The above proposition indicates that in order to study the behaviour of transcendental meromorphic functions, one has to compare the (unbounded) height to functions that grow "faster than $\log$". 

Let $f\in K$. Define the \emph{order} of $f$ as
\begin{align*}
\rho(f):= \limsup_{r\to \infty} \frac{\log T(r,f)}{\log r} \in [0,+\infty].
\end{align*}
Nevanlinna's first theorem shows that this quantity does not depend on the choice of height function.

\begin{lemma}
The subset of $K$ consisting of meromorphic functions of finite order is a subfield of $K$.
\end{lemma}

\begin{proof}
This is a consequence of (\cite{Nevanlinna70}, p.216)
\begin{align*}
&\forall f,g\in K, \quad \rho(f+g) \leq \rho(f)+\rho(g) \text{ and }\rho(fg) \leq \max\{\rho(f),\rho(g)\},\\
&\forall f\in K^{\times}, \quad \rho\left(\frac{1}{f}\right) = \rho(f).
\end{align*}
\end{proof}

More generally (cf. e.g. \cite{Heittokangas2021}), for any non-decreasing unbounded function $\eta: ]R_{0},+\infty\to \bR_{>0}$, for some $R_{0}\geq 0$, we define the $\eta$\emph{-order} $\rho_{\eta}(f)\in [0,+\infty]$ of a meromorphic function $f\in K$ as 
\begin{align*}
\rho_{\eta}(f):= \limsup_{r\to \infty} \frac{\log T(r,f)}{\eta(r)}.
\end{align*}
As in the case of the classical order $\rho_{\log}$ above, the subset of $K$ consisting of functions of finite $\eta$-order is a field, called the field of \emph{finite} $\eta$\emph{-order} functions. If $\eta = \log\circ\log$, Proposition \ref{prop:criterion_rational_functions} implies that the field of finite $\eta$-order functions is the field $\bC(T)$ of rational functions. 

\subsubsection{Second main theorem}

As it was previously mentioned, Nevanlinna's first theorem yields an upper bound for counting functions of meromorphic functions. Nevanlinna's second theorem gives a result in the other direction: namely, it yields a lower bound. To state it, we need to consider multiple proximity functions.

\begin{theorem}[Nevanlinna's second theorem, \cite{Noguchi14}, Theorem 1.2.5]
\label{th:Nevanlinna_second_main_theorem}
Let $f\in K$. Let $q\geq 1$ be an integer and let $a_1,...,a_q\in\bP^1(\bC)$. Then
\begin{align*}
\displaystyle\sum_{i=1}^q m_f(r,a_i) \leq_{\text{exc}} 2T_{f}(r) + O(\log^+ T_f(r)) + o(\log r),
\end{align*}
or, equivalently,
\begin{align*}
\displaystyle\sum_{i=1}^q N_f(r,(f)_{a_i}) \geq_{\text{exc}} (q-2)T_f(r) - O(\log^+ T_f(r)) - o(\log r),
\end{align*}
where $\leq_{\text{exc}}$ means that the inequalities hold for all $r>0$ except on a set of finite Lebesgue measure.
\end{theorem}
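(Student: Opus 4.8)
The plan is to run the classical argument through the lemma on the logarithmic derivative, matching the geometric normalization of the appendix. First I would reduce to the case $\infty\notin\{a_1,\dots,a_q\}$: since $T_f$, the proximity and counting functions and the asserted inequality are all invariant up to a bounded function of $r$ under post-composition of $f$ with a M\"obius transformation of $\bP^1(\bC)$ — this is immediate from Nevanlinna's first main theorem — one may replace $f$ by $1/(f-b)$ for some $b$ outside the finite set $\{a_i\}$ and thus assume all target points are finite and pairwise distinct.

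With that reduction, introduce the auxiliary meromorphic function $h:=\sum_{i=1}^q \frac{1}{f-a_i}$. The key pointwise estimate is that there is a constant $K$, depending only on $\delta:=\tfrac13\min_{i\neq j}|a_i-a_j|_\infty$, with
\begin{align*}
\sum_{i=1}^q \log^+\frac{1}{|f(z)-a_i|_\infty}\ \leq\ \log^+|h(z)|+K
\end{align*}
for every $z$: indeed $f(z)$ lies within $\delta$ of at most one $a_\nu$, and on the region where it does the term $\frac{1}{f-a_\nu}$ dominates $h$ while the remaining terms are bounded, and off that region the whole left-hand side is bounded. Integrating over $|z|=r$ gives $\sum_i m_f(r,a_i)\leq m(r,h)+K+O(1)$. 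Then write $h=\frac1{f'}\sum_{i=1}^q\frac{(f-a_i)'}{f-a_i}$ and use subadditivity of $m(r,\cdot)$: $m(r,h)\leq m(r,1/f')+\sum_i m\!\big(r,\tfrac{(f-a_i)'}{f-a_i}\big)+O(\log q)$. The logarithmic derivative lemma — $m(r,g'/g)=O(\log^+T(r,g))+o(\log r)$ for $r$ outside a set of finite Lebesgue measure — applied to $g=f-a_i$ and combined with $T(r,f-a_i)=T_f(r)+O(1)$ shows each such term is an error term $S(r,f)$. Finally $m(r,1/f')=T(r,f')-N_0(r,f')+O(1)$ by the first main theorem (with $N_0$ counting zeros of $f'$), and $T(r,f')=m(r,f')+N(r,(f')_\infty)\leq m(r,f)+m(r,f'/f)+N(r,(f)_\infty)+\overline N(r,(f)_\infty)=T_f(r)+\overline N(r,(f)_\infty)+S(r,f)$, using that a pole of $f$ of order $p$ is a pole of $f'$ of order $p+1$. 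Assembling these, $\sum_i m_f(r,a_i)\leq T_f(r)+\overline N(r,(f)_\infty)-N_0(r,f')+S(r,f)\leq 2T_f(r)+S(r,f)$, where $\overline N(r,(f)_\infty)\leq N(r,(f)_\infty)\leq T_f(r)$ and the nonpositive term $-N_0(r,f')$ is discarded. The equivalent counting-function form follows by substituting $m_f(r,a_i)=T_f(r)-N_f(r,(f)_{a_i})+O(1)$ (first main theorem) and summing: $\sum_i N_f(r,(f)_{a_i})=qT_f(r)-\sum_i m_f(r,a_i)+O(1)\geq (q-2)T_f(r)-S(r,f)$.

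The main obstacle is the lemma on the logarithmic derivative together with the precise control of the exceptional set hidden in $\leq_{\mathrm{exc}}$. I would prove it from the Poisson--Jensen representation of $\log|f|$, which yields a bound of the shape $m(r,f'/f)\leq C\big(\log^+T(2r,f)+\log r+1\big)$, and then pass from $T(2r,f)$ back to $T(r,f)$ by the Borel--Nevanlinna growth lemma (if $T$ is increasing with $T\geq e$, then $T(r+1/T(r))\leq 2T(r)$ off a set of finite Lebesgue measure); keeping track of how the individual exceptional sets for $i=1,\dots,q$ combine into one of finite measure is the delicate bookkeeping. Everything else — subadditivity of $m$, the decomposition of $h$, the pole/zero accounting for $f'$, and the passage between the two forms of the inequality — is routine once the first main theorem and the logarithmic derivative lemma are in hand. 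A minor but necessary care is reconciling the normalizations of $m_f$, $N_f$, $T_f$ used in the appendix (the $\tfrac1{2\pi}$ factor in the circle integral, the lower limit $1$ in $N_f$) with the standard statements of these two input results.
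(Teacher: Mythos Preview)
The paper does not give its own proof of this statement: Nevanlinna's second main theorem is stated in the appendix solely as background, with a citation to \cite{Noguchi14}, Theorem 1.2.5, and no argument is supplied. Your proposal reproduces the standard classical proof via the auxiliary function $h=\sum_i 1/(f-a_i)$ and the lemma on the logarithmic derivative, which is correct and is essentially the argument one finds in the cited reference; there is nothing further to compare.
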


Roughly speaking, Nevanlinna's second main theorem says that a meromorphic function on $\bC$ cannot avoid "too many points", in that case, at most $2$. 

It is possible to measure to what extent the counting function at a point is significantly smaller than the height via the so-called defect. More precisely, let $f\in K$, and let $a\in \bP^1(\bC)$. Then the \emph{defect} of $f$ at $a$ is defined by
\begin{align*}
\delta_f(a) := \displaystyle\liminf_{r\to\infty} \frac{m_f(r,a)}{T_f(r)} = 1 - \limsup_{r\to\infty}\frac{N_f(r,(f)_a)}{T_f(r)},
\end{align*}
where the second equality comes from Nevanlinna's first theorem. The same theorem implies that we have the inequality $0\leq\delta_f(a)\leq 1$ and Nevanlinna's second theorem yields
\begin{align*}
\displaystyle\sum_{a\in\bP^1(\bC)} \delta_f(a) \leq 2.
\end{align*}
In the $\delta_f(a) > 0$ case, we say that $a$ is a \emph{deficient} value for $f$. With this notion at hand, Nevanlinna's second theorem implies that a given meromorphic function cannot have too many deficient values. 

\subsection{Holomorphic curves on a projective variety}
\label{sub:Nevanlinna_holomorphic_curves}

In this subsection, we fix a complex projective variety $X$ and we study holomorphic maps $f : \bC \to X$. The latter are called \emph{holomorphic curves} in $X$. Since $X$ is projective, $f$ induces a holomorphic map $f : \bC \to \bP^n(\bC)$ and Weierstrass factorisation theorem (\cite{Ahlfors66}, Chapter 5, Theorem 8) implies that there exist entire functions $f_0,...,f_n$ without common zeroes such that $f(x) = [f_0(x):\cdots:f_n(x)]$ for all $x\in \bC$. This way we can see $f : \bC \to X$ as a $K$-point $f\in X(K)$, where $K = \cM(\bC)$. In this context, we can extend the definitions of counting, proximity and height functions.

Let $D$ be a divisor on $X$. Assume that $f(\bC) \not\subset |D|$, so that $f^\ast D$ is a divisor on $\bC$. Define the \emph{counting function}
\begin{align*}
N_f(r,D) := \ord(f^\ast D,0)\log r + \displaystyle\sum_{0<|z|_{\infty}<r} \ord(f^\ast D,z)\log\frac{r}{|z|_{\infty}},
\end{align*}
for all $r>0$. Denote $L:= \cO_X(D)$ and let $s$ be a regular meromorphic section of de $L$ such that $D = \div(s)$. Let $\varphi$ be a continuous Hermitian metric on $L$. Define the \emph{proximity function}
\begin{align*}
m_f(r,(L,\varphi,s)) := -\frac{1}{2\pi}\int_{0}^{2\pi} \log |s|_{\varphi}(f(re^{i\theta})) d\theta,
\end{align*}
for all $r>0$. Finally, the \emph{height function} is defined by
\begin{align*}
T_{f,(L,\varphi,s)}(r) := m_f(r,(L,\varphi,s)) + N_f(r,D),
\end{align*}
for all $r>0$. 

We can now state the two main theorems of Nevanlinna theory for holomorphic curves.

\begin{theorem}[Nevanlinna's first theorem (\cite{BombieriGubler}, Theorem 13.2.9)]
\label{th:Nevanlinna_first_main_theorem}
Let $f : \bC \to X$ be a holomorphic curve, with $X$ a projective complex variety. Let $L$ be a line bundle on $X$, let $s$ be a regular meromorphic section of $L$ and let $\varphi$ be a continuous Hermitian metric on $L$. Denote $D:=\div(s)$ and assume that $f(\bC)\not\subset |D|$. This data defines a height function $(r>0) \mapsto T_{f,(L,\varphi,s)}(r) \in \bR$.
\begin{itemize}
	\item[(1)] Let $\varphi'$ be another continuous Hermitian metric on $L$. Then we have 
	\begin{align*}
	T_{f,(L,\varphi,s)}(r) - T_{f,{L,\varphi',s}}(r) = \frac{1}{2\pi}\int_{0}^{2\pi} \log \frac{|s|_{\varphi'}}{|s|_{\varphi}}(f(re^{i\theta})) d\theta,
	\end{align*}
for all $r>0$. In particular, the LHS in the above equality is a bounded function of $r$. 
	\item[(2)] Let $s'$ be another regular meromorphic section of $L$ such that $f(\bC) \not\subset |\div(s')|$. Then we have
	\begin{align*}
	T_{f,(L,\varphi,s)}(r) - T_{f,{L,\varphi,s'}}(r) = \log |c((s'/s)\circ f, 0)|_{\infty},
	\end{align*}
for all $r>0$, where $c((s'/s)\circ f, 0)$ denotes the first non-zero coefficient in the Laurent series expansion of the meromorphic function $(s'/s) \circ f$.
\end{itemize}
\end{theorem}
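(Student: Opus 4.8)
The plan is to prove the two parts of Theorem~\ref{th:Nevanlinna_first_main_theorem} as direct consequences of the classical Jensen formula, exactly mirroring how the defect $d_{S_R}(f)=\log|c(f,0)|_\infty$ was computed for the topological adelic curves $S_R$ in \S\ref{subsub:example_tac_Nevanlinnaç_compact_disc}. The point in both cases is that the difference of two height functions is the mean on the circle of radius $r$ of the logarithm of a ratio of two metrics, resp. sections, and that this quantity is controlled by Jensen's formula applied to a suitable meromorphic function.

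First I would prove (1). Fix the line bundle $L$, the regular meromorphic section $s$ with $D=\div(s)$, and the two continuous Hermitian metrics $\varphi,\varphi'$. Since $|s|_{\varphi'}/|s|_\varphi$ is independent of the choice of section — it equals $|\cdot|_{\varphi'}/|\cdot|_\varphi$ pointwise — the function $u:=\log(|s|_{\varphi'}/|s|_\varphi)\circ f$ is a well-defined real-valued continuous function on $\bC$ (here using $f(\bC)\not\subset|D|$ is in fact unnecessary since the ratio is everywhere finite and positive). The counting functions $N_f(r,D)$ are literally the same in both height functions because the section $s$ and hence the pullback divisor $f^\ast D$ do not change. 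Therefore
\begin{align*}
T_{f,(L,\varphi,s)}(r) - T_{f,(L,\varphi',s)}(r) = m_f(r,(L,\varphi,s)) - m_f(r,(L,\varphi',s)) = \frac{1}{2\pi}\int_0^{2\pi} u(re^{i\theta})\,d\theta,
\end{align*}
which is the claimed formula. Boundedness in $r$ is then immediate: $X$ is compact, so $\log(|s|_{\varphi'}/|s|_\varphi)$ extends to a continuous, hence bounded, function on $X$ (away from $|D|$ one argues directly, and near $|D|$ one works with a local frame, where the ratio of metrics of the frame is continuous); pulling back along $f$ and averaging over the circle keeps it bounded by $\sup_X|\log(|\cdot|_{\varphi'}/|\cdot|_\varphi)|$.

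Next I would prove (2), which is where Jensen's formula enters essentially. Let $s,s'$ be two regular meromorphic sections with $f(\bC)\not\subset|\div(s)|\cup|\div(s')|$. Then $g:=(s'/s)\circ f$ is a well-defined meromorphic function on $\bC$, not identically $0$ or $\infty$. On the one hand, $\div(s')-\div(s)=\div(s'/s)$ as Cartier divisors on $X$, so $f^\ast\div(s') - f^\ast\div(s) = \div(g)$ as divisors on $\bC$, giving $N_f(r,\div(s')) - N_f(r,\div(s)) = \ord(g,0)\log r + \sum_{0<|z|_\infty<r}\ord(g,z)\log(r/|z|_\infty)$, i.e.\ the ``counting part'' of the Poisson--Jensen formula for $g$. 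On the other hand, since $|s'/s|_{\mathrm{triv}}$ makes no sense but $|s'|_\varphi/|s|_\varphi = |s'/s|$ as a function, we get $m_f(r,(L,\varphi,s')) - m_f(r,(L,\varphi,s)) = -\frac{1}{2\pi}\int_0^{2\pi}\log|g(re^{i\theta})|_\infty\,d\theta$, the ``mean value part''. Adding the two and invoking Jensen's formula in the form
\begin{align*}
\frac{1}{2\pi}\int_0^{2\pi}\log|g(re^{i\theta})|_\infty\,d\theta = \log|c(g,0)|_\infty + \ord(g,0)\log r + \sum_{0<|z|_\infty<r}\ord(g,z)\log\frac{r}{|z|_\infty},
\end{align*}
where $c(g,0)$ is the leading Laurent coefficient of $g$ at $0$, all the divisorial terms cancel and one is left with $T_{f,(L,\varphi,s)}(r) - T_{f,(L,\varphi,s')}(r) = \log|c(g,0)|_\infty$, which is the asserted identity (modulo a sign bookkeeping that I would check carefully).

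The main obstacle, such as it is, is purely bookkeeping rather than conceptual: one must be careful with the sign conventions in the definitions of $m_f$, $N_f$ and $T_f$ (the minus sign in the proximity function, the convention $\ord(f^\ast D,0)\log r$ for the term at the origin), and one must make sure that the identification $f^\ast\div(s'/s)=\div((s'/s)\circ f)$ is legitimate — this requires $f$ to be non-constant with image not contained in the supports, which is part of the hypotheses, together with the functoriality of pullback of Cartier divisors along the dominant morphism $\operatorname{Spec}K\to X$ induced by $f$. A minor point is the slightly degenerate case where $g$ has a zero or pole at $0$, handled by the ``first non-zero coefficient'' phrasing of Jensen's formula, exactly as in the statement of the defect $d_{S_R}$ in \S\ref{subsub:example_tac_Nevanlinnaç_compact_disc}. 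No new analytic input beyond the classical Poisson--Jensen formula is needed.
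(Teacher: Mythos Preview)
Your argument is correct and is the standard proof of this classical result. The paper does not give its own proof of this theorem: it is stated in the appendix with a citation to \cite{BombieriGubler}, Theorem 13.2.9, and serves there only as background for the analogy with Diophantine approximation. Your unwinding of the definitions for part (1) and your application of Jensen's formula to $g=(s'/s)\circ f$ for part (2) are exactly the expected steps; the sign bookkeeping you flag does work out to give the stated identity, and the case where $g$ has a zero or pole at $0$ is indeed absorbed into the leading-coefficient formulation, just as in the computation of $d_{S_R}(f)$ in \S\ref{subsub:example_tac_Nevanlinnaç_compact_disc}.
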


As in the complex case, we study the asymptotic behaviour of the height function. The latter tends to $+\infty$ as $r\to +\infty$ if the holomorphic curve is not constant. Therefore, we consider height functions associated with different choices of metrics and regular meromorphic sections as equivalent. Properties of height functions are similar to those appearing in Diophantine geometry. 

\begin{proposition}
Let $f : \bC \to X$ be a holomorphic curve, where $X$ denotes a complex projective variety. 
\begin{itemize}
	\item[(1)] Let $(L_1,\varphi_1,s_1)$, $(L_2,\varphi_2,s_2)$ be continuous Hermitian metrised Cartier divisors on $X$. Let $\varphi_{1+2}$ be a continuous Hermitian metric on $L_1\otimes L_2$. Assume that $f(\bC) \not\subset |\div(s_1)|\cup|\div(s_2)|$. Then
	\begin{align*}
	T_{f,(L_1\otimes L_2,\varphi_{1+2},s_1 \otimes s_2)} (r) = T_{f,(L_1,\varphi_1,s_1)}(r) + T_{f,(L_2,\varphi_2,s_2)}(r) + O(1).
	\end{align*}
	\item[(2)] Let $\alpha : X \to Y$ be a morphism between complex projective varieties. Let $(L,\varphi)$ be a continuous Hermitian line bundle $Y$ and let $s$ be a regular meromorphic section of $L$. Assume that $f(\bC) \not\subset |\div(s)|$ and that $\alpha(X) \not\subset |\div(s)|$. Then
	\begin{align*}
	T_{f\circ \alpha,(L,\varphi,s)}(r) = T_{f,(f^{\ast}L,f^{\ast}\varphi,f^\ast s)}(r) + O(1).
	\end{align*}
	\item[(3)] Let $(L,\varphi)$ be a Hermitian line bundle $L$ on $X$ which is globally generated. Let $s$ be a regular meromorphic section of $L$. Then the height function $T_{f,(L,\varphi,s)}$ is bounded from below.
\end{itemize}
\end{proposition}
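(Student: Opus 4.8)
The plan is to treat the three assertions in turn, using in each case the same scheme: invoke Theorem~\ref{th:Nevanlinna_first_main_theorem} to replace the given metric and regular meromorphic section by a convenient choice, for which the relation between height functions becomes an \emph{exact} identity obtained separately from the counting part and the proximity part. For~(1), first replace $\varphi_{1+2}$ by the tensor-product metric $\varphi_1\otimes\varphi_2$ on $L_1\otimes L_2$; by Theorem~\ref{th:Nevanlinna_first_main_theorem}~(1) this changes $T_{f,(L_1\otimes L_2,\varphi_{1+2},s_1\otimes s_2)}$ by a bounded function of $r$, so it suffices to prove additivity for this choice. Now $s_1\otimes s_2$ is a regular meromorphic section of $L_1\otimes L_2$ with $\div(s_1\otimes s_2)=\div(s_1)+\div(s_2)$; since $f(\bC)$ meets neither support, $f^{\ast}\div(s_1\otimes s_2)=f^{\ast}\div(s_1)+f^{\ast}\div(s_2)$ as divisors on $\bC$, whence $N_f(r,\div(s_1\otimes s_2))=N_f(r,\div(s_1))+N_f(r,\div(s_2))$. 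On the proximity side, $|s_1\otimes s_2|_{\varphi_1\otimes\varphi_2}=|s_1|_{\varphi_1}\,|s_2|_{\varphi_2}$ pointwise, so taking $\log$ and integrating over the circle of radius $r$ gives $m_f(r,(L_1\otimes L_2,\varphi_1\otimes\varphi_2,s_1\otimes s_2))=m_f(r,(L_1,\varphi_1,s_1))+m_f(r,(L_2,\varphi_2,s_2))$. Adding the two identities proves~(1) exactly for this choice, hence up to $O(1)$ in general.

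For~(2) — read with the composition $\alpha\circ f\colon\bC\to Y$ and the pullbacks $\alpha^{\ast}L,\alpha^{\ast}\varphi,\alpha^{\ast}s$ on $X$ — the argument is functoriality of the three ingredients. One has $(\alpha\circ f)^{\ast}\div(s)=f^{\ast}\bigl(\alpha^{\ast}\div(s)\bigr)=f^{\ast}\div(\alpha^{\ast}s)$ as divisors on $\bC$, where $\alpha^{\ast}s$ is a regular meromorphic section of $\alpha^{\ast}L$ because $\alpha(X)\not\subset|\div(s)|$, and the identity of pullback divisors uses $f(\bC)\not\subset|\div(s)|$; hence the counting functions agree. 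Likewise $|s|_{\varphi}\bigl(\alpha(f(z))\bigr)=|\alpha^{\ast}s|_{\alpha^{\ast}\varphi}(f(z))$ by compatibility of pullback of a metrised line bundle with pullback of sections, so the proximity functions agree after integration. Therefore $T_{\alpha\circ f,(L,\varphi,s)}=T_{f,(\alpha^{\ast}L,\alpha^{\ast}\varphi,\alpha^{\ast}s)}$ identically, and the stated $O(1)$ merely absorbs the freedom in the choice of metric and section used to compute the right-hand side (Theorem~\ref{th:Nevanlinna_first_main_theorem}~(1)--(2)).

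For~(3), pick global sections $t_0,\dots,t_n\in H^0(X,L)$ without common zero (possible since $L$ is globally generated); they define a morphism $\iota\colon X\to\bP^n_{\bC}$ with $L\cong\iota^{\ast}\cO(1)$ and $t_j=\iota^{\ast}x_j$. Because the $t_j$ have no common zero, $t_j\circ f\not\equiv 0$ for some $j$, so $f(\bC)\not\subset|\div(t_j)|$ and $\iota(f(\bC))\not\subset\{x_j=0\}$. Equip $\cO(1)$ with its Fubini--Study metric $\varphi_{\mathrm{FS}}$; then $\iota^{\ast}\varphi_{\mathrm{FS}}$ is a continuous Hermitian metric on $L$, and combining parts~(1)--(2) with Theorem~\ref{th:Nevanlinna_first_main_theorem} gives $T_{f,(L,\varphi,s)}(r)=T_{f,(L,\iota^{\ast}\varphi_{\mathrm{FS}},t_j)}(r)+O(1)=T_{\iota\circ f,(\cO(1),\varphi_{\mathrm{FS}},x_j)}(r)+O(1)$. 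Finally $|x_j|_{\varphi_{\mathrm{FS}}}\le 1$ pointwise, so $m_{\iota\circ f}(r,\cdot)\ge 0$; and $\div(x_j)$ is effective with $\iota(f(\bC))$ not in its support, so its pullback to $\bC$ is effective and $N_{\iota\circ f}(r,\div(x_j))\ge 0$ for $r\ge 1$. Hence the Fubini--Study height is nonnegative and $T_{f,(L,\varphi,s)}(r)\ge -C$ for a constant $C$ independent of $r$.

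The main obstacle is not conceptual but lies in the careful verification, in each step, that the image of the holomorphic curve avoids the relevant divisor supports so that all pullback divisors on $\bC$ make sense (and, in~(3), are effective), and that $s_1\otimes s_2$ and $\alpha^{\ast}s$ remain \emph{regular} meromorphic sections; once this is in place, everything reduces to multiplicativity of metrics under $\otimes$ and pullback, additivity of counting functions, and the first main theorem. A secondary point to handle cleanly is the behaviour of $N_f$ and $m_f$ for small $r$: one should either state boundedness below for $r\ge 1$, or note that the potential $-\infty$ coming from the $\ord(f^{\ast}D,0)\log r$ term in $N_f$ is cancelled by the corresponding $+\infty$ in $m_f$, so that $T_{f,(L,\varphi,s)}$ is in fact bounded below on all of $(0,+\infty)$.
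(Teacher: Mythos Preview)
The paper states this proposition without proof; it appears in the appendix as background material on Nevanlinna theory (the text simply moves on to the Griffiths conjecture after the statement). There is therefore no ``paper's own proof'' to compare against.

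Your argument is correct and is the standard one. A few brief remarks: your reading of~(2) is right---the statement as printed has $f\circ\alpha$ and $f^{\ast}$ where it should have $\alpha\circ f$ and $\alpha^{\ast}$, and you have silently corrected this. In~(3) you do not actually need part~(1); Theorem~\ref{th:Nevanlinna_first_main_theorem} alone lets you swap $(\varphi,s)$ for $(\iota^{\ast}\varphi_{\mathrm{FS}},t_j)$ at the cost of $O(1)$, and then part~(2) handles the passage to $\bP^n$. Your closing remark about the small-$r$ behaviour is a nice touch: the cancellation between the $\ord(f^{\ast}D,0)\log r$ term and the logarithmic pole of $\log|s|_{\varphi}\circ f$ at $0$ is exactly why $T$ is continuous on $(0,\infty)$ even when $N$ and $m$ separately are not bounded near $r=0$.
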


In the context of holomorphic curves, Nevanlinna's second theorem is not known. Its conjectural statement is known as the Griffith conjecture.
\begin{conjecture}[Griffiths' Conjecture]
 Let $A$ be an ample line bundle on a complex projective variety $X$. Denote $K_X := \wedge^{\dim(X)}T_{X}^{\ast}$ the canonical line bundle on $X$. Let $D$ be a normal crossing divisor on $X$. 
\begin{itemize}
	\item[(1)] Then, for any holomorphic curve $f : \bC \to X$ with Zariski dense image, the inequality
\begin{align*}
m_{f,D}(r) + T_{f,K_X}(r) \leq_{\exc} O(\log^+ T_{f,A}(r)) + o(\log(r))
\end{align*}
holds.
	
	\item[(2)] For any $\epsilon>0$, there exists an algebraic subset $Z \nsubseteq X$ such that, for any holomorphic curve $f : \bC \to X$ such that $f(\bC) \not\subset Z$, we have 
\begin{align*}
\forall C\in \bR,\quad m_{f,D}(r) + T_{f,K_X}(r) \leq_{\exc} \epsilon T_{f,A}(r) + C.
\end{align*}
\end{itemize}
\end{conjecture}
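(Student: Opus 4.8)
The final display is Griffiths' conjecture, a central open problem in value distribution theory; it appears here only as motivation and the paper does not prove it. What follows is therefore not a proof but the strategy I would pursue, with an honest indication of where it is currently blocked.

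\textbf{Reducing (2) to (1).} Part (2) should follow from part (1) by a standard filtration/tautological-inequality argument: assuming (1) for every Zariski-dense holomorphic curve, one stratifies $X$ into a countable descending chain of algebraic subsets, applies (1) on a log-resolution of each stratum together with the elementary comparison of $K_Y$ with the restriction of $K_X+D$ under the normal-crossing hypothesis, and uses a Borel--Cantelli / Second Main Theorem bookkeeping to convert the ``exceptional-set-in-$r$'' into a single exceptional algebraic subset $Z$ carrying the small term $\epsilon\,T_{f,A}(r)+C$. The genuine content is thus entirely in (1).

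\textbf{Strategy for (1).} The plan is to follow the logarithmic-derivative / jet-differential approach. First, embedding $X$ by a large multiple of $A$ and passing to a smooth log-pair $(X,D)$, one reduces to bounding $m_{f,D}(r)$ against $T_{f,K_X}(r)$. Two mechanisms are available. In the model case $X=\bP^n$ with $D$ a union of hyperplanes in general position, $K_X=\cO(-n-1)$ and the inequality is \emph{exactly} H. Cartan's Second Main Theorem, which already comes with the error terms $O(\log^+T_f(r))+o(\log r)$ off a set of finite Lebesgue measure; this is where the $\leq_{\exc}$ in the statement originates. For general $X$ I would try to push the negative-curvature method of Bloch--Demailly--Siu--Yamanoi: construct global logarithmic jet differentials on $(X,D)$ vanishing along an ample divisor via Riemann--Roch and holomorphic Morse inequalities, apply the fundamental vanishing theorem to force $f(\bC)$ into a proper subvariety unless the desired bound holds, and extract the precise error terms from the Nevanlinna lemma on logarithmic derivatives $m(r,f'/f)=O(\log^+T_f(r))+o(\log r)$ (valid off a set of finite measure).

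\textbf{Main obstacle.} The decisive difficulty is exactly what keeps the conjecture open: producing log-jet differentials in all dimensions and for all $D$, and controlling the ramification term. The Morse-inequality construction only yields such differentials under positivity hypotheses (roughly, $K_X+D$ big \emph{plus} delicate Segre-class positivity for the jet bundles) that are not known for general $X$; without them the Ahlfors--Schwarz estimate degenerates. Even granting the differentials, part (2) needs a Second Main Theorem that is effective in families, currently available only in the (semi)abelian setting (Noguchi--Winkelmann--Yamanoi). Consequently I would expect a serious attempt to succeed only in special geometries---$\bP^n$ with hyperplanes, abelian and semiabelian varieties, surfaces of general type with suitable Chern-number inequalities---and to leave the general case genuinely open.
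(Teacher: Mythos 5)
You are correct to flag that this is Griffiths' Conjecture, a famous open problem stated in the paper purely as motivation in the appendix on Nevanlinna theory; the paper offers no proof and in fact remarks that it is only known for curves and for $\bP^n$, and that the recently announced proof of Dong and Hu appears to have a gap. Your honest survey of the jet-differential/log-derivative strategy and its known obstructions is accurate and consistent with the paper's brief commentary, so there is nothing to check against.
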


This conjecture is known in the case where $X$ is a curve and in the $X = \bP^{n}(\bC)$ case (cf. \cite{Vojta97} refining a result of Cartan). In the recent preprint \cite{DongHu22}, Dong and Hu announced a proof of the Griffiths conjecture. Unfortunately, there seems to be a gap in the paper. 

\subsection{Analogy with Diophantine approximation}

We conclude this appendix by giving the idea of the (heuristic) analogy between Diophantine approximation and Nevanlinna theory. For more details, we refer to \cite{Vojta87,BombieriGubler,Vojta10}.

\begin{center}
\begin{tabular}{|c|c|}
  \hline  
  Diophantine approximation & Nevanlinna theory \\
  \hline 
  $\bZ$ & $\cO(\bC)$\\  
  \hline 
  $\bQ$ &  $\cM(\bC)$\\  
  \hline
  $\{b_i : i\in I\} \subset \bQ$ infinite & $f\in \cO(\bC)$ non constant\\
  \hline
  $i\in I$ & $r>0$ \\
  \hline
  $\{\va_{\infty}\}\subset S$ finite set of absolute values on $\bQ$ & $\{\theta : \theta \in [0,2\pi]\}$ \\
  \hline 
  $M_{\bQ} \setminus S$ & $\{z\in \bR : |z|_{\infty} < r\}$\\
  \hline
  $|b_i|_{v}$, $v\in S$  & $|f(re^{i\theta})|_{\infty}$, $\theta \in [0,2\pi]$\\
  \hline
  $|b_i|_{v}$, $v\notin S$ & $\ord(f,z)$, $|z|_{\infty} < r$\\
  \hline 
   \end{tabular}
  
  \begin{tabular}{|c|c|}
  \hline  
  Diophantine approximation & Nevanlinna theory \\
  \hline
  Height  & Characteristic function \\ 
   $h(b_i) = \sum_{v\in M_{\bQ}} \log^{+}|b_i|_v$ & $T(f,\infty,r)$\\
  \hline
  Proximity function  & Proximity function \\ 
  $\forall a\in \bQ$, $m_{S}(a,b_i) =   \sum_{v\in S}\log^{+}|\frac{1}{b_i-a}|_v$ & $\forall a\in \bC$, $m(f,a,r)$\\
  \hline
  Counting function  & Counting function \\ 
  $\forall a\in \bQ$, $N_{S}(a,b_i) =   \sum_{v\notin S}\log^{+}|\frac{1}{b_i-a}|_v$ & $\forall a\in \bC$, $N(f,a,r)$\\
  \hline
  Product formula & Jensen's formula \\ 
  $\sum_{v\in S}\log|b_i|_v=0$ & $T(f,\infty,r) - T(f,0,r) = \log|c(f,0)|_{\infty}$\\
  \hline
  Height theory & First main theorem \\ 
  $\forall a\in \bQ$, $m_S(a,b_i)+N_{S}(a,b_i)= h(b_i) + O(1)$ & $\forall a\in\bC$, $T(f,a,r) = T(f,\infty,r) + O(1)$\\
  \hline
  Roth's theorem & Second main theorem (weak form) \\ 
  $\forall \epsilon>0$,  $\forall m\in\bZ{\geq 1}$, $\forall a_1,...,a_m\in \bQ$, & $\forall \epsilon>0$, $\forall m\in\bZ_{\geq 1}$, $\forall a_1,...,a_m\in\bC$,\\
 $\sum_{j=1}^n m(b_i,a_j) \leq_{\exc} (2+\epsilon)h(b_i)$ & $\sum_{j=1}^n m(f,a_j,r) \leq_{\text{exc}} (2+\epsilon)T(f,\infty,r)$\\
  \hline
  Defect & Defect \\ 
  $\forall a\in \bQ$, $\delta(a):= \liminf_{i\in I} \frac{m_{S}(a,b_i)}{h(b_i)}$ & $\forall a\in \bC$, $\delta(a):= \liminf_{r\to +\infty} \frac{m(f,a,r))}{T(f,\infty,r)}$\\
  \hline
\end{tabular}
\end{center}
In the line concerning Roth's theorem, $\leq_{\text{exc}}$ means that the inequality holds for all $i\in I$ except a finite number.

\section{Ultrafilters and ultraproducts}
\label{sec:ultrafilters}

\subsection{(Ultra)filters}
\label{sub:filters}

Let $I$ be a set. A \emph{filter} on $I$ is a subset $\cF \subset \cP(E)$ such that 
\begin{itemize}
	\item[(i)] $\emptyset \notin \cF$;
	\item[(ii)] for any $X,Y\in \cP(I)$ such that $X\subset Y$, if $X\in \cF$ then $Y\in \cF$;
	\item[(iii)] for any $X,Y\in \cF$, $X\cap Y\in \cF$. 
\end{itemize}
A filter $\cF$ on $I$ is called an \emph{ultrafilter} if for any $X\in \cP(I)$, either $X\in \cF$ or $I\setminus X \in \cF$. Zorn lemma implies that any filter on $I$ is contained in an ultrafilter (\cite{BouTG}, Chapitre I, \S 6.4, Théorème 1). An filter $\cF$ on $I$ is called fixed if there exists an element $x\in I$ such that $x\in \bigcap_{X\in\cF} X$. In that case, $\cF$ is called \emph{fixed}. A filter that is not fixed is called \emph{free}. 

\begin{example}
\label{example:filters}
Let $I$ be a set. The following $\cF \subset \cP(I)$ are filters on $I$.
\begin{itemize}
\item[(1)] $\cF:=\cF_{x}:=\{X\in\cP(I) : x\in I\}$, for any $x\in I$. This is a fixed ultrafilter on $I$. Moreover, all fixed ultrafilters arise this way. 
\item[(2)] Assume that $I$ is infinite and set $\cF$ be the subset of all subsets of $I$ of finite complement. This is a free filter.
\item[(3)] $I=\bR_{>0}$ and $\cF$ is the subset of all subsets of $\bR_{>0}$ whose complement is of finite Lebesgue measure. This is also a free filter. 
\end{itemize}
\end{example}

Assume that $I$ is a topological space and $\cF$ be a filter on $X$. We say that $x\in I$ is a \emph{limit point} of $\cF$ if for any open neighbourhood $U$ of $x$ in $I$, $U\in \cF$. Note that $I$ is Hausdorff, resp. compact, iff any filter on $I$ has at most one limit point, resp. has a limit point (\emph{loc. cit.}, Chapitre I, \S 8.1-\S 9.1). 

Let $f:I\to J$ be a map of sets $\cF$ be a filter on $I$. Then define the \emph{pushforward filter} 
$$f_{\ast}\cF := \{Y\in \cP(J) : f^{-1}(Y)\in \cF\}.$$
It is straightforward to check that $f_{\ast}(\cF)$ is a filter on $J$ that is an ultrafilter if $\cF$ is an ultrafilter. Assume that $J$ is a topological space. We say that $f$ admits a limit $x\in X$ along $\cF$ if $x$ is a limit point of $f_{\ast}\cF$. 

Let $\cF$ be a filter on a set $I$. We say that $\cF$ is $\delta$\emph{-incomplete} if there exists a countable family $(X_{n})_{n\in \bN}\in \cF^{\bN}$ such that $\bigcap_{n\in\bN}X_{n}=\emptyset$. For instance, any free filter on a countable set is $\delta$-incomplete (\cite{Vath07}, Proposition 4.11). Moreover, the filter $\cF$ on $\bR_{>0}$ defined in Example \ref{example:filters} (3) is $\delta$-incomplete (take $X_{n}:=]n,+\infty[$ for all $n\in \bN$).

\subsection{Ultraproducts}
\label{sub:ultraproducts}

From now on, we fix an infinite set $I$ and $\cU$ be a free ultrafilter on $I$. 

\subsubsection{Ultraproduct of sets}
\label{subsub:ultraproduct_sets}

Let $E=(E_{i})_{i\in I}$ be a family of sets indexed by $I$. The \emph{ultraproduct} of $E$ is defined as
\begin{align*}
E_{\cU} := \prod_{\cU}E_{i} := \left(\prod_{i\in I}E_{i}\right)/\sim_{\cU},
\end{align*}
where $\sim_{\cU}$ denotes the equivalence relation on $\prod_{i\in I}E_{i}$ defined by
\begin{align*}
(x_{i})_{i\in I} \sim_{\cU} (y_{i})_{i\in I} \Leftrightarrow \{i\in I : x_{i}=y_{i}\}\in \cU.
\end{align*}
 
\subsubsection{Ultraproduct of fields}
\label{subsub:ultraproduct_fields}

Let $K=(K_{i})_{i\in I}$ be a family of fields indexed by $I$. The \emph{ultraproduct} of the family $K$ is defined, as a set, as
\begin{align*}
K_{\cU} := \displaystyle\prod_{\cU} K_{i}.
\end{align*}
It is a standard fact that $K_{\cU}$ is a field. Note that even if the $K_{i}$'s are countable, the ultraproduct $K_{\cU}$ is uncountable.

\subsubsection{Ultraproduct of topological spaces}
\label{subsub:ultraproduct_topological_space}

Let $(\Omega_{i})_{i\in I}$ be a family of topological spaces indexed by $I$. Their \emph{ultraproduct} is defined, as a set, as 
\begin{align*}
\Omega_{\cU} := \displaystyle\prod_{\cU} \Omega_{i}. 
\end{align*}
We equip $\Omega_{\cU}$ with the topology generated by \emph{ultraboxes} of the form 
\begin{align*}
U_{\Omega} := \prod_{\cU} U_{i},
\end{align*}
where the $U_{i}$'s run over the open subsets of the $\Omega_{i}$'s. Note that this so-called \emph{ultraproduct topology} identifies with the quotient topology of the box topology on $\prod_{i\in I}\Omega_{i}$. 

\begin{proposition}[\cite{Bankston77}, Appendix 1]
Let $(\Omega_{i})_{i\in I}$ be a family of topological spaces indexed by $I$. Assume that the $\Omega_{i}$'s are all discrete, resp. Hausdorff. Then the ultraproduct $\Omega_{\cU}$ is discrete, resp. Hausdorff. 
\end{proposition}

\subsubsection{Ultraproduct of (Borel) measures}
\label{subsub:ultraproduct_measures}

Let $(\Omega_{i})_{i\in I}$ be a family of topological spaces indexed by $I$. For any $i\in I$, denote by $\cB_{i}$ the Borel $\sigma$-algebra of $\Omega_{i}$ and fix a Borel measure $\nu_{i}:\cB_{i}\to[0,+\infty]$ on $\Omega_{i}$. For any family $E=(E_{i})_{i\in I}\in \prod_{i\in I}\cB_{i}$, let $E_{\cU}:=\prod_{\cU}E_{i}$ denote the corresponding ultraproduct. Then the collection $\cB_{\cU}:=(E_{\cU})_{E=(E_{i})_{i\in I}}$, where the families $E$ are as above, is a $\sigma$-algebra on $\Omega_{\cU}$ that identifies with the Borel $\sigma$-algebra. 

Consider the map $\nu_{\cU} : \cB_{\cU}\to\prod_{\cU}[0,+\infty]$ defined by
\begin{align*}
\forall E_{\cU}=\prod_{\cU}E_{i}\in \cB_{\cU}, \quad  \nu_{\cU}(E_{\cU}) := [\nu_{i}(E_{i})].
\end{align*}
Then $\nu_{\cU}$ satisfies the properties: 
\begin{itemize}
	\item[(i)] $\nu_{\cU}(\emptyset)=[(0)_{i\in I}]$;
	\item[(ii)] $\forall E^{(1)}_{\cU},E^{(2)}_{\cU}\in\cB_{\cU}$ such that $E^{(1)}_{\cU}\cap E^{(2)}_{\cU}=\emptyset$, $\nu_{\cU}(E^{(1)}_{\cU}\cup E^{(2)}_{\cU})=\nu_{\cU}(E^{(1)}_{\cU})+\nu_{\cU}(E^{(2)}_{\cU})$.
\end{itemize}
We say that $\nu_{\cU}=:\prod_{\cU}$ is the \emph{ultraproduct} of the family $(\nu_{i})_{i\in I}$ (w.r.t. the ultrafilter $\cU$). 

\bibliographystyle{alpha}
\bibliography{biblio}
\end{document}